\definecolor{winered}{rgb}{0.7,0,0}
\definecolor{lessblue}{rgb}{0,0,0.7}
\newcommand{\myitem}[3]{\item[#2]\def\@currentlabel{#3}\label{#1}}
\def\@tocline#1#2#3#4#5#6#7{
\begingroup
  \par
    \parindent\z@ \leftskip#3 \relax \advance\leftskip\@tempdima\relax
                  \rightskip\@pnumwidth plus 4em \parfillskip-\@pnumwidth
    \ifcase #1 
       \vskip 0.6em \hskip 0em 
       \or
       \or \hskip 0em 
       \or \hskip 1em 
    \fi%
    %
    #6
    %
    \nobreak\relax{\leavevmode\leaders\hbox{\,.}\hfill}
    \hbox to\@pnumwidth {\@tocpagenum{#7}}
  \par
\endgroup
}
 \def\l@section{\@tocline{0}{0pt}{0pc}{}{}}
\renewcommand{\tocsection}[3]{%
  \indentlabel{\@ifnotempty{#2}{ 
    \ignorespaces\bfseries{#2. #3}}}
  \indentlabel{\@ifempty{#2}{\ignorespaces\bfseries{#3}}{}} 
    \vspace{1.5pt}}
\renewcommand{\tocsubsection}[3]{%
  \indentlabel{\@ifnotempty{#2}{
    \ignorespaces#2. #3}}
  \indentlabel{\@ifempty{#2}{\ignorespaces #3}{}}
    \vspace{1.5pt}}
\renewcommand{\tocsubsubsection}[3]{%
  \indentlabel{\@ifnotempty{#2}{
    \ignorespaces#2. #3}}
  \indentlabel{\@ifempty{#2}{\ignorespaces #3}{}}
    \vspace{1.5pt}}
\def\@nomenstarted{0}
\newlength{\@nomenoldtabcolsep}
\newcommand{\nomenstart}
  {%
    \def\@nomenstarted{1}%
    \setlength{\@nomenoldtabcolsep}{\tabcolsep}%
    \setlength{\tabcolsep}{3.5pt}%
    \begin{longtable}{p{0.11\textwidth} p{0.86\textwidth}}
  }
\newcommand{\nomenitem}[2]{%
    \ifcase\@nomenstarted%
      \or 
      \or \\ 
    \fi%
    #1\,{\leavevmode\leaders\hbox{\,.}\hfill} & #2%
    \def\@nomenstarted{2}%
  }%
\newcommand{\nomenend}
  {\\%
      \end{longtable}%
      \setlength{\tabcolsep}{\@nomenoldtabcolsep}%
      \def\@nomenstarted{0}%
  }
\newcommand{\vast}{\bBigg@{4}}
\newcommand{\Vast}{\bBigg@{5}}
\numberwithin{equation}{section}
\numberwithin{figure}{section}
\newtheorem{thm}{Theorem}[section]
\newtheorem{prop}[thm]{Proposition}
\newtheorem{lemma}[thm]{Lemma}
\newtheorem{cor}[thm]{Corollary}
\newtheorem*{thm*}{Theorem}
\newtheorem*{prop*}{Proposition}
\newtheorem*{cor*}{Corollary}
\newtheorem*{conj*}{Conjecture}
\theoremstyle{definition}
\newtheorem{definition}[thm]{Definition}
\theoremstyle{remark}
\newtheorem{rmk}[thm]{Remark}
\newcommand{\mc}{\mathcal}
\newcommand{\cA}{\mc A}
\newcommand{\cC}{\mc C}
\newcommand{\cE}{\mc E}
\newcommand{\cF}{\mc F}
\newcommand{\cH}{\mc H}
\newcommand{\cK}{\mc K}
\newcommand{\cL}{\mc L}
\newcommand{\cM}{\mc M}
\newcommand{\cO}{\mc O}
\newcommand{\cP}{\mc P}
\newcommand{\cR}{\mc R}
\newcommand{\cV}{\mc V}
\newcommand{\cX}{\mc X}
\newcommand{\ms}{\mathscr}
\newcommand{\sC}{\ms C}
\newcommand{\sD}{\ms D}
\newcommand{\scri}{\ms I}
\newcommand{\sR}{\ms R}
\newcommand{\C}{\mathbb{C}}
\newcommand{\N}{\mathbb{N}}
\newcommand{\R}{\mathbb{R}}
\newcommand{\Z}{\mathbb{Z}}
\newcommand{\Sph}{\mathbb{S}}
\newcommand{\sfr}{\mathsf{r}}
\newcommand{\sfG}{\mathsf{G}}
\newcommand{\sfH}{\mathsf{H}}
\newcommand{\bfa}{\mathbf{a}}
\newcommand{\bfX}{\mathbf{X}}
\newcommand{\fc}{\mathfrak{c}}
\newcommand{\fm}{\mathfrak{m}}
\newcommand{\ft}{\mathfrak{t}}
\newcommand{\fv}{\mathfrak{v}}
\newcommand{\sld}{\slashed{d}{}}
\newcommand{\slg}{\slashed{g}{}}
\newcommand{\slG}{\slashed{G}{}}
\newcommand{\slH}{\slashed{H}{}}
\newcommand{\sldelta}{\slashed{\delta}{}}
\newcommand{\slDelta}{\slashed{\Delta}{}}
\newcommand{\slnabla}{\slashed{\nabla}{}}
\newcommand{\slpi}{\slashed{\pi}{}}
\newcommand{\slstar}{\slashed{\star}}
\newcommand{\sltr}{\operatorname{\slashed\tr}}
\newcommand{\scal}{\mathbb{S}}
\newcommand{\vect}{\mathbb{V}}
\newcommand{\asph}{{\mathrm{AS}}}
\newcommand{\TAS}{T_\asph}
\newcommand{\TS}{T_{\mathrm{S}}}
\newcommand{\vol}{\operatorname{vol}}
\newcommand{\ran}{\operatorname{ran}}
\newcommand{\ann}{\operatorname{ann}}
\newcommand{\End}{\operatorname{End}}
\newcommand{\Hom}{\operatorname{Hom}}
\renewcommand{\Re}{\operatorname{Re}}
\renewcommand{\Im}{\operatorname{Im}}
\newcommand{\mathspan}{\operatorname{span}}
\newcommand{\supp}{\operatorname{supp}}
\newcommand{\tr}{\operatorname{tr}}
\newcommand{\diag}{\operatorname{diag}}
\newcommand{\Poly}{{\mathrm{Poly}}}
\newcommand{\cd}{\fc}
\newcommand{\Ups}{\Upsilon}
\newcommand{\eps}{\epsilon}
\newcommand{\ftrans}{\;\!\wh{\ }\;\!}
\newcommand{\hra}{\hookrightarrow}
\newcommand{\la}{\langle}
\newcommand{\ol}{\overline}
\newcommand{\pa}{\partial}
\newcommand{\ra}{\rangle}
\newcommand{\ul}[1]{\underline{#1}{}}
\newcommand{\weakto}{\rightharpoonup}
\newcommand{\wh}{\widehat}
\newcommand{\wt}{\widetilde}
\newcommand{\xra}{\xrightarrow}
\newcommand{\ubar}[1]{\underaccent{\bar}{#1}}
\newcommand{\pfstep}[1]{$\bullet$\ \underline{\textit{#1}}}
\newcommand{\pfsubstep}[1]{$-$\ \textit{#1}}
\newcommand{\bop}{{\mathrm{b}}}
\newcommand{\scop}{{\mathrm{sc}}}
\newcommand{\cp}{{\mathrm{c}}}
\newcommand{\scl}{{\mathrm{sc}}}
\newcommand{\semi}{\hbar}
\newcommand{\Diff}{\mathrm{Diff}}
\newcommand{\Vb}{\cV_\bop}
\newcommand{\Diffb}{\Diff_\bop}
\newcommand{\Diffbh}{\Diff_{\bop,\semi}}
\newcommand{\Vsc}{\cV_\scl}
\newcommand{\Diffsc}{\Diff_\scl}
\newcommand{\WF}{\mathrm{WF}}
\newcommand{\Omegasc}{{}^{\scop}\Omega}
\newcommand{\Lambdasc}{{}^{\scop}\Lambda}
\newcommand{\WFsc}{\WF_{\scop}}
\newcommand{\Tb}{{}^{\bop}T}
\newcommand{\Tsc}{{}^{\scl}T}
\newcommand{\half}{{\tfrac{1}{2}}}
\newcommand{\sigmasc}{{}^\scop\sigma}
\newcommand{\CI}{\cC^\infty}
\newcommand{\CIdot}{\dot\cC^\infty}
\newcommand{\CIdotc}{\CIdot_\cp}
\newcommand{\CIc}{\cC^\infty_\cp}
\newcommand{\Hb}{H_{\bop}}
\newcommand{\Hbext}{\bar H_{\bop}}
\newcommand{\Hbsupp}{\dot H_{\bop}}
\newcommand{\Hext}{\bar H}
\newcommand{\Hsupp}{\dot H}
\newcommand{\Hbh}{H_{\bop,h}}
\newcommand{\Hbhext}{\bar H_{\bop,h}}
\newcommand{\Hsc}{H_{\scop}}
\newcommand{\phg}{{\mathrm{phg}}}
\newcommand{\Ric}{\mathrm{Ric}}
\newcommand{\Ein}{\mathrm{Ein}}
\newcommand{\bhm}{\fm}
\newcommand{\bha}{\bfa}
\newcommand{\openbigpmatrix}[1]
  {%
    \def\@bigpmatrixsize{#1}%
    \addtolength{\arraycolsep}{-#1}%
    \begin{pmatrix}%
  }
\newcommand{\closebigpmatrix}
  {%
    \end{pmatrix}%
    \addtolength{\arraycolsep}{\@bigpmatrixsize}%
  }
\newlength{\enummargin}\setlength{\enummargin}{1.5em}
\newcommand{\inclfig}[1]{\includegraphics{#1}}
\newcommand*{\fwbw}[1]{\expandafter\@fwbw\csname c@#1\endcsname}
\newcommand*{\@fwbw}[1]{\ifcase #1 \or {\rm fw}\or {\rm bw}\fi}
\AddEnumerateCounter{\fwbw}{\@fwbw}
\begin{document}

\title[Linear stability of Kerr black holes]{Linear stability of slowly rotating Kerr black holes}

\date{\today. Original version: June 3, 2019.}

\subjclass[2010]{Primary 83C05, 58J50, Secondary 83C57, 35B40, 83C35}
\keywords{Einstein's equation, black hole stability, constraint damping, low energy resolvent}

\author{Dietrich H\"afner}
\address{Universit\'e Grenoble Alpes, Institut Fourier, 100 rue des maths, 38402 Gi\`eres, France}
\email{dietrich.hafner@univ-grenoble-alpes.fr}

\author{Peter Hintz}
\address{Department of Mathematics, Massachusetts Institute of Technology, Cambridge, Massachusetts 02139-4307, USA}
\email{phintz@mit.edu}

\author{Andr\'as Vasy}
\address{Department of Mathematics, Stanford University, Stanford, California 94305-2125, USA}
\email{andras@math.stanford.edu}

\begin{abstract}
  We prove the linear stability of slowly rotating Kerr black holes as solutions of the Einstein vacuum equation: linearized perturbations of a Kerr metric decay at an inverse polynomial rate to a linearized Kerr metric plus a pure gauge term. We work in a natural wave map/DeTurck gauge and show that the pure gauge term can be taken to lie in a fixed $7$-dimensional space with a simple geometric interpretation. Our proof rests on a robust general framework, based on recent advances in microlocal analysis and non-elliptic Fredholm theory, for the analysis of resolvents of operators on asymptotically flat spaces. With the mode stability of the Schwarzschild metric as well as of certain scalar and 1-form wave operators on the Schwarzschild spacetime as an input, we establish the linear stability of slowly rotating Kerr black holes using perturbative arguments; in particular, our proof does not make any use of special algebraic properties of the Kerr metric. The heart of the paper is a detailed description of the resolvent of the linearization of a suitable hyperbolic gauge-fixed Einstein operator at low energies. As in previous work by the second and third authors on the nonlinear stability of cosmological black holes, constraint damping plays an important role. Here, it eliminates certain pathological generalized zero energy states; it also ensures that solutions of our hyperbolic formulation of the linearized Einstein equation have the stated asymptotics and decay for general initial data and forcing terms, which is a useful feature in nonlinear and numerical applications.
\end{abstract}

\maketitle

\setlength{\parskip}{0.00in}
\tableofcontents
\setlength{\parskip}{0.05in}

\section{Introduction}
\label{SI}

We continue our investigation of stability problems in general relativity from a systematic microlocal and spectral theoretic point of view. In previous work \cite{HintzVasyKdSStability,HintzKNdSStability}, the second and third authors proved the full nonlinear stability of slowly rotating Kerr--de~Sitter (KdS), resp.\ Kerr--Newman--de~Sitter (KNdS) black holes as solutions of the Einstein vacuum equations, resp.\ Einstein--Maxwell equations, with positive cosmological constant $\Lambda>0$. The proofs of these results rest on the completion of two main tasks:
\begin{enumerate}
\item control of \emph{asymptotics and decay} of tensor-valued linear waves on \emph{exact} slowly rotating KdS spacetimes via spectral theory/resonance analysis---we were in fact able to deduce the structure of resonances as well as mode stability of slowly rotating KdS black holes from that of spherically symmetric Schwarzschild--de~Sitter (SdS) spacetimes;
\item robust control of the \emph{regularity} of linear waves on \emph{asymptotically} KdS spacetimes via microlocal analysis on the spacetime. (Combined with the spectral theoretic results on exact KdS spacetimes, this gives precise regularity and decay results for waves on asymptotically KdS spacetimes.)
\end{enumerate}
The present paper completes the first task on slowly rotating Kerr spacetimes: we show that solutions of the linearization of the Einstein vacuum equation around a slowly rotating Kerr solution decay at an inverse polynomial rate to a linearized Kerr metric, plus a pure gauge solution which, in a linearized wave map gauge, lies in an (almost) explicit 7-dimensional vector space.

More precisely, recall that the metric of a Schwarzschild black hole with mass $\bhm>0$ is given in static coordinates by
\[
  g_{(\bhm,0)} = \left(1-\frac{2\bhm}{r}\right)d t^2 - \left(1-\frac{2\bhm}{r}\right)^{-1}d r^2 - r^2\slg,\qquad t\in\R,\ r\in(2\bhm,\infty),
\]
where $\slg$ is the standard metric on $\Sph^2$ \cite{SchwarzschildPaper}. The more general Kerr family of metrics $g_{(\bhm,\bha)}$ \cite{KerrKerr} depends in addition on the angular momentum $\bha\in\R^3$. These metrics are solutions of the \emph{Einstein vacuum equation}
\begin{equation}
\label{EqIEin}
  \Ric(g) = 0.
\end{equation}

Fix a mass parameter $\bhm_0>0$ and set $b_0=(\bhm_0,0)\in\R^4$. Restricting to Kerr black hole parameters $b=(\bhm,\bha)\in\R^4$ close to $b_0$, we can regard $g_b$ as a smooth family of stationary (time-independent) Lorentzian metrics on a fixed 4-dimensional manifold
\[
  M^\circ := \R_\ft \times [r_-,\infty) \times \Sph^2,
\]
where $r_-<2\bhm_0$. The level sets of $\ft$ here are equal to those of $t$ in $r>4\bhm_0$, i.e.\ far away from the black hole, and are regular and transversal to the future event horizon $\cH^+$, which for $b=b_0$ is located at the Schwarzschild radius $r=2\bhm_0$. Linearizing equation~\eqref{EqIEin} for $g=g_b$ in the parameters $b$, we see that the \emph{linearized Kerr metrics}
\[
  \dot g_b(\dot b) := \frac{d}{d s}\bigg|_{s=0}g_{b+s\dot b},\quad \dot b\in\R^4,
\]
are solutions of the linear equation $D_{g_b}\Ric\bigl(\dot g_b(\dot b)\bigr) = 0$.

Our main result concerns the long-time behavior of general solutions of the \emph{linearized Einstein vacuum equation}
\begin{equation}
\label{EqIEinLin}
  D_{g_b}\Ric(h) = 0.
\end{equation}
To describe it, recall that the non-linear equation~\eqref{EqIEin} admits a formulation as a Cauchy problem \cite{ChoquetBruhatLocalEinstein,ChoquetBruhatGerochMGHD}: fix a Cauchy surface
\[
  \Sigma_0^\circ = \ft^{-1}(0) \subset M^\circ.
\]
Then the initial data are a Riemannian metric $\gamma$ and a symmetric 2-tensor $k$ on $\Sigma_0^\circ$, and one seeks a solution $g$ of~\eqref{EqIEin} such that $-\gamma$ and $k$ are, respectively, the induced metric and second fundamental form of $\Sigma_0^\circ$ with respect to $g$. A solution $g$ exists locally near $\Sigma_0^\circ$ if and only if $\gamma,k$ satisfy the \emph{constraint equations}, which are the Gauss--Codazzi equations, see~\eqref{EqPfConstraints}. The Cauchy problem for~\eqref{EqIEinLin} is the linearization of this initial value problem; its solutions exist globally and are unique modulo addition of a Lie derivative $\cL_V g_b$ of $g_b$ along any vector field $V$.

\begin{thm}
\label{ThmIBaby}
  Let $b=(\bhm,\bha)$ be close to $b_0=(\bhm_0,0)$; let $\alpha\in(0,1)$. Suppose $\dot\gamma,\dot k\in\CI(\Sigma_0^\circ;S^2\,T^*\Sigma_0^\circ)$ satisfy the linearized constraint equations, and decay according to
  \[
    |\dot\gamma(r,\omega)|\leq C r^{-1-\alpha},\quad
    |\dot k(r,\omega)|\leq C r^{-2-\alpha},
  \]
  together with their derivatives along $r\pa_r$ and $\pa_\omega$ (spherical derivatives) up to order $8$. Let $h$ denote a solution of the linearized Einstein vacuum equation~\eqref{EqIEinLin} on $M^\circ$ which attains the initial data $\dot\gamma,\dot k$ at $\Sigma_0^\circ$. Then there exist linearized black hole parameters $\dot b=(\dot\bhm,\dot\bha)\in\R\times\R^3$ and a vector field $V$ on $M^\circ$ such that
  \begin{equation}
  \label{EqIBaby}
    h = \dot g_b(\dot b) + \cL_V g_b + \tilde h,
  \end{equation}
  where for bounded $r$ the tail $\tilde h$ satisfies the bound $|\tilde h|\leq C_\eta\ft^{-1-\alpha+\eta}$ for all $\eta>0$.

  Upon imposing a suitable linearized generalized harmonic gauge condition on $h$, and replacing $\dot g_b(\dot b)$ by its gauge-fixed version, we can choose $V$ to lie in a 7-dimensional space (only depending on $b$) of smooth vector fields on $M^\circ$.
\end{thm}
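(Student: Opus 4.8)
The plan is to replace the gauge-invariant but non-hyperbolic equation~\eqref{EqIEinLin} by a \emph{gauge-fixed, constraint-damped} wave equation. Fixing a reference metric near $g_b$ and letting $\Upsilon_b(h)$ denote the linearization of the associated wave map/DeTurck gauge $1$-form, I would add a constraint-damping modification $\wt\delta_{g_b}^*$ of the symmetrized differential and work with the stationary wave-type operator $L_b := D_{g_b}\Ric + \wt\delta_{g_b}^*\circ\Upsilon_b$ on $M^\circ$. Using the linearized second Bianchi identity together with the linearized constraint equations satisfied by $(\dot\gamma,\dot k)$, one checks that a solution of $L_b h=0$ with suitably adjusted Cauchy data actually solves~\eqref{EqIEinLin}: the gauge $1$-form $\Upsilon_b(h)$ satisfies a homogeneous wave equation whose zeroth order constraint-damping term forces it to decay (which suffices), and—crucially for later—also eliminates pathological stationary gauge states. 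Reducing the Cauchy problem to a forcing problem $L_b h=f$ with $f$ compactly supported in $\ft$ and built from $(\dot\gamma,\dot k)$, the task becomes the analysis of $h=\tfrac{1}{2\pi}\int e^{-i\sigma\ft}\wh{L_b}(\sigma)^{-1}\wh f(\sigma)\,d\sigma$, where $\wh{L_b}(\sigma)$ is the spectral family obtained by Fourier--Laplace transform in $\ft$.

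Next I would analyze $\wh{L_b}(\sigma)^{-1}$ by frequency regime. For $\sigma$ in a bounded set with $\Im\sigma\ge 0$, $\wh{L_b}(\sigma)$ is Fredholm of index $0$ on suitable $\bop$-Sobolev spaces: interior elliptic estimates, radial-point/red-shift estimates at the event horizon, normally hyperbolic trapping estimates at the (perturbed Schwarzschild) photon sphere, and estimates at spatial infinity via the asymptotically flat resolvent framework combine to give this. Invertibility for $\sigma\in\R\setminus\{0\}$ is \emph{mode stability}: a mode of $L_b$ splits into a mode of $D_{g_b}\Ric$ plus a pure-gauge mode, the former vanishing by mode stability of the Schwarzschild metric and the latter by mode stability of the scalar and $1$-form wave operators controlling $\Upsilon_b$; for $b$ near $b_0$ this persists perturbatively once uniform Fredholm bounds are available. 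For $|\sigma|\to\infty$ in the closed upper half plane, semiclassical and trapping estimates yield $\wh{L_b}(\sigma)^{-1}$ with polynomially bounded norms.

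The heart of the argument is the behavior as $\sigma\to0$, where $\wh{L_b}(0)$ degenerates to an operator of the type handled by the low-energy resolvent machinery on asymptotically flat spaces. Here one must (i) compute $\ker\wh{L_b}(0)$ on mildly growing functions and identify it, using constraint damping to rule out spurious generalized zero modes, with $\{\dot g_b(\dot b):\dot b\in\R^4\}$ together with a finite-dimensional space of stationary pure-gauge tensors $\cL_V g_b$; and (ii) expand $\wh{L_b}(\sigma)^{-1}$ near $\sigma=0$—with the expected $\sigma^{-2}$ and $\sigma^{-1}$ singular terms and logarithmic corrections—matching the near- and far-zone regimes $\sigma r\lesssim1$ and $\sigma r\gtrsim1$ across the transition $\sigma r\sim1$. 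Feeding in the decay hypotheses on $(\dot\gamma,\dot k)$, which translate into Hölder-type $\sigma^\alpha$ regularity of $\wh f$ at $\sigma=0$, and deforming the contour in $\sigma$ down to and below the real axis, the residue at $\sigma=0$ produces the stationary piece $\dot g_b(\dot b)+\cL_V g_b$ of~\eqref{EqIBaby}, while the rest of the integral, controlled by the regularity of $\sigma\mapsto\wh{L_b}(\sigma)^{-1}\wh f(\sigma)$, decays like $\ft^{-1-\alpha+\eta}$.

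For the final assertion: imposing the linearized generalized harmonic gauge $\Upsilon_b(h)=0$ kills the gauge freedom except for residual transformations $\cL_V g_b$ with $\Upsilon_b(\cL_V g_b)=0$; such $V$ solve a fixed stationary wave-type equation depending only on $b$, and the subspace of those producing the $\sigma=0$ residue is, modulo the Killing fields of $g_b$, exactly $7$-dimensional—interpretable via the asymptotically Minkowskian end as the $4$ asymptotic translations and $3$ boosts (the rotations being absorbed into $\dot\bha$). One then checks that these representatives extend to smooth vector fields on all of $M^\circ$, which furnishes the claimed $7$-dimensional space. The main obstacle I anticipate is step (ii): constructing and precisely expanding the low-energy resolvent of the \emph{tensorial} operator $L_b$, in particular securing uniform control through the region $\sigma r\sim1$ and pinning down how constraint damping regularizes the $\sigma=0$ behavior so that the residue lands in the expected $(4+7)$-dimensional space rather than a larger one.
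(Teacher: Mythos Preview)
Your outline matches the paper's strategy closely: gauge-fix with constraint damping, reduce the initial value problem to a forcing problem $L_b h=f$, pass to the spectral family $\wh{L_b}(\sigma)$, and analyze $\wh{L_b}(\sigma)^{-1}$ by frequency regime, with mode stability for Schwarzschild plus Fredholm perturbation for nearby Kerr, and the $\sigma\to 0$ behavior producing both the stationary asymptotic piece and the decay rate of the remainder. You also correctly flag the low-energy resolvent for the tensorial, constraint-damped operator as the hard step.

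Two points to correct. First, the decay of $\tilde h$ does \emph{not} come from deforming the contour below the real axis: in the asymptotically flat setting there is no useful meromorphic continuation of $\wh{L_b}(\sigma)^{-1}$ across $\R$, and $t^{-1-\alpha}$ decay is inconsistent with isolated resonances at fixed $\Im\sigma<0$. Instead one shifts the contour only to $\Im\sigma=0$, writes $\wh{L_b}(\sigma)^{-1}=P_b(\sigma)+L_b^-(\sigma)$ with $P_b(\sigma)$ finite rank with a \emph{double} pole at $\sigma=0$ (the $\sigma^{-2}$ term reflecting the linearly growing generalized zero modes), and extracts the $t^{-1-\alpha+}$ decay from the finite $H^{3/2-\eps}$-type conormal regularity of the regular part $L_b^-(\sigma)$ at $\sigma=0$; the latter is obtained not by an explicit near/far-zone matching but by iterating the resolvent identity against a perturbed reference operator and tracking b-Sobolev weights (the ``logarithmic corrections'' you mention are in fact shown not to occur, thanks to the precise $r^{-1}$ leading behavior of the zero modes). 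Second, the $7$-dimensional space of gauge potentials $V$ is not ``$4$ translations $+$ $3$ boosts'': it is $3$ asymptotic translations (the $\omega_{b,s1}(\scal)$), $3$ asymptotic Lorentz boosts (the $\hat\omega_{b,s1}(\scal)$, which enter only as \emph{generalized}, linearly growing, zero modes), and one additional spherically symmetric, non-geometric potential $\omega_{b,s0}$; the $3$ rotations are, as you say, absorbed into $\dot\bha$.
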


The gauge-fixed version of $\dot g_b(\dot b)$ is a symmetric 2-tensor $\dot g_b'(\dot b)=\dot g_b(\dot b)+\cL_{V(\dot b)}g_b$, where $V(\dot b)$ is a suitable vector field chosen so that $\dot g_b'(\dot b)$ satisfies the chosen gauge condition. We refer the reader to Theorem~\ref{ThmPfKerr} for the precise result, which \begin{enumerate*} \item operates under precise regularity assumptions on $\dot\gamma,\dot k$ encoded by weighted Sobolev spaces, \item controls $\tilde h$ in a weighted spacetime Sobolev space, and \item gives uniform estimates on spacetime, namely pointwise bounds on $\tilde h$ by $t_*^{-1-\alpha+\eta}(\tfrac{\la r\ra+t_*}{t_*})^{-\alpha+\eta}$ (in $t_*\geq 1$), where $t_*$ is equal to $\ft$ near the black hole, and equal to $t-(r+2\bhm\log(r-2\bhm))$ (which is an affine time function along null infinity) for large $r$.\end{enumerate*} See Figure~\ref{FigISetup} for an illustration of the setup.

\begin{figure}[!ht]
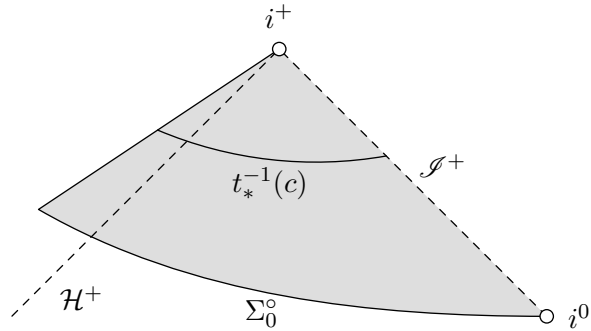

  \centering
  \inclfig{FigISetup}
  \caption{Part of the Penrose diagram of a slowly rotating Kerr spacetime, including the future event horizon $\cH^+$, null infinity $\scri^+$, future timelike infinity $i^+$ and spacelike infinity $i^0$. Shaded in gray is the domain $\{\ft\geq 0\}$ inside of $M^\circ$ where we solve the linearized Einstein equation. Also shown are the Cauchy surface $\Sigma_0^\circ=\ft^{-1}(0)$, and a level set of the function $t_*$ with respect to which we measured decay in Theorem~\ref{ThmIBaby}.}
\label{FigISetup}
\end{figure}

The gauge in which we work is (the linearization of) the natural wave map gauge for studying perturbations of a given spacetime $(M^\circ,g_b)$. In this gauge, the vector field $V$ in~\eqref{EqIBaby} is then asymptotic (as $r\to\infty$) to a linear combination of translations and boosts of Minkowski space (and an additional non-geometric vector field, which can be eliminated by a small, only $b$-dependent, modification of the gauge); asymptotic rotations either do not appear (when $g_b$ is a Schwarzschild metric, which is spherically symmetric) or can be subsumed in the infinitesimal change $\dot\bha$ of the rotation axis. Thus, we can read off the change $\dot b$ of black hole parameters (mass and angular momentum) as well as the shift $V$ of the rest frame of the black hole (translation and boost). In a nonlinear iteration, one thus expects to be able to change the gauge condition at each step to `re-center' the black hole; a (less explicit, in the sense that it is more analytic than geometric) version of this was a key ingredient in \cite{HintzVasyKdSStability}.

We use the DeTurck trick \cite{DeTurckPrescribedRicci} to relate equation~\eqref{EqIEin} to a quasilinear wave equation $P(g)=0$ for the Lorentzian metric $g$, which correspondingly relates equation~\eqref{EqIEinLin} to a linear wave equation $L_{g_b}h:=D_{g_b}P(h)=0$ for the symmetric 2-tensor $h$. After reduction to a forcing problem for $L_{g_b}$, with forcing supported in the future of a hypersurface transversal both to $\cH^+$ and future null infinity $\scri^+$, we immediately pass to the Fourier transform in $t_*$. The main part of the proof then takes place on the spectral/resolvent side; see~\S\ref{SsII} for a description of the key ingredients.

Importantly, our hyperbolic formulation $L_{g_b}h=0$ of the linearized Einstein equation is significantly better than stated in Theorem~\ref{ThmIBaby}: \emph{any} pair of Cauchy data on $\Sigma_0^\circ$ (i.e.\ a pair of smooth sections of the \emph{spacetime} symmetric 2-tensor bundle $S^2 T^*M^\circ$ over $\Sigma_0^\circ$) with $r^{-1-\alpha}$, resp.\ $r^{-2-\alpha}$ decay produces a solution $h$ of the form~\eqref{EqIBaby}. (In this sense, the geometric origin of initial data in Theorem~\ref{ThmIBaby} is irrelevant; their only use is to eventually ensure that $h$ not only solves $L_{g_b}h=0$, but also the linearized Einstein equation~\eqref{EqIEinLin}.) We expect such a strong stability statement to be useful in nonlinear applications, as explained at the end of \cite[\S1.1]{HintzVasyKdSStability} in the KdS setting; it also relates to numerical (in)stabilities when solving Einstein's equation, cf.\ \cite{PretoriusBinaryBlackHole}.

Beyond our straightforward choice of gauge condition, our construction of $L_{g_b}$ with these properties involves the implementation of \emph{constraint damping} (CD), which was first discussed in \cite{GundlachCalabreseHinderMartinConstraintDamping} and played a central role both in numerical work \cite{PretoriusBinaryBlackHole} and the nonlinear stability proofs \cite{HintzVasyKdSStability,HintzKNdSStability,HintzVasyMink4}. In fact, CD is a crucial input already in the proof of Theorem~\ref{ThmIBaby}, as it makes the low energy behavior of the spectral family of $L_{g_b}$ non-degenerate and thus perturbation-stable, allowing us to deduce Theorem~\ref{ThmIBaby} perturbatively from the statement for Schwarzschild parameters $b=b_0$; we discuss this in~\S\S\ref{SssII0}--\ref{SssIIK}.

Indeed, a key feature of our analysis is that we (prove and) use a suitable version of mode stability \emph{only of the Schwarzschild metric}, as proved by Regge--Wheeler \cite{ReggeWheelerSchwarzschild}, Vishveshwara \cite{VishveshwaraSchwarzschild}, and Zerilli \cite{ZerilliPotential}; we work with the formalism of Kodama--Ishibashi \cite{KodamaIshibashiMaster}. The only structure of Kerr metrics which we use, beyond the fact that they satisfy the Einstein equation, concerns their asymptotic behavior as $r\to\infty$. A central aim of this paper is thus to show how the (in principle straightforward) computations on Schwarzschild spacetimes, when combined with robust, perturbation-stable tools from modern microlocal analysis, imply the linear stability of \emph{slowly rotating} Kerr spacetimes, without the need for delicate separation of variable techniques. (See~\S\ref{SsIF} for comments about the full subextremal range $|\bha|<\bhm$ of Kerr parameters.) The relevant recent advances in microlocal analysis include (see~\S\ref{SsII} for more details):

\begin{enumerate}
\item non-elliptic Fredholm theory, as introduced by Vasy \cite{VasyMicroKerrdS} (see also~\cite[\S2]{HintzVasySemilinear}); for a quick guide, see Zworski \cite{ZworskiRevisitVasy};
\item a robust conceptual understanding of normally hyperbolic trapping, and the microlocal estimates which are consequences of its structural properties, as discussed in the context of Kerr and KdS black holes by Wunsch--Zworski \cite{WunschZworskiNormHypResolvent}, Dyatlov \cite{DyatlovSpectralGaps}, and Hintz \cite{HintzPsdoInner} (see also \cite{HintzVasyNormHyp,HintzPolyTrap,HintzVasyKdSStability});
\item specific to the asymptotically flat setting: scattering theory at the large end of Euclidean, or asymptotically conic, spaces, in the form pioneered by Melrose \cite{MelroseEuclideanSpectralTheory} and extended by Vasy--Zworski \cite{VasyZworskiScl} and Vasy \cite{VasyLAPLag,VasyLowEnergyLag};
\item elliptic b-analysis (analysis on manifolds with cylindrical ends), introduced by Melrose~\cite{MelroseAPS}, and used here for the study of stationary solutions (bound states, half-bound states, and generalizations).
\end{enumerate}

(We refer the reader to Dyatlov--Zworski \cite{DyatlovZworskiBook} for an introduction to some of these themes in scattering theory.) A commonality of these tools is that they rely only on structural properties of the null-bicharacteristic flow of the (wave) operator in question, rather than any special algebraic structures; we note here that our hyperbolic formulation $L_{g_b}$ of the linearized Einstein operator is a \emph{principally scalar} wave operator, to which these microlocal tools readily apply.

Nonlinear stability problems for solutions of~\eqref{EqIEin} have attracted a large amount of interest, see Friedrich \cite{FriedrichStability}, Christodoulou--Klainerman \cite{ChristodoulouKlainermanStability}, Lindblad--Rodnianski \cite{LindbladRodnianskiGlobalStability} for de~Sitter and Minkowski spacetimes, the aforementioned \cite{HintzVasyKdSStability,HintzKNdSStability} on cosmological black holes spacetimes, and the recent proof, by Klainerman--Szeftel \cite{KlainermanSzeftelPolarized}, of the nonlinear stability of the Schwarzschild metric under axially symmetric and polarized perturbations; see also Remark~\ref{RmkIPolarized}. We also mention the \emph{backwards} construction of black hole spacetimes settling down at an exponential rate to a Kerr solution \cite{DafermosHolzegelRodnianskiKerrBw}.

In very recent work, Andersson, B\"ackdahl, Blue, and Ma \cite{AnderssonBackdahlBlueMaKerr} proved Theorem~\ref{ThmIBaby} for initial data with strong decay (roughly pointwise $r^{-7/2}$ decay of $\dot\gamma$), proving $\ft^{-3/2+\eps}$ decay for the metric coefficients using energy methods; the strong decay ensures that solutions are purely radiative, i.e.\ decay to \emph{zero} (so $\dot b=0$) modulo pure gauge solutions, see also \cite{AksteinerAnderssonCharges}; we recover this by using the structure of the zero energy dual bound states, see Remark~\ref{RmkDecTo0}. They work in an outgoing radiation gauge, available on algebraically special spacetimes \cite[Remark~3.3]{AnderssonBackdahlBlueMaKerr}. Their argument uses the Newman--Penrose \cite{NewmanPenroseSpin} and Geroch--Held--Penrose \cite{GerochHeldPenrose} spin formalism, and in fact conditionally proves the linear stability of the Kerr metric in the full subextremal range, \emph{assuming} integrated energy decay holds for the Teukolsky equation; in the slowly rotating case, the latter was proved by Ma \cite{MaGravityKerr} and Dafermos--Holzegel--Rodnianski \cite{DafermosHolzegelRodnianskiTeukolsky}. See Finster--Smoller \cite{FinsterSmollerKerrStability} for results in the general case. Mode stability for curvature perturbations of Schwarzschild and Kerr spacetimes was proved by Bardeen--Press \cite{BardeenPressSchwarzschild}, Teukolsky \cite{TeukolskyKerr}, Whiting~\cite{WhitingKerrModeStability}, and Andersson--Ma--Paganini--Whiting \cite{AnderssonMaPaganiniWhitingModeStab}; see Chandrasekhar's book on the subject \cite{ChandrasekharBlackHoles} for an extensive literature review.

Previously, Dafermos--Holzegel--Rodnianski \cite{DafermosHolzegelRodnianskiSchwarzschildStability} proved the linear stability of the Schwarzschild metric in a double null gauge by reconstructing a perturbation from a certain decoupled quantity; they prove $\ft^{-1/2+\eps}$ decay of metric coefficients, and stronger decay for certain geometric quantities. Giorgi \cite{GiorgiRNTeukolsky,GiorgiRNThesis} establishes the linear stability of weakly charged Reissner--Nordstr\"om black holes using similar techniques. (See \cite{GiorgiRNPolarizedI,GiorgiRNPolarizedII} for progress in the nonlinear, axially symmetric, polarized setting.) Hung--Keller--Wang \cite{HungKellerWangSchwarzschild} proved $\ft^{-1/2}$ decay in the generalized harmonic gauge also used in the present paper, extending work by Hung--Keller \cite{HungKellerAxial}; Hung \cite{HungSchwarzschildOdd} proves $\ft^{-1+\eps}$ decay for the odd part of the linear perturbation (called `vector part' on the level of individual modes in~\S\ref{SMS}), and in upcoming work \cite{HungSchwarzschildEven} proves up to $\ft^{-2+\eps}$ decay (for $\alpha$ close to $1$) for the even part. Similarly, Johnson \cite{JohnsonSchwarzschild} obtained $\ft^{-1/2}$ decay using a modification of this gauge by suitable gauge source functions. We furthermore mention the work by Hung--Keller--Wang \cite{HungKellerWangHigher} on the decay of master quantities for the perturbation of higher-dimensional Schwarzschild black holes. We will discuss further related work in~\S\ref{SsIW}.

\subsection{Ingredients of the proof}
\label{SsII}

We define the gauge 1-form
\[
  \Ups(g;g^0) := g(g^0)^{-1}\delta_g\sfG_g g^0,
\]
where $g^0$ is an arbitrary `background metric' on $M^\circ$; this vanishes iff the pointwise identity map $(M^\circ,g)\to(M^\circ,g^0)$ is a wave map. Here, $\delta_g$ is the negative divergence (thus, the adjoint of the symmetric gradient $\delta_g^*$), and $\sfG_g=1-\half g\tr_g$ is the trace reversal operator in $4$ spacetime dimensions. Following DeTurck \cite{DeTurckPrescribedRicci}, one then considers the nonlinear operator
\begin{equation}
\label{EqIIOp}
  P(g;g^0) := \Ric(g) - \delta_g^*\Ups(g;g^0).
\end{equation}
Solving the initial value problem for $\Ric(g)=0$ in the gauge $\Ups(g;g^0)=0$ is then equivalent to solving the quasilinear wave equation $P(g)=0$ with suitable Cauchy data constructed from the geometric initial data, namely, the Cauchy data induce the given geometric data at $\Sigma_0^\circ$, and the gauge condition $\Ups(g;g^0)=0$ holds there.

Let now $g_b$, $b=(\bhm,\bha)$, denote a fixed Kerr metric. It is then natural to study perturbations of $g_b$ in the gauge $\Ups(-):=\Ups(-;g_b)=0$. (Note that $g_b$ itself satisfies this gauge condition.) The linearization of $P(g;g_b)$ around $g=g_b$ is
\begin{equation}
\label{EqIILin}
  L_b := D_{g_b}P(-;g_b) = D_{g_b}\Ric - \delta_{g_b}^*\circ D_{g_b}\Ups.
\end{equation}
We solve $D_{g_b}\Ric(h)=0$ in the gauge $D_{g_b}\Ups(h)=0$ by solving $L_b h=0$ with suitable initial data. Explicitly, $L_b=\half\Box_{g_b}+\text{lower order terms}$; this is thus a linear wave operator acting on symmetric 2-tensors. Simple linear theory (using the framework of \cite{HintzVasyMink4}) allows us to solve the initial value problem for $L_b h=0$ up to a hypersurface which is transversal to future null infinity $\scri^+$ and the future event horizon $\cH^+$, see Figure~\ref{FigISetup}.

Concretely, fix a function $t_*$ which equals $t_*=t+r_*$ near $\cH^+$ and $t_*=t-r_*$ near $\scri^+$, where $r_*=r+2\bhm\log(r-2\bhm)$ is the Regge--Wheeler tortoise coordinate. It then remains to solve a forcing problem
\[
  L_b h = f,\qquad t_*\geq 0\ \ \text{on}\ \supp h,\ \supp f,
\]
where $f$ has compact support in $t_*$ and suitable decay (roughly, $r^{-2-\alpha}$) as $r\to\infty$. Our approach is to take the Fourier transform in $t_*$, giving the representation
\begin{equation}
\label{EqIIFT}
  h(t_*) = \frac{1}{2\pi}\int_{\Im\sigma=C} e^{-i\sigma t_*} \wh{L_b}(\sigma)^{-1}\hat f(\sigma)\,d\sigma,
\end{equation}
initially for $C\gg 1$ (which gives exponential bounds for $h$). We point out that typically one takes the Fourier transform in $t$ rather than $t_*$; the advantage of the latter is that precise mapping properties of $\wh{L_b}(\sigma)$ are easier to read off, and, more importantly, the analysis near $\sigma=0$ is simplified.

The strategy of our proof of Theorem~\ref{ThmIBaby} is to shift the contour of integration in~\eqref{EqIIFT} to $C=0$, which requires a detailed analysis of $\wh{L_b}(\sigma)$. A simple combination of microlocal tools already gives a large amount of information on $\wh{L_b}(\sigma)$:

\begin{enumerate}
\item\label{ItII1} the operator $\wh{L_b}(\sigma)$ is Fredholm (of index $0$) as an operator between suitable function spaces based on weighted Sobolev spaces. This uses the non-elliptic Fredholm framework of \cite{VasyMicroKerrdS}, scattering (radial point) estimates at infinity \cite{MelroseEuclideanSpectralTheory,VasyLAPLag} (for non-zero $\sigma$), radial point estimates at the horizons \cite{VasyMicroKerrdS}, real principal type propagation of regularity \cite{DuistermaatHormanderFIO2}, and (for $\sigma=0$) elliptic b-theory \cite{MelroseAPS};
\item\label{ItII2} $\wh{L_b}(\sigma)$ satisfies high energy estimates (in particular: is invertible) for $|\Re\sigma|\gg 1$ and bounded $\Im\sigma\geq 0$. This uses semiclassical estimates at the aforementioned places, together with estimates at normally hyperbolic trapping \cite{WunschZworskiNormHypResolvent,DyatlovResonanceProjectors,DyatlovSpectralGaps,HintzPsdoInner} which originate with \cite{NonnenmacherZworskiDecay}; see also \cite{GerardSjostrandHyperbolic,ChristiansonNonconc}. High energy estimates at infinity are due to Vasy--Zworski \cite{VasyZworskiScl} and Vasy~\cite{VasyLAPLag};
\item\label{ItII3} uniform Fredholm estimates for $\wh{L_b}(\sigma)$ down to $\sigma=0$ \cite{VasyLowEnergy,VasyLowEnergyLag}; see also \cite{BonyHaefnerResolvent,VasyWunschMorawetz}. (See also \cite{GuillarmouHassellResI,GuillarmouHassellResII,GuillarmouHassellSikoraResIII} for an explicit construction of the resolvent, in the $t$-Fourier transform picture, of Schr\"odinger operators on asymptotically conic manifolds.)
\end{enumerate}
We discuss these results in more detail in~\S\ref{SOp}. We only need to apply them once in order to obtain the uniform Fredholm statements for $\wh{L_b}(\sigma)$; the rest of the paper, starting with~\S\ref{SY}, contains no further microlocal analysis.

There are only two remaining ingredients, the proofs of which occupy \S\S\ref{S0}--\ref{SReg}:
\begin{enumerate}
\setcounter{enumi}{3}
\item\label{ItIII1} mode stability of $L_b$, that is, the invertibility of $\wh{L_b}(\sigma)$ for $\Im\sigma\geq 0$, $\sigma\neq 0$;
\item\label{ItIII2} the regularity of the resolvent $\wh{L_b}(\sigma)^{-1}$ near $\sigma=0$.
\end{enumerate}
(The regularity at low frequency determines the decay rate in Theorem~\ref{ThmIBaby}, as we explain in detail in~\S\S\ref{SD}--\ref{SPf}.) We first sketch our arguments on Schwarzschild spacetimes in~\S\S\ref{SssIIS}--\ref{SssII0}. In~\S\ref{SssIIK}, we explain the perturbative arguments which give~\eqref{ItIII1}--\eqref{ItIII2} on slowly rotating Kerr spacetimes.

We stress that the linearized Einstein operator itself is analytically very ill-behaved (infinite-dimensional kernel and cokernel, no control of regularity of solutions, etc.), which precludes the study e.g.\ of mode stability of slowly rotating Kerr black holes by perturbative arguments starting with the mode stability of the Schwarzschild metric. On the other hand, the gauge-fixed operator is well-behaved, in the sense of points~\eqref{ItII1}--\eqref{ItII3} above, and has strong stability properties under perturbations. Thus, a general theme underlying \S\S\ref{SK}--\ref{SR} is the exploitation of the exploitation relationship between $L_b$ and $D_{g_b}\Ric$.

\subsubsection{Mode stability}
\label{SssIIS}

We work with a fixed \emph{Schwarzschild} metric $g_{b_0}$, and study mode solutions, with frequency $\sigma\in\C$, of the operator $L_{b_0}$,
\begin{equation}
\label{EqIIS}
  L_{b_0} h=D_{g_{b_0}}\Ric(h)-\delta_{g_{b_0}}^*D_{g_{b_0}}\Ups(h) = 0,\quad h=e^{-i\sigma t_*}h_0,
\end{equation}
where $h_0$ is stationary, i.e.\ only depends on the spatial coordinates $(r,\omega)$, and satisfies an outgoing radiation condition, which in particular entails the smoothness of $h$ across $\cH^+$.

\begin{prop}
\label{PropIIS}
  (See Proposition~\ref{PropL0}.) Mode stability holds for $L_{b_0}$: there are no non-trivial mode solutions of~\eqref{EqIIS} with $\sigma\neq 0$, $\Im\sigma\geq 0$.
\end{prop}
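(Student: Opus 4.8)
Throughout, let $h=e^{-i\sigma t_*}h_0$ be an outgoing mode solution of $L_{b_0}h=0$ with $\Im\sigma\geq 0$ and $\sigma\neq 0$; we must show $h=0$. The plan follows the DeTurck mechanism in three steps: propagate the gauge condition to show that $h$ in fact solves the linearized Einstein equation in the linearized wave map gauge; invoke classical Schwarzschild mode stability to write $h$ as a pure gauge solution $\delta_{g_{b_0}}^*\omega$; and use the gauge condition to force the potential $\omega$, hence $h$, to vanish. The two genuine inputs, beyond constraint propagation, are the mode stability of the Schwarzschild metric and the mode stability of certain principally scalar wave operators on $1$-forms over the Schwarzschild spacetime; both are recorded in~\S\ref{SMS}.

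\emph{Constraint propagation.} Set $\theta:=D_{g_{b_0}}\Ups(h)$. Since $D_{g_{b_0}}\Ups$ is a stationary first-order differential operator, $\theta=e^{-i\sigma t_*}\theta_0$ is again an outgoing mode $1$-form of frequency $\sigma$. Linearizing the contracted second Bianchi identity $\delta_g\sfG_g\Ric(g)\equiv 0$ at the Ricci-flat metric $g_{b_0}$ yields $\delta_{g_{b_0}}\sfG_{g_{b_0}}D_{g_{b_0}}\Ric(h)=0$ (the terms in which the differentiation hits $\delta$ or $\sfG$ carry a factor $\Ric(g_{b_0})=0$). Applying $\delta_{g_{b_0}}\sfG_{g_{b_0}}$ to $0=L_{b_0}h=D_{g_{b_0}}\Ric(h)-\delta_{g_{b_0}}^*\theta$ therefore gives $\delta_{g_{b_0}}\sfG_{g_{b_0}}\delta_{g_{b_0}}^*\theta=0$, i.e.\ $\BoxCP_{g_{b_0}}\theta=0$, where the constraint propagation operator $\BoxCP_{g_{b_0}}$ is---modulo lower-order terms, including the constraint-damping modification, which is irrelevant for $\sigma\neq 0$ (it matters only at $\sigma=0$; cf.~\S\ref{SssII0})---the wave operator $\Box_{g_{b_0}}$ acting on $1$-forms, a principally scalar operator. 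By mode stability of $\BoxCP_{g_{b_0}}$ for $\sigma\neq 0$, $\Im\sigma\geq 0$, we conclude $\theta=0$; thus $h$ solves $D_{g_{b_0}}\Ric(h)=0$ and satisfies the linearized gauge condition $D_{g_{b_0}}\Ups(h)=0$.

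\emph{Schwarzschild mode stability and triviality of the potential.} Mode stability of the Schwarzschild metric---the Regge--Wheeler--Vishveshwara--Zerilli result, which we recast in the Kodama--Ishibashi formalism by decomposing $h$ into scalar-, vector-, and tensor-type parts, reducing to the Regge--Wheeler and Zerilli master equations for angular momenta $\ell\geq 2$, and treating $\ell=0,1$ directly---shows that every outgoing mode solution of $D_{g_{b_0}}\Ric(h)=0$ with $\sigma\neq 0$, $\Im\sigma\geq 0$ is pure gauge: $h=\delta_{g_{b_0}}^*\omega$ for an outgoing $1$-form mode $\omega=e^{-i\sigma t_*}\omega_0$ of the same frequency. (No linearized Kerr metric $\dot g_{b_0}(\dot b)$ enters, since those are stationary and so do not occur at $\sigma\neq 0$; and for $\sigma\neq 0$ the potential $\omega$ is uniquely determined, because the Killing fields of $g_{b_0}$ are stationary.) Finally, $D_{g_{b_0}}\Ups\circ\delta_{g_{b_0}}^*$ is, up to a nonzero constant and lower-order terms, again the $1$-form wave operator $\Box_{g_{b_0}}$; since $D_{g_{b_0}}\Ups(h)=0$, the $1$-form $\omega$ is an outgoing mode solution of this principally scalar operator with $\sigma\neq 0$, $\Im\sigma\geq 0$, so its mode stability forces $\omega=0$, hence $h=\delta_{g_{b_0}}^*\omega=0$.

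\emph{Main obstacle.} The constraint-propagation step and the final vanishing step are essentially formal once the corresponding $1$-form mode stability is in hand; the real work is to establish the ``suitable versions'' of Schwarzschild mode stability used as inputs here. Two issues require care. First, the classical literature mainly rules out exponentially growing modes, whereas we need every mode solution in the \emph{closed} upper half-plane away from $0$ to be pure gauge, including the real axis $\sigma\in\R\setminus\{0\}$, where exponential decay is unavailable; there one instead argues via boundary pairings and energy-flux identities together with the non-negativity of the Regge--Wheeler and Zerilli potentials. Second, the exceptional low multipoles $\ell=0,1$, where the master-field reduction degenerates, must be analyzed separately, and one must verify that the solutions of the radial ODEs which are smooth at $r=2\bhm_0$ and outgoing as $r\to\infty$ are precisely those that arise from the spacetime problem.
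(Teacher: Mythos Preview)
Your proposal is correct and follows essentially the same three-step approach as the paper (constraint propagation via Bianchi, Schwarzschild mode stability giving $h=\delta_{g_{b_0}}^*\omega$, then mode stability of the gauge potential operator forcing $\omega=0$); see the proof of Proposition~\ref{PropL0} and the sketch around equations~\eqref{EqIISGauge}--\eqref{EqIISGaugeProp}. One small inaccuracy: for the \emph{unmodified} operator $L_{b_0}$ treated here, there is no constraint-damping term, and on the Ricci-flat background both $\cP_{b_0}=2\delta_{g_{b_0}}\sfG_{g_{b_0}}\delta_{g_{b_0}}^*$ and $2 D_{g_{b_0}}\Ups\circ\delta_{g_{b_0}}^*$ are \emph{exactly} the tensor wave operator $\Box_{g_{b_0},1}$ on $1$-forms, not merely so modulo lower-order terms.
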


The linearized \emph{gauge-fixed} Einstein operator $L_{b_0}$ and the linearized Einstein operator $D_{g_{b_0}}\Ric$ are distinct, but closely related, allowing for a conceptually straightforward proof of this proposition which relies on \begin{enumerate*} \item mode stability for the wave equation on 1-forms and \item mode stability for the linearized Einstein equation on a Schwarzschild background.\end{enumerate*}

Indeed, suppose first that in addition to~\eqref{EqIIS}, $h$ also satisfies the linearized gauge condition
\begin{equation}
\label{EqIISGauge}
  D_{g_{b_0}}\Ups(h) = 0,
\end{equation}
then we also have $D_{g_{b_0}}\Ric(h)=0$. Mode stability of the Schwarzschild metric \cite{ReggeWheelerSchwarzschild,VishveshwaraSchwarzschild,ZerilliPotential} implies that $h$ is \emph{pure gauge}, that is, of the form $h=\delta_{g_{b_0}}^*\omega$ (which equals the Lie derivative $\cL_V g_{b_0}$, $V=\half\omega^\sharp$), where the gauge potential $\omega$ is outgoing and has time frequency $\sigma$ as well. Our gauge condition~\eqref{EqIISGauge} further restricts $\omega$, to wit
\begin{equation}
\label{EqIISGaugeWave}
  (D_{g_{b_0}}\Ups\circ\delta_{g_{b_0}}^*)\omega = 0.
\end{equation}
(We refer to this as the \emph{gauge potential wave equation}.) But this is one half times the wave equation on 1-forms on the Schwarzschild spacetime, for which we prove mode stability in~\S\ref{S1} by adapting the arguments from~\cite{HintzVasyKdsFormResonances}.\footnote{Mode stability for~\eqref{EqIISGaugeWave} for KdS metrics is likely \emph{false}: it is known to be false for de~Sitter metrics \cite[Appendix~C]{HintzVasyKdSStability}. In this sense, the spectral theory for the linearized gauge-fixed Einstein operator on KdS spacetimes is more complicated than on Kerr or Schwarzschild spacetimes. Since the behavior of $\wh{L_{b_0}}(\sigma)$ for real $\sigma$ is more delicate on Kerr spacetimes, we gladly use the extra information here.} Therefore, $\omega=0$ and so $h=0$.

In order to prove~\eqref{EqIISGauge}, we note that the linearization of the second Bianchi identity, $\delta_{g_{b_0}}\sfG_{g_{b_0}}\circ D_{g_{b_0}}\Ric\equiv 0$, applied to~\eqref{EqIIS} gives the equation (which we refer to as the \emph{constraint propagation wave equation})
\begin{equation}
\label{EqIISGaugeProp}
  \cP_{b_0}(D_{g_{b_0}}\Ups(h))=0,\quad \cP_{b_0}=2\delta_{g_{b_0}}\sfG_{g_{b_0}}\circ\delta_{g_{b_0}}^*.
\end{equation}
Therefore, mode stability for $\cP_{b_0}$, which is the 1-form wave operator as well, implies~\eqref{EqIISGauge}.

\subsubsection{Zero energy modes; resolvent near zero}
\label{SssII0}

We need to analyze
\begin{enumerate}[label=(\alph*),ref=\alph*]
\item\label{ItII0Ker} the space of bound and half-bound states, and more generally the space of (generalized) zero energy modes of the operator $L_{b_0}$ in~\eqref{EqIIS}, and
\item\label{ItII0Reg} the regularity of the resolvent $\wh{L_{b_0}}(\sigma)^{-1}$ near $\sigma=0$.
\end{enumerate}
This is markedly different from the analysis of spacetimes with positive cosmological constant $\Lambda>0$: the asymptotic flatness of the spacetime causes the (regular part of) the resolvent to only have finite regularity at $\sigma=0$. Moreover, the operator $\wh{L_{b_0}}(\sigma)$ satisfies uniform estimates as $\sigma\to 0$ only on function spaces with a restricted range of allowed decay rates as $r\to\infty$, roughly, requiring the decay rate to be between $r^{-1}$ and $r^0=1$. (This is closely related to the fact that the Euclidean Laplacian on $\R^3$ is invertible on suitable weighted (b-)Sobolev spaces only when the weight of the domain allows for $r^{-1}$ asymptotics but disallows $r^0$ asymptotics as $r\to\infty$; this is in turn is linked to the off-diagonal $r^{-1}$ decay of the Green's function of the Euclidean Laplacian in $3$ dimensions.) In particular, the zero modes of interest are of size $o(1)$ as $r\to\infty$, and smooth across $\cH^+$.

\begin{prop}
\label{PropII0}
  (See Propositions~\ref{PropL0} and \ref{PropL0Lin}.) The space $\cK_{b_0}$ of zero energy modes of $L_{b_0}$ is $7$-dimensional; it is the sum of a 3-dimensional space of linearized Kerr metrics $\dot g_{b_0}(0,\dot\bha)$ corrected by addition of a pure gauge solution to arrange the gauge condition~\eqref{EqIISGauge}, a 3-dimensional space of pure gauge solutions $\delta_{g_{b_0}}^*\omega$ with $\omega$ asymptotic to a translation, and another 1-dimensional space of (spherically symmetric) pure gauge solutions.\footnote{The gauge potential of the latter is given in Proposition~\ref{PropL0}, but has no geometric significance. See Remark~\ref{RmkCDUGauge} for an indication of how to eliminate it.}

  The space of generalized zero energy modes (with $o(1)$ decay as $r\to\infty$ for fixed $t_*$, but allowing for polynomial growth in $t_*$) of $L_{b_0}$ contains the space $\wh\cK_{b_0}$ which is the sum of $\cK_{b_0}$, a 1-dimensional space of linearized Schwarzschild metrics $\dot g_{b_0}(\dot\bhm,0)$ corrected by a pure gauge solution, and a 3-dimensional space of pure gauge solutions $\delta_{g_{b_0}}^*\omega$ with $\omega$ asymptotic to a Lorentz boost.
\end{prop}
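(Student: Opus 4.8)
The plan is to compute the space of (generalized) zero energy modes of $L_{b_0}$ directly, exploiting the same mechanism used in the proof of Proposition~\ref{PropIIS}: the tight relationship between the gauge-fixed operator $L_{b_0}$ and the geometric operator $D_{g_{b_0}}\Ric$, mediated by the constraint propagation equation~\eqref{EqIISGaugeProp} and the gauge potential wave equation~\eqref{EqIISGaugeWave}. First I would set up the relevant function spaces: a stationary section $h_0$ is a zero energy mode if $\wh{L_{b_0}}(0)h_0 = 0$ with $h_0 = o(1)$ as $r\to\infty$ and $h_0$ smooth across $\cH^+$; generalized zero energy modes are finite sums $h = \sum_j t_*^j h_j$ with $\wh{L_{b_0}}(0)$ annihilating the associated Jordan chain, again with $o(1)$ spatial decay. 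The key structural input is that elliptic b-theory at $\sigma = 0$ (item~\eqref{ItII1} of the ingredients list, via \cite{MelroseAPS}) guarantees these spaces are finite-dimensional, so the task is genuinely a computation of a finite-dimensional object, not an a priori estimate.

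The main step is to run the following dichotomy on a putative zero mode $h_0$ of $L_{b_0}$. Applying the linearized Bianchi identity gives $\cP_{b_0}\bigl(D_{g_{b_0}}\Ups(h_0)\bigr) = 0$, so $\theta := D_{g_{b_0}}\Ups(h_0)$ is a zero energy mode of the $1$-form wave operator $\cP_{b_0}$. I would determine the zero modes of $\cP_{b_0}$ on Schwarzschild with $o(1)$ decay; by the analysis in~\S\ref{S1} (adapting \cite{HintzVasyKdsFormResonances}) these form an explicit small-dimensional space, and crucially constraint damping is designed so that this space is as small as possible — this is where CD "eliminates pathological generalized zero energy states" as advertised in the abstract. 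For each such $\theta$ one then asks whether it is in the image of $h_0\mapsto D_{g_{b_0}}\Ups(h_0)$ over the space of candidate modes; when $\theta = 0$, $h_0$ solves the genuine linearized Einstein equation $D_{g_{b_0}}\Ric(h_0) = 0$ at zero energy with $o(1)$ decay, and the classical Regge--Wheeler--Zerilli mode stability analysis \cite{ReggeWheelerSchwarzschild,VishveshwaraSchwarzschild,ZerilliPotential} (in the Kodama--Ishibashi formalism \cite{KodamaIshibashiMaster}) classifies these: they are spanned by the linearized Kerr metrics $\dot g_{b_0}(0,\dot\bha)$ with $\dot\bha\in\R^3$ (the $\ell=1$ vector/axial modes, giving infinitesimal rotation), plus the pure gauge solutions $\delta_{g_{b_0}}^*\omega$ with $\omega$ asymptotic to a spatial translation (the $\ell=1$ scalar/polar modes at the appropriate decay order). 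One must then add back the corrections $\delta_{g_{b_0}}^*\omega$ needed to enforce the gauge condition $D_{g_{b_0}}\Ups(h_0) = 0$, which requires solving~\eqref{EqIISGaugeWave} for $\omega$ at zero energy with a prescribed leading behavior — itself an instance of the $1$-form wave operator analysis. This yields the $3 + 3 = 6$ dimensions, and the extra $1$-dimensional spherically symmetric pure gauge solution arises from the non-geometric zero mode of the gauge potential equation~\eqref{EqIISGaugeWave} (the footnote's "$\omega$ with no geometric significance"), giving $\dim\cK_{b_0} = 7$.

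For the generalized (polynomially $t_*$-growing) modes one repeats the argument one Jordan level up: if $h = t_* h_1 + h_0$ with $\wh{L_{b_0}}(0)h_1 = 0$ and $\wh{L_{b_0}}(0)h_0 = -(\pa_\sigma\wh{L_{b_0}})(0)h_1$, then the linearized mass perturbation $\dot g_{b_0}(\dot\bhm,0)$ — which is \emph{not} $o(1)$ on its own but whose leading $r^{-1}$-type behavior can be absorbed — contributes a generalized mode, and the Lorentz boost vector fields $\omega$ produce $\delta_{g_{b_0}}^*\omega$ which grow linearly in $t_*$ (since a boost field is linear in $t$), contributing a $3$-dimensional space; together with $\cK_{b_0}$ this gives the $7+1+3 = 11$-dimensional $\wh\cK_{b_0}$. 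Throughout, the identification of "asymptotic to a translation/boost" relies on the explicit asymptotic form of Minkowski Killing and conformal-type vector fields, matched against the allowed decay window $r^{-1}$ to $r^0$.

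I expect the main obstacle to be the bookkeeping at the borderline decay rates: the zero modes live precisely at the threshold where the b-normal-operator analysis is delicate, so one must carefully track which $r^{-1}$ versus $r^0$ asymptotic terms are permitted, and show that the gauge-correction potentials $\omega$ solving~\eqref{EqIISGaugeWave} exist with exactly the right leading coefficients and no extraneous growth — in particular that the map from boundary data of $\omega$ at $r=\infty$ to the resulting $\delta_{g_{b_0}}^*\omega$ lands in the claimed spaces and is injective. A secondary subtlety is ruling out \emph{additional} generalized zero modes beyond $\wh\cK_{b_0}$ at higher $t_*$-order, which is where one must invoke constraint damping decisively: the CD modification is chosen so that the constraint propagation operator $\cP_{b_0}$ has no zero energy modes forcing the Jordan chain to continue, so the filtration terminates. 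The cross-referenced Propositions~\ref{PropL0} and~\ref{PropL0Lin} presumably carry out exactly this analysis with the explicit gauge potentials.
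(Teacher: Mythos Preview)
Your proposal has the right overall architecture (Bianchi $\Rightarrow$ constraint propagation equation $\Rightarrow$ linearized gauge condition $\Rightarrow$ mode stability $\Rightarrow$ classify via gauge potential wave equation), but there are two genuine gaps.

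\textbf{The role of constraint damping.} You invoke CD to make the zero energy kernel of $\cP_{b_0}$ ``as small as possible'' and thereby force $\theta := D_{g_{b_0}}\Ups(h_0) = 0$. This is not how the paper proceeds, and it cannot work here: Proposition~\ref{PropII0} is about the \emph{unmodified} operator $L_{b_0}$, for which $\cP_{b_0} = \Box_{g_{b_0},1}$ genuinely has a nontrivial zero energy kernel $\langle \omega_{b_0,s0}\rangle$ with $\omega_{b_0,s0} = r^{-1}(dt_0 - dr)$ (Theorem~\ref{Thm1}). The mechanism the paper uses is a \emph{decay mismatch}: if $h_0 = o(1)$, then $\theta = \delta_{g_{b_0}}\sfG_{g_{b_0}} h_0 \in \Hbext^{\infty,1/2-}$, i.e.\ $\theta = o(r^{-1})$, whereas $\omega_{b_0,s0}$ is only $\cO(r^{-1})$ and not better. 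Hence $\theta$ lies in the zero energy kernel of the $1$-form wave operator \emph{with one order more decay than the generator}, forcing $\theta = 0$. This is the step you need to articulate; CD enters only later, in Proposition~\ref{PropII0Mod}, to rule out the quadratically growing pathological modes of the \emph{modified} operator.

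\textbf{What the second part actually asserts.} You frame ``ruling out additional generalized zero modes beyond $\wh\cK_{b_0}$'' as a subtlety to be handled via CD. But the statement only claims that the space of generalized zero modes \emph{contains} $\wh\cK_{b_0}$; it does not claim equality, and in fact equality fails for the unmodified $L_{b_0}$ (see~\S\ref{SsL0Qu}: there exist quadratically growing generalized zero modes which do not satisfy the linearized gauge condition). So for the second part you only need to \emph{construct} the four additional linearly growing modes, not exclude anything.

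A smaller point: you write that $\dot g_{b_0}(\dot\bhm,0)$ ``is not $o(1)$ on its own''. It is: $\dot g^0_{b_0}(1,0) = -\tfrac{2\bhm_0}{r}\,dt_0^2$. The reason the mass perturbation appears only as a \emph{generalized} (linearly growing) mode rather than a genuine zero mode is the obstruction~\eqref{EqL0SchwGauge}: one cannot solve the gauge potential equation $\wh{\Box_g}(0)\omega = -2\delta_g\sfG_g\dot g^0_{b_0}(\dot\bhm,0)$ for stationary $\omega$ because the right hand side pairs nontrivially against the dual state $\omega_{b_0,s0}^*$. A linearly growing $\omega$ (built from $t_*\omega_{b_0,s0}$ plus a stationary correction) is what is required; see Proposition~\ref{PropL0Lin}.
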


The proof of the first part is similar to that of Proposition~\ref{PropIIS}, albeit more subtle. The constraint propagation operator $\cP_{b_0}$ in~\eqref{EqIISGaugeProp} does have a zero mode, which however has exactly $r^{-1}$ decay, and thus decays more slowly than $D_{g_{b_0}}\Ups(h)=o(r^{-1})$ when $h$ is a zero mode ($\wh{L_{b_0}}(h)=0$ with $h=o(1)$). Thus, $h$ in fact solves $D_{g_{b_0}}\Ric(h)=0$, and is therefore a sum of a pure gauge solution and a linearized Kerr metric, in the precise sense stated in Theorem~\ref{ThmMS}. An analysis of the gauge potential equation~\eqref{EqIISGaugeWave} then restricts the possibilities to those stated above. Note here that in general a linearized Kerr metric $\dot g_{b_0}(\dot b)$ does not lie in $\ker L_{b_0}$; rather, $\dot g_{b_0}(\dot b)+\delta_{g_{b_0}}^*\omega$ does, where $\omega$ needs to solve equation~\eqref{EqIISGaugeWave} with non-trivial right hand side $-D_{g_{b_0}}\Ups(\dot g_{b_0}(\dot b))$, which is not always possible with stationary $\omega$. The fact that the linearization of the Schwarzschild family in the mass parameter does not give rise to a zero mode is, in this sense, due to our choice of gauge; see~\S\ref{SsL0}. The second part of Proposition~\ref{PropII0} lists all \emph{linearly} growing generalized zero energy modes, as can be shown by similar arguments; see~\S\ref{SsL0Lin}.

It turns out that there do exist generalized zero modes of $L_{b_0}$ which are (at least) \emph{quadratically growing} in $t_*$; see~\S\ref{SsL0Qu}. \emph{However}, these are pathological in that they are \emph{not} solutions of the linearized (not gauge-fixed) Einstein equation, and do not satisfy the linearized gauge condition~\eqref{EqIISGauge}. Thus, their existence is due to the failure of equation~\eqref{EqIISGaugeProp} to enforce $D_{g_{b_0}}\Ups(h)=0$ for $h$ which grow quadratically in $t_*$.

To remedy this, we thus implement \emph{constraint damping}, which means replacing $\delta_{g_{b_0}}^*$ in the definition~\eqref{EqIIOp} of the gauge-fixed Einstein operator by a zeroth order modification; concretely, we shall take
\[
  \wt\delta^*_{g,\gamma}\omega := \delta_g^*\omega + \gamma\bigl(2\cd\otimes_s\omega-g\la\cd,\omega\ra_G\bigr),\quad G=g^{-1},
\]
for a suitable (future timelike) 1-form $\cd$ with compact support near $\cH^+$ and small non-zero $\gamma$. The linearized modified gauge-fixed Einstein operator $L_{b_0,\gamma}$ is then given by~\eqref{EqIILin} with $\wt\delta_{g_{b_0},\gamma}^*$ in place of $\delta_{g_{b_0}}^*$, and correspondingly the modified gauge propagation operator is
\[
  \cP_{b_0,\gamma} = 2\delta_{g_{b_0}}\sfG_{g_{b_0}}\circ\wt\delta_{g_{b_0},\gamma}^*.
\]
\begin{prop}
  (See Proposition~\ref{PropCDU}.) For a suitable choice of $\cd$ and $\gamma$, $\cP_{b_0,\gamma}$ has no modes $\sigma\in\C$ with $\Im\sigma\geq 0$.
\end{prop}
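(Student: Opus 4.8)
The plan is to reduce the claim to a careful analysis near $\sigma=0$ and to deduce it there from the mode stability of the \emph{unmodified} operator $\cP_{b_0}=2\delta_{g_{b_0}}\sfG_{g_{b_0}}\delta_{g_{b_0}}^*$ (the $1$-form wave operator, proved mode stable in \S\ref{S1}) together with a one-parameter perturbation argument in $\gamma$, the point of the damping being that it moves the zero resonance of $\cP_{b_0}$ off the real axis. First I would record the structure of $\cP_{b_0,\gamma}$: using $\sfG_{g_{b_0}}(2\cd\otimes_s\omega-g_{b_0}\la\cd,\omega\ra_G)=2\cd\otimes_s\omega$ one gets $\cP_{b_0,\gamma}=\cP_{b_0}+4\gamma\,\delta_{g_{b_0}}(\cd\otimes_s(\cdot))$, which differs from $\cP_{b_0}$ by a first order operator with coefficients supported near $\cH^+$, with leading part the friction term $-2\gamma\nabla_{\cd^\sharp}$. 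Consequently $\wh{\cP_{b_0,\gamma}}(\sigma)$ is Fredholm of index $0$ on the same weighted b-Sobolev spaces as $\wh{\cP_{b_0}}(\sigma)$ (the threshold conditions at the radial sets over $\cH^+$ and at $r=\infty$ are open, and the trapped set, being disjoint from $\supp\cd$, is untouched), and satisfies the same high energy estimates; hence there are $R_0,\gamma_0>0$ with $\wh{\cP_{b_0,\gamma}}(\sigma)$ invertible for $|\Re\sigma|\ge R_0$, $\Im\sigma\ge 0$, $|\gamma|\le\gamma_0$. On the compact set $\{\epsilon\le|\sigma|\le R_0,\ \Im\sigma\ge 0\}$, invertibility of $\wh{\cP_{b_0}}(\sigma)$ (mode stability) together with the uniform boundedness of $\wh{\cP_{b_0}}(\sigma)^{-1}$ there gives invertibility of $\wh{\cP_{b_0,\gamma}}(\sigma)$ once $|\gamma|$ is small depending on $\epsilon$. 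This reduces the statement to a fixed neighborhood $|\sigma|<\epsilon$ of $\sigma=0$ — and in particular to $\sigma=0$ itself, where $\wh{\cP_{b_0}}(0)$ fails to be invertible, its kernel on the borderline weighted space being the finite-dimensional space of stationary $1$-forms with $r^{-1}$ asymptotics (the Coulomb-type zero mode(s)), about which mode stability gives no information.

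Near $\sigma=0$ I would set up a Grushin problem for the two-parameter family $\wh{\cP_{b_0,\gamma}}(\sigma)$, using the uniform b-Fredholm estimates down to $\sigma=0$ and the structure of the low-energy expansion of $\wh{\cP_{b_0}}(\sigma)^{-1}$ (finite order, with $\sigma$- and $\sigma\log\sigma$-terms, as in the asymptotically flat $3$-dimensional setting; see \S\ref{SOp} and ingredients (3)--(4) of \S\ref{SsII}). Fixing a basis $\{\omega_j\}$ of $\ker\wh{\cP_{b_0}}(0)$ and a basis $\{\omega_k^*\}$ of the cokernel — the dual zero modes, which are stationary $1$-forms with $r^0$ asymptotics — one obtains for $(\sigma,\gamma)$ near $(0,0)$ a finite matrix $E_{-+}(\sigma,\gamma)$ such that $\wh{\cP_{b_0,\gamma}}(\sigma)$ is invertible precisely when $E_{-+}(\sigma,\gamma)$ is, with $E_{-+}(0,0)=0$; the modes of $\cP_{b_0,\gamma}$ in $\{\Im\sigma\ge 0,\ |\sigma|<\epsilon\}$ are exactly the zeros there of $\det E_{-+}(\cdot,\gamma)$. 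To leading order $E_{-+}(\sigma,\gamma)=\sigma B+\gamma A+O(|\sigma|^2+|\sigma\gamma|+\gamma^2)$ (plus $\sigma\log\sigma$ corrections), where $B=\pa_\sigma E_{-+}(0,0)$ is read off from the unperturbed low-energy expansion and — the key computation — $A=\pa_\gamma E_{-+}(0,0)$ equals, up to a nonzero constant, the pairing matrix $\big(\la 4\delta_{g_{b_0}}(\cd\otimes_s\omega_j),\,\omega_k^*\ra\big)_{j,k}$. Since $\cd$ is supported near $\cH^+$, each $4\delta_{g_{b_0}}(\cd\otimes_s\omega_j)$ is compactly supported there, so this matrix consists of honest integrals over a neighborhood of $\cH^+$ of bilinear expressions in $\cd,\omega_j,\omega_k^*$ and their first derivatives.

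The remaining and decisive step is to exploit the freedom in $\cd$. Because the $\omega_j$ and $\omega_k^*$ are smooth and (by unique continuation from $\cH^+$) not identically zero near $\cH^+$, and because $\cd\mapsto A$ is essentially linear, one can choose $\cd$ — future timelike, compactly supported near $\cH^+$ — so that $A$ is invertible and the eigenvalues of $B^{-1}A$ all lie in a fixed open half-plane; this is verified by an explicit computation with the (known) Schwarzschild zero modes. For such $\cd$ and $|\gamma|$ small of the appropriate sign, $E_{-+}(\sigma,\gamma)\approx\sigma B+\gamma A$ has, near $\sigma=0$, its cluster of zeros at $\sigma\approx-\gamma\,(\text{eigenvalues of }B^{-1}A)$, all of which are then pushed strictly into $\Im\sigma<0$; taking $|\gamma|$ small enough that this persists under the higher order terms and is compatible with the two reductions of the previous paragraph (which one monitors with Rouch\'e's theorem for the count of modes near $\sigma=0$), one concludes that $\wh{\cP_{b_0,\gamma}}(\sigma)$ has no modes with $\Im\sigma\ge 0$. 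The same $\cd$, future timelike, with that sign of $\gamma$, makes the friction term $-2\gamma\nabla_{\cd^\sharp}$ energy-draining, which is the heuristic origin of constraint damping and the form used throughout the paper.

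I expect the main obstacle to be exactly this last computation: pinning down $A=\pa_\gamma E_{-+}(0,0)$ and $B=\pa_\sigma E_{-+}(0,0)$ explicitly enough to choose $\cd$ with $A$ invertible and $\spec(B^{-1}A)$ in the correct half-plane. This requires a concrete description of the $\sigma=0$ kernel and cokernel of the $1$-form wave operator on Schwarzschild near $\cH^+$, and — if that kernel carries a Jordan block at $\sigma=0$ — a refinement tracking the whole nilpotent structure, the point being that it is the first order in $\gamma$ perturbation of the top of each block that governs the bifurcation of modes off the real axis. Everything else is a routine assembly of the mode stability of $\cP_{b_0}$ with the perturbation stability and high energy estimates of the Fredholm framework of \S\ref{SOp}.
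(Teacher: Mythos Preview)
Your approach is essentially the same as the paper's: reduce to a neighborhood of $\sigma=0$ via high energy estimates and mode stability of $\cP_{b_0}$, then run a finite-rank perturbation argument (Grushin problem) in the two parameters $(\sigma,\gamma)$ to show the zero mode is pushed into $\Im\sigma<0$. Your computation of the perturbation $\cP_{b_0,\gamma}-\cP_{b_0}=4\gamma\,\delta_{g_{b_0}}(\cd\otimes_s\cdot)$ is correct, and your identification of the decisive step --- computing $A=\pa_\gamma E_{-+}(0,0)$ and $B=\pa_\sigma E_{-+}(0,0)$ and choosing $\cd$ so that $\spec(B^{-1}A)$ lies in the right half-plane --- is exactly the content of the paper's Lemma~\ref{LemmaCDU}.

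The paper's execution is considerably simpler than you anticipate, for three reasons. First, by Theorem~\ref{Thm1} the kernel and cokernel of $\wh{\cP_{b_0}}(0)$ are both \emph{one-dimensional}: the kernel is spanned by $\omega_{b_0,s0}=r^{-1}(dt_0-dr)$, and the cokernel by the delta distribution $\omega_{b_0,s0}^*=\delta(r-2\bhm_0)\,dr$ supported on the horizon (not a smooth $1$-form with $r^0$ asymptotics as you wrote). So your matrices $A,B$ are scalars. Second, by Lemma~\ref{Lemma10NoLin} there is no Jordan block at $\sigma=0$ in the relevant weighted space, so the refinement you worry about is unnecessary. Third, with the explicit choice $\cd=\chi(r)(dt_0-\fv\,dr)$ the paper computes directly $A=\cP_{11}^\flat=4(\fv-1)$ and $B=\cP_{11}^\sharp=-2i$ (equations~\eqref{EqCD0Nondeg} and \eqref{Eq10NoLinPair}), so $E_{-+}(\sigma,\gamma)=4(\fv-1)\gamma-2i\sigma+o(|\sigma|+|\gamma|)$; for $\fv>1$ and $\gamma>0$ small this vanishes only at $\sigma\approx-2i(\fv-1)\gamma$, which has $\Im\sigma<0$. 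No $\sigma\log\sigma$ terms enter at this order (the relevant differentiability is established in the proof of Lemma~\ref{LemmaCDU} via a resolvent identity, using that $\omega_{b_0,s0}\in\rho\CI+\cA^{2-}$ so that $\pa_\sigma\wh{\cP_{b_0}}(0)\omega_{b_0,s0}$ gains an extra order of decay).
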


For this modified version $L_{b_0,\gamma}$ of the linearized gauge-fixed Einstein equation, we can then show that quadratically or faster growing generalized zero modes do not exist, and thus Proposition~\ref{PropII0} captures the \emph{full} space of generalized zero modes of $L_{b_0,\gamma}$, accomplishing (the constraint damping modification of) part~\eqref{ItII0Ker}:

\begin{prop}
\label{PropII0Mod}
  (See Theorem~\ref{ThmCD0Modes}.) The space of generalized zero energy modes of $L_{b_0,\gamma}$ is equal to $\wh\cK_{b_0}$.
\end{prop}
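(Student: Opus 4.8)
The plan is to establish the two inclusions between $\wh\cK_{b_0}$ and the space of generalized zero energy modes of $L_{b_0,\gamma}$, using throughout that $g_{b_0}$ is Ricci-flat, so that the linearized second Bianchi identity $\delta_{g_{b_0}}\sfG_{g_{b_0}}\circ D_{g_{b_0}}\Ric\equiv 0$ holds and $D_{g_{b_0}}\Ric(\cL_V g_{b_0})=\cL_V\Ric(g_{b_0})=0$ for every (possibly $t_*$-dependent) vector field $V$. For the inclusion $\wh\cK_{b_0}\subseteq\{\text{generalized zero energy modes of }L_{b_0,\gamma}\}$, I would recall from Propositions~\ref{PropL0}--\ref{PropL0Lin} that every $h\in\wh\cK_{b_0}$ is of the form $h=\dot g_{b_0}(\dot b)+\delta_{g_{b_0}}^*\omega$, where the gauge potential $\omega$—possibly polynomially growing in $t_*$—is chosen so that the linearized gauge condition $D_{g_{b_0}}\Ups(h)=0$ of~\eqref{EqIISGauge} holds; for purely pure-gauge elements this reduces to the gauge potential wave equation~\eqref{EqIISGaugeWave} for $\omega$. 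Since $\delta_{g_{b_0}}^*\omega=\tfrac12\cL_{\omega^\sharp}g_{b_0}$ and $\dot g_{b_0}(\dot b)$ both lie in $\ker D_{g_{b_0}}\Ric$, it follows that $L_{b_0,\gamma}h=D_{g_{b_0}}\Ric(h)-\wt\delta_{g_{b_0},\gamma}^*D_{g_{b_0}}\Ups(h)=0$. As neither $D_{g_{b_0}}\Ric(h)=0$ nor $D_{g_{b_0}}\Ups(h)=0$ involves the constraint damping modification, $\wh\cK_{b_0}$ is literally the space of Proposition~\ref{PropII0}, with $o(1)$ decay as $r\to\infty$ and at most linear growth in $t_*$.

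The reverse inclusion is the substantive part. Let $h=\sum_{j=0}^k t_*^j h_j$, with $h_j=h_j(r,\omega)=o(1)$ as $r\to\infty$, smooth across $\cH^+$ and outgoing, be a generalized zero energy mode of $L_{b_0,\gamma}$, i.e.\ $L_{b_0,\gamma}h=0$ as a formal $t_*$-polynomial of \emph{a priori arbitrary} order $k$. Applying the linearized Bianchi identity to $D_{g_{b_0}}\Ric(h)=\wt\delta_{g_{b_0},\gamma}^*D_{g_{b_0}}\Ups(h)$ and recalling $\cP_{b_0,\gamma}=2\delta_{g_{b_0}}\sfG_{g_{b_0}}\circ\wt\delta_{g_{b_0},\gamma}^*$ shows that $\theta:=D_{g_{b_0}}\Ups(h)$ satisfies $\cP_{b_0,\gamma}\theta=0$; since $\Ups$ is a first-order operator, $\theta$ is a $t_*$-polynomial of degree $\leq k$ with coefficients $o(r^{-1})$, hence a generalized zero energy mode of $\cP_{b_0,\gamma}$ in the relevant decay class. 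Now I invoke Proposition~\ref{PropCDU}: $\cP_{b_0,\gamma}$ has no modes $\sigma\in\C$ with $\Im\sigma\geq 0$; combined with the index-$0$ Fredholm property of the $1$-form wave operator (by the framework of item~\eqref{ItII1}) this makes $\wh{\cP_{b_0,\gamma}}(0)$ invertible, and since $\cP_{b_0,\gamma}$ is stationary, comparing coefficients of $t_*^k$ in $\cP_{b_0,\gamma}\theta=0$ gives $\wh{\cP_{b_0,\gamma}}(0)\theta_k=0$, so $\theta_k=0$ and inductively $\theta\equiv 0$. \emph{This is precisely where constraint damping is indispensable}: the undamped operator $\cP_{b_0}$ does have a zero mode, decaying exactly like $r^{-1}$, so for $L_{b_0}$ the same argument breaks once $h$ is allowed to grow fast enough in $t_*$—whence the quadratically growing generalized zero modes of $\S\ref{SsL0Qu}$. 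With $\theta=D_{g_{b_0}}\Ups(h)=0$ in hand, $0=L_{b_0,\gamma}h=D_{g_{b_0}}\Ric(h)$ exactly, so $h$ is a generalized zero energy mode of the linearized Einstein operator which moreover satisfies the linearized gauge condition.

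The final step feeds this into the description of the generalized zero energy modes of $D_{g_{b_0}}\Ric$ on the Schwarzschild spacetime—Theorem~\ref{ThmMS}, via the Regge--Wheeler--Zerilli/Kodama--Ishibashi formalism of~\S\ref{SMS}—together with the mode and zero mode analysis of the $1$-form wave equation~\eqref{EqIISGaugeWave} in~\S\ref{S1}: $h$ is then a linearized Kerr metric plus a pure gauge term $\delta_{g_{b_0}}^*\omega$, with $\omega$ polynomially bounded in $t_*$, and the gauge condition $D_{g_{b_0}}\Ups(h)=0$ forces $\omega$ to solve~\eqref{EqIISGaugeWave}; carrying out the resulting computations (\S\S\ref{SsL0}--\ref{SsL0Lin}), which also show that the admissible $\omega$ grow at most linearly in $t_*$—asymptotic translations, rotations and boosts, plus the single spherically symmetric non-geometric potential—identifies the space of such $h$ with $\wh\cK_{b_0}$. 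Combined with the first inclusion, this proves the proposition. The obstacle I expect to be the main one is the lack of any a priori bound on the polynomial order $k$: the Bianchi reduction disposes of it uniformly in $k$, but only because $\cP_{b_0,\gamma}$ has \emph{no} generalized zero energy mode at all, and securing this—by a suitable choice of $\cd$ and $\gamma$ in Proposition~\ref{PropCDU}—is the one ingredient that genuinely goes beyond the undamped analysis; everything after the reduction to $D_{g_{b_0}}\Ric(h)=0$ runs parallel to the proof of Proposition~\ref{PropII0}.
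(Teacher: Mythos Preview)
Your global Bianchi reduction is a legitimately different route from the paper's. The paper works type by type on $L_{g_{b_0},\gamma}$ directly: for scalar/vector $l\geq 2$, vector $l=1$, and scalar $l=1$ it uses that the obstruction pairings~\eqref{EqL0Linv1Nondeg} and~\eqref{EqL0Qus1Pair} remain non-degenerate for small $\gamma$ by continuity from $\gamma=0$, so higher-order generalized modes are ruled out without ever invoking Bianchi; only in the scalar $l=0$ sector (where the $\gamma=0$ pairing~\eqref{EqL0Qus0Pair} is degenerate) does it use constraint damping, via Proposition~\ref{PropCD0}, together with Lemma~\ref{LemmaL0Qus0}. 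Your approach instead kills $\theta=D_{g_{b_0}}\Ups(h)$ in one stroke for all types and degrees, then reduces to the \emph{unmodified} problem. One small imprecision: only the top coefficient $\theta_k=\wh{D_{g_{b_0}}\Ups}(0)h_k$ is automatically $o(r^{-1})$; the lower coefficients $\theta_j$ pick up zeroth-order contributions from the $\pa_{t_*}$-part of $\delta_{g_{b_0}}\sfG_{g_{b_0}}$ acting on $h_{j+1}$ and a priori lie only in $\Hbext^{\infty,\ell}$, $\ell\in(-\tfrac32,-\tfrac12)$. This is harmless since Proposition~\ref{PropCD0} (by its proof) gives injectivity of $\wh{\cP_{b_0,\gamma}}(0)$ on $\Hbext^{s,\ell}$ for the full range $\ell\in(-\tfrac32,-\tfrac12)$.

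There is, however, a real gap in your final step. Theorem~\ref{ThmMS} is a statement about \emph{modes} (fixed frequency); with the sole exception of part~\eqref{ItMSSc0} for scalar $l=0$, it does not decompose a polynomially growing $h$ as $\dot g_{b_0}(\dot b)+\delta_{g_{b_0}}^*\omega$ with a polynomially growing $\omega$, and \S\S\ref{SsL0}--\ref{SsL0Lin} only construct the zero and linearly growing modes---they do not rule out quadratic growth, which is the content of \S\ref{SsL0Qu}. The clean fix is to use what your reduction actually delivers: from $D_{g_{b_0}}\Ric(h)=0$ and $D_{g_{b_0}}\Ups(h)=0$ you also get $L_{g_{b_0}}h=0$ for the \emph{unmodified} operator. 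Decomposing by type (legitimate on Schwarzschild), Lemma~\ref{LemmaL0Linv1} excludes growing vector $l=1$ modes, Lemma~\ref{LemmaL0Qus1} excludes quadratically growing scalar $l=1$ modes, and Lemma~\ref{LemmaL0Qus0}---which needs precisely the gauge condition you have secured---excludes quadratically growing scalar $l=0$ modes; higher polynomial orders reduce to these by applying $\pa_{t_*}$ and inducting. Combined with Propositions~\ref{PropL0} and~\ref{PropL0Lin} this identifies $h\in\wh\cK_{b_0}$.
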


\begin{rmk}
\label{RmkIPolarized}
  For comparison of our linear result with the nonlinear analysis of Klainerman--Szeftel \cite{KlainermanSzeftelPolarized} for axially symmetric and polarized perturbations of a Schwarzschild metric, we observe that the subspace of $\wh\cK_{b_0}$ consisting of those elements which verify (the linearized version of) these symmetry conditions is $4$-dimensional, spanned by infinitesimal changes of the Schwarzschild black hole mass ($1$ dimension), the Lie derivative of $g_{b_0}$ along the asymptotic translations and asymptotic boosts in the direction of the axis of rotational symmetry ($2$ dimensions), and the spherically symmetric pure gauge solution of Proposition~\ref{PropII0} ($1$ dimension).
\end{rmk}

Part~\eqref{ItII0Reg}, or rather the precise regularity of the resolvent of $L_{b_0,\gamma}$ near $\sigma=0$, is the most technical part of the argument; it relies on a careful analysis of the formal resolvent identity (dropping $b_0,\gamma$ from the notation for brevity) $\hat L(\sigma)^{-1}-\hat L(0)^{-1}=-\hat L(\sigma)^{-1}(\hat L(\sigma)-\hat L(0))\hat L(0)^{-1}$: when does it hold and how often can it be applied, restrictions coming from the limited range (as far as weights at $r=\infty$ are concerned) of spaces on which $\hat L(\sigma)^{-1}$ acts in a uniform manner near $\sigma=0$ and the mapping properties of $\hat L(\sigma)-\hat L(0)$ on such spaces. This is discussed in general in~\cite[\S7]{VasyLowEnergy} and \cite[\S6]{VasyLowEnergyLag}, and executed in detail in the setting of current interest in~\S\S\ref{SsRS}--\ref{SReg}.

\subsubsection{Perturbation to Kerr metrics}
\label{SssIIK}

The main work is the extension of Proposition~\ref{PropII0Mod} to the operators $L_{b,\gamma}$ for $b$ near $b_0$. We accomplish this constructively by exhibiting an $11$-dimensional space $\wh\cK_b$ of generalized zero energy modes of $L_{b,\gamma}$. This can be done with robust arguments which only use the asymptotic behavior of the Kerr metric $g_{(\bhm,\bha)}$: the model case to keep in mind is that for scalar wave operators, the operator $\wh{\Box_{g_{(\bhm,\bha)}}}(0)$ equals $\wh{\Box_{g_{(\bhm,0)}}}(0)$ modulo two orders (in the sense of decay of coefficients) lower, and equals $\wh{\Box_{\ubar g}}(0)$ (with $\ubar g$ the Minkowski metric) modulo one order down; the latter operator is the Euclidean Laplacian on $\R^3$. (We discuss the precise sense in which these statements hold in~\S\ref{SK}.

We thus use \emph{normal operator arguments} familiar from b-analysis \cite{MelroseAPS}, or more simply from the analysis of ODEs with regular-singular points. Namely, an element of the nullspace of the asymptotic model (or \emph{normal operator}) $\wh{\Box_{\ubar g}}(0)$ can be corrected to an element of the nullspace of the actual operator of interest $\wh{\Box_{g_b}}(0)$; see Proposition~\ref{Prop00Grow} and its proof for the simplest instance of this. In this fashion, we can extend asymptotic symmetries of Minkowski spacetimes, namely translations, boosts, and rotations, to gauge potentials on Kerr spacetimes whose symmetric gradients already span most of $\wh\cK_b$. (The non-geometric gauge potential mentioned in Proposition~\ref{PropII0} can easily be extended to Kerr spacetimes, too.) The rest of $\wh\cK_b$ is constructed by adding to linearized Kerr metrics suitable pure gauge solutions in order to ensure the linearized gauge condition.

Throughout \S\S\ref{S0}--\ref{SL} (with the exception of \S\ref{SMS}), in which we study the (generalized) zero modes of various wave operators of interest, we shall construct those for Kerr black holes at the same time as those for Schwarzschild black holes using such normal operator arguments.

Finally, Proposition~\ref{PropIIS} for the modified operator $L_{b_0,\gamma}$ holds for $L_{b,\gamma}$ by simple perturbative arguments which exploit the non-degeneracy of $\wh{L_{b_0,\gamma}}(\sigma)$ near $\sigma=0$. The basic structure of the argument is illustrated by a simple linear algebra example: suppose $\hat L(\sigma)$ is holomorphic with values in $N\times N$ matrices; suppose $\ker\hat L(0)=\C h$ and $\ker\hat L(0)^*=\C h^*$. Then if the pairing $\la\pa_\sigma\hat L(0)h,h^*\ra$ is non-degenerate (i.e.\ non-zero), then $\hat L(\sigma)^{-1}$ has a simple pole at $\sigma=0$. If now $\hat L(\sigma)=\wh{L_0}(\sigma)$ is a member of a continuous family $\wh{L_a}(\sigma)$, $a\in\R^p$, of holomorphic operators, and $\ker\wh{L_a}(0)=\C h_a$, $\ker\wh{L_a}(0)^*=\C h_a^*$, with $h_a,h_a^*$ continuously depending on $a$, then $\wh{L_a}(\sigma)^{-1}$ has a simple pole at $\sigma=0$ as well, since the pairing $\la\pa_\sigma\wh{L_a}(0)h_a,h_a^*\ra$ is non-degenerate for small $a$ by continuity. Thus, invertibility of $\wh{L_{b,\gamma}}(\sigma)^{-1}$ in a uniform punctured neighborhood of $\sigma=0$ follows from such arguments. On other hand, invertibility for $\sigma\in\C$, $\Im\sigma\geq 0$, $|\sigma|\gtrsim 1$, follows from that of $\wh{L_{b_0,\gamma}}(\sigma)$ by standard (Fredholm) perturbation theory. See~\S\ref{SsRE}.

\subsection{Further related work}
\label{SsIW}

Decay of solutions of Maxwell's equation to stationary states was proved on Schwarzschild spacetimes by Sterbenz--Tataru \cite{SterbenzTataruMaxwellSchwarzschild} and Blue~\cite{BlueMaxwellSchwarzschild}, and on slowly rotating Kerr spacetimes by Andersson--Blue \cite{AnderssonBlueMaxwellKerr}. Pasqualotto~\cite{PasqualottoMaxwell} proved decay for the Teukolsky equation for Maxwell fields on Schwarzschild spacetimes. Finster--Kamran--Smoller--Yau considered Dirac waves on Kerr spacetimes \cite{FinsterKamranSmollerYauDiracKerrNewman}.

There is a vast literature on the scalar wave equation on Kerr spacetimes, starting with the work by Wald and Kay--Wald \cite{WaldSchwarzschild,KayWaldSchwarzschild}. Sharp pointwise decay (Price's law \cite{PriceLawI,PriceLawII}) is now known in the full subextremal range ($|\bha|<\bhm$) by work of Tataru~\cite{TataruDecayAsympFlat} (see also the subsequent work by Metcalfe--Tataru--To\-ha\-ne\-a\-nu \cite{MetcalfeTataruTohaneanuPriceNonstationary}) and Dafermos--Rodnianski--Shlapentokh-Rothman \cite{DafermosRodnianskiShlapentokhRothmanDecay,ShlapentokhRothmanModeStability} (building on prior work \cite{DafermosRodnianskiKerrDecaySmall,DafermosRodnianskiKerrBoundedness} which followed $L^\infty$ estimates on Schwarzschild spacetimes by Donninger--Schlag--Soffer \cite{DonningerSchlagSofferSchwarzschild}). Finster--Kamran--Smoller--Yau \cite{FinsterKamranSmollerYauKerr} proved decay without quantitative rates. Earlier results include decay on slowly rotating Kerr black holes due to Andersson--Blue \cite{AnderssonBlueHiddenKerr} and Tataru--Tohaneanu \cite{TataruTohaneanuKerrLocalEnergy}. Marzuola--Metcalfe--Tataru--To\-ha\-ne\-a\-nu and Tohaneanu \cite{MarzuolaMetcalfeTataruTohaneanuStrichartz,TohaneanuKerrStrichartz} proved Strichartz estimates on Kerr spacetimes. See Luk \cite{LukKerrNonlinear} for the solution of a scalar semilinear equation with null form nonlinearity on Kerr black holes, and Ionescu--Klainerman and Stogin \cite{IonescuKlainermanWavemap,StoginKerrWaveMap} for a wave map equation related to the study of polarized perturbations. We remark that Theorems~\ref{Thm0} and \ref{Thm1} easily imply the decay of scalar waves (to zero) and 1-forms (to an element of a 1-dimensional space of stationary solutions) on slowly rotating Kerr spacetimes when combined with results on the regularity of the resolvent which follow by (a simpler version of) arguments in~\S\S\ref{SR}--\ref{SReg}.

Closely related to black hole stability problems is the black hole uniqueness problem; the stability of Kerr family would imply that it gives, locally, the full space of stationary solutions of the Einstein vacuum equation. See \cite{IonescuKlainermanUniqueness,AlexakisIonescuKlainermanUniqueness,RobinsonBlackHoleUniquenessReview,ChruscielCostaHeuslerStationaryBH} for results and further references, and \cite{HintzKNdSUniq} for results in the cosmological setting.

In the algebraically more complicated but analytically less degenerate context of cosmological black holes, we recall that S\'a Barreto--Zworski \cite{SaBarretoZworskiResonances} studied the distribution of resonances of SdS black holes; exponential decay of linear scalar waves to constants was proved by Bony--H\"afner \cite{BonyHaefnerDecay} and Melrose--S\`a Barreto--Vasy \cite{MelroseSaBarretoVasySdS} on SdS and by Dyatlov \cite{DyatlovQNM,DyatlovQNMExtended} on KdS spacetimes, and substantially refined by Dyatlov \cite{DyatlovAsymptoticDistribution} to a full resonance expansion. (See \cite{DafermosRodnianskiSdS} for a physical space approach giving superpolynomial energy decay.) Tensor-valued and nonlinear equations on KdS spacetimes were studied in a series of works by Hintz--Vasy \cite{HintzVasySemilinear,HintzVasyQuasilinearKdS,HintzVasyKdsFormResonances,HintzVasyKdSStability,HintzKNdSStability}. For a physical space approach to resonances, see Warnick \cite{WarnickQNMs}, and for the Maxwell equation on SdS spacetimes, see Keller~\cite{KellerMaxwellSdS}.

\subsection{Future directions}
\label{SsIF}

A natural problem is the extension of Theorem~\ref{ThmIBaby} to the full subextremal range of Kerr spacetimes, as conditionally accomplished by \cite{AnderssonBackdahlBlueMaKerr}. In our framework, this requires:
\begin{enumerate}
\item\label{ItIF1} a suitable version of mode stability for metric perturbations of Kerr spacetimes, generalizing~\S\ref{SMS};
\item\label{ItIF2} a mode analysis of 1-form operator, and the implementation of constraint damping;
\item\label{ItIF3} non-degenerate control of generalized zero energy states.
\end{enumerate}
We stress that these are all ingredients on the level of \emph{individual modes}. It is natural to expect that \eqref{ItIF1} and \eqref{ItIF3} can be accomplished by following, \emph{on the level of modes}, the procedure in \cite[\S\S3, 8]{AnderssonBackdahlBlueMaKerr} for recovering metric perturbations from the Teukolsky scalar---for which mode stability is known \cite{WhitingKerrModeStability,AnderssonMaPaganiniWhitingModeStab}. Problem~\eqref{ItIF2} has, to the authors' knowledge, not yet been studied in the full subextremal range (though it is related to mode stability for the Maxwell equation on Kerr). The explicit nature of these ingredients suggests that the use of arguments specifically tailored to the special nature of the Kerr metric, such as separation of variables, are unavoidable. On the other hand, the general Fredholm framework discussed in points~\eqref{ItII1}--\eqref{ItII3} at the beginning of~\S\ref{SsII} applies in the full subextremal range, hence full linear stability would follow from the above mode stability inputs, as shown for slowly rotating Kerr black holes in the present paper. (See \cite{DyatlovWaveAsymptotics} for a discussion of trapping for scalar waves in this generality. Work by Marck \cite{MarckParallelNull} implies that tensor-valued waves on Kerr spacetimes can be treated as well using the techniques of \cite{HintzPsdoInner}; this will be taken up elsewhere.)

In another direction, we expect that the methods of the present paper can be used to give another proof of the results by Andersson--Blue and Sterbenz--Tataru \cite{AnderssonBlueMaxwellKerr,SterbenzTataruMaxwellSchwarzschild} on decay to the stationary Coulomb solution for the Maxwell equation on slowly rotating Kerr spacetimes, or more generally on stationary perturbations of such spacetimes. In fact, we expect that differential form-valued waves (of any form degree) on slowly rotating Kerr spacetimes decay to stationary solutions as in the Kerr--de~Sitter case studied in \cite{HintzVasyKdsFormResonances}; for differential 1-forms, this follows from Theorem~\ref{Thm1} and (a simpler version of) the arguments in~\S\S\ref{SR}--\ref{SD}. Coupling the Maxwell equation to the Einstein equation, we expect the full linear stability of slowly rotating Kerr--Newman spacetimes (with subextremal charge) under coupled gravitational and electromagnetic perturbations to follow by an (essentially only computational) extension of the methods of the present paper.

Moreover, we expect the linear stability of higher-dimensional Schwarzschild black holes (see \cite{HungKellerWangHigher} for a first step in this direction) and their perturbations, slowly rotating Myers--Perry black holes \cite{MyersPerryBlackHoles}, to follow by similar arguments, the main task again being computational, namely the detailed mode analysis; the general microlocal tools apply to such spacetimes as well. (See \cite{DiasHartnettSantosQNM} for linear instabilities of black holes in high dimensions with \emph{large} angular momenta.)

\subsection{Outline of the paper}

The paper is structured as follows:
\begin{itemize}
\item in~\S\ref{SB}, we introduce notions pertaining to geometry and analysis on compactifications of non-compact spaces (such as spatial slices $\Sigma_0^\circ$ of $M^\circ$), namely b-analysis and scattering analysis and associated bundles and function spaces, following Melrose \cite{MelroseAPS,MelroseEuclideanSpectralTheory};
\item in~\S\ref{SK}, we describe the Kerr family of metrics as a smooth family of metrics on a fixed spacetime manifold;
\item in~\S\ref{SOp}, we define the gauge-fixed Einstein operator, prove the general properties listed in~\S\ref{SsII}, and introduce general constraint damping modifications;
\item in~\S\ref{SY}, we introduce useful terminology for the description of scalar, 1-form, and 2-tensor perturbations of spherically symmetric spacetimes;
\item in~\S\ref{S0}, we study modes for the scalar wave equation on Schwarzschild and slowly rotating Kerr spacetimes;
\item in~\S\ref{S1}, we do the same for 1-forms, and also construct the gauge potentials for asymptotic translations, boosts, and rotations;
\item in~\S\ref{SMS}, we prove the version of mode stability of the Schwarzschild metric used in the sequel;
\item in~\S\ref{SL}, we combine the previous results, analyze the mode stability of the linearized gauge-fixed Einstein operator, and motivate the need for constraint damping;
\item in~\S\ref{SCD}, we implement constraint damping and discuss the consequences for the linearized modified gauge-fixed Einstein operator $L_{b,\gamma}$;
\item in~\S\ref{SR}, we prove mode stability for $L_{b,\gamma}$ and determine the structure of its resolvent near zero frequency;
\item in~\S\ref{SReg}, we prove higher regularity of the regular part of the resolvent of $L_{b,\gamma}$ near zero frequency as well as for large frequencies;
\item in~\S\ref{SD}, we combine the previous sections to establish the precise asymptotic behavior of solutions of the linearized modified gauge-fixed Einstein operator;
\item in~\S\ref{SPf} finally, we reduce the initial value problem for the Einstein equation to the general decay result of the previous section.
\end{itemize}

For the reader interested in getting an impression of the flavor of our arguments, we refer to the construction of (generalized) zero energy modes in~\S\ref{Ss00Grow} and \S\ref{Ss10Gen}; the proof of Proposition~\ref{PropL0} can be read early on as well, and serves as motivation for most of the preceding constructions. The key idea/calculation behind the perturbation theory in the context of constraint damping is explained in~\S\ref{SsCD0}. The perturbative arguments for the existence of the resolvent on slowly rotating Kerr spacetimes are given in~\S\ref{SsRE}.

\subsection*{Acknowledgments}

D.H.\ acknowledges support from the ANR funding ANR-16-CE40-0012-01. Part of this research was conducted during the period P.H.\ served as a Clay Research Fellow. P.H.\ would also like to thank the Miller Institute at the University of California, Berkeley, for support during the early stages of this project. A.V.\ gratefully acknowledges partial support from the NSF under grant number DMS-1664683 and from a Simons Fellowship.

\section{b- and scattering structures}
\label{SB}

We first discuss geometric structures on manifolds with boundaries or corners, and corresponding function spaces. Thus, let $X$ be a compact $n$-dimensional manifold with boundary $\pa X\neq\emptyset$, and let $\rho\in\CI(X)$ denote a boundary defining function: $\pa X=\rho^{-1}(0)$, $d\rho\neq 0$ on $\pa X$. We then define the Lie algebras of \emph{b-vector fields} and \emph{scattering vector fields} by
\begin{equation}
\label{EqBVf}
  \Vb(X) = \{ V\in\cV(X) \colon V\ \text{is tangent to}\ \pa X \}, \quad
  \Vsc(X) = \rho\Vb(X).
\end{equation}
In local \emph{adapted coordinates} $x\geq 0$, $y\in\R^{n-1}$ on $X$, with $x=0$ locally defining the boundary of $X$ (thus $\rho=a(x,y)x$ for some smooth $a>0$), elements of $\Vb(X)$ are of the form $a(x,y)x\pa_x+\sum_{i=1}^{n-1}b^i(x,y)\pa_{y^i}$, with $a,b^i\in\CI(X)$, while elements of $\Vsc(X)$ are of the form $a(x,y)x^2\pa_x+\sum_{i=1}^{n-1}b^i(x,y) x\pa_{y^i}$. Thus, there are natural vector bundles
\[
  \Tb X\to X, \quad \Tsc X \to X,
\]
with local frames given by $\{x\pa_x,\pa_{y^i}\}$, resp.\ $\{x^2\pa_x,x\pa_{y^i}\}$, such that $\Vb(X)=\CI(X;\Tb X)$ and $\Vsc(X)=\CI(X;\Tsc X)$; thus, for example, $x\pa_x$ is a \emph{smooth, non-vanishing} section of $\Tb X$ down to $\pa X$. Over the interior $X^\circ$, these bundles are naturally isomorphic to $T X^\circ$, but the maps $\Tb X\to T X$ and $\Tsc X\to T X$ fail to be injective over $\pa X$. We denote by $\Diffb^m(X)$, resp.\ $\Diffsc^m(X)$ the space of $m$-th order b-, resp.\ scattering differential operators, consisting of linear combinations of up to $m$-fold products of elements of $\Vb(X)$, resp.\ $\Vsc(X)$.

The dual bundles $\Tb^*X\to X$ (b-cotangent bundle), resp.\ $\Tsc^*X\to X$ (scattering cotangent bundle) have local frames
\[
  \frac{d x}{x},\ d y^i,\quad \text{resp.}\quad \frac{d x}{x^2},\ \frac{d y^i}{x},
\]
which are \emph{smooth} down to $\pa X$ as sections of these bundles (despite their being singular as standard covectors, i.e.\ elements of $T^*X$). A \emph{scattering metric} is then a section $g\in\CI(X;S^2\,\Tsc^*X)$ which is a non-degenerate quadratic form on each scattering tangent space $\Tsc_p X$, $p\in X$; b-metrics are defined analogously.

These structures arise naturally on compactifications of non-compact manifolds, the simplest example being the \emph{radial compactification of $\R^n$}, defined by
\begin{equation}
\label{EqBRadCp}
  \ol{\R^n} := \bigl(\R^n \sqcup ([0,1)_\rho\times\Sph^{n-1})\bigr) / \sim
\end{equation}
where the relation $\sim$ identifies a point in $\R^n\setminus\{0\}$, expressed in polar coordinates as $r\omega$, $r>0$, $\omega\in\Sph^{n-1}$, with the point $(\rho,\omega)$ where
\[
  \rho=r^{-1};
\]
this has a natural smooth structure, with smoothness near $\pa\ol{\R^n}=\rho^{-1}(0)$ meaning smoothness in $(\rho,\omega)$. In polar coordinates in $r>1$, the space of b-vector fields is then locally spanned over $\CI(\ol{\R^n})$ by $\rho\pa_\rho=-r\pa_r$ and $\cV(\Sph^{n-1})$; scattering vector fields are spanned by $\rho^2\pa_\rho=-\pa_r$ and $\rho\cV(\Sph^{n-1})$. Using standard coordinates $x^1,\ldots,x^n$ on $\R^n$, scattering vector fields on $\ol{\R^n}$ are precisely those of the form
\[
  \sum_{i=1}^n a^i\pa_{x^i}, \quad a^i\in\CI(\ol{\R^n});
\]
this entails the statement that $\{\pa_{x^1},\ldots,\pa_{x^n}\}$, which is a frame of $T^*\R^n$, extends by continuity to a smooth frame of $\Tsc^*\ol{\R^n}$ down to $\pa\ol{\R^n}$. Thus, the space of scattering vector fields on $\ol{\R^n}$ is generated over $\CI(\ol{\R^n})$ by constant coefficient (translation-invariant) vector fields on $\R^n$. On the other hand, $\Vb(\ol{\R^n})$ is spanned over $\CI(X)$ by vector fields on $\R^n$ with coefficients which are \emph{linear} functions, i.e.\ by $\pa_{x^1},\ldots,\pa_{x^n}$, and $x^i\pa_{x^j}$, $1\leq i,j\leq n$.

On the dual side, $\Tsc^*\ol{\R^n}$ is spanned by $d x^i$, $1\leq i\leq n$, \emph{down to} $\pa\ol{\R^n}$. Therefore, a scattering metric $g\in\CI(\ol{\R^n},S^2\,\Tsc^*\ol{\R^n})$ is a non-degenerate linear combination of $d x^i\otimes_s d x^j=\half(d x^i\otimes d x^j+d x^j\otimes d x^i)$ with $\CI(\ol{\R^n})$ coefficients. In particular, the Euclidean metric
\[
  (d x^1)^2+\cdots+(d x^n)^2 \in \CI(\ol{\R^n};S^2\,\Tsc^*\ol{\R^n})
\]
is a Riemannian scattering metric.

By extension from $T^*X^\circ$, one can define Hamilton vector fields $H_p$ of smooth functions $p\in\CI(\Tsc^*X)$. In fact $H_p\in\Vsc(\Tsc^*X)$ is a scattering vector field on $\Tsc^*X$, which is a manifold with boundary $\Tsc^*_{\pa X}X$. (Likewise, if $p\in\CI(\Tb^*X)$, then $H_p\in\Vb(\Tb^*X)$.) For us, the main example will be the Hamilton vector field $H_G$ where $G(z,\zeta):=|\zeta|_{g_z^{-1}}^2$ is the dual metric function of a scattering metric $g\in\CI(X;S^2\,\Tsc^*X)$.

We next introduce Sobolev spaces corresponding to b- and scattering structures. As an integration measure on $X$, let us fix a \emph{scattering density}, i.e.\ a positive section of $\Omegasc^1 X=|\Lambda^n\,\Tsc^*X|$, which in local adapted coordinates takes the form $a(x,y)|\tfrac{d x}{x^2}\frac{d y}{x^{n-1}}|$ with $0<a\in\CI(X)$. (On $\ol{\R^n}$, one can take $|d x^1\cdots d x^n|$.) This provides us with a space $L^2(X)$; the norm depends on the choice of density, but all choices lead to equivalent norms. Working with a b-density on the other hand would give a different space, namely a weighted version of $L^2(X)$; we therefore stress that even for b-Sobolev spaces, we work with a scattering density. Thus, for $k\in\N_0$, we define
\[
  H_\bullet^k(X) := \{ u\in L^2(X) \colon V_1\cdots V_j u\in L^2(X)\ \forall\,V_1,\ldots,V_j\in\cV_\bullet(X),\ 0\leq j\leq k \},\quad
  \bullet=\bop,\scop,
\]
called \emph{b-} or \emph{scattering Sobolev space}. Using a finite spanning set in $\cV_\bullet(X)$, one can give this the structure of a Hilbert space; $H_\bullet^s(X)$ for general $s\in\R$ is then defined by duality and interpolation. If $\ell\in\R$, we denote weighted Sobolev spaces by
\[
  H_\bullet^{s,\ell}(X) = \rho^\ell H_\bullet^s(X) = \{ \rho^\ell u \colon u\in H_\bullet^s(X) \}.
\]
For example, $\Hsc^{s,\ell}(\ol{\R^n})\cong\la x\ra^{-\ell}H^s(\R^n)$ is the standard weighted Sobolev space on $\R^n$. The space of weighted ($L^2$-)conormal functions on $X$ is
\[
  \Hb^{\infty,\ell}(X) = \bigcap_{s\in\R}\Hb^{s,\ell}(X).
\]
Dually, we define
\[
  \Hb^{-\infty,\ell}(X) = \bigcup_{s\in\R}\Hb^{s,\ell}(X).
\]
Note that $\Hb^{s,\ell}(X)\subset\cC^{-\infty}(X):=\CIdot(X)^*$ (where $\CIdot(X)\subset\CI(X)$ is the subspace of functions vanishing to infinite order at $\pa X$) consists of tempered distributions. (In particular, they are extendible distributions at $\pa X$ in the sense of \cite[Appendix~B]{HormanderAnalysisPDE3}.) We furthermore introduce the notation
\begin{equation}
\label{EqBHsPM}
  H_\bop^{s,\ell+} := \bigcup_{\eps>0} H_\bop^{s,\ell+\eps},\quad
  H_\bop^{s,\ell-} := \bigcap_{\eps>0} H_\bop^{s,\ell-\eps}
\end{equation}
for $s\in\R\cup\{\pm\infty\}$. A space closely related to $\Hb^{\infty,\ell}(X)$ is
\[
  \cA^\ell(X) := \{ u\in\rho^\ell L^\infty(X) \colon \Diffb(X)u\subset\rho^\ell L^\infty(X) \},
\]
consisting of \emph{weighted $L^\infty$-conormal functions}. For $X=\ol{\R^3}$, we have the inclusions
\[
  \Hb^{\infty,\ell}(\ol{\R^3}) \subset \cA^{\ell+3/2}(\ol{\R^3}), \quad
  \cA^\ell(\ol{\R^3}) \subset \Hb^{\infty,\ell-3/2-}(\ol{\R^3}),
\]
by Sobolev embedding. (The shift $\tfrac32$ in the weight is due to our defining b-Sobolev spaces with respect to scattering densities; indeed, for $s>\tfrac32$,
\begin{equation}
\label{EqBHbSobEmb}
  \Hb^{s,\ell}(\ol{\R^3};|d x^1\,d x^2\,d x^3|)=\Hb^{s,\ell+3/2}(\ol{\R^3};\la r\ra^{-3}|d x^1\,d x^2\,d x^3|) \hra \la r\ra^{-\ell-3/2}L^\infty(\ol{\R^3}),
\end{equation}
with the second density here being a b-density on $\ol{\R^3}$.) We define $\cA^{\ell+}$ and $\cA^{\ell-}$ analogously to~\eqref{EqBHsPM}. These notions extend readily to sections of rank $k$ vector bundles $E\to X$: for instance, in a local trivialization of $E$, an element of $H_\bullet^{s,\ell}(X,E)$ is simply a $k$-tuple of elements of $H_\bullet^{s,\ell}(X)$.

We next turn to the notion of \emph{$\cE$-smoothness}, where $\cE\subset\C\times\N_0$ is an \emph{index set}; the latter means that $(z,k)\in\cE$ implies $(z,j)\in\cE$ for $0\leq j\leq k$ and $(z-i,k)\in\cE$, and that any sequence $(z_j,k_j)\in\cE$ with $|z_j|+|k_j|\to\infty$ satisfies $\Im z_j\to-\infty$. The space
\[
  \cA^\cE_\phg(X) \subset \cA^{-\infty}(X) = \bigcup_{\ell\in\R} \cA^\ell(X)
\]
then consists of all $u$ for which
\[
  \prod_{\genfrac{}{}{0pt}{}{(z,j)\in\cE}{\Im z\geq -N}} (\rho D_\rho-z)u \in \cA^N(X) \quad \forall\,N\in\R,
\]
where we define $D_\rho=i^{-1}\pa_\rho$ with respect to any fixed collar neighborhood of $\pa X$. Equivalently, there exist $a_{(z,j)}\in\CI(\pa X)$, $(z,j)\in\cE$, such that
\begin{equation}
\label{EqBExp}
  u - \sum_{\genfrac{}{}{0pt}{}{(z,j)\in\cE}{\Im z\geq -N}} \rho^{i z}|\log\rho|^j a_{(z,j)} \in \cA^N(X) \quad\forall\,N\in\R.
\end{equation}
We say that $u\in\cA^{-\infty}(X)$ is \emph{polyhomogeneous} if it is $\cE$-smooth for some index set $\cE$.

Suppose now $X'$ is a compact manifold with boundary, and let $X\subset X'$ be a submanifold with boundary. Suppose that its boundary decomposes into two non-empty sets
\begin{equation}
\label{EqBInt}
  \pa X = \pa_-X \sqcup \pa_+X,\qquad \pa_-X=\pa X\setminus\pa X',\quad \pa_+X=\pa X';
\end{equation}
we consider $\pa_+X$ to be a boundary `at infinity', while $\pa_-X$ is an interior, `artificial' boundary. Concretely, this means that we define (by a slight abuse of notation)
\[
  \Vb(X) := \{ V|_X \colon V\in\Vb(X') \}, \quad
  \Vsc(X) := \{ V|_X \colon V\in\Vsc(X') \};
\]
these vector fields are b or scattering at infinity, but are unrestricted at $\pa_-X$. A typical example is $X'=\ol{\R^n}$ and  $X=\ol{\{r\geq 1\}}\subset X'$, in which case $\pa_-X=\{r=1\}$, while $\pa_+X=\pa X'$ is the boundary (at infinity) of $\ol{\R^n}$. See Figure~\ref{FigBInt}.

\begin{figure}[!ht]
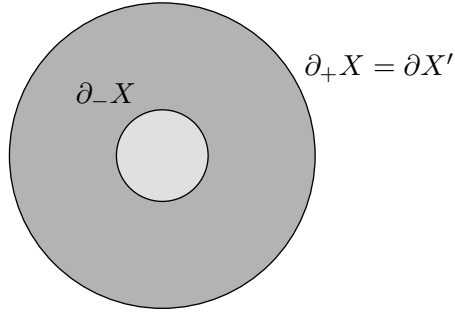

  \centering
  \inclfig{FigBInt}
  \caption{A typical example of the setting~\eqref{EqBInt}: $X$ (dark gray) is a submanifold of $X'$ (the union of the dark and light gray regions) with two boundary components $\pa_+X=\pa X'$ and $\pa_-X\subset(X')^\circ$. We then consider function spaces such as $\Hbext^{s,\ell}(X)$, which measures b-regularity of degree $s$ at $\pa_+X$ (with decay rate $\ell$), and standard regularity (regularity with respect to incomplete vector fields) at $\pa_-X$.}
\label{FigBInt}
\end{figure}

There are now two natural classes of Sobolev spaces: those consisting of \emph{extendible distributions},
\begin{equation}
\label{EqBExt}
  \Hext_\bullet^{s,\ell}(X) := \{ u|_{X^\circ} \colon u\in H_\bullet^{s,\ell}(X') \},\quad \bullet=\bop,\scop,
\end{equation}
and those consisting of \emph{supported distributions},
\begin{equation}
\label{EqBSupp}
  \Hsupp_\bullet^{s,\ell}(X) := \{ u \colon u\in H_\bullet^{s,\ell}(X'),\ \supp u\subset X \}.
\end{equation}
Away from $\pa_-X$, these are the same as the standard spaces $H_\bullet^{s,\ell}(X)$; thus, the subspaces of $\Hext_\bullet^{s,\ell}(X)$ or $\Hsupp_\bullet^{s,\ell}(X)$ consisting of those elements which are polyhomogeneous (in particular automatically conormal) at $\pa_+X$ are well-defined.

If $X$ is the `spatial part' of a stationary spacetime $M=\R_\ft\times X$ with projection $\pi_X\colon M\to X$, and $\cF(X)$ denotes a space of distributions on $X$ such as $\cF(X)=\Hbext^{\infty,\ell}(X)$ or $\Hbsupp^{-\infty,\ell}(X)$ in the setting~\eqref{EqBInt}, we will be interested not only in \emph{zero modes} $\cF(X)\cong\pi_X^*\cF(X)$, i.e.\ $\ft$-independent distributions, but also \emph{generalized zero modes},
\begin{equation}
\label{EqBFnSpacetime}
  \Poly(\ft)\cF(X) := \bigcup_{k\in\N_0}\Poly^k(\ft)\cF(X), \quad
  \Poly^k(\ft)\cF(X) := \left\{ \sum_{j=0}^k \ft^j a_j \colon a_j\in\cF(X) \right\}.
\end{equation}
(Here, we do not require $X$ to be spacelike or $d\ft$ to be timelike. Note that the definition~\eqref{EqBFnSpacetime} is independent of a choice of metric on $M$.)

For high energy estimates of the resolvent on Kerr spacetimes, we will work with \emph{semiclassical b-Sobolev spaces}. Thus, if $X$ is a manifold with boundary, we define $\Hbh^{s,\ell}(X)=\Hb^{s,\ell}(X)$ as a set, but with norm depending on the semiclassical parameter $h\in(0,1]$: if $V_1,\ldots,V_N\in\Vb(X)$ spans $\Vb(X)$ over $\CI(X)$, we let
\[
  \|u\|_{\Hbh^k(X)}^2 := \sum_{\genfrac{}{}{0pt}{}{1\leq i_1,\ldots,i_j\leq N}{0\leq j\leq k}} \|(h V_{i_1})\cdots(h V_{i_j})u\|_{L^2(X)}^2,\quad
  \|u\|_{\Hbh^{k,\ell}(X)} := \|\rho^{-\ell}u\|_{\Hbh^k(X)},
\]
for $k\in\N_0$; for $k\in\R$, we take the dual and interpolated norms. (Alternatively, one can define $\Hbh^{s,\ell}(X)$ using semiclassical b-pseudodifferential operators, see~\cite[Appendix~A]{HintzVasyKdSStability}.) On manifolds $X$ as in~\eqref{EqBInt}, one can then define semiclassical spaces $\bar H_{\bop,h}^{s,\ell}(X)$ and $\dot H_{\bop,h}^{s,\ell}(X)$ of extendible and supported distributions analogously to~\eqref{EqBExt}--\eqref{EqBSupp}.

\section{The spacetime manifold and the Kerr family of metrics}
\label{SK}

In~\S\ref{SsK0}, we define a manifold $M^\circ$, equipped with the metric of a Schwarzschild black hole with mass
\[
  \bhm_0>0,
\]
which is a (small) extension of the domain of outer communications across the future event horizon; the purpose of such an extension is that it allows the immediate application of by now standard microlocal tools at the event horizon, as we will discuss in~\S\ref{SOp}. In~\S\ref{SsKa}, we define the Kerr family with black hole parameters $(\bhm,\bha)\in\R\times\R^3$ close to $(\bhm_0,0)$ as a smooth family of metrics on $M^\circ$. In~\S\ref{SsKSt}, we elucidate the structure of stationary differential operators on $M^\circ$ near spatial infinity. In~\S\ref{SsKFl} finally, we describe the full null-geodesic dynamics of slowly rotating Kerr spacetimes.

\subsection{The Schwarzschild metric}
\label{SsK0}

Fix a mass parameter $\bhm_0>0$. We define the \emph{static patch} of the mass $\bhm_0$ Schwarzschild spacetime to be the manifold
\begin{equation}
\label{EqK0Static}
  \cM = \R_t \times \cX,\quad \cX=(2\bhm_0,\infty)_r\times\Sph^2,
\end{equation}
with $t$ called the \emph{static time function}. We equip $\cM$ with the metric
\begin{equation}
\label{EqK0Metric}
  g_{(\bhm_0,0)} := \mu(r)\,d t^2 - \mu(r)^{-1}\,d r^2 - r^2\slg,
\end{equation}
with $\slg$ denoting the standard metric on $\Sph^2$, and where
\begin{equation}
\label{EqK0mu}
  \mu(r) = 1-\frac{2\bhm_0}{r}.
\end{equation}
This is the unique family (depending on the real parameter $\bhm_0$) of spherically symmetric solutions of the Einstein vacuum equation in $3+1$ dimensions:
\[
  \Ric(g_{(\bhm_0,0)}) = 0.
\]

We denote the dual metric by $G_{(\bhm_0,0)}=g_{(\bhm_0,0)}^{-1}$. The form~\eqref{EqK0Metric} of the metric is singular at the Schwarzschild radius $r=r_{(\bhm_0,0)}:=2\bhm_0$. This is merely a coordinate singularity: switching to the null coordinate
\begin{equation}
\label{EqK0t0}
  t_0 := t + r_*,\ \ r_*:=r+2\bhm_0\log(r-2\bhm_0),\quad
  |d t_0|_{G_{(\bhm_0,0)}}^2 = 0,
\end{equation}
so $d r_*=\mu^{-1}d r$, the Schwarzschild metric and its dual take the form
\begin{equation}
\label{EqK0Metrict0}
  g_{(\bhm_0,0)} = \mu\,d t_0^2 - 2\,d t_0\,d r - r^2\slg, \quad
  G_{(\bhm_0,0)} = -2\pa_{t_0}\pa_r-\mu\pa_r^2 - r^2\slG.
\end{equation}
This is now smooth and non-degenerate on the extended manifold
\begin{equation}
\label{EqK0Ext}
  M^\circ = \R_{t_0}\times X^\circ \supset \cM,\quad X^\circ=[r_-,\infty)_r\times\Sph^2 \supset \cX,
\end{equation}
where the endpoint $r_-\in(0,2\bhm_0)$ is an arbitrary fixed number.

On the other hand, the metric $g_{(\bhm_0,0)}$ is a warped product in static coordinates, which is a useful structure at infinity; we thus introduce another coordinate,
\begin{equation}
\label{EqK0TimeFnChi}
  t_{\chi_0} := t + \int \frac{1-\chi_0(r)}{\mu(r)}\,d r,
\end{equation}
where $\chi_0$ is smooth, vanishes near $r\leq 3\bhm_0$, and is identically $1$ for $r\geq 4\bhm_0$; thus, $t_{\chi_0}-t_0$ is smooth and bounded in $r\leq 4\bhm_0$, while $t_{\chi_0}=t$ for $r\geq 4\bhm_0$ provided we choose the constant of integration suitably.

We compactify $X^\circ$ as follows: recalling the definition of $\ol{\R^3}$ from~\eqref{EqBRadCp}, we set
\[
  X := \ol{X^\circ} \subset \ol{\R^3}, \quad \rho:=r^{-1},
\]
adding the boundary $\{\rho=0\}\cong\Sph^2$ at infinity, with $\rho$ a boundary defining function of infinity. Thus, $X=\ol{\{r\geq r_-\}}$, and we let $\pa_-X=r^{-1}(r_-)$, $\pa_+X=\pa\ol{\R^3}\subset X$. Within $X$, the topological boundary of $\cX$ has two components,
\begin{equation}
\label{EqK0StaticBdy}
  \pa\cX = \pa_-\cX \sqcup \pa_+\cX,\qquad
  \pa_-\cX := r^{-1}(2\bhm_0), \quad
  \pa_+\cX := \rho^{-1}(0).
\end{equation}
We shall call (somewhat imprecisely) $\pa_-\cX$ the event horizon.

The level sets of $t_0$ are smooth submanifolds of $M^\circ$ (unlike those of $t$ which are singular at $r=2\bhm_0$) which are transversal to the future event horizon $\R_{t_0}\times\pa_-\cX$. However, a sequence of points with $t_0$ bounded and $r\to\infty$ tends to \emph{past} null infinity. Thus, for the description of waves near the future event horizon and \emph{future} null infinity (and in between), we introduce another function
\begin{equation}
\label{EqK0TimeFn}
  t_* := t + (r+2\bhm_0\log(r-2\bhm_0))\chi(r) - (r+2\bhm_0\log(r-2\bhm_0))(1-\chi(r)),
\end{equation}
where $\chi(r)\equiv 1$ for $r\leq 3\bhm_0$ and $\chi(r)\equiv 0$ for $r\geq 4\bhm_0$; it smoothly interpolates between $t+r_*$ near the event horizon and $t-r_*$ near null infinity. In the bulk of this paper, we will study forcing problems for wave equations of the type $\Box_{g_{(\bhm_0,0)}}u=f$, where $f$ is supported in $t_*\geq 0$. (Choosing $t_*$ more carefully so as to make $d t_*$ future causal would ensure that $u$ is supported in $t_*\geq 0$ as well; since we are not arranging this, we will have $t_*\geq -C$ on $\supp u$ for some constant $C\geq 0$ depending only on our choice of $t_*$.) Note that $\R_{t_*}\times\pa X$ has two components,
\begin{equation}
\label{EqK0SurfFin}
  \Sigma_{\rm fin} := \R_{t_*}\times\pa_- X
\end{equation}
(which is a spacelike hypersurface inside of the black hole) and $\R_{t_*}\times\pa_+ X$ (which is future null infinity, typically denoted $\scri^+$); moreover, the future event horizon is $\R_{t_*}\times\pa_-\cX$.

The more common setting for the Einstein equation is to place asymptotically flat initial data on a Cauchy surface
\begin{equation}
\label{EqK0Sigma0}
  \Sigma_0^\circ \subset M^\circ
\end{equation}
which we can choose to be a smooth and spacelike transversal to $t_0$-translations, and equal to $t^{-1}(0)$ in $r\geq 3\bhm_0$. See Figure~\ref{FigK0Time}.

\begin{figure}[!ht]
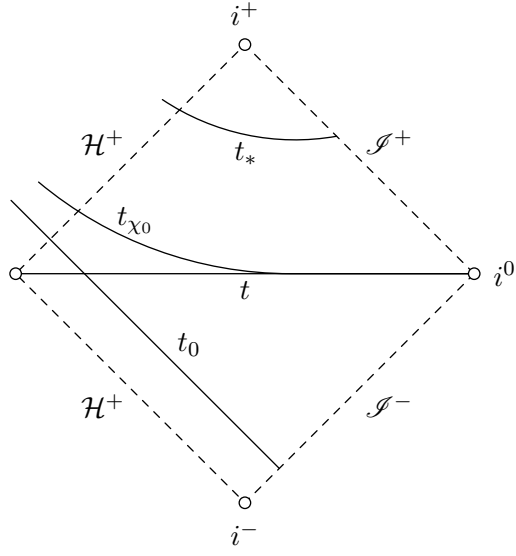

  \centering
  \inclfig{FigK0Time}
  \caption{Illustration of time functions on $M^\circ$ in the Penrose diagram of the Schwarzschild metric (including future/past null infinity $\scri^\pm$, the future/past event horizon $\cH^\pm$, spacelike infinity $i^0$, and future/past timelike infinity $i^\pm$). Shown are level sets of the static time function $t$, of its modification near the event horizon $t_{\chi_0}$, of the null coordinate $t_0$, and of the function $t_*$.}
\label{FigK0Time}
\end{figure}

\subsection{The Kerr family}
\label{SsKa}

Write $b_0:=(\bhm_0,0)$. Consider black hole parameters $b=(\bhm,\bha)\in\R\times\R^3$, with $\bha\in\R^3$ denoting the angular momentum. If $a=|\bha|\neq 0$, choose \emph{adapted polar coordinates} $(\theta,\varphi)$ on $\Sph^2$, meaning that $\hat\bha=\bha/|\bha|$ is the north pole $\theta=0$; for $a=0$, adapted polar coordinates are simply any polar coordinates. The Kerr metric in Boyer--Lindquist coordinates $(t,r,\theta,\varphi)$ is then
\begin{equation}
\label{EqKaMetric}
\begin{split}
  g_b^{\rm BL} &= \frac{\Delta_b}{\varrho_b^2}(d t-a\sin^2\theta\,d\varphi)^2 - \varrho_b^2\Bigl(\frac{d r^2}{\Delta_b}+d\theta^2\Bigr) - \frac{\sin^2\theta}{\varrho_b^2}\bigl(a\,d t-(r^2+a^2)d\varphi\bigr)^2, \\
  G_b^{\rm BL} &= \frac{1}{\Delta_b\varrho_b^2}\bigl((r^2+a^2)\pa_t+a\pa_\varphi\bigr)^2 - \frac{\Delta_b}{\varrho_b^2}\pa_r^2 - \frac{1}{\varrho_b^2}\pa_\theta^2 - \frac{1}{\varrho_b^2\sin^2\theta}(\pa_\varphi+a\sin^2\theta\,\pa_t)^2, \\
  &\quad \Delta_{(\bhm,\bha)} = r^2-2\bhm r+a^2, \ \ 
  \varrho_{(\bhm,\bha)}^2 = r^2+a^2\cos^2\theta.
\end{split}
\end{equation}
This is a solution of the Einstein vacuum equation:
\begin{equation}
\label{EqKaEinstein}
  \Ric\bigl(g_{(\bhm,\bha)}^{\rm BL}\bigr)=0.
\end{equation}
Here, we will focus on parameters $(\bhm,\bha)$ close to $(\bhm_0,0)$; in particular, we are looking at slowly rotating ($a\ll\bhm$) Kerr black holes. The form~\eqref{EqKaMetric} of the metric breaks down at the event horizon
\begin{equation}
\label{EqKaRadius}
  r=r_{(\bhm,\bha)}:=\bhm+\sqrt{\bhm^2-\bha^2}.
\end{equation}
This is again merely a coordinate singularity: for $\chi\in\CI(\R)$, vanishing in $r\leq 2\bhm$, put
\begin{equation}
\label{EqKaNull}
  t_{b,\chi} = t + \int\frac{r^2+a^2}{\Delta_b}(1-\chi(r))\,d r, \quad
  \varphi_{b,\chi} = \varphi + \int\frac{a}{\Delta_b}(1-\chi(r))\,d r.
\end{equation}
The metric $g_b^{\rm BL}$ then takes the form
\begin{equation}
\label{EqKaMetric2}
\begin{split}
  g_{b,\chi} &= \frac{\Delta_b}{\varrho_b^2}(d t_{b,\chi}-a\sin^2\theta\,d\varphi_{b,\chi})^2-2(1-\chi)(d t_{b,\chi}-a\sin^2\theta\,d\varphi_{b,\chi})d r \\
    &\hspace{6em} - \frac{\chi(2-\chi)\varrho_b^2}{\Delta_b}d r^2 - \varrho_b^2\,d\theta^2-\frac{\sin^2\theta}{\varrho_b^2}\bigl(a\,d t_{b,\chi}-(r^2+a^2)d\varphi_{b,\chi}\bigr)^2,
\end{split}
\end{equation}
which is smooth and non-degenerate on $M^\circ_b=\R_{t_{b,\chi}}\times[r_-,\infty)_r\times\Sph^2_{\theta,\varphi_{b,\chi}}$. Taking $\chi=\chi_0$ as in~\eqref{EqK0TimeFnChi}, and choosing suitable constants of integration, we have $t_{b,\chi_0}\equiv t$, $\varphi_{b,\chi_0}\equiv\varphi$ for $r\geq 4\bhm_0$; defining the diffeomorphism
\[
  \Phi_b \colon M^\circ_b \to M^\circ,
  \quad (t_{\chi_0},r,\theta,\varphi)(\Phi_b(p))=(t_{b,\chi_0},r,\theta,\varphi_{b,\chi_0})(p),
\]
where $t_{\chi_0}$ is defined in~\eqref{EqK0TimeFnChi} and $(\theta,\varphi)$ denotes polar coordinates on $\Sph^2$ adapted to $\bha$,
\begin{equation}
\label{EqKaMetricEmbed}
  g_b = (\Phi_b)_*(g_b^{\rm BL}) \in \CI(M^\circ;S^2 T^*M^\circ)
\end{equation}
is a stationary metric on $M^\circ$. For $b=b_0=(\bhm_0,0)$, this produces the Schwarzschild metric $g_{b_0}$, thus the notation is unambiguous. As in \cite[Proposition~3.5]{HintzVasyKdSStability}, one can prove that $g_b$ is a smooth family of metrics on $M^\circ$.

Furthermore, an inspection of~\eqref{EqKaMetric} shows that the mass parameter $\bhm$ contributes $\rho\CI(X)$ terms to the metric, while the angular momentum $\bha$ only contributes $\rho^2\CI(X)$ terms; see also~\eqref{EqKStMetrics}--\eqref{EqKStMetrics2} below.

The choice~\eqref{EqKaMetricEmbed} of defining the Kerr family as a smooth family of metrics on the fixed manifold $M^\circ$ is not unique, and in fact another presentation is more convenient for calculations later on. Namely, we also consider coordinates $t_{b,0}$, $\varphi_{b,0}$ (that is, $t_{b,\chi}$ and $\varphi_{b,\chi}$ for the function $\chi\equiv 0$) and use the embedding
\[
  \Phi_b^0 \colon M_b^\circ \to M^\circ,\quad
  (t_0,r,\theta,\varphi)(\Phi_b^0(p))=(t_{b,0},r,\theta,\varphi_{b,0})(p).
\]
Denote the resulting presentation of the Kerr family on $M^\circ$ by
\[
  g_b^0 = (\Phi_b^0)_*(g_b^{\rm BL}) \in \CI(M^\circ;S^2 T^*M^\circ).
\]
We have $\Phi_{b_0}^0=\Phi_{b_0}$, hence $g_{b_0}^0=g_{b_0}$; see also~\eqref{EqKaLie} below. The full Schwarzschild family becomes
\begin{equation}
\label{EqKaMetric2Schw}
  g_{(\bhm,0)}^0 = \mu_\bhm\,d t_0^2 - 2 d t_0\,d r-r^2\slg,\quad
  \mu_\bhm = 1-\frac{2\bhm}{r}.
\end{equation}

Linearizing the families $g_b$ and $g_b^0$ in the parameter $b$ yields \emph{linearized Kerr metrics},
\begin{equation}
\label{EqKaLin}
  \dot g_{(\bhm,\bha)}^{(0)}(\dot\bhm,\dot\bha) = \bigl(\tfrac{d}{d s}g^{(0)}_{(\bhm,\bha)+s(\dot\bhm,\dot\bha)}\bigr)\big|_{s=0},
\end{equation}
with linear dependence on $\dot b=(\dot\bhm,\dot\bha)$. We record the particular cases
\begin{equation}
\label{EqKaLin2}
  \dot g_{(\bhm_0,0)}^0(1,0) = -\tfrac{2\bhm_0}{r}\,d t_0^2, \quad
  \dot g_{(\bhm_0,0)}^0(0,\dot\bha) = (\tfrac{4\bhm_0}{r} d t_0+2 d r)\sin^2\theta\,d\varphi,
\end{equation}
where in the second line $|\dot\bha|=1$, and $(\theta,\varphi)$ are spherical coordinates adapted to $\dot\bha$. By linearizing~\eqref{EqKaEinstein}, we find $(D_{g_{(\bhm,\bha)}^{(0)}}\Ric)\bigl(\dot g^{(0)}_{(\bhm,\bha)}(\dot\bhm,\dot\bha)\bigr) = 0$.

\begin{rmk}
\label{RmkKaLie}
  Since $g_b=(\Phi_b^0\circ\Phi_b^{-1})^*g_b^0$, the linearized Kerr metric $\dot g_{(\bhm,\bha)}$ can be obtained from $\dot g^0_{(\bhm,\bha)}$ by pullback along a diffeomorphism and addition of a Lie derivative of $g_{(\bhm,\bha)}$ along a suitable vector field. More precisely, in $(t_*,r,\theta,\varphi)$ coordinates on $M^\circ$, we have
  \begin{subequations}
  \begin{equation}
  \label{EqKaLie}
    \Phi_b^0\circ\Phi_b^{-1} \colon (t_*,\,r,\,\theta,\,\varphi) \mapsto \left(t_*+\int\left(\frac{r^2+a^2}{\Delta_b}-\frac{r^2}{\Delta_{b_0}}\right)\chi_0\,d r,\,r,\,\theta,\,\varphi+\int\frac{a}{\Delta_b}\chi_0\,d r\right).
  \end{equation}
  In particular, for $\dot b=(\dot\bhm,\dot\bha)$, the two versions of the linearization of the Kerr metric at $g_{b_0}$ are related by
  \begin{equation}
  \label{EqKaLie2}
  \begin{split}
    \dot g_{b_0}(\dot b) = \dot g_{b_0}^0(\dot b) +{}& \cL_{V(\dot b)} g_{b_0},\\
    V(\dot b)&=\frac{d}{d s}\bigg|_{s=0}(\Phi_{b_0+s\dot b}^0\circ\Phi_{b_0+s\dot b}^{-1}) \\
      &= \dot\bhm\biggl(\int_{r_0}^r \frac{2 r^3}{\Delta_{b_0}^2}\chi_0\,d r\biggr)\pa_{t_*} + \dot\bha\biggl(\int_{r_0}^r\frac{\chi_0}{\Delta_{b_0}}d r\biggr)\pa_\varphi.
  \end{split}
  \end{equation}
  \end{subequations}
  (Since $\pa_{t_*}$ and $\pa_\varphi$ are Killing vector fields for $g_{b_0}$, the constants of integration, and thus $r_0$ in the definition of $V(\dot b)$, can be chosen arbitrarily.)
\end{rmk}

Finally, we note that spherically symmetric outgoing light cones for $g_{(\bhm,0)}$ depend on $\bhm$ via a logarithmic (in $r$) correction. Thus, we introduce the function
\begin{equation}
\label{EqKTimeFn}
  t_{\bhm,*} := t_* - \bigl(2\bhm\log(r-2\bhm)-2\bhm_0\log(r-2\bhm_0)\bigr) (1-\chi(r)),
\end{equation}
generalizing~\eqref{EqK0TimeFn}; it is smooth on $M^\circ$, and equals $t-(r+2\bhm\log(r-2\bhm))$ in $r\geq 4\bhm_0$. In particular, in $r\geq 4\bhm_0$, we have
\[
  g_{(\bhm,0)}=\mu_\bhm\,d t^2-\mu_\bhm^{-1}\,d r^2-r^2\slg=\mu_\bhm\,d t_{\bhm,*}^2+2 d t_{\bhm,*}\,d r-r^2\slg,
\]
which thus has the same form as $g_{(\bhm_0,0)}$ with respect to $t_*=t_{\bhm_0,*}$; in particular, $d t_{\bhm,*}$ is null for large $r$. (One can also construct $t_{(\bhm,\bha),*}$, a lower order ($\cO(r^{-1})$) correction of $t_{\bhm,*}$, which takes the angular momentum $\bha$ into account and has the property that $d t_{(\bhm,\bha),*}$ is null with respect to $g_{(\bhm,\bha)}$ for large $r$; see \cite{PretoriusIsraelKerr}.)

\subsection{Stationarity, vector bundles, and geometric operators}
\label{SsKSt}

In the notation~\eqref{EqK0Ext}, denote the projection to the spatial manifold by
\[
  \pi_X \colon M^\circ \to X^\circ;
\]
this is independent of the choice of time function. Suppose $E_1\to X^\circ$ is a vector bundle; then differentiation along $\pa_t=\pa_{t_0}=\pa_{t_*}$ is a well-defined operation on sections of the pullback bundle $\pi_X^*E_1$. The tangent bundle of $M^\circ$ is an important example of such a pullback bundle, as
\[
  T M^\circ \cong \pi_X^*(T_{t_0^{-1}(0)}M^\circ) \cong \pi_X^*(T_{t_*^{-1}(0)}M^\circ),
\]
likewise for the cotangent bundle and other tensor bundles.

Let $E_2\to X^\circ$ be another vector bundle, and suppose $\wh L(0)\in\Diff(X^\circ;E_1,E_2)$ is a differential operator; fixing $\ft=t_*+F$, $F\in\CI(X^\circ)$, with $d\ft\neq 0$ everywhere, we can then define its \emph{stationary extension} by assigning to $u\in\CI(M^\circ;\pi_X^*E_1)$ the section $(L u)(\ft,-):=\wh L(0)(u(\ft,-))$ of $\pi_X^*E_2$; this extension \emph{does} depend on the choice of $\ft$. The action of $L$ on stationary functions on the other hand is independent of the choice of $\ft$ since
\begin{equation}
\label{EqKStStat}
  L\pi_X^*=\pi_X^*\wh L(0).
\end{equation}
Via stationary extension, one can consider $\Diffb(X;E)$ (for $E\to X$ a smooth vector bundle down to $\pa_+ X$) to be a subalgebra of $\Diff(M^\circ;\pi_X^*E)$; likewise $\Diffsc(X;E)\hra\Diff(M^\circ;\pi_X^*E)$.

Conversely, if $L\in\Diff(M^\circ;\pi_X^*E_1,\pi_X^*E_2)$ is \emph{stationary}, i.e.\ commutes with $\pa_t$, there exists a unique (independent of the choice of $\ft$) operator $\wh L(0)\in\Diff(X^\circ;E_1,E_2)$ such that the relation~\eqref{EqKStStat} holds. More generally, we can consider the formal conjugation of $L$ by the Fourier transform in $\ft$,
\[
  \wh L(\sigma) := e^{i\sigma\ft}L e^{-i\sigma\ft} \in \Diff(X^\circ;E_1,E_2),
\]
where we identify the stationary operator $e^{i\sigma\ft}L e^{-i\sigma\ft}$ with an operator on $X^\circ$. Switching from $\ft$ to another time function, $\ft+F'$, $F'\in\CI(X^\circ)$, amounts to conjugating $\wh L(\sigma)$ by $e^{i\sigma F'}$.

In order to describe the uniform behavior of geometric operators at $\pa_+X$ concisely, we need to define a suitable extension of $T^*M^\circ$ to `infinity'. To accomplish this, note that the product decomposition~\eqref{EqK0Ext} induces a splitting
\[
  T^*M^\circ \cong T^*\R_{t_0} \boxplus T^*X^\circ = \pi_T^*(T^*\R_{t_0}) \oplus \pi_X^*(T^*X^\circ),
\]
where $\pi_T\colon M_0\to\R_{t_0}$ is the projection. We therefore define the \emph{extended scattering cotangent bundle} of $X$ by
\begin{equation}
\label{EqKStExtTsc}
  \wt\Tsc{}^*X := \R d t_0 \oplus \Tsc^*X.
\end{equation}
At this point, $d t_0$ is merely a name for the basis of a trivial real rank $1$ line bundle over $X$; considering the pullback bundle $\pi_X^*\wt\Tsc{}^*X\to M^\circ$, we identify it with the differential of $t_0\in\CI(M^\circ)$, giving an isomorphism
\begin{equation}
\label{EqKStExtTscIso}
  (\pi_X^*\wt\Tsc{}^*X)|_{M^\circ} \cong T^*M^\circ.
\end{equation}
Smooth sections of $\wt{\Tsc^*}X\to X$ are linear combinations, with $\CI(X)$ coefficients, of $d t_0$ and the 1-forms $d x^i$, where $(x^1,x^2,x^3)$ are standard coordinates on $X^\circ\subset\R^3$. One can switch to another time function in~\eqref{EqKStExtTsc}, say, $t_*$, by writing $d t_*=d t_0+(d t_*-d t_0)$, with the second term being a smooth scattering 1-form on $X$; likewise for $t_{\chi_0}$ and also for the static time $t$ in $r>2\bhm_0$.

For a stationary metric $g$ on $M^\circ$, there exists a unique $g'\in\CI(X^\circ;S^2\,\wt\Tsc{}^*X)$ such that $\pi_X^*g'=g$, namely $g'$ is the restriction (as a section of $S^2\,\wt\Tsc{}^*X$) of $g$ to any transversal of $\pi_X$, such as level sets of $t_0,t_{\chi_0},t_*$. Identifying $g$ with $g'$ and applying this to the Kerr family, we then have 
\begin{equation}
\label{EqKStMetrics}
  g_b,\ g_b^0 \in \CI(X;S^2\,\wt\Tsc{}^*X);
\end{equation}
they are non-degenerate down to $\pa_+ X$. Moreover, we have
\begin{equation}
\label{EqKStMetrics2}
\begin{split}
  &g_b - \ubar g \in \rho\CI, \quad \ubar g:=d t_{\chi_0}^2-d r^2-r^2\slg, \\
  &g_{(\bhm,\bha)}-g_{(\bhm,0)} \in \rho^2\CI,
\end{split}
\end{equation}
i.e.\ a Kerr metric equals the Minkowski metric $\ubar g$ to leading order, and is a $\cO(\rho^2)$ perturbation of the Schwarzschild metric of the same mass.

We proceed to discuss basic geometric operators on Kerr spacetimes. We write
\begin{equation}
\label{EqKStDel}
  (\delta_g^*\omega)_{\mu\nu}=\half(\omega_{\mu;\nu}+\omega_{\nu;\mu}), \quad
  (\delta_g h)_\mu=-h_{\mu\nu;}{}^\nu, \quad
  \sfG_g = 1-\half g\,\mathrm{tr}_g,
\end{equation}
and furthermore denote by
\begin{equation}
\label{EqKStWave}
  \Box_{g,0},\ \Box_{g,1},\ \Box_{g,2},
\end{equation}
the wave operator $-\!\tr_g\nabla^2$ on scalars, 1-forms, and symmetric 2-tensors, respectively. When the bundle is clear from the context, we shall simply write $\Box_g$.
\begin{prop}
\label{PropKStBox2}
  Writing the operator $\Box_{g_b,2}$ as
  \begin{equation}
  \label{EqKStBox2}
    \Box_{g_b,2} = |d t_{\chi_0}|_{G_b}^2 D_{t_{\chi_0}}^2 + \wh{\Box_{g_b,2}}(0) + Q_b D_{t_{\chi_0}},
  \end{equation}
  we have $\wh{\Box_{g_b,2}}(0)\in\rho^2\Diffb^2(X;S^2\,\wt\Tsc{}^*X)$ and $Q_b\in\rho^2\Diffsc^1(X;S^2\,\wt\Tsc{}^*X)$.
\end{prop}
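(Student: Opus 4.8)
The plan is to compute $\Box_{g_b,2}$ in a convenient coordinate frame near $\pa_+X$; away from $\pa_+X$ — in particular near the artificial boundary $\pa_-X$ — the assertion only says that $\wh{\Box_{g_b,2}}(0)$ and $Q_b$ are differential operators with smooth coefficients, which is immediate since $g_b$ is a smooth Lorentzian metric on $M^\circ$. Near $\pa_+X$ I would work in the coordinates $(t_{\chi_0},x^1,x^2,x^3)$, with $x^i$ the standard coordinates on $X^\circ\subset\R^3$ and $t_{\chi_0}$ as in~\eqref{EqK0TimeFnChi} (so $t_{\chi_0}=t$ for large $r$); the associated coordinate coframe $\{d t_{\chi_0},d x^1,d x^2,d x^3\}$ is a \emph{smooth, holonomic} frame of $\wt\Tsc{}^*X$ down to $\pa_+X$, since $d t_{\chi_0}-d t_0=-\chi_0\mu^{-1}\,d r\in\CI(X;\Tsc^*X)$ and $\{d x^i\}$ frames $\Tsc^*X$. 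As $g_b$ is stationary, $\Box_{g_b,2}$ commutes with $\pa_{t_{\chi_0}}$, which yields the decomposition~\eqref{EqKStBox2}; and since the principal symbol of $\Box_{g_b,2}$ is the scalar $-G_b$, its $D_{t_{\chi_0}}^2$-coefficient is forced to be $G_b(d t_{\chi_0},d t_{\chi_0})=|d t_{\chi_0}|_{G_b}^2$ times the identity.

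Next I would record the behavior of the metric coefficients in this frame. By~\eqref{EqKStMetrics2}, together with the fact that $g_{(\bhm,0)}$ has \emph{no} mixed $d t$--$d x^i$ component in static coordinates (which agree with $(t_{\chi_0},x^i)$ near $\pa_+X$), the components of $g_b$ and $G_b$ have constant diagonal leading parts $\pm 1$, with $tt$- and $ij$-corrections in $\rho\CI(X)$ and mixed $ti$-components in $\rho^2\CI(X)$; the determinant $|g_b|$ lies in $\CI(X)$ and is bounded above and below, as $g_b$ is a nondegenerate section of $S^2\wt\Tsc{}^*X$. Since $\pa_{t_{\chi_0}}$ annihilates all these coefficients (stationarity) and $\pa_{x^i}\in\Vsc(X)=\rho\Vb(X)$ maps $\rho^k\CI(X)$ into $\rho^{k+1}\CI(X)$ (using $\pa_{x^i}\rho=-\omega^i\rho^2$), we get $\pa g_b\in\rho^2\CI$; and as the frame is holonomic, the Christoffel symbols of $g_b$ in it lie in $\rho^2\CI(X)$.

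I would then expand $\Box_{g_b,2}$ componentwise: schematically,
\[
  (\Box_{g_b,2}h)_{AB}=\Box_{g_b,0}h_{AB}+G_b^{CD}\Gamma^E_{CA}\pa_D h_{EB}+G_b^{CD}\Gamma^E_{CB}\pa_D h_{AE}+(\text{0th order in }h),
\]
with $\Box_{g_b,0}=-|g_b|^{-1/2}\pa_C(|g_b|^{1/2}G_b^{CD}\pa_D)$ the scalar wave operator, the correction terms at most first order in $h$, and the 0th order part built from $G_b$, $\Gamma$, $\pa\Gamma$. Collecting powers of $D_{t_{\chi_0}}$ and using $\Diffsc^m(X)\subset\rho^m\Diffb^m(X)$ (immediate from $\Vsc=\rho\Vb$): the terms with one $\pa_{t_{\chi_0}}$ have coefficients of the form $G_b^{ti}\pa_{x^i}$, or $\pa_{x^i}$-derivatives of the $\rho^2\CI$-coefficients $G_b^{ti}$, or $G_b^{Ct}\Gamma$, all lying in $\rho^2\CI(X)\cdot\{1,\pa_{x^i}\}\subset\rho^2\Diffsc^1(X)$, so $Q_b\in\rho^2\Diffsc^1$. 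The $\pa_{t_{\chi_0}}$-free part $\wh{\Box_{g_b,2}}(0)$ has: second-order part $-G_b^{ij}\pa_{x^i}\pa_{x^j}\in\CI(X)\cdot\Diffsc^2(X)=\Diffsc^2(X)\subset\rho^2\Diffb^2(X)$; first-order part with coefficients in $\rho\CI(X)$ (from $\pa_{x^i}$ falling on a bounded coefficient) or $\rho^2\CI(X)$ (from a Christoffel symbol), hence in $\rho\CI(X)\cdot\Vsc(X)=\rho^2\Vb(X)\subset\rho^2\Diffb^1(X)$; and 0th-order part, built from $\pa\Gamma\in\rho^3\CI$ and $\Gamma^2\in\rho^4\CI$, in $\rho^3\CI(X)\subset\rho^2\Diffb^0(X)$. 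Altogether $\wh{\Box_{g_b,2}}(0)\in\rho^2\Diffb^2(X;S^2\wt\Tsc{}^*X)$.

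The only real content is the $\rho$-weight bookkeeping in the last step, and the two points to get right are: (i) that the mixed metric coefficients vanish to \emph{second} order — this is what places $Q_b$ in the smaller space $\rho^2\Diffsc^1$ rather than merely $\rho^2\Diffb^1$, and it uses both the $\cO(\rho^2)$-closeness of the Kerr metric to the Schwarzschild metric of the same mass and the absence of mixed terms in the latter; and (ii) the systematic use of $\Vsc=\rho\Vb$, so that every spatial derivative landing on a bounded coefficient produces an extra factor $\rho$, and products of two scattering vector fields land in $\rho^2\Diffb^2$. The identification of the $D_{t_{\chi_0}}^2$-coefficient is immediate from the scalar principal symbol, and the passage from the scalar to the tensorial wave operator contributes only lower-order terms carrying the extra $\rho^2$-decay of the connection coefficients. (The same argument gives the analogous statements for $\Box_{g_b,0}$ and $\Box_{g_b,1}$.)
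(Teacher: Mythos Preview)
Your argument is correct, and the key points you flag at the end are indeed the ones that matter: the $\rho^2$-decay of the mixed metric coefficients is what puts $Q_b$ into $\rho^2\Diffsc^1$ rather than just $\rho^2\Diffb^1$, and the inclusion $\Diffsc^m\subset\rho^m\Diffb^m$ is what packages the spatial part into $\rho^2\Diffb^2$. One minor imprecision: in your schematic formula for $(\Box_{g_b,2}h)_{AB}$ the first-order connection terms should carry a factor of $2$, but this is irrelevant for the weight count.

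Your route differs from the paper's. You work directly in the holonomic coordinate frame $\{d t_{\chi_0},d x^i\}$ and expand $\Box_{g_b,2}$ as the scalar wave operator on components plus Christoffel corrections, tracking $\rho$-weights by hand. The paper instead proves a more general statement (Lemma~\ref{LemmaKStBox2}) for \emph{any} stationary metric satisfying the structural hypotheses~\eqref{EqKStBox2Struct}, by computing $\nabla$, $\delta_g$, $\tr_g$ as block operators in the splitting $\wt\Tsc{}^*X=\la d t\ra\oplus\Tsc^*X$ (and its tensor powers), and only then specializing to Kerr. Your argument is more elementary and gets to the stated conclusion faster; the paper's is more systematic and, crucially, yields the extra information that $\wh{\Box_g}(0)\bmod\rho^3\Diffb^2$ depends only on the leading spatial metric $h$. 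That ``Moreover'' is precisely what feeds the normal-operator comparisons (Lemma~\ref{LemmaKStNormal}, equations~\eqref{EqKStSymmGradLeading}--\eqref{EqKStBoxLeading}) used throughout \S\S\ref{S0}--\ref{SL}. You could extract this from your computation too---it amounts to observing that the $\rho^2\Diffb^2$-part of $\wh{\Box_{g_b,2}}(0)$ modulo $\rho^3\Diffb^2$ comes only from the constant leading parts of $G_b^{ij}$---but the paper's organization makes this structural dependence manifest from the outset.
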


Away from $\pa_+X$, this merely states that $\Box_{g_b,2}$ is a second order differential operator with smooth coefficients, with principal symbol given by the dual metric function. It thus suffices to analyze $\Box_{g,2}$ near spatial infinity where $t_{\chi_0}\equiv t$ is the static or Boyer--Lindquist time coordinate; there, Proposition~\ref{PropKStBox2} is a consequence of:

\begin{lemma}
\label{LemmaKStBox2}
  Suppose $g$ is a stationary Lorentzian metric on $M^\circ$ for which
  \begin{equation}
  \label{EqKStBox2Struct}
  \begin{split}
    g(\pa_t,\pa_t) &\in 1+\rho\CI(X), \\
    g(\pa_t,-) &\in \rho^2\CI(X;\Tsc^*X), \\
    g|_{\Tsc X\times\Tsc X} &\in -h + \rho\CI(X;S^2\,\Tsc^*X),
  \end{split}
  \end{equation}
  where $h\in\CI(X;S^2\,\Tsc^*X)$ is Riemannian. Then the operator $\Box_g=\Box_{g,2}$ takes the form
  \[
    \Box_g = |d t|_G^2 D_t^2 + \wh{\Box_g}(0) + Q D_t,
  \]
  with $\wh{\Box_g}(0)\in\rho^2\Diffb^2(X;S^2\,\wt{\Tsc^*}X)$ and $Q\in\rho^2\Diffsc^1(X;S^2\,\wt{\Tsc^*}X)$. Moreover, $\wh{\Box_g}(0)\bmod\rho^3\Diffb^2(X;S^2\,\wt{\Tsc^*}X)$ only depends on $h$.
\end{lemma}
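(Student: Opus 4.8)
The plan is to reduce to a computation near spatial infinity $\pa_+X$ (away from $\pa_+X$ the operator $\Box_{g,2}$ is just a second-order operator with smooth coefficients on $X^\circ$ and the assertion is vacuous), working in static, resp.\ Boyer--Lindquist, coordinates $(t,x^1,x^2,x^3)$ with $x^i$ the standard coordinates on $X^\circ\subset\R^3$ and $\rho=r^{-1}$, and to track orders of vanishing at $\{\rho=0\}$. All the bookkeeping is driven by two elementary scaling facts: $\pa_{x^i}\in\Vsc(X)=\rho\Vb(X)$, so that $\pa_{x^i}$ maps $\rho^k\CI(X)$ to $\rho^{k+1}\CI(X)$, $\pa_{x^i}\in\rho\Diffb^1(X)$ and $\pa_{x^i}\pa_{x^j}\in\rho^2\Diffb^2(X)$; and $\pa_t$ annihilates stationary coefficients.

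First I would invert the metric: writing $g=g_{tt}\,d t^2+2 g_{ti}\,d t\,d x^i+g_{ij}\,d x^i\,d x^j$ as a section of $S^2\,\wt{\Tsc^*}X$, the hypotheses~\eqref{EqKStBox2Struct} read $g_{tt}\in 1+\rho\CI$, $g_{ti}\in\rho^2\CI$, $g_{ij}\in-h_{ij}+\rho\CI$ with $(h_{ij})$ positive definite down to $\pa_+X$; block inversion then gives $G^{tt}\in 1+\rho\CI$ (this is $|d t|_G^2$), $G^{ti}\in\rho^2\CI$, $G^{ij}\in-h^{ij}+\rho\CI$ for the components of $G=g^{-1}$ in the frame $\pa_t,\pa_{x^i}$. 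Next I would compute the Levi-Civita Christoffel symbols from $\Gamma^\lambda_{\mu\nu}=\tfrac12 G^{\lambda\sigma}(\pa_\mu g_{\sigma\nu}+\pa_\nu g_{\sigma\mu}-\pa_\sigma g_{\mu\nu})$ and the previous step; one finds $\Gamma^k_{ij}\in\rho\CI$, $\Gamma^t_{ti}\in\rho^2\CI$, $\Gamma^i_{tt}\in\rho^2\CI$, $\Gamma^t_{ij}\in\rho^3\CI$, $\Gamma^i_{tj}\in\rho^3\CI$, $\Gamma^t_{tt}\in\rho^4\CI$. In the frame $d t\otimes_s d t$, $d t\otimes_s d x^i$, $d x^i\otimes_s d x^j$ of $S^2\,\wt{\Tsc^*}X$ this translates into connection matrices $\Gamma_{\pa_{x^i}}\in\rho\CI(X;\End)$ and $\Gamma_{\pa_t}\in\rho^2\CI(X;\End)$; in particular all Christoffels and connection matrices are smooth down to $\pa_+X$ in the relevant extended scattering bundles.

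Then I would expand $\Box_{g,2}=-G^{\mu\nu}\nabla_\mu\nabla_\nu$ using $\nabla_\mu=\pa_\mu+\Gamma_{\pa_\mu}$ on the bundle together with the correction $-\Gamma^\lambda_{\mu\nu}\nabla_\lambda$, and sort by powers of $\pa_t=i D_t$. The $D_t^2$ coefficient is $G^{tt}=|d t|_G^2$ (since $\pa_t^2=-D_t^2$). The $D_t^1$ coefficient $Q$ collects $-2 G^{tj}\pa_{x^j}$ — which lies in $\rho^2\CI\cdot\Vsc\subset\rho^2\Diffsc^1$ — together with the terms carrying one factor of $\Gamma_{\pa_t}\in\rho^2\CI$ or of $G^{ij}\Gamma^t_{ij}\in\rho^3\CI$, all lying in $\rho^2\Diffsc^1(X;S^2\,\wt{\Tsc^*}X)$. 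The $D_t^0$ part is $\wh{\Box_{g,2}}(0)$: its top-order term $-G^{ij}\pa_{x^i}\pa_{x^j}$ lies in $\rho^2\Diffb^2$ by the scaling of $\pa_{x^i}$; the first-order-in-space terms $-2 G^{ij}\Gamma_{\pa_{x^i}}\pa_{x^j}$ and $G^{ij}\Gamma^k_{ij}\pa_{x^k}$ lie in $\CI\cdot\rho\CI\cdot\rho\Vb\subset\rho^2\Diffb^1$; the zeroth-order terms (built from $\pa_{x^i}\Gamma_{\pa_{x^j}}$, $\Gamma_{\pa_{x^i}}\Gamma_{\pa_{x^j}}$, $\Gamma^k_{ij}\Gamma_{\pa_{x^k}}$) lie in $\rho^2\CI(\End)$; and the $D_t^0$ contributions of $-G^{tt}\nabla_t^2$ and $-2 G^{tj}\nabla_t\nabla_j$, which carry $\Gamma^k_{tt}\in\rho^2\CI$ or an extra factor $G^{tj}\in\rho^2\CI$, in fact land in $\rho^3\Diffb^2$. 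Hence $\wh{\Box_{g,2}}(0)\in\rho^2\Diffb^2(X;S^2\,\wt{\Tsc^*}X)$.

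For the last assertion I would compare $g$ with the model extended scattering metric $g^\flat:=d t^2-h$, which has the same $h$. The difference $g-g^\flat$ lies only in $\rho\CI(X;S^2\,\wt{\Tsc^*}X)$, but tracing it through the first two steps shows it perturbs $G^{ij}$ only at order $\rho$ and every Christoffel symbol (hence every connection matrix) only at order $\rho^2$; substituting these perturbations into the expansion of $\wh{\Box}(0)$ and using once more $\pa_{x^i}\pa_{x^j}\in\rho^2\Diffb^2$ and $\pa_{x^i}\in\rho\Diffb^1$, every term so produced lies in $\rho^3\Diffb^2(X;S^2\,\wt{\Tsc^*}X)$. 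Therefore $\wh{\Box_{g,2}}(0)$ agrees modulo $\rho^3\Diffb^2$ with $\wh{\Box_{g^\flat,2}}(0)$, which is manifestly built only from $h$ (it is the tensor Laplacian of $-h$ on $S^2\,\wt{\Tsc^*}X$, the connection being trivial on the $d t$-summand). The main obstacle is not any single estimate but the organised bookkeeping of the induced connection on $S^2\,\wt{\Tsc^*}X$ near $\pa_+X$: one must verify that, in the adapted frame, the connection coefficients vanish to exactly the orders listed above — this is what makes the $\pa_t$-cross terms genuinely $\cO(\rho^2)$ and the $\rho^3$-separation in the final assertion go through. Given the structure~\eqref{EqKStBox2Struct} of $g$ and the scaling relations for $\pa_{x^i}$, the remaining order-counting is routine.
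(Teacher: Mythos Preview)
Your proposal is correct and follows essentially the same approach as the paper: both compute the Christoffel symbols in the coordinate frame $(t,x^i)$, track their orders of vanishing at $\rho=0$ (the orders you list agree with those implicit in the paper's equations~\eqref{EqKStNablaT}--\eqref{EqKStNablaS2T}), and then assemble $\Box_{g,2}=-\tr_g\nabla\nabla$ and sort by powers of $D_t$. The only difference is organizational --- the paper records the connection as explicit block matrices in the splitting $\wt{\Tsc^*}X=\la d t\ra\oplus\Tsc^*X$ (which makes the leading term $\tr_h\nabla^h\nabla^h$ visible in~\eqref{EqKStBox}), whereas you work directly with connection coefficients on $S^2\,\wt{\Tsc^*}X$; the order-counting and the comparison with $g^\flat=d t^2-h$ are the same.
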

\begin{proof}
  We use the splittings
  \begin{subequations}
  \begin{align}
  \label{EqKStBox2Spl1}
    \wt\Tsc{}^*X &= \la d t\ra \oplus \Tsc^*X, \\
  \label{EqKStBox2Spl2}
    S^2\,\wt\Tsc{}^*X &= \la d t^2\ra \oplus (2 d t\otimes_s \Tsc^*X) \oplus S^2\,\Tsc^*X, \\
  \label{EqKStBox2Spl3}
    \wt\Tsc{}^*X \otimes \wt\Tsc{}^*X &= \la d t^2\ra \oplus (d t\otimes \Tsc^*X) \oplus (\Tsc^*X\otimes d t) \oplus (\Tsc^*X\otimes\Tsc^*X), \\
  \label{EqKStBox2Spl4}
    \wt\Tsc{}^*X \otimes S^2\,\wt\Tsc{}^*X &= (d t \otimes S^2\,\wt\Tsc{}^*X) \oplus (\Tsc^*X\otimes S^2\,\wt\Tsc{}^*X), \\
  \label{EqKStBox2Spl5}
  \begin{split}
    S^2\,\wt\Tsc{}^*X\otimes S^2\,\wt\Tsc{}^*X &= (d t^2 \otimes S^2\,\wt\Tsc{}^*X) \oplus \bigl((2 d t\otimes_s\Tsc^*X) \otimes S^2\,\wt\Tsc{}^*X\bigr) \\
      &\hspace{14em} \oplus (S^2\,\Tsc^*X \otimes S^2\,\wt\Tsc{}^*X),
  \end{split}
  \end{align}
  \end{subequations}
  with the $S^2\,\wt\Tsc{}^*X$ factors in~\eqref{EqKStBox2Spl4}--\eqref{EqKStBox2Spl5} further split according to~\eqref{EqKStBox2Spl2}. Thus, writing $\cO^k:=\rho^k\CI(X)$ (and writing $f=\cO^k$ in the spirit of $\cO$-notation instead of $f\in\cO^k$ when $f$ is a smooth function), we have
  \begin{equation}
  \label{EqKStTrace}
    g = (1+\cO^1,\,\cO^2,\,-h+\cO^1 )^T, \quad
    \tr_g = (1+\cO^1,\,\cO^2,\,-\tr_h+\cO^1 )
  \end{equation}
  in the splitting~\eqref{EqKStBox2Spl2} and its dual. The dual metric takes the form $G:=g^{-1}=(1+\cO^1,\cO^2,-h^{-1}+\cO^1)$.

  For subsequent calculations, let us introduce coordinates
  \[
    z=(z^0,z^1,z^2,z^3),
  \]
  where $z^0=t$, and $z^1,z^2,z^3$ are standard coordinates on $\R^3$. We use Latin letters $i,j,k$ for indices from $1$ to $3$, and Greek letters $\mu,\nu,\lambda$ for indices from $0$ to $3$. We first compute the Levi-Civita connection of $g$; for example, $2\Gamma_{0 i j}=\pa_i g_{0 j}+\pa_j g_{0 i}=\cO^3$ since $g$ is stationary, and since $\pa_i\in\rho\Vb(X)$ maps $g_{0 j}\in\rho^2\CI(X)$ into $\rho^3\CI(X)$. The same reasoning shows that all Christoffel symbols of $g$ lie in $\cO^2$ except for
  \begin{equation}
  \label{EqKStGamma}
    \Gamma^k_{i j}=\Gamma^k_{i j}(h)+\cO^2;
  \end{equation}
  by an explicit calculation, some Christoffel symbols have faster decay. We collect all of them by stating the form of $\nabla\in\Diff^1(M^\circ;T^*M^\circ,T^*M^\circ\otimes T^*M^\circ)$, defined by $u\mapsto\nabla u$, $(\nabla u)(V,W)=(\nabla_V u)(W)$: the $d z^\mu\otimes d z^\nu$ coefficient of $\nabla(u\,d z^\lambda)$, with $u$ a scalar function, is given by
  \[
    (\nabla_\mu(u\,d z^\lambda))_\nu = \bigl(\pa_\mu\delta_\nu^\lambda - \Gamma_{\mu\nu}^\lambda\bigr)u.
  \]
  In the splittings~\eqref{EqKStBox2Spl1} and \eqref{EqKStBox2Spl3}, we then have
  \begin{equation}
  \label{EqKStNablaT}
    \nabla^{\wt\Tsc{}^*X}=
    \begin{pmatrix}
      \pa_t+\cO^4 & \cO^2 \\
      \cO^2 & \pa_t+\cO^3 \\
      d_X+\cO^2 & \cO^3 \\
      \cO^3 & \nabla^h+\cO^2
    \end{pmatrix},
  \end{equation}
  where $d_X$ is the exterior differential on $X$, and $\nabla^h$ is the Levi-Civita connection of $h$. Note that $d_X\in\rho\Diffb^1(X;\ul\C,\Tsc^*X)$ (with $\ul\C:=X\times\C\to X$ the trivial bundle) and $\nabla^h\in\rho\Diffb^1(X;\Tsc^*X,\Tsc^*X\otimes\Tsc^*X)$.

  Using this, one computes $\nabla$ acting on symmetric 2-tensors, expressed in the splittings~\eqref{EqKStBox2Spl2} and \eqref{EqKStBox2Spl4} (the latter refined by~\eqref{EqKStBox2Spl2} as explained before), to be
  \begin{equation}
  \label{EqKStNablaS2T}
    \nabla^{S^2\,\wt\Tsc{}^*X}=
    \begin{pmatrix}
      \pa_t+\cO^4 & \cO^2 & 0 \\
      \cO^2 & \pa_t+\cO^3 & \cO^2 \\
      0 & \cO^2 & \pa_t+\cO^3 \\
      d_X+\cO^2 & \cO^3 & 0 \\
      \cO^3 & \nabla^h+\cO^2 & \cO^3 \\
      0 & \cO^3 & \nabla^h+\cO^2
    \end{pmatrix}.
  \end{equation}
  Now $\delta_g=-\tr_g^{1 3}\nabla$ on symmetric 2-tensors, where $\tr_g^{1 3}$ is the contraction of the first and third slot. In the splittings~\eqref{EqKStBox2Spl4} and \eqref{EqKStBox2Spl1}, we have
  \[
    \tr_g^{1 3}=
    \begin{pmatrix}
      1+\cO^1 & \cO^2 & 0 & \cO^2 & -\mathrm{tr}_h+\cO^1 & 0 \\
      0 & 1+\cO^1 & \cO^2 & 0 & \cO^2 & -\mathrm{tr}_h^{1 3}+\cO^1
    \end{pmatrix},
  \]
  hence
  \begin{equation}
  \label{EqVKDiv}
    \delta_g=
    \begin{pmatrix}
      -\pa_t+\rho^3\Diffb^1+\cO^1\pa_t & -\delta_h+\rho^2\Diffb^1+\cO^2\pa_t & \cO^3 \\
      \cO^2 & -\pa_t+\rho^3\Diffb^1+\cO^1\pa_t & -\delta_h+\rho^2\Diffb^1+\cO^2\pa_t
    \end{pmatrix}.
  \end{equation}

  Symmetrizing $\nabla\colon\CI(M^\circ;T^*M^\circ\otimes S^2 T^*M^\circ)\to\CI(M^\circ;T^*M^\circ\otimes T^*M^\circ\otimes S^2 T^*M^\circ)$ in the first two factors on the right and expressing the resulting symmetrized gradient in the splittings~\eqref{EqKStBox2Spl4} and \eqref{EqKStBox2Spl5} (both refined by~\eqref{EqKStBox2Spl2}), one furthermore computes
  \begin{equation}
  \label{EqVKNablaTS2T}
  \begin{split}
    &\mathrm{Sym}\nabla^{\wt\Tsc{}^*X\otimes S^2\wt\Tsc{}^*X} \\
    &\quad=
    \openbigpmatrix{2pt}
      {\pa_t+}\cO^4 & \cO^2 & 0 & \cO^2 & 0 & 0 \\
      \cO^2 & {\pa_t+}\cO^3 & \cO^2 & 0 & \cO^2 & 0 \\
      0 & \cO^2 & {\pa_t+}\cO^3 & 0 & 0 & \cO^2 \\
      {\half d_X+}\cO^2 & \cO^3 & 0 & {\half\pa_t+}\cO^3 & \cO^2 & 0 \\
      \cO^3 & {\half\nabla^h+}\cO^2 & \cO^3 & \cO^2 & {\half\pa_t+}\cO^3 & \cO^2 \\
      0 & \cO^3 & {\half\nabla^h+}\cO^2 & 0 & \cO^2 & {\half\pa_t+}\cO^3 \\
      \cO^3 & 0 & 0 & {\nabla^h+}\cO^2 & \cO^3 & 0 \\
      0 & \cO^3 & 0 & \cO^3 & {\nabla^h+}\cO^2 & \cO^3 \\
      0 & 0 & \cO^3 & 0 & \cO^3 & {\nabla^h+}\cO^2
    \closebigpmatrix,
  \end{split}
  \end{equation}
  where $\nabla^h$ in the $(8,5)$ entry acts on $\Tsc^*X\otimes\Tsc^*X$, which is isomorphic to the $5$-th summand of~\eqref{EqKStBox2Spl4} when refined by~\eqref{EqKStBox2Spl2}, i.e.\ $\Tsc^*X\otimes(2 d t\otimes_s\Tsc^*X)$; the meaning of the other $\nabla^h$ is analogous.
  
  The trace in the first factor on the left of~\eqref{EqKStBox2Spl5}, expressed in the splittings~\eqref{EqKStBox2Spl5} and \eqref{EqKStBox2Spl2}, is given by
  \[
    \tr_g^{1 2}=
      \openbigpmatrix{4pt}
        {1+}\cO^1 & 0 & 0 & \cO^2 & 0 & 0 & \!\!{-}{\mathrm{tr}_h+}\cO^1\!\! & 0 & 0 \\
        0 & {1+}\cO^1 & 0 & 0 & \cO^2 & 0 & 0 & \!\!{{-}\mathrm{tr}_h^{1 2}+}\cO^1 & 0 \\
        0 & 0 & {1+}\cO^1 & 0 & 0 & \cO^2 & 0 & 0 & \!\!{{-}\mathrm{tr}_h^{1 2}+}\cO^1
      \closebigpmatrix.
  \]
  Combining this with~\eqref{EqKStNablaS2T} and \eqref{EqVKNablaTS2T}, one finds that on symmetric 2-tensors,
  \begin{equation}
  \label{EqKStBox}
  \begin{split}
    \Box_g = -\tr_g^{1 2}\nabla\nabla
    &=(1+\cO^1)D_t^2
     +
     \begin{pmatrix}
       \tr_h\nabla^h d_X & 0 & 0 \\
       0 & \tr_h^{1 2}\nabla^h\nabla^h & 0 \\
       0 & 0 & \tr_h^{1 2}\nabla^h\nabla^h
     \end{pmatrix} \\
    &\qquad + \begin{pmatrix} 0 & \cO^2 D_t & 0 \\ \cO^2 D_t & 0 & \cO^2 D_t \\ 0 & \cO^2 D_t & 0 \end{pmatrix} + \rho^3\Diffb^1 D_t + \rho^3\Diffb^2;
  \end{split}
  \end{equation}
  The third and fourth summand lie in $\rho^2\Diffsc^1 D_t$. Lastly, the coefficient of $D_t^2$ can be computed from the principal symbol of $\Box_g$, hence is $|d t|_G^2$ as stated. The proof is complete.
\end{proof}

By~\eqref{EqKStGamma} and the discussion preceding it, the Riemann curvature tensor
\begin{equation}
\label{EqKStRiemann}
  R_g\in\rho^2\CI\bigl(X;\wt\Tsc{}X \otimes (\wt\Tsc{}^*X)^3\bigr)
\end{equation}
is determined, modulo $\rho^3\CI$, by $h$; likewise for the Ricci curvature $\Ric(g)\in\rho^2\CI$. We similarly obtain that the scalar and 1-form wave operators for metrics of the form~\eqref{EqKStBox2Struct} are given by 
\begin{equation}
\label{EqVKOps}
  \Box_{g,j} = |d t_{\chi_0}|_G^2 D_{t_{\chi_0}}^2 + \wh{\Box_{g,j}}(0) + Q_j D_{t_{\chi_0}},\quad j=0,1,
\end{equation}
where $Q_0\in\rho^3\Diffb^1(X)$ and $Q_1\in\rho^2\Diffsc^1(X;\wt\Tsc{}^*X)$, and where $\wh{\Box_{g,j}}(0)\bmod\rho^3\Diffb^2(X)$ only depends on $h$. Indeed, for $j=0$, \eqref{EqKStTrace} and \eqref{EqKStNablaT} imply
\[
  \delta_g=-\tr_g\nabla=(-\pa_t+\rho^3\Diffb^1+\cO^1\pa_t,\,-\delta_h+\rho^2\Diffb^1+\cO^2\pa_t),
\]
which together with $d=(\pa_t,\,d_X)^T$ gives $\Box_{g,0}=\delta_g d=G^{0 0}D_t^2-\Delta_h + \rho^3\Diffb^2+\rho^3\Diffb^1 D_t$. The proof for $j=1$ follows from a simple variant of the calculations of Lemma~\ref{LemmaKStBox2}.

We furthermore note that $\sfG_g\in\CI(X;\End(S^2\,\wt\Tsc{}^*X))$, with $\sfG_g\bmod\rho\CI$ depending only on $h$. Moreover,
\begin{equation}
\label{EqKStDivSymmGrad}
\begin{aligned}
  \delta_g &= -\iota_{d t_{\chi_0}^\sharp}D_{t_{\chi_0}} + \wh{\delta_g}(0), &\quad&
    \wh{\delta_g}(0) \in \rho\Diffb^1(X;S^2\,\wt\Tsc{}^*X,\wt\Tsc{}^*X), \\
  \delta_g^* &= d t_{\chi_0} \otimes_s D_{t_{\chi_0}} + \wh{\delta_g^*}(0), &&
    \wh{\delta_g^*}(0) \in \rho\Diffb^1(X;\wt\Tsc{}^*X,S^2\,\wt\Tsc{}^*X),
\end{aligned}
\end{equation}
with $\wh{\delta_g}(0),\wh{\delta_g^*}(0)\bmod\rho^2\Diffb^1$ only depending on $h$.

In the context of~\eqref{EqKStMetrics2}, it is useful to record the following strengthening of the leading order control: if $g_1,g_2$ are two metrics of the form~\eqref{EqKStBox2Struct} and so that in addition $g_1-g_2\in\rho^2\CI(X;S^2\,\wt\Tsc{}^*X)$, then
\begin{equation}
\label{EqKStSymmGradLeading}
  \wh{\delta_{g_1}^*}(0)-\wh{\delta_{g_2}^*}(0) \in \rho^3\CI(X;\Hom(\wt\Tsc{}^*X,S^2\,\wt\Tsc{}^*X)),
\end{equation}
similarly for other operators, including
\begin{equation}
\label{EqKStBoxLeading}
  \wh{\Box_{g_1,j}}(0)-\wh{\Box_{g_2,j}}(0) \in \rho^4\Diffb^2(X;\wt\Tsc{}^*X),\quad j=0,1,2,
\end{equation}
and $R_{g_1}-R_{g_2}\in\rho^4\CI$.

When $g$ is the Kerr metric, then $g|_{\Tsc X\times\Tsc X}=-h+\rho\CI$ is to leading order equal to the Euclidean metric $h=(d x^1)^2+(d x^2)^2+(d x^3)^2$ on $\R^3$ (equipped with standard coordinates $(x^1,x^2,x^3)$ on $\R^3\setminus B(0,3\bhm)\cong X^\circ\setminus\{r<3\bhm\}$). Thus, the leading order terms at $\rho=0$ are simply those of the corresponding operators on Minkowski space $\R^4=\R_t\times\R^3_x$ with metric
\begin{equation}
\label{EqKStMink}
  \ubar g = d t^2-d x^2.
\end{equation}
But the latter take a very simple form in the standard coordinate trivialization of $\wt\Tsc{}^*X$ by $d t$, $d x^i$, $i=1,2,3$:

\begin{lemma}
\label{LemmaKStNormal}
  Let $N_0=1$, $N_1=4$, $N_2=10$. For $g=g_{(\bhm,\bha)}$, we have
  \[
    \wh{\Box_{g,j}}(0) - \wh{\Box_{\ubar g,j}}(0) \in \rho^3\Diffb^2,
  \]
  where $\Box_{\ubar g}$ is the scalar wave operator on Minkowski space, given by $\Box_{\ubar g,j}=\Box_{\ubar g}\otimes 1_{N_j\times N_j}$ in the standard coordinate basis. Likewise,
  \[
    \delta_g^* - \delta_{\ubar g}^* \in \rho^2\Diffb^1, \quad
    \delta_{g_{(\bhm,\bha)}}^*-\delta_{g_{(\bhm,0)}}^* \in \rho^3\Diffb^1.
  \]
\end{lemma}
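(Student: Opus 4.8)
The plan is to obtain this lemma as an essentially immediate specialization of the structural results established just above it: Lemma~\ref{LemmaKStBox2} and its $j=0,1$ counterparts recorded around~\eqref{EqVKOps}, the description~\eqref{EqKStDivSymmGrad} of $\delta_g^*$, the leading-order comparison~\eqref{EqKStSymmGradLeading}, and the inclusions~\eqref{EqKStMetrics2}. The one fact that makes everything work is that the Kerr metric $g_{(\bhm,\bha)}$, the Schwarzschild metric $g_{(\bhm,0)}$ of the same mass, and the Minkowski metric $\ubar g$ all satisfy the hypotheses~\eqref{EqKStBox2Struct} with the \emph{same} Riemannian scattering metric $h=h_\eucl:=(d x^1)^2+(d x^2)^2+(d x^3)^2$ on $X$, and that moreover $g_{(\bhm,\bha)}-\ubar g\in\rho\CI$ while $g_{(\bhm,\bha)}-g_{(\bhm,0)}\in\rho^2\CI$ by~\eqref{EqKStMetrics2}. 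Since the cited results assert precisely that $\wh{\Box_{g,j}}(0)$ modulo $\rho^3\Diffb^2$, and $\wh{\delta_g^*}(0)$ modulo $\rho^2\Diffb^1$, depend only on $h$, the Kerr-versus-Minkowski bounds drop out at once; and since $\delta_g^*=d t_{\chi_0}\otimes_s D_{t_{\chi_0}}+\wh{\delta_g^*}(0)$ by~\eqref{EqKStDivSymmGrad} with the first summand common to all metrics of the form~\eqref{EqKStBox2Struct}, the bound on $\delta_g^*-\delta_{\ubar g}^*$ follows too.

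Concretely I would first verify that $g_{(\bhm,\bha)}$ is of the form~\eqref{EqKStBox2Struct} with $h=h_\eucl$: the conditions $g_b(\pa_t,\pa_t)\in 1+\rho\CI$ and $g_b|_{\Tsc X\times\Tsc X}=-h_\eucl+\rho\CI$ follow from $g_b-\ubar g\in\rho\CI$ together with $\ubar g=d t_{\chi_0}^2-d r^2-r^2\slg$ and the identification of $d r^2+r^2\slg$ with the Euclidean metric $h_\eucl$ on $X$, while the off-diagonal condition $g_b(\pa_t,V)\in\rho^2\CI$ for $V\in\Tsc X$ is read off from the Boyer--Lindquist form~\eqref{EqKaMetric}, whose only stationary cross term is $\propto a\rho\sin^2\theta\,d t\,d\varphi$ and so contributes $\rho^2\CI$ once the extra factor of $\rho$ gained by pairing the b-vector field $\pa_\varphi$ against a scattering covector is accounted for (this is the assertion, recorded after~\eqref{EqKStMetrics2}, that the angular momentum enters $g_{(\bhm,\bha)}$ only through $\rho^2\CI$ terms). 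Next I would invoke the ``depends only on $h$'' clauses of Lemma~\ref{LemmaKStBox2} (for $j=2$), of the paragraph following~\eqref{EqVKOps} (for $j=0,1$), and of~\eqref{EqKStDivSymmGrad} (for $\delta^*$), applying each to $g=g_{(\bhm,\bha)}$ and to $g=\ubar g$; this yields $\wh{\Box_{g_{(\bhm,\bha)},j}}(0)-\wh{\Box_{\ubar g,j}}(0)\in\rho^3\Diffb^2$ for $j=0,1,2$ and $\wh{\delta_{g_{(\bhm,\bha)}}^*}(0)-\wh{\delta_{\ubar g}^*}(0)\in\rho^2\Diffb^1$, hence $\delta_{g_{(\bhm,\bha)}}^*-\delta_{\ubar g}^*\in\rho^2\Diffb^1$. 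Then I would note that, in the standard coordinate frame $\{d t,d x^i\}$ of $\wt\Tsc{}^*X$ and the induced frame of $S^2\,\wt\Tsc{}^*X$, one has $\Box_{\ubar g,j}=\Box_{\ubar g}\otimes 1_{N_j\times N_j}$ because the Christoffel symbols of $\ubar g$ vanish in the coordinates $(t,x^1,x^2,x^3)$, so that $\nabla$ is the componentwise differential and $\Box_{\ubar g}$ acts diagonally. Finally, the Kerr-versus-Schwarzschild comparison of symmetric gradients is~\eqref{EqKStSymmGradLeading} applied with $g_1=g_{(\bhm,\bha)}$, $g_2=g_{(\bhm,0)}$, which differ by $\rho^2\CI$; cancelling the common $d t_{\chi_0}\otimes_s D_{t_{\chi_0}}$ term leaves $\delta_{g_{(\bhm,\bha)}}^*-\delta_{g_{(\bhm,0)}}^*\in\rho^3\CI(X;\Hom(\wt\Tsc{}^*X,S^2\,\wt\Tsc{}^*X))\subset\rho^3\Diffb^1$.

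I do not expect a serious obstacle, since this lemma is bookkeeping on top of results already in hand. The one input that genuinely requires checking — the ``hard part'', such as it is — is that Lemma~\ref{LemmaKStBox2} admits the asserted $j=1$ analogue, i.e.\ that the $\rho^2$-leading part of $\wh{\Box_{g,1}}(0)$ is determined by $h$ alone (the case $j=0$ being carried out explicitly after~\eqref{EqVKOps}). This amounts to rerunning the Christoffel-symbol bookkeeping from the proof of Lemma~\ref{LemmaKStBox2} at the level of the connection $\nabla^{\wt\Tsc{}^*X}$ of~\eqref{EqKStNablaT}: all connection coefficients coupling the $d t$-component to the spatial components decay at least like $\rho^2$, and the spatial block equals $\nabla^h$ modulo $\rho^2\CI$, so that composing two such gradients and tracing out shows $\wh{\Box_{g,1}}(0)$ agrees, modulo $\rho^3\Diffb^2$, with an operator built from $h$ alone; for $h=h_\eucl$ the latter is the Euclidean Laplacian acting componentwise, as needed. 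A secondary minor point is the $\rho^2$-decay of the stationary off-diagonal metric component used in the first step, which is exactly the statement recorded after~\eqref{EqKStMetrics2}.
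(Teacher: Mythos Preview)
Your proposal is correct and follows essentially the same route as the paper's proof: invoke the ``depends only on $h$'' clauses of Lemma~\ref{LemmaKStBox2} and its $j=0,1$ analogues (together with~\eqref{EqKStDivSymmGrad}) to compare Kerr with Minkowski, observe that $\Box_{\ubar g,j}$ is diagonal in the standard frame since $d t,d x^i$ are parallel for $\ubar g$, and then use~\eqref{EqKStSymmGradLeading} with~\eqref{EqKStMetrics2} for the Kerr-versus-Schwarzschild $\delta^*$ comparison. Your write-up is simply more explicit than the paper's on points already recorded earlier (the verification of~\eqref{EqKStBox2Struct} for Kerr, the $j=1$ bookkeeping), but there is no substantive difference in strategy.
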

\begin{proof}
  By Lemma~\ref{LemmaKStBox2}, the only structure of $g_{(\bhm,\bha)}$ relevant for this calculation is~\eqref{EqKStBox2Struct}, with $h$ the Euclidean metric on $X=\ol{\R^3}$. That is, it suffices to compute $\Box_{\ubar g,j}$, where $\ubar g=d t^2-h$ is the Minkowski metric. The claim is then immediate since $d t$, $d x^i$ are parallel for $\ubar g$. The final statement follows from~\eqref{EqKStSymmGradLeading} combined with~\eqref{EqKStMetrics2}.
\end{proof}

In the language of~\cite{MelroseAPS}, the \emph{normal operators} of $\wh{\Box_{g,j}}(0)$ and $\wh{\Box_{\ubar g,j}}(0)$ at $\pa_+ X$ are the same; this will allow us to deduce precise asymptotic expansions of zero energy modes for waves on Kerr spacetimes from simple calculations on Minkowski in~\S\S\ref{S0}--\ref{SL}.

\subsection{Properties of the null-geodesic flow}
\label{SsKFl}

Fix black hole parameters $b=(\bhm,\bha)$ close to $b_0$, and let
\[
  g=g_{(\bhm,\bha)},\quad
  G=g^{-1}.
\]
We proceed to describe the null-bicharacteristic flow of the spectral family of the wave operator $\Box_g$ on Schwarzschild or slowly rotating Kerr spacetimes. (Since this concerns only the principal symbol of $\Box_g$, which is the dual metric function, this discussion applies to the scalar, 1-form, and symmetric 2-tensor wave operators, as well as to the linearized gauge-fixed Einstein operator $L_g$ in~\eqref{EqOpLinOp} below.) Concretely, recalling the function $t_{\bhm,*}$ from~\eqref{EqKTimeFn}, we are interested in the null-bicharacteristic flow of
\begin{equation}
\label{EqKFl}
  \wh{\Box_g}(\sigma) := e^{i t_{\bhm,*}\sigma}\Box_g e^{-i t_{\bhm,*}\sigma} \in \Diffsc^2(X),
\end{equation}
both for finite $\sigma$ as well as in the semiclassical regime, see~\eqref{EqKFlScl}.

The principal symbol of $-\wh{\Box_g}(\sigma)$ as a large parameter (in $\sigma$) differential operator is
\[
  p(\sigma;\xi) := -G(-\sigma\,d t_{\bhm,*}+\xi) \in \CI(\Tsc^*X\times\C_\sigma),\quad \xi\in \pa_{t_{\bhm,*}}^\perp = \Tsc^*X;
\]
the overall minus sign ensures that $p(\sigma;-)$ has positive principal symbol for $\sigma\in\R$ and large $r$.

\begin{rmk}
  One typically considers the spectral family of $\Box_g$ with respect to another `time' function such as $t_{\chi_0}$, which equals $t$ for large $r$. (Formally taking $b=(0,0)$, so $g_b=d t^2-d r^2-r^2\slg=\ubar g$ is the Minkowski metric, the spectral family with respect to $t$ is $-\Delta+\sigma^2$, with $\Delta\geq 0$ the Euclidean Laplacian.) For future reference, let us thus define
  \[
    \wt{\Box_g}(\sigma) := e^{i t_{\chi_0}\sigma}\Box_g e^{-i t_{\chi_0}\sigma};
  \]
  this can be obtained from $\wh{\Box_g}(\sigma)$ by conjugation by the stationary function $e^{i(t_{\bhm,*}-t_{\chi_0})\sigma}$.
\end{rmk}

The semiclassical rescaling of $-\wh{\Box_g}(\sigma)$ is $-h^2\wh{\Box_g}(h^{-1}z)$ where $h=|\sigma|^{-1}$, $z=\sigma/|\sigma|$, and its semiclassical principal symbol is
\begin{equation}
\label{EqKFlScl}
  p_\semi(\xi) := -G(-z\,d t_{\bhm,*}+\xi) \in \CI(\Tsc^*X).
\end{equation}
For the sake of definiteness, we consider the case $\sigma>0$, so $z=1$. We then define
\[
  \Sigma_\semi \subset \ol{\Tsc^*}X
\]
as the closure of $p_\semi^{-1}(0)$ in the (fiber-wise) radially compactified scattering cotangent bundle. Note that on the set where $\pa_{t_{\bhm,*}}=\pa_t$ is timelike, in particular for large $r$, $p_\semi(\xi)$ is classically elliptic since $p_\semi(\xi)\gtrsim|\xi|^2$ (with $|\cdot|$ denoting the Euclidean metric) for large $|\xi|$. We shall consider the rescaled Hamilton flow of $p_\semi$, namely, the flow of the vector field
\[
  \sfH := |\xi|^{-1}\rho^{-1}H_{p_\semi}\in\Vb(\ol{\Tsc^*}X).
\]

The structure of the $\sfH$-flow on subextremal Kerr spacetimes has been described in detail before: by the third author \cite[\S6]{VasyMicroKerrdS} on Kerr--de~Sitter spacetimes, which are very similar to Kerr spacetimes except for the presence of a cosmological horizon; by Dyatlov~\cite[\S\S3.1--3.3]{DyatlovWaveAsymptotics} on subextremal Kerr spacetimes, building on the work by Wunsch--Zworski~\cite{WunschZworskiNormHypResolvent}, and with refinements in the presence of bundles due to the Dyatlov~\cite{DyatlovSpectralGaps} and the second and third authors \cite{HintzPsdoInner,HintzVasyKdSStability,HintzKNdSStability,HintzPolyTrap}; see also \cite{DyatlovZworskiTrapping}. (We refer the reader to \cite{ZworskiResonanceReview} for a survey of trapping phenomena.) Vasy--Zworski \cite{VasyZworskiScl} analyzed the semiclassical scattering behavior near $\pa X$ for the null-bicharacteristic flow of the semiclassical principal symbol of $-\sigma^{-2}\wt{\Box_g}(\sigma)$, which is the same, up to a canonical transformation, as that of $p_\semi$. In the non-semiclassical setting, i.e.\ for fixed $\sigma$, the description of the Hamilton dynamics within the characteristic set of $\pa X$ is due to Melrose~\cite{MelroseEuclideanSpectralTheory}; the remaining part of the characteristic set in this setting lies over $r\leq 2\bhm$ on a mass $\bhm$ Schwarzschild spacetime, and in a neighborhood thereof for slowly rotating Kerr spacetimes.

Here, we shall thus merely list (without proof) the relevant properties of the flow. To begin, over $\pa_+X$, we have $G(-d t_{\bhm,*}+\xi)=|{-}d t+(d r+\xi)|_{\ubar G}^2=1-|d r+\xi|^2$, where $\ubar G=\ubar g^{-1}$ is the dual of the Minkowski metric, hence
\[
  \Sigma_{\semi,\pa} := \Sigma_\semi \cap \Tsc^*_{\pa X}X = \{ \zeta-d r\colon |\zeta|^2=1 \}.
\]
Note that the semiclassical characteristic set of $-\sigma^{-2}\wt{\Box_g}(\sigma)$ over $\pa X$ is the zero set of $G(-d t+\xi)$, i.e.\ equal to $\Sigma_{\semi,\pa}+d r$. More generally, conjugation by $e^{i(t_{\bhm,*}-t_{\chi_0})\sigma}$ (is a semiclassical scattering FIO which) shifts the characteristic set by $d(t_{\bhm,*}-t_{\chi_0})$, but preserves the qualitative properties of the null-bicharacteristic flow discussed here.

First of all, $\Sigma_{\semi,\pa}$ has two distinguished submanifolds (of radial points),
\begin{equation}
\label{EqKFlRadInfty}
  \cR_{\rm in} = \{ -2 d r \in \Tsc^*_p X,\ p\in\pa X \},\quad
  \cR_{\rm out} = o_{\pa X} \subset \Tsc^*_{\pa X}X,
\end{equation}
with $o_{\pa X}$ denoting the zero section; these are critical manifolds for the rescaled Hamilton vector field $\rho^{-1}H_{p_\semi}$ within $\Sigma_{\semi,\pa}$ (see \cite{MelroseEuclideanSpectralTheory}) and in fact within all of $\Sigma_\semi$ (see \cite{VasyZworskiScl}), with $\cR_{\rm in}$ being a source and $\cR_{\rm out}$ a sink. (Indeed, the linearization of $\rho^{-1}H_{p_\semi}$ at $\cR_{\rm out}$, resp.\ $\cR_{\rm in}$, is $-2(\rho\pa_\rho+\eta_\scop\pa_{\eta_\scop})$, resp.\ $2(\rho\pa_\rho+\eta_\scop\pa_{\eta_\scop})$, where we write scattering covectors as $\xi_r\,d r+r\eta_\scop$ with $\eta_\scop\in T^*\Sph^2$; this calculation uses that to leading order at $\pa X$ we have $p_\semi=(1+\xi_r)^2+|\eta_\scop|^2-1$.)

Next, globally, $\Sigma_\semi$ has two connected components,
\begin{equation}
\label{EqKFlSigmaComp}
  \Sigma_\semi = \Sigma_\semi^+ \cup \Sigma_\semi^-,
\end{equation}
with $\Sigma_{\semi,\pa}\subset\Sigma_\semi^+$; they are defined as the intersection of the future ($+$), resp.\ past ($-$) light cone in $\ol{\Tsc^*X}$ intersected with $\ol{-d t_{\bhm,*}+\Tsc^*X}$. (Over a point $p\in X$, $\Sigma_\semi^-$ is empty unless $\Tsc_p^*X$ is timelike or null, i.e.\ unless $\pa_t$ is spacelike or null, i.e.\ unless $p$ lies in the ergoregion, on the event horizon, or inside the black hole.)

We recall that fiber infinity of the conormal bundle of the event horizon $\pa_-\cX$, see~\eqref{EqK0StaticBdy}, has two components
\[
  \pa\bigl(\ol{N^*\pa_-\cX}\bigr) = \cR_\semi^+ \cup \cR_\semi^-,\quad \cR_\semi^\pm\subset\Sigma_\semi^\pm,
\]
which are invariant under the $\sfH$-flow.

Finally, recall that there is a \emph{trapped set} $\Gamma\subset\Sigma_\semi^+\cap T^*X^\circ$ consisting of all $\alpha\in T^*X^\circ$ such that $r$ remains in a compact subset of $(r_{(\bhm,\bha)},\infty)$ along the $\sfH$-integral curve with initial condition $\alpha$. The trapped set is $\sfr$-normally hyperbolic for every $\sfr\in\R$ \cite{HirschPughShubInvariantManifolds} as proved in~\cite{WunschZworskiNormHypResolvent,DyatlovWaveAsymptotics}.

Recalling the definition of the final hypersurface $\Sigma_{\rm fin}$ from \eqref{EqK0SurfFin}, the global structure of the $\sfH$-flow is then as follows:
\begin{prop}
\label{PropKFl}
  Let $s\mapsto\gamma(s)\subset\Sigma_\semi^\pm$ be a maximally extended integral curve of $\pm\sfH$ with domain of definition $I\subseteq\R$; let $s_-=\inf I$, $s_+=\sup I$.
  \begin{enumerate}
  \item If $\gamma\subset\Sigma_\semi^-$, then either $\gamma\subset\cR_\semi^-$; or $\gamma(s)\to\cR_\semi^-$ as $s\to s_-$, and $\gamma(s)$ crosses $\Sigma_{\rm fin}$ into the inward direction (decreasing $r$) in finite time $s_+<\infty$.
  \item If $\gamma\subset\Sigma_\semi^+$, then either:
    \begin{enumerate}
    \item $\gamma\subset\Gamma\cup\cR_\semi^+\cup\cR_{\rm in}\cup\cR_{\rm out}$; or
    \item as $s\to s_-$, $\gamma(s)$ tends to $\cR_\semi^+\cup\cR_{\rm in}\cup\Gamma$, and as $s\to s_+$, $\gamma(s)$ tends to $\cR_{\rm out}\cup\Gamma$ or crosses $\Sigma_{\rm fin}$ into the inward direction (decreasing $r$) in finite time $s_+<\infty$. Moreover, $\gamma(s)$ cannot tend to $\Gamma$ in both the forward and backward direction.
    \end{enumerate}
  \end{enumerate}
\end{prop}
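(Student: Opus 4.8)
The plan is to reduce everything to the Schwarzschild case and then assemble three local normal forms of the $\sfH$-flow, glued by a monotonicity argument. Since $\sfH$ depends only on the principal symbol $p_\semi$ (the factor $|\xi|^{-1}\rho^{-1}$ in its definition is positive, hence a mere reparametrization that affects neither the orbits nor which of the limit sets below are attained), and since $g_{(\bhm,\bha)}-g_{(\bhm_0,0)}\in\rho^2\CI(X;S^2\,\wt\Tsc{}^*X)$, the whole picture is stable under passing from a mass $\bhm_0$ Schwarzschild metric to a slowly rotating Kerr metric: $\cR_{\rm in},\cR_{\rm out}$ are governed by the Minkowskian leading term and are unchanged, $\cR_\semi^\pm$ and $\Gamma$ deform smoothly with $b$, and the relevant hyperbolicity statements — $\cR_{\rm in}$ a source, $\cR_{\rm out}$ a sink, $\cR_\semi^\pm$ the standard radial sets of a principally scalar wave operator at a non-degenerate Killing horizon, and $\Gamma$ being $\sfr$-normally hyperbolic for all $\sfr$ — are open, quantitative conditions. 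So I would prove the statement for $g=g_{b_0}$, where the null-geodesic flow is integrable and, by conservation of the energy $\la\xi,\pa_t\ra$ and of the angular momentum, the radial motion reduces to a one-dimensional equation in which a conserved ``energy'' is balanced against an effective potential having a single nondegenerate maximum at the photon sphere $r=3\bhm_0$. (The cited works \cite{MelroseEuclideanSpectralTheory,VasyZworskiScl,VasyMicroKerrdS,DyatlovWaveAsymptotics,WunschZworskiNormHypResolvent} carry out the analogous analysis directly on Kerr and Kerr--de~Sitter spacetimes; below I only indicate how the pieces fit.)

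Next I would record the three local models. \emph{Over $\pa_+X$:} the computation preceding the statement gives $p_\semi=(1+\xi_r)^2+|\eta_\scop|^2-1$ to leading order at $\pa X$, whence $\rho^{-1}H_{p_\semi}$ linearizes to $+2(\rho\pa_\rho+\eta_\scop\pa_{\eta_\scop})$ at $\cR_{\rm in}$ and to $-2(\rho\pa_\rho+\eta_\scop\pa_{\eta_\scop})$ at $\cR_{\rm out}$; by \cite{MelroseEuclideanSpectralTheory} these are the distinguished critical manifolds of the flow in $\Sigma_{\semi,\pa}$, and by \cite{VasyZworskiScl} they remain a source, resp.\ a sink, within all of $\Sigma_\semi$. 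Since $\pa_t$ is timelike over $\pa_+X$, we have $\Sigma_\semi^-\cap\Tsc^*_{\pa_+X}X=\emptyset$, so flowlines in $\Sigma_\semi^-$ never approach $\pa_+X$. \emph{Near the event horizon $\pa_-\cX$:} $\cR_\semi^\pm$ are radial sets at fiber infinity with the standard source/sink structure for black hole spacetimes \cite{VasyMicroKerrdS,HintzVasyKdSStability}; a flowline in $\Sigma_\semi^\pm$ not contained in $\cR_\semi^\pm$ is driven off the horizon and, upon crossing it transversally, proceeds inward (decreasing $r$) or returns outward. \emph{Near the trapped set:} $\Gamma$ is, by definition, the maximal flow-invariant subset of $\Sigma_\semi^+$ lying over a compact subset of $(r_{(\bhm,\bha)},\infty)$, and $\sfr$-normal hyperbolicity provides a function monotone along the flow and strictly so off $\Gamma$ — equivalently, codimension-one stable and unstable manifolds $\Gamma^s,\Gamma^u$ with $\Gamma^s\cap\Gamma^u=\Gamma$ \cite{WunschZworskiNormHypResolvent,DyatlovWaveAsymptotics,HirschPughShubInvariantManifolds}.

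I would then glue these over $r_{(\bhm,\bha)}<r<\infty$ by the potential picture of the radial equation, which governs the radial dynamics of a flowline $\gamma\subset\Sigma_\semi^+$: with super-critical energy it is radially monotone and runs from one end of the flow to the other; with the critical energy it is trapped, so $\gamma\subset\Gamma$, or $\gamma$ lies on $\Gamma^s\cup\Gamma^u$ and converges to $\Gamma$ in exactly one time direction while rolling off the photon sphere — outward or inward — in the other; with sub-critical energy it has a single turning point and returns. Collecting the possibilities: as $s\to s_+$, $\gamma$ tends to $\cR_{\rm out}$, or converges to $\Gamma$, or passes through $\pa_-\cX$ and crosses $\Sigma_{\rm fin}$ in finite time; as $s\to s_-$, $\gamma$ tends to $\cR_{\rm in}$ or $\cR_\semi^+$, or converges to $\Gamma$. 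This yields the dichotomy~(a)/(b), and $\gamma$ cannot converge to $\Gamma$ in both directions, by the monotone quantity attached to $\Gamma$ (equivalently $\Gamma^s\cap\Gamma^u=\Gamma$). For $\gamma\subset\Sigma_\semi^-$, which lies over the compact set formed by the ergoregion, the event horizon and the black hole interior and carries no flow-invariant structure beyond $\cR_\semi^-$: either $\gamma\subset\cR_\semi^-$, or $\gamma\to\cR_\semi^-$ as $s\to s_-$ and, being unable to reach $\pa_+X$, must exit this compact set in forward time, which can only happen by crossing $\Sigma_{\rm fin}$ inward in finite time.

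The hard part will be the gluing step: showing that no flowline lingers in a compact $r$-range other than along $\Gamma$, and that the accumulation sets of flowlines are exactly the listed ones. In the exact Schwarzschild or Kerr case this is pinned down by the explicit radial potential; the delicate point for slowly rotating parameters is that the perturbation must neither create additional trapping nor spoil the eventual monotonicity of $r$ away from the photon sphere — precisely what the openness of $\sfr$-normal hyperbolicity of $\Gamma$ and of the source/sink properties of $\cR_{\rm in},\cR_{\rm out},\cR_\semi^\pm$ guarantees. One also has to handle carefully the radially compactified fiber picture at fiber infinity over $\pa_-\cX$ (where $\cR_\semi^\pm$ sit) and verify the transversal, finite-time crossing of the spacelike surface $\Sigma_{\rm fin}$.
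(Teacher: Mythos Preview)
Your sketch is correct in spirit and aligns with the literature the paper cites; note, however, that the paper does \emph{not} give a proof of this proposition at all. It explicitly states that the structure of the $\sfH$-flow has been described in detail in prior work (\cite{VasyMicroKerrdS,DyatlovWaveAsymptotics,WunschZworskiNormHypResolvent,VasyZworskiScl,MelroseEuclideanSpectralTheory}) and that it will ``merely list (without proof) the relevant properties of the flow.'' So there is no paper proof to compare against; your proposal is effectively a summary of how those references establish the result, and as such it is accurate.

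One small correction: you write $g_{(\bhm,\bha)}-g_{(\bhm_0,0)}\in\rho^2\CI$, but this holds only when $\bhm=\bhm_0$; in general one has $g_b-\ubar g\in\rho\CI$ and $g_{(\bhm,\bha)}-g_{(\bhm,0)}\in\rho^2\CI$, see~\eqref{EqKStMetrics2}. This does not affect your argument, since the leading behavior at $\pa_+X$ is Minkowskian regardless, and the radial sets $\cR_{\rm in},\cR_{\rm out}$ are indeed unchanged; but the intermediate comparison to Schwarzschild of mass $\bhm_0$ should go via Schwarzschild of mass $\bhm$ (or directly via $\ubar g$). Also, for Kerr itself one can bypass the perturbation-from-Schwarzschild step entirely: Dyatlov~\cite[\S\S3.1--3.3]{DyatlovWaveAsymptotics} analyzes the null-geodesic flow on subextremal Kerr directly using the complete integrability (Carter constant), which gives the effective radial potential picture without any smallness assumption on $\bha$. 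Your perturbative argument is nonetheless valid for the slowly rotating regime considered here.
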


Next, we discuss the properties of the null-bicharacteristic flow of $\wh{\Box_g}(\sigma)\in\Diffsc^2(X)$ when $\sigma\in\R$ is \emph{fixed}; concretely, we consider $\sfH:=|\xi|^{-1}\rho^{-1}H_{p(\sigma;-)}$. Since $\wh{\Box_g}(\sigma)$ is a scattering differential operator, its characteristic set is a subset of $\pa\ol{\Tsc^*X}=\ol{\Tsc^*_{\pa X}}X\sqcup S^*X^\circ$, and indeed is a disjoint union
\[
  \Sigma_\sigma = \Sigma_{\sigma,\pa} \cup \Sigma_{\sigma,\infty},
  \qquad \Sigma_{\sigma,\pa} = \{ \zeta-\sigma\,d r \colon |\zeta|^2=\sigma^2 \}, \quad
  \Sigma_{\sigma,\infty} = \Sigma_\sigma \cap S^*X^\circ.
\]
Note here that on $X^\circ$, $\wh{\Box_g}(\sigma)$ is elliptic where $\pa_{t_{\bhm,*}}=\pa_t$ is timelike, which is in particular true for large $r$; the component $\Sigma_{\sigma,\infty}$ lies over $r\leq 2\bhm$ for $g=g_{(\bhm,0)}$, and in a neighborhood thereof for small angular momenta. In fact, $\Sigma_{\sigma,\infty}$ is the boundary at fiber infinity of $\Sigma_\semi$, and as such has two connected components $\Sigma_{\sigma,\infty}^\pm$ as a consequence of~\eqref{EqKFlSigmaComp}. Fiber infinity of the conormal bundle of the event horizon,
\[
  \cR^\pm:=\cR_\semi^\pm\subset S^*X^\circ,
\]
is again an invariant submanifold of $\sfH$. The analogue of Proposition~\ref{PropKFl} is that maximally extended $\pm\sfH$-integral curves inside of $\Sigma_{\sigma,\infty}$ tend to $\cR^\pm$ in one direction and escape through $\Sigma_{\rm fin}$ in the other; integral curves in $\Sigma_{\sigma,\pa}$ on the other hand tend to
\[
  \cR_{\sigma,\rm in}=\{-2\sigma\,d r\},\quad \text{resp.}\quad \cR_{\sigma,\rm out}=o_{\pa X}
\]
in the backward, resp.\ forward direction.

In the case $\sigma=0$, $p(0,-)$ vanishes quadratically at $o_{\pa X}\subset\Tsc^*_{\pa X}X$, and in fact $\wh{\Box_g}(0)\in\rho^2\Diffb^2(X)$ by Proposition~\ref{PropKStBox2} and equation~\eqref{EqVKOps}. (The degeneracy of $\Sigma_{\sigma,\pa}$ as $\sigma\to 0$ can be resolved by working on a resolution of a parameterized version $[0,1)_\sigma\times\ol{\Tb^*X}$ of phase space, see \cite{VasyLowEnergyLag}.) Away from $\pa X$ on the other hand, thus in $\Sigma_{0,\infty}$, the characteristic set and null-bicharacteristic flow remain non-degenerate, i.e.\ have the same structure as for non-zero real $\sigma$.

\section{The gauge-fixed Einstein operator}
\label{SOp}

We now commence the study of the Einstein equation in a wave map (or DeTurck \cite{DeTurckPrescribedRicci}, or generalized harmonic coordinate \cite{ChoquetBruhatLocalEinstein,FriedrichHyperbolicityEinstein}) gauge. We shall deduce a significant amount of information from the structural and dynamical properties of Kerr metrics discussed in~\S\S\ref{SsKSt}--\ref{SsKFl}; only once we turn to obtaining very precise (spectral) information in subsequent sections do we need to use their exact form.

\subsection{The unmodified gauge-fixed Einstein operator}

We first study the gauge-fixed operator arising from a natural wave map gauge which we already used in the Kerr--de~Sitter setting in \cite{HintzVasyKdSStability}, following \cite{GrahamLeeConformalEinstein}:

\begin{definition}
\label{DefOpGauge}
  Given two pseudo-Riemannian metrics $g,g^0$ on $M^\circ$, we denote the \emph{gauge 1-form} by
  \[
    \Ups(g;g^0) := g(g^0)^{-1}\delta_g\sfG_g g^0.
  \]
  In local coordinates, $\Ups(g;g^0)_\mu=g_{\mu\nu}(g^{-1})^{\kappa\lambda}(\Gamma(g)^\nu_{\kappa\lambda}-\Gamma(g^0)^\nu_{\kappa\lambda})$.
\end{definition}

Fixing $g^0$, the \emph{(unmodified) gauge-fixed Einstein operator} is then the map
\begin{equation}
\label{EqOpEin}
  g \mapsto P(g) := \Ric(g) - \delta_g^*\Ups(g;g^0).
\end{equation}
(Modifications are discussed in~\S\ref{SsOpCD}.) For any choice of $g^0$, the equation $P(g)=0$ is a quasilinear wave equation for $g$ with Lorentzian signature. Fix Kerr black hole parameters
\[
  b=(\bhm,\bha),
\]
and let $g^0=g_b$. The linearization of $2 P$ around $g=g_b$ is then given by the operator
\begin{equation}
\label{EqOpLinOp}
\begin{split}
  &L_{g_b} := 2 D_{g_b} P = 2(D_{g_b}\Ric + \delta_{g_b}^*\delta_{g_b}\sfG_{g_b}) = \Box_{g_b,2} + 2\sR_{g_b}, \quad (\sR_{g_b} u)_{\mu\nu} := (R_{g_b})^\kappa{}_{\mu\nu\lambda}u_\kappa{}^\lambda,
\end{split}
\end{equation}
see \cite{GrahamLeeConformalEinstein}. We call $L_{g_b}$ the \emph{linearized (unmodified) gauge-fixed Einstein operator}. By Proposition~\ref{PropKStBox2} and the membership~\eqref{EqKStRiemann}, and writing $\rho=r^{-1}$, we have
\begin{equation}
\label{EqOpLinOpSt}
\begin{split}
  &L_{g_b} = |d t_{\chi_0}|_G^2 D_{t_{\chi_0}}^2 + \wh{L_{g_b}}(0) + Q D_{t_{\chi_0}}, \\
  &\qquad \wh{L_{g_b}}(0)\in\rho^2\Diffb^2(X;S^2\,\wt\Tsc{}^*X), \quad
  Q \in \rho^2\Diffsc^1(X;S^2\,\wt\Tsc{}^*X).
\end{split}
\end{equation}

As motivated in~\S\ref{SI}, we consider the spectral family of $L_{g_b}$ with respect to the function $t_{\bhm,*}$ from~\eqref{EqKTimeFn}; the level sets of $t_{\bhm,*}$ are approximately null in the sense that
\begin{equation}
\label{EqOpLinTimeFn}
  |d t_{\bhm,*}|^2_{G_b} \in \rho^2\CI(X).
\end{equation}
(On the other hand, we only have $|d t_{\bhm',*}|^2_{G_b}\in\rho\CI(X)$ when $\bhm'\neq\bhm$.) Thus, let
\begin{equation}
\label{EqOpLinFT}
  \wh{L_{g_b}}(\sigma) := e^{i\sigma t_{\bhm,*}}L_{g_b}e^{-i\sigma t_{\bhm,*}} \in \Diff^2(X;S^2\,\wt{\Tsc^*}X).
\end{equation}
We prove that this fits into the framework of~\cite{VasyLAPLag,VasyLowEnergyLag}. We work in the collar neighborhood $[0,(3\bhm)^{-1})_\rho\times\Sph^2$ of $\pa X$.

\begin{lemma}
\label{LemmaOpLinFT}
  Let $\rho=r^{-1}$. The operator $\wh{L_{g_b}}(\sigma)$ has the form
  \begin{equation}
  \label{EqOpLinFT2}
    \wh{L_{g_b}}(\sigma) = 2\sigma\rho(\rho D_\rho+i) + \wh{L_{g_b}}(0) + \sigma Q' + \alpha\sigma^2,
  \end{equation}
  where $Q'\in\rho^2\Diffsc^1(X;S^2\,\wt{\Tsc^*}X)$, $\alpha\in\rho^2\CI(X)$, and $\wh{L_{g_b}}(0)\in\rho^2\Diffb^2(X;S^2\,\wt{\Tsc^*}X)$.
\end{lemma}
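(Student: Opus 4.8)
The plan is to start from the decomposition~\eqref{EqOpLinOpSt} of the stationary operator $L_{g_b}$ in the $t_{\chi_0}$ time function, and to track how conjugation by $e^{i\sigma t_{\bhm,*}}$ rewrites it. First I would expand, using $L_{g_b} = |d t_{\chi_0}|^2_{G_b}D_{t_{\chi_0}}^2 + \wh{L_{g_b}}(0) + Q D_{t_{\chi_0}}$ and the fact that $t_{\bhm,*}-t_{\chi_0}$ is a (stationary) smooth function on $X^\circ$ that equals $-r_*-r = -(2r + 2\bhm\log(r-2\bhm))$ up to bounded terms, hence $\rho D_\rho(t_{\bhm,*}-t_{\chi_0})$ is a smooth multiple of $\rho^{-1}$, i.e. $d(t_{\bhm,*}-t_{\chi_0})$ is a scattering $1$-form whose $dr$-component is $-2+\cO(\rho)$. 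Since $\Box_{g_b}$ and hence $L_{g_b}$ commutes with $\pa_{t_{\chi_0}}=\pa_{t_{\bhm,*}}$, and $e^{i\sigma t_{\bhm,*}}L_{g_b}e^{-i\sigma t_{\bhm,*}}$ replaces $D_{t_{\chi_0}}$ by $D_{t_{\bhm,*}}-\sigma$ formally — more precisely, conjugating the $\ft$-stationary operator by $e^{i\sigma(t_{\bhm,*}-t_{\chi_0})}$ after passing to the $t_{\bhm,*}$-picture — one collects the $\sigma^0$, $\sigma^1$, $\sigma^2$ terms.

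The $\sigma^2$ coefficient is $|d t_{\bhm,*}|^2_{G_b}$, which by~\eqref{EqOpLinTimeFn} lies in $\rho^2\CI(X)$; this gives $\alpha\in\rho^2\CI(X)$. The $\sigma^0$ coefficient is, by construction, the spectral family at zero $\wh{L_{g_b}}(0)\in\rho^2\Diffb^2(X;S^2\,\wt{\Tsc^*}X)$, using~\eqref{EqOpLinOpSt} and the remark after~\eqref{EqKStStat} that switching from $t_{\chi_0}$ to $t_{\bhm,*}$ conjugates $\wh{L_{g_b}}(0)$ only by a $\sigma$-independent factor (so at $\sigma=0$ it is literally unchanged). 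The $\sigma^1$ coefficient is where the structure~\eqref{EqOpLinFT2} is genuinely being asserted: it equals $Q - 2|d t_{\chi_0}|^2_{G_b}D_{t_{\chi_0}}(t_{\bhm,*}-t_{\chi_0}) + (\text{first-order terms from }\wh{L_{g_b}}(0)\text{ acting through }e^{i\sigma(t_{\bhm,*}-t_{\chi_0})})$, which I would reorganize. The leading piece comes from the $D_{t_{\chi_0}}^2$ term: its cross term produces $2 |dt_{\chi_0}|^2_{G_b}\,(\pa_r(t_{\bhm,*}-t_{\chi_0}))\,D_r$-type contributions, but more invariantly it is $2 G_b(d t_{\bhm,*}, d t_{\bhm,*} - d t_{\chi_0})$-weighted, plus $\wh{L_{g_b}}(0)$'s first-order part hitting the exponential. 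The key computation is to show that the resulting first-order operator, minus the explicit model term $2\sigma\rho(\rho D_\rho + i)$, lies in $\sigma\rho^2\Diffsc^1$. Near $\pa_+X$ one uses that $G_b = \ubar G + \rho\CI$ with $\ubar G$ the Minkowski dual metric, for which $|dt|^2_{\ubar G}=1$, $\ubar G(dt,dr)=0$... wait, rather $dt_{\bhm,*} = dt - dr_*$ for large $r$ with $dr_* = \mu^{-1}dr$, so $|dt_{\bhm,*}|^2_{\ubar G} = 1 - \mu^{-2}\cdot(-1)^{-1}\cdots$ — in any case the model Minkowski computation $\ubar g = dt^2 - dr^2 - r^2\slg$ gives that conjugating $\Box_{\ubar g}$ by $e^{i\sigma(t - r)}$ (the flat analogue, since $t_{\bhm,*} \to t - r - 2\bhm\log$ and $t_{\chi_0}\to t$) produces exactly the term $2\sigma\rho(\rho D_\rho + i) = -2\sigma(\pa_r + r^{-1})$ in $\rho D_\rho = -r\pa_r$ variables, matching $-2\sigma\pa_r - 2\sigma r^{-1}$.

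Concretely the steps are: (i) record $d(t_{\bhm,*}-t_{\chi_0})\in\CI(X;\Tsc^*X)$ with leading term $-2\,dr/\rho$... more precisely $\rho\cdot d(t_{\bhm,*}-t_{\chi_0})$ is a smooth scattering $1$-form equal to $-2\rho^{-1}\cdot\rho^2\pa_r^{\mathrm{dual}}$... I would phrase it as: $t_{\bhm,*}-t_{\chi_0} = -(r_* + r)(1-\chi) + \cdots$ so $\pa_r(t_{\bhm,*}-t_{\chi_0}) = -(1+\mu^{-1})(1-\chi)+\cdots = -2 + \cO(\rho)$ for large $r$; (ii) expand $e^{i\sigma t_{\bhm,*}}L_{g_b}e^{-i\sigma t_{\bhm,*}}$ by writing it as the $t_{\bhm,*}$-spectral family and using~\eqref{EqOpLinOpSt}, peeling off powers of $\sigma$; (iii) identify the $\sigma^2$ term via~\eqref{EqOpLinTimeFn} and the $\sigma^0$ term as $\wh{L_{g_b}}(0)$; (iv) for the $\sigma^1$ term, compute modulo $\rho^2\Diffsc^1$ using only the Minkowski model structure (Lemma~\ref{LemmaKStNormal} and~\eqref{EqKStMetrics2}), where $L_{\ubar g}$ is principally $\Box_{\ubar g}\otimes\Id$, and the $dt^2\mapsto(dt-dr)^2$ conjugation of $\Box_{\ubar g}=\pa_t^2 - \pa_r^2 - \tfrac{2}{r}\pa_r - r^{-2}\slDelta$ gives the first-order-in-$\sigma$ part $2\sigma i(\pa_r + r^{-1})=2\sigma\rho(\rho D_\rho+i)$; (v) check that the subleading Schwarzschild and Kerr corrections, and the logarithmic correction in $t_{\bhm,*}$ versus $t-r$, contribute only to $\sigma Q'$ with $Q'\in\rho^2\Diffsc^1$, using $\mu_\bhm - 1\in\rho\CI$ and~\eqref{EqKStMetrics2}. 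The main obstacle I expect is bookkeeping in step (v): verifying that all the error terms — the $\rho\CI$ corrections to the metric, the $2\bhm\log(r-2\bhm)$ tail in $t_{\bhm,*}$, and the first-order part of $\wh{L_{g_b}}(0)$ interacting with the exponential — really do land in $\rho^2\Diffsc^1$ rather than merely $\rho\Diffsc^1$, which is exactly why the \emph{specific} choice of $t_{\bhm,*}$ (rather than $t_{\chi_0}$ or $t_{\bhm',*}$ with $\bhm'\neq\bhm$) is forced by~\eqref{EqOpLinTimeFn}; this is a routine but delicate $\cO$-calculus computation entirely analogous to the proof of Lemma~\ref{LemmaKStBox2}.
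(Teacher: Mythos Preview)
Your approach is essentially the same as the paper's: both start from the decomposition~\eqref{EqOpLinOpSt} and pass from the $t_{\chi_0}$-picture to the $t_{\bhm,*}$-picture (the paper phrases this as the coordinate change $\pa_r\mapsto\pa_r-(\mu_\bhm^{-1}+\rho^2\CI)\pa_{t_{\bhm,*}}$ rather than as conjugation by $e^{i\sigma(t_{\bhm,*}-t_{\chi_0})}$, but these are equivalent), then track each of the three pieces. One slip in your step~(i): for large $r$ one has $t_{\bhm,*}-t_{\chi_0}=-(r+2\bhm\log(r-2\bhm))=-r_*$, not $-(r_*+r)$, so $\pa_r(t_{\bhm,*}-t_{\chi_0})=-\mu_\bhm^{-1}=-1+\cO(\rho)$ rather than $-2+\cO(\rho)$; your Minkowski model in step~(iv) in fact uses the correct value (conjugation by $e^{i\sigma(t-r)}$) and produces the right leading term $2\sigma\rho(\rho D_\rho+i)$, so the slip does not propagate.
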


As a concrete illustration, we note that on Minkowski space with metric $\ubar g=d t^2-d r^2-r^2\slg$, and taking the Fourier transform in $t-r$, we have
\[
  \wh{L_{\ubar g}}(\sigma) = \wh{\Box_{\ubar g,2}}(\sigma)=2\sigma\rho(\rho D_\rho+i) + \bigl( -(\rho^2 D_{\rho})^2+2 i\rho^3 D_\rho-\rho^2\slDelta \bigr).
\]
We also remark that the form of the first term in~\eqref{EqOpLinFT2} is consistent with~\cite[Lemma~3.8]{HintzVasyMink4} (taking $\gamma=h=0$ in the reference); see also the proof of Lemma~\ref{LemmaPfIVP}.

\begin{proof}[Proof of Lemma~\ref{LemmaOpLinFT}]
  Changing from $t_{\chi_0},r$ coordinates in~\eqref{EqOpLinOpSt} to $t_{\bhm,*},r$ coordinates, with $t_{\bhm,*}\equiv t_{\chi_0}-(r+2\bhm\log(r-2\bhm))$ modulo bounded smooth functions in $r\geq 3\bhm$, transforms $\pa_{t_{\chi_0}}$, $\pa_r$ into $\pa_{t_{\bhm,*}}$, $\pa_r-((1-\tfrac{2\bhm}{r})^{-1}+\rho^2\CI)\pa_{t_{\bhm,*}}$.
  
  Thus, since $Q$ is a sum of terms of the form $\rho^2\CI$, $\rho^2 D_r$, $\rho^3\cV(\Sph^2)$, the spectral family of $Q D_{t_{\chi_0}}$ is of the form $\sigma\rho^2\Diffsc^1+\sigma^2\rho^2\CI$. In a similar vein, the $t_{\bhm,*}$-spectral family of an operator in $\rho^4\Diffb^2(X;S^2\,\wt{\Tsc^*}X)$, extended by stationarity using the time function $t_{\chi_0}$, lies in the space $\rho^4\Diffb^2+\sigma\rho^2\Diffsc^1+\sigma^2\rho^2\CI$. Terms in $\rho^2\Diff^2(\Sph^2)$ in $\wh{L_{g_b}}(0)$ in~\eqref{EqOpLinOpSt} are unaffected upon changing coordinates. Cross terms involving $\pa_r$ and $\cV(\Sph^2)$ only arise for $b=(\bhm,\bha)$ with $\bha\neq 0$ and hence contribute at the level of, schematically, $r^{-2}\pa_r\circ r^{-1}\cV(\Sph^2)=\rho^4(r\pa_r)\circ\cV(\Sph^2)\subset\rho^4\Diffb^2(X)$, see~\eqref{EqKStMetrics2}; hence upon changing coordinates, they can be subsumed in the final three terms in~\eqref{EqOpLinFT2}.

  It remains to consider the terms $(1-\frac{2\bhm}{r})^{-1}D_{t_{\chi_0}}^2+((1-\frac{2\bhm}{r})\pa_r^2+\frac{2}{r}\pa_r)$ coming from the first two summands in~\eqref{EqOpLinOpSt}; upon changing coordinates to $(r,t_{\bhm,*})$, the $D_{t_{\bhm,*}}^2$ terms cancel modulo $\rho^2\CI(M)D_{t_{\bhm,*}}^2$, which gives $\alpha$ upon passing to the spectral family. (The membership of $\alpha$ can also be directly deduced from principal symbol considerations: $\alpha\in\rho^2\CI(X)$ is equivalent to~\eqref{EqOpLinTimeFn}.) The only remaining unaccounted term is
  \[
    2(1-\tfrac{2\bhm}{r})\pa_r\circ(-(1-\tfrac{2\bhm}{r})\pa_{t_{\bhm,*}}) - 2 r^{-1}\pa_{t_*},
  \]
  whose spectral family is, modulo terms that can be subsumed in $Q',\alpha$, equal to $2\sigma\rho(\rho D_\rho+i)$. The proof is complete.
\end{proof}

We can now prove (omitting the vector bundle $S^2\,\wt{\Tsc^*}X$ from the notation for brevity):
\begin{thm}
\label{ThmOp}
   Let $b_0=(\bhm_0,0)$. There exists $\eps>0$ such that for $b\in\R^4$ with $|b-b_0|<\eps$, the following holds. Suppose that $s>\tfrac52$ and $\ell<-\half$ with $s+\ell>-\half$.
  \begin{enumerate}
  \item\label{ItOpLow} (Uniform estimates for finite $\sigma$.) For any fixed $C>1$, and $s_0<s$, $\ell_0<\ell$, there exists a constant $C'>0$ (independent of $b$) such that
    \begin{equation}
    \label{EqOpFinite}
      \|u\|_{\Hbext^{s,\ell}} \leq C'\Bigl(\bigl\|\wh{L_{g_b}}(\sigma)u\bigr\|_{\Hbext^{s-1,\ell+2}} + \|u\|_{\Hbext^{s_0,\ell_0}} \Bigr)
    \end{equation}
    for all $\sigma\in\C$, $\Im\sigma\in[0,C]$, satisfying $C^{-1}\leq |\sigma|\leq C$. If $\ell\in(-\tfrac32,-\half)$, then this estimate holds uniformly down to $\sigma=0$, i.e.\ for $|\sigma|\leq C$.
  \item\label{ItOpHigh} (High energy estimates in strips.) For any fixed $C>0$, there exist $C_1>1$ and $C'>0$ (independent of $b$) such that for $\sigma\in\C$, $\Im\sigma\in[0,C]$, $|\Re\sigma|>C_1$, and $h:=|\sigma|^{-1}$, we have
    \begin{equation}
    \label{EqOpHigh}
      \|u\|_{\Hbhext^{s,\ell}} \leq C'\bigl\|\wh{L_{g_b}}(\sigma)u\bigr\|_{\Hbhext^{s,\ell+1}}.
    \end{equation}
  \end{enumerate}
  Moreover, the operators
  \begin{subequations}
  \begin{alignat}{3}
  \label{EqOpFredS}
    \wh{L_{g_b}}(\sigma)&\colon\{u\in\Hbext^{s,\ell}(X)\colon\wh{L_{g_b}}(\sigma)u\in\Hbext^{s,\ell+1}(X)\}&&\to\Hbext^{s,\ell+1}(X), \quad \Im\sigma\geq 0,\ \sigma\neq 0, \\
  \label{EqOpFred0}
    \wh{L_{g_b}}(0) &\colon \{u\in\Hbext^{s,\ell}(X)\colon\wh{L_{g_b}}(0)u\in\Hbext^{s-1,\ell+2}(X)\}&&\to \Hbext^{s-1,\ell+2}(X)
  \end{alignat}
  \end{subequations}
  are Fredholm operators of index $0$.
\end{thm}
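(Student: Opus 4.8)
\emph{Approach.} The plan is to verify that $\wh{L_{g_b}}(\sigma)$ falls within the microlocal Fredholm framework of Vasy \cite{VasyLAPLag,VasyLowEnergy,VasyLowEnergyLag}, and to read off uniformity in $b$ from the stability of the underlying dynamical structure. The key structural input is \eqref{EqOpLinOp}: since $L_{g_b}=\Box_{g_b,2}+2\sR_{g_b}$ is \emph{principally scalar}, its principal part at finite frequency, at low frequency, and semiclassically is that of the scalar wave operator $\Box_{g_b}$ tensored with the identity on $S^2\,\wt\Tsc{}^*X$, whose null-bicharacteristic flow is described completely in \S\ref{SsKFl} (Proposition~\ref{PropKFl} and the subsequent discussion). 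The proof then reduces to (a) establishing the standard microlocal estimates at each feature of that flow --- elliptic, real principal type, radial points over the horizon and over $\pa_+X$, normally hyperbolic trapping, and their semiclassical analogues --- paying attention to the subprincipal conditions through which the bundle $S^2\,\wt\Tsc{}^*X$ and the hypotheses on $(s,\ell)$ enter, and then (b) assembling these into Fredholm estimates for $\wh{L_{g_b}}(\sigma)$ and for its formal adjoint.

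\emph{Part~\eqref{ItOpLow} and the operator \eqref{EqOpFred0}.} By Lemma~\ref{LemmaOpLinFT}, $\wh{L_{g_b}}(\sigma)\in\Diffsc^2(X)$ for $\sigma\neq 0$, with characteristic set and flow as described at the end of \S\ref{SsKFl}. First I would estimate $u$ microlocally: elliptic regularity off $\Sigma_\sigma$; real principal type propagation \cite{DuistermaatHormanderFIO2} along $\sfH$ in $\Sigma_{\sigma,\infty}$ and the non-radial part of $\Sigma_{\sigma,\pa}$; radial-point estimates at the horizon sets $\cR^\pm$ \cite{VasyMicroKerrdS}, whose threshold regularity $s>\tfrac52$ comes from the bundle's contribution to the subprincipal symbol, computed as in \cite[\S\S4, 9]{HintzVasyKdSStability} (cf.\ \cite{HintzPsdoInner}) using principal scalarity so that the relevant part of this contribution is a scalar multiple of the identity; and the scattering radial-point estimates \cite{MelroseEuclideanSpectralTheory,VasyLAPLag} at the source $\cR_{\sigma,\rm in}$ and sink $\cR_{\sigma,\rm out}$ over $\pa_+X$, which impose $\ell<-\half$ and, at the interface with fiber infinity over the corner, $s+\ell>-\half$. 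Patching these with the normally hyperbolic trapping estimate \cite{WunschZworskiNormHypResolvent,DyatlovSpectralGaps,HintzPsdoInner} at $\Gamma$ --- admissible since $\Gamma$ is $\sfr$-normally hyperbolic for all $\sfr$ and the rest of the flow is non-trapping by Proposition~\ref{PropKFl} --- yields \eqref{EqOpFinite} with the relatively compact error $\|u\|_{\Hbext^{s_0,\ell_0}}$, uniformly for $\Im\sigma\in[0,C]$, $C^{-1}\le|\sigma|\le C$. At $\sigma=0$ one has $\wh{L_{g_b}}(0)\in\rho^2\Diffb^2(X)$ by \eqref{EqOpLinFT2}, with normal operator at $\pa_+X$ equal to that of $\Delta_{\R^3}\otimes 1$ by Lemma~\ref{LemmaKStNormal}, which is invertible on the relevant weighted b-Sobolev spaces precisely for weights in $(-\tfrac32,-\half)$ (the range allowing $r^{-1}$ but not $r^0$ asymptotics, as in \S\ref{SssII0}); elliptic b-theory \cite{MelroseAPS} together with the interior, real principal type and horizon estimates gives the Fredholm estimate for \eqref{EqOpFred0}, the $\rho^2$-gain reflecting the b-normalization. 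To extend \eqref{EqOpFinite} uniformly down to $\sigma=0$ when $\ell\in(-\tfrac32,-\half)$, I would run the argument on the resolved space $[0,1)_\sigma\times\ol{\Tb^*X}$, on which the $\sigma$-dependent scattering structure at $\pa_+X$ --- whose radial sets $\cR_{\sigma,\rm in}=\{-2\sigma\,d r\}$ and $\cR_{\sigma,\rm out}=o_{\pa X}$ coalesce as $\sigma\to 0$ --- degenerates to the b-structure of $\wh{L_{g_b}}(0)$; this is exactly the setting of \cite{VasyLowEnergy,VasyLowEnergyLag}, and the combined estimate at the front face gives \eqref{EqOpFinite} for $|\sigma|\le C$.

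\emph{Part~\eqref{ItOpHigh}, Fredholm property, index, and uniformity in $b$.} Part~\eqref{ItOpHigh} is the semiclassical version of the same picture: with $h=|\sigma|^{-1}$, $z=\sigma/|\sigma|$, the semiclassical principal symbol of $-h^2\wh{L_{g_b}}(\sigma)$ is $p_\semi$ of \eqref{EqKFlScl}, with flow as in Proposition~\ref{PropKFl}. One combines semiclassical elliptic estimates off $\Sigma_\semi$ (which cover all large $r$ once $|\Re\sigma|>C_1$, so there is no boundary error at $\pa_+X$), semiclassical real principal type propagation along $\sfH$, semiclassical radial-point estimates at $\cR_\semi^\pm$ and --- over $\pa_+X$ --- the semiclassical scattering estimates of \cite{VasyZworskiScl} at $\cR_{\rm in},\cR_{\rm out}$, and the semiclassical normally hyperbolic trapping estimate \cite{WunschZworskiNormHypResolvent,DyatlovSpectralGaps,HintzPsdoInner} at $\Gamma$, which for $\Im\sigma\ge 0$ carries the correct sign of the subprincipal symbol and so loses nothing beyond what is harmless in a strip. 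Since by Proposition~\ref{PropKFl} the flow is otherwise non-trapping and escapes through $\Sigma_{\rm fin}$, patching gives the lossless estimate \eqref{EqOpHigh}, and in particular invertibility for $|\Re\sigma|>C_1$, $0\le\Im\sigma\le C$. The Fredholm property of \eqref{EqOpFredS} and \eqref{EqOpFred0} then follows from \eqref{EqOpFinite}, resp.\ the $\sigma=0$ estimate, together with the analogous estimate for the formal adjoint --- same principal structure, with $\sigma\mapsto\bar\sigma$ and the source/sink radial sets exchanged, valid under the dual pair of conditions to $s>\tfrac52$, $\ell<-\half$, $s+\ell>-\half$ --- which gives closed range and finite-dimensional kernel and cokernel; the index is $0$ by homotopy invariance of the index, deforming $\sigma$ within $\{\Im\sigma\ge 0\}$ to the region $|\Re\sigma|>C_1$ of invertibility, and for \eqref{EqOpFred0} deforming $g_b$ through metrics of the form \eqref{EqKStBox2Struct} with Euclidean spatial part to the index-$0$ model $\Delta_{\R^3}\otimes 1$. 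Uniformity of all constants for $|b-b_0|<\eps$ is automatic: the radial sets, the escape/non-trapping dynamics, the $\sfr$-normal hyperbolicity of $\Gamma$, and the normal operator at $\pa_+X$ --- which equals that of $\Delta_{\R^3}\otimes 1$ and is in particular independent of $b$ by Lemma~\ref{LemmaKStNormal} --- all vary continuously, and every microlocal estimate above depends only on these.

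\emph{Main obstacle.} The hard part will be the bookkeeping at the radial sets: verifying the subprincipal sign and threshold conditions at $\cR^\pm$ and at $\cR_{\sigma,\rm in},\cR_{\sigma,\rm out}$ for the bundle $S^2\,\wt\Tsc{}^*X$ is exactly where principal scalarity of $L_{g_b}$ is indispensable and where the constraints $s>\tfrac52$, $\ell<-\half$, $s+\ell>-\half$ are forced; and the uniform-to-$\sigma=0$ statement requires the resolved-phase-space construction of \cite{VasyLowEnergyLag} to bridge the scattering behavior for $\sigma\neq 0$ and the b-behavior at $\sigma=0$. The remaining delicate input is the semiclassical normally hyperbolic trapping estimate on the $2$-tensor bundle with the correct sign throughout the strip $\Im\sigma\in[0,C]$, which rests on \cite{HintzPsdoInner}.
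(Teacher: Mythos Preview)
Your overall framework matches the paper's: both invoke Vasy's microlocal machinery \cite{VasyLAPLag,VasyLowEnergyLag} together with radial-point, propagation, and normally hyperbolic trapping estimates, and both read off uniformity in $b$ from the structural stability of the flow. Two technical points need more care, however.

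First, the subprincipal threshold computations. At the horizon radial sets $\cR^\pm$, the threshold $\tfrac52$ does \emph{not} arise because the subprincipal contribution is ``a scalar multiple of the identity''---it is not. The paper computes (via \cite[\S9.2]{HintzVasyKdSStability}) that the relevant bundle endomorphism $\hat\beta_-$ has eigenvalues $-2,-1,0,1,2$, and the threshold $\tfrac12+2=\tfrac52$ is set by the worst eigenvalue. More importantly, at the scattering radial sets $\cR_{\sigma,\rm in/out}$ over $\pa_+X$, the threshold $\ell<-\tfrac12$ is a priori only the \emph{scalar} threshold; for a bundle-valued operator it is shifted by the subprincipal symbol $\sigmasc_1\bigl(\tfrac{1}{2i\rho}(\wh{L_{g_b}}(\sigma)-\wh{L_{g_b}}(\sigma)^*)\bigr)|_{\cR_{\sigma,\rm in/out}}$. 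The paper checks explicitly that this vanishes by passing to the coordinate trivialization $d t,d x^i$ of $S^2\,\wt{\Tsc^*}X$, in which $L_{g_b}$ agrees to leading order at $\pa_+X$ with the Minkowski 2-tensor wave operator, a diagonal $10\times 10$ matrix of scalar wave operators. Without this verification the condition $\ell<-\tfrac12$ is not justified.

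Second, your index-$0$ argument has a gap at $\sigma=0$. Deforming $g_b$ ``through metrics of the form \eqref{EqKStBox2Struct}'' to reach $\Delta_{\R^3}\otimes 1$ does not work: the manifold $X$ has an interior boundary $\pa_-X$ and an event-horizon radial-point structure that are essential to the Fredholm setup, and there is no continuous path of such operators from Kerr to flat $\R^3$. The paper instead fixes a global trivialization of $S^2\,\wt{\Tsc^*}X$, writes $\wh{L_{g_b}}(0)$ as a $10\times 10$ matrix of scalar operators in $\rho^2\Diffb^2(X)$ with off-diagonal entries in $\rho^2\Diffb^1(X)$, and deforms the off-diagonal entries to zero; since adding elements of $\rho^2\Diffb^1$ does not change the domain in \eqref{EqOpFred0}, this is a continuous family of Fredholm operators landing on $\wh{\Box_{g_b}}(0)\otimes 1$ \emph{on the same Kerr background}, whose invertibility (hence index $0$) is established separately. (For $\sigma\neq 0$ your idea of deforming $\sigma$ to the high-energy region is viable---the paper mentions it and defers the details to the proof of Theorem~\ref{Thm0}---but since the domain in \eqref{EqOpFredS} is $\sigma$-dependent, ``homotopy invariance of the index'' requires the uniform Fredholm estimates plus a Grushin-type augmentation, not just a one-line appeal.)
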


The fixed frequency version of~\eqref{EqOpHigh} reads
\[
  \|u\|_{\Hbext^{s,\ell}} \leq C'\left(\|\wh{L_{g_b}}(\sigma)u\bigr\|_{\Hbext^{s,\ell+1}} + \|u\|_{\Hbext^{s_0,\ell_0}}\right).
\]
The relationship between this and~\eqref{EqOpFinite} is obscured by our usage of imprecise function spaces: we refer the reader to \cite{VasyLAPLag,VasyLowEnergyLag} to the precise statements in terms of second microlocal scattering-b-Sobolev spaces, and only remark here that the above estimates are optimal as far as the b- (as well as scattering) decay orders are concerned.

\begin{proof}[Proof of Theorem~\ref{ThmOp}]
  The main ingredient which did not arise in spectral theory on Kerr--\emph{de~Sitter} spacetimes \cite{DyatlovQNM,VasyMicroKerrdS,HintzVasyKdSStability} and which goes beyond the settings discussed in \cite{MelroseEuclideanSpectralTheory,GuillarmouHassellResI,GuillarmouHassellResII,GuillarmouHassellSikoraResIII} is the Fredholm analysis near zero energy. Furthermore, we are working with $t_{\bhm,*}$ here, rather than the more usual $t_{\chi_0}$, which allows for concise proofs of stronger (in terms of function spaces) results. Concretely, we use the main results of \cite{VasyLAPLag} for the estimates at fixed $\sigma\neq 0$, $\Im\sigma\geq 0$, as well as for high energy estimates when $|\Re\sigma|\to\infty$ with $\Im\sigma\geq 0$ bounded, while the uniform description in $\Im\sigma\geq 0$ near $\sigma=0$ is provided by \cite{VasyLowEnergyLag}.

  Radial point estimates at $\cR_\semi^\pm=\cR^\pm$ require the computation of threshold regularities, which was done for Schwarzschild--de~Sitter metrics in~\cite{HintzVasyKdSStability}; the calculations there apply also in the case of Schwarzschild metrics for which the cosmological constant $\Lambda$ vanishes. In short, the subprincipal operator (see \cite{HintzPsdoInner}) at $\cR_\semi$ is computed in~\S9.2; \cite[Equation~(9.9]{HintzVasyKdSStability} (with the bottom sign, $\kappa_-=(4\bhm_0)^{-1}$ being the surface gravity, and with $\gamma_1=\gamma_2=0$) gives as eigenvalues of its $0$-th order part $-4\kappa_-,-2\kappa_-,0,2\kappa_-,4\kappa_-$, and in the subsequent displayed equation, with $\beta_{-,0}=2\kappa_-$ by~\cite[Equations~(3.13), (3.27), (6.1), (6.15)]{HintzVasyKdSStability}, the bundle endomorphism $\hat\beta_-$ of $S^2 T^*X^\circ$ thus has eigenvalues $-2,-1,0,1,2$. As discussed in \cite[Theorem~5.4]{HintzVasyKdSStability}, the threshold regularity (for spectral parameters $\sigma\in\C$, $\Im\sigma\geq 0$, thus taking $C=0$ in the reference) is therefore $\half+2=\tfrac52$.\footnote{Dually, the cokernel may contain 2-tensors which barely fail to lie in $H^{-3/2}$ at the event horizon, thus permitting at most once differentiated $\delta$-distributions, which indeed arise, see Proposition~\ref{PropL0}. The threshold regularity is $\half$ for $\Box_{g_b}$ acting on functions and $\half+1=\tfrac32$ for $\Box_{g_b}$ acting on 1-forms, the latter being a consequence of \cite[Equation~(6.15)]{HintzVasyKdSStability}.} This implies that the threshold regularity for nearby Kerr metrics is close to $\tfrac52$; a calculation shows that it is in fact \emph{equal} to $\tfrac52$ for all $b$.\footnote{This calculation is not needed if one assumes that $s$ is some fixed amount larger than $\tfrac52$, say, $s>3$, which ensures that it exceeds the threshold regularity for $L_{g_b}$ for $b$ close to $b_0$ simply by continuity.}

  For $0\neq\sigma\in\R$, radial point estimates at $\Sigma_{\sigma,\pa}$ for $\wh{L_{g_b}}(\sigma)$ similarly require the computation of a threshold decay rate relative to $L^2(X)$. Concretely, the threshold $-\half$ from \cite[Propositions~9 and 10]{MelroseEuclideanSpectralTheory}, \cite{VasyZworskiScl}, \cite[Theorems~1.1 and 1.3]{VasyLAPLag} is modified by the subprincipal symbol $\sigmasc_1(\frac{1}{2 i\rho}(\wh{L_{g_b}}(\sigma)-\wh{L_{g_b}}(\sigma)^*))|_{\cR_{\sigma,\rm in/out}}$; we now argue that this symbol vanishes. Indeed, formally taking $b=(0,0)$, so $g_b=\ubar g$ is the Minkowski metric, and working in the trivialization of $S^2\,\wt{\Tsc^*}X$ given in terms of the differentials of standard coordinates $t,x^1,x^2,x^3$, the operator $L_{g_b}$ is the wave operator on Minkowski space acting on symmetric 2-tensors, hence a $10\times 10$ diagonal matrix of scalar wave operators, and therefore the subprincipal symbol vanishes when using the fiber inner product on $S^2\,\wt{\Tsc^*}X$ which makes $d t^2$, $2\,d t\,d x^i$, $d x^i\,d x^j$ orthonormal. Changing from the Minkowski metric to a Kerr metric does not affect the subprincipal symbol at $\cR_{\sigma,\rm in/out}$, as follows from a simple calculation using~\eqref{EqKStMetrics2} and Proposition~\ref{PropKStBox2} together with (the proof of) Lemma~\ref{LemmaOpLinFT}.
  
  Combining the radial point estimates at infinity from \cite{VasyLAPLag} with those at the event horizon from \cite{VasyMicroKerrdS} (see also \cite[Proposition~2.1]{HintzVasySemilinear}), gives the stated uniform estimates for $\Im\sigma\in[0,C]$, $C^{-1}\leq|\sigma|\leq C$ for any fixed $C>1$. (We also point the reader to \cite[\S6]{VasyLowEnergy} for a discussion of the low energy Fredholm analysis for the $t_{\chi_0}$-spectral family of the scalar wave equation on Kerr spacetimes.) The uniformity of the stated estimate down to $\sigma=0$ is proved in \cite[Proposition~5.3]{VasyLowEnergyLag}; this uses the invertibility of a model operator, see \cite[\S5]{VasyLowEnergyLag}, which in the current setting and in the standard coordinate trivialization of $S^2\,\wt{\Tsc^*}X$ is the $10\times 10$ identity matrix tensored with the scalar model operator discussed (and proved to be invertible) in \cite[Proposition~5.4]{VasyLowEnergyLag}.

  The high energy estimates in strips of bounded $\Im\sigma\geq 0$ use the phase space dynamics of the Hamilton vector field of the semiclassical principal symbol, as described in Proposition~\ref{PropKFl}. The main new ingredient concerns the trapped set which was discussed, for Schwarzschild--de~Sitter spacetimes, in \cite[\S10.1]{HintzVasyKdSStability}; the calculations there apply directly in the Schwarzschild setting ($\Lambda=0$) as well, and imply (as discussed in \cite[\S4]{HintzPsdoInner} and \cite[\S5.1]{HintzVasyKdSStability}) that the semiclassical estimates at $\Gamma$ proved in \cite{DyatlovSpectralGaps} (see \cite[\S4.4]{HintzVasyQuasilinearKdS} for the microlocalized version) apply. (See also the discussion prior to, and the proof of \cite[Theorem~5.4]{HintzVasyKdSStability} for further details.)

  It remains to prove that $\wh{L_{g_b}}(\sigma)$ has index $0$ as stated in~\eqref{EqOpFredS}--\eqref{EqOpFred0}. This is clear when $|\sigma|$ is large since $\wh{L_{g_b}}(\sigma)$ is then invertible; hence we only need to consider bounded $\sigma$. One approach is to prove the continuity of the index in $\sigma$ by exploiting uniform Fredholm estimates; we present the details of such an argument in the proof of Theorem~\ref{Thm0}. Here, we instead use a deformation argument, which reduces the index $0$ property of $\wh{L_{g_b}}(\sigma)$ to that of the Fourier-transformed \emph{scalar} wave operator (which is established, by means of direct, non-perturbative arguments for real $\sigma$, in the proof of Theorem~\ref{Thm0}).
  
  We first treat the case $\sigma=0$: choose a global trivialization of $S^2\,\wt{\Tsc^*}X$, then $\wh{L_{g_b}}(0)$ is a $10\times 10$ matrix of scalar operators in $\rho^2\Diffb^2(X)$, with the off-diagonal operators lying in $\rho^2\Diffb^1(X)$. Since adding an element of $\rho^2\Diffb^1$ to $\wh{L_{g_b}}(0)$ does not change the domain in~\eqref{EqOpFred0}, we can continuously deform $\wh{L_{g_b}}(0)$ within the class of Fredholm operators on the spaces in~\eqref{EqOpFred0} to a diagonal $10\times 10$ matrix with all diagonal entries equal to the scalar wave operator at zero energy, $\wh{\Box_{g_b}}(0)$; the latter operator is well known to be invertible for $b=b_0$ (we recall the argument in the proof of Theorem~\ref{Thm0} below), and thus for $|b-b_0|$ small enough by a perturbative argument as in \cite[\S2.7]{VasyMicroKerrdS}, which we recall in the proof of Theorem~\ref{Thm0} below; in particular, it has index $0$. Thus, $\wh{L_{g_b}}(0)$ has index $0$ as well.

  Moreover, as shown in~\cite[\S5]{VasyLowEnergyLag}, the invertibility of $\wh{\Box_{g_b}}(0)$ for $b$ near $b_0$ implies that of $\wh{\Box_{g_b}}(\sigma)$ on the spaces~\eqref{EqOpFredS} for $(b,\sigma)$ (with $\Im\sigma\geq 0$) near $(b_0,0)$. For these $(b,\sigma)$, we can use a completely analogous deformation argument to deform $\wh{L_{g_b}}(\sigma)$ to a $10\times 10$ matrix of scalar operators $\wh{\Box_{g_b}}(\sigma)$, hence $\wh{L_{g_b}}(\sigma)$ is Fredholm of index $0$ indeed.

  For $\sigma\in\C$, $\Im\sigma\geq 0$, bounded away from $0$ and $\infty$, say $C^{-1}\leq|\sigma|\leq C$, the index $0$ claim follows again by a deformation argument together with the invertibility of $\wh{\Box_{g_b}}(\sigma)$. The latter invertibility is straightforward to prove for $b=b_0$ using boundary pairing/integration by parts arguments; we recall the argument in the present conjugated setting in the proof of Theorem~\ref{Thm0}. Thus, invertibility holds for $b$ with $|b-b_0|$ sufficiently small (depending on $C$) by a perturbative argument as in~\cite[\S2.7]{VasyMicroKerrdS}.
\end{proof}

Moreover, we can describe putative elements of the nullspace of $\wh{L_{g_b}}(\sigma)$ rather precisely:

\begin{prop}
\label{PropOpNull}
  Let $s>\tfrac52$, and suppose $b=(\bhm,\bha)$ is close to $b_0$.
  \begin{enumerate}
  \item\label{ItOpNull0} Suppose $u\in\Hbext^{s,\ell}(X)$, $\ell\in(-\tfrac32,-\half)$. If $\wh{L_{g_b}}(0)u=0$, then $u\in\cA^{1-}(X)$. More precisely, there exists $u_0\in\CI(\pa X;S^2\,\wt{\Tsc^*_{\pa X}}X)$ such that $u-r^{-1}u_0\in\cA^{2-}(X)$. More generally, for arbitrary $\ell\in\R$, every $u\in\ker\wh{L_{g_b}}(0)\cap\Hbext^{s,\ell}$ is polyhomogeneous with index set contained in $\{(z,k)\colon z\in i\N,\,k\in\N_0\}$.
  \item\label{ItOpNull1} If $u\in\ker\wh{L_{g_b}}(0)^*\cap\Hbsupp^{-\infty,\ell}(X)$, then near $\pa X$, $u\in\Hb^{\infty,\ell}(X)$, and it has expansions as in part~\eqref{ItOpNull0}. Moreover, $u\in\Hbsupp^{1-s,\ell}(X)$ for all $s>\tfrac52$.
  \item\label{ItOpNullSigma} If $\sigma\neq 0$ and $u\in\ker\wh{L_{g_b}}(\sigma)\cap\Hbext^{s,\ell}(X)$ for some $\ell\in\R$ with $s+\ell>-\half$, then $u\in\rho\CI(X)+\cA^{2-}(X)$.
  \end{enumerate}
\end{prop}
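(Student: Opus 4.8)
The plan is to prove all three parts by combining the a priori estimates of Theorem~\ref{ThmOp} with a normal operator argument at $\pa_+X$. For the latter I first record the key structural fact: by Lemma~\ref{LemmaKStNormal} together with $\sR_{g_b}\in\rho^3\CI$ — which follows from~\eqref{EqKStRiemann}, \eqref{EqKStBoxLeading} and the flatness of the Euclidean model metric $h$, so the curvature term of $\wh{L_{g_b}}(0)$ is subleading at $\pa_+X$ — we have $\wh{L_{g_b}}(0)-\wh{\Box_{\ubar g,2}}(0)\in\rho^3\Diffb^2$. Hence $\rho^{-2}\wh{L_{g_b}}(0)$ is b-elliptic near $\pa_+X$, with normal operator there equal, in the $d t,d x^i$ trivialization of $S^2\,\wt\Tsc{}^*X$, to ten copies of the normal operator of the scalar operator $\wh{\Box_{\ubar g}}(0)\in\rho^2\Diffb^2$. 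The latter is, up to sign, the Euclidean Laplacian on $\R^3$, whose indicial roots on the $\ell$-th spherical harmonic are $s=\ell+1$ and $s=-\ell$; so the boundary spectrum of $\wh{L_{g_b}}(0)$ is contained in $\Z$, it has no indicial root with $\Re s\in(0,1)$, and $s=1$ is a simple root (coming only from the $\ell=0$ harmonic).

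For part~\eqref{ItOpNull0}: since $\wh{L_{g_b}}(0)u=0$, the estimate~\eqref{EqOpFinite} at $\sigma=0$ — run at arbitrarily large Sobolev order — gives $u\in\Hbext^{\infty,\ell}(X)$, so $u$ is conormal at $\pa_+X$; as $\ell\in(-\tfrac32,-\half)$ forces $u\in\cA^{\ell+3/2}(X)$ with $0<\ell+\tfrac32<1$, the leading exponent of $u$ at $\pa_+X$ is at least the first positive indicial root $s=1$. Since that root is simple, the standard normal operator iteration produces $u=r^{-1}u_0+\tilde u$ with $u_0$ a smooth section of $S^2\,\wt{\Tsc^*_{\pa X}}X$ (the restriction of a tensor with constant coefficients in the coordinate trivialization) and $\tilde u\in\cA^{2-}(X)$, the logarithms responsible for `$2-$' rather than `$2$' arising when the normal operator is inverted at higher integer indicial roots; iterating across all indicial roots gives the full polyhomogeneous expansion of $u$ with index set contained in $\{(z,k)\colon z\in i\N,\ k\in\N_0\}$. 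For an arbitrary weight $\ell$, conormality at $\pa_+X$ still follows from the (local, weight-independent) b-elliptic regularity there, and the same iteration applies, now possibly producing nonpositive integer exponents, i.e.\ growth.

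For part~\eqref{ItOpNull1}: the scattering-$L^2$ adjoint $\wh{L_{g_b}}(0)^*$ again lies in $\rho^2\Diffb^2$, is b-elliptic near $\pa_+X$, and has normal operator the adjoint of the one above, whose indicial roots are again integers; so the conormality and the expansion statements near $\pa X$ follow just as in~\eqref{ItOpNull0}. The global regularity $u\in\Hbsupp^{1-s,\ell}(X)$, for all $s>\tfrac52$, is the dual of the Fredholm statement in Theorem~\ref{ThmOp}: running real-principal-type propagation and the radial point estimates at $\cR^\pm$ in their (dual) below-threshold forms — the primal threshold regularity at the event horizon being $\tfrac52$, as computed in the proof of Theorem~\ref{ThmOp} — yields $\ker\wh{L_{g_b}}(0)^*\subset\Hbsupp^{-(s-1),\ell}(X)$ for every $s>\tfrac52$. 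For part~\eqref{ItOpNullSigma}: fix $\sigma\neq0$, $\Im\sigma\geq0$. By~\eqref{EqOpFinite} (run at large Sobolev order) $u\in\Hbext^{\infty,\ell}(X)$; here it is crucial that we work with $t_{\bhm,*}$, so that by the Lagrangian regularity theory of~\cite{VasyLAPLag,VasyLowEnergyLag} the outgoing solution $u$ is genuinely conormal — rather than oscillatory — at $\pa_+X$. By Lemma~\ref{LemmaOpLinFT}, the term of lowest $\rho$-weight in $\wh{L_{g_b}}(\sigma)$ at $\pa_+X$ is the transport operator $2\sigma\rho(\rho D_\rho+i)$, the remaining terms $\wh{L_{g_b}}(0)$, $\sigma Q'$, $\alpha\sigma^2$ all carrying weight $\rho^2$ or higher; its only indicial root is $s=1$, and the associated first-order ODE in $\rho$, together with the conormality of $u$, forces $u=r^{-1}u_0+\tilde u$ with $u_0$ smooth on $\pa X$ and $\tilde u\in\cA^{2-}(X)$. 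Extending $u_0$ to a smooth section of $S^2\,\wt\Tsc{}^*X$ over $X$, and noting that $u$ is smooth up to the artificial boundary $\pa_-X$ (again by~\eqref{EqOpFinite}), gives $u\in\rho\CI(X)+\cA^{2-}(X)$.

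The main obstacle is the bookkeeping in these normal operator arguments: pinning down the normal operator of $\wh{L_{g_b}}(\sigma)$ at $\pa_+X$ precisely — which is exactly where Lemmas~\ref{LemmaKStNormal} and~\ref{LemmaOpLinFT} and the $\rho^3$ vanishing of the Kerr curvature enter — locating its indicial roots, and tracking at which orders logarithms are generated so as to justify the weight `$2-$'. A secondary point, special to~\eqref{ItOpNullSigma}, is that conormality of $u$ at $\pa_+X$ in the $t_{\bhm,*}$ picture is not automatic; it is, however, precisely what the second-microlocal scattering--b/Lagrangian estimates underlying Theorem~\ref{ThmOp} provide, so no analysis beyond that theorem is needed, and the expansion reduces to the short transport computation above.
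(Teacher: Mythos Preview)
Your proof is correct and follows essentially the same approach as the paper: both identify the normal operator of $\wh{L_{g_b}}(0)$ at $\pa_+X$ with the Euclidean Laplacian (tensored with the identity) via Lemma~\ref{LemmaKStNormal}, compute its integer boundary spectrum, and iterate to obtain the polyhomogeneous expansion; and both use the transport operator $2\sigma\rho(\rho D_\rho+i)$ from Lemma~\ref{LemmaOpLinFT} for $\sigma\neq 0$. The only minor omission is in part~\eqref{ItOpNull1}, where the paper makes explicit that $u\equiv 0$ in $r<r_b$ (by energy estimates for the adjoint in the black hole interior) before invoking the below-threshold radial sink estimate at $\cR^\pm$; you gesture at this via the ``dual below-threshold forms'' but should state the interior vanishing explicitly, since it is what feeds the propagation into the radial set.
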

\begin{proof}
  For part~\eqref{ItOpNull0}, note that $u\in\Hbext^{\infty,\ell}(X)$ by elliptic regularity, the propagation of regularity at the radial sets $\cR^\pm$ at the horizons, and real principal type propagation. The polyhomogeneity of $u$ is then a consequence of the fact that $\wh{L_{g_b}}(0)\in\rho^2\Diffb^2$ is a (weighted) elliptic b-operator near $\pa X$, see \cite[\S\S4--5]{MelroseAPS}, with boundary spectrum contained in $i\Z\times\{0\}$.
  
  In more detail, the normal operator of $\wh{L_{g_b}}(0)$ is the negative Euclidean Laplacian $-\Delta=\rho^2(\rho\pa_\rho(\rho\pa_\rho-1)-\slDelta)$ tensored with the $10\times 10$ identity matrix when working in the standard coordinate trivialization of $S^2\,\wt{\Tsc^*}X$ (meaning: the difference of the two operators lies in $\rho^3\Diffb^2$). Thus, the asymptotic behavior of $u$ can be found by writing
  \[
    \rho^{-2}\Delta(\chi u) = \rho^{-2}(\wh{L_{g_b}}(0)-(-\Delta))(\chi u) - \rho^{-2}[\wh{L_{g_b}}(0),\chi]u \in \Hbext^{\infty,\ell+1},
  \]
  where $\chi$ is a cutoff, identically $1$ near $\pa_+X$ and vanishing for $r\leq 3\bhm_0$; one then takes the Mellin transform in $\rho$ and uses the properties of the meromorphic (in $\lambda\in\C$) inverse of the operator $\wh{\rho^{-2}\Delta}(\lambda)=i\lambda(i\lambda+1)+\slDelta$ on $\CI(\pa_+X;S^2\,\wt{\Tsc^*_{\pa X}}X)$ to deduce a partial expansion of $\chi u$, plus a remainder term in $\Hbext^{\infty,\ell+1}$ near $\pa_+X$. An iterative argument gives a full polyhomogeneous expansion.
  
  The boundary spectrum of the scalar Euclidean Laplacian is, by definition, the divisor of $\wh{\rho^{-2}\Delta}(\lambda)^{-1}$. Decomposing functions on $\pa X$ into spherical harmonics, and denoting by $\scal_l$ a degree $l\in\N_0$ spherical harmonic, we have
  \[
    \rho^{-i\lambda}\Delta(\rho^{i\lambda}\scal_l) = \bigl(i\lambda(i\lambda+1) + l(l+1)\bigr)\scal_l,
  \]
  which vanishes for $\lambda=i l,-i(l+1)$. Thus, the boundary spectrum of $\Delta$ is equal to $i\Z$, with space of resonant states at $i l$ given by $r^l\scal_l$ for $l\geq 0$ and $r^l\scal_{-l-1}$ for $l\leq -1$. (The need to allow for logarithmic powers of $\rho$ in the expansion of $u$ arises as usual from the presence of integer coincidences in the boundary spectrum.)

  Part~\eqref{ItOpNull1} is proved similarly; the regularity statement follows from the fact that we have $u\equiv 0$ in the interior $r<r_b$ of the black hole, together with a radial sink estimate at $\cR^\pm$.

  In part~\eqref{ItOpNullSigma}, smoothness of $u$ away from $\pa X$ follows as above, while the radial point estimates at $\pa X$ in \cite{VasyLAPLag} imply that $u$ is conormal at $\pa X$. But then note that the normal operator of $\wh{L_{g_b}}(\sigma)$ (which for $\sigma\neq 0$ merely lies in $\rho\Diffb^2$) is $2\sigma\rho(\rho D_\rho+i)$ by Lemma~\ref{LemmaOpLinFT}, whose boundary spectrum consists of the single point $\{(-i,0)\}$. This implies $u\in\cA_\phg^\cE(X)$ where $\cE\subset\{(-i,0)\}\cup\{(-i j,k)\colon 2\leq j\in\N,\,k\in\N_0\}$.
\end{proof}

We will make abundant use of such the normal operator arguments. Note that part~\eqref{ItOpNull0} holds under much weaker assumptions, namely $\wh{L_{g_b}}(0)u\in\CIc(X^\circ)$ or just $\wh{L_{g_b}}(0)u\in\Hbext^{\infty,5/2-}$ (except for the last statement).

\begin{rmk}
\label{RmkOpOutgoing}
  The relationship of $\ker\wh{L_{g_b}}(\sigma)$ to the usual outgoing condition on Schwarzschild spacetimes is as follows: any $u\in\ker\wh{L_{g_b}}(\sigma)$ as in part~\eqref{ItOpNullSigma} solves
  \[
    L_{g_b}\bigl(e^{-i\sigma t_{\chi_0}} u'\bigr) = 0,\quad u'=e^{i\sigma(t_{\chi_0}-t_{\bhm,*})}u;
  \]
  but $t_{\chi_0}-t_{\bhm,*}\equiv r_*=r+2\bhm\log r$ up to addition of a smooth bounded function, hence $u'\sim r^{-1}e^{i\sigma r_*}$ for large $r$.
\end{rmk}

\subsection{Constraint damping and the modified gauge-fixed Einstein operator}
\label{SsOpCD}

We will show in~\S\ref{SCD} and exploit in~\S\ref{SR} and subsequent sections that the properties of the low energy resolvent of the linearized gauge-fixed Einstein operator can be crucially improved by modifying the way the gauge 1-form is combined with the Ricci tensor in~\eqref{EqOpEin}. Concretely:

\begin{definition}
\label{DefOpCD}
  Let $E\in\CI(M^\circ;\Hom(T^*M^\circ,S^2 T^*M^\circ))$, and let $g$ denote a pseudo-Riemannian metric. Then the \emph{modified symmetric gradient} is
  \[
    \wt\delta_{g,E}^* := \delta_g^* + E.
  \]
\end{definition}

In this paper, we shall use $E$ of the form
\begin{equation}
\label{EqOpCD}
  E = E(g;\cd,\gamma_1,\gamma_2) := 2\gamma_1\cd\otimes_s(-)-\gamma_2 g^{-1}(\cd,-)g,
\end{equation}
where $\gamma_1,\gamma_2\in\R$, and $\cd$ is a stationary 1-form on $M^\circ$ with compact spatial support, i.e.\ $\cd\in\CIc(X^\circ;\wt{\Tsc^*}X)$. The \emph{modified gauge-fixed Einstein operator} is then the map
\begin{equation}
\label{EqOpCDEin}
  g \mapsto P_E(g) := \Ric(g) - \wt\delta_{g,E}^*\Ups(g;g^0).
\end{equation}
Fixing $g^0=g_{(\bhm,\bha)}$ to be a subextremal Kerr metric, and linearizing around $g=g_{(\bhm,\bha)}$, we then have
\begin{equation}
\label{EqOpCDEinLin}
  L_{g,E} := 2 D_g P_E = 2(D_g\Ric + \wt\delta_{g,E}^*\delta_g\sfG_g) = \Box_{g,2} + 2 E\delta_g\sfG_g + 2\sR_g.
\end{equation}

Here, it will suffice to use \emph{small} $\gamma_1,\gamma_2$ and perturbative arguments in order to reap the benefits of constraint damping, as outlined in~\S\ref{SI} and explained in detail in~\S\S\ref{SsL0Qu} and \ref{SCD}. Thus, we record here that Theorem~\ref{ThmOp} and Proposition~\ref{PropOpNull} remain valid for $\wh{L_{g_b,E}}(\sigma)$, $E=E(g_b;\cd,\gamma_1,\gamma_2)$ for some fixed $\cd\in\CIc(X^\circ;\wt{\Tsc^*}X)$, provided $b$ is sufficiently close to $b_0$ and $|\gamma_1|,|\gamma_2|$ are sufficiently small (depending on the regularity parameter $s$), with the estimates in Theorem~\ref{ThmOp} being \emph{uniform} for such $b,\gamma_1,\gamma_2$. (For $s>3$, say, the point being that it is a \emph{fixed} amount about $\tfrac52$, the theorem and the proposition hold for $|b-b_0|+|\gamma_1|+|\gamma_2|<\eps$ with $\eps$ independent of $s$.)

We note that for $g$ satisfying $\Ric(g)=0$, we have $L_{g,E}(\delta_g^*\omega)=0$ for a 1-form $\omega$ provided that $\delta_g\sfG_g\delta_g^*\omega=0$, which is the tensor wave equation on 1-forms. In this way, suitable zero energy states of the 1-form wave equation give rise to pure gauge bound states of $L_{g,E}$. Dually, we have
\begin{equation}
\label{EqOpCDAdjoint}
  L_{g,E}^* = 2\bigl(\sfG_g(D_g\Ric)\sfG_g + \sfG_g\delta_g^*\wt\delta_{g,E}\bigr),\quad \wt\delta_{g,E}=(\delta_{g,E}^*)^*,
\end{equation}
which satisfies $L_{g,E}^*(\sfG_g\delta_g^*\omega^*)=0$ provided $\wt\delta_{g,E}\sfG_g\delta_g^*\omega^*=0$. (For $E=0$, this is the same equation as for $\omega$, though we need to solve it on different function spaces.) Such `dual-pure-gauge' 2-tensors $\sfG_g\delta_g^*\omega^*$ are thus, for suitable $\omega^*$, bound states of $L_{g,E}^*$.

\section{Spherical harmonic decompositions}
\label{SY}

We introduce the terminology which will be used in the subsequent precise (generalized) mode analysis, borrowing from~\cite{KodamaIshibashiMaster}, and taking some of the notation from~\cite[\S5]{HintzKNdSStability}.

\subsection{Spherical harmonics on the sphere}
\label{SsYS}

Recall that $\slg$ denotes the standard metric on $\Sph^2$, and denote geometric operators on $\Sph^2$ using a slash, thus $\sltr=\tr_\slg$, $\sldelta=\delta_\slg$, etc. We denote by $Y_{l m}$, $l\in\N_0$, $m\in\Z$, $|m|\leq l$, the usual spherical harmonics on $\Sph^2$ satisfying $\slDelta Y_{l m}=l(l+1)Y_{l m}$. Define the space 
\begin{equation}
\label{EqYS0}
  \scal_l := \mathspan\{Y_{l m} \colon |m|\leq l\}
\end{equation}
of degree $l$ spherical harmonics. Thus, $L^2(\Sph^2)=\bigoplus_{j\in\N_0}\scal_j$ is an orthogonal decomposition.

Consider next 1-forms on $\Sph^2$. Denote the Hodge Laplacian by $\slDelta_H=(\sld+\sldelta)^2$; the tensor Laplacian $\slDelta_{\slg,1}=-\sltr\slnabla^2$ (also denoted $\slDelta$ for brevity) satisfies $\slDelta=\slDelta_H-\Ric(\slg)=\slDelta_H-1$. Therefore, a spectral decomposition of $\slDelta$ on $L^2(\Sph^2;T^*\Sph^2)$ is provided by the scalar/vector decomposition
\begin{equation}
\label{EqYS1}
  \sld\scal_l,\ 
  \vect_l := \slstar\sld\scal_l \subset \ker\bigl(\slDelta-(l(l+1)-1)\bigr)\qquad (l\geq 1);
\end{equation}
note that $\sldelta\vect_l=0$, and that the two spaces in~\eqref{EqYS1} are trivial for $l=0$.

For symmetric 2-tensors finally, we have an analogous orthogonal decomposition into scalar and vector type symmetric 2-tensors: the scalar part consists of a pure trace and a trace-free part, the latter defined using the trace-free symmetric gradient $\sldelta_0^*:=\sldelta^*+\half\slg\sldelta$:
\begin{subequations}
\begin{equation}
\label{EqYS2scal}
  \scal_l\slg\ \ (l\geq 0),\qquad
  \sldelta_0^*\sld\scal_l\ \ (l\geq 2).
\end{equation}
(Note here that for $\scal\in\scal_0\oplus\scal_1$, we have $\sldelta_0^*\sld\scal=0$, hence the restriction to $l\geq 2$.) The vector part consists only of trace-free tensors with $l\geq 2$ (since the 1-forms $\vect_1$ are Killing),
\begin{equation}
\label{EqYS2vect}
  \sldelta^*\vect_l\ \ (l\geq 2).
\end{equation}
\end{subequations}

The geometric operators on $\Sph^2$ which we will encounter here preserve scalar and vector type spherical harmonics; indeed, this holds in the strong sense that a scalar type function/1-form/symmetric 2-tensor built out of a particular $\scal\in\scal_l$ is mapped into another scalar type tensor \emph{with the same $\scal$}, likewise for vector type tensors; this is clear for $\sld$ on functions, $\sldelta$ on 1-forms ($\sldelta(\sld\scal)=l(l+1)\scal$). Furthermore, for $\scal\in\scal_l$ and $\vect\in\vect_l$,
\begin{gather*}
  \sldelta^*(\sld\scal) = -\tfrac{l(l+1)}{2}\scal\slg + \sldelta_0^*\sld\scal, \quad
  \sldelta(\scal\slg)=-\sld\scal,\quad
  \sldelta(\sldelta_0^*\sld\scal)=\tfrac{l(l+1)-2}{2}\sld\scal, \\
  \sldelta\sldelta^*\vect = \tfrac{l(l+1)-2}{2}\vect, \quad
  \slDelta(\sldelta_0^*\sld\scal) = (l(l+1)-4)\sldelta_0^*\sld\scal, \quad
  \slDelta(\sldelta^*\vect) = (l(l+1)-4)\sldelta^*\vect.
\end{gather*}

\subsection{Decompositions on spacetime}
\label{SsYM}

Rather than working in a splitting into temporal and spatial parts, we split the spacetime $M^\circ$ in~\eqref{EqK0Ext} into an aspherical and spherical part,
\begin{equation}
\label{EqYMProd}
  M^\circ = \wh X \times \Sph^2,\quad
  \wh\pi\colon M^\circ \to \wh X,\ \ 
  \slpi\colon M^\circ \to \Sph^2,
\end{equation}
where $\wh X=\R_{t_*}\times[r_-,\infty)_r$. Via pullback by $\wh\pi^*$, we can identify $\CI(\wh X)$ with the subspace $\wh\pi^*\CI(\wh X)\subset\CI(M^\circ)$ of \emph{aspherical functions}. Similarly, $\CI(\Sph^2)\subset\CI(M^\circ)$ via pullback by $\slpi$, and $\CI(\wh X;T^*\wh X)\subset\CI(M^\circ;T^*M^\circ)$, likewise for other tensor bundles on $\wh X$ and $\Sph^2$. Functions on $M^\circ$ can be decomposed into spherical harmonics in the $\Sph^2$ factor; restricting to degree $l_0$ harmonics gives the space of \emph{scalar $l=l_0$ functions} $\{u\scal_{l_0}\colon u\in\CI(\wh X)\}$.

We can split the cotangent bundle of $M^\circ$ into aspherical and spherical parts,
\begin{equation}
\label{EqYMSplit}
  T^*M^\circ = \TAS^*\oplus\TS^*,\quad \TAS^*=\wh\pi^* T^*\wh X,\ \ \TS^*=\slpi^* T^*\Sph^2;
\end{equation}
this induces the splitting
\begin{equation}
\label{EqYMSplit2}
  S^2 T^* M \cong S^2\TAS^* \oplus (\TAS^*\otimes\TS^*) \oplus S^2\TS^*
\end{equation}
of the second symmetric tensor power into the aspherical, mixed, and spherical subbundles; here, the second summand is a subbundle of $S^2 T^*M$ via $a\otimes s\mapsto 2 a\otimes_s s$.

Corresponding to the scalar/vector decomposition~\eqref{EqYS1}, there are two classes of 1-forms of fixed spherical harmonic degree on $M^\circ$, which we write in the splitting~\eqref{EqYMSplit} and using $T\in\CI(\wh X;T^*\wh X)$, $L\in\CI(\wh X)$:
\begin{equation}
\label{EqYM1forms}
\begin{aligned}
  &\text{scalar $l=l_0$ ($l_0\geq 1$):} \ \ && (T\scal,\,L\sld\scal), &&\scal\in\scal_{l_0}, \\
  &\text{scalar $l=0$:} && (T,\,0), \\
  &\text{vector $l=l_0$ ($l_0\geq 1$):} && (0,\,L\vect), &&\vect\in\vect_{l_0}.
\end{aligned}
\end{equation}

Similarly, symmetric 2-tensors on $M^\circ$ come in two classes, with low spherical harmonic degrees requiring separate treatment. Below, $H_L,H_T\in\CI(\wh X)$, $f\in\CI(\wh X;T^*\wh X)$, and $\wt f\in\CI(\wh X;S^2 T^*\wh X)$:
\begin{equation}
\label{EqYMSym2}
\begin{aligned}
  &\text{scalar $l=l_0$ ($l_0\geq 2$):} \ \ && (\wt f\scal,\,f\otimes\sld\scal,\,H_L\scal\slg+H_T\sldelta_0^*\sld\scal), &&\scal\in\scal_{l_0}, \\
  &\text{scalar $l=1$:} && (\wt f\scal,\,f\otimes\sld\scal,\,H_L\scal\slg), && \scal\in\scal_1, \\
  &\text{scalar $l=0$:} && (\wt f,\,0,\,H_L\slg), \\
  &\text{vector $l=l_0$ ($l_0\geq 2$):} && (0,\,f\otimes\vect,\,H_T\sldelta^*\vect), && \vect\in\vect_{l_0}. \\
  &\text{vector $l=1$:} && (0,\,f\otimes\vect,\,0), && \vect\in\vect_1.
\end{aligned}
\end{equation}
We call tensors of this form \emph{scalar type $\scal$}, resp.\ \emph{vector type $\vect$} if $\scal$, resp.\ $\vect$ are fixed.

\section{Mode analysis of the scalar wave operator}
\label{S0}

As a preparation for the precise spectral analysis of wave-type operators on tensor bundles, we briefly discuss the properties of the Fourier-transformed \emph{scalar} wave operator on slowly rotating Kerr spacetimes. As in~\eqref{EqOpLinFT}, we define the spectral family of a stationary operator $L$ on a Kerr spacetime with parameters $b=(\bhm,\bha)$ by
\begin{equation}
\label{Eq0Spectral}
  \wh L(\sigma) := e^{i\sigma t_{\bhm,*}}L e^{-i\sigma t_{\bhm,*}}.
\end{equation}

\begin{thm}
\label{Thm0}
  Let $g=g_{(\bhm_0,0)}$. For $\Im\sigma\geq 0$, $\sigma\neq 0$, the operator
  \[
    \wh{\Box_g}(\sigma)\colon\{u\in\Hbext^{s,\ell}(X)\colon\wh{\Box_g}(\sigma)u\in\Hbext^{s,\ell+1}(X)\} \to\Hbext^{s,\ell+1}(X)
  \]
  is invertible when $s>\half$, $\ell<-\half$, and $s+\ell>-\half$. The stationary operator
  \[
    \wh{\Box_g}(0)\colon\{u\in\Hbext^{s,\ell}(X)\colon\wh{\Box_g}(0)u\in\Hbext^{s-1,\ell+2}(X)\} \to\Hbext^{s-1,\ell+2}(X)
  \]
  is invertible for all $s>\half$ and $\ell\in(-\tfrac32,-\half)$. Both statements continue to hold for $g=g_{(\bhm,\bha)}$ with $(\bhm,\bha)$ near $(\bhm_0,0)$.
\end{thm}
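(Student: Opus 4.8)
The plan is to prove Theorem~\ref{Thm0} by establishing the invertibility of $\wh{\Box_g}(\sigma)$ first for the Schwarzschild metric $g=g_{(\bhm_0,0)}$ and then extending to nearby Kerr metrics by a perturbative argument. Since Theorem~\ref{ThmOp} (applied with bundle $S^2\,\wt{\Tsc^*}X$ replaced by the trivial line bundle, i.e.\ the scalar wave operator) already gives that the relevant maps are Fredholm of index $0$, it suffices in each case to show triviality of the kernel; invertibility then follows, and the perturbative statement is automatic from Fredholm stability once kernel triviality is stable. So the entire proof reduces to three kernel computations: at fixed $\sigma\neq 0$ with $\Im\sigma\geq 0$, at $\sigma=0$ on Schwarzschild, and the perturbation to Kerr.

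For the case $0\neq\sigma\in\R$, I would use a boundary pairing / integration-by-parts argument in the spirit of \cite{MelroseEuclideanSpectralTheory}, adapted to the conjugated operator $\wh{\Box_g}(\sigma)=e^{i\sigma t_{\bhm,*}}\Box_g e^{-i\sigma t_{\bhm,*}}$. Given $u\in\ker\wh{\Box_g}(\sigma)\cap\Hbext^{s,\ell}$, Proposition~\ref{PropOpNull}\eqref{ItOpNullSigma} (whose scalar analogue holds by the same normal-operator argument) shows $u$ is conormal with an asymptotic expansion $u\sim r^{-1}e^{i\sigma r_*}u_0(\omega)+\cO(r^{-2})$ near $\scri^+$; equivalently $u'=e^{i\sigma(t_{\chi_0}-t_{\bhm,*})}u$ is an outgoing solution of $\wt{\Box_g}(\sigma)u'=0$ in the classical sense. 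Pairing $\wh{\Box_g}(\sigma)u$ against $\bar u$ and integrating by parts over $r_-\le r\le R$, then letting $R\to\infty$ and taking imaginary parts, the boundary term at $\scri^+$ is $-c|\sigma|\int_{\Sph^2}|u_0|^2$ for some $c>0$ (this is the standard outgoing flux, with the sign fixed by $\Im\sigma\ge 0$ and the null nature of $d t_{\bhm,*}$), while the boundary term at $\Sigma_{\rm fin}=\R_{t_*}\times\pa_- X$ has a definite sign because $\Sigma_{\rm fin}$ is spacelike and the energy flux through a spacelike surface into the black hole is non-negative. Hence $u_0=0$, so $u$ decays faster than $r^{-1}$; but then the operator is elliptic at $\scri^+$ on that faster-decaying space (no more outgoing/incoming degeneracy) and the radial point estimates together with real principal type propagation and unique continuation from $\Sigma_{\rm fin}$ force $u\equiv 0$. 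For $\sigma\in\C$ with $\Im\sigma>0$, the same pairing gives an extra term $-2\Im\sigma\,\Re\langle(\text{first-order part})u,u\rangle$ plus an exponentially decaying tail, and a similar positivity argument (the energy estimate for a wave operator with the correct sign of the zeroth-order term, as in the Schwarzschild-de~Sitter analysis) yields $u=0$.

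For $\sigma=0$ on Schwarzschild, I would use that $\wh{\Box_{g_{(\bhm_0,0)}}}(0)$ is, up to $\rho^3\Diffb^2$, the Euclidean Laplacian $-\Delta$ on $\ol{\R^3}$ by Lemma~\ref{LemmaKStNormal}. Given $u\in\ker$ with $\ell\in(-\tfrac32,-\half)$, Proposition~\ref{PropOpNull}\eqref{ItOpNull0} (scalar version) shows $u\in\cA^{1-}$ with expansion $u=r^{-1}u_0+\cO(r^{-2-})$, $u_0\in\CI(\Sph^2)$; in particular $u\to 0$ at infinity and is smooth across $\cH^+$. One then runs the maximum principle / energy argument for the \emph{stationary} wave operator: since $\Box_{g_{(\bhm_0,0)}}$ is, on the exterior region, minus the Laplacian of the static Riemannian metric $\mu^{-1}dr^2+r^2\slg$ (times the appropriate conformal factor), an $\Hb^{s,\ell}$-solution vanishing at infinity and regular at $r=2\bhm_0$ must be constant by the integration-by-parts identity $\int|\nabla u|^2\mu\,dr\,d\slg=0$ coming from the boundary terms vanishing at both ends — the horizon term drops because $\mu=0$ there and $u$ is smooth, the infinity term drops because $u=\cO(r^{-1})$ and $\nabla u=\cO(r^{-2})$. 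A constant in $\cA^{1-}$ is zero. (Equivalently: $-\Delta u=f$ with $f\in\rho^3\Hb^\infty$ conormal, $u=\cO(r^{-1})$, and one iterates, but the cleanest route is the static energy identity inside the domain of outer communications plus propagation across $\cH^+$ and into $r<2\bhm_0$.) Finally, the extension to $g_{(\bhm,\bha)}$ for $(\bhm,\bha)$ near $(\bhm_0,0)$ is exactly the argument of \cite[\S2.7]{VasyMicroKerrdS}: if $\wh{\Box_{g_{b_0}}}(\sigma)$ is invertible with uniformly bounded inverse on the relevant spaces for $\sigma$ in a fixed compact subset of $\{\Im\sigma\ge 0\}\setminus\{0\}$ (resp.\ including $\sigma=0$ on the $\ell\in(-\tfrac32,-\half)$ spaces), and $\wh{\Box_{g_b}}(\sigma)-\wh{\Box_{g_{b_0}}}(\sigma)\to 0$ in operator norm as $b\to b_0$ (which follows from the smoothness of $b\mapsto g_b$ and Lemma~\ref{LemmaOpLinFT}/Proposition~\ref{PropKStBox2} applied to the scalar operator), then $\wh{\Box_{g_b}}(\sigma)$ is invertible for $|b-b_0|$ small by a Neumann series, uniformly on compact $\sigma$-sets; combined with the high-energy invertibility of Theorem~\ref{ThmOp}\eqref{ItOpHigh} for $|\Re\sigma|\gg 1$, this covers all $\sigma$ with $\Im\sigma\ge 0$.

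The main obstacle I anticipate is the boundary-pairing argument at $\sigma\neq 0$: one must track signs carefully because the conjugation by $e^{i\sigma t_{\bhm,*}}$ with $t_{\bhm,*}$ approximately null means the "outgoing" flux at $\scri^+$ is not the familiar $\int|\pa_r u|^2$ but a genuine radiation flux whose sign has to be read off from $|d t_{\bhm,*}|^2_{G_b}\in\rho^2\CI$ and the structure of the normal operator $2\sigma\rho(\rho D_\rho+i)$, and simultaneously the horizon/$\Sigma_{\rm fin}$ term must be controlled using the causal character of those surfaces rather than any ellipticity. Making the regularity bookkeeping rigorous — that $u$ is regular enough at $\cH^+$ for the integration by parts there, which is where the threshold regularity $s>\tfrac12$ enters — is the delicate point, but it is precisely what the radial-point estimates in \cite{VasyMicroKerrdS} supply.
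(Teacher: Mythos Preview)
Your overall strategy matches the paper's --- boundary pairing for injectivity, Fredholm index $0$ for surjectivity, perturbation to Kerr --- and your $\sigma=0$ injectivity argument (the static energy identity in $r>2\bhm_0$, with boundary terms dropping because $\mu=0$ at the horizon and $u=\cO(r^{-1})$ at infinity) is essentially the paper's. But the $\sigma\neq 0$ case has a genuine gap, and two supporting points need repair.

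\textbf{The boundary term at $r=r_-$.} For real $\sigma\neq 0$ you integrate over $r_-\le r\le R$ and assert that the boundary contribution at $\Sigma_{\rm fin}=\{r=r_-\}$ has a definite sign ``because $\Sigma_{\rm fin}$ is spacelike''. This does not work: the Killing field $\partial_{t_*}$ is spacelike in $r<2\bhm_0$, so its energy flux through $\{r=r_-\}$ carries no sign, and on the spectral side the \emph{extendible} condition at $r=r_-$ leaves the boundary term there completely uncontrolled. The paper instead runs the boundary pairing in the static exterior $r>2\bhm_0$, where the two boundary contributions sit at the \emph{event horizon} and at infinity; the special structure at $r=2\bhm_0$ (namely $\mu=0$ together with the outgoing behavior $u\in e^{-i\sigma r_*}\CI$) is what produces a sign there. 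Once both leading terms vanish, an indicial-root argument gives infinite-order vanishing at $r=2\bhm_0$ and at $\partial_+X$, and unique continuation (at infinity, or at the horizon via \cite{MazzeoUniqueContinuation}) yields $u\equiv 0$ in $r>2\bhm_0$; only \emph{then} does one propagate into $r<2\bhm_0$ by energy estimates for the interior hyperbolic problem with zero data at $r=2\bhm_0$. Your ``unique continuation from $\Sigma_{\rm fin}$'' is therefore pointing in the wrong direction.

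\textbf{Two smaller points.} First, be careful citing Theorem~\ref{ThmOp} for index $0$ of the \emph{scalar} operator: the paper's proof of index $0$ in Theorem~\ref{ThmOp} deforms to the scalar operator and invokes Theorem~\ref{Thm0}. What is non-circular is: Fredholm estimates (from the microlocal machinery) plus high-energy invertibility plus continuity of the index along $\Im\sigma=\text{const}$ --- this is the argument the paper actually gives inside the proof of Theorem~\ref{Thm0} for $\Im\sigma>0$. At $\sigma=0$ the paper proves surjectivity directly by showing the dual kernel is trivial, which requires a Fuchsian analysis at $r=2\bhm_0$ (ruling out $\log\mu$ behavior of supported solutions); you have not addressed this. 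Second, the perturbation to Kerr is not a Neumann series in operator norm: the difference $\wh{\Box_{g_b}}(\sigma)-\wh{\Box_{g_{b_0}}}(\sigma)$ is second order and does not map the graph-norm domain into the target. The working argument (which the paper spells out) uses the uniform Fredholm estimate together with compactness of $\Hbext^{s,\ell}\hookrightarrow\Hbext^{s_0,\ell_0}$: a sequence of normalized kernel elements for $b_j\to b_0$ has a nonzero weak limit in $\ker\wh{\Box_{g_{b_0}}}(\sigma)$, a contradiction.
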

\begin{proof}
  We analyze the case of the Schwarzschild metric first, beginning with $\sigma=0$. Thus, suppose $\wh{\Box_g}(0)u=-(r^{-2}D_r\mu r^2 D_r+r^{-2}\slDelta)u=0$, where $\mu=1-\frac{2\bhm_0}{r}$, and $u\in\Hbext^{s,\ell}(X)$. Then $u\in\Hbext^{\infty,\ell}$, and in fact $|u|,|r\pa_r u|\lesssim r^{-1}$ by Proposition~\ref{PropOpNull}. Therefore,
  \begin{align*}
    0 &= -\int_{\Sph^2}\int_{2\bhm_0}^R (\pa_r\mu r^2\pa_r-\slDelta)u\cdot\bar{u}\,d r|d\slg|\\
      &= \int_{\Sph^2}\int_{2\bhm_0}^R \bigl(\mu|r\pa_r u|^2+|\slnabla u|^2\bigr)\,d r|d\slg|
        - \int_{\{r=R\}} \mu r^2(\pa_r u)\cdot\bar u\,|d\slg|.
  \end{align*}
  The second term is $\lesssim R^{-1}$. Taking the limit $R\to\infty$, we thus conclude that $u$ is constant and hence vanishes in $r\geq 2\bhm_0$; the infinite order vanishing at $r=2\bhm_0$ and smoothness in $r\leq 2\bhm_0$ then imply $u\equiv 0$ in $r\leq 2\bhm_0$ as well, see \cite[Lemma~1]{ZworskiRevisitVasy}. Thus, $\wh{\Box_g}(0)$ is injective.

  To prove surjectivity, we can either use an abstract deformation or perturbation argument to show that $\wh{\Box_g}(0)$ has index $0$; or we can proceed directly, and show that $v\in\Hbsupp^{1-s,-\ell-2}(X)$ with $0=\wh{\Box_g}(0)^*v=\wh{\Box_g}(0)v$ vanishes. We do the latter: energy estimates in $r<2\bhm_0$ for $\wh{\Box_g}(0)^*$ (which is a hyperbolic operator there, with $r$ a timelike function) show that $v$ vanishes there; furthermore, $v$ is smooth in $r>2\bhm_0$, and $|v|,|r\pa_r v|\lesssim r^{-1}$ in $r>2\bhm_0$. Moreover, radial point estimates at the event horizon imply that $v\in H^{1/2-}$ there. We claim that $v|_{r>2\bhm_0}$ is smooth down to $r=2\bhm_0$; the arguments above then imply $v=0$ in $r>2\bhm_0$, hence $v\equiv 0$ since $v$ cannot be a (differentiated) $\delta$-distribution at $r=2\bhm_0$ since it lies in $L^2$.\footnote{One can alternatively check by an explicit calculation that sums of differentiated $\delta$-distributions at $r=2\bhm_0$ do not lie in $\ker\wh{\Box_g}(0)$.} To prove the smoothness, recall that $v$ is conormal at $r=2\bhm_0$ by \cite{HaberVasyPropagation}, hence we can obtain the asymptotic behavior of $v$ there by writing
  \begin{equation}
  \label{Eq0Fuchs}
    (\mu r^2 D_r)^2 v+\mu r^4\slDelta v=0,
  \end{equation}
  where the crucial point is that $\mu=0$ at $r=2\bhm_0$. Now $(\mu r^2 D_r)^2$ is a Fuchsian operator with a double indicial root at $0$: it annihilates $1$ and $\log\mu$ in $\mu>0$; we thus have $v=H(r-2\bhm_0)(v_0+v_1\log\mu) + v'$ where $v_0,v_1\in\CI(\Sph^2)$ and $v'\in\cA^{1-}(\{r\geq 2\bhm_0\})$ is conormal at $r=2\bhm_0$ and bounded by $\mu^{1-}$. Now $\wh{\Box_g}(0)H(r-2\bhm_0)=0$, so
  \[
    0 = \wh{\Box_g}(0)v = \wh{\Box_g}(0)(v_1 H(r-2\bhm_0)\log\mu) + \wh{\Box_g}(0)v' = \frac{v_1}{2\bhm_0}\delta(r-2\bhm_0) + \wh{\Box_g}(0)v'.
  \]
  The second term lies in $\cA^{0-}$, whereas the first term does not lie in $\cA^{0-}$ unless $v_1=0$. Using this information in the asymptotic analysis of~\eqref{Eq0Fuchs} at $\mu=0$ then implies $v\in\CI(\{r\geq 2\bhm_0\})$, as desired.

  We shall sketch the proof of invertibility of $\wh{\Box_g}(\sigma)$ for non-zero $\sigma$ as well to illustrate the relationship between outgoing solutions in the `conjugated perspective' (i.e.\ using $t_{\bhm_0,*}$ to define spectral families) and in the `standard perspective', cf.\ Remark~\ref{RmkOpOutgoing}. We start with $0\neq\sigma\in\R$. In this case, $u\in\ker\wh{\Box_g}(\sigma)$ gives rise to an outgoing solution
  \begin{equation}
  \label{Eq0Spec}
    \wt{\Box_g}(\sigma)u'=0,\qquad \wt{\Box_g}(\sigma):=e^{i\sigma t}\Box_g e^{-i\sigma t},\quad u':=e^{-i\sigma(t-t_{\bhm_0,*})}u.
  \end{equation}
  A boundary pairing argument, see \cite[\S2.3]{MelroseGeometricScattering} and also the proof of Proposition~\ref{Prop1Re} (starting at~\eqref{Eq1ReCutoff}) below, then shows that the leading order terms of $u'$ at the event horizon and at infinity must vanish, thus $u(2\bhm_0)=0$ and $u\in\cA^{2-}(X)$. An indicial root argument then implies that $u$ vanishes to infinite order at $r=2\bhm_0$ and $\pa X$. A unique continuation argument\footnote{One can use either unique continuation at infinity for $\wt{\Box_g}(\sigma)u'=0$ as in \cite[Theorem~17.2.8]{HormanderAnalysisPDE2}, or unique continuation at $r=2\bhm_0$ using \cite{MazzeoUniqueContinuation}.} then gives $u\equiv 0$ in $r\geq 2\bhm_0$, hence also in $r<2\bhm_0$ using energy estimates as in \cite{ZworskiRevisitVasy}. This proves the injectivity of $\wh{\Box_g}(\sigma)$ for $0\neq\sigma\in\R$.
  
  The proof of surjectivity is not quite symmetric from the conjugated perspective, hence we sketch the direct argument; for an abstract perturbative argument, see the discussion of the $\Im\sigma>0$ case below. First, one notes that $v\in\ker\wh{\Box_g}(\sigma)^*\cap\Hbsupp^{-s,-\ell-1}$, where now $-s<-\half$ and $-\ell-1>-\half$, is an element of a second microlocal scattering-b space as defined in \cite[\S2]{VasyLAPLag}, to wit, $v\in\dot H_{\scop,\bop}^{-s,-s-\ell-1,-\ell-1}$, where the orders denote the scattering regularity, scattering decay, and b-decay (which can be thought of as a very precise form of `scattering decay order at $0$ scattering frequency'). Note that $-s-\ell-1<-\half$. Let us work in $r>3\bhm_0$; then elliptic regularity implies $v\in H_{\scop,\bop}^{\infty,-s-\ell-1,-\ell-1}$, and a normal operator argument at $\pa X$ as in the proof of Proposition~\ref{PropOpNull}\eqref{ItOpNullSigma} improves the b-decay order to $+\infty$, so $v\in H_{\scop,\bop}^{\infty,-s-\ell-1,\infty}$ in $r>3\bhm_0$. Radial point estimates at the (lift of the) outgoing radial set $\cR_{\rm out}$, see~\eqref{EqKFlRadInfty}, improve the scattering decay order to $+\infty$ away from $\cR_{\rm in}$. This now implies that (restricting to $r>3\bhm_0$ still) $\WFsc(u')\subset\cR_{\rm in}$. At this point, we can again consider~\eqref{Eq0Spec}; thus $u'$ is now an \emph{incoming} mode solution, and can easily be shown to be equal to\footnote{One can follow the arguments of \cite[\S12]{MelroseEuclideanSpectralTheory}. An approach closer in spirit to the conjugated perspective passes to the spectral family relative to the time variable $2 t-t_{\bhm_0,*}$, which is the time-reversed analogue of $t_{\bhm_0,*}$: its level sets are transversal to the past event horizon and past null infinity. Indeed, what we have just proved is that an incoming mode solution for $\wh{\Box_g}(\sigma)$ is an outgoing mode solution for this new spectral family.} $e^{-i\sigma r_*}(c r^{-1}+\cA^{2-})$ for some $c\in\C$. The same boundary pairing and unique continuation arguments as for the direct problem prove that $v=0$.

  For $\Im\sigma>0$, the proof of injectivity uses that $u'$ in~\eqref{Eq0Spec} now decays exponentially fast as $|r_*|\to\infty$ where $r_*=r+2\bhm_0\log(r-2\bhm_0)$ is the tortoise coordinate; taking imaginary parts of $0=\la\wt{\Box_g}(\sigma)u',u'\ra$ then implies $u'=0$. Surjectivity is proved most easily by establishing that $\wh{\Box_g}(\sigma)$ has index $0$. This holds for fixed $\Im\sigma=:C>0$ when $|\Re\sigma|\gg 1$, since then $\wh{\Box_g}(\sigma)$ is in fact invertible by high energy estimates, cf.\ Theorem~\ref{ThmOp}. We claim that the index of $\wh{\Box_g}(\sigma)$ is constant on $\Im\sigma=C$, and thus equal to $0$ for all such $\sigma$; it suffices to show that it is locally constant. Let
  \[
    \cX(\sigma) := \{ u\in\Hbext^{s,\ell}(X) \colon \wh{\Box_g}(\sigma)u\in\Hbext^{s,\ell+1}(X) \}.
  \]
  Then if $\wh{\Box_g}(\sigma_0)$, $\Im\sigma_0=C$, has kernel and cokernel of dimension $k_1$ and $k_2$, respectively, we define an operator
  \[
    L(\sigma) = \begin{pmatrix} \wh{\Box_g}(\sigma) & L_1 \\ L_2 & 0 \end{pmatrix} \colon\cX(\sigma)\oplus\C^{k_2}\to\Hbext^{s,\ell+1}(X)\oplus\C^{k_1},
  \]
  where $L_1\colon\C^{k_2}\to\CIc(X^\circ)$ maps into a complement of $\ran\wh{\Box_g}(\sigma_0)$, and $L_2\colon\sD'(X^\circ)\to\C^{k_1}$ restricts to an isomorphism $\ker\wh{\Box_g}(\sigma_0)\to\C^{k_1}$; thus, $L(\sigma_0)$ is invertible. Uniform Fredholm estimates for $\wh{\Box_g}(\sigma)$ imply such for $L(\sigma)$; but $L(\sigma_0)$ is invertible, hence so is $L(\sigma)$ for $\sigma$ near $\sigma_0$ by the perturbation arguments in~\cite[\S2.7]{VasyMicroKerrdS}. Therefore, the index of $\wh{\Box_g}(\sigma)$ is constant (namely, equal to $k_1-k_2$), as claimed.\footnote{One can give a more direct argument, in which one directly realizes $\wh{\Box_g}(\sigma)$ as a Fredholm family of operators acting between \emph{$\sigma$-independent} function spaces for $\Im\sigma>0$, by using the more precise microlocal point of view sketched in \cite[Remark~4.17]{VasyLAPLag}.}

  That the same results hold for slowly rotating Kerr metrics $g_b$, with $b$ close to $b_0$, was already discussed in the proof of Theorem~\ref{ThmOp}. Here, we flesh out the argument near zero energy, as we shall need more general versions of this later on. The key is that we have \emph{uniform} estimates for $s_0<s$, $\ell_0<\ell$,
  \[
    \|u\|_{\Hbext^{s,\ell}} \leq C\bigl(\bigl\|\wh{\Box_{g_b}}(0)u\bigr\|_{\Hbext^{s-1,\ell+2}} + \|u\|_{\Hbext^{s_0,\ell_0}}\bigr),
  \]
  for $b$ close to $b_0$. Thus, if $\ker\wh{\Box_{g_b}}(0)$ were non-trivial for a sequence $b_j\to b_0$, $j\to\infty$, we could find $u_j\in\Hbext^{s,\ell}\cap\ker\wh{\Box_{g_b}}(0)$, $\|u_j\|_{\Hbext^{s,\ell}}=1$; this estimate gives a positive lower bound $\|u_j\|_{\Hbext^{s_0,\ell_0}}\geq(2 C)^{-1}>0$ for large $j$, hence we can extract a weakly convergent subsequence $u_j\weakto u\in\Hbext^{s,\ell}$ (thus $u_j\to u$ in $\Hbext^{s_0,\ell_0}$) with the limit $u$ necessarily non-zero and satisfying
  \[
    \wh{\Box_{g_{b_0}}}(0)u = \lim_{j\to\infty} \Bigl(\wh{\Box_{g_{b_0}}}(0)(u-u_j) + \bigl(\wh{\Box_{g_{b_0}}}(0)-\wh{\Box_{g_{b_j}}}(0)\bigr)u_j\Bigr) = 0.
  \]
  But this contradicts $\ker\wh{\Box_{g_{b_0}}}(0)=0$. Surjectivity, which is equivalent to the injectivity of the adjoint, is shown similarly.

  A minor modification of this argument (now using the uniformity of the estimate~\eqref{EqOpFinite} \emph{near} rather than merely \emph{at} zero energy) applies more generally to putative sequences of normalized elements $u_j\in\ker\wh{\Box_{g_b}}(\sigma_j)\cap\Hbext^{s,\ell}$ where $\sigma_j\to 0$, $\Im\sigma_j\geq 0$. Thus, $\wh{\Box_{g_b}}(\sigma)$ is injective for $(b,\sigma)$ near $(b_0,0)$; surjectivity follows from the index $0$ property.
\end{proof}

\subsection{Growing zero modes}
\label{Ss00Grow}

For later use, we record the explicit form of scalar functions in $\ker\wh{\Box_g}(0)$ which are allowed to have more growth at infinity. Their differentials are 1-forms, some of which are gauge potentials for (pure gauge) metric perturbations arising in the spectral analysis of $\wh{L_{g_b}}(0)$ in~\S\ref{SL}.

\begin{prop}
\label{Prop00Grow}
  For $b=(\bhm,\bha)$ near $b_0=(\bhm_0,0)$, we have
  \begin{subequations}
  \begin{alignat}{3}
  \label{Eq00Grows0Ker}
    &\ker\wh{\Box_{g_b}}(0) \cap \Hbext^{\infty,-3/2-}&=&\ \la u_{b,s 0}\ra, \\
  \label{Eq00Grows0KerAdj}
    &\ker\wh{\Box_{g_b}}(0)^* \cap \Hbsupp^{-\infty,-3/2-}&=&\ \la u_{b,s 0}^*\ra,
  \end{alignat}
  \end{subequations}
  where, in the notation~\eqref{EqKaRadius},
  \begin{equation}
  \label{Eq00Grows0}
    u_{b,s 0}=1,\quad u_{b,s 0}^*=H(r-r_{(\bhm,\bha)}).
  \end{equation}
  Furthermore, the spaces
  \begin{subequations}
  \begin{alignat}{4}
  \label{Eq00Grows1Ker}
    &\ker\wh{\Box_{g_b}}(0) \cap \Hbext^{\infty,-5/2-}& &=\ &\la u_{b,s 0}\ra & \oplus \{ u_{b,s 1}(\scal) \colon \scal\in\scal_1 \}, \\
  \label{Eq00Grows1KerAdj}
    &\ker\wh{\Box_{g_b}}(0)^* \cap \Hbsupp^{-\infty,-5/2-}& &=\ &\la u_{b,s 0}^*\ra & \oplus \{ u_{b,s 1}^*(\scal) \colon \scal\in\scal_1 \}
  \end{alignat}
  \end{subequations}
  are $4$-dimensional; the maps $b\mapsto u_{b,s 1}(\scal)$ and $b\mapsto u_{b,s 1}^*(\scal)$ can be chosen to be continuous (with values in the respective spaces), and to take the values
  \begin{equation}
  \label{Eq00Grows1}
    u_{(\bhm,0),s 1}(\scal) = (r-\bhm)\scal, \quad
    u_{(\bhm,0),s 1}^*(\scal) = (r-\bhm)H(r-2\bhm)\scal.
  \end{equation}
\end{prop}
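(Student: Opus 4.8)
The plan is to combine a direct computation on the Schwarzschild model with a normal operator (b-calculus) perturbation argument to pass to slowly rotating Kerr metrics, exactly in the spirit of Proposition~\ref{PropOpNull} and of Theorem~\ref{Thm0}. The backbone is that $\wh{\Box_{g_b}}(0)\in\rho^2\Diffb^2(X)$ has normal operator at $\pa_+X$ equal to the Euclidean Laplacian $\rho^2(\rho\pa_\rho(\rho\pa_\rho-1)-\slDelta)$, whose boundary spectrum is $i\Z$, with resonant states $r^l\scal_l$ ($l\geq 0$) and $r^l\scal_{-l-1}$ ($l\leq -1$) as recorded in the proof of Proposition~\ref{PropOpNull}. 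So an element $u\in\ker\wh{\Box_{g_b}}(0)$ with growth weaker than $r^2$ (i.e.\ in $\Hbext^{\infty,-5/2-}$) must have a leading expansion $c_0 + r\scal_1' + (\text{lower order, }o(r^{-1/2}))$ for some constant $c_0$ and some $\scal_1'\in\scal_1$; the possible leading behaviors are governed by the indicial roots $0$ and $1$ (from $l=0$ and $l=1$), hence $\dim\leq 4$, with the $l\geq 2$ growing modes $r^l\scal_l$ excluded by the weight constraint.

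First I would dispense with the Schwarzschild case $b=(\bhm,0)$ by explicit solution of the radial ODE. For $u=v(r)\scal_l$, $\wh{\Box_g}(0)u=0$ reads $-(r^{-2}\pa_r\mu r^2\pa_r - r^{-2}l(l+1))v=0$ with $\mu=1-2\bhm/r$; for $l=0$ this is $\pa_r(\mu r^2\pa_r v)=0$, giving $v=\text{const}$ and $v=\int\frac{dr}{\mu r^2}$, the latter of which blows up logarithmically at $r=2\bhm$ (not smooth across $\cH^+$) and decays like $r^{-1}$, so in $\Hbext^{\infty,-5/2-}$ the kernel restricted to $l=0$ is exactly $\la u_{b,s0}\ra=\la 1\ra$. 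For $l=1$ one solves the hypergeometric-type ODE; I would verify directly that $v=r-\bhm$ solves it (a routine check: $\mu r^2 v' = \mu r^2 = r^2-2\bhm r$, so $\pa_r(\mu r^2 v') = 2r-2\bhm = 2(r-\bhm) = l(l+1)v$ with $l=1$), which is smooth down to $r=r_-$ and grows linearly, hence lies in $\Hbext^{\infty,-5/2-}$; the second solution grows like $r^{-2}$-corrected $\sim r^2$ behavior or is singular at the horizon, so it is excluded. This yields~\eqref{Eq00Grows1}. The adjoint statements~\eqref{Eq00Grows0KerAdj} and~\eqref{Eq00Grows1KerAdj} follow the pattern of the corresponding parts of the proof of Theorem~\ref{Thm0}: $\wh{\Box_g}(0)^*=\wh{\Box_g}(0)$ formally, the supported-distribution condition forces vanishing in $r<2\bhm$ by the energy (hyperbolicity of $r$) argument there, across $r=2\bhm$ one gets $u^* = (\text{Schwarzschild solution})\cdot H(r-2\bhm)$ plus possibly once-differentiated $\delta$'s at $r=2\bhm$, and the $L^2$-based regularity (radial point estimate, $H^{1/2-}$ at the horizon) rules out the $\delta$-terms, leaving $u_{b,s0}^*=H(r-2\bhm)$ and $u_{b,s1}^*(\scal)=(r-\bhm)H(r-2\bhm)\scal$; the smoothness-across-the-horizon argument from the proof of Theorem~\ref{Thm0} (the Fuchsian analysis of~\eqref{Eq0Fuchs}) applies verbatim to handle the $\log\mu$ terms.

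Next I would handle general $b$ near $b_0$. By Lemma~\ref{LemmaKStNormal}, $\wh{\Box_{g_{(\bhm,\bha)}}}(0)-\wh{\Box_{g_{(\bhm,0)}}}(0)\in\rho^3\Diffb^2$ (even $\rho^4$ by~\eqref{EqKStBoxLeading}), so perturbing $b$ does not change the normal operator, hence not the boundary spectrum and not the dimension count. Upper bound on dimension: any $u\in\ker\wh{\Box_{g_b}}(0)\cap\Hbext^{\infty,-5/2-}$ is polyhomogeneous (Proposition~\ref{PropOpNull}\eqref{ItOpNull0}) with index set in $i\N_0\times\N_0$ intersected with the half-plane $\Im z\geq -5/2+$, so only $z=0$ (the constant, $l=0$) and $z=-i$ with coefficient in $\scal_1$ (the $r\scal_1$ term, $l=1$) are allowed as leading terms; the map $u\mapsto(c_0,\scal_1')$ reading off these leading coefficients is injective on the kernel because its nullspace consists of kernel elements in $\Hbext^{\infty,-3/2-}$ with no $r^0$ term, i.e.\ elements of $\Hbext^{\infty,-1/2-}$, which are $o(r^{-1/2})$ hence by the boundary-pairing/energy argument of Theorem~\ref{Thm0} (applied to $\wh{\Box_{g_b}}(0)$) must vanish — this shows $\dim\le 4$. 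Existence (lower bound $=4$): for each target leading coefficient I solve a formal power series (polyhomogeneous) problem at $\pa_+X$ using the invertibility of the indicial family $\wh{\rho^{-2}\Delta}(\lambda)=i\lambda(i\lambda+1)+\slDelta$ off its poles, producing an approximate solution $u_{\mathrm{app}}$ with $\wh{\Box_{g_b}}(0)u_{\mathrm{app}}\in\Hbext^{\infty,N}$ for $N$ large; then I correct it to an exact solution using the uniform invertibility of $\wh{\Box_{g_b}}(0)$ on $\Hbext^{s,\ell}$ for $\ell\in(-3/2,-1/2)$ from Theorem~\ref{Thm0} (the correction lies in such a space and is thus $O(r^{-1/2-})$, not disturbing the leading terms). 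Continuity of $b\mapsto u_{b,s1}(\scal)$ follows from continuity of the construction (the indicial family and the resolvent $\wh{\Box_{g_b}}(0)^{-1}$ depend continuously on $b$), and at $b=(\bhm,0)$ one can normalize so that the construction reproduces the explicit~\eqref{Eq00Grows1}. The adjoint spaces are treated the same way, with the supported-distribution/horizon-smoothness bookkeeping as above.

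The main obstacle I anticipate is not any single hard estimate but the careful bookkeeping of leading coefficients under the weight $-5/2-$: one must check that the log-coincidence at the integer boundary spectrum (here $0$ and $1$ are both in $i\N_0$, and $1$ could in principle generate a $\log$ when hitting the $l=1$ indicial root, but since $r\scal_1$ is an honest indicial solution with no forcing there is no log at the leading orders) does not introduce spurious growing solutions, and that the $\rho^3$ (resp.\ $\rho^4$) perturbation from $\bha$ genuinely cannot shift a would-be $r^{-1/2-}$ correction up into the $r^0$ or $r^1$ range — this is exactly where one uses that the perturbation is two (or more) orders down and that the relevant resolvent acts uniformly on the restricted weight window $(-3/2,-1/2)$. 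Once that is in place, the dimension count and the continuity statement are routine consequences of the b-normal-operator formalism of~\cite{MelroseAPS} combined with Theorem~\ref{Thm0} and Proposition~\ref{PropOpNull}.
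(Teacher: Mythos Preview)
Your approach is correct and essentially coincides with the paper's: upper bounds on the dimension come from the normal operator (boundary spectrum) analysis together with the triviality of $\ker\wh{\Box_{g_b}}(0)$ on $\Hbext^{\infty,\ell}$ for $\ell\in(-\tfrac32,-\tfrac12)$ from Theorem~\ref{Thm0}; existence comes from correcting an approximate solution with prescribed leading term; and the explicit Schwarzschild formulas are obtained by solving the radial ODE.

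One simplification worth noting: for the existence step you propose building a full formal polyhomogeneous series $u_{\rm app}$ with error in $\Hbext^{\infty,N}$. The paper gets by with a single step: take $\chi v$ with $v=r\scal$ and $\chi$ a radial cutoff, and observe that
\[
  e:=\wh{\Box_{g_b}}(0)(\chi v)=\chi\wh{\Box_{\ubar g}}(0)v+[\wh{\Box_{\ubar g}}(0),\chi]v+\bigl(\wh{\Box_{g_b}}(0)-\wh{\Box_{\ubar g}}(0)\bigr)(\chi v)\in 0+\Hbext^{\infty,\infty}+\Hbext^{\infty,1/2-},
\]
using Lemma~\ref{LemmaKStNormal}; since $\ker\wh{\Box_{g_b}}(0)^*\cap\Hbsupp^{-\infty,-1/2+}=0$ by Theorem~\ref{Thm0}, one solves $\wh{\Box_{g_b}}(0)w=-e$ with $w\in\Hbext^{\infty,-3/2-}$ directly, and $\chi v+w$ is the desired element. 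No iteration is needed because the error already lands in a weight range where the relevant cokernel is trivial. Your concern about log-coincidences at integer indicial roots is therefore moot at this level: the correction $w$ is obtained globally in one shot, and its polyhomogeneity (if one cares about it) then follows a posteriori from Proposition~\ref{PropOpNull}.
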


\begin{rmk}
  We keep the notation $u_{b,s 0}$ etc.\ even though the spherical harmonic and scalar/vector decompositions cease to be globally well-defined on Kerr spacetimes with non-zero angular momentum. In later sections, proving the existence of continuous families of zero energy solutions will in fact require mixing continuous (in $b$) extensions of 1-forms or symmetric 2-tensors which at $b=b_0$ are of distinct types, see e.g.\ \eqref{EqL0Ansatz} and \eqref{EqL0v1Ansatz}.
\end{rmk}

\begin{rmk}
\label{Rmk00Reg}
  The dual states automatically vanish in $r<r_b$, are smooth in $r>r_b$, and are conormal at $\pa_+X$. They are only singular at the event horizon (and in fact only microlocally at its conormal bundle), where they lie in $H^{1/2-}$ in the present, scalar setting. Since we are interested in the construction of modes with controlled \emph{decay or growth}, we shall typically not state the precise regularity of dual states; the center of attention is the weight at $\pa_+X$.
\end{rmk}

\begin{proof}[Proof of Proposition~\ref{Prop00Grow}]
  Consider first an element $u\in\ker\wh{\Box_{g_b}}(0)\cap\Hbext^{\infty,-3/2-}$. By normal operator arguments as in Proposition~\ref{PropOpNull},\footnote{Note that passing from the weight $-\tfrac32+0$ to $-\tfrac32-0$, we cross the point $0$ in the boundary spectrum, corresponding to $\lambda=0$ and $l=0$ and the solution $1$ of the normal operator, $\wh{\Box_{\ubar g}}(0)1=0$.} $u$ has an expansion at infinity, $u=u_0+\wt u$, where $u_0$ is constant and $\wt u\in\Hbext^{\infty,-1/2-}$; but then $0=\wh{\Box_{g_b}}(0)u_0+\wh{\Box_{g_b}}(0)\wt u=\wh{\Box_{g_b}}(0)\wt u$ and Theorem~\ref{Thm0} imply $\wt u=0$, hence $u=u_0$ is constant. Conversely, constants do lie in $\ker\wh{\Box_{g_b}}(0)$, proving~\eqref{Eq00Grows0Ker}. The proof of~\eqref{Eq00Grows0KerAdj} is analogous: by normal operator arguments as in Proposition~\ref{PropOpNull}, the space in~\eqref{Eq00Grows0KerAdj} is at most 1-dimensional, and indeed $u_{b,s 0}^*\in\ker\wh{\Box_{g_b}}(0)^*$.

  Passing to the weights~\eqref{Eq00Grows1Ker}, one crosses the point $\lambda=i$ in the boundary spectrum, corresponding to $l=1$ and asymptotics $r\scal$, $\scal\in\scal_1$; since $\dim\scal_1=3$, the space in~\eqref{Eq00Grows1Ker} is at most $4$-dimensional, with its elements equal to $r\scal$ plus functions with faster decay. To prove that it is $4$-dimensional indeed, let $v:=r\scal\in\Hbext^{\infty,-5/2-}$ and fix a cutoff $\chi\in\CI(\R)$ with $\chi\equiv 0$ for $r\leq 3\bhm$, $\chi\equiv 1$ for $r\geq 4\bhm$; then
  \begin{align*}
    e := \wh{\Box_{g_b}}(0)(\chi v) &= \chi\wh{\Box_{\ubar g}}(0)v + [\wh{\Box_{\ubar g}}(0),\chi]v + \bigl(\wh{\Box_{g_b}}(0)-\wh{\Box_{\ubar g}}(0)\bigr)(\chi v) \\
     &\in 0 + \Hbext^{\infty,\infty} + \Hbext^{\infty,1/2-} = \Hbext^{\infty,1/2-},
  \end{align*}
  where we used Lemma~\ref{LemmaKStNormal} for the third summand. Now, $\wh{\Box_{g_b}}(0)w=-e$ can be solved with $w\in\Hbext^{\infty,-3/2-}$; indeed, $e$ is $L^2$-orthogonal to the kernel of $\wh{\Box_{g_b}}(0)^*$ on $\Hbsupp^{-\infty,-1/2+}$, which is trivial by Theorem~\ref{Thm0}. Therefore, $\chi v+w$ furnishes an element in~\eqref{Eq00Grows1Ker} with leading term $r\scal$. The continuous dependence on $b$ is a consequence of this construction.

  A similar argument gives~\eqref{Eq00Grows1KerAdj}, using in the final step now that $\ker\wh{\Box_{g_b}}(0) \cap \Hbext^{\infty,-1/2+}$ is trivial. The explicit expressions~\eqref{Eq00Grows1} on the Schwarzschild spacetime $(M^\circ,g_{b_0})$ are found by solving the radial ODE $\wh{\Box_{g_{b_0}}}(0)\bigl(p(r)\scal\bigr)=0$ for $p(r)$ with $p(r)-r=o(r)$. (Concerning the computation of these expressions for general Schwarzschild metrics with parameters $(\bhm,0)$, note that the stationary operator $\wh{\Box_{g_{(\bhm,0)}}}(0)$ is the same for any two choices of presentations of $g_{(\bhm,0)}$ as a metric on $M^\circ$ that are related by pullback along a diffeomorphism of $M^\circ$ of the form $(t_{\chi_0},r,\theta,\phi)\mapsto(t_{\chi_0}+F(r),r,\theta,\phi)$ for any smooth $F$; therefore, calculations for $g_{(\bhm,0)}$ are the same as those for $g_{(\bhm_0,0)}$ upon replacing $\bhm_0$ by $\bhm$.)
\end{proof}

\section{Mode analysis of the 1-form wave operator}
\label{S1}

While we are ultimately interested in the linearized gauge-fixed Einstein operator acting on symmetric 2-tensors, the wave operator on 1-forms, $\Box_g:=\Box_{g,1}$, appears in two different functions when studying the unmodified operator: once as the (unmodified) constraint propagation operator acting on the gauge 1-form, and once as an operator acting on gauge potentials; see the discussion in~\S\ref{SssIIS}. We thus study it here in detail, largely following the notation, and parts of the presentation, of~\cite{HintzVasyKdsFormResonances}.

\begin{thm}
\label{Thm1}
  Let $g=g_{(\bhm_0,0)}$, and consider $\Box_g$ acting on 1-forms.
  \begin{enumerate}
  \item For $\Im\sigma\geq 0$, $\sigma\neq 0$, the operator
    \[
      \wh{\Box_g}(\sigma)\colon\left\{\omega\in \Hbext^{s,\ell}(X;\wt\Tsc{}^*X)\colon\wh{\Box_g}(\sigma)\omega\in\Hbext^{s,\ell+1}(X;\wt\Tsc{}^*X)\right\} \to\Hbext^{s,\ell+1}(X;\wt\Tsc{}^*X)
    \]
    is invertible when $s>\tfrac32$, $\ell<-\half$, $s+\ell>-\half$.
  \item For $s>\tfrac32$ and $\ell\in(-\tfrac32,-\half)$, the stationary operator
    \begin{equation}
    \label{Eq1StatOp}
    \begin{split}
      \wh{\Box_g}(0)\colon&\left\{\omega\in\Hbext^{s,\ell}(X;\wt\Tsc{}^*X)\colon\wh{\Box_g}(0)\omega\in\Hbext^{s-1,\ell+2}(X;\wt\Tsc{}^*X)\right\} \\
        &\qquad\to\Hbext^{s-1,\ell+2}(X;\wt\Tsc{}^*X)
    \end{split}
    \end{equation}
    has 1-dimensional kernel and cokernel.
  \end{enumerate}
  Both statements continue to hold for $g=g_{(\bhm,\bha)}$ with $b=(\bhm,\bha)$ near $b_0=(\bhm_0,0)$. Concretely, there exist $\omega_{b,s 0}\in\Hbext^{\infty,-1/2-}$ and $\omega_{b,s 0}\in\Hbsupp^{-\infty,-1/2-}$, depending continuously on $b$ near $b_0$, such that
  \begin{subequations}
  \begin{alignat}{4}
  \label{Eq1s0Ker}
    &\ker\wh{\Box_{g_b}}(0)\, &\cap\ & \Hbext^{\infty,-1/2-}& &=&\ \la \omega_{b,s 0}\ra, \\
  \label{Eq1s0KerAdj}
    &\ker\wh{\Box_{g_b}}(0)^*\, &\cap\ & \Hbsupp^{-\infty,-1/2-}& &=& \la \omega_{b,s 0}^*\ra.
  \end{alignat}
  \end{subequations}
  Explicitly, using the notation of Proposition~\ref{Prop00Grow}, we can take
  \begin{equation}
  \label{Eq1s0}
  \begin{aligned}
    \omega_{b_0,s 0} = r^{-1}(d t_0-d r), \quad
    &\omega_{b_0,s 0}^*&\!\!\!=\ & d u_{b_0,s 0}^*&\!\!\!=\ &\delta(r-2\bhm_0)d r, \\
    &\omega_{b,s 0}^*&\!\!\!=\ &d u_{b,s 0}^*&\!\!\!=\ &\delta(r-r_{(\bhm,\bha)})d r.
  \end{aligned}
  \end{equation}
\end{thm}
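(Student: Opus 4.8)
The plan is to treat this theorem as a tensor analogue of Theorem~\ref{Thm0}, prove the statements first on the Schwarzschild spacetime $g_{b_0}$ and then transfer them to slowly rotating Kerr by the same uniform-Fredholm/weak-limit argument used at the end of the proof of Theorem~\ref{Thm0}. Throughout I would use the spherical harmonic decomposition of \S\ref{SsYM}: since $\wh{\Box_{g_{b_0}}}(\sigma)$ commutes with the $\mathrm{SO}(3)$-action, it is block-diagonal on scalar type $\scal_l$ and vector type $\vect_l$ 1-forms as in~\eqref{EqYM1forms}, and for each block one gets a system of ODEs in $r$ (one unknown for the vector blocks, two — the aspherical 1-form coefficient $T$ and the scalar $L$ — for the scalar blocks). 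The $l=0$ scalar block is purely aspherical: a $2\times2$ system (the $d t_0$ and $d r$ coefficients). For non-zero $\sigma$, invertibility (part (1)) reduces, exactly as in Theorem~\ref{Thm0}, to: Fredholm index $0$ (already contained in Theorem~\ref{ThmOp} applied to $\Box_{g,1}$, or reproved by the index-continuity argument given there), plus injectivity, plus triviality of the cokernel. Injectivity follows from a boundary pairing / integration-by-parts argument on the outgoing solution $\omega' = e^{-i\sigma(t-t_{\bhm_0,*})}\omega$ of $\wt{\Box_g}(\sigma)\omega'=0$, forcing the leading terms at $\cH^+$ and at $\scri^+$ to vanish, then an indicial-root argument giving infinite-order vanishing, then unique continuation (as cited for Theorem~\ref{Thm0}); the cokernel is handled by the mirror argument (time-reversed spectral family / incoming solutions), noting that dual states vanish for $r<2\bhm_0$ by energy estimates and are smooth and $H^{1/2-}$ at the horizon. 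The vector bundle enters only through the subprincipal/threshold computations, which are recorded in Theorem~\ref{ThmOp} (threshold $\tfrac32$ for 1-forms at $\cR^\pm$, vanishing subprincipal symbol at $\cR_{\sigma,\rm in/out}$ after trivializing $\wt{\Tsc^*}X$ by $d t,d x^i$), so $\Box_{g,1}$ behaves like $\Box_{\ubar g}\otimes 1_4$ to leading order at $\pa_+X$.

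For the zero energy statement (part (2) and the explicit formulas), I would run the normal operator analysis at $\pa_+X$: by Lemma~\ref{LemmaKStNormal}, $\wh{\Box_{g_b,1}}(0)$ agrees with $\wh{\Box_{\ubar g,1}}(0) = -\Delta\otimes 1_4$ modulo $\rho^3\Diffb^2$ in the $d t,d x^i$ trivialization, so the boundary spectrum is $i\Z$ (tensored with $\C^4$), with resonant states at $\lambda = i l$, $l\ge 0$, given by the harmonic polynomials $r^l\scal_l\, d x^\mu$. On the weight window $\ell\in(-\tfrac32,-\tfrac12)$ the only allowed asymptotics are $o(1)$, i.e.\ $O(r^{-1})$ (the constant 1-forms $d x^\mu$, sitting at $\lambda=0$, are disallowed), so any $\omega\in\ker\wh{\Box_{g_b}}(0)\cap\Hbext^{s,\ell}$ is $O(r^{-1})$ and polyhomogeneous with index set in $i\N_0$. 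I then analyze block by block. The scalar $l=0$ block: solve the $2\times2$ radial ODE explicitly on Schwarzschild for stationary 1-forms $(T\,d t_0 + L\,d r)$ decaying like $r^{-1}$; one checks that $r^{-1}(d t_0 - d r)$ solves $\wh{\Box_{g_{b_0}}}(0)\omega = 0$ (this is essentially $d$ of a growing scalar zero mode, or can be read off from $\Box_{\ubar g}$ in null coordinates), and that it is the unique decaying solution regular at $r=2\bhm_0$ — the second solution of the indicial pair either blows up at infinity or is singular/$\delta$-type at the horizon. Scalar $l\ge1$ and all vector blocks: here the relevant normal-operator indicial roots are $\lambda = i l$ ($l\ge1$) giving growth $r^l$, which is excluded by the weight, and $\lambda = -i(l+1)$ giving decay $r^{-l-1} = O(r^{-2})$; but one must rule out genuine $O(r^{-2})$ zero modes. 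For $l\ge1$ this I would do by the same boundary-pairing energy identity as in the $\sigma=0$ case of Theorem~\ref{Thm0}: multiply $\wh{\Box_{g_{b_0}}}(0)\omega=0$ by $\bar\omega$, integrate over $r\in(2\bhm_0,R)$, and use that $\omega, r\pa_r\omega = O(r^{-2})$ kills the boundary term at $r=R$ and the infinite-order vanishing at $r=2\bhm_0$ kills the inner boundary term, leaving a positive-definite bulk term (the Weitzenböck/curvature term $2\sR_g$ does not appear since we are looking at $\Box_{g,1}$, not $L_g$, and on Schwarzschild the energy is coercive on the relevant $l\ge1$ modes after accounting for the $-\Ric(\slg)$ shift in $\slDelta_H$) — so $\omega\equiv0$ on $r\ge2\bhm_0$, hence everywhere. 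This leaves the 1-dimensional kernel $\la\omega_{b_0,s0}\ra$. The cokernel is computed the same way: by the normal operator argument the space $\ker\wh{\Box_{g_{b_0}}}(0)^*\cap\Hbsupp^{-\infty,-1/2-}$ is at most $1$-dimensional, and $\omega_{b_0,s0}^* = \delta(r-2\bhm_0)\,d r$ — which is $d$ of the Heaviside $u_{b_0,s0}^* = H(r-2\bhm_0)$ from Proposition~\ref{Prop00Grow}, and lies in $\ker\wh{\Box_{g_{b_0}}}(0)^*$ since $\wh{\Box_{g_{b_0}}}(0)$ annihilates $du$ whenever it annihilates $u$, intertwining scalar and 1-form operators via $d$ — realizes it (it is a once-differentiated $\delta$-type distribution conormal to $\cH^+$, consistent with the $H^{-3/2-}$ threshold remark in the proof of Theorem~\ref{ThmOp}).

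For the passage to Kerr, I would invoke the uniform estimates~\eqref{EqOpFinite} (valid for $\wh{\Box_{g_b,1}}(\sigma)$ by the remarks following Theorem~\ref{ThmOp}) and argue by contradiction exactly as at the end of the proof of Theorem~\ref{Thm0}: if the kernel jumped in dimension along $b_j\to b_0$, a normalized weak limit would produce a nonzero element of $\ker\wh{\Box_{g_{b_0}}}(0)$ outside $\la\omega_{b_0,s0}\ra$ (or in a one-higher-dimensional space), a contradiction; and similarly for non-zero $\sigma$ with $\sigma_j\to0$ using the uniform-near-zero version. Since $\dim\ker = \dim\coker = 1$ is stable under small perturbation once the uniform Fredholm estimates with trivial "extra" kernel are in hand, and for $|\sigma|$ bounded away from $0$ the invertibility transfers by standard Fredholm perturbation theory (as in the proof of Theorem~\ref{ThmOp}), this gives both statements for $g_b$. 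Finally, to get the explicit continuous family $\omega_{b,s0}$, $\omega_{b,s0}^*$: for the dual state, $\omega_{b,s0}^* = d u_{b,s0}^* = \delta(r-r_{(\bhm,\bha)})\,d r$ is forced by the same $d$-intertwining applied to Proposition~\ref{Prop00Grow}, and its continuity in $b$ follows from that of $r_{(\bhm,\bha)}$; for $\omega_{b,s0}$ one takes a cutoff of the Schwarzschild solution $r^{-1}(dt_0-dr)$ and corrects by an element of $\Hbext^{\infty,-3/2-}$ solving $\wh{\Box_{g_b}}(0)w = -e$ with $e\in\Hbext^{\infty,1/2-}$ the error (which is orthogonal to $\ker\wh{\Box_{g_b}}(0)^*$ on $\Hbsupp^{-\infty,-1/2+}$ — trivial by part (2) applied with the reflected weight — hence solvable), exactly as in the proof of Proposition~\ref{Prop00Grow}; continuity in $b$ is built into the construction.

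The main obstacle I anticipate is the $l\ge1$ block argument at zero energy: showing there are no $O(r^{-2})$ stationary 1-form zero modes in those blocks. The integration-by-parts energy identity works only if the resulting quadratic form is genuinely coercive, and for 1-forms the Hodge Laplacian carries the $-\Ric(\slg)=-1$ shift (see~\eqref{EqYS1}), so the spherical part of the energy is $l(l+1)-1$, which is still positive for $l\ge1$ — but one must be careful that the mixed scalar-block coupling between $T$ and $L$ and the $\mu$-weighted radial terms near $r=2\bhm_0$ do not spoil positivity; this may require either a judicious choice of the pairing (e.g.\ pairing against $\sfG_g\bar\omega$ or using a modified multiplier) or an explicit solution of the radial ODE system in each block, as is done implicitly in~\cite{HintzVasyKdsFormResonances}, whose notation and computations the proof will largely follow.
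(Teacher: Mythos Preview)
Your proposal is plausible but takes a genuinely different route from the paper's, and the obstacle you flag at the end is exactly where the two diverge.

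The paper does \emph{not} do a block-by-block spherical harmonic analysis for the zero energy kernel. Instead it exploits the Hodge structure: since $g$ is Ricci-flat, $\Box_{g,1}=(d+\delta_g)^2$, so for non-zero $\sigma$ it suffices to prove mode stability for the \emph{first-order} operator $\wh d(\sigma)+\wh\delta(\sigma)$ (applied twice). This first-order reduction is what makes the boundary-pairing arguments clean: in the splitting $\omega=(\omega_T,\omega_N)$ one decouples to a scalar-type equation $(d_\cX\alpha\delta_\cX\alpha-\sigma^2)v_T=0$ for $v_T=d_\cX\omega_T$, which is formally self-adjoint in a natural weighted $L^2$, and the commutator terms at both $\pa_-\cX$ and $\pa_+\cX$ can be computed explicitly (the latter is the new ingredient compared to the Kerr--de~Sitter case). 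For $\sigma=0$, the paper introduces a restriction map $R\colon\omega\mapsto\wt\omega_{NT}|_{\pa_-\cX}$ and proves an exact sequence $\cH_1\hookrightarrow\cK_1\xrightarrow{R}\C$, where $\cH_1$ consists of outgoing 1-forms with $\wh d(0)\omega=\wh\delta(0)\omega=0$ and $R\omega=0$. One then shows $\cH_1=\{0\}$ by writing such $\omega$ in terms of a scalar potential $\phi$ (using $H^1(\Sph^2)=0$) and reducing to the scalar zero-energy problem from Theorem~\ref{Thm0}. This gives $\dim\cK_1\le1$ without ever confronting a coupled $l\ge1$ block directly.

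Your approach trades this conceptual reduction for explicit ODE analysis in each $(l,\text{type})$ block. That can certainly be made to work (it is essentially the Regge--Wheeler/Zerilli route), but the coercivity concern you raise is real: the naive pairing of $\wh{\Box_{g_{b_0},1}}(0)\omega$ against $\bar\omega$ does not give a sign-definite bulk term on 1-forms because of the coupling between the aspherical 1-form $T$ and the scalar $L$ in the scalar blocks, and the radial weights near $r=2\bhm_0$ degenerate. You would need either a carefully chosen multiplier or a full reduction to decoupled Regge--Wheeler-type scalar equations in each block. The paper's first-order/Hodge approach sidesteps this entirely, at the cost of the somewhat delicate commutator computations with the cutoff $\chi_\eps$ near both boundaries.

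For the Kerr perturbation and the explicit dual state $\omega_{b,s0}^*=d u_{b,s0}^*$, your argument is essentially the paper's; the intertwining identity $\wh{\Box_{g_b,1}}(0)(d u_{b,s0}^*)=d(\wh{\Box_{g_b,0}}(0)u_{b,s0}^*)=0$ is exactly what the paper uses.
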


In the spirit of Remark~\ref{Rmk00Reg}, we note that the precise regularity for dual states here is smoothness away from the event horizon, conormality at infinity, and conormality relative to $H^{-1/2-}$ at the event horizon.

\begin{rmk}
\label{Rmk10KerrExpl}
  We can give an explicit expression for $\omega_{b,s 0}$ by noting that the elements $\frac{1}{r^2-2\bhm_0 r}d r$, $r^{-1}d t\in\ker\Box_{g_{b_0},1}$ (in the static patch $r>2\bhm_0$) have analogues on Kerr spacetimes. Namely, the 1-forms
  \[
    \omega^0_{b,1}:=\frac{1}{\Delta_b}\,d r,\quad
    \omega^0_{b,2}:=\frac{r}{\rho_b^2}(d t-\bha\sin^2\theta\,d\phi) = \frac{r}{\rho_b^2}(d t_{b,0}-a\sin^2\theta\,d\varphi_{b,0}) - \frac{r}{\Delta_b}d r
  \]
  lie in $\ker\Box_{g_b^0,1}$ in $r>r_{(\bhm,\bha)}$, and hence so does their linear combination $\omega^0_{b,s 0}=\omega^0_{b,2}+r_b\omega^0_{b,1}$ which depends continuously on $b$ and is smooth across the event horizon. We can then define $\omega_{b,s 0}$ as the pullback $(\Phi_b^0\circ\Phi_b^{-1})^*\omega^0_{b,s 0}$, cf.\ Remark~\ref{RmkKaLie}.
\end{rmk}

\begin{rmk}
\label{Rmk10AdjDecay}
  Regarding the nullspace of the adjoint in~\eqref{Eq1s0KerAdj}, we note that the dual of the target space of $\wh{\Box_g}(0)$ in~\eqref{Eq1StatOp} is $\Hbsupp^{-s+1,-\ell-2}$, whose weight lies in $(-\tfrac32,-\half)$; elements of the kernel of $\wh{\Box_g}(0)^*$ thus automatically have the decay rate $-\half-$ stated in~\eqref{Eq1s0KerAdj} by indicial root (or normal operator) arguments as in Proposition~\ref{PropOpNull}.
\end{rmk}

The arguments in the proof of Theorem~\ref{ThmOp} apply also to $\wh{\Box_{g_b}}(\sigma)$, thus it is Fredholm of index $0$, and the determination of its kernel suffices to determine the dimension of the kernel of the adjoint; we find the latter, when non-trivial, by a simple observation, see~\eqref{Eq10DualKer}. In the Schwarzschild case $b=b_0$, one can prove Theorem~\ref{Thm1} using separation of variables into radial and spherical variables, and expanding further into 1-form spherical harmonics; this is the approach used in the proof of mode stability for the linearized Einstein metric in~\S\ref{SMS}. Here, we instead opt for a more conceptual proof, adapting the arguments of~\cite{HintzVasyKdsFormResonances} to the present asymptotically flat setting.

The main part of the proof is the analysis of the Schwarzschild case; let us thus put $\bhm=\bhm_0$, $g=g_{(\bhm,0)}$, until specified otherwise. Since $g$ is Ricci-flat, the tensor wave operator equals the Hodge d'A\-lem\-bert\-ian,
\[
  \Box_g = (d+\delta_g)^2 = d\delta_g+\delta_g d.
\]
We are specifically interested in its action on 1-forms, though by virtue of $\Box_g$ being a square of $d+\delta_g$, which mixes form degrees, most of our analysis will apply more generally to $\Box_g$ acting on forms without a restriction on their form degree. We shall first prove the absence of mode solutions in $\Im\sigma\geq 0$ in \S\S\ref{Ss1Pos}--\ref{Ss1Re}, and proceed to compute the space of stationary solutions on Schwarzschild spacetimes in~\S\ref{Ss10}; simple perturbative arguments then finish the proof of Theorem~\ref{Thm1} for slowly rotating Kerr metrics.

We write $g=g_{(\bhm,0)}$ in the static patch $\cM$ (see~\eqref{EqK0Static}) as
\begin{equation}
\label{Eq1Metric}
  g = \alpha^2\,d t^2-h,\quad
  h = \alpha^{-2}\,d r^2+r^2\slg,\qquad
  0<\alpha = \sqrt{1-\frac{2\bhm}{r}},
\end{equation}
We use $\alpha$ as a (radial) coordinate; note $\,d\alpha=\frac{\bhm}{r^2}\alpha^{-1}\,d r$. Denoting points on $\Sph^2$ by $y$,
\[
  h = \wt\beta^2\,d\alpha^2 + k(\alpha^2,y,d y),\quad
  \wt\beta=\frac{r^2}{\bhm}=\frac{(2\bhm/(1-\alpha^2))^2}{\bhm},\ \ 
  k=r^2\slg.
\]
The (non-zero) quantity
\[
  \beta:=\wt\beta|_{\alpha=0}=4\bhm,
\]
which is the reciprocal of the surface gravity of the event horizon, will play an important role in describing outgoing asymptotics below.

We set up our analysis by splitting the (full) form bundle on $\cM$:
\begin{equation}
\label{Eq1Split}
  \Lambda\cM = \Lambda\cX \oplus (\alpha\,d t\wedge\Lambda\cX).
\end{equation}
Correspondingly, we write differential forms on $\cM$ as
\[
  \omega(t,x)=\omega_T + \alpha\,d t\wedge\omega_N =: \begin{pmatrix} \omega_T \\ \omega_N \end{pmatrix},
\]
where $\omega_T,\omega_N$ are $t$-dependent forms on $\cX$. In this splitting, we have
\begin{equation}
\label{Eq1Split2}
  d=\begin{pmatrix} d_\cX & 0 \\ \alpha^{-1}\pa_t & -\alpha^{-1}d_\cX\alpha \end{pmatrix},\quad
  G_k = \begin{pmatrix} (-1)^k H_k & 0 \\ 0 & (-1)^{k-1}H_{k-1} \end{pmatrix},
\end{equation}
where $G_k$ is the fiber inner product on $\Lambda^k\cM$ induced by $g$ (thus $G_1=g^{-1}$ is the dual metric), and $H_k$ is the fiber inner product on $\Lambda^k\cX$ induced by $h$. The adjoint of $d$ is thus
\begin{equation}
\label{Eq1delta}
  \delta\equiv\delta_g = \begin{pmatrix} -\alpha^{-1}\delta_\cX\alpha & -\alpha^{-1}\pa_t \\ 0 & \delta_\cX \end{pmatrix},
\end{equation}
where $\delta_\cX$ is the adjoint of $d_\cX$ with respect to $h$ (giving both the volume density on $\cX$ and the fiber inner product).

Recall the two components $\pa_-\cX$ (event horizon) and $\pa_+\cX$ (infinity) of the boundary of the closure $\ol\cX\subset X$ from~\eqref{EqK0StaticBdy}; both are diffeomorphic to $Y:=\Sph^2$.\footnote{In~\cite{HintzVasyKdsFormResonances}, we used the notation $\ol X_{\rm even}$ for the part of (in present notation) $X$ near the event horizon.} Near $\pa_-\cX$, we refine the splitting~\eqref{Eq1Split} by writing
\begin{equation}
\label{Eq1SplitFull}
  \Lambda\cX = \Lambda Y \oplus (d\alpha\wedge\Lambda Y), \quad
  \text{i.e.}\ \ \omega=\omega_{T T}+d\alpha\wedge\omega_{T N} + \alpha\,d t\wedge\omega_{N T}+\alpha\,d t\wedge d\alpha\wedge\omega_{N N}.
\end{equation}
(Note that this is not smooth down to $\pa_-\cX$, as $d\alpha$ is not smooth at $\pa_-\cX$.) In this splitting, we also record, using $[\sldelta,\wt\beta]=0$:
\begin{equation}
\label{Eq1Spatiald}
  d_\cX = \begin{pmatrix} \sld & 0 \\ \pa_\alpha & -\sld \end{pmatrix}, \quad
  \delta_\cX = \begin{pmatrix} \sldelta & \pa_\alpha^* \\ 0 & -\sldelta \end{pmatrix}.
\end{equation}
Here, $\pa_\alpha^*$ is the adjoint of $\pa_\alpha\colon L^2(\cX;|d h|,\Lambda Y,K)\to L^2(\cX;|d h|,\Lambda Y,\wt\beta^{-2}K)$ (with $K$ denoting the metric on forms on $Y$ induced by $k$), so
\[
  \pa_\alpha^* = -\beta^{-2}\pa_\alpha+\alpha^2 p_1\pa_\alpha + \alpha p_2,\quad
  p_1,p_2\in\CI(\ol{\cX}).
\]

Outgoing modes are of the form $e^{-i\sigma t_0}\omega'(x)$ near $\pa_-\cX$, where $t_0=t+r_*$, and with $\omega'$ smooth down to $\pa_-\cX$. Such a mode is a linear combination, with $\CI(\ol{\cX};\Lambda Y)$ coefficients, of $1$, $d(\alpha^2)$, $d t_*$, and $d t_*\wedge d(\alpha^2)$. Using that $r_*=r+2\bhm\log(r-2\bhm)\in\beta\log\alpha+\CI([2\bhm,\infty))$, we find that such forms $e^{-i\sigma t_0}\omega'=:e^{-i\sigma t}\omega$ are precisely those for which $\omega$ has the form
\begin{equation}
\label{Eq1CIsigma}
  \begin{pmatrix}
    \omega_{T T} \\ \omega_{T N} \\ \omega_{N T} \\ \omega_{N N}
  \end{pmatrix}
  =\sC \alpha^{-i\beta\sigma}\begin{pmatrix} \wt\omega_{T T} \\ \wt\omega_{T N} \\ \wt\omega_{N T} \\ \wt\omega_{N N} \end{pmatrix},
  \quad
  \sC:=\begin{pmatrix} 1 & 0 & 0 & 0 \\ 0 & \alpha & \beta\alpha^{-1} & 0 \\ 0 & 0 & \alpha^{-1} & 0 \\ 0 & 0 & 0 & 1 \end{pmatrix},\ \ 
  \wt\omega_{**} \in \CI(\ol\cX;\Lambda Y).
\end{equation}
We denote the space of stationary differential forms $\omega\in\CI(\cX;\Lambda\cX)$ with this structure near $\pa_-\cX$, and which for $\sigma\neq 0$ lie in $e^{i\sigma r_*}\Hb^{\infty,\ell}$ near $\pa_+\cX$ for some $\ell\in\R$, and for $\sigma=0$ lie in  $\Hb^{\infty,\ell}$ near $\pa_+\cX$ for some $\ell\in(-\tfrac32,-\half)$, by
\[
  \CI_{(\sigma)}.
\]
Phrased differently, the membership $\omega\in\CI_{(\sigma)}$ amounts to the smoothness of $e^{-i\sigma t}\omega$ on $M^\circ$ down to $\R_{t_*}\times\pa_-\cX$, together with an outgoing condition at $\R_{t_*}\times\pa_+X$.

We denote the conjugation of $d$ by the $t$-Fourier transform by
\[
  \wh d(\sigma)=e^{i\sigma t}d e^{-i\sigma t},
\]
likewise $\wh\delta(\sigma)=e^{i\sigma t}\delta e^{-i\sigma t}$, acting on $t$-independent forms on $\cM$.\footnote{This is different from the normalization used in~\eqref{Eq0Spectral}.} Note that $\wh d(\sigma)$ and $\wh\delta(\sigma)$ preserve $\CI_{(\sigma)}$, hence $\sC^{-1}\wh d(\sigma)\sC$ and $\sC^{-1}\wh\delta(\sigma)\sC$ preserve $\alpha^{-i\beta\sigma}\CI(\ol{\cX}\setminus\pa_+\cX;(\Lambda Y)^4)$. For later use, we note that the components of $\omega\in\CI_{(\sigma)}$ in~\eqref{Eq1CIsigma} satisfy
\begin{equation}
\label{Eq1CIsigma2}
  \omega_{T T},\,\omega_{N N}\in\alpha^{-i\beta\sigma}\CI,\quad \omega_{T N},\omega_{N T}\in\alpha^{-i\beta\sigma-1}\CI.
\end{equation}
For $\omega$ satisfying only~\eqref{Eq1CIsigma2} (rather than having the precise structure~\eqref{Eq1CIsigma}), it only follows from~\eqref{Eq1Split2}--\eqref{Eq1delta} and \eqref{Eq1Spatiald} that the components of $v=\wh d(\sigma)\omega$ and $w=\wh\delta(\sigma)\omega$ lie in the spaces~\eqref{Eq1CIsigma2} with the exceptions of $v_{N N},w_{T T}\in\alpha^{-i\beta\sigma-2}\CI$. (This is discussed in~\cite[Equations~(3.13)--(3.14)]{HintzVasyKdsFormResonances}.)

Near $\pa_+\cX$, we record that
\begin{equation}
\label{Eq1ddelSc}
  \wh d(\sigma),\ \wh\delta(\sigma) \in \Diffsc^1(\cX;\Lambda\cX\oplus\Lambda\cX),
\end{equation}
using the splitting~\eqref{Eq1Split}; note here that $\alpha\in 1+\rho\CI(\cX)$. More precisely, we can split $\Lambda\cX$ near $\pa_+\cX$, and in fact globally on $\cX$, into
\begin{equation}
\label{Eq1SplitInfty}
  \Lambda\cX = \Lambda(r\,T^*Y) \oplus (d r_*\wedge \Lambda(r\,T^*Y));
\end{equation}
we thus identify $\Lambda\cX\cong\Lambda Y\oplus\Lambda Y$ via $(\eta,\zeta)\mapsto r^{\deg(\eta)}\eta + d r_*\wedge r^{\deg(\zeta)}\zeta$ on the pure form summands of $\Lambda Y$, with $\deg$ denoting the form degree. In view of \eqref{Eq1Metric}, the inner product on $k$-forms is $H_k=\slG_k\oplus\alpha^{-2}\slG_{k-1}$. Correspondingly, we express forms on $\cM$ as finite sums of forms of the type
\begin{equation}
\label{Eq1SplitInfty2}
  \omega=r^{d_{T T}}\omega_{T T}+d r_*\wedge r^{d_{T N}}\omega_{T N}+\alpha\,d t\wedge r^{d_{N T}}\omega_{N T}+\alpha\,d t\wedge d r_*\wedge r^{d_{N N}}\omega_{N N}
\end{equation}
where the $\omega_{\bullet\bullet}$ are $\Lambda^{d_{\bullet\bullet}}Y$-valued forms on $\cX$. The point of rescaling spherical forms according to their degree in~\eqref{Eq1SplitInfty} is that the size of the coefficients of $\omega$ in the basis $d t$, $d x^i$, with $x^1,x^2,x^3$ standard coordinates on $\R^3$, is comparable to the size of $\omega_{\bullet\bullet}$ as forms on $Y=\Sph^2$ with respect to the standard metric. In the splitting~\eqref{Eq1SplitInfty}, one computes that
\begin{equation}
\label{Eq1ddelInfty}
  d_\cX=
  \begin{pmatrix}
    r^{-1}\sld & 0 \\
    \pa_{r_*}+\alpha^2 r^{-1}\deg & -r^{-1}\sld
  \end{pmatrix},\quad
  \delta_\cX=
  \begin{pmatrix}
    r^{-1}\sldelta & -\alpha^{-1}r^{-2}\pa_{r_*} r^2\alpha^{-1}+r^{-1}\deg \\
    0 & -r^{-1}\sldelta
  \end{pmatrix}.
\end{equation}

\subsection{Absence of modes in \texorpdfstring{$\Im\sigma>0$}{the upper half plane}}
\label{Ss1Pos}

We are interested in mode solutions
\[
  \Box_g(e^{-i\sigma t}\omega)=0,\quad \omega\in\CI_{(\sigma)},\ \Im\sigma>0.
\]
In particular, $\omega$ is exponentially decaying at $\pa_+\cX$. This equation is equivalent to
\begin{equation}
\label{Eq1Pos}
  \wh{\Box_g}(\sigma)\omega=\bigl(\wh d(\sigma)+\wh\delta(\sigma)\bigr)^2\omega=0
\end{equation}

\begin{prop}
\label{Prop1Pos}
  Any solution $\omega$ of~\eqref{Eq1Pos} for $\sigma\in\C$, $\Im\sigma>0$, vanishes identically.
\end{prop}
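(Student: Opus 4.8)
The plan is to run a boundary pairing (energy) argument for the Hodge d'Alembertian on the static patch, exploiting the square structure $\Box_g = (\wh d(\sigma)+\wh\delta(\sigma))^2$ together with exponential decay of $\omega$ at $\pa_+\cX$. First I would observe that, since $\Im\sigma>0$, the mode $\omega\in\CI_{(\sigma)}$ decays exponentially at spatial infinity $\pa_+\cX$ (it lies in $e^{i\sigma r_*}\Hb^{\infty,\ell}$ with $\Im\sigma>0$, and $r_*\to+\infty$), so no boundary terms arise there; meanwhile at the event horizon $\pa_-\cX$, smoothness of $e^{-i\sigma t_0}\omega'$ across $\cH^+$ means $\omega$ has the precise outgoing structure~\eqref{Eq1CIsigma}, and the relevant boundary contributions at $r=2\bhm$ will be controlled by that structure.

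The key computation is to pair~\eqref{Eq1Pos} against $\omega$ in the natural $L^2$ inner product on forms over the static slice $\cX$ (with respect to $|d h|$ and the fiber metrics $H_k$ from~\eqref{Eq1Split2}). Writing $(d+\delta)(e^{-i\sigma t}\omega) = e^{-i\sigma t}(\wh d(\sigma)+\wh\delta(\sigma))\omega =: e^{-i\sigma t}\eta$, the equation $(\wh d(\sigma)+\wh\delta(\sigma))\eta = 0$ combined with integration by parts should yield, modulo boundary terms at $\pa_-\cX$ and $\pa_+\cX$, an identity of the schematic form
\[
  0 = \int_\cX \bigl\la (\wh d(\sigma)+\wh\delta(\sigma))\eta,\,\eta\bigr\ra = \|\wh d(\sigma)^{\mathrm{formal}}\eta\|^2 + \|\cdots\|^2 + (\text{terms linear in }\bar\sigma) + (\text{bdy}),
\]
where the non-square terms carry a factor of $\sigma$ or $\bar\sigma$ coming from the $\alpha^{-1}\pa_t \mapsto -i\sigma\alpha^{-1}$ entries in~\eqref{Eq1Split2}, \eqref{Eq1delta}. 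Taking the imaginary part then isolates a definite quantity times $\Im\sigma$; since $\Im\sigma>0$, this forces that quantity to vanish, which (after also taking the real part, or iterating) forces $\eta\equiv 0$, i.e.\ $(d+\delta)(e^{-i\sigma t}\omega)=0$, and hence $\Box_g(e^{-i\sigma t}\omega)=0$ with $(d+\delta)(e^{-i\sigma t}\omega)=0$. A second, simpler energy identity — pairing $(d+\delta)(e^{-i\sigma t}\omega)=0$ against $e^{-i\sigma t}\omega$ itself and taking imaginary parts, exactly as in the scalar case in the proof of Theorem~\ref{Thm0} for $\Im\sigma>0$ — then gives $\omega\equiv 0$ in the static patch, and the infinite-order vanishing at $r=2\bhm$ plus a standard energy/unique continuation argument inside the horizon (as in \cite{ZworskiRevisitVasy}) propagates this to all of $M^\circ$.

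\textbf{Main obstacle.} The delicate point is the treatment of the boundary terms at the event horizon $\pa_-\cX$: the splitting~\eqref{Eq1SplitFull} is singular at $\pa_-\cX$ (the form $d\alpha$ blows up), the coefficients of $\omega$ in~\eqref{Eq1CIsigma2} genuinely grow like $\alpha^{-i\beta\sigma-1}$ there, and the $L^2$ density on $\cX$ degenerates (since $h = \alpha^{-2}\,d r^2 + r^2\slg$ is not complete at $\alpha=0$). One must check that, in the combination dictated by the square structure, the boundary integrand at $r=2\bhm$ either vanishes outright or has a definite sign compatible with the bulk terms — i.e.\ that $\{r=2\bhm\}$ is appropriately ``outgoing'' for $\wh{\Box_g}(\sigma)$. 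I expect this to work because of the factor $\alpha^{-i\beta\sigma}$ with $\Im\sigma>0$: on the surface $\alpha = \epsilon$, one has $|\alpha^{-i\beta\sigma}| = \epsilon^{\beta\Im\sigma}\to 0$ as $\epsilon\to 0$, which should kill the boundary contribution after accounting for the compensating powers of $\alpha$ in $\sC$ and in the density; but verifying this requires carefully bookkeeping the powers of $\alpha$ using~\eqref{Eq1Split2}, \eqref{Eq1delta}, \eqref{Eq1Spatiald}, the explicit form of $\pa_\alpha^*$, and the structure~\eqref{Eq1CIsigma}, and this is the only genuinely technical part of the argument. This is precisely the kind of boundary-pairing bookkeeping carried out in~\cite[\S3]{HintzVasyKdsFormResonances} in the Kerr--de~Sitter setting, and the plan is to adapt it, the simplification here being the exponential (rather than merely polynomial) decay at $\pa_+\cX$.
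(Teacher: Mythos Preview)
Your proposal is correct and follows essentially the same route as the paper: reduce to the first-order equation $(\wh d(\sigma)+\wh\delta(\sigma))\omega=0$ (applied first to $\eta=(\wh d(\sigma)+\wh\delta(\sigma))\omega$, then to $\omega$), and conclude via an integration-by-parts/imaginary-part argument, with exponential decay at $\pa_+\cX$ making that end trivial. The paper simply cites \cite[\S3.1]{HintzVasyKdsFormResonances} for the horizon bookkeeping you describe; your identification of the key mechanism there---that $|\alpha^{-i\beta\sigma}|=\alpha^{\beta\Im\sigma}\to 0$ with $\beta=4\bhm>0$ dominates the singular powers of $\alpha$ in $\sC$ and the density---is exactly right.
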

\begin{proof}
  It suffices to show that if $\omega$ is outgoing with
  \begin{equation}
  \label{Eq1Pos2}
    (\wh d(\sigma)+\wh\delta(\sigma))\omega=0,
  \end{equation}
  then $\omega\equiv 0$: applying this first to $(\wh d(\sigma)+\wh\delta(\sigma))\omega$ in place of $\omega$ and subsequently to $\omega$ itself then proves the proposition. Assuming~\eqref{Eq1Pos2}, we obtain $\omega=0$ using an integration by parts argument by following~\cite[\S3.1]{HintzVasyKdsFormResonances} verbatim. Due to the exponential decay of $\omega$ at $\pa_+\cX$, integration by parts is immediately justified there.
\end{proof}

\subsection{Absence of non-zero real modes}
\label{Ss1Re}

Here, the argument differs slightly from the Schwarzschild--de~Sitter case discussed in~\cite{HintzVasyKdsFormResonances} in that the outgoing condition at $\pa_+\cX$ enters; this boundary at infinity is not present for de~Sitter black holes. Concretely, fixing $\sigma\in\R$, $\sigma\neq 0$, the outgoing condition $\omega\in\CI_{(\sigma)}$ implies, using the identification $\Lambda\cM\cong\Lambda\cX\oplus\Lambda\cX$ via~\eqref{Eq1Split}, that in $r\geq r_0\gg 1$, we have (following Proposition~\ref{PropOpNull})
\begin{equation}
\begin{split}
\label{Eq1Re}
  &\omega(r,y) = e^{i\sigma r_*}\bigl(r^{-1}\omega_+(y) + \wt\omega(r^{-1},y)\bigr), \\
  &\qquad\quad \omega_+\in\CI(\pa_+X;\Lambdasc_{\pa_+X}X\oplus\Lambdasc_{\pa_+X}X), \\
  &\qquad\quad \wt\omega(\rho,y)\in \cA^{2-}([0,r_0^{-1})_\rho\times\Sph^2_y;\Lambdasc X\oplus\Lambdasc X).
\end{split}
\end{equation}

\begin{prop}
\label{Prop1Re}
  If $\sigma\in\R$, $\sigma\neq 0$, and $\omega\in\CI_{(\sigma)}$ solves $\wh{\Box_g}(\sigma)\omega=0$, then $\omega\equiv 0$.
\end{prop}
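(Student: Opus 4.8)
The plan is to follow the strategy of Proposition~\ref{Prop1Pos}, reducing the second-order equation $\wh{\Box_g}(\sigma)\omega = (\wh d(\sigma)+\wh\delta(\sigma))^2\omega = 0$ to the first-order equation $(\wh d(\sigma)+\wh\delta(\sigma))\omega = 0$ by a boundary pairing argument, and then showing that the only outgoing solution of the first-order equation vanishes. The key difference from the Schwarzschild--de~Sitter case treated in~\cite{HintzVasyKdsFormResonances} is that for $\sigma\in\R\setminus\{0\}$ the form $\omega$ no longer decays exponentially at $\pa_+\cX$; instead it has the outgoing asymptotics~\eqref{Eq1Re}, with leading term $e^{i\sigma r_*}r^{-1}\omega_+(y)$. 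So the integration by parts at spatial infinity now produces a genuine boundary term which must be analyzed.

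First I would carry out the reduction. Write $\eta := (\wh d(\sigma)+\wh\delta(\sigma))\omega$, which is itself outgoing (since $\wh d(\sigma),\wh\delta(\sigma)$ preserve $\CI_{(\sigma)}$, as noted after~\eqref{Eq1CIsigma}). The equation $(\wh d(\sigma)+\wh\delta(\sigma))\eta = 0$ holds, and $\wh{\Box_g}(\sigma)\omega = (\wh d(\sigma)+\wh\delta(\sigma))\eta$. Pair $\wh{\Box_g}(\sigma)\omega$ with $\omega$: formally $\langle (\wh d+\wh\delta)\eta,\omega\rangle = \langle \eta, (\wh d+\wh\delta)\omega\rangle = \|\eta\|^2$ since $\wh d(\sigma)^* = \wh\delta(\bar\sigma) = \wh\delta(\sigma)$ for real $\sigma$ (with respect to the natural Lorentzian fiber inner product $G_k$, but the correct functional-analytic setup here pairs extendible and supported distributions, or uses a cutoff and tracks the boundary terms). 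The subtlety is that the fiber inner products $G_k$ in~\eqref{Eq1Split2} are \emph{indefinite}, so $\|\eta\|^2$ is not a norm; as in~\cite[\S3.2]{HintzVasyKdsFormResonances} one instead uses the positive-definite inner product on $\cX$ and a Morawetz-type/commutator structure. The essential new input is a boundary pairing identity at $\pa_+\cX$: using the asymptotics~\eqref{Eq1Re} together with the expressions~\eqref{Eq1ddelInfty} for $d_\cX,\delta_\cX$ near $\pa_+\cX$, the boundary term at $r=R\to\infty$ from $\langle \wh{\Box_g}(\sigma)\omega,\omega\rangle - \langle\omega,\wh{\Box_g}(\sigma)\omega\rangle$ (or the relevant one-sided pairing) is computed in terms of $|\omega_+|^2$ integrated over $\Sph^2$, with a definite sign fixed by $\sigma\neq 0$. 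This is exactly the kind of boundary-pairing/radiation-condition argument referenced in the proof of Theorem~\ref{Thm0} (the passage starting at~\eqref{Eq1ReCutoff}) and in~\cite[\S2.3]{MelroseGeometricScattering}. Concluding $\omega_+ = 0$, we upgrade: $\omega\in\cA^{2-}$, and then an indicial-root analysis (the normal operator of $\wh{\Box_g}(\sigma)$ at $\pa_+X$ is $2\sigma\rho(\rho D_\rho + i)$ by Lemma~\ref{LemmaOpLinFT}, applied componentwise, with boundary spectrum $\{(-i,0)\}$, cf.\ Proposition~\ref{PropOpNull}\eqref{ItOpNullSigma}) forces $\omega$ to vanish to infinite order at $\pa_+X$, hence $\omega\in\dot\CI$ near $\pa_+X$.

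Next I would handle the event horizon and interior. Near $\pa_-\cX$ the outgoing condition is encoded in the structure~\eqref{Eq1CIsigma}; following~\cite[\S3.2]{HintzVasyKdsFormResonances} verbatim, the integration by parts in $r \in [2\bhm_0, r_0]$ together with the already-established rapid decay at $\pa_+\cX$ gives that $\eta = (\wh d(\sigma)+\wh\delta(\sigma))\omega$ vanishes in the static region $r > 2\bhm_0$. Applying the same first-order argument once more (now to $\omega$ itself, using $\wh{\Box_g}(\sigma)\omega$ replaced by its now-vanishing value and the same boundary-pairing machinery) yields $\omega \equiv 0$ in $r > 2\bhm_0$. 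Since $\omega$ is an outgoing mode, $e^{-i\sigma t}\omega$ is smooth across $\cH^+$ and vanishes on the static side; by a unique continuation argument at the event horizon (as in the proof of Theorem~\ref{Thm0}, using \cite{MazzeoUniqueContinuation}, or an energy estimate for the hyperbolic region $r < 2\bhm_0$ as in \cite{ZworskiRevisitVasy}) we conclude $\omega \equiv 0$ on all of $X$.

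The main obstacle, and the only genuinely new point compared to the Schwarzschild--de~Sitter analysis of~\cite{HintzVasyKdsFormResonances}, is the boundary term at $\pa_+\cX$: one must verify that, in the indefinite fiber metric on forms and in the splitting~\eqref{Eq1SplitInfty}, the asymptotics~\eqref{Eq1Re} produce a boundary contribution of \emph{definite sign} proportional to $\sigma\int_{\Sph^2}|\omega_+|^2$ — i.e.\ that the outgoing radiation condition is the "sommerfeld-type" condition making the problem well-posed. This requires a careful bookkeeping of the $\alpha\to 1$, $r\to\infty$ limits of $H_k$ and of the off-diagonal entries of $d_\cX,\delta_\cX$ in~\eqref{Eq1ddelInfty}, and checking that the mixed $TN/NT$ components (which carry an extra factor of $\alpha^{\pm1}$) do not spoil the sign; once this is in hand, the rest is a routine adaptation of the existing arguments. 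A clean way to organize this is to pass, as in Remark~\ref{RmkOpOutgoing}, to the "standard perspective" $\wt{\Box_g}(\sigma) = e^{i\sigma t}\Box_g e^{-i\sigma t}$ via $u' = e^{i\sigma(t - t_{\bhm_0,*})}\omega$, where $u'\sim r^{-1}e^{i\sigma r_*}\omega_+$, and invoke the standard Euclidean-type boundary pairing of Melrose \cite{MelroseEuclideanSpectralTheory,MelroseGeometricScattering} applied componentwise in the standard coordinate frame $dt, dx^i$ of $\wt\Tsc{}^*X$ (in which, to leading order at $\pa_+X$, $\Box_g$ is diagonal scalar by Lemma~\ref{LemmaKStNormal}).
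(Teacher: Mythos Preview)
Your overall architecture is correct: the new ingredient compared to \cite[\S3.2]{HintzVasyKdsFormResonances} is indeed the boundary term at $\pa_+\cX$, and once the leading coefficients at both ends vanish one finishes by indicial roots and unique continuation exactly as you say. However, there is a real gap in how you propose to run the boundary pairing itself.

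You try to pair $\wh{\Box_g}(\sigma)\omega$ with $\omega$ (or $\eta=(\wh d+\wh\delta)\omega$ with itself) and extract a term of definite sign. As you acknowledge, the fiber inner product $G_k$ is indefinite, so neither $\|\eta\|^2$ nor the boundary term $\sigma\int_{\Sph^2}|\omega_+|^2$ has a sign. You gesture at ``the positive-definite inner product on $\cX$ and a Morawetz-type/commutator structure'' but never say what operator is self-adjoint in that inner product; $\wh{\Box_g}(\sigma)$ is not. Your alternative at the end---pass to the standard frame $dt,dx^i$ and use that $\Box_g$ is diagonal scalar to leading order at $\pa_+X$---could in principle produce the correct boundary term at infinity, but it does not help at the horizon, where the coupling is essential and the argument of \cite{HintzVasyKdsFormResonances} already requires the specific decoupled equation described below.

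What the paper actually does is the missing step. After reducing to the first-order system $(\wh d(\sigma)+\wh\delta(\sigma))\omega=0$ (as in Proposition~\ref{Prop1Pos}), one expands in the splitting~\eqref{Eq1Split} as
\[
  (\alpha d_\cX-\delta_\cX\alpha)\omega_T+i\sigma\omega_N=0,\qquad -i\sigma\omega_T+(-d_\cX\alpha+\alpha\delta_\cX)\omega_N=0,
\]
eliminates $\omega_N$, and then applies $d_\cX$ to obtain for $v_T:=d_\cX\omega_T$ the equation
\[
  (d_\cX\alpha\delta_\cX\alpha-\sigma^2)v_T=0.
\]
The point is that $d_\cX\alpha\delta_\cX\alpha$ is formally self-adjoint in the \emph{positive definite} space $L^2(\cX;\alpha|dh|,\Lambda\cX,H)$. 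The commutator $\langle v_T,[d_\cX\alpha\delta_\cX\alpha,\chi_\eps]v_T\rangle$ then has two pieces: the horizon piece from \cite{HintzVasyKdsFormResonances}, namely $-2i\beta^{-2}\sigma\|v'_{TN}|_{\pa_-\cX}\|^2$, and the piece at $\pa_+\cX$, computed from~\eqref{Eq1ddelInfty} to be $-2i\sigma\|v_{TN,+}\|^2$. Both have the same sign, so each vanishes. Once $v_T=0$ one unwinds back to $\omega_T=0$ and $\omega_N=0$ via~\eqref{Eq1ReSys}. This decoupling step---passing from the indefinite problem for $\omega$ to a positive-definite problem for $v_T=d_\cX\omega_T$---is the idea your proposal is missing.
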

\begin{proof}
  We give the full proof, which is similar to that in~\cite[\S3.2]{HintzVasyKdsFormResonances}, in order to point out where the outgoing condition at infinity comes in. It suffices to show that any outgoing solution of the \emph{first order} equation $(\wh d(\sigma)+\wh\delta(\sigma))\omega=0$ vanishes identically. Expanding this equation into its tangential and normal components, this means
  \begin{equation}
  \label{Eq1ReSys}
    (\alpha d_\cX-\delta_\cX\alpha)\omega_T + i\sigma\omega_N = 0, \quad
    -i\sigma\omega_T + (-d_\cX\alpha+\alpha\delta_\cX)\omega_N = 0.
  \end{equation}
  We apply $(-d_\cX\alpha+\alpha\delta_\cX)$ to the first equation and use the second to obtain the decoupled equation
  \begin{equation}
  \label{Eq1ReDecoupled}
    (d_\cX\alpha\delta_\cX\alpha+\alpha\delta_\cX\alpha d_\cX-d_\cX\alpha^2 d_\cX-\sigma^2)\omega_T = 0.
  \end{equation}
  Applying $d_\cX$ from the left, the outgoing form $v_T:=d_\cX\omega_T$ satisfies the simpler equation
  \begin{equation}
  \label{Eq1ReSimple}
    (d_\cX\alpha\delta_\cX\alpha-\sigma^2)v_T = 0.
  \end{equation}
  It suffices to show that all outgoing solutions of this equation vanish identically; indeed, this would first give $v_T=0$, i.e.\ $d_\cX\omega_T=0$, which by~\eqref{Eq1ReDecoupled} implies $(d_\cX\alpha\delta_\cX\alpha-\sigma^2)\omega_T=0$, hence $\omega_T=0$; using~\eqref{Eq1ReSys} and $\sigma\neq 0$, this implies $\omega_N=0$.
  
  Note that the operator $d_\cX\alpha\delta_\cX\alpha$ in~\eqref{Eq1ReSimple} is formally self-adjoint in $L^2(\cX;\alpha|d h|,\Lambda\cX,H)$. Let now $f\in\CI((2m,\infty))$ denote a positive function with $f(r)=\alpha(r)=(1-2\bhm/r)^{1/2}$ for $2\bhm<r<3\bhm$ and $f(r)=r^{-1}$ for $r>4\bhm$. Fix moreover a cutoff $\chi\in\CI([0,\infty))$ which is identically $0$ on $[0,\half]$ and identically $1$ on $[1,\infty)$. For small $\eps>0$, define then
  \begin{equation}
  \label{Eq1ReCutoff}
    \chi_\eps := \chi(f/\eps) \in \CIc(\cX).
  \end{equation}
  Thus, $\chi_\eps\equiv 1$ when $\alpha\geq\eps$, $r<\eps^{-1}$, while $\alpha\geq\half\eps$, $r<2\eps^{-1}$ on $\supp\chi$. We then evaluate
  \begin{equation}
  \label{Eq1RePair}
  \begin{split}
    0 &= \lim_{\eps\to 0} \la (d_\cX\alpha\delta_\cX\alpha-\sigma^2)v_T,\chi_\eps v_T\ra - \la v_T,\chi_\eps(d_\cX\alpha\delta_\cX\alpha-\sigma^2)v_T\ra \\
    &= \lim_{\eps\to 0} \la v_T,[d_\cX\alpha\delta_\cX\alpha,\chi_\eps]v_T\ra.
  \end{split}
  \end{equation}
  (The localizer $\chi_\eps$ is necessary to make sense of the pairings since $v_T$ fails, just barely, to lie in $L^2$.) The commutator has two pieces: one near $\pa_-\cX$, and one near $\pa_+\cX$. The former piece was analyzed in~\cite[Proof of Proposition~3.6]{HintzVasyKdsFormResonances}: writing $v_T=(\alpha^{-i\beta\sigma}v'_{T T}, \alpha^{-i\beta\sigma-1}v'_{T N})$, its contribution to the above limit is
  \[
    -2 i\beta^{-2}\sigma\|v'_{T N}|_{\pa_-\cX}\|_{L^2(\pa_-\cX;|d k|,K)}^2.
  \]
  The latter piece, at $\pa_+\cX$, can be evaluated using~\eqref{Eq1ddelInfty}: put $\chi_\eps(r)=\chi(1/(\eps r))=:\psi(\eps r)$, with $\psi(r')\equiv 1$, resp.\ $0$ for $r'\leq 1$, resp.\ $r'\geq 2$; then $\pa_r\chi_\eps=\eps\psi_{1,\eps}$, $\psi_{1,\eps}(r):=\psi'(\eps r)$. Thus,
  \[
    [d_\cX\alpha\delta_\cX\alpha,\chi_\eps]
    =
    \eps
    \begin{pmatrix}
      0 & -\alpha^2 r^{-1}\psi_{1,\eps}\sld \\
      \alpha^4 r^{-1}\psi_{1,\eps}\sldelta & \psi_{1,\eps}\bigl(-\alpha^2\pa_{r_*}+\cA^1\bigr)-\pa_{r_*}(1+\cA^1)\psi_{1,\eps}
    \end{pmatrix}.
  \]
  Inserted in the pairing~\eqref{Eq1RePair}, the off-diagonal terms here give terms of the form
  \[
    \int (e^{i\sigma r_*}r^{-1})\cdot \eps r^{-1}\psi_{1,\eps}\cO(1)\overline{e^{i\sigma r_*}r^{-1}} v'' r^2\,d r\,|d\slg|
  \]
  with $v''$ bounded on $\cX$; as $\eps\to 0$ and changing variables to $r'=\eps r$, this is $\eps\int\psi'(r')\cO(1)\,d r'=\cO(\eps)\to 0$. Likewise, the two $\cA^1$ terms on the diagonal give vanishing contributions in the limit $\eps\to 0$. Next, writing $v=r^{d_{T T}}v_{T T}+d r_*\wedge r^{d_{T N}}v_{T N}$ as in~\eqref{Eq1SplitInfty2}, and further writing
  \[
    v_{T N} = e^{i\sigma r_*}(r^{-1}v_{T N,+}+\wt v_{T N}),\quad \wt v_{T N}\in\cA^{2-},
  \]
  we note that $\pa_{r_*}v_{T N}=e^{i\sigma r_*}(i\sigma r^{-1}v_{T N,+}+\wt v_{T N}')$, $\wt v_{T N}'\in\cA^{2-}$, hence in view of $\alpha-1\in r^{-1}\CI(\ol{\cX}\setminus\pa_-\cX)$:
  \[
    \la v_{T N},-\eps\psi_{1,\eps}\alpha^2\pa_{r_*}v_{T N}\ra = i\sigma \iint \eps\psi'(\eps r)|v_{T N,+}|^2\,d r\,|d\slg|+ o(1) \to -i\sigma\|v_{T N,+}\|_{L^2(\Sph^2;\Lambda\Sph^2)}.
  \]
  Similarly, we have $\eps\la v_{T N},-\pa_{r_*}\psi_{1,\eps}v_{T N}\ra\to -i\sigma\|v_{T N,+}\|_{L^2}$ as $\eps\to 0$, as can be seen by first integrating by parts and then taking the limit. In summary,~\eqref{Eq1RePair} reads
  \begin{equation}
  \label{Eq1ReVanish}
    -2 i\sigma\bigl(\beta^{-2}\|v'_{T N}|_{\pa_-\cX}\|_{L^2}^2 + \|v_{T N,+}\|_{L^2}^2\bigr) = 0,
  \end{equation}
  which implies $v'_{T N}|_{\pa_-\cX}\equiv 0$ and $v_{T N,+}\equiv 0$.
  
  A simple indicial root argument then shows that $v_T$ in fact vanishes to infinite order at $\pa_-\cX$, which by Mazzeo's unique continuation result~\cite{MazzeoUniqueContinuation} implies $v_T\equiv 0$ near $\pa_-\cX$, which by standard unique continuation implies $v_T\equiv 0$ globally, finishing the proof. Alternatively, one can study $v_T$ near $\pa_+\cX$ by calculating
  \[
    e^{-i\sigma r_*}(d_\cX\alpha\delta_\cX\alpha-\sigma^2)e^{i\sigma r_*} = \begin{pmatrix} -\sigma^2 & -i\sigma r^{-1}\sld \\ i\sigma r^{-1}\sldelta & -2 i\sigma r^{-1}(r\pa_r+1) \end{pmatrix} + \cA^{2-}\Diffb^2.
  \]
  Applying this to $e^{-i\sigma r_*}v_T=:(\hat v_{T T},\hat v_{T N})$, which is conormal with $\hat v_{T N}\in\cA^{2-}$ in view of~\eqref{Eq1ReVanish}, the first line shows that $\hat v_{T T}\in\cA^{3-}$; the second line then implies, by integration of $r\pa_r+1$ and using $\hat v_{T N}\in\cA^{1+}$, that $\hat v_{T N}\in\cA^{3-}$ as well. Proceeding iteratively, this gives $e^{-i\sigma r_*}v_T\in\cA^\infty$, hence $v_T$ vanishes to infinite order at $\pa_+\cX$. Unique continuation at infinity then implies that $v_T\equiv 0$ near $\pa_+\cX$, and thus globally as before.
\end{proof}

\subsection{Description of zero modes}
\label{Ss10}

Membership in $\omega\in\CI_{(0)}$ together with $\wh{\Box_g}(0)\omega=0$ implies, by normal operator arguments as in Proposition~\ref{PropOpNull}, that
\[
  \omega(r,y) = r^{-1}\omega_+(y) + \wt\omega(r^{-1},y),
\]
with $\omega_+$ and $\wt\omega\in\cA^{2-}$ as in~\eqref{Eq1Re}. We now restrict to \emph{1-forms} on $M^\circ$. Define
\[
  \cK_1=\ker\wh{\Box_g}(0)\cap\CI_{(0)}.
\]
We aim to show that $\cK_1$ is 1-dimensional and spanned by $\omega_{b_0,s 0}$, see~\eqref{Eq1s0}. The proof proceeds in several steps, making use of various calculations of \cite[\S3.3--3.4]{HintzVasyKdsFormResonances}, but simplified and made concrete in the present Schwarzschild 1-form setting. First, writing $\omega\in\CI_{(0)}$ near $\pa_-\cX$ as $\omega=\sC\wt\omega$ as in~\eqref{Eq1CIsigma} (with $\sigma=0$), we define the restriction map
\[
  R \colon \omega \mapsto \wt\omega_{N T}|_{\pa_-\cX}.
\]
Define also the space
\[
  \cH_1 := \{ \omega\in\CI_{(0)} \colon \wh d(0)\omega=\wh\delta(0)\omega=0,\ R\omega=0 \}.
\]
\begin{lemma}
\label{Lemma10Seq}
  If $\omega\in\cK_1$, then $R\omega$ is a constant function. Let $\iota\colon\cH_1\hra\cK_1$ denote the inclusion map; then the following sequence is exact: $\cH_1 \xra{\iota} \cK_1 \xra{R} \C$.
\end{lemma}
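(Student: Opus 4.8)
\textbf{Proof plan for Lemma~\ref{Lemma10Seq}.}

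The plan is to analyze the constraint propagation equation satisfied by $\wh{\Box_g}(0)$-harmonic 1-forms and to peel off the ``non-geometric'' normal component at the horizon. First I would establish the exactness at $\cK_1$, i.e.\ that $\ran\iota=\ker R$. The inclusion $\ran\iota\subset\ker R$ is immediate from the definition of $\cH_1$. For the reverse inclusion, suppose $\omega\in\cK_1$ with $R\omega=0$; I need to show $\wh d(0)\omega=\wh\delta(0)\omega=0$. Since $g$ is Ricci-flat, $\Box_g=(d+\delta_g)^2$, so $\omega$ harmonic means $(\wh d(0)+\wh\delta(0))^2\omega=0$. Set $\eta:=(\wh d(0)+\wh\delta(0))\omega$; since $\wh d(0)\omega$ is a 2-form (plus the appropriate $\Lambda\cX$-pieces under the splitting~\eqref{Eq1Split}) and $\wh\delta(0)\omega$ is a $0$-form, $\eta$ is an outgoing solution of $(\wh d(0)+\wh\delta(0))\eta=0$. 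If I can show that the absence-of-modes argument at $\sigma=0$ — i.e.\ Proposition~\ref{Prop1Re} combined with the $\sigma=0$ analysis (or rather the boundary-pairing argument of~\S\ref{Ss1Re} adapted to $\sigma=0$, valid because $\omega$ has $r^{-1}$ leading behavior) — forces $\eta=0$, then $\wh d(0)\omega=0$ and $\wh\delta(0)\omega=0$ separately (they live in different form degrees, so their sum vanishing forces each to vanish). The subtlety is that at $\sigma=0$ the pairing argument degenerates; here the hypothesis $R\omega=0$ is exactly what is needed to kill the boundary term at $\pa_-\cX$ in the analogue of~\eqref{Eq1ReVanish}, while the $r^{-1}$ decay at $\pa_+\cX$ (from $\omega\in\CI_{(0)}$ and the normal operator expansion) handles the infinity term. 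This is the step I expect to be the main obstacle: making the $\sigma=0$ boundary pairing work requires care, because one cannot simply quote Proposition~\ref{Prop1Re}, and one must track precisely which components of $\eta$ survive the commutator $[d_\cX\alpha\delta_\cX\alpha,\chi_\eps]$ and verify that $R\omega=0$ suppresses exactly those.

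Next I would prove that $R\omega$ is constant for every $\omega\in\cK_1$, i.e.\ that $R$ is well-defined as a map $\cK_1\to\C$. Near $\pa_-\cX$, in the splitting~\eqref{Eq1SplitFull} with the conjugation~\eqref{Eq1CIsigma} at $\sigma=0$, the operator $\wh{\Box_g}(0)$ acts on $\wt\omega_{N T}$ (the relevant component) through its leading ``indicial'' part, which near the horizon is essentially a Laplace-type operator on $Y=\Sph^2$ with a specific zeroth-order piece coming from the bundle structure; restricting the equation $\wh{\Box_g}(0)\omega=0$ to $\pa_-\cX$ should yield an elliptic equation on $\Sph^2$ for $\wt\omega_{N T}|_{\pa_-\cX}$ whose only solutions are constants. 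Concretely, I would extract from~\eqref{Eq1Spatiald}, \eqref{Eq1ddelInfty} analogues, and the form~\eqref{Eq1CIsigma} the restriction of $(\wh d(0)+\wh\delta(0))^2$ to the horizon and read off that the surviving scalar operator on $\wt\omega_{N T}|_{\pa_-\cX}$ is $\sldelta\sld$ (so that $\wt\omega_{N T}|_{\pa_-\cX}\in\ker\sldelta\sld$, hence constant) — this is the Schwarzschild 1-form specialization of the computation in~\cite[\S3.3]{HintzVasyKdsFormResonances}, and should be a short direct check rather than a lengthy computation.

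Finally, exactness at $\C$ (surjectivity of $R$) and at $\cH_1$ (injectivity of $\iota$): injectivity of $\iota$ is trivial since it is an inclusion. For surjectivity of $R$, I would exhibit a single element of $\cK_1$ with $R\omega\neq 0$; the natural candidate is (a continuation to Schwarzschild of) the explicit zero mode $\omega_{b_0,s 0}=r^{-1}(d t_0-d r)$ from~\eqref{Eq1s0}, or equivalently $r^{-1}\,d t$ in the static patch — one checks $\Box_g(r^{-1}\,d t)=0$ directly, verifies it lies in $\CI_{(0)}$ (smoothness across $\pa_-\cX$ in the $t_0$-coordinate, $r^{-1}$ decay at infinity), and computes its normal component $\wt\omega_{N T}|_{\pa_-\cX}$ via the change of frame $\sC$ in~\eqref{Eq1CIsigma}, finding it nonzero. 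With all three exactness statements in hand the sequence $\cH_1\xra{\iota}\cK_1\xra{R}\C$ is exact, completing the proof.
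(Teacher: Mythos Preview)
Your treatment of the first claim (that $R\omega$ is constant) is essentially the paper's: restrict the conjugated operator $\sC^{-1}\wh{\Box_g}(0)\sC$ to $\pa_-\cX$ and read off that the $(NT)$-component satisfies $\slDelta(\wt\omega_{NT}|_{\pa_-\cX})=0$, hence is constant. That part is fine.

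The gap is in your plan for exactness at $\cK_1$. You set $\eta=(\wh d(0)+\wh\delta(0))\omega$ and propose to show $\eta=0$ by ``adapting the boundary-pairing argument of \S\ref{Ss1Re} to $\sigma=0$.'' But that argument uses $\sigma\neq 0$ in an essential way: the system~\eqref{Eq1ReSys} is solved for $\omega_N$ in terms of $\omega_T$ by dividing by $\sigma$, reducing to the scalar equation~\eqref{Eq1ReSimple} on which the commutator $[d_\cX\alpha\delta_\cX\alpha,\chi_\eps]$ acts. At $\sigma=0$ this reduction is unavailable, and the naive pairing $\langle(\wh d+\wh\delta)\eta,\chi_\eps\eta\rangle$ produces only a boundary commutator, not a coercive quantity from which $\eta=0$ would follow. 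You acknowledge the step ``requires care'' but do not supply the missing mechanism.

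The paper's route is different and avoids this obstruction by a two-step iteration. From $\wh{\Box_g}(0)\omega=0$ one first applies $\wh d$ (resp.\ $\wh\delta$), obtaining $\wh d\wh\delta\omega'=0$ for $\omega'=\wh d\omega$ (resp.\ $\wh\delta\wh d\omega''=0$ for $\omega''=\wh\delta\omega$). The pairing $\langle\wh d\wh\delta\omega',\chi_\eps\omega'\rangle$ then integrates by parts to $\|\chi_\eps^{1/2}\wh\delta\omega'\|^2$ plus a commutator $\langle\wh\delta\omega',[\wh\delta,\chi_\eps]\omega'\rangle$; this \emph{does} yield a coercive term. The commutator is shown to vanish in the limit $\eps\to 0$: near $\pa_+\cX$ by the extra $r^{-1}$ decay of $\omega'$ relative to $\omega$, and near $\pa_-\cX$ by a direct computation of $[\wh\delta\sC,\chi_\eps]$ in which the dangerous $\alpha^{-1}$ entries hit $\wt\omega'_{NT}$, which vanishes at $\pa_-\cX$ precisely because $R\omega=0$. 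This gives $\wh\delta\wh d\omega=0$; the dual computation with $[\wh d\sC,\chi_\eps]$ gives $\wh d\wh\delta\omega=0$. One then \emph{repeats} the same two pairings with $\omega$ in place of $\omega',\omega''$ to conclude $\wh d\omega=\wh\delta\omega=0$, i.e.\ $\omega\in\cH_1$.

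A minor point: surjectivity of $R$ is not part of the lemma as stated (the sequence $\cH_1\to\cK_1\to\C$ is asserted exact only at $\cK_1$); the paper establishes $\ran R\neq 0$ separately, after Lemma~\ref{Lemma10H1}, by exhibiting $\omega_{b_0,s0}$.
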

\begin{lemma}
\label{Lemma10H1}
  $\cH_1=\{0\}$.
\end{lemma}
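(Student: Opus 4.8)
The goal is to show that $\cH_1$, the space of outgoing stationary 1-forms $\omega$ with $\wh d(0)\omega=\wh\delta(0)\omega=0$ and $R\omega=0$, is trivial. The plan is to follow the Schwarzschild--de~Sitter argument of \cite[\S3.3]{HintzVasyKdsFormResonances}, but to exploit the presence of the boundary $\pa_+\cX$ at infinity rather than a cosmological horizon. The strategy is an integration-by-parts (boundary pairing) argument for the \emph{closed and coclosed} form $\omega$. Since $\omega$ is both $\wh d(0)$- and $\wh\delta(0)$-closed, the key identity is that $\|\wh d(0)\omega\|^2+\|\wh\delta(0)\omega\|^2$, suitably localized and integrated by parts, produces only boundary terms at $\pa_-\cX$ and $\pa_+\cX$, which must then be shown to vanish given the two conditions $R\omega=0$ and the outgoing/decay condition $\omega\in\cA^{1-}$ (hence $\omega_+$ is the only potentially non-decaying part, and its contribution is controlled).

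First I would split into form-degree components using~\eqref{Eq1Split} and, near $\pa_-\cX$, the refined splitting~\eqref{Eq1SplitFull}; write $\omega=\sC\wt\omega$ as in~\eqref{Eq1CIsigma} with $\sigma=0$. The closedness/coclosedness equations $\wh d(0)\omega=0$, $\wh\delta(0)\omega=0$ translate, via~\eqref{Eq1Split2}, \eqref{Eq1delta}, \eqref{Eq1Spatiald}, into a coupled first-order system for the tangential and normal parts $\omega_T,\omega_N$ on $\cX$. Since $\sigma=0$, the $i\sigma$-coupling terms in~\eqref{Eq1ReSys} drop out, so the system decouples: $d_\cX\omega_T=0$, $\delta_\cX(\alpha\omega_T)=0$ (from the first equation with the appropriate conjugation), and similarly $d_\cX(\alpha\omega_N)=0$, $\delta_\cX\omega_N=0$—i.e.\ $\omega_T$ is a harmonic field for one weighted Hodge Laplacian and $\alpha\omega_N$ for the conjugate one. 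The next step is to run the boundary pairing: pair $d_\cX\omega_T=0$ against $\chi_\eps\cdot(\text{something})$ and integrate by parts, analogously to~\eqref{Eq1RePair}, using the cutoff $\chi_\eps$ of~\eqref{Eq1ReCutoff} to handle the borderline failure of $L^2$-membership at $\pa_+\cX$. The commutator $[\,\cdot\,,\chi_\eps]$ again localizes to collar neighborhoods of $\pa_-\cX$ (using~\eqref{Eq1Spatiald} and the structure of $\pa_\alpha^*$) and of $\pa_+\cX$ (using~\eqref{Eq1ddelInfty}), producing boundary terms that are quadratic in the restrictions $\wt\omega_{**}|_{\pa_-\cX}$ and in the coefficient $\omega_+$ of the $r^{-1}$ asymptotics at infinity.

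The crux is then to show all these boundary terms vanish. At $\pa_-\cX$: the condition $R\omega=0$ kills $\wt\omega_{N T}|_{\pa_-\cX}$; combined with the system of equations restricted to the horizon (which forces indicial-root relations among the four components $\wt\omega_{TT},\wt\omega_{TN},\wt\omega_{NT},\wt\omega_{NN}$ at $\pa_-\cX$), one concludes that the full boundary contribution at $\pa_-\cX$ vanishes—this is where I expect to reproduce, in simplified form, the horizon computation of \cite[Lemma~3.6 and its proof]{HintzVasyKdsFormResonances}. At $\pa_+\cX$: because $\omega\in\cA^{1-}$ and the equations $d_\cX\omega_T=0=\delta_\cX(\alpha\omega_T)$ together with~\eqref{Eq1ddelInfty} impose that the $r^{-1}$-coefficient $\omega_+$ itself must be a closed and coclosed 1-form (equivalently, a harmonic field) on $\Sph^2$ valued in $\Lambda Y\oplus\Lambda Y$ in an appropriate sense—and there are \emph{no} such nonzero fields in the relevant degrees on $\Sph^2$ (the cohomology $H^0$ contributes a constant, but that corresponds to $\omega_{b_0,s0}$ which has $R\omega\neq 0$ and is excluded by $R\omega=0$; $H^1(\Sph^2)=0$). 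Thus the $\pa_+\cX$ boundary term vanishes as well. Once both boundary terms vanish, the pairing identity forces $d_\cX\omega_T=0$ to propagate to $\omega_T=0$ via the $\eps\to0$ limit yielding $\|\omega_T\|^2$-type control plus a unique continuation step (Mazzeo's theorem \cite{MazzeoUniqueContinuation} at $\pa_-\cX$, or an indicial-root-plus-standard-unique-continuation argument as in the proof of Proposition~\ref{Prop1Re}), and likewise $\omega_N=0$; hence $\omega\equiv0$.

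\textbf{Main obstacle.} The delicate point is the bookkeeping at $\pa_+\cX$: one must verify carefully that the outgoing/conormal structure of $\omega$ at infinity, \emph{combined with} $\omega$ being closed and coclosed (not merely $\Box_g$-harmonic), forces the leading coefficient $\omega_+$ to be a genuine harmonic field on the sphere of a degree where no such nonzero field exists, so that the boundary term in the $\eps\to0$ limit is not just sign-definite but identically zero. Equivalently: I must rule out a nonzero $r^{-1}$-tail. This requires matching the normal-operator/indicial analysis of $\wh d(0),\wh\delta(0)$ near $\pa_+\cX$ (via~\eqref{Eq1ddelInfty}) with the global topology of $\Sph^2$, and it is here that the asymptotically flat setting genuinely differs from the Schwarzschild--de~Sitter one, where $\pa_+\cX$ was a cosmological horizon with an $\alpha$-type (rather than $r^{-1}$-type) degeneration.
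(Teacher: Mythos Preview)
There is a genuine gap in your strategy. Your boundary pairing, modeled on~\eqref{Eq1RePair}, yields no information at $\sigma=0$: the commutator identity in that proof produces a boundary contribution proportional to $\sigma$ (see~\eqref{Eq1ReVanish}), so at $\sigma=0$ one obtains the vacuous statement $0=0$. More fundamentally, by the very definition of $\cH_1$ you already have $\wh d(0)\omega=\wh\delta(0)\omega=0$, so any pairing whose bulk term is built from $\|\wh d(0)\omega\|^2+\|\wh\delta(0)\omega\|^2$ is identically zero on both sides; the assertion that ``the pairing identity forces $d_\cX\omega_T=0$ to propagate to $\omega_T=0$ via \ldots\ $\|\omega_T\|^2$-type control'' has no mechanism. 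The boundary pairing in Lemma~\ref{Lemma10Seq} was used to pass from $\wh\Box\omega=0$ to $\wh d\omega=\wh\delta\omega=0$; once you are in $\cH_1$, a different idea is required.

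The paper's proof does not use a pairing at all; it proceeds by reduction to the \emph{scalar} zero-energy problem. After the decoupling into $\omega_T,\omega_N$ (which you identify correctly), the argument is elementary. For $\omega_N$ (a function on $\cX$), $d_\cX(\alpha\omega_N)=0$ forces $\alpha\omega_N$ constant, so $\omega_N=c\alpha^{-1}=c(1+\cO(r^{-1}))$; the $\cO(r^{-1})$ decay at infinity gives $c=0$. For $\omega_T$ (a 1-form on $\cX$), the key input is topological: $d_\cX\omega_T=0$ together with $H^1(\Sph^2)=0$ allow one to write $\omega_T=d_\cX\phi$ for a scalar potential $\phi$ (concretely $(\omega_{TT},\omega_{TN})=(\sld\phi,\pa_r\phi)$). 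The coclosedness condition then becomes precisely the zero-energy scalar wave equation $\wh{\Box_{g,0}}(0)\phi=0$. A normal operator argument (cf.\ Proposition~\ref{Prop00Grow}) shows the leading term $\phi_+$ of $\phi$ at infinity is constant and may be subtracted (it does not affect $d_\cX\phi$), after which $\phi\in\Hbext^{\infty,-1/2-}$, hence $\phi=0$ by Theorem~\ref{Thm0}. This scalar reduction, via exactness of closed forms on a space with trivial first cohomology, is the missing ingredient in your plan.
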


These two lemmas together imply that $\dim\cK_1\leq 1$. The statement~\eqref{Eq1s0Ker} of Theorem~\ref{Thm1} thus follows from the fact that $\omega_{b_0,s 0}\in\CI_{(0)}$ and $\wh\Box(0)\omega_{b_0,s 0}=0$, which is a direct calculation using \cite[Lemma~4.3]{HintzVasyKdsFormResonances}. \emph{We now drop `\;\!$(0)$\!' from the notation and write $\wh d\equiv\wh d(0)$, etc.}

\begin{proof}[Proof of Lemma~\ref{Lemma10Seq}]
  Near $\pa_-\cX$, we compute
  \[
    \wh d\sC=
      \begin{pmatrix}
        \sld & 0 & 0 & 0 \\
        \pa_\alpha & -\alpha\sld & -\beta\alpha^{-1}\sld & 0 \\
        0 & 0 & -\alpha^{-1}\sld & 0 \\
        0 & 0 & -\alpha^{-1}\pa_\alpha & \sld
      \end{pmatrix}, \ \ 
    \wh\delta\sC=
      \begin{pmatrix}
        -\sldelta & -\alpha^{-1}\pa_\alpha^*\alpha^2 & -\beta\alpha^{-1}\pa_\alpha^* & 0 \\
        0 & \alpha\sldelta & \beta\alpha^{-1}\sldelta & 0 \\
        0 & 0 & \alpha^{-1}\sldelta & \pa_\alpha^* \\
        0 & 0 & 0 & -\sldelta
      \end{pmatrix},
  \]
  and
  \[
    \sC^{-1}\wh d\sC=
      \openbigpmatrix{3pt}
        \sld & 0 & 0 & 0 \\
        \alpha^{-1}\pa_\alpha & -\sld & 0 & 0 \\
        0 & 0 & -\sld & 0 \\
        0 & 0 & -\alpha^{-1}\pa_\alpha & \sld
      \closebigpmatrix, \ \ 
    \sC^{-1}\wh\delta\sC=
      \openbigpmatrix{3pt}
        -\sldelta & -\alpha^{-1}\pa_\alpha^*\alpha^2 & -\beta\alpha^{-1}\pa_\alpha^* & 0 \\
        0 & \sldelta & 0 & -\beta\alpha^{-1}\pa_\alpha^* \\
        0 & 0 & \sldelta & \alpha\pa_\alpha^* \\
        0 & 0 & 0 & -\sldelta
      \closebigpmatrix.
  \]
  Since $\alpha\pa_\alpha^*\colon\CI(\ol{\cX})\to\alpha^2\CI(\ol{\cX})$, the $(N T)$ coefficient of $\sC^{-1}\wh\Box\sC\wt\omega$ is therefore equal to $-\slDelta\wt\omega_{N T}+\alpha^2\CI$; since this vanishes when $\omega=\sC\wt\omega\in\cK_1$, we find that $\wt\omega_{N T}|_{\pa_-\cX}$ is a harmonic 0-form on $\pa_-\cX\cong\Sph^2$, hence constant. This proves the first claim.

  Now if $\omega\in\cK_1$, $\omega=\sC\wt\omega$, has $R\omega=0$, i.e.\ $\wt\omega_{N T}|_{\pa_-\cX}=0$, we proceed as follows. First, given $\wh\Box\omega=0$, we can apply either $\wh d$ or $\wh\delta$, obtaining
  \begin{equation}
  \label{Eq10Seqddd}
    \wh d\wh\delta\wh d\omega=0,\quad
    \wh\delta\wh d\wh\delta\omega=0.
  \end{equation}

  Consider the first equation, and write $\omega'=\wh d\omega$, which is outgoing and decays faster, by a factor $r^{-1}$, than $\omega$ itself. Writing $\omega'=\sC\wt\omega'$, we have $\wt\omega'_{N T}|_{\pa_-\cX}=0$. Note that $\wh d$ and $-\wh\delta$ are adjoints of each other with respect to the $L^2$ pairing with volume density $\alpha|d h|$ and fiber inner product $H\oplus H$. \emph{We work with this inner product from now on, unless otherwise specified}. With $\chi,\chi_\eps$ as in~\eqref{Eq1ReCutoff}, we then have
  \begin{equation}
  \label{Eq10Seqddel}
    0 = -\lim_{\eps\to 0} \la \wh d\wh\delta\omega',\chi_\eps\omega'\ra = \lim_{\eps\to 0} \la \wh\delta\omega',\wh\delta\chi_\eps\omega'\ra = \lim_{\eps\to 0} \|\chi_\eps^{1/2}\wh\delta\omega'\|^2 + \la \wh\delta\omega',[\wh\delta,\chi_\eps]\omega'\ra.
  \end{equation}
  Let us evaluate the second term on the right. It has two pieces; using the $\cO(r^{-1})$, resp.\ $\cO(r^{-2})$ decay of $\omega'$, resp.\ $\wh\delta\omega'$, the piece near $\pa_+\cX$ is easily shown to vanish in the limit. Near $\pa_-\cX$, we write $\chi_{1,\eps}:=\chi'(f/\eps)=\chi'(\alpha/\eps)$ and compute
  \[
    [\wh\delta\sC,\chi_\eps]\in
      \eps^{-1}\chi_{1,\eps}
      \begin{pmatrix}
        0 & \beta^{-2}\alpha+\alpha^3\CI & \beta^{-1}\alpha^{-1}+\alpha\CI & 0 \\
        0 & 0 & 0 & 0 \\
        0 & 0 & 0 & -\beta^{-2}+\alpha^2\CI \\
        0 & 0 & 0 & 0
      \end{pmatrix}.
  \]
  Thus, using the vanishing of $\wt\omega'_{N T}$ at $\pa_-\cX$, we have
  \[
    [\wh\delta\sC,\chi_\eps]\wt\omega' \in \eps^{-1}\chi_{1,\eps}\begin{pmatrix} \alpha\CI \\ 0 \\ -\beta^{-2}\wt\omega'_{N N}+\alpha^2\CI \\ 0 \end{pmatrix}.
  \]
  We furthermore compute
  \begin{align*}
    (\wh\delta\sC\wt\omega')_{T T} &\in -\sldelta\wt\omega'_{T T} + 2\beta^{-2}\wt\omega'_{T N}-\beta\alpha^{-1}\pa_\alpha^*\wt\omega'_{N T} + \alpha^2\CI, \\
    (\wh\delta\sC\wt\omega')_{N T} &\in \alpha\CI.
  \end{align*}
  Since $H\oplus H=K\oplus\wt\beta^{-2}K\oplus K\oplus\wt\beta^{-2}K$ is block-diagonal, we thus see that the pointwise inner product of $\wh\delta\omega'$ and $[\wh\delta,\chi_\eps]\omega'$ is bounded by $\alpha\eps^{-1}\chi_{1,\eps}$; using $\alpha|d h|=\alpha\wt\beta\,d\alpha|d k|$, it follows easily that $\lim_{\eps\to 0}\la\wh\delta\omega',[\wh\delta,\chi_\eps]\omega'\ra = 0$, and therefore~\eqref{Eq10Seqddel} implies $\wh\delta\omega'=0$, so $\wh\delta\wh d\omega=0$.

  Consider similarly the second equation~\eqref{Eq10Seqddd}, and let $\omega''=\wh\delta\omega$: we now have
  \begin{equation}
  \label{Eq10Seqdeld}
    0 = \lim_{\eps\to 0} \|\chi_\eps^{1/2}\wh d\omega''\|^2 + \la\wh d\omega'',[\wh d,\chi_\eps]\omega''\ra.
  \end{equation}
  The contribution of the second term near $\pa_+\cX$ again vanishes in the limit, while the contribution near $\pa_-\cX$ can be evaluated using
  \[
    [\wh d\sC,\chi_\eps] = \eps^{-1}\chi_{1,\eps}
      \begin{pmatrix}
        0 & 0 & 0 & 0 \\
        1 & 0 & 0 & 0 \\
        0 & 0 & 0 & 0 \\
        0 & 0 & -\alpha^{-1} & 0
      \end{pmatrix},
  \]
  which for $\wt\omega''=\sC^{-1}\omega''$ with $\wt\omega''_{N T}|_{\pa_-\cX}=0$ gives
  \[
    [\wh d\sC,\chi_\eps]\wt\omega'' \in \eps^{-1}\chi_{1,\eps}\begin{pmatrix} 0 \\ \CI \\ 0 \\ \alpha\CI \end{pmatrix};
  \]
  on the other hand, $(\wh d\sC\wt\omega'')_{T N} \in \alpha\CI$ and $(\wh d\sC\wt\omega'')_{N N} \in \CI$. Thus, as in the previous calculation, the pairing is pointwise bounded by $\alpha\eps^{-1}\chi_{1,\eps}$, whose integral with respect to $\alpha\wt\beta\,d\alpha|d k|$ tends to $0$ as $\eps\to 0$. We conclude that $\wh d\omega''=0$, so $\wh d\wh\delta\omega=0$.

  We now repeat these arguments: replacing $\omega'$ by $\omega$ in~\eqref{Eq10Seqddel}, we find $\wh\delta\omega=0$; using~\eqref{Eq10Seqdeld} with $\omega$ in place of $\omega''$ gives $\wh d\omega=0$. Therefore, $\omega\in\cH_1$, finishing the proof.
\end{proof}

\begin{proof}[Proof of Lemma~\ref{Lemma10H1}]
  Let us write an outgoing 1-form as
  \[
    \omega=\omega_T+\omega_N\alpha\,d t.
  \]
  Then $\omega_N$ is a function which lies in $\alpha^{-1}\CI(\ol\cX\setminus\pa_-\cX)$ and takes the form $r^{-1}\omega_{N,+}+\cA^{2-}$ near $\pa_+\cX$; moreover,
  \begin{equation}
  \label{Eq10H1Split}
    \omega_T=\omega_{T T}+\omega_{T N}\,d r,
  \end{equation}
  where $\omega_{T T}\in\CI$ and $\omega_{T N}\in\alpha^{-2}\CI$ near $\pa_-\cX$; near $\pa_+X$ on the other hand, $\omega_{T N}=r^{-1}\omega_{T N,+}+\cA^{2-}$ and $\omega_{T T}=\omega_{T T,+}+\cA^{1-}$ with $\omega_{T N,+}\in\CI(\pa_+X)$, $\omega_{T T,+}\in\CI(\pa_+X;T^*\Sph^2)$.

  Since $\wh d$ and $\wh\delta$ are diagonal in the splitting~\eqref{Eq1Split}, $\omega_T$ and $\omega_N$ satisfy decoupled equations. For the function $\omega_N$, this gives $d_\cX\alpha\omega_N=0$, hence $\omega_N=c\alpha^{-1}=c(1+\cO(r^{-1}))$ for some $c\in\C$; for this to be $\cO(r^{-1})$ as $r\to\infty$, we need $c=0$, hence $\omega_N\equiv 0$.

  For the 1-form $\omega_T$, we first use $d_\cX\omega_T=0$, i.e.
  \[
    \sld\omega_{T T}=0,\quad
    \pa_r\omega_{T T}-\sld\omega_{T N}=0.
  \]
  Since $H^1(\Sph^2)=0$, we can write $\omega_{T T}=\sld\phi$ with $\phi$ smooth down to $\pa_-\cX$ and of the form
  \[
    \phi = \phi_+ +\cA^{1-}\ \ \text{near}\ \ \pa_+\cX,\quad \phi_+\in\CI(\pa_+\cX).
  \]
  The second equation then gives $\sld(\pa_r\phi-\omega_{T N})=0$, hence
  \begin{equation}
  \label{Eq10H1Potential}
    (\omega_{T T},\omega_{T N})=(\sld\phi,\pa_r\phi);
  \end{equation}
  there is no constant of integration since $\omega_{T N}$ is required to decay as $r\to\infty$.

  Finally, we use $\delta_\cX\omega_T=0$ and the expression $\delta_\cX=(r^{-2}\sldelta,-\alpha r^{-2}\pa_r\alpha r^2)$ in the splitting~\eqref{Eq10H1Split}. Expressing this equation in terms of $\phi$ gives
  \[
    \wh{\Box_{g,0}}(0)\phi = 0,\quad \wh{\Box_{g,0}}(0)=r^{-2}\pa_r\alpha^2 r^2\pa_r-r^{-2}\slDelta.
  \]
  By the normal operator calculations in the proof of Proposition~\ref{Prop00Grow}, $\phi_+$ must be constant and can thus be assumed to be zero by replacing $\phi$ by $\phi-\phi_+$ (which preserves~\eqref{Eq10H1Potential}); therefore $\phi\in\cA^{1-}=\Hbext^{\infty,-1/2-}$. Theorem~\ref{Thm0} gives $\phi=0$, and by~\eqref{Eq10H1Potential}, we get $\omega_T\equiv 0$.
\end{proof}

Since $\wh{\Box_{g_{b_0}}}(0)$ is Fredholm of index $0$, this proves the 1-dimensionality of~\eqref{Eq1s0KerAdj} for $b=b_0$. But in fact, we have, for all $b$,
\begin{equation}
\label{Eq10DualKer}
  \wh{\Box_{g_b,1}}(0)(d u_{b,s 0}^*) = d\bigl(\wh{\Box_{g_b,0}}(0)u_{b,s 0}^*\bigr) = d(0) = 0.
\end{equation}
This conversely implies that~\eqref{Eq1s0Ker} is at least $1$-dimensional for $b$ near $b_0$.

A slight extension of the argument in the proof of Theorem~\ref{Thm0} proves that $\cK_{b,1}:=\ker\wh{\Box_{g_b}}(0)\cap\Hbext^{\infty,-1/2-}$ is \emph{at most} 1-dimensional for $b$ near $b_0$. Indeed, fix $\eta\in\CIdotc(X^\circ;T^*X^\circ)$ such that $\la\omega_{b_0,s 0},\eta\ra=1$. Assuming that $\dim\ker\wh{\Box_{g_{b_j}}}(0)\geq 2$ for a sequence of parameters $b_j\to b_0$, $j\to\infty$, we can select $u_j$ in $\cK_{b,1}\cap\eta^\perp$ of norm $\|u_j\|_{\Hbext^{s,\ell}}=1$, where we fix $s>\tfrac32$, $\ell\in(-\tfrac32,-\half)$. As in the proof of Theorem~\ref{Thm0}, we can pass to a weakly convergent (in $\Hbext^{s,\ell}$) subsequence $u_j\weakto u\neq 0$; but $0=\la u_j,\eta\ra\to\la u,\eta\ra=1$, which is a contradiction.

This finishes the proof of Theorem~\ref{Thm1}.

\subsection{Growing and generalized zero modes}
\label{Ss10Gen}

We shall later see that 1-form zero energy states with more decay at infinity may generate asymptotic symmetries which contribute to the kernel of the linearized gauge-fixed Einstein operator. This section can be skipped at first reading; its purpose as well as the motivation behind the constructions in its proof will become clear in~\S\ref{SL}.

\begin{prop}
\label{Prop10Grow}
  For $b=(\bhm,\bha)$ near $b_0=(\bhm_0,0)$, we have
  \begin{subequations}
  \begin{alignat}{3}
  \label{Eq10Grows0Ker}
    &\ker\wh{\Box_{g_b}}(0) \cap \Hbext^{\infty,-3/2-}& =&\ \la\omega_{b,s 0}\ra \oplus \la\omega_{b,s 0}^{(0)}\ra \oplus \{\omega_{b,s 1}(\scal) \colon \scal\in\scal_1\}, \\
  \label{Eq10Grows0KerAdj}
    &\ker\wh{\Box_{g_b}}(0)^* \cap \Hbsupp^{-\infty,-3/2-}& =&\ \la\omega_{b,s 0}^*\ra \oplus \{\omega_{b,s 1}^*(\scal) \colon \scal\in\scal_1\},
  \end{alignat}
  \end{subequations}
  where, with $\flat$ denoting the musical isomorphism $V^\flat:=g_b(V,-)$, and using~\eqref{Eq00Grows1Ker}--\eqref{Eq00Grows1KerAdj},
  \begin{alignat}{3}
  \label{Eq10Grows0}
    &\omega_{b,s 0}^{(0)}=\pa_t^\flat,&& \\
  \label{Eq10Grows1}
    &\omega_{b,s 1}(\scal)=d u_{b,s 1}(\scal),&\quad
    &\omega_{b,s 1}^*(\scal)=d u_{b,s 1}^*(\scal).
  \end{alignat}
\end{prop}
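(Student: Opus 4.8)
The plan is to prove the two identities \eqref{Eq10Grows0Ker}--\eqref{Eq10Grows0KerAdj} by the same normal operator/Fredholm bookkeeping already used in Propositions~\ref{Prop00Grow} and \ref{PropOpNull}, combined with the explicit constructions \eqref{Eq10Grows0}--\eqref{Eq10Grows1}. The structure of the argument is: (i) establish the \emph{a priori} upper bound on the dimension of each space by counting how many points of the boundary spectrum of the normal operator $\wh{\Box_{\ubar g,1}}(0)$ are crossed when the weight is relaxed from $-\tfrac12-$ to $-\tfrac32-$; (ii) exhibit enough explicit (or perturbatively constructed) elements to match that bound from below; (iii) identify these elements with the ones named in the proposition. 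Throughout, $\wh{\Box_{g_b,1}}(0)\in\rho^2\Diffb^2$ is a weighted elliptic b-operator near $\pa_+X$ with normal operator the Euclidean 1-form Laplacian (Lemma~\ref{LemmaKStNormal}), and Fredholmness of index $0$ on the relevant spaces was noted after Remark~\ref{Rmk10AdjDecay}.

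\textbf{Upper bounds via the boundary spectrum.} First I would compute the boundary spectrum of $\wh{\Box_{\ubar g,1}}(0)$ acting on $\wt\Tsc{}^*X$-valued functions, using the splitting $\wt\Tsc{}^*X=\la d t\ra\oplus\Tsc^*X$ and the Euclidean 1-form decomposition on $\R^3$ into $d$(scalar) $\oplus$ divergence-free parts; concretely one decomposes a 1-form on $\pa_+X=\Sph^2$ into $\sld\scal_l\oplus\vect_l$ and tracks indicial roots exactly as in the scalar case (proof of Proposition~\ref{Prop00Grow}). Passing from the weight $-\tfrac12-$ (where $\ker$ is $\la\omega_{b,s 0}\ra$ and $\ker^*$ is $\la\omega_{b,s 0}^*\ra$ by Theorem~\ref{Thm1}) to $-\tfrac32-$ one crosses the indicial root at $\lambda=0$ and at $\lambda=i$. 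The $\lambda=0$ crossing contributes constant-coefficient 1-forms: the bounded solution of $\wh{\Box_{\ubar g,1}}(0)$ with such asymptotics is, on Kerr, corrected to $\pa_t^\flat=\omega_{b,s 0}^{(0)}$ (one checks $\wh{\Box_{g_b,1}}(0)\pa_t^\flat=0$ either directly or from stationarity of $g_b$ together with $\Ric(g_b)=0$, so that $\pa_t^\flat$ is a closed and coclosed 1-form). The $\lambda=i$ crossing contributes 1-forms with leading term $\sim r\,(\text{degree-}1\text{ term})$, i.e.\ $du_{b,s1}(\scal)$ for $\scal\in\scal_1$ (three dimensions, since $\dim\scal_1=3$), together with possible genuinely new solutions; the point is that $du_{b,s1}(\scal)=d(\,r\scal+\text{l.o.t.}\,)$ indeed lies in $\ker\wh{\Box_{g_b,1}}(0)$ because $d$ commutes with $\wh{\Box_{g_b,1}}(0)$ when acting from the scalar to the 1-form bundle (the Hodge d'Alembertian commutes with $d$), exactly as in \eqref{Eq10DualKer}. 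A short count shows the bound for \eqref{Eq10Grows0Ker} is $1+1+3=5$ and for the adjoint \eqref{Eq10Grows0KerAdj} is $1+3=4$ (no boost-type $\lambda=0$ state appears on the dual side because $\wh{\Box}(0)^*$ annihilates no bounded 1-form other than $\omega_{b,s0}^*$ — this is the asymmetry forced by the mismatch of domain and target weights, cf.\ Remark~\ref{Rmk10AdjDecay}).

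\textbf{Lower bounds and $b$-dependence.} For the matching lower bound I would run the same correction argument as in the proof of Proposition~\ref{Prop00Grow}: for each putative leading term $v$ (namely $r\scal$ for the $\scal_1$ states, and the constant-coefficient $d t$-component for $\omega_{b,s 0}^{(0)}$), set $e:=\wh{\Box_{g_b,1}}(0)(\chi v)\in\Hbext^{\infty,1/2-}$ using Lemma~\ref{LemmaKStNormal} and solve $\wh{\Box_{g_b,1}}(0)w=-e$ with $w\in\Hbext^{\infty,-3/2-}$, which is possible because $e$ is $L^2$-orthogonal to $\ker\wh{\Box_{g_b,1}}(0)^*\cap\Hbsupp^{-\infty,-1/2+}=\{0\}$ (Theorem~\ref{Thm1}); then $\chi v+w$ is the desired element, and the construction manifestly depends continuously on $b$. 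For $\omega_{b,s 1}(\scal)$ one may instead simply take $du_{b,s1}(\scal)$ with $u_{b,s1}(\scal)$ from Proposition~\ref{Prop00Grow}, which is already continuous in $b$; similarly $\omega_{b,s1}^*(\scal)=du_{b,s1}^*(\scal)$, and $\wh{\Box_{g_b,1}}(0)^*$ annihilates it because $\wh{\Box_{g_b,1}}(0)^*\circ d=d\circ\wh{\Box_{g_b,0}}(0)^*$ on the relevant supported spaces. Finally, linear independence: the five (resp.\ four) elements have distinct leading asymptotics at $\pa_+X$ ($r^{-1}$, $r^0$, $r^1$ in three independent $\scal_1$ directions), so no nontrivial linear combination can lie in a smaller weighted space, forcing independence; this closes the dimension count and shows the listed spaces exhaust the kernels.

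\textbf{Main obstacle.} The delicate point is \emph{not} the Fredholm theory but the precise determination of which indicial roots of the \emph{1-form} normal operator actually carry solutions of the full operator — i.e.\ ruling out spurious kernel elements at $\lambda=i$ beyond the $d(\scal_1)$ family, and correctly accounting for the logarithmic terms that appear from integer coincidences in the (vector-valued) boundary spectrum. I expect to handle this by the same device as in the scalar proof of Theorem~\ref{Thm0}: decompose into scalar/vector spherical harmonics on $\Sph^2$ (using \S\ref{SsYS}), reduce to radial ODEs with regular-singular points at $r=r_b$ and $\rho=0$, and read off the indicial structure explicitly; the closedness/coclosedness of $\pa_t^\flat$ and the intertwining of $d$ with the Hodge d'Alembertian then pin down exactly which solutions survive. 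The asymmetry between \eqref{Eq10Grows0Ker} and \eqref{Eq10Grows0KerAdj} (the boost-type state $\omega_{b,s0}^{(0)}$ on the primal side with no dual counterpart) is the concrete manifestation of this, and verifying it carefully is the crux.
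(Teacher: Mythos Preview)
Your strategy matches the paper's, but there are technical slips and one substantive gap. On the indicial bookkeeping: in the standard trivialization of $\wt{\Tsc^*}X$ (Lemma~\ref{LemmaKStNormal}), $\wh{\Box_{\ubar g,1}}(0)$ is the scalar Euclidean Laplacian tensored with the identity, so relaxing the weight from $-\tfrac12-$ to $-\tfrac32-$ crosses only the root $\lambda=0$, with a \emph{four}-dimensional resonant space spanned by $dt,dx^1,dx^2,dx^3$; there is no separate $\lambda=i$ crossing here, since the leading term of $\omega_{b,s1}(\scal)=du_{b,s1}(\scal)$ is $d(r\scal)=dx^i$, a bounded scattering $1$-form. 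Also, $\pa_t^\flat$ is not closed (on Schwarzschild $\pa_t^\flat=\mu\,dt_0-dr$); the correct one-liner is that $\pa_t$ is Killing, so $\delta_{g_b}^*\pa_t^\flat=0$ and hence $\Box_{g_b,1}\pa_t^\flat=2\delta_{g_b}\sfG_{g_b}\delta_{g_b}^*\pa_t^\flat=0$. Finally, the obstruction space in your correction step is not $\{0\}$: $\omega_{b,s0}^*$ is a $\delta$-distribution at the horizon and so lies in $\Hbsupp^{-\infty,\ell}$ for every $\ell$; the paper resolves this by noting $\supp e\cap\supp\omega_{b,s0}^*=\emptyset$, so the pairing vanishes and one can take $w\in\Hbext^{\infty,-1/2-}$.

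The genuine gap is the asymmetry you flag as the crux; it is not a domain/target weight issue but a concrete obstruction. To correct $\chi\,\pa_t^\flat$ to a \emph{dual} state, one must have $\wh{\Box_{g_b}}(0)^*(\chi\,\pa_t^\flat)\perp\ker\wh{\Box_{g_b}}(0)\cap\Hbext^{\infty,-1/2-}=\la\omega_{b,s0}\ra$, and $\omega_{b,s0}$ is \emph{not} localized at the horizon. The paper computes (at $b=b_0$)
\[
  \big\la\wh{\Box_{g_{b_0}}}(0)^*(\chi\,\pa_t^\flat),\,\omega_{b_0,s0}\big\ra=2\neq 0,
\]
which obstructs the existence of such a dual state; this persists for nearby $b$ by continuity. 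One then proves ``$\subseteq$'' in \eqref{Eq10Grows0KerAdj} by writing any $\omega^*$ as $\chi(v_0\,dt+v')+\tilde\omega^*$, subtracting $\omega_{b,s1}^*(\scal)$ to kill $v'$, and observing that $v_0\,\wh{\Box_{g_b}}(0)^*(\chi\,dt)=-\wh{\Box_{g_b}}(0)^*\tilde\omega^*\in\ran\wh{\Box_{g_b}}(0)^*$ forces $v_0=0$ by the nonvanishing pairing. For the $dx^i$ leading terms the analogous pairing vanishes (the error is scalar type $l=1$, $\omega_{b_0,s0}$ is scalar type $l=0$). This single computation is what closes the argument; your proposed reduction to radial ODEs would work but is more than is needed.
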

\begin{proof}
  This follows from a normal operator argument, using that the normal operator $\wh{\Box_{\ubar g,1}}(0)$ (see Lemma~\ref{LemmaKStNormal}) annihilates the 1-forms
  \begin{equation}
  \label{Eq10GrowsNormal}
    d t,\ d x^1,\ d x^2,\ d x^3;
  \end{equation}
  the first one is of scalar type $l=0$, while the latter three are of scalar type $l=1$.
  
  We first construct the space on the right in~\eqref{Eq10Grows0Ker}. Let $v$ be one of 1-forms in~\eqref{Eq10GrowsNormal}, in particular $v\in\Hbext^{\infty,-3/2-}$; then with a radial cutoff $\chi$, identically $1$ near infinity and vanishing for $r<3\bhm$, we have
  \[
    e:=\wh{\Box_{g_b}}(0)(\chi v) \in \Hbext^{\infty,3/2-},
  \]
  with the extra order of vanishing due to the normal operator annihilating $v$; moreover, $\supp e\cap\supp\omega_{b,s 0}^*=\emptyset$, the latter being a $\delta$-distribution at the event horizon. Therefore, we can find $w\in\Hbext^{\infty,-1/2-}$ with $\wh{\Box_{g_b}}(0)w=-e$, and $\chi v+w$ furnishes an element of~\eqref{Eq10Grows0Ker}. As $v$ varies over $\mathspan\{d t,d x^1,d x^2,d x^3\}$, we obtain the 4-dimensional supplement to $\la\omega_{b,s 0}\ra$ as in~\eqref{Eq10Grows0Ker}, with continuous dependence on $b$. The explicit expressions given in~\eqref{Eq10Grows0} and \eqref{Eq10Grows1} are of size $\cO(1)$ and thus lie in the desired space. (Note that $\pa_t$ is Killing, hence $\delta_{g_b}^*(\pa_t^\flat)=0$ and so $\Box_{g_b,1}(\pa_t^\flat)=2\delta_{g_b}\sfG_{g_b}\delta_{g_b}^*(\pa_t^\flat)=0$.)

  Next, note that the right hand side of~\eqref{Eq10Grows0KerAdj} indeed lies in the space on the left hand side. Arguing more robustly in the Schwarzschild case $b=b_0$, the 1-forms $v=\chi d x^i$, $i=1,2,3$, can be corrected similarly as above by decaying 1-forms with supported character at $\pa_-X$, giving the zero modes $\omega_{b_0,s 1}^*(\scal)$; this uses that $\ker\wh{\Box_{g_{b_0}}}(0)\cap\Hbext^{\infty,-3/2+}=\la\omega_{b_0,s 0}\ra$ (which is of scalar type $l=0$) is orthogonal to the error term $\wh{\Box_{g_{b_0}}}(0)^*(\chi v)\in\Hbsupp^{-\infty,3/2-}$ (which of scalar type $l=1$). On the other hand, $v=\chi\pa_t^\flat$ \emph{cannot} be corrected in this fashion since this orthogonality fails; indeed, we have
  \begin{equation}
  \label{Eq10NoDualdt}
    \big\la\wh{\Box_{g_{b_0}}}(0)^*(\chi\pa_t^\flat),\omega_{b_0,s 0}\big\ra=2,
  \end{equation}
  where we use the volume density and fiber inner product induced by $g_{b_0}$ in the pairing and in the definition of the adjoint. (The resulting $L^2$-type pairing is \emph{not} positive definite, but still non-degenerate, which is all that is needed here.)

  In order to prove `$\subseteq$' in~\eqref{Eq10Grows0Ker}, note that any $\omega\in\ker\wh{\Box_{g_b}}(0)\cap\Hbext^{\infty,-3/2-}$ is of the form $\omega=\chi\,v+\tilde\omega$ where $v$ is a linear combination (with constant coefficients) of the 1-forms~\eqref{Eq10GrowsNormal}, and $\tilde\omega\in\Hbext^{\infty,-1/2-}$; this follows from (the proof of) Proposition~\ref{PropOpNull}. Upon subtracting a linear combination of $\omega_{b,s 0}^{(0)}$ and $\omega_{b,s 1}(\scal)$ from $\omega$, we can thus assume $\omega=\tilde\omega$, which by Theorem~\ref{Thm1} is a scalar multiple of $\omega_{b,s 0}$.

  The argument for proving `$\subseteq$' in~\eqref{Eq10Grows0KerAdj} is slightly more subtle in view of the obstruction~\eqref{Eq10NoDualdt} to the existence of a mode with $\pa_t^\flat$ asymptotics. Now, $\omega^*\in\ker\wh{\Box_{g_b}}(0)^*\cap\Hbsupp^{-\infty,-3/2-}$ can be written as $\omega^*=\chi v+\tilde\omega^*$ where $v=v_0\,d t+v'$ with $v_0\in\C$ and $v'$ a linear combination of $d x^1,d x^2,d x^3$. Upon subtracting $\omega_{b,s 1}^*(\scal)$ for a suitable $\scal\in\scal_1$, we can assume $v'=0$. Therefore
  \[
    v_0\wh{\Box_{g_b}}(0)^*(\chi\,d t)=-\wh{\Box_{g_b}}(0)^*\tilde\omega^*
  \]
  is necessarily orthogonal to $\ker\wh{\Box_{g_b}}(0)\cap\Hbext^{\infty,-1/2-}=\la\omega_{b,s 0}\ra$, which in view of~\eqref{Eq10NoDualdt} (and continuity in $b$) implies $v_0=0$, thus $\omega^*=\tilde\omega^*$ is a scalar multiple of $\omega_{b,s 0}^*$ by Theorem~\ref{Thm1}.
\end{proof}

\begin{rmk}
  This is an instance of the relative index theorem \cite[Theorem~6.5]{MelroseAPS} (albeit in a non-elliptic setting): allowing more growth as in~\eqref{Eq10Grows0Ker} and thereby crossing the indicial root $0$, with $4$-dimensional space of resonant states, shifts the index by $4$; the cokernel, consisting of a $\delta$-distribution (which lies in every weighted Sobolev space with below-threshold regularity), remains unchanged, and therefore the dimension of the kernel must increase by $4$. On the dual side~\eqref{Eq10Grows0KerAdj} on the other hand, the index still shifts by $4$ when crossing the indicial root $0$, but now the cokernel (of the adjoint), consisting of $\omega_{b,s 0}\in\Hbext^{\infty,-1/2-}$, disappears as one imposes more decay. Therefore, the dimension of the kernel (of the adjoint) increases only by $4-1=3$.
\end{rmk}

Beyond the `asymptotic translations' $\omega_{b,s 1}$, we have `asymptotic rotations':
\begin{prop}
\label{Prop10Genv1}
  There exist continuous families (with $b$ near $b_0$)
  \[
    b\mapsto \omega_{b,v 1}(\vect) \in \ker\wh{\Box_{g_b}}(0)\cap\Hbext^{\infty,-5/2-}, \quad
    b\mapsto \omega_{b,v 1}^*(\vect) \in \ker\wh{\Box_{g_b}}(0)^*\cap\Hbsupp^{-\infty,-5/2-},
  \]
  linear in $\vect\in\vect_1$, which satisfy
  \begin{equation}
  \label{Eq10Genv1}
    \omega_{b_0,v 1}(\vect)=r^2\vect,\quad
    \omega_{b_0,v 1}^*(\vect)=r^2\vect H(r-2\bhm_0),
  \end{equation}
  and which are such that $\delta_{g_b}^*\omega_{b,v 1}(\vect)\in\Hbext^{\infty,1/2-}$ and $\delta_{g_b}^*\omega_{b,v 1}^*(\vect)\in\Hbsupp^{-\infty,-1/2-}$.
\end{prop}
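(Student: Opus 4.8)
\textbf{Proof plan for Proposition~\ref{Prop10Genv1}.}
The plan is to run the same normal operator/correction argument that produced the ``asymptotic translations'' $\omega_{b,s1}$ in Proposition~\ref{Prop10Grow}, but now starting from the $l=1$ vector-type $1$-forms on Minkowski space. Concretely, on $\R^3$ the $1$-forms $r^2\vect$ with $\vect\in\vect_1$ are precisely the duals of the rotation vector fields $x^i\pa_{x^j}-x^j\pa_{x^i}$; since these are Killing for the Euclidean metric and $\pa_t$ is Killing for $\ubar g$, they are annihilated by $\wh{\Box_{\ubar g,1}}(0)$. First I would verify this: in the coordinate trivialization of $\wt\Tsc{}^*X$, $\wh{\Box_{\ubar g,1}}(0)$ acts diagonally as the scalar operator $\wh{\Box_{\ubar g}}(0) = \rho^2(\rho\pa_\rho(\rho\pa_\rho-1)-\slDelta)$ on each component (Lemma~\ref{LemmaKStNormal}), and the component functions of $r^2\vect$ (in the $dx^i$ basis) are homogeneous degree $1$ harmonic polynomials, i.e.\ of the form $r\scal$ with $\scal\in\scal_1$, so indeed $\wh{\Box_{\ubar g}}(0)(r\scal)=0$.

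Next I would carry out the correction step. Fix $\vect\in\vect_1$, set $v=r^2\vect$ (more precisely: a stationary $1$-form on $M^\circ$ equal to the Minkowski rotation $1$-form near infinity), pick a cutoff $\chi$ identically $1$ near $\pa_+X$ and vanishing for $r<3\bhm$, and compute
\[
  e := \wh{\Box_{g_b}}(0)(\chi v) = \chi\bigl(\wh{\Box_{g_b,1}}(0)-\wh{\Box_{\ubar g,1}}(0)\bigr)v + [\wh{\Box_{\ubar g,1}}(0),\chi]v \in \Hbext^{\infty,1/2-},
\]
using $\wh{\Box_{g_b,1}}(0)-\wh{\Box_{\ubar g,1}}(0)\in\rho^3\Diffb^2$ from Lemma~\ref{LemmaKStNormal} and $v\in\Hbext^{\infty,-5/2-}$. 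Since $\supp e$ is disjoint from the support of the dual state $\omega_{b,s0}^*$ (a $\delta$-distribution at the horizon) which spans the obstruction for the target weight $1/2-$ by Theorem~\ref{Thm1}, the equation $\wh{\Box_{g_b}}(0)w=-e$ is solvable with $w\in\Hbext^{\infty,-1/2-}$, and $\omega_{b,v1}(\vect):=\chi v+w$ lies in $\ker\wh{\Box_{g_b}}(0)\cap\Hbext^{\infty,-5/2-}$, depending continuously on $b$ by construction. For $b=b_0$ one checks directly (radial ODE, using that $r^2\vect$ has vanishing divergence and curl for $g_{b_0}$) that no correction is needed, giving the explicit formula $\omega_{b_0,v1}(\vect)=r^2\vect$; the dual statement is identical, now using that $\wh{\Box_{g_b}}(0)^*w^*=-e^*$ is solvable into $\Hbsupp^{-\infty,-1/2-}$ because the relevant obstruction $\ker\wh{\Box_{g_b}}(0)\cap\Hbext^{\infty,-1/2+}$ is trivial (Theorem~\ref{Thm1}), and $\omega_{b_0,v1}^*(\vect)=r^2\vect\,H(r-2\bhm_0)$ since this is annihilated by $\wh{\Box_{g_{b_0}}}(0)^*$ away from the horizon and is a supported distribution there.

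The one point needing a little extra care is the last assertion, $\delta_{g_b}^*\omega_{b,v1}(\vect)\in\Hbext^{\infty,1/2-}$ (and its dual analogue), i.e.\ that the symmetric gradient decays one order faster than the generic $\rho^{-3/2-}\cdot\rho=\rho^{-1/2-}$ rate. Here I would use $\delta_g^*=dt_{\chi_0}\otimes_s D_{t_{\chi_0}}+\wh{\delta_g^*}(0)$ with $\wh{\delta_g^*}(0)\in\rho\Diffb^1$ from~\eqref{EqKStDivSymmGrad}: on the stationary $1$-form $\omega_{b,v1}(\vect)$ only $\wh{\delta_g^*}(0)$ contributes, and modulo $\rho^2\Diffb^1$ it depends only on the Euclidean metric $h$ (the ``leading order only depends on $h$'' clause), so $\delta_{g_b}^*(\chi v) = \chi\,\delta_{\ubar g}^*v + (\text{error in }\Hbext^{\infty,1/2-})$; since $v$ is a Euclidean Killing $1$-form near infinity, $\delta_{\ubar g}^*v=0$ there, and the $w$-contribution is in $\rho\cdot\Hbext^{\infty,-1/2-}=\Hbext^{\infty,1/2-}$, yielding the claim. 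The main obstacle I anticipate is not conceptual but bookkeeping: making sure the ``$v$ is Killing for $\ubar g$ near infinity hence $\delta_{\ubar g}^*v=0$'' cancellation really gains a \emph{full} order (so that the leftover genuinely lands in weight $1/2-$ rather than $-1/2-$), which requires combining~\eqref{EqKStSymmGradLeading}, \eqref{EqKStMetrics2}, and the explicit form of the rotation $1$-forms; this is the same mechanism by which the asymptotic translations $\omega_{b,s1}$ have $\delta_{g_b}^*\omega_{b,s1}$ one order better, only applied to a $1$-form of higher growth.
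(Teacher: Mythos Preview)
Your overall strategy is right, but there is a one-order-of-decay gap that breaks both the correction step and the symmetric gradient claim. Comparing to Minkowski via Lemma~\ref{LemmaKStNormal} only gives $\wh{\Box_{g_b,1}}(0)-\wh{\Box_{\ubar g,1}}(0)\in\rho^3\Diffb^2$, so with $v=r^2\vect\in\Hbext^{\infty,-5/2-}$ you get $e\in\Hbext^{\infty,1/2-}$, \emph{not} $\Hbext^{\infty,3/2-}$. But the Fredholm range in Theorem~\ref{Thm1} is $\ell\in(-\tfrac32,-\tfrac12)$, i.e.\ target weight in $(\tfrac12,\tfrac32)$; your $e$ sits just below this, so you cannot conclude $w\in\Hbext^{\infty,-1/2-}$ from orthogonality to $\omega_{b,s0}^*$ alone---you would only get $w\in\Hbext^{\infty,-3/2-}$. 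The same deficit appears in your symmetric gradient step: $\delta_{g_b}^*-\delta_{\ubar g}^*\in\rho^2\Diffb^1$ applied to $\chi v$ lands in $\Hbext^{\infty,-1/2-}$, not $\Hbext^{\infty,1/2-}$ as you wrote; combined with $\delta_{g_b}^*w\in\rho\Diffb^1\cdot\Hbext^{\infty,-3/2-}=\Hbext^{\infty,-1/2-}$, you fall one order short of the claimed $\Hbext^{\infty,1/2-}$.

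The missing idea is that $r^2\vect$ is Killing for \emph{every Schwarzschild metric} $g_{(\bhm,0)}$ (it is the $g_{(\bhm,0)}$-dual of a rotation vector field), not just for Minkowski; you note this at $b=b_0$ but do not use it for general $b$. The paper therefore compares to Schwarzschild rather than Minkowski: writing $b=(\bhm,\bha)$, one has $\wh{\Box_{g_{(\bhm,0)}}}(0)(r^2\vect)=0$ exactly, and then $\wh{\Box_{g_{(\bhm,\bha)}}}(0)-\wh{\Box_{g_{(\bhm,0)}}}(0)\in\rho^4\Diffb^2$ by~\eqref{EqKStBoxLeading}, which yields $e\in\Hbext^{\infty,3/2-}$ and hence $w\in\Hbext^{\infty,-1/2-}$ as needed. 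Likewise $\delta_{g_{(\bhm,0)}}^*(r^2\vect)=0$ and $\delta_{g_{(\bhm,\bha)}}^*-\delta_{g_{(\bhm,0)}}^*\in\rho^3\CI$ by~\eqref{EqKStSymmGradLeading} give the required $\Hbext^{\infty,1/2-}$ for the symmetric gradient. For the dual state the paper in fact only obtains $w^*\in\Hbsupp^{-\infty,-3/2-}$ (giving up one order, as it notes), which suffices since the dual claim is only $\delta_{g_b}^*\omega_{b,v1}^*(\vect)\in\Hbsupp^{-\infty,-1/2-}$.
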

\begin{proof}
  This follows again from a normal operator argument. Indeed, let $\vect\in\vect_1$ and put $v=r^2\vect\in\Hbext^{\infty,-5/2-}$, which on Schwarzschild spacetimes is a 1-form dual to a rotation vector field and thus lies in $\ker\delta_{g_{b_0}}^*\subset\ker\Box_{g_{b_0}}$. On Kerr spacetimes, we need to correct it: fixing a cutoff $\chi$ as in the previous proof, and writing $b=(\bhm,\bha)$, we have
  \begin{align*}
    e := \wh{\Box_{g_{(\bhm,\bha)}}}(0)(\chi v) &= \chi\wh{\Box_{g_{(\bhm,0)}}}(0)v + [\wh{\Box_{g_{(\bhm,0)}}}(0),\chi]v + \bigl(\wh{\Box_{g_{(\bhm,\bha)}}}(0)-\wh{\Box_{g_{(\bhm,0)}}}(0)\bigr)(\chi v) \\
      &\in 0 + \Hbext^{\infty,\infty} + \Hbext^{\infty,3/2-},
  \end{align*}
  where we use that the operator in the third summand lies in $\rho^4\Diffb^2$ by~\eqref{EqKStBoxLeading}. Since $\ker\wh{\Box_{g_b}}(0)^*\cap\Hbsupp^{-\infty,-3/2+}=\la\omega_{b,s 0}^*\ra$ consists of $\delta$-distributions with support disjoint from $e$, we can solve away the error, $-e=\wh{\Box_{g_b}}(0)w$, with $w\in\Hbext^{\infty,-1/2-}$; we then put $\omega_{b,v 1}(\vect):=\chi v+w$. Its symmetric gradient is
  \begin{align*}
    \delta_{g_b}^*\omega_{b,v 1}(\vect) &= [\delta_{g_b}^*,\chi]r^2\vect + \chi(\delta_{g_{(\bhm,\bha)}}^*-\delta_{g_{(\bhm,0)}}^*)r^2\vect + \delta_{g_b}^*w \\
      &\in \Hbext^{\infty,\infty} + \Hbext^{\infty,1/2-} + \Hbext^{\infty,1/2-},
  \end{align*}
  where for the second term we used~\eqref{EqKStSymmGradLeading}.

  For the dual state, we argue similarly; the error term $\wh{\Box_{g_b}}(0)^*(\chi v)\in\Hbsupp^{-\infty,3/2-}\subset\Hbsupp^{-\infty,1/2-}$ can now be solved away by $\wh{\Box_{g_b}}(0)^*w$, $w\in\Hbsupp^{-\infty,-3/2-}$, in view of the triviality of $\ker\wh{\Box_{g_b}}(0)\cap\Hbext^{\infty,-1/2+}$ proved in Theorem~\ref{Thm1}. (Note that we give up one order of decay here compared to the construction of $\omega_{b,v 1}(\vect)$.) The explicit form of $\omega_{b_0,v 1}^*$ in~\eqref{Eq10Genv1} can be verified by a direct calculation.
\end{proof}

Another useful family of 1-forms is the following; it plays a technical role in the sequel.
\begin{lemma}
\label{Lemma10Gens12}
  There exist continuous families
  \[
    b\mapsto\omega_{b,s 1}^{(1)}(\scal) \in \ker\wh{\Box_{g_b}}(0) \cap \Hbext^{\infty,-5/2-}, \quad
    b\mapsto\omega_{b,s 1}^{(1)*}(\scal) \in \ker\wh{\Box_{g_b}}(0)^* \cap \Hbsupp^{-\infty,-5/2-},
  \]
  depending linearly on $\scal\in\scal_1$, with $b$ near $b_0$, with specified leading order term:
  \[
    \omega_{b,s 1}^{(1)}(\scal)-r\scal\,d t_{\chi_0}\in\Hbext^{\infty,-3/2-},\quad
    \omega_{b,s 1}^{(1)*}(\scal)-\chi r\scal\,d t_{\chi_0}\in\Hbsupp^{-\infty,-3/2-},
  \]
  with $\chi\in\CI$, $\chi=0$ for $r\leq 3\bhm_0$, $\chi=1$ for $r\geq 4\bhm_0$. Moreover, $\omega_{b_0,s 1}^{(1)}=r(\mu\,d t_0-d r)\scal$.
\end{lemma}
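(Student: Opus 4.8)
\textbf{Proof proposal for Lemma~\ref{Lemma10Gens12}.}
The plan is to follow the same normal operator / perturbation scheme already used in Propositions~\ref{Prop00Grow}, \ref{Prop10Grow}, and \ref{Prop10Genv1}, but now correcting the \emph{growing} candidate leading term $r\scal\,d t_{\chi_0}$ rather than a bounded one. First I would observe that $d t_{\chi_0}$ agrees with $d t_0$ modulo a smooth scattering 1-form, and that $r\scal$ (with $\scal\in\scal_1$) is exactly the resonant state of the scalar Euclidean Laplacian at the indicial root $\lambda=i$, as computed in the proof of Proposition~\ref{Prop00Grow}. Hence the normal operator of $\wh{\Box_{g_b,1}}(0)$ — which by Lemma~\ref{LemmaKStNormal} is the Minkowski 1-form wave operator at zero energy, i.e.\ $-\Delta\otimes 1_{4\times 4}$ in the $d t_0,d x^i$ trivialization — annihilates $r\scal\,d t_0$ up to two orders of decay. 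Concretely, fixing the cutoff $\chi$ (identically $1$ near $\pa_+X$, vanishing for $r\le 3\bhm_0$) and setting $v:=r\scal\,d t_{\chi_0}$, one gets
\[
  e := \wh{\Box_{g_b}}(0)(\chi v) \in \Hbext^{\infty,1/2-},
\]
since the commutator $[\wh{\Box_{g_b}}(0),\chi]v$ is compactly supported, the difference $\wh{\Box_{g_b}}(0)-\wh{\Box_{\ubar g}}(0)\in\rho^3\Diffb^2$ applied to the $\cO(r)$ quantity $\chi v$ lands in $\Hbext^{\infty,1/2-}$, and the normal operator kills the leading term of $v$ outright. The support of $e$ is disjoint from that of the dual zero mode $\omega_{b,s 0}^*$ (a $\delta$-distribution at the event horizon), so $e$ is $L^2$-orthogonal to $\ker\wh{\Box_{g_b}}(0)^*\cap\Hbsupp^{-\infty,-1/2+}$, which is trivial by Theorem~\ref{Thm1}; hence one can solve $\wh{\Box_{g_b}}(0)w=-e$ with $w\in\Hbext^{\infty,-3/2-}$, and set $\omega_{b,s 1}^{(1)}(\scal):=\chi v+w$. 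Linearity in $\scal$ and continuity in $b$ follow from this construction (the solution operator for $\wh{\Box_{g_b}}(0)$ on these spaces is uniform in $b$ near $b_0$, as in the proof of Theorem~\ref{ThmOp}).

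For the dual family I would argue symmetrically, using instead the triviality of $\ker\wh{\Box_{g_b}}(0)\cap\Hbext^{\infty,-1/2+}=\la\omega_{b,s 0}\ra$ at the next weight threshold — wait, more precisely: the error $\wh{\Box_{g_b}}(0)^*(\chi v)\in\Hbsupp^{-\infty,1/2-}$ must be checked to be orthogonal to $\ker\wh{\Box_{g_b}}(0)\cap\Hbext^{\infty,-1/2+}$. Since the latter is spanned by $\omega_{b,s 0}$, which is of scalar type $l=0$ at $b=b_0$, while $v$ is of scalar type $l=1$, the pairing vanishes at $b=b_0$ by orthogonality of distinct spherical harmonics, and the error can be solved away by $w\in\Hbsupp^{-\infty,-3/2-}$, giving $\omega_{b,s 1}^{(1)*}(\scal):=\chi v+w$; continuity in $b$ then propagates this. (As in Proposition~\ref{Prop10Genv1}, the dual construction costs one order of decay relative to the primal one, but here both are already stated at weight $-\tfrac52-$, so there is slack.) The explicit Schwarzschild expression $\omega_{b_0,s 1}^{(1)}=r(\mu\,d t_0-d r)\scal$ I would verify by a direct ODE computation: one checks that $\wh{\Box_{g_{b_0},1}}(0)$ applied to this exact 1-form vanishes in $r>2\bhm_0$ and that the 1-form is smooth across $r=2\bhm_0$, using the identity $\mu\,d t_0-d r = \mu\,d t$ (from $d t_0=d t+\mu^{-1}d r$) which shows $r(\mu\,d t_0-d r)\scal = r\mu\scal\,d t$, a clean separated expression on the static patch; the Fredholm/uniqueness statements above then force this to be the member of the constructed family with the prescribed leading term (note $r\mu\,d t = r\,d t + \cO(1)$, and $d t\equiv d t_{\chi_0}$ for large $r$, so its leading term is indeed $r\scal\,d t_{\chi_0}$).

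The main obstacle I anticipate is \emph{bookkeeping the decay orders precisely enough}: one must confirm that the normal operator really buys two full orders (so $e\in\Hbext^{\infty,1/2-}$ and not merely $\Hbext^{\infty,-1/2-}$), which requires knowing that $r\scal\,d t_0$ is annihilated by the \emph{full} Minkowski 1-form operator $\wh{\Box_{\ubar g,1}}(0)$ and not just its scalar principal part — this is where the parallelism of $d t_0$ for $\ubar g$ (used in Lemma~\ref{LemmaKStNormal}) enters, reducing the 1-form computation to the scalar one for the coefficient $r\scal$, which is exactly the $\lambda=i$, $l=1$ resonant state from Proposition~\ref{Prop00Grow}. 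A secondary subtlety, should one want sharper mapping properties (e.g.\ for $\delta_{g_b}^*\omega_{b,s 1}^{(1)}$, not asked for here), would be tracking the symmetric gradient as in Proposition~\ref{Prop10Genv1}, but since the lemma only asserts membership in the kernel with a prescribed leading term, this is not needed. Everything else is a routine instance of the normal-operator correction argument already deployed several times in \S\ref{S0}--\S\ref{S1}.
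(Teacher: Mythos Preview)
Your overall approach matches the paper's: correct the growing candidate $\chi r\scal\,d t_{\chi_0}$ by solving away the error via the normal operator scheme, with the key input that $\wh{\Box_{\ubar g,1}}(0)$ annihilates $x^i\,d t$. The primal construction and the explicit Schwarzschild verification are fine. There are, however, two slips in identifying the obstruction spaces.

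In the primal case you assert that $\ker\wh{\Box_{g_b}}(0)^*\cap\Hbsupp^{-\infty,-1/2+}$ is ``trivial by Theorem~\ref{Thm1}''. This is false: $\omega_{b,s 0}^*=\delta(r-r_b)\,d r$ is compactly supported and hence lies in $\Hbsupp^{-\infty,\ell'}$ for \emph{every} $\ell'$, so this cokernel is still $\langle\omega_{b,s 0}^*\rangle$ (as the paper states explicitly). Your support-disjointness observation already supplies the needed orthogonality, so this is only cosmetic; but the ``trivial'' clause should be deleted.

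The dual case has a genuine gap. You claim $\ker\wh{\Box_{g_b}}(0)\cap\Hbext^{\infty,-1/2+}=\langle\omega_{b,s 0}\rangle$, but $\omega_{b,s 0}$ has \emph{exactly} $r^{-1}$ decay: it lies in $\Hbext^{\infty,-1/2-}$ but not in $\Hbext^{\infty,-1/2+}$. Hence this kernel is \emph{trivial}, and the paper's dual argument is one sentence: there is no obstruction, so one solves $\wh{\Box_{g_b}}(0)^*w=-e$ with $w\in\Hbsupp^{-\infty,-3/2-}$ directly. Your workaround --- type orthogonality at $b_0$, then ``continuity in $b$ propagates this'' --- is not valid as written: vanishing of a pairing at a single parameter value is a closed condition, not an open one, and if the obstruction space were genuinely nontrivial with nonzero pairing for $b\neq b_0$, solvability would simply fail there. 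Replace that paragraph with the observation that the obstruction space is empty, and the proof goes through.
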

\begin{proof}
  The key is that $\wh{\Box_{\ubar g}}(0)$ annihilates $x^i\,d t$. Since $r\scal\,d t_{\chi_0}\in\Hbext^{\infty,-5/2-}$, we thus have
  \[
    \wh{\Box_{g_b}}(0)(\chi r\scal\,d t_{\chi_0}) \in \Hbext^{\infty,1/2-},
  \]
  and this is orthogonal (by support considerations) to $\ker\wh{\Box_{g_b}}(0)^*\cap\Hbsupp^{-\infty,-1/2+}=\la\omega_{b,s 0}^*\ra$, hence can be written as $\wh{\Box_{g_b}}(0)w$ with $w\in\Hbext^{\infty,-3/2-}$. Then $\omega_{b,s 1}^{(1)}(\scal):=\chi r\scal\,d t_{\chi_0}+w$ is the desired 1-form. The explicit expression on Schwarzschild spacetimes is obtained by a direct calculation following these steps.

  For the dual states, the error term is now $\wh{\Box_{g_b}}(0)^*(\chi r\scal\,d t_{\chi_0})\in\Hbsupp^{-\infty,1/2-}$; but since $\ker\wh{\Box_{g_b}}(0)\cap\Hbext^{\infty,-1/2+}=0$, this error term can be solved away as claimed.
\end{proof}

For later use, we also record the leading order terms of the symmetric gradient,
\begin{equation}
\label{Eq10Gens12SymGrad}
  \delta_{g_b}^*\omega_{b,s 1}^{(1)}(\scal) = d t_{\chi_0} \otimes_s d(r\scal) + \Hbext^{\infty,-1/2-}.
\end{equation}
Indeed, we can replace $g_b$ by $\ubar g$ and $\omega_{b,s 1}^{(1)}(\scal)$ by $r\scal\,d t_{\chi_0}$ modulo error terms in $\Hbext^{\infty,-1/2-}$; the calculation then becomes straightforward.

Lastly, we discuss generalized zero modes. As a simple instance of degeneracy/non-degeneracy considerations in~\S\ref{SL}, we prove:
\begin{lemma}
\label{Lemma10NoLin}
  For $b$ close to $b_0$, there does not exist a 1-form $\omega=t_*\omega_1+\omega_0$ with $\omega_0,\omega_1\in\Hbext^{\infty,-3/2+}$ and $\omega_1\neq 0$ so that $\Box_{g_b}\omega=0$.
\end{lemma}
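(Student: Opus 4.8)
\textbf{Proof plan for Lemma~\ref{Lemma10NoLin}.}

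The plan is to substitute the ansatz $\omega = t_*\omega_1 + \omega_0$ into $\Box_{g_b,1}\omega = 0$ and extract a hierarchy of stationary equations, as is standard for generalized zero modes. Recall from \eqref{EqVKOps} that $\Box_{g_b,1} = |d t_{\chi_0}|_{G_b}^2 D_{t_{\chi_0}}^2 + \wh{\Box_{g_b,1}}(0) + Q_1 D_{t_{\chi_0}}$, where the spectral parameter is taken with respect to $t_{\bhm,*}$; since $t_*$ and $t_{\bhm,*}$ differ by a stationary function (of size $\cO(\log r)$), and since $Q_1 \in \rho^2\Diffsc^1$, $|d t_{\bhm,*}|_{G_b}^2 \in \rho^2\CI$, the key point is that $\Box_{g_b,1}(t_* \omega_1) = t_* \Box_{g_b,1}\omega_1 + [\Box_{g_b,1}, t_*]\omega_1$ where the commutator is a first-order-in-$\pa_{t_*}$ operator applied to $\omega_1$, hence stationary. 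First I would collect the $t_*^1$ coefficient: this forces $\wh{\Box_{g_b,1}}(\text{the relevant conjugation of } \Box_{g_b,1} \text{ at }\sigma=0)\,\omega_1 = 0$, i.e.\ $\omega_1 \in \ker\wh{\Box_{g_b}}(0)$ after accounting for the precise relation between $t_*$ and $t_{\bhm,*}$. Since $\omega_1 \in \Hbext^{\infty,-3/2+} \subset \Hbext^{\infty,-1/2-}$, Theorem~\ref{Thm1} (specifically \eqref{Eq1s0Ker}) gives $\omega_1 \in \la\omega_{b,s 0}\ra$; scaling, we may assume $\omega_1 = \omega_{b,s 0}$ provided we can rule out $\omega_1 \neq 0$, which is exactly what we want to contradict.

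Next I would collect the $t_*^0$ coefficient. This yields an equation of the schematic form $\wh{\Box_{g_b}}(0)\omega_0 = -R\,\omega_1$, where $R$ is the stationary first-order operator arising from the commutator $[\Box_{g_b,1}, t_*]$ together with the cross terms from $t_* \neq t_{\bhm,*}$; concretely $R$ involves the $D_{t_{\chi_0}}$-linear part of $\Box_{g_b,1}$ evaluated on the stationary form $\omega_1$. The solvability obstruction is that $-R\,\omega_1$ must be orthogonal to $\ker\wh{\Box_{g_b}}(0)^*$ on the dual space $\Hbsupp^{-\infty,\ell'}$, where $\ell'$ is dual to the weight for which $\omega_0 \in \Hbext^{\infty,-3/2+}$ is sought. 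By \eqref{Eq1s0KerAdj}, this kernel is spanned by $\omega_{b,s 0}^* = \delta(r - r_{(\bhm,\bha)})\,d r$, supported at the event horizon. The crux of the argument is the computation of the pairing $\la R\,\omega_{b,s 0}^{\phantom *}, \omega_{b,s 0}^*\ra$, which, because $\omega_{b,s 0}^*$ is a $\delta$-distribution at $r = r_b$ in the $dr$-component, reduces to evaluating the $dr$-component of $R\,\omega_{b,s 0}$ at $r = r_b$ against $d r$. I expect this is most cleanly done in the Schwarzschild case $b = b_0$, using the explicit forms $\omega_{b_0,s 0} = r^{-1}(d t_0 - d r)$ and the explicit $\Box_{g_{b_0},1}$ in the splitting \eqref{Eq1Split}, \eqref{Eq1delta}, \eqref{Eq1Spatiald}; the analogous statement for nearby $b$ then follows by continuity of the pairing in $b$.

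The main obstacle — and the heart of the lemma — is showing that this pairing is \emph{nonzero}, which is the exact analogue of \eqref{Eq10NoDualdt} in Proposition~\ref{Prop10Grow}: there, the obstruction $\la\wh{\Box_{g_{b_0}}}(0)^*(\chi\pa_t^\flat), \omega_{b_0,s 0}\ra = 2$ was the statement that $\pa_t^\flat$ asymptotics cannot be corrected to a dual zero mode, and here the mechanism is dual: a linearly-growing-in-$t_*$ mode would require solving $\wh{\Box_{g_b}}(0)\omega_0 = -R\,\omega_{b,s 0}$, and the nonvanishing of $\la R\,\omega_{b,s 0}, \omega_{b,s 0}^*\ra$ obstructs this. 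In fact I expect this pairing to equal (a nonzero multiple of) the same quantity appearing in \eqref{Eq10NoDualdt}, since $R$ is essentially the $\sigma$-derivative $\pa_\sigma \wh{\Box_{g_b}}(\sigma)|_{\sigma=0}$ up to the $t_*$-vs-$t_{\bhm,*}$ discrepancy, and the relevant inner product is exactly the $g_{b_0}$-induced one used in \eqref{Eq10NoDualdt} — this is the ``simple linear algebra example'' with $\la\pa_\sigma\hat L(0)h, h^*\ra \neq 0$ described in \S\ref{SssIIK}. Once this nonvanishing is in hand, the obstruction is violated unless $\omega_1 = 0$, contradicting the hypothesis; and the perturbative stability in $b$ follows since the pairing depends continuously on $b$ and is nonzero at $b_0$. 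I would also remark that the weight conditions $\omega_0, \omega_1 \in \Hbext^{\infty,-3/2+}$ are used precisely to place $\omega_1$ in the regime where \eqref{Eq1s0Ker} pins it down to a scalar multiple of $\omega_{b,s 0}$, and to ensure that the solvability obstruction for $\omega_0$ is governed exactly by the one-dimensional cokernel $\la\omega_{b,s 0}^*\ra$ and no more.
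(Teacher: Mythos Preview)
Your plan matches the paper's proof: extract $\wh{\Box_{g_b}}(0)\omega_1=0$ from the $t_*$-coefficient, identify $\omega_1=\omega_{b,s 0}$ via Theorem~\ref{Thm1}, then show the stationary equation $\wh{\Box_{g_b}}(0)\omega_0=-[\Box_{g_b},t_*]\omega_{b,s 0}$ is obstructed by the nonvanishing pairing $\la[\Box_{g_{b_0}},t_*]\omega_{b_0,s 0},\omega_{b_0,s 0}^*\ra=2$ (computed by passing to $t_0$ and evaluating at $r=2\bhm_0$), with continuity in $b$ handling the Kerr case.

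Two minor corrections. First, the inclusion $\Hbext^{\infty,-3/2+}\subset\Hbext^{\infty,-1/2-}$ is backwards (larger weight means more decay); what you actually need is that $\omega_1\in\Hbext^{\infty,-3/2+\eps}$ for some $\eps>0$ places the weight in $(-\tfrac32,-\tfrac12)$, so Theorem~\ref{Thm1} applies directly. Second, your conjectured identification with \eqref{Eq10NoDualdt} is not correct: that pairing involves $\wh{\Box_{g_{b_0}}}(0)^*(\chi\pa_t^\flat)$ against $\omega_{b_0,s 0}$, whereas here the relevant pairing is $\la i\pa_\sigma\wh{\Box_{g_{b_0}}}(0)\omega_{b_0,s 0},\omega_{b_0,s 0}^*\ra$ --- structurally different objects that both happen to evaluate to $2$. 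You should simply compute the latter directly (using $[\Box_{g_{b_0}},t_0]\omega_{b_0,s 0}=-2\bhm_0 r^{-3}d t_0$) rather than rely on the analogy.
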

\begin{proof}
  Consider first the Schwarzschild case. Given $\omega$ of this type solving
  \[
    0=\Box_{g_{b_0}}\omega = t_*\wh{\Box_{g_{b_0}}}(0)\omega_1 + \bigl([\Box_{g_{b_0}},t_*]\omega_1+\wh{\Box_{g_{b_0}}}(0)\omega_0\bigr),
  \]
  we deduce that $\wh{\Box_{g_{b_0}}}(0)\omega_1=0$, hence (after rescaling by a non-zero constant) $\omega_1=\omega_{b_0,s 0}$. Since
  \begin{equation}
  \label{Eq10NoLinComm}
    [\Box_{g_{b_0}},t_*]\ftrans(0)=i\pa_\sigma\wh{\Box_{g_{b_0}}}(0)\in\rho^2\Diffb^1
  \end{equation}
  by Lemma~\ref{LemmaOpLinFT}, we conclude that
  \[
    \wh{\Box_{g_{b_0}}}(0)\omega_0 = -[\Box_{g_{b_0}},t_*]\omega_1 \in \Hbext^{\infty,1/2-}.
  \]
  The existence of a 1-form $\omega_0\in\Hbext^{\infty,\ell}$ for some $\ell\in\R$ satisfying this equation requires that the right hand side be orthogonal to $\ker\wh{\Box_{g_{b_0}}}(0)\cap\Hbsupp^{-\infty,-\ell-2}\ni\omega_{b_0,s 0}^*$. However,
  \begin{equation}
  \label{Eq10NoLinPair}
    \la[\Box_{g_{b_0}},t_*]\omega_{b_0,s 0},\omega_{b_0,s 0}^*\ra = 2 \neq 0;
  \end{equation}
  indeed, this holds for $t_0$ in place of $t_*$ (using $[\Box_{g_{b_0}},t_0]\omega_{b_0,s 0}=-2\bhm_0 r^{-3}d t_0$), and changing back from $t_0$ to $t_*$ gives a vanishing correction since for $f(r)=t_0-t_*=2 r+4\bhm_0\log(r)$, the 1-form $[\Box_{g_{b_0}},f(r)]\omega_{b_0,s 0} = \wh{\Box_{g_{b_0}}}(0)(f(r)\omega_{b_0,s 0})$ is orthogonal to $\omega_{b_0,s 0}^*$ (which annihilates the range of $\wh{\Box_{g_{b_0}}}(0)$ acting on $\Hbext^{\infty,-3/2-}$).
\end{proof}

By continuity in the parameter $b$, the pairing in~\eqref{Eq10NoLinPair} remains non-zero for $b$ near $b_0$:
\begin{equation}
\label{Eq10NoLinPairKerr}
  \la[\Box_{g_b},t_*]\omega_{b,s 0},\omega_{b,s 0}^*\ra \neq 0,\quad b\text{ near }b_0.
\end{equation}
The relationship between the (non)degeneracy of such pairings and the (non)existence of solutions which grow linearly in time will play a major role in~\S\ref{SL}.

On the other hand, there do exist linearly growing generalized modes with \emph{less} restrictive decay conditions at infinity, as well as linearly growing generalized \emph{dual} zero modes. This includes the `asymptotic Lorentz boosts' $\hat\omega_{b,s 1}(\scal)$ below:
\begin{prop}
\label{Prop10GenSc}
  There exist continuous families
  \begin{alignat*}{2}
    b &\mapsto \hat\omega_{b,s 0}^* &\,\in\,& \ker\Box_{g_b} \cap \Poly^1(t_*)\Hbsupp^{-\infty,-3/2-}, \\
    b &\mapsto \hat\omega_{b,s 1}(\scal) &\,\in\,& \ker\Box_{g_b} \cap \Poly^1(t_*)\Hbext^{\infty,-5/2-}, \\
    b &\mapsto \hat\omega_{b,s 1}^*(\scal) &\,\in\,& \ker\Box_{g_b} \cap \Poly^1(t_*)\Hbsupp^{-\infty,-5/2-},
  \end{alignat*}
  with linear dependence on $\scal\in\scal_1$, which in addition satisfy
  \begin{equation}
  \label{Eq10GenScGoodGrad}
  \begin{split}
    \delta_{g_b}^*\hat\omega_{b,s 1}(\scal)&\in\Poly^1(t_*)\Hbext^{\infty,-1/2-}, \\
    \sfG_{g_b}\delta_{g_b}^*\hat\omega_{b,s 0}^*,\ \sfG_{g_b}\delta_{g_b}^*\hat\omega_{b,s 1}^*(\scal) &\in \Poly^1(t_*)\Hbsupp^{-\infty,-1/2-}.
  \end{split}
  \end{equation}
  At $b=b_0$, $\hat\omega_{b,s 1}(\scal)$ takes the value
  \begin{align}
  \label{Eq10GenSc1}
    \hat\omega_{b_0,s 1}(\scal)&=t_0\omega_{b_0,s 1}(\scal) + \breve\omega_{b_0,s 1}(\scal), \\
    \begin{split}
      &\breve\omega_{b_0,s 1}(\scal)=-r\omega_{b_0,s 1}(\scal)-\omega_{b_0,s 1}^{(1)}(\scal)+\bhm_0\scal(d t_0+d r) \\
      &\hspace{9em} +d\Bigl(2\bhm_0(2\bhm_0+(\bhm_0-r)\log\bigl(\tfrac{r}{\bhm_0}\bigr)\bigr)\scal\Bigr).
    \end{split} \nonumber
  \end{align}
\end{prop}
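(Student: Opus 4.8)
The plan is to construct the three families $\hat\omega^*_{b,s0}$, $\hat\omega_{b,s1}(\scal)$, $\hat\omega^*_{b,s1}(\scal)$ by the same normal-operator bootstrap used throughout \S\ref{S1}, but now \emph{one order up in the polynomial filtration in $t_*$}. The key observation is the commutator identity from Lemma~\ref{LemmaOpLinFT}, $[\Box_{g_b},t_*]\ftrans(0) = i\pa_\sigma\wh{\Box_{g_b}}(0) \in \rho^2\Diffsc^1$ (so it gains two orders of decay), together with $[\Box_{g_b},t_*^2]\ftrans(0) = -\pa_\sigma^2\wh{\Box_{g_b}}(0) + 2 i t_*\pa_\sigma\wh{\Box_{g_b}}(0)$. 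Seeking $\hat\omega = t_*\omega_1 + \omega_0$, the equation $\Box_{g_b}\hat\omega = 0$ becomes the pair $\wh{\Box_{g_b}}(0)\omega_1 = 0$ and $\wh{\Box_{g_b}}(0)\omega_0 = -[\Box_{g_b},t_*]\ftrans(0)\,\omega_1 = -i\pa_\sigma\wh{\Box_{g_b}}(0)\omega_1$. So the plan is: pick $\omega_1$ from the zero-mode spaces already produced in Propositions~\ref{Prop10Grow} and \ref{Prop10Genv1} and Lemma~\ref{Lemma10Gens12} (namely $\omega_{b,s0}$, $\omega_{b,s1}(\scal)$, and their dual analogues), check that the resulting right-hand side lies in a weighted space on which $\wh{\Box_{g_b}}(0)$ is invertible modulo the known finite-dimensional obstructions, and solve for $\omega_0$ using Theorems~\ref{Thm0}--\ref{Thm1}.

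Concretely: for $\hat\omega_{b,s1}(\scal)$ take $\omega_1 = \omega_{b,s1}(\scal) \in \Hbext^{\infty,-3/2-}$; then $i\pa_\sigma\wh{\Box_{g_b}}(0)\omega_1 \in \Hbext^{\infty,1/2-}$, which has support disjoint from $\omega_{b,s0}^*$ and is moreover of scalar type $l=1$, hence orthogonal to $\langle\omega_{b,s0}^*\rangle$; so it can be solved away with $\omega_0 \in \Hbext^{\infty,-1/2-}$, but one must be careful: at $b=b_0$ the scalar-$l=1$ dual modes $\omega_{b_0,s1}^*(\scal)$ \emph{do} obstruct unless the pairing vanishes, so one actually lands in $\Hbext^{\infty,-5/2-}$ only after adding a multiple of $\omega_{b,s1}^{(1)}(\scal)$ (cf.\ Lemma~\ref{Lemma10Gens12}) to absorb the $r\scal\,d t_{\chi_0}$ asymptotics — this is exactly why the explicit formula~\eqref{Eq10GenSc1} contains $-r\omega_{b_0,s1}(\scal) - \omega_{b_0,s1}^{(1)}(\scal)$ plus the correction terms. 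For $\hat\omega_{b,s0}^*$ take $\omega_1 = \omega_{b,s0}^*$ (the $\delta$-distribution at the horizon); the error $i\pa_\sigma\wh{\Box_{g_b}}(0)^*\omega_{b,s0}^*$ is again supported at the horizon but now one uses the triviality of $\ker\wh{\Box_{g_b}}(0)\cap\Hbext^{\infty,-1/2+}$ modulo $\langle\omega_{b,s0}\rangle$ together with the \emph{nonvanishing} of~\eqref{Eq10NoLinPairKerr} to see that the obstruction is exactly the coefficient forcing the genuine linear growth — i.e.\ the linearly growing dual mode exists precisely because~\eqref{Eq10NoLinPairKerr} is nonzero, which is the mirror image of Lemma~\ref{Lemma10NoLin}. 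For $\hat\omega_{b,s1}^*(\scal)$ take $\omega_1 = \omega_{b,s1}^*(\scal)$ and argue analogously, giving up one order of decay as in the proof of Proposition~\ref{Prop10Genv1}.

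The gradient bounds~\eqref{Eq10GenScGoodGrad} are then obtained by differentiating the construction: $\delta_{g_b}^*\hat\omega_{b,s1}(\scal) = t_*\,\delta_{g_b}^*\omega_1 + (d t_*\otimes_s\omega_1 + \delta_{g_b}^*\omega_0)$; using $\delta_{g_b}^*\omega_{b,s1}(\scal) = \delta_{g_b}^* d u_{b,s1}(\scal)$ and the Hessian computation, plus the leading-order identity~\eqref{Eq10Gens12SymGrad} to cancel the $d t_*\otimes_s(r\scal\,d(\cdot))$ piece against the $\omega_{b,s1}^{(1)}(\scal)$ correction, one checks the result lies in $\Poly^1(t_*)\Hbext^{\infty,-1/2-}$; similarly for the dual states using $\sfG_{g_b}\delta_{g_b}^*$ in place of $\delta_{g_b}^*$. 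Continuous dependence on $b$ is automatic from the construction (solving $\wh{\Box_{g_b}}(0)\omega_0 = -e_b$ with $e_b$ depending continuously on $b$ and $\wh{\Box_{g_b}}(0)$ uniformly invertible on the relevant complement). The main obstacle I expect is bookkeeping the exact weights and the finite-dimensional obstruction spaces at each stage — in particular making sure that after subtracting the $\omega_{b,s1}^{(1)}(\scal)$-type corrections the error term genuinely lands in the space where $\wh{\Box_{g_b}}(0)$ is surjective, and verifying the explicit Schwarzschild expression~\eqref{Eq10GenSc1} by a direct ODE computation; but conceptually everything is a one-step normal-operator argument identical in spirit to \S\ref{Ss10Gen}.
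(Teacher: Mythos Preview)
Your strategy is on the right track but two steps misfire. First, $[\Box_{g_b},t_*]\ftrans(0)=i\pa_\sigma\wh{\Box_{g_b}}(0)$ is not in $\rho^2\Diffsc^1$: Lemma~\ref{LemmaOpLinFT} gives $\pa_\sigma\wh{\Box_{g_b}}(0)=2\rho(\rho D_\rho+i)+Q'$ with $Q'\in\rho^2\Diffsc^1$, and the leading piece $2\rho(\rho D_\rho+i)$ lies only in $\rho\Diffb^1$. So the commutator gains one order of decay, not two; acting on $\omega_{b,s1}(\scal)\in\Hbext^{\infty,-3/2-}$ it produces a right-hand side only in $\Hbext^{\infty,-1/2-}$ (which is neither supported away from the horizon nor of pure scalar type $l=1$ when $b\neq b_0$). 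The paper's fix is twofold: the $-\omega_{b,s1}^{(1)}(\scal)$ subtraction is made \emph{first}, its role being to arrange the gradient condition~\eqref{EqL0Lins1ExistDel}; via the factorization $\Box_{g_b}=2\delta_{g_b}\sfG_{g_b}\delta_{g_b}^*$ this simultaneously pushes the residual right-hand side for $\breve\omega'_{b,s1}$ into $\Hbext^{\infty,1/2-}$. The obstruction to solving for $\breve\omega'_{b,s1}\in\Hbext^{\infty,-3/2-}$ is then $\ker\wh{\Box_{g_b}}(0)^*\cap\Hbsupp^{-\infty,-1/2+}=\langle\omega_{b,s0}^*\rangle$ (not $\omega_{b,s1}^*(\scal)$, which is $\cO(1)$ and hence does not lie in this space), and for $b\neq b_0$ the pairing need not vanish; one must therefore include a multiple $c_b\omega_{b,s0}$ in the leading term and choose $c_b$ using the nondegeneracy~\eqref{Eq10NoLinPairKerr}. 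You omit this. (At $b=b_0$ type orthogonality gives $c_{b_0}=0$.)

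Second, your logic for $\hat\omega_{b,s0}^*$ is backwards. You claim the dual mode ``exists precisely because~\eqref{Eq10NoLinPairKerr} is nonzero''; in fact the equation $\wh{\Box_{g_b}}(0)^*\breve\omega_{b,s0}^*=-[\Box_{g_b},t_*]\omega_{b,s0}^*$ (right-hand side a differentiated $\delta$-distribution at $r=r_b$) is solvable for $\breve\omega_{b,s0}^*\in\Hbsupp^{-\infty,-3/2-}$ unconditionally, because the relevant cokernel $\ker\wh{\Box_{g_b}}(0)\cap\Hbext^{\infty,-1/2+}$ is \emph{trivial} by Theorem~\ref{Thm1}. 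The pairing~\eqref{Eq10NoLinPairKerr} is the obstruction, in Lemma~\ref{Lemma10NoLin}, to constructing a linearly growing \emph{extendible} mode in the smaller space $\Poly^1(t_*)\Hbext^{\infty,-3/2+}$; the dual construction lives in supported spaces and sees a different (here empty) cokernel.
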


In $r\gg 1$, our construction gives $\hat\omega_{b,s 1}(\scal)=t(\omega_{b,s 1}(\scal)+\cA^1)-\omega_{b,s 1}^{(1)}(\scal)+\cA^{0-}=t(d x^i+\cA^1)-x^i\,d t+\cA^{0-}$ when $\scal=r^{-1}x^i$, so $\hat\omega_{b,s 1}(\scal)$ is indeed asymptotic to a Lorentz boost.

\begin{proof}[Proof of Proposition~\ref{Prop10GenSc}]
  \pfstep{Scalar type $l=0$ generalized dual modes.} The ansatz $\hat\omega_{b,s 0}^*=t_*\omega_{b,s 0}^*+\breve\omega_{b,s 0}^*$ gives an element of $\ker\Box_{g_b}$ iff $\wh{\Box_{g_b}}(0)^*\breve\omega_{b,s 0}^*=-[\Box_{g_b},t_*]\omega_{b,s 0}^*$; note that the right hand side is a differentiated $\delta$-distribution at $r=r_b$. This equation can be solved for $\breve\omega_{b,s 0}^*\in\Hbsupp^{-\infty,-3/2-}$ since $\ker\wh{\Box_{g_b}}(0)\cap\Hbext^{\infty,-1/2+}=0$ for $b$ near $b_0$ by Theorem~\ref{Thm1}. While $\breve\omega_{b,s 0}^*$ is unique modulo $\Omega^*:=\la\omega_{b,s 0}^*\ra\oplus\{\omega_{b,s 1}^*(\scal)\colon\scal\in\scal_1\}$, one can force it to be unique, and thus automatically continuous in $b$, by requiring that it be orthogonal to a collection $\eta_1,\ldots,\eta_4\in\CIdot(X;\wt\Tsc{}^*X)$ of 1-forms which are linearly independent functionals on $\Omega^*$.

  The membership in~\eqref{Eq10GenScGoodGrad} holds since $\delta_{g_b}^*\hat\omega_{b,s 0}^* = t_*\delta_{g_b}^*\omega_{b,s 0}^* + [\delta_{g_b}^*,t_*]\omega_{b,s 0}^* + \delta_{g_b}^*\breve\omega_{b,s 0}^*$, with the first summand a linearly growing differentiated $\delta$-distribution at $r=r_b$, and the last two summands lying in $\Hbsupp^{-\infty,-1/2-}$ by definition and using~\eqref{EqKStDivSymmGrad}.

  \pfstep{Scalar type $l=1$ generalized modes.} For better readability, we put
  \[
    \ft=t_{\chi_0}.
  \]
  Let $h_{b,s 0}:=\delta_{g_b}^*\omega_{b,s 0}$ and $h_{b,s 1}(\scal)=\delta_{g_b}^*\omega_{b,s 1}(\scal)$. Note that
  \begin{equation}
  \label{Eq10SymGradDecay}
    h_{b,s 0},\ h_{b,s 1}(\scal) \in \Hbext^{\infty,1/2-}(X;S^2\,\wt\Tsc{}^*X);
  \end{equation}
  for $h_{b,s 1}(\scal)$, this is due to fact that the normal operator of $\wh{\delta_{g_b}^*}(0)$ (which is $\wh{\delta_{\ubar g}^*}(0)$) annihilates the leading order term $d(r\scal)$ (which is the differential of a linear function on $\R^3$) of $\omega_{b,s 1}(\scal)$.
  
  Fixing $\scal\in\scal_1$, the ansatz
  \begin{equation}
  \label{EqL0Ansatz}
    \hat\omega_{b,s 1} = \ft(\omega_{b,s 1}(\scal)+c_b\omega_{b,s 0}) + \breve\omega_{b,s 1},
  \end{equation}
  with $c_b\in\R$ and $\breve\omega_{b,s 1}$ to be determined, then gives
  \[
    \hat h := \delta_{g_b}^*\hat\omega_{b,s 1} = \ft(h_{b,s 1}(\scal)+c_b h_{b,s 0}) + \bigl([\delta_{g_b}^*,\ft](\omega_{b,s 1}(\scal)+c_b\omega_{b,s 0}) + \delta_{g_b}^*\breve\omega_{b,s 1}\bigr).
  \]
  In view of~\eqref{Eq10SymGradDecay} and $t_*-\ft\in\cA^{-1}$, we therefore have $\hat\omega_{b,s 1}\in\ker\Box_{g_b}$, $\hat h\in\Poly^1(t_*)\Hbext^{\infty,-1/2-}$ provided the two conditions
  \begin{subequations}
  \begin{gather}
  \label{EqL0Lins1ExistBox}
    \wh{\Box_{g_b}}(0)\breve\omega_{b,s 1}=-[\Box_{g_b},\ft](\omega_{b,s 1}(\scal)+c_b\omega_{b,s 0}), \\
  \label{EqL0Lins1ExistDel}
    [\delta_{g_b}^*,\ft]\omega_{b,s 1}(\scal) + \delta_{g_b}^*\breve\omega_{b,s 1} \in \Hbext^{\infty,-1/2-}
  \end{gather}
  \end{subequations}
  are satisfied. (Note that $[\delta_{g_b}^*,\ft]\in\cA^0$ maps $\omega_{b,s 0}$ into $\Hbext^{\infty,-1/2-}$, which thus automatically has the required decay). We first arrange~\eqref{EqL0Lins1ExistDel} using the refined ansatz
  \begin{equation}
  \label{EqL0Ansatz2}
    \breve\omega_{b,s 1} = (-1)\omega_{b,s 1}^{(1)}(\scal) + \breve\omega_{b,s 1}',
  \end{equation}
  with $\breve\omega_{b,s 1}'\in\Hbext^{\infty,-3/2-}$ to be determined; the prefactor $(-1)$ is explained below. Note that~\eqref{EqL0Lins1ExistDel} is insensitive to the choice of $\breve\omega_{b,s 1}'$. Moreover, since $[\delta_{g_b}^*,\ft]\omega_{b,s 1}(\scal)$, $\delta_{g_b}^*\omega_{b,s 1}^{(1)}(\scal)\in\Hbext^{\infty,-3/2-}$ change by elements of $\Hbext^{\infty,-1/2-}$ when replacing $b$ by any other Kerr parameters such as $b=(0,0)$ (formally), we conclude that~\eqref{EqL0Lins1ExistDel} is a condition solely involving the leading order parts of all appearing operators and 1-forms; we thus merely need to compute
  \[
    [\delta_{\ubar g}^*,t]\bigl(d(r\scal)\bigr) = d t\otimes_s d(r\scal),
  \]
  which indeed agrees modulo $\rho\CI\subset\Hbext^{\infty,-1/2-}$ with $\delta_{g_{b_0}}^*\omega_{b_0,s 1}^{(1)}(\scal)$, see~\eqref{Eq10Gens12SymGrad}. (This is not a coincidence, but merely the fact that on Minkowski space, $\delta_{\ubar g}^*(t\,d x^i)=\delta_{\ubar g}^*(x^i\,d t)$, which is precisely the statement that the Lorentz boost $t\,d x^i-x^i\,d t$ is Killing.) Thus, \eqref{EqL0Lins1ExistDel} holds, and \eqref{EqL0Ansatz} is asymptotic to a Lorentz boost.

  Turning to equation~\eqref{EqL0Lins1ExistBox}, we write it using~\eqref{EqL0Ansatz2}, and expanding the commutator as $[\Box_{g_b},\ft]=[2\delta_{g_b}\sfG_{g_b},\ft]\delta_{g_b}^*+2\delta_{g_b}\sfG_{g_b}[\delta_{g_b}^*,\ft]$, as
  \begin{align*}
    \wh{\Box_{g_b}(0)}\breve\omega_{b,s 1}' &= -c_b[\Box_{g_b},\ft]\omega_{b,s 0} - [2\delta_{g_b}\sfG_{g_b},\ft]h_{b,s 1}(\scal) \\
      &\qquad - 2\delta_{g_b}\sfG_{g_b}\bigl([\delta_{g_b}^*,\ft]\omega_{b,s 1}(\scal)-\delta_{g_b}^*\omega_{b,s 1}^{(1)}(\scal)\bigr) \in \Hbext^{\infty,1/2-},
  \end{align*}
  where we used~\eqref{Eq10SymGradDecay} and~\eqref{EqL0Lins1ExistDel} to get the improved decay of the second and third term, respectively. But this can be solved for $\breve\omega_{b,s 1}'\in\Hbext^{\infty,-3/2-}$ iff the right hand side integrates to $0$ against $\omega_{b,s 0}^*$ (which spans $\ker\wh{\Box_{g_b}}(0)^*\cap\Hbsupp^{-\infty,-1/2+}$). In view of the non-degeneracy~\eqref{Eq10NoLinPairKerr}, this can be accomplished by a suitable choice of $c_b$, with $c_b$ continuous in $b$ and linear in $\scal\in\scal_1$. (In the Schwarzschild case $b=b_0$, we have
  \begin{equation}
  \label{EqL0cb0}
    c_{b_0}=0
  \end{equation}
  since the second and third summands in this equation are of scalar type $l=1$, while $\omega_{b_0,s 0}^*$ is of scalar type $l=0$, and 1-forms of different pure types are orthogonal.)

  Making this explicit for Schwarzschild metrics, let us work for computational simplicity with the function $\ft=t_0-r$; we make the ansatz $\hat\omega_{b_0,s 1}=\ft\omega_{b_0,s 1}(\scal)-\omega_{b_0,s 1}^{(1)}(\scal)+\breve\omega'_{b_0,s 1}$. Thus, we seek $\breve\omega_{b_0,s 1}'$ such that
  \[
    \wh{\Box_{g_{b_0}}}(0)\breve\omega_{b_0,s 1}' = -[\Box_{g_{b_0}},\ft]\omega_{b_0,s 1}(\scal) = -\tfrac{2\bhm_0}{r^2}\scal\,d t_0-\tfrac{2\bhm_0(r+\bhm_0)}{r^2}\sld\scal =: e.
  \]
  We can solve this in two steps: firstly, one can check that $e-\wh{\Box_{g_{b_0},1}}(0)(\bhm_0(d t_0+d r)\scal)$ is exact, and indeed equals $d f$, $f=\tfrac{2\bhm_0(\bhm_0-3 r)}{r^2}\scal$; secondly, we have
  \[
    \wh{\Box_{g_{b_0},0}}(0)u = f,\quad u=2\bhm_0\bigl(2\bhm_0+(\bhm_0-r)\log\bigl(\tfrac{r}{\bhm_0}\bigr)\bigr)\scal.
  \]
  Combining these calculations gives the expression in~\eqref{Eq10GenSc1}.

  \pfstep{Scalar type $l=1$ generalized dual modes.} The arguments are completely analogous, with the role of $\omega_{b,s 1}^{(1)}(\scal)$ now being played by $\omega_{b,s 1}^{(1)*}(\scal)$.
\end{proof}

\section{Mode stability of the Schwarzschild metric}
\label{SMS}

A crucial input for the spectral theory of the linearization $L_{g_{b_0}}$ of the gauge-fixed Einstein operator at the Schwarzschild metric, defined in equation~\eqref{EqOpLinOp}, is the mode analysis for the linearization of the Einstein equation itself. We carefully follow the arguments of Kodama--Ishibashi~\cite{KodamaIshibashiMaster} (which in turn build on \cite{KodamaIshibashiSetoBranes,KodamaSasakiPerturbation}) in the form presented in \cite[\S5]{HintzKNdSStability}. In particular, we will highlight the places where decay assumptions at infinity are used, and what asymptotics one obtains for the gauge potentials, i.e.\ the 1-forms whose symmetric gradients produce a given pure gauge perturbation.

Throughout this section, we take $\bhm=\bhm_0$ and
\[
  g = g_{(\bhm,0)},\quad t_*=t_{\bhm,*}.
\]
We work in the setting of~\S\ref{SsYM}, equipping $M^\circ$ with the Schwarzschild metric $g=g_{(\bhm,0)}$, written as $g=\wh g-r^2\slg$. Thus, the aspherical part $\wh X$ in~\eqref{EqYMProd} carries the Lorentzian metric $\wh g=\mu\,d t_0^2-2 d t_0\,d r$; in static coordinates on $\R_t\times(2\bhm,\infty)_r$, this means $\wh g=\mu\,d t^2-\mu^{-1}\,d r^2$. We shall moreover phrase the outgoing condition on modes from the conjugated perspective, cf.\ Remark~\ref{RmkOpOutgoing}.

\begin{thm}
\label{ThmMS}
  Let $\sigma\in\C$, $\Im\sigma\geq 0$, and suppose $\dot g$ is an outgoing mode solution of the linearized Einstein equation
  \begin{equation}
  \label{EqMSLinEq}
    D_g\Ric(\dot g)=0.
  \end{equation}
  Then there exist parameters $\dot\bhm\in\R$, $\dot\bha\in\R^3$, and an outgoing 1-form $\omega$ on $M$, such that
  \begin{equation}
  \label{EqMSLinDec}
    \dot g-\dot g_{(\bhm,0)}(\dot\bhm,\dot\bha)=\delta_g^*\omega.
  \end{equation}

  More precisely:
  \begin{enumerate}
  \item If $\sigma\neq 0$, suppose that $\dot g=e^{i\sigma t_*}\dot g_0$ with $\dot g_0\in\Hbext^{\infty,\ell}(X;S^2\,\wt{\Tsc^*}X)$ for some $\ell\in\R$. Then~\eqref{EqMSLinDec} holds with $(\dot\bhm,\dot\bha)=(0,0)$ and $\omega=e^{i\sigma t_*}\omega_0$, with $\omega_0\in\Hbext^{\infty,\ell'}(X;\wt{\Tsc^*}X)$ for some $\ell'\in\R$.
  \item If $\sigma=0$, and $\dot g\in\Hbext^{\infty,\ell}(X;S^2\,\wt\Tsc{}^*X)$, $\ell\in(-\tfrac32,-\half)$, is a stationary perturbation, we consider each part in the spherical harmonic decomposition of $\dot g$---which is of one of the types in~\eqref{EqYMSym2}---separately:
    \begin{enumerate}
    \item If $\dot g$ is a scalar perturbation with $l\geq 1$ or a vector perturbation with $l\geq 2$, then
    \[
      \dot g=\delta_g^*\omega,
    \]
    where $\omega\in\Hbext^{\infty,\ell-1}(X;\wt\Tsc{}^*X)$ is a 1-form of the same type as $\dot g$;
    \item\label{ItMSSc00} if $\dot g$ is a scalar perturbation with $l=0$, i.e.\ spherically symmetric, then
    \[
      \dot g-\dot g_{(\bhm,0)}(\dot\bhm,0)=\delta_g^*\omega,
    \]
    where $\omega\in\Hbext^{\infty,\ell-1}$ is a spherically symmetric 1-form;
    \item\label{ItMSV10} if $\dot g$ is a vector perturbation with $l=1$, then
    \[
      \dot g-\dot g_{(\bhm,0)}(0,\dot\bha)=\delta_g^*\omega,
    \]
    where $\omega\in\Hbext^{\infty,\ell-1}$ is a vector type $l=1$ 1-form.
    \end{enumerate}
  \end{enumerate}

  The zero energy scalar $l=0$ and $l=1$ statements can be strengthened as follows:
  \begin{enumerate}[label=\upshape{(\alph*)},ref=\alph*]
  \setcounter{enumi}{3}
  \item\label{ItMSSc1} If $\dot g$ is a stationary scalar $l=1$ perturbation with merely $\dot g\in\Hbext^{\infty,\ell}$ for some $\ell<-\half$, $\ell\notin-\half-\N$, then $\dot g$ is pure gauge,
  \[
    \dot g=\delta_g^*\omega
  \]
  with $\omega\in\Hbext^{\infty,\ell-1}(X;\wt\Tsc{}^*X)$ of scalar type $l=1$.
  \item\label{ItMSSc0} if $\dot g\in\Poly(t_*)^k\Hbext^{\infty,\ell}(X;S^2\,\wt\Tsc{}^*X)$ is of scalar type $l=0$ and satisfies $D_g\Ric(\dot g)=0$, then there exists $\dot\bhm\in\R$ such that
  \[
    \dot g=\dot g_{(\bhm,0)}(\dot\bhm,0) + \delta_g^*\omega,
  \]
  where $\omega\in\Poly(t_*)^{k+1}\Hbext^{\infty,\ell'}(X;\wt\Tsc{}^*X)$ (for some $\ell'\in\R$) is of scalar type $l=0$.
  \end{enumerate}
\end{thm}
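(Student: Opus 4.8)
The plan is to reduce the time-polynomial scalar $l=0$ statement~(\ref{ItMSSc0}) to the stationary case~(\ref{ItMSSc00}) by an induction on the polynomial degree $k$ in $t_*$, using the structure of the aspherical reduction of the Einstein equations on spherically symmetric perturbations. First I would recall from the Kodama--Ishibashi formalism (as set up in~\cite[\S5]{HintzKNdSStability} and reviewed earlier in this section for the stationary case) that a scalar type $l=0$ perturbation $\dot g$ is parametrized by an aspherical symmetric 2-tensor $\wt f\in\CI(\wh X;S^2 T^*\wh X)$ and a function $H_L\in\CI(\wh X)$ (there is no $H_T$ since $\sldelta_0^*\sld\scal=0$ for $\scal\in\scal_0$), and that the content of~\eqref{EqMSLinEq} on this sector, after subtracting off the explicit mass perturbation $\dot g_{(\bhm,0)}(\dot\bhm,0)$ and an appropriate pure gauge term $\delta_g^*\omega$ with $\omega$ of scalar type $l=0$, is that $\dot g$ vanishes. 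This is exactly the statement of~(\ref{ItMSSc00}), but now I need it with $\Poly^k(t_*)$ coefficients throughout.

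The key steps, in order: (i) Write $\dot g=t_*^k a_k+\cdots+t_* a_1+a_0$ with $a_j\in\Hbext^{\infty,\ell}(X;S^2\,\wt\Tsc{}^*X)$ of scalar type $l=0$. Applying $D_g\Ric$ and collecting the top power $t_*^k$ gives $\wh{(D_g\Ric)}(0)a_k=0$ (using that $D_g\Ric$ is a stationary operator and that $[D_g\Ric,t_*]$ lowers the $t_*$-degree by one), where $\wh{(D_g\Ric)}(0)$ is the spectral family at zero frequency. Here I would invoke the stationary scalar $l=0$ result~(\ref{ItMSSc00}): $a_k=\dot g_{(\bhm,0)}(\dot\bhm_k,0)+\delta_g^*\omega_k$ for some $\dot\bhm_k\in\R$ and a stationary scalar $l=0$ 1-form $\omega_k\in\Hbext^{\infty,\ell-1}$. (ii) Crucially, the linearized Kerr metric $\dot g_{(\bhm,0)}(\dot\bhm,0)$ is itself of the form $\delta_g^*\omega$ plus nothing---no, rather it is \emph{not} pure gauge, so I keep track of it. (iii) Now replace $\dot g$ by $\dot g'=\dot g - t_*^k \dot g_{(\bhm,0)}(\dot\bhm_k,0) - \delta_g^*(t_*^k\omega_k)$. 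Since $\pa_t$ is Killing for $g$, $\delta_g^*(t_*^k\omega_k) = t_*^k\delta_g^*\omega_k + k t_*^{k-1}(d t_*\otimes_s\omega_k) + \cdots$, which is $t_*^k a_k$ modulo $\Poly^{k-1}(t_*)\Hbext^{\infty,\ell-1}$; and $D_g(t_*^k\dot g_{(\bhm,0)}(\dot\bhm_k,0))$... here I must be careful: $t_*^k \dot g_{(\bhm,0)}(\dot\bhm_k,0)$ is \emph{not} in $\ker D_g\Ric$ unless $k=0$, since $D_g\Ric$ does not commute with multiplication by $t_*$. So instead of subtracting $t_*^k \dot g_{(\bhm,0)}(\dot\bhm_k,0)$ directly, I should find a genuine element of $\ker D_g\Ric$ of the form $t_*^k\dot g_{(\bhm,0)}(\dot\bhm_k,0)+(\text{lower $t_*$-order})$; equivalently, I absorb the mass perturbation into $\dot\bhm$ only at order $k=0$ and carry the higher-order mass pieces as part of the gauge term via a Taylor expansion argument. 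The cleanest route: observe that $\frac{d}{ds}g_{(\bhm+s\dot\bhm,0)}$ evaluated with $s$ replaced by a $t_*$-polynomial is still annihilated by $D_g\Ric$ only to leading order, so the honest statement is that the full mass contribution must occur at $k=0$, forcing $\dot\bhm_k=0$ for $k\geq 1$---this is because a linearly-or-faster growing solution cannot have nontrivial asymptotic mass, which one sees by pairing against the appropriate dual zero mode (analogous to Lemma~\ref{Lemma10NoLin} and equation~\eqref{Eq10NoLinPair} in the 1-form setting).

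Thus the induction runs: at the top order $t_*^k$ with $k\geq 1$, the stationary result plus the pairing/non-degeneracy argument forces $\dot\bhm_k=0$, so $a_k=\delta_g^*\omega_k$ is pure gauge; subtract $\delta_g^*(t_*^k\omega_k)\in\ker D_g\Ric$ (using $\pa_t$ Killing, so $\delta_g^*$ commutes with $D_g\Ric$ and $\delta_g^*(t_*^k\omega_k)$ is automatically in the kernel once $\delta_g^*\omega_k\in\ker$---but $\delta_g^*\omega_k$ need not be in $\ker D_g\Ric$; rather $D_g\Ric\circ\delta_g^*=0$ identically on \emph{any} 1-form since $\delta_g^*\omega$ is always pure gauge and the linearized Einstein operator annihilates pure gauge perturbations when $\Ric(g)=0$). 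This last point is the key simplification: $D_g\Ric(\delta_g^*\omega)=0$ for \emph{every} 1-form $\omega$, so $\delta_g^*(t_*^k\omega_k)\in\ker D_g\Ric$ for free. After subtracting it, the new tensor has $t_*$-degree $\leq k-1$, is still scalar $l=0$, still in $\ker D_g\Ric$, and lies in $\Poly^{k-1}(t_*)\Hbext^{\infty,\ell'}$ for a possibly worse weight $\ell'$ (we lose decay because $\omega_k$ has weight $\ell-1$ and hits $d t_*$-type terms). Iterating down to $k=0$, I land on a stationary scalar $l=0$ solution, apply~(\ref{ItMSSc00}) one last time to extract the single genuine $\dot\bhm$, and collect all the accumulated gauge potentials into one $\omega\in\Poly^{k+1}(t_*)\Hbext^{\infty,\ell'}(X;\wt\Tsc{}^*X)$ of scalar type $l=0$ (the degree goes up by one because each $\delta_g^*(t_*^j\omega_j)$ step produces a gauge potential $t_*^j\omega_j$ of $t_*$-degree $j$, and the final stationary gauge potential has degree $0$, but in assembling $\omega=\sum_j t_*^j\omega_j+\omega_0^{\mathrm{stat}}$ one stays at degree $k$; the extra order arises if one instead needs to solve $\delta_g^*\omega=(\text{given degree-}k\text{ tensor})$ with $\omega$ of degree $k+1$, as happens when the leading gauge potential itself must be constructed by integrating in $t_*$). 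The main obstacle I anticipate is the bookkeeping of weights: each application of~(\ref{ItMSSc00}) is stated for $\ell\in(-\tfrac32,-\tfrac12)$, but the inductive subtractions degrade the weight, so I will need either a version of~(\ref{ItMSSc00}) valid for a range of weights $\ell<-\tfrac12$ with $\ell\notin-\tfrac12-\N$ (which is available: the stationary scalar $l=0$ operator $\wh{\Box_g}(0)$-type analysis plus the polyhomogeneity from Proposition~\ref{PropOpNull} gives surjectivity onto the orthocomplement of finitely many dual states at each admissible weight, with the dual states explicitly the mass mode), or to track carefully that the subtracted terms always land in weights avoiding the finitely many indicial roots between $\ell$ and $-\tfrac12$. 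The secondary obstacle is verifying the no-growing-mass claim $\dot\bhm_k=0$ for $k\geq1$; I would prove this by pairing $D_g\Ric$ applied to the degree-$k$ part against the stationary dual zero mode of the adjoint linearized Einstein operator that detects the mass (the ADM mass functional), exactly as the non-degenerate pairing~\eqref{Eq10NoLinPair} rules out linearly growing 1-form modes, and the analogous non-vanishing for the symmetric 2-tensor operator follows from~\cite[\S5]{HintzKNdSStability} together with the explicit form $\dot g_{(\bhm,0)}^0(1,0)=-\tfrac{2\bhm_0}{r}\,d t_0^2$ in~\eqref{EqKaLin2}.
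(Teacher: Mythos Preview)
Your inductive strategy for part~(\ref{ItMSSc0}) is different from the paper's approach, which is a direct Birkhoff-type gauge-fixing argument carried out uniformly for $\Poly^k(t_*)$ coefficients: write $\dot g=\dot\mu\,dt_0^2-2\dot X\,dt_0\,dr+\dot Z\,dr^2-2r^2\dot Y\slg$, gauge away $\dot Z,\dot Y$ with a stationary vector field, read off $\pa_r\dot X=0$ from the Einstein equations so $\dot X=\dot X(t_0)\in\Poly^k(t_0)$, then gauge away $\dot X$ by $\cL_{2f\pa_{t_0}}g$ with $f'=\dot X$ (this is where $f\in\Poly^{k+1}$ and hence the extra power appears), and finally the remaining Einstein equations force $\dot\mu=-\tfrac{2\dot\bhm}{r}$ with $\pa_{t_0}\dot\bhm=0$.

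Your approach has a genuine gap at the step $\dot\bhm_k=0$ for $k\geq 1$. The pairing argument you propose, by analogy with Lemma~\ref{Lemma10NoLin} and equation~\eqref{Eq10NoLinPair}, does not transfer: those concern the hyperbolic, Fredholm operator $\Box_{g,1}$, for which dual zero modes in $\Hbsupp$ are well-defined and finite-dimensional. The operator $D_g\Ric$ has infinite-dimensional kernel (all pure gauge) and infinite-dimensional cokernel (by the linearized Bianchi identity, its range lies in $\ker\delta_g\sfG_g$), so there is no ``dual zero mode of $(D_g\Ric)^*$ detecting the mass'' in the sense you invoke. The correct mechanism forcing $\dot\bhm_k=0$ is the equation $\pa_{t_0}\dot\bhm=0$, which only appears \emph{after} one has reached the gauge $\dot X=0$; but getting there for general weights already requires the degree-raising $f\pa_{t_0}$ gauge transformation, which is exactly the paper's argument. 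Your alternative suggestion via ADM mass conservation is morally correct but would need to be set up carefully (defining the linearized ADM mass at each $t_*$-level and proving its conservation from $D_g\Ric(\dot g)=0$), and this is more work than the two-line coordinate computation.

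There is a second issue you correctly flag but do not resolve: the weight degrades by one at each inductive step, so after the first subtraction the new top coefficient lies in $\Hbext^{\infty,\ell-1}$ with $\ell-1<-\tfrac32$, outside the range where~(\ref{ItMSSc00}) is stated. The extension of~(\ref{ItMSSc00}) to such weights is not free: for $\ell\leq-\tfrac32$ a stationary $a_k$ can have $\dot X=$ nonzero constant, which is only removable by a degree-$1$ gauge potential (again the $f\pa_{t_0}$ trick). So the ``extended~(\ref{ItMSSc00})'' you would need already produces $\eta_k\in\Poly^1$, and lifting this to $\dot g$ gives a contribution of degree $k+1$ to $\omega$. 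This is in fact where the $\Poly^{k+1}$ in the conclusion comes from (and it is sharp, e.g.\ for $\dot g=\delta_g^*(t_0^{k+1}\pa_{t_0}^\flat)$); your remark that the gauge potential assembled from $\sum_j t_*^j\omega_j$ has degree only $k$ would, if correct, contradict this. Once both gaps are filled, your induction collapses into the paper's direct computation.
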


\begin{rmk}
\label{RmkMSWhichLin}
  In parts~\eqref{ItMSSc00} and \eqref{ItMSV10}, we can replace $\dot g_{(\bhm,0)}$ by $\dot g^0_{(\bhm,0)}$ upon changing $\omega$ by an element of $\Hbext^{\infty,-3/2-}(X;\wt{\Tsc^*}X)$; likewise in part~\eqref{ItMSSc0}. Indeed, this follows from~\eqref{EqKaLie2}, which gives $V(\dot b)^\flat\in\cA^{0-}\subset\Hbext^{\infty,-3/2-}$.
\end{rmk}

We list a number of explicit expressions needed for the proof. Recall from~\cite{GrahamLeeConformalEinstein} that
\[
  D_g\Ric = \half\Box_g - \delta_g^*\delta_g\sfG_g + \sR_g,\quad
  (\sR_g\dot g)_{\mu\nu}=(R_g)_{\kappa\mu\nu\lambda}\dot g^{\kappa\lambda},
\]
where the expression for $\sR_g$ simplifies when $g$ is the Schwarzschild metric, since $\Ric(g)=0$. We adorn operators on $(\wh X,\wh g)$ with hats. For the calculations below, we recall from~\cite[\S5.1]{HintzKNdSStability} that in the splittings~\eqref{EqYMSplit} and \eqref{EqYMSplit2}, and writing $\varpi:=\wh d r$ and $\wh\Box:=\Box_{\wh g}$, we have
\begin{equation}
\label{EqMSExpr}
\begin{split}
  &\Box_g=\wh\Box-r^{-2}\slDelta + \diag\bigl[-2 r^{-1}\wh\nabla_\varpi+4 r^{-2}\varpi\otimes_s\iota_\varpi,\,4 r^{-2}\varpi\otimes_s\iota_\varpi+(-r^{-1}\wh\Box r+r^{-2}|\varpi|^2) \\
  &\hspace{7em} 2 r^{-1}\wh\nabla_\varpi-2 r^{-1}\wh\Box r\bigr]
    +
    \begin{pmatrix}
      0 & 4 r^{-3}\varpi\otimes_s\sldelta & 2 r^{-4}(\varpi\otimes\varpi)\sltr \\
      -2 r^{-1}\sld \iota_\varpi & 0 & 2 r^{-3}\varpi\otimes\sldelta \\
      2\slg\iota_\varpi\iota_\varpi & -4 r^{-1}\sldelta^*\iota_\varpi & 0
    \end{pmatrix}, \\
  &\delta_g^*=
    \begin{pmatrix}
      \wh\delta^* & 0 \\
      \half\sld & \half r^2\wh d r^{-2} \\
      -r\slg\iota_\varpi & \sldelta^*
    \end{pmatrix}, \quad
  \delta_g\sfG_g=
    \begin{pmatrix}
      r^{-2}\wh\delta r^2+\half\wh d\wh\tr & -r^{-2}\sldelta & -\half r^{-2}\wh d\sltr \\
      \half\sld\wh\tr & r^{-2}\wh\delta r^2 & -r^{-2}\sldelta-\half r^{-2}\sld\sltr
    \end{pmatrix}, \\
  &2\sR_g=
    \begin{pmatrix}
      2\mu''\sfG_{\wh g}+2 r^{-1}\mu' & 0 & r^{-3}\mu'\wh g\sltr \\
      0 & \half\mu''\wh g+r^{-2}(\mu-1)+3 r^{-1}\mu' & 0 \\
      r\mu'\slg\wh\tr & 0 & 4 r^{-2}(\mu-1)\sfG_\slg+2 r^{-1}\mu'
    \end{pmatrix}.
\end{split}
\end{equation}

On the static part $\R_t\times(2\bhm,\infty)_r$ of $\wh X$, we furthermore split
\begin{equation}
\label{EqMSExprHat}
  T^*\wh X=\la\wh d t\ra \oplus \la\wh d r\ra,\quad
  S^2 T^*\wh X=\la\wh d t^2\ra \oplus \la 2\wh dt\,\wh d r\ra \oplus \la\wh d r^2\ra.
\end{equation}
In the first splitting, $\varpi=(0,1)$. On functions,
\[
  \wh d=\begin{pmatrix}\pa_t \\ \pa_r\end{pmatrix}, \quad
  \wh\Box = -\mu^{-1}\pa_t^2 + \pa_r\mu\pa_r,
\]
while on 1-forms, $\iota_\varpi=(0,-\mu)$ and
\[
  \wh\delta=(-\mu^{-1}\pa_t,\,\pa_r\mu), \quad
  \wh\delta^*=\begin{pmatrix} \pa_t & -\half\mu\mu' \\ \half\mu\pa_r\mu^{-1} & \half\pa_t \\ 0 & \mu^{-1/2}\pa_r\mu^{1/2} \end{pmatrix}.
\]
On symmetric 2-tensors, $\wh\nabla_\varpi=\diag(-\mu^2\pa_r\mu^{-1},-\mu\pa_r,-\pa_r\mu)$ and
\begin{gather*}
  \wh\delta=
  \begin{pmatrix}
    -\mu^{-1}\pa_t & \pa_r\mu & 0 \\
    \half\mu'\mu^{-2} & -\mu^{-1}\pa_t & \mu^{-1/2}\pa_r\mu^{3/2}
  \end{pmatrix}, \quad
  \iota_\varpi=\begin{pmatrix} 0 & -\mu & 0 \\ 0 & 0 & -\mu \end{pmatrix}, \\
  \wh\Box=-\mu^{-1}\pa_t^2+\mu\pa_r^2
    +\begin{pmatrix}
       -\mu'\pa_r+\tfrac{\mu'^2}{2\mu}-\mu'' & 2\mu'\pa_t & -\half\mu\mu'^2 \\
       \tfrac{\mu'}{\mu^2}\pa_t & \mu'\pa_r-\tfrac{\mu'^2}{\mu} & \mu'\pa_t \\
       -\tfrac{\mu'^2}{2\mu^3} & \tfrac{2\mu'}{\mu^2}\pa_t & 3\mu'\pa_r+\tfrac{\mu'^2}{2\mu}+\mu''
     \end{pmatrix}.
\end{gather*}

Below, we shall write $\wt f\in\CI(\wh X;S^2 T^*\wh X)$, $f\in\CI(\wh X;T^*\wh X)$, $H_L,H_T\in\CI(\wh X)$, and $T\in\CI(\wh X;T^*\wh X)$, $L\in\CI(\wh X)$.

\subsection{Scalar type perturbations}
\label{SsMSSc}

We discuss $l\geq 2$, $l=1$, and $l=0$ modes separately. We denote by $\scal\in\scal_l$ a spherical harmonic with eigenvalue $k^2$, where $k=\sqrt{l(l+1)}$. We also introduce a rescaled version of the trace-free part of the Hessian:
\[
  \slH_k=k^{-2}\sldelta_0^*\sld\quad (l\neq 0).
\]

\subsubsection{Modes with \texorpdfstring{$l\geq 2$}{l at least 2}}
\label{SssMSSc2}

Consider a metric perturbation of the form
\begin{equation}
\label{EqMSScal2}
  \dot g= \begin{pmatrix} \wt f\scal \\ -\tfrac{r}{k}f\otimes\sld\scal \\ 2 r^2(H_L\scal\slg+H_T\slH_k\scal) \end{pmatrix}.
\end{equation}
Pure gauge solutions of the same type take the form
\begin{equation}
\label{EqMSSc2Pert}
  \updelta\dot g = \delta_g^*\omega
   =\begin{pmatrix}
      (2\wh\delta^* T)\scal \\
      -\frac{r}{k}(-\frac{k}{r}T + r\wh d r^{-1}L)\otimes\sld\scal \\
      2 r^2\bigl[(-r^{-1}\iota_\varpi T+\frac{k}{2 r}L)\scal\slg - \frac{k}{r}L\slH_k\scal\bigr]
    \end{pmatrix}, \qquad
  \omega=\begin{pmatrix}2 T\scal \\ -\frac{2 r}{k}L\sld\scal\end{pmatrix};
\end{equation}
upon adding this to $\dot g$, the quantities $\wt f$ etc.\ change by
\begin{equation}
\label{EqMSSc2Pert2}
  \updelta\wt f=2\wh\delta^*T, \quad
  \updelta f=-\tfrac{k}{r}T+r\wh d r^{-1}L, \quad
  \updelta H_L=-r^{-1}\iota_\varpi T+\tfrac{k}{2 r}L, \quad
  \updelta H_T=-\tfrac{k}{r}L.
\end{equation}
Defining the 1-form $\bfX:=\tfrac{r}{k}\bigl(f+\tfrac{r}{k}\wh d H_T\bigr)$, which satisfies $\updelta\bfX=-T$, the quantities
\begin{equation}
\label{EqMSSc2Inv}
  \wt F := \wt f+2\wh\delta^*\bfX, \quad
  J := H_L+\half H_T-r^{-1}\iota_\varpi\bfX
\end{equation}
are therefore gauge-invariant: $\updelta\wt F=\updelta J=0$. Conversely, if $\wt F=J=0$, then $\dot g$ is a pure gauge solution:
\begin{equation}
\label{EqMSSc2GaugePot}
  \delta_g^*\omega=\dot g,\quad \omega=\bigl(-2\bfX,\tfrac{2 r^2}{k^2}H_T\sld\scal\bigr).
\end{equation}
If $\dot g$ is an outgoing mode solution with frequency $\sigma\neq 0$, then so is $\omega$, with an extra factor of $r^2$ relative to $\dot g$. When $\sigma=0$ on the other hand, then $H_T$ is stationary; on time-independent functions, $r\wh d$ in the definition of $\bfX$ acts as an unweighted b-operator, and therefore $\omega$ grows at most by a factor of $r$ more than $\dot g$ in this case.

As explained after \cite[Equation~(5.27)]{HintzKNdSStability}, we can express the linearized Einstein equation in terms of $\wt F,J$ (by formally replacing $\wt f,f,H_L,H_T$ by $\wt F,0,J,0$). Expressing the scalar type symmetric 2-tensor equation $2 D_g\Ric(\dot g)=0$ analogously to~\eqref{EqMSScal2} in terms of quantities $\wt f^E,f^E,H_T^E,H_L^E$, one obtains
\begin{subequations}
\begin{align}
\label{EqMSSc2wtf}
  \begin{split}
  \wt f^E &= (\wh\Box-2\wh\delta^*\wh\delta-\wh\delta^*\wh d\wh\tr)\wt F+2 r^{-1}(2\wh\delta^*\iota_\varpi\wt F-\wh\nabla_\varpi\wt F) \\
    &\qquad + 4\wh\delta^*\wh d J + 8 r^{-1}\varpi\otimes_s\wh d J + (\mu''-k^2 r^{-2})\wt F-\mu''\wh g\wh\tr\wt F = 0,
  \end{split} \\
\label{EqMSSc2f}
  -\tfrac{r}{k}f^E &= -\wh\delta\wt F+2\wh d J-r\wh d r^{-1}\wh\tr\wt F = 0, \\
\label{EqMSSc2HL}
  2 r^2 H_L^E &= \wh\Box(2 r^2 J)+2 r\iota_\varpi\wh\delta\wt F-2\iota_\varpi\iota_\varpi\wt F+r\iota_\varpi\wh d\wh\tr\wt F + (r\mu'+\tfrac{k^2}{2})\wh\tr\wt F - 2 k^2 J = 0, \\
\label{EqMSSc2HT}
  2 r^2 H_T^E &= -k^2\wh\tr\wt F = 0.
\end{align}
\end{subequations}
Using~\eqref{EqMSSc2HT} and $k^2\neq 0$, we can eliminate all occurrences of $\wh\tr\wt F$. Plugging $\wh d J=\half\wh\delta\wt F$ from~\eqref{EqMSSc2f} into~\eqref{EqMSSc2wtf}, one obtains a wave equation for $\wt F$ which is (via subprincipal terms) coupled to the wave equation for $J$ resulting from~\eqref{EqMSSc2HL}; i.e.\ we obtain a principally scalar system of wave equations for $(\wt F,J)$. When $\dot g$ and thus $(\wt F,J)$ are smooth modes, then this wave equation becomes an ODE on the 1-dimensional space $t_*^{-1}(0)$ with a regular-singular point at $r=2\bhm$; the vanishing of $(\wt F,J)$ in $r>2\bhm$ thus implies its vanishing in $r\leq 2\bhm$ as well.

The goal is thus to prove $(\wt F,J)=0$ in $r>2\bhm$; there, we can use the static coordinates $(t,r)$. By the linearization of the second Bianchi identity, $\delta_g\sfG_g(D_g\Ric(\dot g))\equiv 0$ (for \emph{any} $\dot g$), the above equations are not independent: putting
\[
  \wt E:=\sfG_{\wh g}\wt f^E+2 H_L^E\wh g,
\]
we have
\[
  2 r^{-2}\wh\delta(r^2\wt E) + \wh d(\wh\tr\wt f^E)+\tfrac{2 k}{r}f^E = 0, \qquad
  \wh\tr\wt f^E-\bigl(\tfrac{2}{k} r^{-2}\wh\delta(r^3 f^E)+\tfrac{2(k^2-2)}{k^2}H_T^E\bigr) = 0.
\]
In particular, the vanishing of $f^E$ and $H_T^E$ implies that of $\wh\tr\wt f^E$ and $\wh\delta(r^2\wt E)$, and in this case, $\wt f^E$ and $H_L^E$ are the trace-free, resp.\ pure trace part of $\wt E$. By the calculations after~\eqref{EqMSExprHat}, the $d r$-component of $\wh\delta(r^2\wt E)=0$ reads
\[
  \tfrac{\mu'}{2\mu^2}\wt E_{t t}-r^2\mu^{-1}\pa_t\wt E_{t r}+\mu^{-1/2}\pa_r(r^2\mu^{3/2}\wt E_{r r})=0,
\]
so in view of $\mu'\neq 0$, the vanishing of $\wt E_{t r},\wt E_{r r}$ implies $\wt E_{t t}=0$. We have thus reduced the linearized Einstein equations to the system
\[
  (f^E,H_T^E,\wt E_{t r},\wt E_{r r}) = 0,
\]
with $H_T^E=0$ simply giving $\wh\tr\wt F=0$. Let us combine $\wt F$ (trace-free) and $J$ by writing
\begin{equation}
\label{EqMSSc2XYZ}
  \wt F+2 J\wh g = \begin{pmatrix} \mu X \\ -\mu^{-1}Z \\ -\mu^{-1}Y \end{pmatrix}
\end{equation}
in the splitting~\eqref{EqMSExprHat}; we have $\wh\delta(\wt F+2 J\wh g)=0$ by~\eqref{EqMSSc2f} and $4 J=\wh\tr(\wt F+2 J\wh g)=X+Y$. The equations for $f^E$, $\wt E_{t r}$, and $\mu\wt E_{r r}=\half(\mu^{-1}\wt f^E_{t t}+\mu\wt f^E_{r r})-2 H_L^E$ then read
\begin{subequations}
\begin{gather}
\label{EqMSSc2fCoord}
  \pa_t X+\pa_r Z=0, \qquad -\pa_r Y+\mu^{-2}\pa_t Z+\tfrac{\mu'}{2\mu}(X-Y)=0, \\
\label{EqMSSc2Etr}
  \pa_t\pa_r X-\tfrac{\mu'}{2\mu}\pa_t X+\pa_t\pa_r Y+\bigl(2 r^{-1}-\tfrac{\mu'}{2\mu}\bigr)\pa_t Y+\tfrac{k^2}{r^2\mu}Z=0, \\
\label{EqMSSc2Err}
  \mu^{-1}\pa_t^2 X-\tfrac{\mu'}{2}\pa_r X + \mu^{-1}\pa_t^2 Y-\bigl(\tfrac{\mu'}{2}+\tfrac{2\mu}{r}\bigr)\pa_r Y + \tfrac{k^2-2}{r^2}Y + \tfrac{4}{r\mu}\pa_t Z = 0.
\end{gather}
\end{subequations}

\emph{We first discuss the case that $\dot g$ is a mode with $\sigma\neq 0$.} We now have $\pa_t X=-i\sigma X$ etc., so writing $X'=\pa_r X$ etc., the equations~\eqref{EqMSSc2fCoord}--\eqref{EqMSSc2Etr} become
\[
  \begin{pmatrix} X' \\ Y' \\ \frac{Z'}{i\sigma} \end{pmatrix} = T \begin{pmatrix} X \\ Y \\ \frac{Z}{i\sigma} \end{pmatrix}, \qquad
  T=\begin{pmatrix}
      0 & \frac{\mu'}{\mu}-\frac{2}{r} & \frac{k^2}{r^2\mu}-\frac{\sigma^2}{\mu^2} \\
      \frac{\mu'}{2\mu} & -\frac{\mu'}{2\mu} & \frac{\sigma^2}{\mu^2} \\
      1 & 0 & 0
    \end{pmatrix},
\]
while~\eqref{EqMSSc2Err} gives the linear constraint
\begin{equation}
\label{EqMSSc2Constr}
  \gamma\begin{pmatrix} X \\ Y \\ \frac{Z}{i\sigma} \end{pmatrix} = 0, \qquad
  \gamma=\Bigl(-\frac{\sigma^2}{\mu}-\frac{\mu'^2}{4\mu}-\frac{\mu'}{r},\,-\frac{\sigma^2}{\mu}+\frac{k^2-2}{r^2}-\frac{\mu'^2}{4\mu}+\frac{2\mu'}{r},\,\frac{2\sigma^2}{r\mu}-\frac{k^2\mu'}{2 r^2\mu}\Bigr).
\end{equation}
Thus, a generic linear combination $\Phi$ of $X,Y,\frac{Z}{i\sigma}$ satisfies a second order ODE; choosing $\Phi$ carefully, one can make this ODE be of Schr\"odinger type
\begin{equation}
\label{EqMSSc2MasterEq}
  (\mu\pa_r)^2\Phi-(V-\sigma^2)\Phi=0.
\end{equation}
Concretely, this holds if we let
\begin{gather}
  m := k^2-2,\ \ 
  x := \frac{2\bhm}{r},\ \ 
  H := m+3 x, \nonumber\\
\label{EqMSSc2Master}
  \Phi := \frac{\frac{2 Z}{i\sigma}-r(X+Y)}{H},\quad
  V=\frac{\mu}{r^2 H^2}\bigl(9 x^3+9 m x^2+3 m^2 x+m^2(m+2)\bigr).
\end{gather}
Conversely, one can recover $X,Y,\frac{Z}{i\sigma}$ from $\Phi$ by means of
\begin{equation}
\label{EqMSSc2MasterRec}
  X = \bigl(\tfrac{\sigma^2 r}{\mu}-\tfrac{P_{X 0}}{2 r H^2}\bigr)\Phi + \tfrac{P_{X 1}}{2 H}\Phi', \ \ 
  Y = \bigl(-\tfrac{\sigma^2 r}{\mu}-\tfrac{P_{Y 0}}{2 r H^2}\bigr)\Phi + \tfrac{P_{Y 1}}{2 H}\Phi', \ \ 
  \tfrac{Z}{i\sigma} = \tfrac{P_Z}{2 H}\Phi-r\mu\Phi',
\end{equation}
where
\begin{alignat*}{3}
  P_{X 0} &= 27 x^3+24 m x^2+3 m(3 m+2)x+2 m^2(m+2), & \quad & P_{X 1} = 9 x^2+(5 m-6)x-4 m, \\
  P_{Y 0} &= 9 x^3+6 m x^2+3 m(m+2)x, && P_{Y 1} = 3 x^2-(m+6)x, \\
  P_Z &= 3 x^2+3 m x-2 m. &&
\end{alignat*}

It thus remains to show that $\Phi\equiv 0$. Now $m,x,H>0$ are bounded away from $0$ in $[2\bhm,\infty)_r$, hence $V>0$ is of size $\cO(\mu)$ near $r=2\bhm$, and of size $\cO(r^{-2})$ as $r\to\infty$; passing to the Regge--Wheeler coordinate $r_*=\int\mu^{-1}\,d r=r+2\bhm\log(r-2\bhm)$, this means exponential decay as $r_*\to-\infty$ and $\cO(r_*^{-2})$ decay as $r_*\to+\infty$.

Note then that $\wt F$ and $J$, as a 2-tensor, resp.\ function on $\wh X$ (which extends across $r=2\bhm$) are smooth. Thus, the contribution of $J=(X+Y)/4$ to $\Phi$ is smooth at the event horizon. Similarly, recall that in static coordinates, the vector fields
\begin{equation}
\label{EqMSSc2VF}
  \pa_t,\ \pa_r+\mu^{-1}\pa_t
\end{equation}
are smooth across the event horizon, hence $Z=-\mu\wt F_{t r}=-\mu\wt F(\pa_t,\pa_r+\mu^{-1}\pa_t)+\wt F_{t t}$ is smooth as well. Thus, $\Phi(t_*,r)=e^{-i\sigma t_*}\CI([2\bhm,\infty))$; writing this as
\[
  \Phi(t,r) = e^{-i\sigma t}\Psi(r),\quad \Psi(r)\in e^{-i\sigma r_*}\CI([2\bhm,\infty)),
\]
we see that $\Psi$ decays exponentially as $r_*\to-\infty$ when $\Im\sigma>0$.

Consider first the case that $\Im\sigma>0$. The outgoing condition on $\dot g$ implies the rapid vanishing of $\dot g$, hence of $(\wt F,J)$, thus of $(X,Y,Z/i\sigma)$ and finally of $\Psi$ as $r\to\infty$ (for constant $t$). Switching to the tortoise coordinate $r_*$ in~\eqref{EqMSSc2MasterEq}, i.e.\ $(D_{r_*}^2+V-\sigma^2)\Psi=0$, we can integrate against $\bar\Psi$ in $L^2(\R_{r_*};d r_*)$ and integrate by parts, obtaining
\[
  0 = \|V^{1/2}\Psi\|^2 - \sigma^2\|\Psi\|^2 + \|D_{r_*}\Psi\|^2.
\]
When $\sigma\notin i(0,\infty)$, taking the imaginary part gives $\Psi=0$, while for $\sigma\in i(0,\infty)$, this is a sum of squares, hence again $\Psi=0$; in both cases, we deduce $\Phi=0$.

When $\sigma\in\R$, $\sigma\neq 0$, note that $\Psi=\Psi(r)$ inherits the radiation condition from $\dot g$. Since it satisfies the ODE~\eqref{EqMSSc2MasterEq}, this excludes the `incoming' asymptotics $e^{-i\sigma r_*}$ as $r_*\to\infty$ and thus forces $\Psi=e^{i\sigma r_*}(\Psi_+ +\cA^{1-})$, $\Psi_+\in\C$; here, $\cA^\ell$ denotes the space of functions on $\ol{\R}$ which are conormal at $\infty$ relative to $r^{-\ell}L^\infty$, i.e.\ remain in this space upon application of $(r\pa_r)^j$, $j\in\N_0$. A standard boundary pairing, or in the present ODE setting Wronskian, argument then implies that $\lim_{r_*\to\pm\infty}e^{\mp i\sigma r_*}\Psi=0$ (the upper sign corresponding to $\Psi_+=0$), which then implies $\Psi\equiv 0$, finishing the argument in the case $\sigma\neq 0$.

\emph{Next, we describe the modifications for the case $\sigma=0$.} Now, $\dot g$ and thus $\wt F,J$ and $X,Y,Z$ are stationary, but $Z/i\sigma$ is no longer well-defined; hence we first rewrite the treatment of the case $\sigma\neq 0$: using~\eqref{EqMSSc2Constr}, we can express $Z/i\sigma$ as a linear combination of $X,Y$. Plugging the resulting expression into~\eqref{EqMSSc2Master} and making the $\sigma$-dependence explicit, we get
\begin{equation}
\label{EqMSSc2Master0}
\begin{split}
  \Phi(\sigma) &= C_X(\sigma)X+C_Y(\sigma)Y, \\
  &\qquad C_{X/Y}(\sigma) = \tfrac{P_{X/Y}}{3\wt H(\sigma)},\quad \wt H(\sigma)=(k^2\mu'-4 r\sigma^2)H, \\
  &\qquad P_X=(9 x-3(6+m))x,\ \ 
          P_Y=-3(9 x+5 m-6)x+12 m.
\end{split}
\end{equation}
But this means that $\Phi(\sigma)$ exists down to $\sigma=0$ (note that $\mu'\neq 0$ in $r>2\bhm$); we thus define the master variable at zero frequency to be $\Phi:=\Phi(0)$. One can then check that the linearized Einstein equations~\eqref{EqMSSc2fCoord}--\eqref{EqMSSc2Err} imply the master equation
\begin{equation}
\label{EqMSSc2MasterEq0}
  (\mu\pa_r)^2\Phi-V\Phi=0,
\end{equation}
with $V$ as before. Using the expressions~\eqref{EqMSSc2MasterRec} with $\sigma=0$, one can recover $X,Y$ from $\Phi$.

Since $\dot g\in\Hbext^{\infty,\ell}\subset\cA^{\ell+3/2}$, we also have $X,Y\in\cA^{\ell+3/2}$. Now $H\in m+\rho\CI([2\bhm,\infty])$ and thus $\wt H(0)\in-2 k^2 m\bhm/r^2+\rho^3\CI$, while $P_X,P_Y\in\CI$; therefore, $C_X(0),C_Y(0)\in\rho^{-2}\CI$, implying that $\Phi\in\cA^{\ell-1/2}\subset\cA^{-2+}$ a priori. Consider now the asymptotic behavior of solutions of~\eqref{EqMSSc2MasterEq0}: note that
\[
  V(r) = \frac{1}{r^2 m^2}m^2(m+2) + \rho^3\CI = \frac{k^2}{r^2} + \rho^3\CI,
\]
hence $V=\frac{k^2}{r_*^2}+\cA^{3-}$ as a function of $r_*$. Thus, the leading order part (as a weighted b-differential operator on $[0,1)_x$, $x=r_*^{-1}$) of $(\mu\pa_r)^2-V$ is $r_*^{-2}(r_*^2\pa_{r_*}^2-k^2)$, which has kernel $r_*^{\lambda_\pm}$, $\lambda_\pm=\half(1\pm\sqrt{4 k^2+1})$. Since $k^2\geq 6$, we have $\lambda_+\geq 3$; since therefore $\Phi=o(r^{\lambda_+})$, the leading order term of $\Phi$ at infinity must be $r_*^{\lambda_-}=\cO(r^{-2})$. This suffices to justify pairing~\eqref{EqMSSc2MasterEq0} against $\Phi$ (on $t=0$) and integrating by parts, giving $\Phi\equiv 0$ since $V\geq 0$. Thus, $X=Y=0$ in $r>2\bhm$; by~\eqref{EqMSSc2Etr}, we then also have $Z=0$, hence $(\wt F,J)=0$, proving that $\dot g$ is pure gauge.

\subsubsection{Modes with \texorpdfstring{$l=1$}{l equal to 1}}
\label{SssMSSc1}

We now have $k^2=l(l+1)=2$. We shall again show that an outgoing mode solution $\dot g$ is pure gauge, with the gauge potential an outgoing mode as well. Consider a general metric perturbation $\dot g$ satisfying the linearized Einstein equation~\eqref{EqMSLinEq} and of the form
\begin{equation}
\label{EqMSScal1}
  \dot g=\begin{pmatrix} \wt f\scal \\ -\tfrac{r}{k}f\otimes\sld\scal \\ 2 r^2 H_L\scal\slg \end{pmatrix}
\end{equation}
with $\scal\in\scal_1$. While $H_T$ is no longer defined in this situation, we use the expressions from the $l\geq 2$ discussion with $H_T$ set to $0$, thus
\begin{equation}
\label{EqMSSc1Quant}
  \bfX:=\tfrac{r}{k}f,\quad \wt F:=\wt f+2\wh\delta^*\bfX,\quad J:=H_L-r^{-1}\iota_\varpi\bfX.
\end{equation}
Pure gauge solutions $\updelta\dot g=\delta_g^*\omega$, $\omega=(2 T\scal,-\frac{2 r}{k}L\sld\scal)$, as in~\eqref{EqMSSc2Pert} change $\wt f,f,H_L$ as in~\eqref{EqMSSc2Pert2}, and therefore $\updelta\bfX=-T+\tfrac{r^2}{k}\wh d r^{-1}L$. Let us first choose $L=0$, $T=\bfX$, and replace $\dot g$ by
\begin{equation}
\label{EqMSSc1GaugePot}
  \dot g + \delta_g^*\omega,\quad \omega=(2\bfX\scal,0),
\end{equation}
which for $\sigma=0$ lies in the same weighted space as $\dot g$ itself, and for $\sigma\neq 0$ is still outgoing; note though that $\omega$ has an extra factor of $r$ relative to $\dot g$. This replacement implements the partial gauge $\bfX=0$ (thus $f=0$), \emph{in which we shall work from now on}. The remaining gauge freedom is the following: given any aspherical function $L$, we can add
\begin{equation}
\label{EqMSSc1Gauge1}
   \delta_g^*\omega,\quad \omega=\left(2 T(L)\scal,-\tfrac{2 r}{k}L\sld\scal\right), \qquad T(L):=\tfrac{r^2}{k}\wh d r^{-1}L,
\end{equation}
which ensures that $\updelta\bfX=0$. The change in the quantities $\wt F$ and $J$ upon addition of such a pure gauge term is
\begin{equation}
\label{EqMSSc1InvChange}
  \updelta\wt F=\tfrac{2}{k}\wh\delta^* r^2\wh d r^{-1}L,\quad
  \updelta J=\tfrac{k}{2 r}L-\tfrac{r}{k}\iota_\varpi\wh d r^{-1}L.
\end{equation}

\emph{In the gauge} $\bfX=0$ and for any fixed choice of $L$, we have $(\wt F,J)=(\wt f,H_L)$ as before, and the linearized Einstein equation is again given by the system~\eqref{EqMSSc2wtf}--\eqref{EqMSSc2HL} (with the equation~\eqref{EqMSSc2HT} for $H_T^E$ absent since we are considering scalar $l=1$ modes). Moreover, if $\wt F=J=0$, we have $\wt f=H_L=0$ (and $f=0$), hence $\dot g\equiv 0$, which means our original perturbation $\dot g$ is pure gauge,
\begin{equation}
\label{EqMSSc1Gauge2}
  \dot g=\delta_g^*\omega(L),\quad \omega(L)=\bigl(2 T(L)\scal,-\tfrac{2 r}{k}L\sld\scal\bigr).
\end{equation}

We shall choose $L$ so as to simplify the structure of the linearized Einstein equations further. Namely, let us \emph{define} $H_T^E:=-\frac{k^2}{2 r^2}\wh\tr\wt F$, cf.\ \eqref{EqMSSc2HT}, and note that by~\eqref{EqMSSc1InvChange},
\[
  \updelta H_T^E = k r^{-2}\wh\delta r^2\wh d r^{-1}L = k\Box_g(r^{-1}L).
\]
We shall demonstrate how to arrange $H_T^E+\updelta H_T^E=0$ by choosing $L$ appropriately.

\emph{Let us first consider non-stationary mode solutions, $\Im\sigma\geq 0$, $\sigma\neq 0$.} In this case, $\omega$ in~\eqref{EqMSSc1Gauge1} is an outgoing mode, and
\begin{equation}
\label{EqMSSc1L}
  \Box_g(r^{-1}L)=-k^{-1}H_T^E=\tfrac{k}{2 r^2}\wh\tr\wt F
\end{equation}
has a spherically symmetric and outgoing mode solution $r^{-1}L$, since the right hand side is outgoing and $\wh{\Box_g}(\sigma)$ is invertible on the relevant function spaces by Theorem~\ref{Thm0}. The linearized Einstein equation for $\dot g':=\dot g+\delta_g^*\omega(L)$ now takes the form~\eqref{EqMSSc2fCoord}--\eqref{EqMSSc2Err}, with $\wt F,J$ and $X,Y,Z$ defined relative to $\dot g'$; one can then follow the arguments of the previous section to deduce that the quantities $\wt F,J$ vanish, and therefore $\dot g'\equiv 0$ (since $\dot g'$ is in the gauge $\bfX=0$), so $\dot g=-\delta_g^*\omega(L)$ is pure gauge.

\emph{Next, we consider stationary mode solutions, $\sigma=0$}, with $\dot g\in\Hbext^{\infty,\ell}$, where we only assume $\ell<-\half$ (as in part~\eqref{ItMSSc1} of Theorem~\ref{ThmMS}). Since $\wh{\delta^*}$ acts on the stationary $\bfX\in\Hbext^{\infty,\ell-1}$ as an element of $\rho\Diffb^1$, we have $\wh{\delta^*}\bfX\in\Hbext^{\infty,\ell}$; $\wt f$ lies in the same space. The right hand side of~\eqref{EqMSSc1L} thus lies in $\Hbext^{\infty,\ell+2}$. Since $\wh{\Box_g}(0)\colon\Hbext^{\infty,\ell}\to\Hbext^{\infty,\ell+2}$ is surjective for any $\ell<-\half$, $\ell\notin-\half-\N$, we can solve~\eqref{EqMSSc1L} with $r^{-1}L\in\Hbext^{\infty,\ell}$, so $L\in\Hbext^{\infty,\ell-1}$; moreover, $L$ is spherically symmetric since $\wt F$ is. Therefore, $\omega(L)\in\Hbext^{\infty,\ell-1}$. Letting
\begin{equation}
\label{EqMSSc1GaugePot2b}
  \dot g'=\dot g+\delta_g^*\omega(L),
\end{equation}
the quantities $\wt F$ and $J$ for $\dot g'$ change, relative to those for $\dot g$, by terms in $\Hbext^{\infty,\ell}$ in view of~\eqref{EqMSSc1InvChange}, hence $\dot g'\in\Hbext^{\infty,\ell}$ lies in the same weighted space as $\dot g$; and we now have $\wh\tr\wt F=0$.

We will apply a modification of the arguments of the scalar $l\geq 2$, $\sigma=0$ discussion in~\S\ref{SssMSSc2} to $\dot g'$. We still have $X,Y\in\Hbext^{\infty,\ell}$, but the quantities in~\eqref{EqMSSc2Master0} (at $\sigma=0$) are now $P_X=9 x(x-2)$, $P_Y=-9 x(3 x-2)$ (which have $r^{-1}$ more decay at infinity than in the case $\ell\geq 2$) and $\wt H=24\bhm^2 r^{-3}$ (likewise), hence
\[
  C_X=\tfrac{r(\bhm-r)}{2\bhm},\ C_Y=\tfrac{r(r-3\bhm)}{2\bhm},\quad
  V=2\bhm r^{-3}(1-\tfrac{2\bhm}{r});
\]
the master quantity $\Phi$ is thus
\[
  \Phi = C_X X+C_Y Y = C_X(X+Y) + (C_Y-C_X)Y = C_X(X+Y) + \tfrac{r^2}{\bhm}\mu Y \in \Hbext^{\infty,\ell-2}.
\]
As discussed around~\eqref{EqMSSc2VF}, $X+Y=4 J$ is smooth at $r=2\bhm$, and so is $\mu Y=-\mu^2(\wt F_{r r}+2 J\hat g_{r r})$, hence so is $\Phi$. The ODE~\eqref{EqMSSc2MasterEq0} has two linearly independent solutions
\[
  \Phi_1=r,\quad \Phi_2=\tfrac{r}{2\bhm}\log(1-\tfrac{2\bhm}{r}),
\]
in $r>2\bhm$. The smoothness of $\Phi$ at $r=2\bhm$ implies that $\Phi=c\Phi_1$, $c\in\R$. By~\eqref{EqMSSc2MasterRec} with $\sigma=0$ and $k^2=2$, this implies $X=-c$, $Y=-c$, hence $Z=0$ by~\eqref{EqMSSc2Etr}, and therefore $\wt F=0$ and $J=-\half c$ in~\eqref{EqMSSc2XYZ}. But since $J\in\Hbext^{\infty,\ell}$, we must have $c=0$ if $\ell>-\tfrac32$, hence $\Phi=0$ and $X=Y=0$, and we are done. If on the other hand $\ell<-\tfrac32$, then $c$ may be non-zero; but by~\eqref{EqMSSc1Quant}, and since $\bfX=0$, the metric perturbation then equals
\[
  \dot g = -c r^2\scal\slg = \delta_g^*\omega,\quad
  \omega=\begin{pmatrix} 0 \\ -\tfrac{2 c r^2}{k^2}\sld\scal \end{pmatrix}
\]
by~\eqref{EqMSSc2Pert} (or~\eqref{EqMSSc1InvChange} with $L=c\tfrac{r}{k}$, $T=T(L)=0$), hence is pure gauge with gauge potential $\omega\in\rho^{-1}\CI\subset\Hbext^{\infty,\ell-1}$, as desired.

\subsubsection{Spherically symmetric modes (\texorpdfstring{$l=0$}{l equal to 0})}
\label{SssMSSc0}

We follow the linearization of the argument in~\cite{SchleichWittBirkhoff} as in \cite[\S7.2]{HintzVasyKdSStability} and \cite[\S5.5]{HintzKNdSStability}. Thus, we work with the coordinates $(t_0,r)$ in which $g=\mu\,d t_0^2-2 d t_0\,d r-r^2\slg$. Rather than using the form~\eqref{EqYMSym2} of spherically symmetric (scalar type $l=0$) metric perturbations, we write $\dot g$ in the form
\[
  \dot g=\dot\mu\,d t_0^2-2\dot X\,d t_0\,d r+\dot Z\,d r^2 - 2 r^2\dot Y\slg,
\]
with coefficients which are smooth functions of $(t_0,r)$ in $0<r<\infty$. To describe pure gauge perturbations, we calculate that as a map between sections of $\la\pa_{t_0}\ra\oplus\la\pa_r\ra$ and $\la d t_0^2\ra\oplus\la 2\,d t_0\,d r\ra\oplus\la d r^2\ra\oplus\la\slg\ra$, so in particular $\dot g=(\dot\mu,-\dot X,\dot Z,-2 r^2\dot Y)$, we have
\[
  \cL_{(-)} g=
  \begin{pmatrix}
    2\mu\pa_{t_0} & -2\pa_{t_0}+\mu' \\
    \mu\pa_r-\pa_{t_0} & -\pa_r \\
    -2\pa_r & 0 \\
    0 & -2 r
  \end{pmatrix}.
\]
Therefore, putting
\begin{equation}
\label{EqMSSc0Z}
  Z_1:=\half\int_{3\bhm}^r \dot Z\,d r, \qquad
  \omega:=Z_1\pa_{t_0}-r\dot Y\pa_r = \begin{pmatrix} Z_1 \\ -r\dot Y \end{pmatrix},
\end{equation}
we have
\[
  \updelta\dot g:=\cL_\omega g
  =\begin{pmatrix}
     2\mu\pa_{t_0}Z_1 +2 r\pa_{t_0}\dot Y-r\mu'\dot Y \\
     \half\mu\dot Z-\pa_{t_0}Z_1+\pa_r(r\dot Y) \\
     -\dot Z \\
     2 r^2\dot Y
   \end{pmatrix}.
\]

Consider the case that $\dot g$ is a mode. If $\sigma=0$, so $\dot g\in\Hbext^{\infty,\ell}$ and $\omega$ are stationary, then
\[
  Z_1,\ \omega\in\Hbext^{\infty,\ell-1},\quad \updelta\dot g\in\Hbext^{\infty,\ell}.
\]
When $\sigma\neq 0$, we integrate in~\eqref{EqMSSc0Z} along level sets of $t_*$, thus $\omega$, $\updelta\dot g$ are outgoing modes as well. If $\dot g\in\Poly(t_*)^k\Hbext^{\infty,\ell}$ is a generalized zero mode as in part~\eqref{ItMSSc0} of Theorem~\ref{ThmMS}, then integration gives $\omega\in\Poly(t_*)^k\Hbext^{\infty,\ell'}$ for some $\ell'\leq\ell-1$, and therefore $\updelta\dot g\in\Poly(t_*)^k\Hbext^{\infty,\ell'-1}$. Replacing $\dot g$ by $\dot g+\updelta\dot g$, we have reduced to the case
\[
  \dot g=\dot\mu\,d t_0^2-2\dot X\,d t_0\,d r,
\]
without increasing the highest power of $t_*$ in the generalized zero mode setting.

Next, the Einstein equation $\Ric(\check g)=0$ for metrics $\check g=\check\mu\,d t_0^2-2\check X\,d t_0\,d r-r^2\slg$, with $\check\mu,\check X$ functions of $(t_0,r)$, has $d r^2$-component $\pa_r\check X/(2 r\check X)=0$, thus implies $\pa_r\check X=0$ when $\check X\neq 0$. For $\check g=g$, we have $\check X=1$; the linearized Einstein equation for $\dot g$ thus implies $\pa_r\dot X=0$, so
\begin{equation}
\label{EqMSSc0dotX}
  \dot X=\tilde X(t_0)
\end{equation}
for some function $\tilde X$; since $\dot X$ is a mode, we have $\tilde X(t_0)=\tilde X(0)e^{-i\sigma t_0}$. Thus, for $\sigma\neq 0$, $\dot X$ violates the outgoing condition unless $\tilde X=0$; for $\sigma=0$, the membership $\dot g\in\Hbext^{\infty,\ell}\subset\cA^{\ell+3/2}$ (so $\dot g=o(1)$, thus $\dot X=o(1)$) likewise forces $\dot X\equiv 0$. Therefore, when $\dot g$ is a mode, we have
\begin{equation}
\label{EqMSSc0Final}
  \dot g=\dot\mu\,d t_0^2.
\end{equation}
The spherical component of the linearized Einstein equation then reads $-\pa_r(r\dot\mu)=0$, therefore $\dot\mu=-\frac{2\dot\bhm}{r}$, where $\dot\bhm=\dot\bhm(t_0)$ is a constant of integration. The $d t_0^2$ component of the linearized Einstein equation however implies $\pa_{t_0}\dot\bhm(t_0)=0$, so $\dot\bhm$ is in fact constant. This shows that $\dot g$ in~\eqref{EqMSSc0Final} is necessarily stationary (in particular, if $\dot g$ is a mode solution with non-zero frequency, it must vanish), and in fact equal to the metric perturbation arising by an infinitesimal change of the Schwarzschild mass: $\dot g=\dot g_{(\bhm,0)}^0(\dot\bhm,0)$.

In the generalized zero mode case, we can only conclude $\dot X\in\Poly(t_0)^k$ from~\eqref{EqMSSc0dotX}; however, we cannot conclude that $\dot X=0$ since $\dot X$ does not necessarily decay at infinity. Instead, write $\dot X=\pa_{t_0}f$ with $f=f(t_0)\in\Poly(t_0)^{k+1}$; note that this has an extra power of $t_0$. Then
\[
  \updelta\dot g:=\cL_{2 f\pa_{t_0}}g = (4\mu\dot X,\,-2\dot X,\,0,\,0),
\]
and therefore $\dot g+\updelta\dot g$ is now of the form~\eqref{EqMSSc0Final}, hence a linearized Schwarzschild metric as before.

\subsection{Vector type perturbations}
\label{SsMSVec}

We discuss $l\geq 2$ and $l=1$ modes separately. We denote by $\vect\in\vect_l$ (in particular $\sldelta\vect=0$) a spherical harmonic 1-form with eigenvalue $k^2$, $k=(l(l+1)-1)^{1/2}$.

\subsubsection{Modes with \texorpdfstring{$l\geq 2$}{l at least 2}}
\label{SsMSVec2}

Here, $k^2\geq 5$. To study metric perturbations of the form
\begin{equation}
\label{EqMSVect2}
  \dot g=\begin{pmatrix} 0 \\ r f\otimes\vect \\ -\tfrac{2}{k} r^2 H_T\sldelta^*\vect \end{pmatrix},
\end{equation}
we first compute the form of pure gauge solutions of the same type, to wit
\[
  \updelta\dot g=\delta_g^*\omega=\begin{pmatrix} 0 \\ r^2\wh d r^{-1} L\otimes\vect \\ 2 r L\sldelta^*\vect \end{pmatrix}, \qquad
  \omega=\begin{pmatrix} 0 \\ 2 r L\vect \end{pmatrix}.
\]
The change in the parameters $f,H_T$ of $\dot g$ upon adding $\updelta\dot g$ is thus
\[
  \updelta f=r\wh d r^{-1}L, \quad
  \updelta H_T=-\tfrac{k}{r}L,
\]
which implies that the quantity
\[
  J := f+\tfrac{r}{k}\wh d H_T
\]
is gauge-invariant. If $J=0$, then $\dot g$ is a mode solution; indeed,
\[
  \dot g=\delta_g^*\bigl(0,-\tfrac{2 r^2}{k}H_T\vect\bigr).
\]
The gauge potential here has an extra factor of $r$ relative to $\dot g$.

Expressing the vector type tensor $D_g\Ric(\dot g)$ similarly to~\eqref{EqMSVect2} in terms of $f^E$ (aspherical 1-form) and $H_T^E$ (aspherical function), the linearized second Bianchi identity gives
\[
  r^{-2}\wh\delta r^3 f^E+\tfrac{k^2-1}{k}H_T^E = 0.
\]
Thus, $f^E=0$ automatically gives $H_T^E=0$. The linearized Einstein equation for $\dot g$ is thus equivalent to $f^E=0$; using the calculations starting with~\eqref{EqMSExpr}, see also \cite[Equation~(5.73)]{HintzKNdSStability}, this takes the form
\begin{equation}
\label{EqMSVec2LinEin}
  r^{-2}\wh\delta r^4\wh d r^{-1}J - (k^2-1)r^{-1}J = 0.
\end{equation}
Now, on an orientable, signature $(p,q)$ pseudo-Riemannian manifold of dimension $n=p+q$, one has $\star^2=(-1)^{k(n-k)+q}$ and $\delta=(-1)^{k(n-k+1)+q}\star d\star$ on $k$-forms. Therefore,~\eqref{EqMSVec2LinEin} implies
\[
  \wh\star r J=\wh d(r\Phi),\quad
  \Phi:=\tfrac{1}{k^2-1}r^3\wh\delta r^{-1}\wh\star J.
\]
Applying $r\wh\delta r^{-2}=r\wh\star\wh d\,\wh\star r^{-2}$ from the left, we obtain the equation $\frac{k^2-1}{r^2}\Phi=r\wh\delta r^{-2}\wh d r\Phi$, so
\begin{equation}
\label{EqMSVec2Master}
  (\wh\Box-V)\Phi=0,\quad V=r^{-2}\Bigl(k^2+1-\frac{6\bhm}{r}\Bigr).
\end{equation}
In static coordinates $(t,r)$, this reads
\begin{equation}
\label{EqMSVec2Master2}
  (D_t^2-(\mu D_r)^2-\mu V)\Phi = 0.
\end{equation}
Note that $V>0$ in $r>2\bhm$, and its asymptotic behavior is $V=(k^2+1)r^{-2}+\cO(r^{-3})$. To show that $\dot g$ is pure gauge, it suffices to show that $\Phi$ must vanish. In $\Im\sigma>0$, this follows from the positivity of $V$ and an integration by parts argument as in~\S\ref{SssMSSc2}; note that $\Phi$ is outgoing at $r=2\bhm$ and rapidly decaying as $r\to\infty$. When $\sigma\in\R$ is non-zero, $\Phi$ is outgoing on the real line $\R_{r_*}$, as follows from its definition; therefore, a boundary pairing or Wronskian argument implies the vanishing of $\Phi$ in this case as well.

For $\sigma=0$ finally, note that $J\in\Hbext^{\infty,\ell}$, therefore $\Phi\in\Hbext^{\infty,\ell-1}$, in particular $\Phi=o(r)$. In view of~\eqref{EqMSVec2Master}, the asymptotic behavior of $\Phi$ as $r\to\infty$ is governed by the indicial roots of $r^2\pa_r^2-(k^2+1)$, which are $\lambda_\pm=\half(1\pm\sqrt{4 k^2+5})$; using $k^2\geq 5$, the solution $r^{\lambda_+}$ corresponding to $\lambda_+\geq 3$ is excluded, hence $\Phi=\cO(r^{\lambda_-})=\cO(r^{-2})$. Thus, one can pair~\eqref{EqMSVec2Master2} with $\Phi$ and integrate by parts, obtaining again $\Phi=0$.

\subsubsection{Modes with \texorpdfstring{$l=1$}{l equal to 1}}
\label{SsMSVec1}

Finally, we consider a metric perturbation $\dot g=(0,r f\otimes\vect,0)$ with $\vect\in\vect_1$. Pure gauge solutions of the same type are
\[
  \updelta\dot g=\delta_g^*\omega=\begin{pmatrix} 0 \\ r^2\wh d r^{-1}L\otimes\vect \\ 0 \end{pmatrix},\quad \omega=\begin{pmatrix} 0 \\ 2 r L\vect \end{pmatrix},
\]
and adding them to  $\dot g$ changes $f$ by $\updelta f=r\wh d r^{-1}L$. Therefore, the quantity $\wh d(r^{-1}f)$ is gauge-invariant; note that this is a (top degree) differential 2-form on $\wh X$.

If $\wh d(r^{-1}f)$ vanishes, then $\dot g$ is pure gauge: indeed, since $\wh X$ is contractible, we can write $r^{-1}f=\wh d(r^{-1}L)$, and then
\begin{equation}
\label{EqMSVec1GaugePota}
  \dot g=\delta_g^*\omega,\quad \omega=(0,2 r L\vect).
\end{equation}
Let us determine the size of $L$: in the splitting $T^*\wh X=\la d t_*\ra \oplus \la d r\ra$, and using the basis $d t_*\wedge d r$ for 2-forms, the exterior differential $\wh d$ acting on modes of frequency $\sigma$, that is, the operator $\wh d(\sigma):=e^{i\sigma t_*}\wh d e^{-i\sigma t_*}$, is given by
\[
  \wh d(\sigma) = \begin{pmatrix} -i\sigma \\ \pa_r \end{pmatrix}\ \ \text{on functions}, \qquad
  \wh d(\sigma) = (-\pa_r,\,-i\sigma)\ \ \text{on 1-forms}.
\]
If $\sigma\neq 0$, write $L=e^{-i\sigma t_*}L'$, $f=e^{-i\sigma t_*}f'$. Then $\wh d(r^{-1}f)=0$ is equivalent to $\pa_r(r^{-1}f'_{t_*})+i\sigma r^{-1}f'_r=0$. Thus, for $L'=\frac{i}{\sigma}f'_{t_*}$, we have $\wh d(\sigma)(r^{-1}L')=r^{-1}f'$; hence, we can take $L = \tfrac{i}{\sigma}f_{t_*}$, which is outgoing, in~\eqref{EqMSVec1GaugePota}. In the stationary case $\sigma=0$, we have $\pa_r(r^{-1}f_{t_*})=0$, so $f_{t_*}=c r$, $c\in\C$; since $f\in\Hbext^{\infty,\ell}$ is $o(1)$ as $r\to\infty$, we must have $c=0$, so $f_{t_*}\equiv 0$. Therefore, we have
\[
  r^{-1}f=r^{-1}f_r\,d r=\wh d(r^{-1}L),\quad L:=r\int_\infty^r r^{-1}f_r\,d r,
\]
with $L\in\Hbext^{\infty,\ell-1}$, thus also $\omega\in\Hbext^{\infty,\ell-1}$ in~\eqref{EqMSVec1GaugePota}.

Returning to the study of $\dot g$, the linearized Einstein equation is equivalent to the single equation $r^{-2}\wh\delta r^4\wh d r^{-1}f=0$ for $f$, thus
\[
  \wh d(\wh\star r^4\wh d r^{-1}f)=0,
\]
i.e.\ the \emph{function} $\wh\star r^4\wh d r^{-1}f$ is constant. Now, differentiating the Kerr family in the angular momentum parameter, we recall from~\eqref{EqKaLin2} that
\[
  \dot g_{(\bhm,0)}^0(0,\bha) = 2 r f_\bhm\otimes_s\vect, \quad
  f_\bhm=2\bhm r^{-2}\,d t_0+r^{-1}\,d r,\ \vect=\sin^2\theta\,d\varphi=(\pa_\varphi)^\flat.
\]
In particular, $\wh\star r^4\wh d r^{-1}f_\bhm=6\bhm$; this equality is independent of the particular presentation of the Kerr family since $\wh d r^{-1}f$ is gauge-independent. Therefore, replacing $\dot g$ by $\dot g-\dot g_{(\bhm,0)}(0,\dot\bha)$ for a suitable linearized angular momentum $\dot\bha\in\R$, we may assume that $\wh\star r^4\wh d r^{-1}f\equiv 0$, hence $\wh d r^{-1}f=0$, so $\dot g$ is pure gauge.

This finishes the proof of Theorem~\ref{ThmMS}.

\section{Modes of the unmodified linearized gauge-fixed Einstein operator}
\label{SL}

We now combine the results of the previous sections to study the linearized unmodified gauge-fixed Einstein operator
\[
  L_g = 2(D_g\Ric + \delta_g^*\delta_g\sfG_g)
\]
from~\eqref{EqOpLinOp} on the \emph{Schwarzschild spacetime} $g=g_{b_0}$. In~\S\ref{SsL0}, we prove the absence of non-zero modes of this operator in the closed upper half plane $\Im\sigma\geq 0$, and compute the space of zero modes. In~\S\ref{SsL0Lin}, we find all generalized zero modes on Schwarzschild spacetimes which grow at most linearly in $t_*$; these include asymptotic Lorentz boosts. We briefly discuss (at least) quadratically growing generalized modes of $L_{g_{b_0}}$ in~\S\ref{SsL0Qu}; they do exist, but do \emph{not} satisfy the linearized Einstein equation. We will eliminate such pathological modes by means of constraint damping in~\S\ref{SCD}.

In~\S\S\ref{SsL0}--\ref{SsL0Lin}, we will at the same time construct spaces of zero modes and linearly growing zero modes of $L_{g_b}$, where $b=(\bhm,\bha)$ denotes Kerr black hole parameters close to $b_0$, which have the same dimension as the corresponding spaces for $L_{g_{b_0}}$. However, we stress that, prior to controlling the resolvent of $L_{g_{b_0}}$ on a Schwarzschild spacetime in a neighborhood of $\sigma=0$ in a suitably non-degenerate manner, cf.\ the toy model at the end of~\S\ref{SssIIK}, we cannot even prove the absence of small non-zero modes for $L_{g_b}$. Thus, the results in~\S\S\ref{SsL0}--\ref{SsL0Lin} are merely \emph{existence} results for generalized zero modes of $L_{g_b}$ when $\bha\neq 0$. The constraint damping modification $L_{g_b,\gamma}$, $\gamma\neq 0$, discussed in~\S\S\ref{SsOpCD} and \ref{SCD}, does have a non-degenerate (and rather explicit) resolvent, as we show in~\S\ref{SR}.

\subsection{Modes in \texorpdfstring{$\Im\sigma\geq 0$}{the upper half plane}}
\label{SsL0}

We now prove the analogue of Theorems~\ref{Thm0} and \ref{Thm1}; we use the notation for 1-forms from Theorem~\ref{Thm1} and Propositions~\ref{Prop10Grow} and \ref{Prop10Genv1}, and define the spectral family $\wh{L_{g_b}}(\sigma)$ for $b=(\bhm,\bha)$ using the function $t_{\bhm,*}$ as in~\eqref{EqOpLinFT}.

\begin{prop}
\label{PropL0}
  The spectral family of $L_{g_{b_0}}$ on the Schwarzschild spacetime has the following properties:
  \begin{enumerate}
  \item For $\Im\sigma\geq 0$, $\sigma\neq 0$, the operator
    \begin{align*}
      \wh{L_{g_{b_0}}}(\sigma)\colon&\{\omega\in \Hbext^{s,\ell}(X;S^2\,\wt\Tsc{}^*X)\colon\wh{L_{g_{b_0}}}(\sigma)\omega\in\Hbext^{s,\ell+1}(X;S^2\,\wt\Tsc{}^*X)\} \\
        &\qquad \to\Hbext^{s,\ell+1}(X;S^2\,\wt\Tsc{}^*X)
    \end{align*}
    is invertible when $s>\tfrac52$, $\ell<-\half$, $s+\ell>-\half$.
  \item For $s>\tfrac52$ and $\ell\in(-\tfrac32,-\half)$, the zero energy operator
    \begin{equation}
    \label{EqL0Op}
    \begin{split}
      \wh{L_{g_{b_0}}}(0)\colon&\{\omega\in\Hbext^{s,\ell}(X;S^2\,\wt\Tsc{}^*X)\colon\wh{L_{g_{b_0}}}(0)\omega\in\Hbext^{s-1,\ell+2}(X;S^2\,\wt\Tsc{}^*X)\} \\
        &\qquad \to\Hbext^{s-1,\ell+2}(X;S^2\,\wt\Tsc{}^*X)
    \end{split}
    \end{equation}
    has 7-dimensional kernel and cokernel.
  \end{enumerate}
  
  The second statement holds also for $\wh{L_{g_b}}(0)$ with $b=(\bhm,\bha)$ near $(\bhm_0,0)$; concretely,
  \begin{subequations}
  \begin{alignat}{3}
  \label{EqL0Ker}
    &\ker\wh{L_{g_b}}(0) \cap \Hbext^{\infty,-1/2-}& &=&\ \la h_{b,s 0}\ra \oplus \{ h_{b,v 1}(\vect)\colon\vect\in\vect_1 \} \oplus \{ h_{b,s 1}(\scal)\colon \scal\in\scal_1 \}, \\
  \label{EqL0KerAdj}
    &\ker\wh{L_{g_b}}(0)^* \cap \Hbsupp^{-\infty,-1/2-}& &=& \la h_{b,s 0}^*\ra \oplus \{ h_{b,v 1}^*(\vect) \colon \vect\in\vect_1 \} \oplus \{ h_{b,s 1}^*(\scal)\colon \scal\in\scal_1 \},
  \end{alignat}
  \end{subequations}
  where the $h_{b,\bullet\bullet}^{(*)}$ depend continuously on $b$, with
  \begin{subequations}
  \begin{alignat}{3}
  \label{EqL0s0}
    h_{b,s 0} &= \delta_{g_b}^*\omega_{b,s 0}, & h_{b,s 0}^* &= \sfG_{g_b}\delta_{g_b}^*\omega_{b,s 0}^*, \\
  \label{EqL0s1}
    h_{b,s 1}(\scal) &= \delta_{g_b}^*\omega_{b,s 1}(\scal),\quad & h_{b,s 1}^*(\scal) &= \sfG_{g_b}\delta_{g_b}^*\omega_{b,s 1}^*(\scal), \\
  \label{EqL0v1}
    h_{b,v 1}(\vect)&=\dot g_b(\dot b)+\delta_{g_b}^*\omega,\quad & h_{b,v 1}^*(\vect) &= \sfG_{g_b}\delta_{g_b}^*\omega_{b,v 1}^*(\vect),
  \end{alignat}
  \end{subequations}
  where $\dot b$, $\omega\in\Hbext^{\infty,-3/2-}$ depend on $b,\vect$; here $\scal\in\scal_1$, $\vect\in\vect_1$. At $b=b_0$, we have $h_{b_0,v 1}(\vect)=2\omega_{b_0,s 0}\otimes_s\vect$. The dual states are supported in $r\geq r_b$, $\CI$ in $r>r_b$, conormal at $\pa_+X$ with the stated weight, and lie in $H^{-3/2-}$ near the event horizon.

  Furthermore, all zero modes are solutions of the linearized Einstein equation and satisfy the linearized gauge condition; that is, $\ker\wh{L_{g_b}}(0)\cap\Hbext^{\infty,-1/2-}\subset\ker D_{g_b}\Ric\cap\ker D_{g_b}\Ups(-;g_b)$ in the notation of Definition~\ref{DefOpGauge}.
\end{prop}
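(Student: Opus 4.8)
\textbf{Plan of proof for the final statement of Proposition~\ref{PropL0}.}
The claim is that every zero mode $h\in\ker\wh{L_{g_b}}(0)\cap\Hbext^{\infty,-1/2-}$ satisfies both $D_{g_b}\Ric(h)=0$ and $D_{g_b}\Ups(h)=0$. The strategy mirrors, but is more delicate than, the Schwarzschild mode-stability argument sketched in~\S\ref{SssIIS}: the key is to show that the gauge 1-form $\theta:=D_{g_b}\Ups(h)$, which a priori only satisfies the constraint propagation equation, is forced to vanish by a decay mismatch. Concretely, applying the linearized second Bianchi identity $\delta_{g_b}\sfG_{g_b}\circ D_{g_b}\Ric\equiv 0$ to the relation $L_{g_b}h = 2 D_{g_b}\Ric(h) + 2\delta_{g_b}^*\theta = 0$ (recall $\theta=\delta_{g_b}\sfG_{g_b}h$ in the linearization, and $L_{g_b}h = 2 D_{g_b}\Ric(h)+2\delta_{g_b}^*\delta_{g_b}\sfG_{g_b}h$) yields $\cP_{b}\theta = 0$, where $\cP_{b}=2\delta_{g_b}\sfG_{g_b}\delta_{g_b}^*$ is the 1-form wave operator as in~\eqref{EqIISGaugeProp}, i.e.\ $\cP_b = \Box_{g_b,1}$. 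So $\theta\in\ker\wh{\Box_{g_b,1}}(0)$.

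Next I would pin down the decay of $\theta$. Since $h\in\Hbext^{\infty,-1/2-}$ and $\wh{\delta_{g_b}}(0)$, $\sfG_{g_b}$ act on such stationary tensors as (weighted) b-operators — more precisely $\wh{\delta_{g_b}}(0)\in\rho\Diffb^1$ by~\eqref{EqKStDivSymmGrad} — we get $\theta = \wh{\delta_{g_b}\sfG_{g_b}}(0)h \in \Hbext^{\infty,1/2-}$, so in particular $\theta = o(r^{-1})$. But by Theorem~\ref{Thm1}, $\ker\wh{\Box_{g_b,1}}(0)\cap\Hbext^{\infty,-1/2-}=\la\omega_{b,s 0}\ra$ is one-dimensional, spanned by $\omega_{b,s 0}$, which has \emph{exactly} $r^{-1}$ leading behavior (on Schwarzschild, $\omega_{b_0,s 0}=r^{-1}(d t_0-d r)$, cf.\ \eqref{Eq1s0}). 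Hence the only element of this kernel that lies in the smaller space $\Hbext^{\infty,1/2-}$ is $0$; but we need slightly more care with weights, since $\theta\in\Hbext^{\infty,1/2-}$ a priori only says $\theta\in\Hbext^{\infty,-1/2-}$, so we may conclude $\theta\in\la\omega_{b,s 0}\ra$, say $\theta=c\,\omega_{b,s 0}$. Then matching the $r^{-1}$ coefficient: $\theta$ has no $r^{-1}$ term (it is $o(r^{-1})$), while $c\,\omega_{b,s 0}$ has $r^{-1}$ leading coefficient $c$ times a nonzero covector field on $\pa X$; therefore $c=0$, so $\theta=0$. This is the step I expect to be the main obstacle: one must verify carefully that the normal-operator/indicial-root bookkeeping indeed places $\theta$ in the range where the only kernel element is the one-dimensional $\la\omega_{b,s 0}\ra$, and that the leading $r^{-1}$ coefficient of $\omega_{b,s 0}$ is genuinely nonvanishing on all of $\pa X$ (not just generically), so that the mismatch argument is airtight. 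For $b$ near $b_0$ this is a perturbation of the explicit Schwarzschild computation, but the angular-momentum corrections must be tracked.

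With $\theta=D_{g_b}\Ups(h)=0$ established, the linearized gauge condition holds; and then $0 = L_{g_b}h = 2 D_{g_b}\Ric(h)+2\delta_{g_b}^*\theta = 2 D_{g_b}\Ric(h)$, so $h$ also solves the linearized Einstein equation. This proves the stated inclusion $\ker\wh{L_{g_b}}(0)\cap\Hbext^{\infty,-1/2-}\subset\ker D_{g_b}\Ric\cap\ker D_{g_b}\Ups(-;g_b)$. (As a byproduct, combined with Theorem~\ref{ThmMS} and Propositions~\ref{Prop00Grow}, \ref{Prop10Grow}, \ref{Prop10Genv1} this identifies the $7$-dimensional kernel explicitly as in~\eqref{EqL0Ker}, but for the final statement only the inclusion is needed.) I would close by remarking that the same argument applies verbatim at $b=b_0$, which is the case with the cleanest constants, and that the analogous reasoning for the dual states — using that $\ker\wh{\Box_{g_b,1}}(0)^*$ consists of $\delta$-distributions at the horizon disjoint from the relevant source terms — underlies the companion inclusion for $\ker\wh{L_{g_b}}(0)^*$.
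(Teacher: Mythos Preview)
Your argument is correct and follows essentially the same route as the paper: you apply the linearized second Bianchi identity to see that $\theta=\delta_{g_b}\sfG_{g_b}h$ lies in $\ker\wh{\Box_{g_b,1}}(0)$, use the mapping property $\wh{\delta_{g_b}\sfG_{g_b}}(0)\in\rho\Diffb^1$ to place $\theta\in\Hbext^{\infty,1/2-}$, and then invoke Theorem~\ref{Thm1} together with the fact that $\omega_{b,s 0}$ has a genuine $r^{-1}$ leading term (equivalently, $\omega_{b,s 0}\notin\Hbext^{\infty,1/2-}$) to force $\theta=0$. The paper phrases the decay mismatch slightly more tersely---simply noting that the generator $\omega_{b_0,s 0}$ of the one-dimensional kernel does not lie in $\Hbext^{\infty,1/2-}$---but your coefficient-matching formulation is equivalent, and your concern about verifying the nonvanishing of the $r^{-1}$ coefficient of $\omega_{b,s 0}$ for $b$ near $b_0$ is addressed by the explicit formula in Remark~\ref{Rmk10KerrExpl}.
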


The zero energy states are linear combinations of linearized Kerr metrics and pure gauge tensors. The dual states all turn out to be \emph{dual-pure-gauge} solutions: recall from~\eqref{EqOpCDAdjoint} (with $E=0$) that $L_g^*$ annihilates $\sfG_g\delta_g^*\omega^*$ if $0=\delta_g(\sfG_g\delta_g^*\omega^*)=\half\Box_{g,1}\omega^*$, which is the origin of the expressions on the right in~\eqref{EqL0s0}--\eqref{EqL0v1}. We also note, as in Remark~\ref{Rmk10AdjDecay}, that elements of $\ker\wh{L_{g_b}}(0)^*$ which lie in the dual space $\Hbsupp^{-s+1,-\ell-2}$ of the range of~\eqref{EqL0Op} automatically have the decay rate $-\half-$ by normal operator arguments as in Proposition~\ref{PropOpNull}.

The vector $l=1$ modes $h_{b_0,v 1}(\vect)$ on Schwarzschild spacetimes are linearizations of the Kerr family (plus a suitably chosen pure gauge term) in the angular momentum, see the arguments following~\eqref{EqL0V1} below. On the other hand, the linearized Schwarzschild family, i.e.\ the linearization of $g_{(\bhm,0)}$ in $\bhm$, does not appear here due to our choice of gauge (see also Remark~\ref{RmkCDUGauge}); it shows up only as a generalized zero mode in~\S\ref{SsL0Lin}.

\begin{proof}[Proof of Proposition~\ref{PropL0}]
  \emph{We first consider the Schwarzschild case $b=b_0=(\bhm_0,0)$, and write $g=g_{b_0}$.} Consider a non-zero mode solution $\wh{L_g}(\sigma)h=0$, $\Im\sigma\geq 0$. The linearized second Bianchi identity implies
  \[
    \delta_g\sfG_g\delta_g^*(\delta_g\sfG_g h)=0.
  \]
  If $\sigma\neq 0$, then $\delta_g\sfG_g h$ is an outgoing mode; if $\sigma=0$, then $\delta_g\sfG_g h\in\Hbext^{\infty,1/2-}$ by~\eqref{EqKStDivSymmGrad}. In both cases, Theorem~\ref{Thm1} and the fact that the generator $\omega_{b_0,s 0}$ of the kernel in~\eqref{Eq1s0Ker} does \emph{not} lie in $\Hbext^{\infty,1/2-}$ imply
  \begin{equation}
  \label{EqL0LinGauge}
    \delta_g\sfG_g h = 0
  \end{equation}
  and thus also
  \begin{equation}
  \label{EqL0LinEin}
    D_g\Ric(h) = 0.
  \end{equation}

  Next, we apply the mode stability result, Theorem~\ref{ThmMS}. Consider first the case $\sigma\neq 0$; then $h=\delta_g^*\omega$ with $\omega$ an outgoing mode; plugging this into~\eqref{EqL0LinGauge}, we obtain $\Box_{g,1}\omega=0$ and hence $\omega=0$ by Theorem~\ref{Thm1}, thus $h=0$. This proves the injectivity of $\wh{L_g}(\sigma)$ for non-zero $\sigma$ with $\Im\sigma\geq 0$, hence its invertibility by Theorem~\ref{ThmOp}.

  Suppose $\sigma=0$; without loss, we can assume $h$ to be of pure type. If $h$ is of scalar or vector type $l\geq 2$, then $h=\delta_g^*\omega$ with $\omega\in\Hbext^{\infty,-3/2-}$, and again $\omega\in\ker\wh{\Box_g}(0)$ by~\eqref{EqL0LinGauge}. By Proposition~\ref{Prop10Grow}, there are no non-trivial such scalar or vector type $l\geq 2$ 1-forms, giving $\omega=0$ and thus $h=0$ again. We study the remaining three types of zero modes separately.
  
  \pfstep{Scalar type $l=0$ modes.} If $h$ is of scalar type $l=0$, then
  \[
    h = \dot g_{b_0}^0(\dot\bhm,0) + \delta_g^*\omega,\quad
    \omega\in\Hbext^{\infty,-3/2-},
  \]
  and~\eqref{EqL0LinGauge} gives the additional condition
  \[
    \wh{\Box_g}(0)\omega = -2\delta_g\sfG_g\dot g_{b_0}^0(\dot\bhm,0) = \tfrac{4\dot\bhm}{r^2}d t_0.
  \]
  Integrating this against $\omega_{b_0,s 0}^*$, see~\eqref{Eq1s0}, which annihilates the range of $\wh{\Box_g}(0)$ on $\Hbext^{\infty,\ell}$ for any $\ell\in\R$, the left hand side gives $0$, while the right hand side equals $-8\dot\bhm$ in view of
  \begin{equation}
  \label{EqL0SchwGauge}
    \la\delta_g\sfG_g\dot g_{b_0}^0(1,0), \omega_{b_0,s 0}^*\ra = 4.
  \end{equation}
  Therefore, $\dot\bhm=0$, and $h=\delta_g^*\omega$ is pure gauge, with $\omega$ of scalar type $l=0$ and of size $o(r)$; by Proposition~\ref{Prop10Grow}, we therefore have $\omega=c_1\omega_{b_0,s 0}+c_2\omega_{b_0,s 0}^{(0)}$ for some $c_1,c_2\in\C$. In view of~\eqref{Eq10Grows0}, this gives $h=\delta_g^*\omega=c_2\delta_g^*\omega_{b_0,s 0}=c_2 h_{b,s 0}$, as advertised in~\eqref{EqL0s0}. Conversely, $h_{b_0,s 0}\in\ker\wh{L_g}(0)$; in fact, $h_{b_0,s 0}$ satisfies both equation~\eqref{EqL0LinGauge} (by construction in terms of the zero mode $\omega_{b,s 0}$ of $\delta_g\sfG_g\delta_g^*$) and equation~\eqref{EqL0LinEin} (since it is pure gauge).

  \pfstep{Scalar type $l=1$ modes.} If $h$ is of scalar type $l=1$, then $h=\delta_g^*\omega$, with $\omega\in\Hbext^{\infty,-3/2-}\cap\ker\wh{\Box_g}(0)$ of scalar type $l=1$, which means $\omega=c\omega_{b_0,s 1}(\scal)$ for some $\scal\in\scal_1$ by Proposition~\ref{Prop10Grow}, and thus $h=c\delta_g^*\omega_{b_0,s 1}(\scal)=c h_{b_0,s 1}(\scal)$ indeed, giving~\eqref{EqL0s1}. Conversely, $h_{b_0,s 1}(\scal)$ satisfies equations~\eqref{EqL0LinGauge}--\eqref{EqL0LinEin}, hence is, a fortiori, a zero mode of $L_g$.

  \pfstep{Vector type $l=1$ modes.} For $h$ of vector type $l=1$, we have
  \[
    h = \dot g_{b_0}(0,\dot\bha) + \delta_g^*\omega,\quad \omega\in\Hbext^{\infty,-3/2-},
  \]
  with $\dot\bha\in\R^3$, and $\omega$ moreover satisfies $\wh{\Box_g}(0)\omega=-2\delta_g\sfG_g\dot g_{b_0}(0,\dot\bha)\in\Hbext^{\infty,1/2-}$. Since the vector type $l=1$ kernel of $\wh{\Box_g}(0)$ on $\Hbext^{\infty,-3/2-}$ is trivial, there is at most one such $\omega$ for fixed $\dot\bha$. Conversely, given $\dot\bha\in\R^3$, the existence of $\omega$ is guaranteed by duality since $\ker\wh{\Box_g}(0)^*\cap\Hbsupp^{-\infty,-1/2+}$ does not contain non-zero vector $l=1$ solutions by Theorem~\ref{Thm1}. Thus, the vector $l=1$ nullspace of $\wh{L_g}(0)$ is $3$-dimensional and spanned by such $h$.
  
  We can easily find $h$ explicitly by writing it as
  \begin{equation}
  \label{EqL0V1}
    h = \dot g_{b_0}^0(0,\dot\bha) + \delta_g^*\omega^0
  \end{equation}
  and verifying that it has the required decay at infinity. Here, rescaling to $|\dot\bha|=1$, $\omega^0$ solves
  \[
    \wh{\Box_g}(0)\omega^0 = -2\delta_g\sfG_g\dot g_{b_0}^0(0,\dot\bha) = -4 r^{-1}\vect,
  \]
  where $\vect=\sin^2\theta\,d\varphi$ in polar coordinates adapted to $\dot\bha$. This has the explicit solution $\omega^0=2(\bhm_0+r)\vect$, hence $\delta_g^*\omega^0=-(1+\tfrac{2\bhm_0}{r})d r\otimes_s\vect$ and therefore indeed
  \[
    h = 4\bhm r^{-1}(d t_0-d r)\otimes_s\vect = 4\bhm\omega_{b_0,s 0}\otimes_s\vect,
  \]
  finishing the calculation of zero energy states on Schwarzschild spacetimes.

  \pfstep{Dual states for Schwarzschild and Kerr spacetimes.} From our calculations thus far, and noting that $L_{g_{b_0}}$, restricted to tensors of pure type, is Fredholm of index $0$, we conclude that the dimensions of the spaces of dual zero energy states of pure type are $1$ (scalar type $l=0$), $3$ (scalar type $l=1$), and $3$ (vector type $l=1$). We can find bases for them by a dual-pure-gauge ansatz, as explained above, using the calculations of 1-forms zero energy dual states in~\S\ref{S1}. In fact, the elements on the right in~\eqref{EqL0s0}--\eqref{EqL0v1} span a 7-dimensional space of dual states also on slowly rotating \emph{Kerr} spacetimes. We briefly check the required decay at infinity: for $h_{b,s 0}^*$, which is a differentiated $\delta$-distribution at the event horizon $r=r_b$, this is clear; for $h_{b,s 1}^*$, it follows from $\wh{\delta_{g_b}^*}(0)\in\rho\Diffb^1$ and $\omega_{b,s 1}^*(\scal)\in\Hbsupp^{-\infty,-3/2-}$; and for $h_{b,v 1}^*$, it was proved in Proposition~\ref{Prop10Genv1}.

  \pfstep{Persistence for slowly rotating Kerr spacetimes.} The argument at the end of~\S\ref{Ss10} shows that the dimensions of $\ker\wh{L_{g_b}}(0)\cap\Hbext^{\infty,-1/2-}$ and $\ker\wh{L_{g_b}}(0)^*\cap\Hbsupp^{-\infty,-1/2-}$ are upper semicontinuous at $b=b_0$, where they are equal to $7$; the explicit construction of a $7$-dimensional space of dual states for $L_{g_b}$ shows that they are at least $7$-dimensional for $b$ near $b_0$. Thus, they equal $7$; the continuous dependence of the nullspace on $b$ then follows by a general functional analytic argument (cf.\ the proof of Lemma~\ref{LemmaCD0Dual}).

  Getting a precise description of the space zero energy states of $L_{g_b}$ requires a direct argument. Now, we certainly have $h_{b,s 0}$, $h_{b,s 1}(\scal)\in\ker\wh{L_{g_b}}(0)$; it remains to construct a continuous family (in $b$) of elements of $\ker\wh{L_{g_b}}(0)\cap\Hbext^{\infty,-1/2-}$ extending $h_{b_0,v 1}(\vect)$. For $\vect\in\vect_1$ which is (dual to) the rotation around the axis $\dot\bha\in\R^3$ (with $\vect$ having angular speed $|\dot\bha|$), we make the ansatz
  \begin{equation}
  \label{EqL0v1Ansatz}
    h_{b,v 1}(\vect) = \dot g_b(\lambda_b(\dot\bha),\dot\bha) + \delta_{g_b}^*\omega,
  \end{equation}
  with $\lambda_b\in(\R^3)^*$, $\lambda_{b_0}=0$, and $\omega\in\Hbext^{\infty,-3/2-}$ to be found. The equation $\wh{L_{g_b}}(0)h_{b,v 1}(\vect)=0$ is then satisfied provided
  \begin{equation}
  \label{EqL0v1Kerr}
    \wh{\Box_{g_b}}(0)\omega = -2\delta_{g_b}\sfG_{g_b}\dot g_b(\lambda_b(\dot\bha),\dot\bha) \in \Hbext^{\infty,1/2-}.
  \end{equation}
  In view of Theorem~\ref{Thm1}, the obstruction for solvability of this is the cokernel $\ker\wh{\Box_{g_b}}(0)^*\cap\Hbsupp^{-\infty,-1/2+}=\la\omega_{b,s 0}^*\ra$. That is, we need to choose $\lambda_b(\dot\bha)$ so that the right hand side of~\eqref{EqL0v1Kerr} is orthogonal to $\omega_{b,s 0}^*$. This gives
  \[
    \lambda_b(\dot\bha) = -\frac{\la\delta_{g_b}\sfG_{g_b}\dot g_b(0,\dot\bha),\omega_{b,s 0}^*\ra}{\la\delta_{g_b}\sfG_{g_b}\dot g_b(1,0),\omega_{b,s 0}^*\ra};
  \]
  note here that the denominator is non-zero $b$ near $b_0$ by continuity from the calculation~\eqref{EqL0SchwGauge} and the orthogonality $\la\delta_{g_{b_0}}\sfG_{g_{b_0}}(\delta_{g_{b_0}}^*\omega),\omega_{b_0,s 0}^*\ra=0$ for any $\omega$. The proof is complete.
\end{proof}

\begin{rmk}
\label{RmkL0RotVF}
  The `asymptotic rotations' $\omega_{b,v 1}(\vect)$ of Proposition~\ref{Prop10Genv1} were not used here, even though they give rise to zero energy states $h_b(\vect):=\delta_{g_b}^*\omega_{b,v 1}(\vect)\in\ker\wh{L_{g_b}}(0)\cap\Hbext^{\infty,-1/2-}$. To explain why they are, in fact, already captured by Proposition~\ref{PropL0}, note first that when $b=(\bhm,0)$ describes a \emph{Schwarzschild} black hole, then $\omega_{b,v 1}(\vect)=r^2\vect$ is dual to a rotation, thus Killing, vector field, hence $h_b(\vect)\equiv 0$. On the other hand, when $b=(\bhm,\bha)$ with $\bha\neq 0$, consider the orthogonal splitting $\vect_1=\la\pa_\varphi^\flat\ra \oplus \vect^\perp$, where $\pa_\varphi$ is unit speed rotation around the axis $\bha/|\bha|$; the latter is a Killing vector field for the metric $g_b$, and thus $h_b(\pa_\varphi^\flat)=0$. On the other hand, $\vect^\perp\ni\vect\mapsto h_b(\vect)$ is now injective; that this does \emph{not} give rise to new (i.e.\ not captured by Proposition~\ref{PropL0}) zero energy states is due to the fact that for such  $b$, the parametrization of the linearized Kerr family $\dot b\mapsto\dot g_b(\dot b)$ is no longer injective when quotienting out by pure gauge solutions, but rather has a 2-dimensional kernel. Indeed, if $\dot\bha\perp\bha$, then $\dot g_{(\bhm,\bha)}(0,\dot\bha)$ is pure gauge: it merely describes the same Kerr black hole with rotation axis rotated infinitesimally, i.e.\ is precisely of the form $h_b(\vect)$ for $\vect\in\vect^\perp$ (plus an extra pure gauge term depending on the presentation of the Kerr family, as described in~\S\ref{SsKa}). In summary then,
  \[
    h_{b,v 1}(\vect_1) + h_b(\vect_1) = h_{b,v 1}(\vect_1),\quad b=(\bhm,\bha),
  \]
  is 3-dimensional for $\bha=0$ as well as for $\bha\neq 0$.
\end{rmk}

\subsection{Generalized stationary modes: linear growth}
\label{SsL0Lin}

The zero energy behavior of the linearized gauge-fixed Einstein operator in the vector $l=1$ sector is non-degenerate, similarly to the wave operator on 1-forms in Lemma~\ref{Lemma10NoLin}:

\begin{lemma}
\label{LemmaL0Linv1}
  Let $d\geq 1$. There does not exist $h=\sum_{j=0}^d t_*^j h_j$ with $h_j\in\Hbext^{\infty,-1/2-}$ of vector type $l=1$ and $h_d\neq 0$ for which $L_{g_{b_0}}h=0$.
\end{lemma}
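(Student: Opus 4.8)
The plan is to argue by induction on the power $d$, reducing at each step to the non-degeneracy of a single pairing of a zero mode against a dual zero mode---the same mechanism that powered Lemma~\ref{Lemma10NoLin} for $1$-forms. Suppose $h=\sum_{j=0}^d t_*^j h_j$ is as in the statement with $h_d\neq 0$. Applying $L_{g_{b_0}}$ and collecting the top power $t_*^d$ gives $\wh{L_{g_{b_0}}}(0)h_d=0$, so by Proposition~\ref{PropL0} the tensor $h_d$ lies in the $7$-dimensional space~\eqref{EqL0Ker}; since $h_d$ is of vector type $l=1$, in fact $h_d\in\{h_{b_0,v1}(\vect):\vect\in\vect_1\}$, i.e.\ (rescaling) $h_d=h_{b_0,v1}(\vect)=2\omega_{b_0,s0}\otimes_s\vect$ for some $\vect\in\vect_1$. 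Collecting the power $t_*^{d-1}$ then yields
\[
  \wh{L_{g_{b_0}}}(0)h_{d-1} = -[L_{g_{b_0}},t_*]\ftrans(0)\,h_d = -\binom{d}{1}\,i\,\pa_\sigma\wh{L_{g_{b_0}}}(0)\,h_d,
\]
where by the analogue of Lemma~\ref{LemmaOpLinFT} for $L_{g_{b_0}}$ (stated in~\eqref{EqOpLinFT2}) we have $\pa_\sigma\wh{L_{g_{b_0}}}(0)\in\rho\Diffb^1$, hence the right-hand side lies in $\Hbext^{\infty,1/2-}$. By Fredholm theory (Theorem~\ref{ThmOp}), solvability of this equation for $h_{d-1}\in\Hbext^{\infty,\ell}$ requires the right-hand side to annihilate $\ker\wh{L_{g_{b_0}}}(0)^*\cap\Hbsupp^{-\infty,-1/2-}$; in the vector $l=1$ sector this cokernel is spanned by $h_{b_0,v1}^*(\vect')=\sfG_{g_{b_0}}\delta_{g_{b_0}}^*\omega_{b_0,v1}^*(\vect')$, $\vect'\in\vect_1$.

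The crux is therefore to show that the pairing
\[
  \big\la [L_{g_{b_0}},t_*]\,h_{b_0,v1}(\vect),\, h_{b_0,v1}^*(\vect')\big\ra
\]
is non-degenerate as a bilinear form in $(\vect,\vect')\in\vect_1\times\vect_1$ (equivalently, non-zero for $\vect'=\vect\neq 0$). I expect this to reduce to the $1$-form computation already done: since $h_{b_0,v1}(\vect)=2\delta_{g_{b_0}}^*\omega_{b_0,v1}(\vect)$ modulo the Killing contribution (on Schwarzschild $\omega_{b_0,v1}(\vect)=r^2\vect$ is dual to a rotation, so $h_{b_0,v1}(\vect)$ as written in~\eqref{EqL0v1} is exactly $2\omega_{b_0,s0}\otimes_s\vect=2\delta_{g_{b_0}}^*\omega_{b_0,s0}\,\lrcorner$-type expression), and $h_{b_0,v1}^*(\vect')$ is dual-pure-gauge built from $\omega_{b_0,v1}^*(\vect')$, one can commute $[L_{g_{b_0}},t_*]$ through $\delta_{g_{b_0}}^*$ and $\sfG_{g_{b_0}}$ up to terms that pair to zero (because $\delta_{g_{b_0}}\sfG_{g_{b_0}}$ kills the range, and $\omega_{b_0,s0}^*$ annihilates the range of $\wh{\Box_{g_{b_0}}}(0)$), landing on the pairing $\la[\Box_{g_{b_0}},t_*]\omega_{b_0,s0},\omega_{b_0,s0}^*\ra$ tensored with the inner product $\la\vect,\vect'\ra$ on $\vect_1$. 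That scalar pairing equals $2\neq 0$ by~\eqref{Eq10NoLinPair}. Hence the obstruction does not vanish for $h_d\neq 0$, giving a contradiction; so no such $h$ exists with $d\geq 1$.

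The main obstacle is precisely the bookkeeping in the previous paragraph: showing that the vector $l=1$ part of $[L_{g_{b_0}},t_*]h_{b_0,v1}(\vect)$ pairs with $h_{b_0,v1}^*(\vect')$ to a nonzero multiple of $\la\vect,\vect'\ra$, keeping careful track of the trace reversal $\sfG_{g_{b_0}}$, the symmetric gradient $\delta_{g_{b_0}}^*$, the extra Lie-derivative/pure-gauge corrections in~\eqref{EqL0v1} (which contribute only decaying terms and pair to zero), and the fact that the pairing uses the $g_{b_0}$-induced density and fiber inner product---which is indefinite but, as noted after~\eqref{Eq10NoDualdt}, still non-degenerate, which is all that is needed. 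A secondary point requiring care is that on Schwarzschild the scalar/vector decomposition is globally valid, so $[L_{g_{b_0}},t_*]$ preserves the vector $l=1$ sector (since $t_*$ is spherically symmetric and $L_{g_{b_0}}$, $\Box_{g_{b_0},1}$ preserve types by~\S\ref{SsYS}), allowing the reduction to stay entirely within this sector. Once the non-vanishing is established, the induction closes immediately: with $d\geq 2$ one would find $h_{d-1}$ of vector type $l=1$ solving $\wh{L_{g_{b_0}}}(0)h_{d-1}\in\Hbext^{\infty,1/2-}$ but the obstruction forces $h_d=0$, contradiction; and $d=1$ is exactly the base case just treated.
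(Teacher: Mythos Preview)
Your overall structure is the same as the paper's: reduce to $d=1$, identify $h_1=h_{b_0,v1}(\vect)$, and show that the solvability of $\wh{L_{g_{b_0}}}(0)h_0=-[L_{g_{b_0}},t_*]h_{b_0,v1}(\vect)$ is obstructed by the dual state $h_{b_0,v1}^*(\vect)$. (The paper's reduction to $d=1$ is slightly slicker---it uses that $\pa_{t_*}$ commutes with $L_{g_{b_0}}$, so $\pa_{t_*}h\in\ker L_{g_{b_0}}$ has degree $d-1$---but your coefficient-matching works too.)

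The gap is exactly where you flag it: you do not compute the pairing $\la[L_{g_{b_0}},t_*]h_{b_0,v1}(\vect),h_{b_0,v1}^*(\vect')\ra$, and your proposed reduction to the $1$-form pairing~\eqref{Eq10NoLinPair} via commuting through $\delta_{g_{b_0}}^*$ and $\sfG_{g_{b_0}}$ is left as ``bookkeeping.'' That route is not obviously clean: $h_{b_0,v1}(\vect)$ is \emph{not} pure gauge (it has a linearized Kerr piece, cf.~\eqref{EqL0v1}), and $[L_{g_{b_0}},t_*]$ acting on a symmetric product $\omega_{b_0,s0}\otimes_s\vect$ does not simply factor as $[\Box_{g_{b_0},1},t_*]\omega_{b_0,s0}$ tensored with $\vect$ without further argument. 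The paper avoids this entirely by direct computation: using the explicit formulas $h_{b_0,v1}(\vect)=2\omega_{b_0,s0}\otimes_s\vect$ and $h_{b_0,v1}^*(\vect')=\sfG_{g_{b_0}}\delta_{g_{b_0}}^*\bigl(r^2\vect'H(r-2\bhm_0)\bigr)=4\bhm_0^2\delta(r-2\bhm_0)\,d r\otimes_s\vect'$, one replaces $t_*$ by $t_0$ (same justification as in Lemma~\ref{Lemma10NoLin}), computes $[L_{g_{b_0}},t_0]h_{b_0,v1}(\vect)$ explicitly, and reads off
\[
  \big\la[L_{g_{b_0}},t_0]h_{b_0,v1}(\vect),h_{b_0,v1}^*(\vect')\big\ra=-2(\vol(\Sph^2))^{-1}\la\vect,\vect'\ra_{L^2(\Sph^2;T^*\Sph^2)}\neq 0\quad\text{for }\vect'=\vect.
\]
This is~\eqref{EqL0Linv1Nondeg}. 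It is a two-line calculation once you have the explicit tensors, and it \emph{does} come out proportional to $\la\vect,\vect'\ra$ as your heuristic predicts---so your intuition was right, but the direct route is shorter than the commutation argument you sketched.
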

\begin{proof}
  It suffices to prove this for $d=1$; indeed, in the case $d\geq 2$, given $h$ as in the statement, $\pa_{t_*}h$ has one degree less growth in $t_*$ and still lies in $\ker L_{g_{b_0}}$, hence $\pa_{t_*}h=0$ by induction, and in particular the leading term of $h$ vanishes, contrary to the assumption.

  Now, for $h$ as in the statement of the lemma, we necessarily have $h_1=h_{b_0,v 1}(\vect)$ for some $0\neq\vect\in\vect_1$. We wish to show that there is no $h_0\in\Hbext^{\infty,-1/2-}$ such that
  \[
    L_{g_{b_0}}h_0 = -[L_{g_{b_0}},t_*]h_{b_0,v 1}(\vect).
  \]
  It suffices to check that the pairing of the right hand side with $h_{b_0,v 1}^*(\vect)$ is non-zero; as in the proof of Lemma~\ref{Lemma10NoLin}, the pairing is unchanged upon replacing $t_*$ by $t_0$. For $\vect=(\sin^2\theta)\pa_\varphi$ (which can be arranged in adapted polar coordinates by rescaling $h$), we calculate
  \begin{equation}
  \label{EqL0Linv1Nondeg}
  \begin{split}
    &\la [L_{g_{b_0}},t_0]h_{b_0,v 1}(\vect), h_{b_0,v 1}^*(\vect')\ra \\
    &\qquad =
      \big\la
        2 r^{-2}\bigl(-(1+\tfrac{\bhm_0}{r})d t_0+d r\bigr)\otimes_s\vect,
        4\bhm_0^2\delta(r-2\bhm_0)d r\otimes_s \vect'
      \big\ra \\
    &\qquad = -2(\vol(\Sph^2))^{-1}\la\vect,\vect'\ra_{L^2(\Sph^2;T^*\Sph^2)},
  \end{split}
  \end{equation}
  which is non-zero for $\vect'=\vect$, as desired.
\end{proof}

More generally, there are no non-trivial polynomially bounded solutions which are of a pure type restricted to which $\wh{L_{g_{b_0}}}(0)$ has trivial kernel within $\Hbext^{\infty,-1/2-}$; this applies to scalar type $l\geq 2$ and vector type $l\geq 2$ modes. On the other hand, there \emph{do exist} generalized scalar type $l=0$ and $l=1$ zero modes with linear growth:

\begin{prop}
\label{PropL0Lin}
  For $b_0=(\bhm_0,0)$, define the spaces of generalized zero modes
  \begin{align*}
    \wh\cK_{b_0,s 0} := \left\{ h \in \ker L_{g_{b_0}} \cap \Poly^1(t_*)\Hbext^{\infty,-1/2-} \colon h\ \text{is of scalar type $l=0$} \right\}, \\
    \wh\cK_{b_0,s 1} := \left\{ h \in \ker L_{g_{b_0}} \cap \Poly^1(t_*)\Hbext^{\infty,-1/2-} \colon h\ \text{is of scalar type $l=1$} \right\}.
  \end{align*}
  Then $\wh\cK_{b_0,s 0}=\mathspan\{h_{b_0,s 0},\hat h_{b_0,s 0}\}$, where $h_{b_0,s 0}$ is defined in~\eqref{EqL0s0} and
  \begin{equation}
  \label{EqL0Lins0}
    \hat h_{b_0,s 0} = \dot g^0_{b_0}(-\tfrac14,0) + \delta_{g_{b_0}}^*(t_0\omega_{b_0,s 0}+\breve\omega_{b_0,s 0}^0),\ \ 
    \breve\omega_{b_0,s 0}^0=-\tfrac{r\log(r/2\bhm_0)}{r-2\bhm_0}\pa_t^\flat+\half(1+\tfrac{2\bhm_0}{r})d r.
  \end{equation}
  Furthermore, $\wh\cK_{b_0,s 1}=\mathspan\{ h_{b_0,s 1}(\scal),\,\hat h_{b_0,s 1}(\scal) \colon \scal\in\scal_1\}$ where $h_{b_0,s 1}(\scal)$ is defined~\eqref{EqL0s1}, and $\hat h_{b_0,s 1}=\delta_{g_{b_0}}^*\hat\omega_{b_0,s 1}$, see~\eqref{Eq10GenSc1}.
  
  The generalized modes $\hat h_{b_0,s 0}$, $\hat h_{b_0,s 1}(\scal)$ extend to continuous families
  \begin{align*}
    b&\mapsto\hat h_{b,s 0} = \dot g_b(\dot b)+\delta_{g_b}^*\omega \in \ker L_{g_b} \cap \Poly^1(t_*)\Hbext^{\infty,-1/2-}, \\
    b&\mapsto\hat h_{b,s 1}(\scal):=\delta_{g_b}^*\hat\omega_{b,s 1}(\scal)\in \ker L_{g_b} \cap \Poly^1(t_*)\Hbext^{\infty,-1/2-},\qquad \scal\in\scal_1,
  \end{align*}
  for suitable $\dot b\in\R^4$, $\omega\in\Poly^1(t_*)\Hbext^{\infty,-3/2-}$; they furthermore satisfy $\hat h_{b,s 0}$, $\hat h_{b,s 1}(\scal_1)\in\ker D_{g_b}\Ric\cap\ker D_{g_b}\Ups(-;g_b)$. Similarly, in the notation of Lemma~\ref{Prop10GenSc}, we have
  \begin{equation}
  \label{EqL0LinDual}
  \begin{split}
    b&\mapsto\hat h_{b,s 0}^*=\sfG_{g_b}\delta_{g_b}^*\hat\omega_{b,s 0}^* \in \ker L_{g_b}^* \cap \Poly^1(t_*)\Hbsupp^{-\infty,-1/2-}, \\
    b&\mapsto\hat h_{b,s 1}^*(\scal)=\sfG_{g_b}\delta_{g_b}^*\hat\omega_{b,s 1}^*(\scal)\in \ker L_{g_b}^* \cap \Poly^1(t_*)\Hbsupp^{-\infty,-1/2-},\qquad \scal\in\scal_1.
  \end{split}
  \end{equation}
\end{prop}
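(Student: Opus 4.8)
\textbf{Proof proposal for Proposition~\ref{PropL0Lin}.}
The plan is to run, in the scalar $l=0$ and $l=1$ sectors, the same two-step normal-operator/solvability scheme that underpins Proposition~\ref{PropL0} and the 1-form results of~\S\ref{Ss10Gen}, but now allowing one power of $t_*$. First I would establish the \emph{existence} of the claimed generalized modes on Schwarzschild spacetimes, then prove they span the full space of linearly growing scalar $l=0$, resp.\ $l=1$, generalized zero modes, and finally run the perturbative normal-operator argument to extend everything to slowly rotating Kerr. For existence in the $l=1$ case I would simply set $\hat h_{b_0,s 1}(\scal)=\delta_{g_{b_0}}^*\hat\omega_{b_0,s 1}(\scal)$ with $\hat\omega_{b_0,s 1}$ the asymptotic Lorentz boost constructed in Proposition~\ref{Prop10GenSc}: since $\Box_{g_{b_0},1}\hat\omega_{b_0,s 1}=0$ and $\Ric(g_{b_0})=0$, the identity $L_g(\delta_g^*\omega)=2\delta_g^*\sfG_g\delta_g\delta_g^*\omega$ (or directly $L_g\delta_g^*\omega=\delta_g^*(\Box_{g,1}\omega)$ modulo curvature terms that vanish here, as recorded after~\eqref{EqOpCDEinLin}) shows $\hat h_{b_0,s 1}(\scal)\in\ker L_{g_{b_0}}$; the decay $\delta_{g_b}^*\hat\omega_{b,s 1}(\scal)\in\Poly^1(t_*)\Hbext^{\infty,-1/2-}$ is exactly~\eqref{Eq10GenScGoodGrad}. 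For the $l=0$ case I would make the ansatz $\hat h_{b_0,s 0}=\dot g^0_{b_0}(\dot\bhm,0)+\delta_{g_{b_0}}^*(t_0\omega_{b_0,s 0}+\breve\omega_{b_0,s 0}^0)$ and determine $\dot\bhm$ and $\breve\omega_{b_0,s 0}^0$ by imposing, in order, the linearized gauge condition $\delta_{g_{b_0}}\sfG_{g_{b_0}}\hat h_{b_0,s 0}=0$ (which, after using $D_g\Ric$ applied to the Bianchi identity, reduces to the gauge-potential wave equation $\Box_{g_{b_0},1}(t_0\omega_{b_0,s 0}+\breve\omega_{b_0,s 0}^0)=-2\delta_{g_{b_0}}\sfG_{g_{b_0}}\dot g^0_{b_0}(\dot\bhm,0)=\tfrac{4\dot\bhm}{r^2}d t_0$) and then the linearized Einstein equation, which holds automatically once the gauge condition and the Bianchi identity are in force because $\dot g^0_{b_0}(\dot\bhm,0)$ solves it. Solving $\Box_{g_{b_0},1}(t_0\omega_{b_0,s 0}+\breve\omega_{b_0,s 0}^0)=\tfrac{4\dot\bhm}{r^2}d t_0$ for stationary $\breve\omega_{b_0,s 0}^0\in\Hbext^{\infty,-3/2-}$ is possible exactly when the right-hand side minus the commutator term $[\Box_{g_{b_0}},t_0]\omega_{b_0,s 0}$ is orthogonal to $\omega_{b_0,s 0}^*$; the pairing computed in~\eqref{Eq10NoLinPair} is nonzero, so this fixes $\dot\bhm=-\tfrac14$, and an explicit radial ODE integration yields the stated $\breve\omega_{b_0,s 0}^0$.

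Next I would prove the \emph{upper bound}, i.e.\ that $\wh\cK_{b_0,s 0}$ and $\wh\cK_{b_0,s 1}$ are exactly 2-, resp.\ 6-dimensional. Given $h=t_*h_1+h_0\in\ker L_{g_{b_0}}$ of scalar type $l=0$ (resp.\ $l=1$) with $h_j\in\Hbext^{\infty,-1/2-}$, differentiating gives $\pa_{t_*}h=h_1\in\ker L_{g_{b_0}}$, so $h_1\in\ker\wh{L_{g_{b_0}}}(0)\cap\Hbext^{\infty,-1/2-}$, which by Proposition~\ref{PropL0} is spanned by $h_{b_0,s 0}$ (resp.\ by $h_{b_0,s 1}(\scal)$, $\scal\in\scal_1$). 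Writing $h_1$ as such a multiple, the equation for $h_0$ is $\wh{L_{g_{b_0}}}(0)h_0=-[L_{g_{b_0}},t_*]h_1$, which is solvable in some weighted space iff the right-hand side pairs to zero against the dual zero modes $\ker\wh{L_{g_{b_0}}}(0)^*$ of the matching pure type. Since the only obstruction in the scalar $l=0$ sector comes from $h_{b_0,s 0}^*$ and in the $l=1$ sector from $h_{b_0,s 1}^*(\scal)$, and since the relevant pairings $\la[L_{g_{b_0}},t_*]h_{b_0,s 0},h_{b_0,s 0}^*\ra$ and $\la[L_{g_{b_0}},t_*]h_{b_0,s 1}(\scal),h_{b_0,s 1}^*(\scal')\ra$ can be shown to \emph{vanish} (this is the content behind the existence step above — the obstruction is absorbed by tuning $\dot\bhm$, resp.\ by the Lorentz-boost structure, exactly as the $c_{b_0}=0$ computation~\eqref{EqL0cb0} illustrates), the solution $h_0$ exists and is unique modulo $\ker\wh{L_{g_{b_0}}}(0)\cap\Hbext^{\infty,-1/2-}$, which is spanned by the already-accounted-for $h_{b_0,s 0}$, resp.\ $h_{b_0,s 1}(\scal)$. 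This gives the stated dimensions, and combined with Lemma~\ref{LemmaL0Linv1} and the remark immediately following it — which rules out linearly growing generalized modes of scalar $l\geq2$ and vector $l\geq1$ type — one recovers the full generalized-kernel count used later (the $11$-dimensional $\wh\cK_b$ arises as $7+1+3$, the $+1$ being $\hat h_{b,s0}$ and the $+3$ being $\hat h_{b,s1}(\scal)$, with the boosts and the $\dot\bhm$-corrected Schwarzschild linearization).

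Finally, the extension to $b$ near $b_0$ proceeds by the normal-operator/correction argument used repeatedly in~\S\S\ref{S0}--\ref{SL}: for the $l=1$ boosts, set $\hat h_{b,s 1}(\scal)=\delta_{g_b}^*\hat\omega_{b,s 1}(\scal)$ with $\hat\omega_{b,s 1}$ the Kerr family of Proposition~\ref{Prop10GenSc}; the identity $L_{g_b}\delta_{g_b}^*\omega=\delta_{g_b}^*\Box_{g_b,1}\omega$ (valid since $\Ric(g_b)=0$) together with $\Box_{g_b,1}\hat\omega_{b,s 1}(\scal)=0$ gives membership in $\ker L_{g_b}$, and~\eqref{Eq10GenScGoodGrad} gives the decay; one checks $\hat h_{b,s1}(\scal)\in\ker D_{g_b}\Ric\cap\ker D_{g_b}\Ups(-;g_b)$ because it is pure gauge with gauge potential solving the gauge-potential wave equation. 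For $\hat h_{b,s 0}$, make the ansatz $\hat h_{b,s 0}=\dot g_b(\dot b)+\delta_{g_b}^*(t_*\omega_{b,s 0}+\breve\omega_{b})$ and determine $\dot b=(\dot\bhm_b,\dot\bha_b)$ and $\breve\omega_b\in\Poly^1(t_*)\Hbext^{\infty,-3/2-}$ by first arranging the linearized gauge condition — this is a solvability problem for $\Box_{g_b,1}$ whose obstruction is $\la\,\cdot\,,\omega_{b,s0}^*\ra$, removed by choosing $\dot b$ using the non-degeneracy~\eqref{Eq10NoLinPairKerr} and the Kerr analogue of~\eqref{EqL0SchwGauge} (nonzero by continuity from $b_0$) — and then noting the linearized Einstein equation follows automatically. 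Continuity in $b$ of all these families is the usual functional-analytic argument (fix finitely many transversal functionals to pin down the correction term uniquely, then invoke continuity of the uniform-near-zero resolvent estimates), as in the proof of Lemma~\ref{LemmaCD0Dual}; the dual statements~\eqref{EqL0LinDual} are entirely parallel, using the dual-pure-gauge ansatz $\sfG_{g_b}\delta_{g_b}^*\hat\omega^*$ together with the generalized dual 1-form modes $\hat\omega_{b,s0}^*$, $\hat\omega_{b,s1}^*(\scal)$ of Proposition~\ref{Prop10GenSc} and the adjoint identity~\eqref{EqOpCDAdjoint}. The main obstacle I anticipate is the bookkeeping of the vanishing pairings: one must verify carefully, in both sectors and for both $L_{g_b}$ and its adjoint, that the obstruction to solving for the subleading term $h_0$ is genuinely removable — for $l=1$ because the relevant pairing sits in a different pure type than the obstructing dual mode (as in~\eqref{EqL0cb0}), and for $l=0$ because the one nonzero pairing~\eqref{Eq10NoLinPair} is precisely the one that can be cancelled by the free parameter $\dot\bhm$ — and that the resulting corrections have the claimed decay and polyhomogeneity, which requires tracking the normal operators of $\delta_{g_b}^*$ and $\Box_{g_b,1}$ through the $t_*$-weighted spaces exactly as in the proof of Proposition~\ref{Prop10GenSc}.
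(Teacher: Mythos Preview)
Your proposal is correct and follows the same overall architecture as the paper's proof: establish existence of the generalized modes first, then prove the upper bound on dimensions, then extend to Kerr via normal-operator/continuity arguments. The existence arguments (using the asymptotic Lorentz boost $\hat\omega_{b_0,s 1}$ from Proposition~\ref{Prop10GenSc} for $l=1$, and the ansatz with $\dot\bhm$ determined by the obstruction pairing for $l=0$) match the paper's exactly, as do the Kerr extensions and the dual-state constructions.

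The one substantive difference is in the upper bound. You argue directly: once existence is known, the pairing $\la[L_{g_{b_0}},t_*]h_{b_0,sj},h_{b_0,sj}^*\ra$ must vanish (since $\wh{L_{g_{b_0}}}(0)h_0=-[L_{g_{b_0}},t_*]h_1$ is solvable), and then $h_0$ is unique modulo $\ker\wh{L_{g_{b_0}}}(0)\cap\Hbext^{\infty,-1/2-}$ in the relevant sector, giving the dimension count. The paper instead takes a more geometric route: it first uses the linearized second Bianchi identity together with Theorem~\ref{Thm1} to show that any $h\in\wh\cK_{b_0,sj}$ automatically satisfies $\delta_{g_{b_0}}\sfG_{g_{b_0}}h=0$ and hence $D_{g_{b_0}}\Ric(h)=0$; it then rewrites $h=\delta_{g_{b_0}}^*(t_*\omega_{b_0,sj})+h_0'$ and invokes the mode stability Theorem~\ref{ThmMS} (specifically part~\eqref{ItMSSc00} for $l=0$ and part~\eqref{ItMSSc1} for $l=1$, the latter needed because $h_0'$ only lies in $\Hbext^{\infty,-3/2-}$ in the $l=1$ case) to classify $h_0'$ as linearized Kerr plus pure gauge. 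Your approach is quicker and avoids tracking the weaker decay of $h_0'$ in the $l=1$ case; the paper's approach has the advantage of exhibiting the structure of the stationary part directly (which feeds into the explicit formula~\eqref{EqL0Lins0}) rather than deducing it post hoc from the existence construction. A minor slip: the identity you write as $L_g(\delta_g^*\omega)=2\delta_g^*\sfG_g\delta_g\delta_g^*\omega$ has the $\sfG_g$ misplaced; the correct version is $L_g(\delta_g^*\omega)=2\delta_g^*\delta_g\sfG_g\delta_g^*\omega=\delta_g^*(\Box_{g,1}\omega)$, as you also state.
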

\begin{proof}
  Given $\wh\cK_{b_0,s j}\ni h=t_* h_1+h_0$ for $j=0$ or $j=1$, we have $L_{g_{b_0}}h_1=0$. Thus, either $h_1=0$, in which case $h_0$ is a scalar multiple of $h_{b_0,s j}$ by Proposition~\ref{PropL0}; or, after rescaling by a non-zero constant, $h_1=h_{b_0,s j}=\delta_{g_{b_0}}^*\omega_{b_0,s j}$ (depending on $\scal\in\scal_1$ when $j=1$). The second Bianchi identity and $\delta_{g_{b_0}}\sfG_{g_{b_0}}h_{b_0,s j}=0$ then imply, by~\eqref{EqKStDivSymmGrad} and \eqref{Eq10SymGradDecay},
  \begin{align*}
    \ker \Box_{g_{b_0}} \ni \delta_{g_{b_0}}\sfG_{g_{b_0}} h &= [\delta_{g_{b_0}}\sfG_{g_{b_0}},t_*]h_{b_0,s j} + \delta_{g_{b_0}}\sfG_{g_{b_0}} h_0 \\
    & \in \Hbext^{\infty,1/2-} + \Hbext^{\infty,1/2-} = \Hbext^{\infty,1/2-}.
  \end{align*}
  By Theorem~\ref{Thm1}, this implies $\delta_{g_{b_0}}\sfG_{g_{b_0}} h=0$, thus $D_{g_{b_0}}\Ric(h)=0$; thus, writing
  \begin{equation}
  \label{EqL0LinRewrite}
    h = \delta_{g_{b_0}}^*(t_*\omega_{b_0,s j}) + h_0',\quad h_0'=h_0 - [\delta_{g_{b_0}}^*,t_*]\omega_{b_0,s j},
  \end{equation}
  we obtain the two equations
  \begin{subequations}
  \begin{gather}
  \label{EqL0LinEqn1}
    D_{g_{b_0}}\Ric(h_0')=0, \\
  \label{EqL0LinEqn2}
    [\Box_{g_{b_0}},t_*]\omega_{b_0,s j} + 2\delta_{g_{b_0}}\sfG_{g_{b_0}} h_0'=0.
  \end{gather}
  \end{subequations}

  \pfstep{Generalized scalar $l=0$ states.} Here, $j=0$. Then $h_0'\in\Hbext^{\infty,-1/2-}$ by~\eqref{Eq1s0Ker} and \eqref{EqKStDivSymmGrad}; by mode stability, Theorem~\ref{ThmMS} and Remark~\ref{RmkMSWhichLin}, we have $h_0'=\dot g_{b_0}^0(\dot\bhm,0)+\delta_{g_{b_0}}^*\omega_1'$ for some $\dot\bhm\in\R$ and $\omega_1'\in\Hbext^{\infty,-3/2-}$. Equation~\eqref{EqL0LinEqn2} thus reads
  \begin{equation}
  \label{EqL0LinEqn3}
    \wh{\Box_{g_{b_0}}}(0)\omega_1' + [\Box_{g_{b_0}},t_*]\omega_{b_0,s 0} + 2\dot\bhm \delta_{g_{b_0}}\sfG_{g_{b_0}}\dot g^0_{b_0}(1,0) = 0.
  \end{equation}
  Integrating this against $\omega_{b_0,s 0}^*$, the first term gives $0$, the second term gives $2$ by~\eqref{Eq10NoLinPair}, and the last term gives $8\dot\bhm$ by~\eqref{EqL0SchwGauge}; thus $2+8\dot\bhm=0$. Let us hence fix $\dot\bhm=-\tfrac14$. Since the second and third term in~\eqref{EqL0LinEqn3} lie in $\Hbext^{\infty,1/2-}$, we can then solve for $\omega_1'\in\Hbext^{\infty,-3/2-}$; moreover, in this space, $\omega_1'$ is unique modulo multiples of $\la\omega_{b,s 0}\ra$. Thus, $h$ is necessarily of the form
  \[
    h = \hat h_{b_0,s 0} := \dot g^0_{(\bhm_0,0)}(-\tfrac14,0) + \delta_{g_{b_0}}^*(t_*\omega_{b_0,s 0} + \omega_1').
  \]
  This proves that $\wh\cK_{b_0,s 0}$ is 2-dimensional. We can easily make this explicit by replacing $t_*$ by $t_0$ and $\omega_1'$ by $\omega_1^0:=\omega_1'+(t_*-t_0)\omega_{b_0,s 0}$, the latter solving
  \[
    \wh{\Box_{g_{b_0}}}(0)\omega_1^0 = -[\Box_{g_{b_0}},t_0]\omega_{b_0,s 0} + \delta_{g_{b_0}}\sfG_{g_{b_0}}\dot g^0_{(\bhm_0,0)}(\half,0) = -r^{-2}(1-\tfrac{2\bhm_0}{r})d t_0,
  \]
  which has the solution $\omega_1^0=\breve\omega_{b_0,s 0}^0$, see~\eqref{EqL0Lins0}.

  We next construct a generalized zero mode $\hat h_{b,s 0}$ for $b$ near $b_0$. We change the point of view and make $\omega_{b,s 0}$ into the main term of an ansatz, while the linearized Kerr family will give a correction term similarly to above. Thus, starting with
  \begin{equation}
  \label{EqL0Lins0Ansatz}
    \hat h_{b,s 0} = \delta_{g_b}^*(t_*\omega_{b,s 0} + \breve\omega_{b,s 0}) + \dot g_b(\dot\bhm(b),0),
  \end{equation}
  we shall determine $\dot\bhm(b)$, with $\dot\bhm(b_0)=-\tfrac14$, such that the equation $L_{g_b}\hat h_{b,s 0}=0$ can be solved for $\breve\omega_{b,s 0}\in\Hbext^{\infty,-3/2-}$. But this can be done provided we arrange the orthogonality
  \[
    \big\la [\Box_{g_b},t_*]\omega_{b,s 0} + 2\delta_{g_b}\sfG_{g_b}\dot g_b(\dot\bhm(b),0), \omega_{b,s 0}^* \big\ra = 0.
  \]
  But as in the proof of Proposition~\ref{PropL0}, this holds for a unique $\dot\bhm(b)$ because of the non-degeneracy~\eqref{EqL0SchwGauge}, which persists for $b$ near $b_0$.

  \pfstep{Generalized scalar $l=1$ states.} Now, $j=1$ in~\eqref{EqL0LinRewrite}, where $\omega_{b_0,s 1}=\omega_{b_0,s 1}(\scal)$ for some $\scal\in\scal_1$; and $j=1$ in \eqref{EqL0LinEqn2}, so $h_0'\in\Hbext^{\infty,-3/2-}$ since $\omega_{b_0,s 1}(\scal)$ is only of size $\cO(1)$. By Theorem~\ref{ThmMS}, equation~\eqref{EqL0LinEqn1} now implies $h_0'=\delta_{g_{b_0}}^*\omega_1'$ for $\omega_1'\in\Hbext^{\infty,-5/2-}$, and therefore
  \begin{equation}
  \label{EqL0Lins1Ansatz}
    h = \delta_{g_{b_0}}^*(t_*\omega_{b_0,s 1}(\scal)+\omega_1'),
  \end{equation}
  with $\omega_1'$ satisfying $\wh{\Box_{g_{b_0}}}(0)\omega_1'=-[\Box_{g_{b_0}},t_*]\omega_{b_0,s 1}(\scal)\in\Hbext^{\infty,-1/2-}$. By Lemma~\ref{Lemma10Gens12}, $\omega_1'$ is unique modulo $\{\omega_{b_0,s 1}^{(1)}(\scal)\colon\scal\in\scal_1\}$ in the scalar type $l=1$ sector. Expanding
  \[
    h=t_*h_{b_0,s 1} + ([\delta_{g_{b_0}}^*,t_*]\omega_{b_0,s 1}(\scal)+\delta_{g_{b_0}}^*\omega_1'),
  \]
  recall that the membership $h\in\wh\cK_{b_0,s 1}$ requires the term in parentheses to lie in $\Hbext^{\infty,-1/2-}$, thus to be of size $o(1)$. In view of~\eqref{Eq10Gens12SymGrad}, this implies that $\omega_1'$ is \emph{unique}. Thus, there exists at most one generalized scalar $l=1$ mode of $L_{g_{b_0}}$ with leading term $t_* h_{b_0,s 1}(\scal)$.

  On the other hand, \emph{existence} of $h$ of this form, and its extension to a continuous family of generalized modes for $L_{g_b}$, $b$ near $b_0$, follows immediately from Proposition~\ref{Prop10GenSc} by setting
  \begin{equation}
  \label{EqL0Lins1Def}
    \hat h_{b,s 1}(\scal)=\delta_{g_b}^*\hat\omega_{b,s 1}(\scal).
  \end{equation}

  \pfstep{Generalized scalar $l=0$ and $l=1$ dual states.} Certainly, the expressions in~\eqref{EqL0LinDual} produce elements $\ker L_{g_b}^*$ of the desired form.
\end{proof}

By Proposition~\ref{PropOpNull}, all zero energy modes are polyhomogeneous at $r=\infty$; this is also true for the coefficients of $t_*^0$ and $t_*^1$ of the generalized zero modes constructed above. For later use, we determine their leading order behavior more precisely. Note first that the construction of $\hat h_{b,s 0}$ in~\eqref{EqL0Lins0Ansatz} shows that its $t_*$-coefficient is $h_{b,s 0}$. Consider similarly the definition~\eqref{EqL0Lins1Def} of $\hat h_{b,s 1}(\scal)$ in terms of $\hat\omega_{b,s 1}$, which is constructed in~\eqref{EqL0Ansatz}; upon re-defining $\omega_{b,s 1}(\scal)$ as $\omega_{b,s 1}(\scal)+c_b\omega_{b,s 0}$ (which is still a continuous family in $b$, linear in $\scal$, and agrees with $\omega_{b_0,s 1}(\scal)$ for $b=b_0$ since $c_{b_0}=0$ by~\eqref{EqL0cb0}), and then letting $h_{b,s 1}(\scal)=\delta_{g_b}^*\omega_{b,s 1}(\scal)$, we ensure that the $t_*$-coefficient of $\hat h_{b,s 1}(\scal)$ is equal to $h_{b,s 1}(\scal)$. Analogous arguments apply to $\hat h_{b,s 0}^*$ and $\hat h_{b,s 1}^*$. Choosing $h_{b,s 0}$ etc.\ in this manner, we now make the following definition:

\begin{definition}
\label{DefL0Breve}
  For $t_*=t_{\bhm,*}$, $\scal\in\scal_1$, $\vect\in\vect_1$, we set
  \begin{alignat*}{3}
    \breve h_{b,s 0}&:=\hat h_{b,s 0}-t_* h_{b,s 0}, &
    \breve h^*_{b,s 0}&:=\hat h^*_{b,s 0}-t_* h^*_{b,s 0}, \\
    \breve h_{b,s 1}(\scal)&:=\hat h_{b,s 1}(\scal)-t_* h_{b,s 1}(\scal), &\qquad
    \breve h^*_{b,s 1}(\scal)&:=\hat h^*_{b,s 1}(\scal)-t_* h^*_{b,s 1}(\scal).
  \end{alignat*}
  These are stationary ($t_*$-independent), and lie in $\Hbext^{\infty,-1/2-}$ and $\Hbsupp^{-\infty,-1/2-}$, respectively.
\end{definition}

\begin{lemma}
\label{LemmaL0Lead}
  For $\scal\in\scal_1$ and $\vect\in\vect_1$, we have
  \begin{equation}
  \label{EqL0Lead}
  \begin{split}
    h_{b,s 0},\ h_{b,s 1}(\scal),\ h_{b,v 1}(\vect) & \in \Hbext^{\infty,1/2-}, \\
    h_{b,s 1}^*(\scal) &\in\Hbsupp^{-\infty,1/2-}, \\
    h_{b,v 1}^*(\vect) &\in\rho\CI+\Hbsupp^{-\infty,1/2-},
  \end{split}
  \end{equation}
  where the $\rho\CI$ term has support in $r\geq 4\bhm_0$; moreover, $h_{b,s 0}^*$ has compact support. In the notation of Definition~\ref{DefL0Breve}, we have
  \begin{equation}
  \label{EqL0LeadBreve}
    \breve h_{b,s 0},\ \breve h_{b,s 1}(\scal) \in \rho\CI+\Hbext^{\infty,1/2-}, \quad
    \breve h_{b,s 0}^*,\ \breve h_{b,s 1}^*(\scal) \in \rho\CI+\Hbsupp^{-\infty,1/2-};
  \end{equation}
  the remainder terms of $\breve h_{b,s 0}^*$ and $\breve h_{b,s 1}^*(\scal)$ have support in $r\geq r_b$, are smooth in $r>r_b$, conormal at $\pa_+X$ with the stated weight, and lie in $H^{-3/2-}$ near the event horizon $r=r_b$.\footnote{Put differently, we have $\breve h_{b,s 0}^*\in\rho\CI+\chi\Hb^{\infty,1/2-}+(1-\chi)\Hsupp^{-3/2-}$, where $\chi\equiv 1$ for $r>4\bhm_0$ and $\chi\equiv 0$ for $r<3\bhm_0$; likewise for $\breve h_{b,s 1}^*(\scal)$.}
\end{lemma}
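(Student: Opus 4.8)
The plan is to establish \eqref{EqL0Lead} and \eqref{EqL0LeadBreve} directly from the explicit formulas for the generators, using the normal operator (leading-order) analysis developed in \S\ref{SsKSt} and \S\ref{S1}. For the first block of \eqref{EqL0Lead}, I would argue that $h_{b,s 0}=\delta_{g_b}^*\omega_{b,s 0}$ and $h_{b,s 1}(\scal)=\delta_{g_b}^*\omega_{b,s 1}(\scal)$ gain an extra order of decay compared to the naive expectation $\Hbext^{\infty,-1/2-}$: indeed $\omega_{b,s 0}\in\Hbext^{\infty,-1/2-}$ and $\omega_{b,s 1}(\scal)\in\Hbext^{\infty,-3/2-}$ with leading terms $r^{-1}(d t_0-d r)$, respectively $d u_{b,s 1}(\scal)$ whose leading part $d(r\scal)$ is the differential of a linear function on $\R^3$ (see \eqref{Eq1s0}, \eqref{Eq10Grows1}, \eqref{Eq00Grows1}); since the normal operator $\wh{\delta_{\ubar g}^*}(0)$ (Lemma~\ref{LemmaKStNormal}) annihilates both $r^{-1}d x^i$-type 1-forms \emph{as leading terms} in the relevant sense and, crucially, kills the differential of a linear function, the symmetric gradient lands one order further down — this is exactly the mechanism already used in \eqref{Eq10SymGradDecay} and in the proof of Proposition~\ref{Prop10Genv1}. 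For $h_{b,v 1}(\vect)=\dot g_b(\dot b)+\delta_{g_b}^*\omega$ with $\omega\in\Hbext^{\infty,-3/2-}$, I would use that at $b=b_0$ one has $h_{b_0,v 1}(\vect)=4\bhm_0 r^{-1}(d t_0-d r)\otimes_s\vect\in\Hbext^{\infty,1/2-}$ by direct inspection, and that the correction as $b$ varies is controlled by the same normal-operator bookkeeping together with $\dot g_{(\bhm,\bha)}-\dot g_{(\bhm,0)}\in\rho^2\CI$ from \eqref{EqKStMetrics2} and \eqref{EqKaLin2}; here $\dot g_b(\dot b)$ itself is $\cO(\rho)$ and $\delta_{g_b}^*\omega$ is $\cO(\rho^{1/2-})$ by the gain-of-order argument, so the sum lies in $\Hbext^{\infty,1/2-}$. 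The dual statements $h_{b,s 1}^*(\scal)\in\Hbsupp^{-\infty,1/2-}$ and $h_{b,v 1}^*(\vect)\in\rho\CI+\Hbsupp^{-\infty,1/2-}$ follow analogously from \eqref{EqL0s1}, \eqref{EqL0v1} and the decay of $\omega_{b,s 1}^*(\scal)$, $\omega_{b,v 1}^*(\vect)$ recorded in Propositions~\ref{Prop10Grow} and~\ref{Prop10Genv1}, noting that $\sfG_{g_b}$ is a bundle isomorphism so it does not change weights; the $\rho\CI$ piece for $h_{b,v 1}^*(\vect)$ and its support in $r\geq 4\bhm_0$ come from the explicit $r^2\vect H(r-2\bhm_0)$ leading term of $\omega_{b_0,v 1}^*(\vect)$ in \eqref{Eq10Genv1} differentiated by $\delta_{g_b}^*$, which at $b_0$ produces a smooth $\cO(\rho)$ term (a boost/rotation-type Killing contribution) away from the horizon. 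Finally $h_{b,s 0}^*=\sfG_{g_b}\delta_{g_b}^*\omega_{b,s 0}^*$ with $\omega_{b,s 0}^*=\delta(r-r_b)d r$ is manifestly compactly supported (a differentiated $\delta$-distribution at the horizon).

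For \eqref{EqL0LeadBreve}, I would unwind Definition~\ref{DefL0Breve}. For $\breve h_{b,s 0}=\hat h_{b,s 0}-t_* h_{b,s 0}$ with $\hat h_{b,s 0}=\dot g_b(\dot b)+\delta_{g_b}^*(t_*\omega_{b,s 0}+\breve\omega_{b,s 0})$ as in \eqref{EqL0Lins0Ansatz}, the $t_*$-linear parts cancel by construction (the coefficient of $t_*$ in $\hat h_{b,s 0}$ is $\delta_{g_b}^*\omega_{b,s 0}=h_{b,s 0}$ after the normalization spelled out just before Definition~\ref{DefL0Breve}), leaving $\breve h_{b,s 0}=\dot g_b(\dot b)+[\delta_{g_b}^*,t_*]\omega_{b,s 0}+\delta_{g_b}^*\breve\omega_{b,s 0}$. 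Here $\dot g_b(\dot b)\in\rho\CI$ (the linearized Kerr metric, by \eqref{EqKaLin2}–\eqref{EqKStMetrics2}), the commutator term $[\delta_{g_b}^*,t_*]\omega_{b,s 0}$ lies in $\Hbext^{\infty,1/2-}$ since $[\delta_{g_b}^*,t_*]\in\cA^0$ and $\omega_{b,s 0}\in\Hbext^{\infty,-1/2-}$ (cf.\ the discussion after \eqref{EqL0Lins1ExistDel}), and $\delta_{g_b}^*\breve\omega_{b,s 0}\in\Hbext^{\infty,1/2-}$ by the same gain-of-order argument applied to $\breve\omega_{b,s 0}\in\Hbext^{\infty,-3/2-}$; hence $\breve h_{b,s 0}\in\rho\CI+\Hbext^{\infty,1/2-}$. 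The argument for $\breve h_{b,s 1}(\scal)$ is parallel, using the refined ansatz \eqref{EqL0Ansatz}–\eqref{EqL0Ansatz2}: after subtracting $t_* h_{b,s 1}(\scal)$ one is left with $[\delta_{g_b}^*,t_*]\omega_{b,s 1}(\scal)-\delta_{g_b}^*\omega_{b,s 1}^{(1)}(\scal)+\delta_{g_b}^*\breve\omega_{b,s 1}'$, and the first two terms, while individually only in $\Hbext^{\infty,-3/2-}$, \emph{cancel to leading order} — this is precisely the content of \eqref{EqL0Lins1ExistDel} together with \eqref{Eq10Gens12SymGrad} (the statement that the Minkowski Lorentz boost $t\,d x^i-x^i\,d t$ is Killing), so their sum is in $\rho\CI+\Hbext^{\infty,1/2-}$, while $\delta_{g_b}^*\breve\omega_{b,s 1}'\in\Hbext^{\infty,1/2-}$ since $\breve\omega_{b,s 1}'\in\Hbext^{\infty,-3/2-}$ and its leading $l=1$ part is again killed by the normal operator of $\delta^*$ (or, if not, is itself $\cO(\rho)$ contributing to the $\rho\CI$ term). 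The dual versions $\breve h_{b,s 0}^*$, $\breve h_{b,s 1}^*(\scal)$ are handled identically using $\hat\omega_{b,s 0}^*=t_*\omega_{b,s 0}^*+\breve\omega_{b,s 0}^*$, $\hat\omega_{b,s 1}^*(\scal)=\ldots$ from Proposition~\ref{Prop10GenSc} and the corresponding cancellations, together with $\sfG_{g_b}$ being a weight-preserving isomorphism; the claimed regularity of the remainders (smooth in $r>r_b$, conormal relative to $H^{-3/2-}$ at $r=r_b$, supported in $r\geq r_b$) is inherited verbatim from the regularity of $\omega_{b,s 0}^*$ and $\omega_{b,s 1}^*(\scal)$ recorded in Remark~\ref{Rmk00Reg} and Proposition~\ref{Prop10Grow}, since $\delta_{g_b}^*$ and $\sfG_{g_b}$ are (differential, resp.\ bundle) operators that preserve these microlocal/support properties.

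I expect the main obstacle to be making precise and bookkeeping correctly the repeated use of the phrase ``the normal operator of $\delta^*$ kills the leading $l=1$ term'': one must check that at each stage the relevant leading coefficient really is the differential of a linear function (or $d t_0$-type) on $\R^3$ — i.e.\ that the corrections $w$, $\breve\omega$, $\breve\omega'$ produced by solving $\wh{\Box_{g_b}}(0)w=-e$ do not re-introduce a slowly decaying $l=1$ piece of the ``wrong'' type that $\delta_{\ubar g}^*$ does \emph{not} annihilate. This is where the structure is genuinely used — it is exactly the same subtlety that makes \eqref{Eq10Gens12SymGrad} and the proof of Proposition~\ref{Prop10Genv1} work — but it must be verified for each of the six generators. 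The safe route, which I would adopt, is to not track exact leading coefficients but only to observe that any leftover $\cO(\rho)$ term is smooth (polyhomogeneous with integer index set, by Proposition~\ref{PropOpNull}) and hence can be absorbed into the $\rho\CI$ summand, which is why the target spaces in \eqref{EqL0LeadBreve} include a $\rho\CI$ term in the first place; the only genuine gain one needs is from $\Hbext^{\infty,-1/2-}$-type ($\cO(\rho^{1/2-})$, no $\rho$ asymptotics allowed by the weight) to either $\rho\CI$ or $\Hbext^{\infty,1/2-}$, and that follows from polyhomogeneity plus the one-order gain of $\delta^*$ on linear-function leading terms. Everything else — the explicit membership $\dot g_b(\dot b)\in\rho\CI$, the boundedness $[\delta_{g_b}^*,t_*]\in\cA^0$, the weight-preservation of $\sfG_{g_b}$ — is routine and already recorded in \S\ref{SsKSt} and \S\S\ref{Ss10Gen}.
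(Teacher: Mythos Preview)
Your treatment of \eqref{EqL0Lead} is essentially in line with the paper's, and the references to \eqref{Eq10SymGradDecay} and Proposition~\ref{Prop10Genv1} are the right ones. There is a genuine gap, however, in your argument for \eqref{EqL0LeadBreve}.

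The problem is the claim that $\delta_{g_b}^*\breve\omega_{b,s 0}\in\Hbext^{\infty,1/2-}$ ``by the same gain-of-order argument.'' That argument requires the leading term of the 1-form to be annihilated by the normal operator $\wh{\delta_{\ubar g}^*}(0)$, i.e.\ to be (the differential of) a linear function. But $\breve\omega_{b,s 0}$ has no such structure: already at $b=b_0$, from \eqref{EqL0Lins0}, its leading part is $-\log r\,d t+\half\,d r+\cdots$, and $\delta_{\ubar g}^*$ does \emph{not} annihilate this. So your decomposition only yields $\breve h_{b,s 0}\in\Hbext^{\infty,-1/2-}$, one order short. The same issue recurs for $\breve h_{b,s 1}$: the cancellation \eqref{EqL0Lins1ExistDel} you cite gives membership in $\Hbext^{\infty,-1/2-}$, not in $\rho\CI+\Hbext^{\infty,1/2-}$. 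Your ``safe route'' via Proposition~\ref{PropOpNull} does not close the gap either: polyhomogeneity with index set contained in $\{(z,k):z\in i\N,\,k\in\N_0\}$ explicitly \emph{allows} logarithmic terms $r^{-1}\log r$, which lie in $\Hbext^{\infty,-1/2-}$ but not in $\rho\CI+\Hbext^{\infty,1/2-}$.

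The paper takes a different route that sidesteps the structure of $\breve\omega$: it works directly with the equation $\wh{L_{g_b}}(0)\breve h_{b,s 0}=-[L_{g_b},t_*]h_{b,s 0}$. The right-hand side lies in $\rho^3\CI+\Hbext^{\infty,5/2-}$, and the issue is precisely whether inverting the normal operator produces a $r^{-1}\log r$ term at the indicial root $-i$. The key observation (equations~\eqref{EqL0LeadOmega1}--\eqref{EqL0LeadBreveExpl}) is that the $r^{-3}$ leading term of the forcing lies in $r^{-3}\Omega_1$, i.e.\ carries a factor $\scal\in\scal_1$; the dual resonant states at $-i$ are $r^{-1}$ times $d t^2$, $d t\otimes_s d x^i$, $d x^i\otimes_s d x^j$ (spherical degree $0$), and the orthogonality $\int_{\Sph^2}\scal=0$ kills the log. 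This orthogonality argument is the missing idea in your proposal; without it, you cannot pass from $\Hbext^{\infty,-1/2-}$ to $\rho\CI+\Hbext^{\infty,1/2-}$.
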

\begin{proof}
  The statement for $h_{b,v 1}$ was proved in Proposition~\ref{Prop10Genv1}; the statement for $h_{b,v 1}^*(\vect)$ follows directly from Proposition~\ref{PropOpNull}, using the fact that zero frequency solutions of the wave operator on symmetric 2-tensors on Minkowski space, which in the standard coordinate splitting is a $10\times 10$ matrix of scalar wave operators, have $r^{-1}$ asymptotics provided they belong to $\Hb^{\infty,-1/2-}$ near infinity.
  
  The statements about $h_{b,s 0}^*$ and $h_{b,s 0}$ follow immediately from~\eqref{Eq1s0} and \eqref{EqL0s0}. For $h_{b,s 0}$, we record a more precise statement: since $\omega_{b,s 0}\in\rho\CI+\Hbext^{\infty,1/2-}$ (either by inspection, see Remark~\ref{Rmk10KerrExpl}, or by similar normal operator arguments), we have $h_{b,s 0}\in\rho^2\CI+\Hbext^{\infty,3/2-}$. Since elements of the kernel of the spectral family of the scalar wave operator on Minkowski space at zero energy with decay $r^{-2}$ have a leading order term in $r^{-2}\scal_1$, we in fact deduce from $h_{b,s 0}\in\ker\wh{L_{g_b}}(0)$ that
  \[
    h_{b,s 0} \in r^{-2}\Omega_1 + \Hbext^{\infty,3/2-},
  \]
  where
  \begin{equation}
  \label{EqL0LeadOmega1}
    \Omega_1 = \mathspan\{ \scal\,d t^2,\, \scal\,d t\otimes_s d x^i,\, \scal\,d x^i\otimes_s d x^j \colon \scal\in\scal_1,\ 1\leq i,j\leq 3 \}.
  \end{equation}
  
  We proved the statement~\eqref{EqL0Lead} for $h_{b,s 1}(\scal)$ in equation~\eqref{Eq10SymGradDecay}; the argument given there also applies to $h_{b,s 1}^*(\scal)$. We again record a more precise statement: dropping the argument $\scal\in\scal_1$ from the notation, write $b=(\bhm,\bha)$ and denote by $b_1=(\bhm,0)$ the Schwarzschild parameters with the same mass. Then $\omega_{b,s 1}=\omega_{b_1,s 1}+\omega'$, where $\omega'\in\Hbext^{\infty,-1/2-}$ solves
  \begin{equation}
  \label{EqL0LeadBoxOmegap}
    \wh{\Box_{g_b}}(0)\omega' = -\wh{\Box_{g_b}}(0)\omega_{b_1,s 1} = \bigl(\wh{\Box_{g_{b_1}}}(0)-\wh{\Box_{g_b}}(0)\bigr)\omega_{b_1,s 1} \in \Hbext^{\infty,5/2-};
  \end{equation}
  here, we used Lemma~\ref{LemmaKStNormal}. Normal operator arguments give $\omega'\in\rho\CI+\Hbext^{\infty,1/2-}$, hence
  \begin{align*}
    h_{b,s 1}=\delta_{g_b}^*\omega_{b,s 1} &= \delta_{g_{b_1}}^*\omega_{b_1,s 1} + (\delta_{g_b}^*-\delta_{g_{b_1}}^*)\omega_{b_1,s 1} + \delta_{g_b}^*\omega' \\
      &\in \delta_{g_{b_1}}^*\omega_{b_1,s 1} + \rho^2\CI + (\rho^2\CI+\Hbext^{\infty,3/2-}) \\
      &\subset \rho^2\CI + \Hbext^{\infty,3/2-}.
  \end{align*}
  A normal operator argument implies that the leading order term has to lie in $\Omega_1$, so
  \[
    h_{b,s 1}(\scal) \in r^{-2}\Omega_1 + \Hbext^{\infty,3/2-}.
  \]
  We argue similarly for $h^*_{b,s 1}$: by~\eqref{Eq00Grows1}, \eqref{Eq10Grows1}, it lies in $\rho^2\CI+\Hb^{\infty,3/2-}$ near infinity, hence
  \begin{equation}
  \label{EqL0Leads1Dual}
    h^*_{b,s 1}\in \chi r^{-2}\Omega_1 + \Hbsupp^{-\infty,3/2-},
  \end{equation}
  where $\chi$ is a cutoff, identically $0$ for $r\leq 3\bhm_0$ and identically $1$ for $r\geq 4\bhm_0$.

  To prove the statements~\eqref{EqL0LeadBreve}, recall that $\breve h_{b,s 0}$ solves
  \begin{equation}
  \label{EqL0LeadEq}
    \wh{L_{g_b}}(0)\breve h_{b,s 0} = -[L_{g_b},t_*]h_{b,s 0} \in \rho^3\CI+\Hbext^{\infty,5/2-}.
  \end{equation}
  This can be solved by first solving away the leading order term via inversion of the normal operator of $\wh{L_{g_b}}(0)$ at infinity; since the latter has $-i$ in its boundary spectrum, this may a priori produce logarithmic terms $r^{-1}\log r$ in addition to $r^{-1}$ terms. It then remains to find a correction term that solves away, globally on $X$, an error term lying in $\Hbext^{\infty,5/2-}$; this can certainly be done in the space $\Hbext^{\infty,-1/2-}$ (since we already know that a solution, $\breve h_{b,s 0}$, to the full equation~\eqref{EqL0LeadEq} exists), and by the usual normal operator argument this correction automatically lies in $\rho\CI+\Hbext^{\infty,-1/2-}$.
  
  Thus, it suffices to show that the \emph{leading term} of $[L_{g_b},t_*]h_{b,s 0}$ can be solved away without a logarithmic term of size $r^{-1}\log r$. But this only requires a normal operator calculation; in particular, in view of Lemma~\ref{LemmaOpLinFT}, we can replace $[L_{g_b},t_*]$ by $2 i\rho(\rho D_\rho+i)$ where $\rho=r^{-1}$, which on $\rho^2\CI/\Hbext^{\infty,3/2-}$ is simply multiplication by $2$; so the task is to solve
  \begin{equation}
  \label{EqL0LeadBreveExpl}
    \Box_{\ubar g,2}\breve h \in -2 r^{-1}\cdot r^{-2}\Omega_1 = r^{-3}\Omega_1.
  \end{equation}
  The space of (generalized) resonant states of $\wh{\Box_{\ubar g,2}}(0)^*$ at $-i$, i.e.\ the space of tensors which are (quasi)homogeneous of degree $-1$ and annihilated by $\wh{\Box_{\ubar g,2}}(0)^*$, is spanned by $r^{-1}$ times
  \begin{equation}
  \label{EqL0LeadTensor}
    d t^2,\ d t\otimes_s d x^i,\ d x^i\otimes_s d x^j\ \ (1\leq i,j\leq 3).
  \end{equation}
  Thus, $\breve h$ has no logarithmic terms provided each of these are orthogonal to $\Omega_1$ when integrated over the sphere $\Sph^2$ at infinity; and this is indeed the case, due to the fact that $\scal\in\scal_1$ integrates to $0$ over $\{\rho=0\}\cong\Sph^2$. The same argument proves the result for $\breve h_{b,s 1}$.

  The statement for $\breve h_{b,s 0}^*$, which solves $\wh{L_{g_b}}(0)^*\breve h_{b,s 0}^*=-[L_{g_b},t_*]h_{b,s 0}^*$ \emph{with compactly supported right hand side}, is clear. (Note also that the right hand side is one derivative less regular than $h_{b,s 0}^*$ at the event horizon; the solution operator $(\wh{L_{g_b}}(0)^*)^{-1}$ gains one derivative there, hence $\breve h_{b,s 0}^*$ has (at least) the same regularity as $h_{b,s 0}^*$ itself.) The claim for $\breve h_{b,s 1}^*$ is proved like that for $\breve h_{b,s 1}$ in view of~\eqref{EqL0Leads1Dual}.
\end{proof}

\subsection{Generalized stationary modes: quadratic growth}
\label{SsL0Qu}

We next study whether the linearization $L_{g_{b_0}}$ at the Schwarzschild metric admits \emph{quadratically} growing solutions of scalar type $l=0$ or $l=1$. (For all other types, this possibility has been excluded already.)

\begin{lemma}
\label{LemmaL0Qus1}
  Let $d\geq 2$. There does not exist $h=\sum_{j=0}^d t_*^j h_j$ with $h_j\in\Hbext^{\infty,-1/2-}$ of scalar type $l=1$ and $h_d\neq 0$ for which $L_{g_{b_0}}h=0$.
\end{lemma}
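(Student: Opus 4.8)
\textbf{Proof plan for Lemma~\ref{LemmaL0Qus1}.} The plan is to follow the same reduction strategy as in the proof of Lemma~\ref{LemmaL0Linv1} and Proposition~\ref{PropL0Lin}: first reduce to the case $d=2$ by differentiating in $t_*$. Indeed, if $h=\sum_{j=0}^d t_*^j h_j$ solves $L_{g_{b_0}}h=0$ with $d\geq 3$ and $h_d\neq 0$, then $\pa_{t_*}h$ is a solution of degree $d-1$ with leading coefficient $d\,h_d\neq 0$; by induction on $d$ we are reduced to $d=2$. So assume $h=t_*^2 h_2+t_* h_1+h_0$ with $h_2\neq 0$ of scalar type $l=1$. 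Matching powers of $t_*$ in $L_{g_{b_0}}h=0$ gives $\wh{L_{g_{b_0}}}(0)h_2=0$, hence by Proposition~\ref{PropL0} (after rescaling) $h_2=h_{b_0,s 1}(\scal)$ for some $0\neq\scal\in\scal_1$.

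The next step is to analyze the $t_*^1$-coefficient equation, namely $\wh{L_{g_{b_0}}}(0)h_1 = -[L_{g_{b_0}},t_*]h_2$ (modulo the identification $[L_{g_{b_0}},t_*]\ftrans(0)=i\pa_\sigma\wh{L_{g_{b_0}}}(0)$ as in~\eqref{Eq10NoLinComm}), and the $t_*^0$-coefficient equation. As in the proof of Proposition~\ref{PropL0Lin}, I would apply the linearized second Bianchi identity to $h$: since $\delta_{g_{b_0}}\sfG_{g_{b_0}}h_{b_0,s 1}(\scal)=0$ and $[\delta_{g_{b_0}}\sfG_{g_{b_0}},t_*]$ maps $\Hbext^{\infty,-1/2-}$ into $\Hbext^{\infty,1/2-}$ by~\eqref{EqKStDivSymmGrad} (using also that $h_{b_0,s 1}(\scal)\in\Hbext^{\infty,1/2-}$ by Lemma~\ref{LemmaL0Lead}, so the $t_*^2$ and $t_*^1$ commutator terms have the right decay), one finds $\delta_{g_{b_0}}\sfG_{g_{b_0}}h\in\Poly^1(t_*)\Hbext^{\infty,1/2-}$ is a polynomially bounded solution of $\Box_{g_{b_0},1}(\delta_{g_{b_0}}\sfG_{g_{b_0}}h)=0$. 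By Lemma~\ref{Lemma10NoLin} (applied to the $t_*$-coefficient) together with Theorem~\ref{Thm1} (applied to the $t_*^0$-coefficient, using that the generator $\omega_{b_0,s 0}$ of $\ker\wh{\Box_{g_{b_0}}}(0)$ is not in $\Hbext^{\infty,1/2-}$), we conclude $\delta_{g_{b_0}}\sfG_{g_{b_0}}h=0$, hence $D_{g_{b_0}}\Ric(h)=0$ as well.

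Now I would invoke the mode stability Theorem~\ref{ThmMS}, part~\eqref{ItMSSc1} in the $t_*^2$- and $t_*^1$-coefficients and the growing version together: writing $h=\delta_{g_{b_0}}^*(t_*^2\omega_{b_0,s 1}(\scal)+\cdots)+\cdots$ using~\eqref{Eq10SymGradDecay} and Lemma~\ref{Lemma10Gens12}, one should be able to peel off the scalar gauge potential order by order, reducing to a statement about 1-forms $\Box_{g_{b_0},1}\omega=0$ with $\omega$ of scalar type $l=1$ and quadratic growth in $t_*$. The coefficient structure should force, at the $t_*^0$-level, a solvability condition of the form $\la[\Box_{g_{b_0}},t_*]\omega_1', \omega_{b_0,s 0}^*\ra = (\text{something nonzero})$, contradicting the fact that $\omega_{b_0,s 0}^*$ is of scalar type $l=0$ while $\omega_1'$ is of scalar type $l=1$ (so their pairing vanishes) — the point being exactly the \emph{opposite} of the obstruction~\eqref{Eq10NoLinPair}. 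I expect the main obstacle to be bookkeeping: carefully tracking which commutator terms $[L_{g_{b_0}},t_*]$, $[\delta_{g_{b_0}}^*,t_*]$, $[\delta_{g_{b_0}}\sfG_{g_{b_0}},t_*]$ produce, and verifying that the decay weights line up so that the second-Bianchi argument closes and Theorem~\ref{ThmMS} applies at each order. The contrast with scalar type $l=0$ (where quadratically growing modes of $L_{g_{b_0}}$ \emph{do} exist, as stated in~\S\ref{SsL0Qu} and motivating the constraint damping of~\S\ref{SCD}) is precisely the spherical harmonic degree mismatch that makes the relevant pairing vanish here, so this parity/degree observation is the heart of the argument.
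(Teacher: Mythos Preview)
Your reduction to $d=2$ and identification $h_2=h_{b_0,s 1}(\scal)$ are correct, and your Bianchi argument showing $\delta_{g_{b_0}}\sfG_{g_{b_0}}h=0$ is valid (the scalar $l=1$ type of $\delta_{g_{b_0}}\sfG_{g_{b_0}}h$, combined with the fact that the only zero mode $\omega_{b_0,s 0}$ of $\Box_{g_{b_0},1}$ is scalar $l=0$, forces it to vanish). However, the final step has a genuine gap. Your proposed contradiction mechanism is backwards: the orthogonality of scalar $l=1$ forms to the scalar $l=0$ dual state $\omega_{b_0,s 0}^*$ means precisely that there is \emph{no} obstruction to solving the 1-form wave equation $\Box_{g_{b_0},1}(t_*^2\omega_{b_0,s 1}(\scal)+t_*\omega_1+\omega_0)=0$ order by order in $t_*$. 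Quadratically growing scalar $l=1$ solutions of the 1-form wave equation \emph{do} exist; you cannot get a contradiction at the 1-form level. What actually obstructs the existence of $h$ is the decay requirement $h_0\in\Hbext^{\infty,-1/2-}$, which you do not use, and which translates into a condition on $\omega_0,\omega_1$ that is not visible from the 1-form wave equation alone.

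The paper's proof bypasses Bianchi and mode stability entirely and works directly with $L_{g_{b_0}}$. After the reduction to $d=2$, the $t_*^1$-equation reads $L_{g_{b_0}}(2t_*h_2+h_1)=0$, which by Proposition~\ref{PropL0Lin} forces $h_1=2\breve h_{b_0,s 1}(\scal)$ (after subtracting a multiple of $\hat h_{b_0,s 1}(\scal')$ from $h$). The $t_*^0$-equation then requires $\wh{L_{g_{b_0}}}(0)h_0=-[[L_{g_{b_0}},t_*],t_*]h_{b_0,s 1}(\scal)-2[L_{g_{b_0}},t_*]\breve h_{b_0,s 1}(\scal)$, whose solvability is obstructed by the scalar $l=1$ dual state $h_{b_0,s 1}^*(\scal)\in\cK_{b_0}^*$. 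The pairing is computed explicitly to be $-4\bhm_0\neq 0$, and that is the contradiction. The point is that the relevant dual state is $h_{b_0,s 1}^*(\scal)$ at the 2-tensor level, not $\omega_{b_0,s 0}^*$ at the 1-form level; its nonvanishing requires an honest calculation rather than a parity argument.
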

\begin{proof}
  By the argument at the beginning of the proof of Lemma~\ref{LemmaL0Linv1}, it suffices to prove this for $d=2$. Then $h_2=h_{b_0,s 1}(\scal)\in\ker L_{g_{b_0}}$ for some $0\neq\scal\in\scal_1$, and
  \begin{equation}
  \label{EqL0Qus1}
    0 = L_{g_{b_0}}h = t_*\bigl(2[L_{g_{b_0}},t_*]h_2 + L_{g_{b_0}}h_1\bigr) + \bigl([[L_{g_{b_0}},t_*],t_*]h_2 + [L_{g_{b_0}},t_*]h_1 + L_{g_{b_0}}h_0\bigr).
  \end{equation}
  The vanishing of the linear (in $t_*$) term is equivalent to $L_{g_{b_0}}(2 t_* h_2+h_1)=0$, thus $2 t_* h_2+h_1=2\hat h_{b_0,s 1}(\scal)+c h_{b_0,s 1}(\scal')$, $c\in\R$, $\scal'\in\scal_1$. By subtracting from $h$ the 2-tensor $c\hat h_{b_0,s 1}(\scal')\in\ker L_{g_{b_0}}$, we can set $c=0$, thus $h_1=2(\hat h_{b_0,s 1}-t_* h_{b_0,s 1})$. It thus remains to determine whether $h_0$ can be chosen to make the constant term in~\eqref{EqL0Qus1} vanish. This is equivalent to vanishing of the pairing of this term with $h_{b_0,s 1}^*$; but, dropping $\scal$ from the notation, a lengthy calculation shows that, for $\scal=\cos\theta$ (which can be arranged by choosing suitable polar coordinates, and rescaling by a non-zero complex number),
  \begin{equation}
  \label{EqL0Qus1Pair}
    \big\la [[L_{g_{b_0}},t_*],t_*]h_{b_0,s 1} + 2[L_{g_{b_0}},t_*](\hat h_{b_0,s 1}-t_* h_{b_0,s 1}), h_{b_0,s 1}^* \big\ra = -4\bhm_0 \neq 0.
  \end{equation}
  (One can show that this calculation is unaffected when one replaces $t_*$ by $t_0$; but then $t_0$ being null implies that $[[L_{g_{b_0}},t_0],t_0]\equiv 0$, which simplifies the calculation.)
\end{proof}

By continuity, the non-degeneracy~\eqref{EqL0Qus1Pair} remains valid for $b_0$ replaced by nearby $b$.

For scalar type $l=0$ modes on the other hand, one can verify that the pairing
\begin{equation}
\label{EqL0Qus0Pair}
    \big\la [[L_{g_{b_0}},t_*],t_*]h_{b_0,s 0} + 2[L_{g_{b_0}},t_*](\hat h_{b_0,s 0}-t_* h_{b_0,s 0}), h_{b_0,s 0}^* \big\ra
\end{equation}
\emph{vanishes}, which by the arguments following~\eqref{EqL0Qus1} implies the existence of a quadratically growing generalized mode solution. We do not use this degeneracy in the sequel, but do point out that \emph{this is the reason for implementing constraint damping}, as we discuss momentarily. The key observation is that these quadratically growing solutions are pathological in that they \emph{cannot} satisfy the linearized gauge condition:

\begin{lemma}
\label{LemmaL0Qus0}
  Suppose $h=t_*^2 h_2+t_* h_1+h_0\in\Poly^2(t_*)\Hbext^{\infty,-1/2-}$ is of scalar type $l=0$ and solves $D_{g_{b_0}}\Ric(h)=0$ and $\delta_{g_{b_0}}\sfG_{g_{b_0}}h=0$, then necessarily $h_2=0$. In particular, if $L_{g_{b_0}}h=0$ but $h_2\neq 0$, then $\delta_{g_{b_0}}\sfG_{g_{b_0}}h\neq 0$.
\end{lemma}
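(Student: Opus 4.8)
\textbf{Proof plan for Lemma~\ref{LemmaL0Qus0}.}
The plan is to run the same Bianchi/mode-stability machinery used in Proposition~\ref{PropL0} and Proposition~\ref{PropL0Lin}, but now pushing it one power of $t_*$ higher and keeping careful track of decay so that the ``$r^{-1}$ kernel of the constraint propagation operator'' obstruction is forced to vanish. First I would invoke mode stability for scalar type $l=0$ generalized modes, Theorem~\ref{ThmMS}\eqref{ItMSSc0}: since $D_{g_{b_0}}\Ric(h)=0$ and $h\in\Poly^2(t_*)\Hbext^{\infty,-1/2-}$ is of scalar type $l=0$, there exists $\dot\bhm\in\R$ such that $h=\dot g_{b_0}^0(\dot\bhm,0)+\delta_{g_{b_0}}^*\omega$ with $\omega\in\Poly^3(t_*)\Hbext^{\infty,\ell'}$ of scalar type $l=0$ for some $\ell'$. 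Then I would apply the linearized gauge condition $\delta_{g_{b_0}}\sfG_{g_{b_0}}h=0$, which using $\delta_{g_{b_0}}\sfG_{g_{b_0}}\dot g_{b_0}^0(\dot\bhm,0)$ being a fixed $\cO(r^{-2})$ stationary 1-form of scalar type $l=0$ (cf.\ the computation of $\delta_g\sfG_g\dot g^0_{b_0}(1,0)$ around~\eqref{EqL0SchwGauge}) reduces to the gauge potential wave equation
\[
  \cP_{b_0}\omega = 2\delta_{g_{b_0}}\sfG_{g_{b_0}}\delta_{g_{b_0}}^*\omega = \Box_{g_{b_0},1}\omega = -2\dot\bhm\,\delta_{g_{b_0}}\sfG_{g_{b_0}}\dot g_{b_0}^0(1,0),
\]
an equation for $\omega\in\Poly^3(t_*)\Hbext^{\infty,\ell'}$ of scalar type $l=0$ with a \emph{stationary}, compactly-decaying ($\cO(r^{-2})$), $l=0$ right-hand side.

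The core of the argument is then an induction on the top power of $t_*$ appearing in $\omega$, i.e.\ analyzing $\Box_{g_{b_0},1}\omega=f$ with $f$ stationary and $\omega$ polynomially growing in $t_*$. Write $\omega=\sum_{j=0}^N t_*^j\omega_j$ with $\omega_N\neq 0$, $N\geq 1$. If $N\geq 2$, then $\pa_{t_*}^2\omega$ solves $\Box_{g_{b_0},1}(\pa_{t_*}^2\omega)=0$ and has lower top $t_*$-power, so an auxiliary statement---that $\Box_{g_{b_0},1}$ has no generalized zero modes of scalar type $l=0$ growing faster than linearly, which follows from the pairing arguments of \S\ref{Ss10Gen} (the analogue of Lemma~\ref{Lemma10NoLin}, iterated, using the non-degeneracy~\eqref{Eq10NoLinPair}/\eqref{Eq10NoLinPairKerr})---forces $\pa_{t_*}^2\omega=0$, hence $N\leq 1$. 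So $\omega=t_*\omega_1+\omega_0$. Plugging in, the $t_*^1$-coefficient gives $\wh{\Box_{g_{b_0}}}(0)\omega_1=0$ with $\omega_1$ of scalar type $l=0$; by Theorem~\ref{Thm1} and Proposition~\ref{Prop10Grow}, $\omega_1$ is a combination of $\omega_{b_0,s 0}$ and $\omega_{b_0,s 0}^{(0)}=\pa_t^\flat$ (the latter only if we allow $\Hbext^{\infty,-3/2-}$ growth, which is within $\ell'$). The $t_*^0$-coefficient then reads $\wh{\Box_{g_{b_0}}}(0)\omega_0=f-[\Box_{g_{b_0}},t_*]\omega_1$, which is solvable for $\omega_0$ only if the right-hand side is orthogonal to $\ker\wh{\Box_{g_{b_0}}}(0)^*\cap\Hbsupp^{-\infty,-1/2+}=\la\omega_{b_0,s 0}^*\ra$; pairing against $\omega_{b_0,s 0}^*$, the term $[\Box_{g_{b_0}},t_*]\omega_{b_0,s 0}$ contributes $2$ by~\eqref{Eq10NoLinPair}, $[\Box_{g_{b_0}},t_*]\pa_t^\flat$ contributes $0$ (it is of scalar type $l=1$? no---actually $\pa_t^\flat$ is $l=0$; here one computes $\la[\Box_{g_{b_0}},t_*]\pa_t^\flat,\omega_{b_0,s 0}^*\ra$, which I would evaluate directly, expecting it to vanish since $\pa_t$ is Killing so $[\Box_{g_{b_0}},t_*]\pa_t^\flat=\wh{\Box_{g_{b_0}}}(0)(t_*\pa_t^\flat\text{-correction})$ lies in the range), and $f$ contributes $-8\dot\bhm\cdot(\text{coefficient of }\pa_t^\flat\text{-component in }\omega_1)$ via~\eqref{EqL0SchwGauge}. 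The upshot: the coefficient of $\omega_{b_0,s 0}$ in $\omega_1$ is pinned to a nonzero multiple of $\dot\bhm$; but separately, the \emph{leading asymptotic behavior} of $\delta_{g_{b_0}}^*\omega$ at $t_*^2$, namely $t_*^2\,\delta_{g_{b_0}}^*(\text{coeff}\cdot\omega_{b_0,s 0})$, must match $t_*^2 h_2$, and matching this against the pathological quadratic mode structure---whose $h_2$ is precisely $h_{b_0,s 0}=\delta_{g_{b_0}}^*\omega_{b_0,s 0}$---shows $h_2\neq 0$ would require $\omega$ to have a genuine $t_*^2$ term, contradicting $N\leq 1$. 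Hence $h_2=0$.

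The main obstacle I anticipate is bookkeeping the interplay between the two reductions: mode stability only gives $\omega\in\Poly^3(t_*)$ a priori, so I must first cut this down to $\Poly^1(t_*)$ using the (iterated) Lemma~\ref{Lemma10NoLin}-type non-degeneracy for $\Box_{g_{b_0},1}$ on scalar $l=0$ 1-forms, being careful that the relevant pairings~\eqref{Eq10NoLinPair} are with the \emph{scalar $l=0$} dual state $\omega_{b_0,s 0}^*$ and do not accidentally vanish (they don't, by~\eqref{Eq10NoLinPair}). Once $\omega$ is reduced to linear growth, the contradiction with a nonzero $t_*^2$-coefficient $h_2$ is essentially immediate: $\delta_{g_{b_0}}^*$ of a linearly-growing 1-form plus a linearized Schwarzschild metric (which is stationary) can have a $t_*^2$-coefficient only if $[\delta_{g_{b_0}}^*,t_*]$ or the commutator structure produces one, which it cannot since $\delta_{g_{b_0}}^*$ and $\pa_{t_*}$ generate at most the $t_*^1$ power already present. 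Therefore $h_2=\dot g_{b_0}^0(0,0)+(\text{coefficient of }t_*^2\text{ in }\delta_{g_{b_0}}^*\omega)=0$. The very last sentence of the lemma (if $L_{g_{b_0}}h=0$ but $h_2\neq0$ then $\delta_{g_{b_0}}\sfG_{g_{b_0}}h\neq0$) is then just the contrapositive, using that $L_{g_{b_0}}h=0$ together with $\delta_{g_{b_0}}\sfG_{g_{b_0}}h=0$ implies $D_{g_{b_0}}\Ric(h)=0$ (by definition of $L_{g_{b_0}}$), so both hypotheses of the first part hold and force $h_2=0$.
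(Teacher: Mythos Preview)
Your overall strategy—apply Theorem~\ref{ThmMS}\eqref{ItMSSc0} to $h$, derive the gauge potential wave equation $\Box_{g_{b_0},1}\omega=-2\dot\bhm\,\delta_{g_{b_0}}\sfG_{g_{b_0}}\dot g^0_{b_0}(1,0)$, then reduce $\omega$ to $\Poly^1(t_*)$—is natural, but the reduction step has a genuine gap. Your ``auxiliary statement'' (that $\Box_{g_{b_0},1}$ has no scalar $l=0$ generalized zero modes growing in $t_*$) relies on an iterated Lemma~\ref{Lemma10NoLin} argument, but Lemma~\ref{Lemma10NoLin} requires the coefficients to lie in $\Hbext^{\infty,-3/2+}$, whereas Theorem~\ref{ThmMS}\eqref{ItMSSc0} only gives $\omega\in\Poly^3(t_*)\Hbext^{\infty,\ell'}$ with $\ell'$ \emph{uncontrolled}. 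With uncontrolled spatial growth the auxiliary statement is actually false: the scalar $l=0$ stationary kernel of $\wh{\Box_{g_{b_0},1}}(0)$ in $\Hbext^{\infty,-3/2-}$ is two-dimensional, spanned by $\omega_{b_0,s 0}$ and $\pa_t^\flat$, while the obstruction $\omega_{b_0,s 0}^*$ (compactly supported, hence in every weighted dual space) gives only one linear constraint; for a specific nonzero combination $\eta_1$ the obstruction vanishes and one \emph{can} solve for $\eta_0$ (with $\eta_0$ growing like $r$), producing a genuinely linearly growing zero mode. Relatedly, your expectation that $\la[\Box_{g_{b_0}},t_*]\pa_t^\flat,\omega_{b_0,s 0}^*\ra=0$ is incorrect—a direct computation (e.g.\ $\Box(t_0\pa_t^\flat)=(2\mu/r+\mu')\,d t_0-(2/r)\,d r$, evaluated at $r=2\bhm_0$) shows this pairing is nonzero.

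The paper's proof avoids this difficulty by a different route. Since both hypotheses together give $L_{g_{b_0}}h=0$, the structure established in Proposition~\ref{PropL0Lin} (together with the argument of Lemma~\ref{LemmaL0Qus1}) pins down $h_2=h_{b_0,s 0}$ and $h_1=2\breve h_{b_0,s 0}$ after normalization. One then separates off the \emph{explicit} pure gauge part $\delta_{g_{b_0}}^*(t_*^2\omega_{b_0,s 0}+2 t_*\breve\omega_{b_0,s 0})$, leaving $2 t_*\dot g^0_{b_0}(-\tfrac14,0)+h_0'$; applying $D_{g_{b_0}}\Ric=0$ to this and then Theorem~\ref{ThmMS}\eqref{ItMSSc0} (now only to a \emph{linearly} growing tensor) yields a gauge potential in $\Poly^2(t_*)$. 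Matching the $t_*^2$-coefficient forces the top piece of this potential to be Killing, and matching the $t_*^1$-coefficient then says $\dot g^0_{b_0}(1,0)$ is pure gauge—a contradiction. Your approach could in principle be salvaged by simultaneously tracking the constraint $\delta_{g_{b_0}}^*\omega=h-\dot g^0_{b_0}(\dot\bhm,0)\in\Poly^2(t_*)\Hbext^{\infty,-1/2-}$ (which controls the decay, hence the identity, of the top coefficients of $\omega$) alongside $\Box_{g_{b_0},1}\omega=f$, but carrying this through requires the corrected pairing and more careful bookkeeping, and ultimately lands at the same ``linearized Schwarzschild is not pure gauge'' endpoint.
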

\begin{proof}
  The assumptions on $h$ imply that $L_{g_{b_0}}h=0$. Suppose we can find a generalized mode $h$ with $h_2\neq 0$. As in the proof of the previous lemma, after multiplying by a non-zero scalar and subtracting from $h$ a multiple of $\hat h_{b_0,s 0}$, we must have $h_2=h_{b_0,s 0}$ and $h_1=2(\hat h_{b_0,s 0}-t_* h_{b_0,s 0})$. Thus, by Proposition~\ref{PropL0Lin}, and writing $\breve\omega_{b_0,s 0}=\breve\omega_{b_0,s 0}^0+(t_0-t_*)\omega_{b_0,s 0}$,
  \begin{align}
  \label{EqL0Qus0h}
    h &= t_*^2 h_{b_0,s 0} + 2 t_*(\hat h_{b_0,s 0}-t_* h_{b_0,s 0}) + h_0 \\
      &= t_*^2 \delta_{g_{b_0}}^*\omega_{b_0,s 0} + 2 t_*\bigl(\delta_{g_{b_0}}^*(t_*\omega_{b_0,s 0}+\breve\omega_{b_0,s 0})-t_*\delta_{g_{b_0}}^*\omega_{b_0,s 0} + \dot g^0_{b_0}(-\tfrac14,0)\bigr) + h_0 \nonumber\\
      &= \delta_{g_{b_0}}^*(t_*^2\omega_{b_0,s 0} + 2 t_*\breve\omega_{b_0,s 0}) + 2 t_*\dot g^0_{b_0}(-\tfrac14,0) + h_0', \nonumber
  \end{align}
  where $h_0'=-2[\delta_{g_{b_0}}^*,t_*]\breve\omega_{b_0,s 0}+h_0$. Since $D_{g_{b_0}}\Ric(h)=0$, we have $D_{g_{b_0}}\Ric(t_*\dot g^0_{b_0}(1,0)-2 h'_0)=0$; but then Theorem~\ref{ThmMS}\eqref{ItMSSc0} implies that there exist scalar type $l=0$ 1-forms $\omega_0,\omega_1,\omega_2\in\Hbext^{\infty,\ell'}$ for some $\ell'\in\R$ such that
  \begin{equation}
  \label{EqL0Qus0PureGauge}
    t_*\dot g^0_{b_0}(1,0)-2 h'_0 = \delta_{g_{b_0}}^*(t_*^2\omega_2+t_*\omega_1+\omega_0).
  \end{equation}
  Expanding the right hand side into powers of $t_*$, the coefficient of $t_*^2$ vanishes, hence $\delta_{g_{b_0}}^*\omega_2=0$, thus $\omega_2=c\pa_t^\flat$, $c\in\C$. The linear term of~\eqref{EqL0Qus0PureGauge} then reads
  \begin{equation}
  \label{EqL0Qus0Schw}
    \dot g^0_{b_0}(1,0) = 2 [\delta_{g_{b_0}}^*,t_*]\omega_2 + \delta_{g_{b_0}}^*\omega_1 = \delta_{g_{b_0}}^*(2 c t_*\pa_t^\flat + \omega_1);
  \end{equation}
  that is, the linearized Schwarzschild metric is pure gauge, which is not the case.\footnote{This is easy to check explicitly. Suppose equation~\eqref{EqL0Qus0Schw} held; more generally, let $\omega=p(t_0,r)d t_0+q(t_0,r)d r$ and suppose $S:=\delta_{g_{b_0}}^*\omega-\dot g_{b_0}^0(1,0)=0$. Since $S_{r r}=\pa_r q$, we have $q=q(t_0)$. Then, $S_{\theta\theta}=r p+(r-2\bhm_0)q=0$ implies $p=-(1-\tfrac{2\bhm_0}{r})q$, and then $S_{t_0 r}=\half q'=0$ implies that $q(t_0)$ is a constant; therefore, $\omega=q(-(1-\tfrac{2\bhm_0}{r})d t_0+d r)=-q\pa_{t_0}^\flat$; but then we have $S=-\dot g_{b_0}^0(1,0)\neq 0$.}
\end{proof}

According to this lemma, in order to exclude quadratically (and faster polynomial) growing generalized mode solutions in the scalar $l=0$ sector on Schwarzschild spacetimes, it suffices to ensure that the linearized gauge condition necessarily holds for $h\in\ker L_{g_{b_0}}\cap\Poly^2(t_*)\Hbext^{\infty,-1/2-}$. We thus proceed to explain why this may (and indeed does) fail for the \emph{unmodified} linearized gauge-fixed Einstein operator: consider again $h$ as in~\eqref{EqL0Qus0h}; then by the linearized second Bianchi identity, and writing $\breve h_{b_0,s 0}=\hat h_{b_0,s 0}-t_* h_{b_0,s 0}\in\Hbext^{\infty,-1/2-}$,
\begin{align*}
  \ker \Box_{g_{b_0}} \ni \delta_{g_{b_0}}\sfG_{g_{b_0}}h
   &= 2 t_*\bigl([\delta_{g_{b_0}}\sfG_{g_{b_0}},t_*]h_{b_0,s 0} + \delta_{g_{b_0}}\sfG_{g_{b_0}}\breve h_{b_0,s 0}\bigr) \\
   &\qquad\qquad + \bigl(2[\delta_{g_{b_0}}\sfG_{g_{b_0}},t_*]\breve h_{b_0,s 0} + \delta_{g_{b_0}}\sfG_{g_{b_0}}h_0\bigr).
\end{align*}
The coefficient of the linear (in $t_*$) term thus lies in $\ker\wh{\Box_{g_{b_0}}}(0)\cap\Hbext^{\infty,1/2-}$, hence vanishes by Theorem~\ref{Thm1}. Thus, the (stationary) second line lies in
\begin{equation}
\label{EqL0Qus0Fail}
  \ker \bigl(\delta_{g_{b_0}}\sfG_{g_{b_0}}\circ\delta_{g_{b_0}}^*\bigr)\ftrans(0) \cap \Hbext^{\infty,-1/2-}.
\end{equation}
\emph{But the latter space is non-trivial}, allowing for $\delta_{g_{b_0}}\sfG_{g_{b_0}}h=c\omega_{b_0,s 0}$ for $c\neq 0$. (This calculation also implies that necessarily $D_{g_{b_0}}\Ric(h)\neq 0$: indeed, we otherwise would also have $0=\delta_{g_{b_0}}^*\delta_{g_{b_0}}\sfG_{g_{b_0}}h=c h_{b_0,s 0}$, forcing $c=0$, and thus $h$ would only be linearly growing by Lemma~\ref{LemmaL0Qus0}---a contradiction.)

\emph{This is therefore the place where constraint damping (CD) becomes crucial.} Namely, replacing $\delta_{g_{b_0}}^*$ in the definition of the linearized gauge-fixed Einstein operator, and thus in~\eqref{EqL0Qus0Fail}, by a lower order (and spherically symmetric) modification of the form described in Definition~\ref{DefOpCD}---let us simply denote the resulting operator by $\wt\delta^*$ here---one can ensure that the zero energy nullspace of $\delta_{g_{b_0}}\sfG_{g_{b_0}}\circ\wt\delta^*$ on $\Hbext^{\infty,-1/2-}$ is trivial. For putative quadratically growing scalar $l=0$ zero modes $h$, with non-vanishing $t_*^2$ term, of the corresponding linearized \emph{modified} gauge-fixed Einstein operator, we can then conclude $\delta_{g_{b_0}}\sfG_{g_{b_0}}h=0$, which is a contradiction by Lemma~\ref{LemmaL0Qus0}.

\section{Constraint damping (CD)}
\label{SCD}

We proceed to describe constraint damping modifications, as motivated in~\S\ref{SsL0Qu}. We will show in~\S\ref{SsCD0} that replacing $\delta_g^*$ by $\wt\delta_{g,E}^*$, see Definition~\ref{DefOpCD}, with $E=E(g;\cd,\gamma_1,\gamma_2)$ (see~\eqref{EqOpCD}) being a modification, with a suitably chosen (and in fact compactly supported) $\cd$, and with $\gamma_1,\gamma_2$ small, eliminates the zero energy nullspace of the constraint propagation operator $\delta_g\sfG_g\wt\delta_{g,E}^*$ on $\Hbext^{\infty,-1/2-}$ for the Schwarzschild metric $g=g_{b_0}$, and thus for slowly rotating Kerr metrics $g=g_b$. This information suffices to get a complete description of the generalized zero energy nullspace of $L_{g_b}$, see Theorem~\ref{ThmCD0Modes}. In~\S\ref{SsCDU} we show that one can ensure that this modification not only eliminates the zero energy nullspace, but also preserves the absence of non-zero resonances $\sigma\in\C$, $\Im\sigma\geq 0$, both for the constraint propagation operator and for the linearized modified gauge-fixed Einstein operator.

\begin{rmk}
\label{RmkCDSmall}
  For comparison with the large (i.e.\ taking $\gamma_1,\gamma_2\gg 1$) CD used in \cite{HintzVasyKdSStability}, note that here, the only problematic behavior which we aim to eliminate by means of CD concerns quadratically (or more) growing generalized \emph{zero} energy modes, whereas on Kerr--de~Sitter we needed to eliminate non-pure-gauge modes in the \emph{open upper half plane} (see e.g.\ \cite[Appendix~C.2]{HintzVasyKdSStability} for the explicit calculations on static de~Sitter spacetimes), which clearly cannot be done by \emph{perturbative} methods.
\end{rmk}

\begin{rmk}
\label{RmkCDSmall2}
  On the other hand, in \cite{HintzVasyMink4}, we used small CD which however is asymptotically (at $\scri^+$) non-trivial (roughly, in the reference we took $\cd=r^{-1}d t$ near $\scri^+$). Such CD modifications affect (albeit only mildly so for small $\gamma_1,\gamma_2$) the asymptotic behavior of (mode) solutions of the modified linearized gauge-fixed operator. Recall however that CD at $\scri^+$ in~\cite{HintzVasyMink4} was implemented only to ensure better decay properties of certain metric components at $\scri^+$ in a \emph{nonlinear} iteration scheme; hence, for the present \emph{linear} stability problem, there is no need for such asymptotically non-trivial CD. With an eye towards a possible proof of the nonlinear stability of the Kerr family, we do remark however that this type of CD can be implemented in this paper as well (as an additional small perturbation on top of an already working compactly supported CD); the changes in the behavior of the resolvent are minor, as discussed in a general setting in~\cite{VasyLowEnergyLag}. The details will be discussed elsewhere.
\end{rmk}

Let $\chi\in\CIc((r_-,3\bhm_0))$ be a localizer, identically $1$ near $2\bhm_0$; let further
\begin{equation}
\label{EqCD1form}
  \cd' := d t_0 - \fv\,d r,\quad
  \cd := \chi(r)\cd',
\end{equation}
with $\fv\in\R$ to be chosen later. (For $\fv>0$, this is a future timelike 1-form on the Schwarzschild spacetime $(M^\circ,g_{b_0})$.) We then let
\[
  E = E(g;\cd,\gamma,\gamma),\quad
  \wt\delta_{g,\gamma}^* := \wt\delta_{g,E}^*.
\]
We study the linearized modified gauge-fixed Einstein operator
\[
  L_{g,\gamma}:=L_{g,E},
\]
see~\eqref{EqOpCDEinLin}, in detail~\S\ref{SR}; here, we focus on the operator arising via the second Bianchi identity, $\delta_g\sfG_g L_{g,\gamma}:=2\delta_g\sfG_g\wt\delta_{g,\gamma}^*\circ\delta_g\sfG_g$, and draw a few simple conclusions for $L_{g,\gamma}$. Define thus the (modified) \emph{gauge propagation operator}
\begin{equation}
\label{EqCD}
  \cP_{g,\gamma} := 2\delta_g\sfG_g\wt\delta_{g,\gamma}^*,\quad
  \cP_{b,\gamma} := \cP_{g_b,\gamma}.
\end{equation}
So far, we worked with $L_0=L_{g_b}$, thus $\cP_{b,0}=\Box_{g_b,1}$ is the tensor wave operator.

\subsection{Zero frequency improvements}
\label{SsCD0}

We show that the kernel $\ker\wh{\cP_{b,\gamma}}(0)\cap\Hbext^{\infty,-1/2-}=\la\omega_{b,s 0}\ra$ for $\gamma=0$ becomes trivial for small $\gamma\neq 0$ upon choosing $\fv$ in~\eqref{EqCD1form} suitably:

\begin{prop}
\label{PropCD0}
  Let $\fv\neq 1$. There exists $\gamma_0>0$ such that for fixed $\gamma$ with $0<|\gamma|<\gamma_0$, the following holds: for $b$ sufficiently close to $b_0$,
  \begin{equation}
  \label{EqCD0}
    \ker\wh{\cP_{b,\gamma}}(0)\cap\Hbext^{\infty,-1/2-} = 0,\qquad
    \ker\wh{\cP_{b,\gamma}}(0)^*\cap\Hbsupp^{-\infty,-1/2-} = 0.
  \end{equation}
\end{prop}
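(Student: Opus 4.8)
The plan is to prove the statement first for the Schwarzschild metric $g=g_{b_0}$ by an explicit perturbative computation in $\gamma$, and then to extend it to slowly rotating Kerr metrics by the now-familiar semicontinuity argument (as at the end of \S\ref{Ss10} and in the proof of Proposition~\ref{PropL0}). For $b=b_0$: we know $\cP_{b_0,0}=\Box_{g_{b_0},1}$ has $\ker\wh{\cP_{b_0,0}}(0)\cap\Hbext^{\infty,-1/2-}=\la\omega_{b_0,s 0}\ra$ and $\ker\wh{\cP_{b_0,0}}(0)^*\cap\Hbsupp^{-\infty,-1/2-}=\la\omega_{b_0,s 0}^*\ra$ by Theorem~\ref{Thm1}. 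Since $\wh{\cP_{b_0,\gamma}}(0)$ is Fredholm of index $0$ (the zeroth order modification $\gamma(2\cd\otimes_s\delta_{g}\sfG_{g}-\cdots)$ lies in $\rho^2\Diffsc$ near infinity and is compactly supported here, hence does not change the Fredholm framework of Theorem~\ref{ThmOp}/Proposition~\ref{PropOpNull} applied to $\Box_{g_{b_0},1}$), and since $\wh{\cP_{b_0,\gamma}}(0)$ depends linearly on $\gamma$, the standard analytic perturbation theory for a Fredholm family through a point with one-dimensional kernel and cokernel applies: the kernel stays trivial for small $\gamma\neq 0$ provided the Fredholm determinant's first-order term, i.e.\ the pairing
\[
  \bigl\la \pa_\gamma\wh{\cP_{b_0,\gamma}}(0)\big|_{\gamma=0}\,\omega_{b_0,s 0},\ \omega_{b_0,s 0}^*\bigr\ra,
\]
is nonzero. (This is exactly the toy model spelled out at the end of \S\ref{SssIIK}.)

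\textbf{Key steps.} First, I would compute $\pa_\gamma\wh{\cP_{b_0,\gamma}}(0)|_{\gamma=0}$. From~\eqref{EqCD} and Definition~\ref{DefOpCD}, $\cP_{g,\gamma}=2\delta_g\sfG_g\delta_g^* + 2\gamma\,\delta_g\sfG_g E(g;\cd,1,1)\delta_g\sfG_g\cdot\tfrac12\cdots$—more precisely $\cP_{g,\gamma}=\Box_{g,1}+2\gamma\,\delta_g\sfG_g\bigl(2\cd\otimes_s(\cdot)-g\la\cd,\cdot\ra_G\bigr)$, so
\[
  \pa_\gamma\wh{\cP_{b_0,\gamma}}(0)\big|_{\gamma=0} = 2\,\wh{\delta_{g_{b_0}}\sfG_{g_{b_0}}}(0)\circ\bigl(2\cd\otimes_s(\cdot)-g_{b_0}\la\cd,\cdot\ra_{G_{b_0}}\bigr).
\]
Second, I would evaluate the pairing. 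Here $\omega_{b_0,s 0}=r^{-1}(d t_0-d r)$ is explicit, $\omega_{b_0,s 0}^*=\delta(r-2\bhm_0)\,d r$ is a $\delta$-distribution concentrated at the event horizon, and $\cd=\chi(r)(d t_0-\fv\,d r)$ with $\chi\equiv 1$ near $2\bhm_0$; thus in the pairing only the behavior of everything near $r=2\bhm_0$ matters, where $\chi\equiv 1$ and all quantities are smooth. So the computation reduces to: apply the explicit operator $2\cd\otimes_s(\cdot)-g_{b_0}\la\cd,\cdot\ra_{G_{b_0}}$ to $\omega_{b_0,s 0}$, apply $\delta_{g_{b_0}}\sfG_{g_{b_0}}$ (a first-order operator, whose action near the horizon is given in the formulas of \S\ref{SMS}), and integrate the $d r$-component against $\delta(r-2\bhm_0)$. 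This is a finite, local Christoffel-symbol computation at $r=2\bhm_0$; the outcome should be a nonzero multiple of $(\fv-1)$ (consistent with the hypothesis $\fv\neq 1$), or more generally an affine function of $\fv$ with a single root which one then excludes by choice of $\fv$. Third, given the nonvanishing of the pairing, analytic Fredholm theory gives $\gamma_0>0$ so that $\ker\wh{\cP_{b_0,\gamma}}(0)\cap\Hbext^{\infty,-1/2-}=0$ for $0<|\gamma|<\gamma_0$; by index $0$, the cokernel is then also trivial. Fourth, for $b$ near $b_0$: the uniform estimates of Theorem~\ref{ThmOp} (valid for $\wh{\cP_{b,\gamma}}(0)$ near zero energy, as these are lower-order perturbations of $\Box_{g_b,1}$) give, exactly as in the proof of Theorem~\ref{Thm0}, that the dimensions of the kernel and of the adjoint kernel are upper semicontinuous at $b_0$ for fixed small $\gamma$; since they vanish at $b_0$, they vanish for $b$ near $b_0$. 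A slight care is needed because the smallness threshold $\gamma_0$ and the $b$-neighborhood interact; one fixes $\gamma$ first with $0<|\gamma|<\gamma_0$, and then shrinks the $b$-neighborhood depending on $\gamma$.

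\textbf{Main obstacle.} The technical heart is the explicit evaluation of the pairing $\la \pa_\gamma\wh{\cP_{b_0,\gamma}}(0)|_{\gamma=0}\,\omega_{b_0,s 0},\omega_{b_0,s 0}^*\ra$ and verifying that it is a nonzero (affine, with root at $\fv=1$) function of $\fv$—this is where the hypothesis $\fv\neq 1$ must be seen to enter, and getting the constant right requires care with the fiber inner products and with the distributional pairing against $\delta(r-2\bhm_0)$ (in particular making sure no hidden boundary term at the horizon is dropped, cf.\ the footnote after Theorem~\ref{ThmOp} about once-differentiated $\delta$-distributions in the cokernel). Everything else is routine: the Fredholm/index bookkeeping is identical to arguments already used repeatedly in \S\ref{S0} and \S\ref{SL}, and the Kerr perturbation step is verbatim the semicontinuity argument from the proof of Theorem~\ref{Thm0}.
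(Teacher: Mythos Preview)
Your proposal is correct and follows essentially the same approach as the paper: the paper organizes the perturbation argument via a $2\times 2$ block matrix decomposition of $\wh{\cP_{b_0,\gamma}}(0)$ with respect to the splittings $\cK^\perp\oplus\cK$ and $\cR\oplus\cR^\perp$, reduces injectivity to the nonvanishing of the Schur complement in the $(1,1)$ entry, and computes the key pairing $\cP_{1 1}^\flat = \bigl\la 2\delta_g\sfG_g(2\cd\otimes_s\omega_{b_0,s 0}-G(\cd,\omega_{b_0,s 0})g),\,\omega_{b_0,s 0}^*\bigr\ra = 4(\fv-1)$ exactly as you predict. The extension to slowly rotating Kerr is likewise by the same perturbation argument you cite.
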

\begin{proof}
  Fix $\ell\in(-\tfrac32,-\half)$, $s>\tfrac32$, and put
  \[
    \cX_b^{s,\ell} = \bigl\{ u\in\Hbext^{s,\ell} \colon \wh{\cP_{b,\gamma}}(0)u\in \Hbext^{s-1,\ell+2} \bigr\}.
  \]
  Recall from (the proof of) Theorem~\ref{ThmOp} that $\wh{\cP_{b,\gamma}}(0)\colon\cX_b^{s,\ell}\to\Hbext^{s-1,\ell+2}$ is Fredholm of index $0$ when $\gamma$ is small. Since $\wh{\cP_{b,\gamma}}(0)-\wh{\cP_{b,0}}(0)$ is a compactly supported, first order operator, the space $\cX_b^{s,\ell}$ does not depend on $\gamma$.
  
  We first consider the Schwarzschild case $b=b_0$. We split domain and target by writing
  \begin{alignat*}{4}
    \cX_{b_0}^{s,\ell} &= \cK^\perp \oplus \cK,&\quad& \cK = \ker_{\cX_{b_0}^{s,\ell}}\wh{\cP_{b_0,0}}(0) &\ =\ & \la\omega_{b_0,s 0}\ra, \\
    \Hbext^{s-1,\ell+2} &= \cR \oplus \cR^\perp,&\quad& \cR = \ran_{\cX_{b_0}^{s,\ell}}\wh{\cP_{b_0,0}}(0) &\ =\ & \ann\omega_{b_0,s 0}^*,
  \end{alignat*}
  where $\ann$ denotes the annihilator, $\cK^\perp$ is a complementary subspace to $\cK$, and $\cR^\perp=\la\eta\ra$ is complementary to $\cR$ inside of $\Hbext^{s-1,\ell+2}$; here we may choose $\eta\in\CIdot(X;\wt{\Tsc^*}X)$, and we may arrange $\la\eta,\omega_{b_0,s 0}^*\ra=1$. The operator $\wh{\cP_{b_0,\gamma}}(0)$ takes the form
  \begin{equation}
  \label{EqCD0PhatMtx}
    \wh{\cP_{b_0,\gamma}}(0) = \begin{pmatrix} \cP_{0 0}+\gamma\cP_{0 0}^\flat & \gamma\cP_{0 1}^\flat \\ \gamma\cP_{1 0}^\flat & \gamma\cP_{1 1}^\flat \end{pmatrix}.
  \end{equation}
  If we identify $\C\cong\cK$ via $c\mapsto c\omega_{b_0,s 0}$, and further $\C\cong\cR^\perp$ via $c\mapsto c\eta$ (thus $\cR^\perp\to\C$ is given by $\eta'\mapsto\la\eta',\omega_{b_0,s 0}^*\ra$), then $\cP_{1 1}^\flat$ is simply a \emph{number}: indeed, one computes
  \begin{equation}
  \label{EqCD0Nondeg}
  \begin{split}
    \cP_{1 1}^\flat &= \big\la\gamma^{-1}(\wh{\cP_{b_0,\gamma}}(0)-\wh{\cP_{b_0,0}}(0))\omega_{b_0,s 0}, \omega_{b_0,s 0}^*\big\ra \\
      &= \big\la 2\delta_g\sfG_g(2\cd\otimes_s\omega_{b_0,s 0}-G(\cd,\omega_{b_0,s 0})g), \omega_{b_0,s 0}^*\big\ra \\
      &= 4(\fv-1).
  \end{split}
  \end{equation}
  Suppose now $\ker\wh{\cP_{b_0,\gamma}}(0)\ni(\omega_0,\omega_1)\in\cK^\perp\oplus\cK$; then $\omega_0=-\gamma(\cP_{0 0}+\gamma\cP_{0 0}^\flat)^{-1}\cP_{0 1}^\flat \omega_1$, so
  \[
    \bigl(\cP_{1 1}^\flat - \gamma\cP_{1 0}^\flat(\cP_{0 0}+\gamma\cP_{0 0}^\flat)^{-1}\cP_{0 1}^\flat\bigr)\omega_1 = 0.
  \]
  For $\fv\neq 1$ and small $\gamma$, this forces $\omega_1=0$, thus $\omega_0=0$, proving the injectivity, and hence invertibility, of $\wh{\cP_{b_0,\gamma}}(0)$; it also implies that the adjoint has trivial kernel on $\Hbsupp^{-\infty,-3/2+}$.

  Fixing such small non-zero $\gamma$, the invertibility of $\wh{\cP_{b_0,\gamma}}(0)$ implies that of $\wh{\cP_{b,\gamma}}(0)$ by simple perturbation arguments as in~\cite[\S2.7]{VasyMicroKerrdS}.
\end{proof}

This is sufficient to exclude quadratically growing zero modes of the operator $L_{g_{b_0},\gamma}$; in fact, we can now give a full description of the generalized zero energy nullspace of $L_{g_b,\gamma}$ for $b$ near $b_0$:

\begin{thm}
\label{ThmCD0Modes}
  Let $s>\tfrac52$, $\ell\in(-\tfrac32,-\half)$, and fix $\gamma$ as in Proposition~\ref{PropCD0}. Then there exists $C_0>0$ such that for Kerr parameters $b\in\R^4$, $|b-b_0|<C_0$, the operator $L_{g_b,\gamma}$ has the following properties:
  \begin{enumerate}
  \item\label{ItCD0Modesker} the kernel of $\wh{L_{g_b,\gamma}}(0)$ is $7$-dimensional,
    \begin{equation}
    \label{EqCD0Modes0}
      \cK_b := \ker_{\Hbext^{s,\ell}} \wh{L_{g_b,\gamma}}(0) = \C h_{b,s 0} \oplus h_{b,s 1}(\scal_1) \oplus h_{b,v 1}(\vect_1),
    \end{equation}
    where we use the notation of Proposition~\ref{PropL0};
  \item\label{ItCD0Modes0gen} the generalized zero energy nullspace
    \begin{equation}
    \label{EqCD0Modes0gen}
      \wh\cK_b :=\{ h\in\Poly(t_*)\Hbext^{s,\ell} \colon L_{g_b,\gamma}h=0 \}
    \end{equation}
    is $11$-dimensional. The quotient $\hat\cK_b/\cK_b$ is spanned by (the image, in the quotient space, of) $\C\hat h_{b,s 0} \oplus \hat h_{b,s 1}(\scal_1)$, where we use the notation of Proposition~\ref{PropL0Lin}.
  \end{enumerate}
\end{thm}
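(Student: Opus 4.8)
\textbf{Proof plan for Theorem~\ref{ThmCD0Modes}.} The plan is to reduce both statements to the Schwarzschild case $b=b_0$ and the results of~\S\S\ref{SsL0}--\ref{SsCD0}, using the same bootstrap structure as in the proof of Proposition~\ref{PropL0}: constraint damping turns the second Bianchi identity into an \emph{effective} gauge condition because $\wh{\cP_{b,\gamma}}(0)$ has trivial kernel on $\Hbext^{\infty,-1/2-}$ (Proposition~\ref{PropCD0}), so that any (generalized) zero energy solution of $L_{g_b,\gamma}$ actually solves the linearized Einstein equation plus the linearized gauge condition, at which point Theorem~\ref{ThmMS} and the 1-form results of~\S\S\ref{S0}--\ref{S1} pin down the solution space completely.

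For part~\eqref{ItCD0Modesker}: given $h\in\ker_{\Hbext^{s,\ell}}\wh{L_{g_b,\gamma}}(0)$, first upgrade regularity and asymptotics via Proposition~\ref{PropOpNull}\eqref{ItOpNull0} (which, as noted, applies verbatim to $L_{g_b,E}$ for small $\gamma$), so $h\in\Hbext^{\infty,-1/2-}$. Applying $\delta_{g_b}\sfG_{g_b}$ and using the (modified) second Bianchi identity $\delta_{g_b}\sfG_{g_b}L_{g_b,\gamma}=\cP_{b,\gamma}\circ\delta_{g_b}\sfG_{g_b}$ together with $\delta_{g_b}\sfG_{g_b}h\in\Hbext^{\infty,1/2-}$ (from~\eqref{EqKStDivSymmGrad}) and the triviality of $\ker\wh{\cP_{b,\gamma}}(0)\cap\Hbext^{\infty,-1/2-}$ from Proposition~\ref{PropCD0}, one gets $\delta_{g_b}\sfG_{g_b}h=0$ and hence $D_{g_b}\Ric(h)=0$ (note $\wt\delta^*_{g_b,\gamma}(0)=0$, so the extra term drops). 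Now the proof of Proposition~\ref{PropL0} (the parts after equation~\eqref{EqL0LinGauge}) goes through unchanged: mode stability Theorem~\ref{ThmMS}, the 1-form kernel computations of Propositions~\ref{Prop10Grow}, \ref{Prop10Genv1}, and Theorem~\ref{Thm1}, plus the non-degeneracy~\eqref{EqL0SchwGauge} which persists for $b$ near $b_0$, produce exactly the $7$-dimensional space in~\eqref{EqCD0Modes0}. The only point to check is that $h_{b,s 0}$, $h_{b,s 1}(\scal)$, $h_{b,v 1}(\vect)$ still lie in $\ker\wh{L_{g_b,\gamma}}(0)$: each is pure gauge, $h=\delta_{g_b}^*\omega$, with $\Box_{g_b,1}\omega=0$, and by the identity in~\S\ref{SsOpCD} (with $\Ric(g_b)=0$) we have $L_{g_b,E}(\delta_{g_b}^*\omega)=0$ whenever $\delta_{g_b}\sfG_{g_b}\delta_{g_b}^*\omega=0$; for $h_{b,v 1}(\vect)=\dot g_b(\dot b)+\delta_{g_b}^*\omega$ one uses in addition $D_{g_b}\Ric(\dot g_b(\dot b))=0$ and the defining property of $\omega$ in~\eqref{EqL0v1Kerr}, noting again $\wt\delta^*_{g_b,\gamma}(0)=\delta_{g_b}^*(0)$ on stationary tensors since $E$ is zeroth order. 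Finally $\ker$ and $\operatorname{coker}$ have equal dimension since $\wh{L_{g_b,\gamma}}(0)$ is Fredholm of index $0$ (Theorem~\ref{ThmOp}, valid for $L_{g_b,E}$), and upper semicontinuity in $b$ at $b_0$ combined with the explicit $7$-dimensional construction gives the constancy of the dimension near $b_0$.

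For part~\eqref{ItCD0Modes0gen}: let $h=\sum_{j=0}^d t_*^j h_j\in\ker L_{g_b,\gamma}$ with $h_d\neq 0$. Arguing as in the first line of the proof of Lemma~\ref{LemmaL0Linv1}, $\pa_{t_*}h$ is again a generalized zero mode of one lower degree, so by induction it suffices to analyze $d=1$ and $d=2$ and to show $d\le 1$. The leading coefficient satisfies $L_{g_b,\gamma}h_d=0$, hence $h_d\in\cK_b$; decomposing into pure types, the scalar/vector $l\geq 2$ and vector $l=1$ sectors are excluded by Lemma~\ref{LemmaL0Linv1} (whose proof uses only the non-degeneracy~\eqref{EqL0Linv1Nondeg}, unaffected by the compactly supported zeroth order $E$), so $h_d\in\C h_{b,s 0}\oplus h_{b,s 1}(\scal_1)$. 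For $d=1$, applying $\delta_{g_b}\sfG_{g_b}$ and using $\cP_{b,\gamma}$ in place of $\Box_{g_b}$, the commutator term $[\delta_{g_b}\sfG_{g_b},t_*]h_{b,s j}$ and $\delta_{g_b}\sfG_{g_b}h_0$ both lie in $\Hbext^{\infty,1/2-}$ by~\eqref{Eq10SymGradDecay} and~\eqref{EqKStDivSymmGrad}, and Proposition~\ref{PropCD0} again forces $\delta_{g_b}\sfG_{g_b}h=0$, hence $D_{g_b}\Ric(h)=0$; then the proof of Proposition~\ref{PropL0Lin} (scalar $l=0$ and $l=1$ generalized states, together with the non-degeneracy~\eqref{EqL0SchwGauge} and~\eqref{Eq10NoLinPairKerr}) shows the $d=1$ solution space, modulo $\cK_b$, is spanned by $\hat h_{b,s 0}$ and $\hat h_{b,s 1}(\scal_1)$, a $4$-dimensional supplement; the same construction verifies these indeed persist to $\ker L_{g_b,\gamma}$ (they satisfy both the linearized Einstein equation and the linearized gauge condition). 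It remains to exclude $d=2$: this is precisely where constraint damping enters. For $d=2$ scalar $l=1$, Lemma~\ref{LemmaL0Qus1} (non-degeneracy~\eqref{EqL0Qus1Pair}, stable under perturbation) rules it out. For $d=2$ scalar $l=0$, repeat the $d=1$ argument: $\delta_{g_b}\sfG_{g_b}h$ is stationary and lies in $\ker(\delta_{g_b}\sfG_{g_b}\circ\wt\delta^*_{g_b,\gamma})\ftrans(0)\cap\Hbext^{\infty,-1/2-}=\ker\wh{\cP_{b,\gamma}}(0)\cap\Hbext^{\infty,-1/2-}$, which by Proposition~\ref{PropCD0} is \emph{trivial}; hence $\delta_{g_b}\sfG_{g_b}h=0$, and Lemma~\ref{LemmaL0Qus0} (whose proof only uses mode stability Theorem~\ref{ThmMS}\eqref{ItMSSc0} and the fact that the linearized Schwarzschild metric is not pure gauge, both $b$-stable) yields $h_2=0$, a contradiction. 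Counting: $\dim\hat\cK_b=\dim\cK_b+4=7+4=11$.

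The main obstacle, and the only genuinely new content beyond reassembling earlier results, is verifying that Proposition~\ref{PropCD0} can be invoked in the \emph{linearly growing, scalar $l=0$} step of part~\eqref{ItCD0Modes0gen} with the right function spaces — i.e.\ that $\delta_{g_b}\sfG_{g_b}h_0$ and the commutator term genuinely land in $\Hbext^{\infty,1/2-}$ so that the difference lies in the space $\Hbext^{\infty,-1/2-}$ on which $\wh{\cP_{b,\gamma}}(0)$ has trivial kernel, together with tracking that the degree in $t_*$ does not secretly increase when one solves the various subsidiary equations for $h_0$ (this is the analogue of the $\Poly^{k+1}$ bookkeeping in Theorem~\ref{ThmMS}\eqref{ItMSSc0} and in~\S\ref{SsMSSc0}). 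A secondary point requiring care is that all the non-degenerate pairings~\eqref{EqL0SchwGauge}, \eqref{Eq10NoLinPairKerr}, \eqref{EqL0Linv1Nondeg}, \eqref{EqL0Qus1Pair} were computed for the unmodified operator; since $E$ is compactly supported and zeroth order, adding $\gamma E\delta_{g_b}\sfG_{g_b}$ changes these pairings only by $O(\gamma)$ and by continuity they remain non-zero for $|\gamma|<\gamma_0$ after possibly shrinking $\gamma_0$ and $C_0$ — this should be stated explicitly but needs no new computation.
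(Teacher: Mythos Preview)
Your plan for part~\eqref{ItCD0Modesker} is correct and matches the paper. For part~\eqref{ItCD0Modes0gen}, the constraint-damping step---using Proposition~\ref{PropCD0} to force $\delta_{g_b}\sfG_{g_b}h=0$, hence $D_{g_b}\Ric(h)=0$, for a putative quadratically growing mode---is also correct and is exactly what the paper does \emph{at $b=b_0$}.

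The gap is the extension to general $b$. You invoke Lemma~\ref{LemmaL0Qus0} at arbitrary $b$, claiming its ingredients are ``$b$-stable''; but its proof rests on Theorem~\ref{ThmMS}\eqref{ItMSSc0}, a Birkhoff-type statement relying on spherical symmetry and the explicit ODE integration of~\S\ref{SssMSSc0}. There is no Kerr analogue available in the paper, and even the phrase ``scalar type $l=0$'' has no invariant meaning for $b\neq b_0$. Similarly, your pairing arguments via Lemmas~\ref{LemmaL0Linv1} and~\ref{LemmaL0Qus1} use the dual states $h_{b,\bullet}^*$ of the \emph{unmodified} operator; for $L_{g_b,\gamma}$ the obstruction to polynomial continuation is orthogonality to $\cK_b^{\gamma*}:=\ker\wh{L_{g_b,\gamma}}(0)^*$, a different space when $\gamma\neq 0$. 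Your secondary point (``changes these pairings only by $O(\gamma)$'') has the right spirit but misses that the dual state itself moves with $(b,\gamma)$, and this needs independent justification.

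The paper's fix is to run your direct argument only at $b_0$, which shows the pairing~\eqref{EqRModes0s0Pair} is nonzero for small $\gamma\neq 0$; then it extends to nearby $b$ by continuity of the pairings~\eqref{EqL0Linv1Nondeg}, \eqref{EqL0Qus1Pair}, \eqref{EqRModes0s0Pair}, now taken with the modified dual states $h_{b,\bullet}^{\gamma*}$. The continuous dependence of these dual states on $(b,\gamma)$ is the content of Lemma~\ref{LemmaCD0Dual}, which is the missing ingredient in your plan. Your stated ``main obstacle'' (function-space bookkeeping for the $d=1$ step) is in fact routine; the genuine obstacle is this passage from $b_0$ to $b$.
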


Part~\eqref{ItCD0Modesker} is proved like Proposition~\ref{PropL0}; the arguments there are in fact slightly simplified since the 1-form operator $\wh{\cP_{b,\gamma}}(0)$, which controls whether a gauge potential has symmetric gradient satisfying the linearized gauge condition, is \emph{injective}. The proof of part~\eqref{ItCD0Modes0gen} is more subtle. In view of the role played by dual pairings such as~\eqref{EqL0Linv1Nondeg}, we first show that the zero energy dual states can be chosen to be continuous in the parameters $b,\gamma$:

\begin{lemma}
\label{LemmaCD0Dual}
  In the notation of Theorem~\ref{ThmCD0Modes}, the $7$-dimensional\footnote{This is a consequence of Theorem~\ref{ThmCD0Modes}\eqref{ItCD0Modesker} and the fact that $\wh{L_{g_b,\gamma}}(0)$ has index $0$.} space
  \[
    \cK_b^{\gamma*} := \ker \wh{L_{g_b,\gamma}}(0)^* \cap \Hbsupp^{-\infty,-1/2-}
  \]
  depends continuously on $(b,\gamma)$ near $(b_0,0)$: there exist continuous (in $b,\gamma$) families
  \[
    h_{b,s 0}^{\gamma*},\ h_{b,s 1}^{\gamma*}(\scal),\ h_{b,v 1}^{\gamma*}(\vect) \in \cK_b^{\gamma*},\quad \scal\in\scal_1,\ \vect\in\vect_1,
  \]
  linear in $\scal$ and $\vect$, which satisfy $h_{b,s 0}^{0*}=h_{b,s 0}^*$, $h_{b,s 1}^{0*}(\scal)=h_{b,s 1}^*(\scal)$, and $h_{b,v 1}^{0*}(\vect)=h_{b,v 1}^*(\vect)$.
\end{lemma}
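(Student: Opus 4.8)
\textbf{Proof plan for Lemma~\ref{LemmaCD0Dual}.}

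The plan is to run a standard functional-analytic continuity argument for the nullspace of a continuous (in parameters) family of Fredholm operators of index $0$, all of which have $7$-dimensional kernel. First I would fix the function spaces: by Theorem~\ref{ThmCD0Modes}\eqref{ItCD0Modesker} (with the constraint-damping modification in force) the operator $\wh{L_{g_b,\gamma}}(0)$ is Fredholm of index $0$ on the spaces in~\eqref{EqL0Op}, uniformly for $(b,\gamma)$ near $(b_0,0)$, and its kernel is exactly $7$-dimensional; hence the cokernel, equivalently $\cK_b^{\gamma*}=\ker\wh{L_{g_b,\gamma}}(0)^*\cap\Hbsupp^{-\infty,-1/2-}$, is also $7$-dimensional, where the decay rate $-\tfrac12-$ is automatic by the normal operator / indicial root argument of Proposition~\ref{PropOpNull} (the target of~\eqref{EqL0Op} has dual $\Hbsupp^{1-s,-\ell-2}$ with weight in $(-\tfrac32,-\tfrac12)$). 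I would then invoke the uniform Fredholm estimate from Theorem~\ref{ThmOp} (valid for $L_{g_b,E}$ with small $\gamma_1=\gamma_2=\gamma$, as recorded after the statement of that theorem) for the adjoint operator: for $s'<1-s$, $\ell'<-\ell-2$,
\[
  \|u\|_{\Hbsupp^{1-s,-\ell-2}} \leq C\bigl(\|\wh{L_{g_b,\gamma}}(0)^* u\|_{\Hbsupp^{-s,-\ell-1}} + \|u\|_{\Hbsupp^{s',\ell'}}\bigr),
\]
with $C$ independent of $(b,\gamma)$ near $(b_0,0)$.

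Next I would set up the continuity of the nullspace abstractly. Fix a basis $h_{b_0,s 0}^*$, $h_{b_0,s 1}^*(\scal)$, $h_{b_0,v 1}^*(\vect)$ of $\cK_{b_0}^{0*}$ as constructed in Proposition~\ref{PropL0}. Choose a closed complement $Z\subset\Hbsupp^{1-s,-\ell-2}$ to $\cK_{b_0}^{0*}$, realized by a finite collection of compactly supported test tensors $\eta_1,\dots,\eta_7\in\CIdot(X;S^2\,\wt{\Tsc^*}X)$ with $\la h_i^*,\eta_j\ra=\delta_{ij}$ on the chosen basis. Consider the augmented operator
\[
  \sL(b,\gamma)\colon \Hbsupp^{1-s,-\ell-2}\oplus\C^7 \to \Hbsupp^{-s,-\ell-1}\oplus\C^7,\quad
  \sL(b,\gamma)(u,c) = \bigl(\wh{L_{g_b,\gamma}}(0)^* u + \textstyle\sum c_j\eta_j,\ (\la u,\eta_i\ra)_i\bigr),
\]
which is invertible at $(b_0,0)$ (kernel and cokernel of $\wh{L_{g_b,\gamma}}(0)^*$ both have dimension $7$, and the $\eta_j$ span a complement of the range while the functionals $\la -,\eta_i\ra$ are independent on the kernel). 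The uniform Fredholm estimate above upgrades to a uniform estimate for $\sL(b,\gamma)$, so by the perturbation argument of~\cite[\S2.7]{VasyMicroKerrdS} (exactly as used repeatedly in the proof of Theorem~\ref{Thm0}), $\sL(b,\gamma)$ remains invertible for $(b,\gamma)$ near $(b_0,0)$, with inverse continuous in $(b,\gamma)$ in the operator norm. Then I would \emph{define} the desired families by
\[
  (h_{b,s 0}^{\gamma*},\ \ast) := \sL(b,\gamma)^{-1}(0,e_1),\quad
  (h_{b,s 1}^{\gamma*}(\scal),\ \ast) := \sL(b,\gamma)^{-1}(0,e_\scal),\quad
  (h_{b,v 1}^{\gamma*}(\vect),\ \ast) := \sL(b,\gamma)^{-1}(0,e_\vect),
\]
using the obvious identifications $\scal_1\cong\C^3$, $\vect_1\cong\C^3$ for the last two, and the standard basis vectors $e_\bullet$ of the $\C^7$ summand. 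The first component of $\sL(b,\gamma)^{-1}(0,e)$ solves $\wh{L_{g_b,\gamma}}(0)^* h = -\sum c_j\eta_j$ together with $\la h,\eta_i\ra = e_i$; one checks that the $\C^7$-component $c$ of this inverse image must vanish (pair the first equation with a basis of $\ker\wh{L_{g_b,\gamma}}(0)$ — which exists and is $7$-dimensional by Theorem~\ref{ThmCD0Modes}\eqref{ItCD0Modesker}, and use that $\sum c_j\eta_j$ lies in a complement of its orthocomplement only if $c=0$), so that $h\in\ker\wh{L_{g_b,\gamma}}(0)^*$; membership in $\Hbsupp^{-\infty,-1/2-}$ follows from the normal operator argument already cited. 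Continuity in $(b,\gamma)$ is inherited from continuity of $\sL(b,\gamma)^{-1}$, linearity in $\scal$ and $\vect$ from the linearity of the defining map in $e$, and the normalization $h_{b,s 0}^{0*}=h_{b,s 0}^*$ etc.\ holds at $(b_0,0)$ by our choice of basis and of the $\eta_j$.

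The main obstacle is not conceptual but bookkeeping: one must make sure that the Fredholm and perturbation framework of Theorem~\ref{ThmOp} genuinely applies uniformly in the \emph{two} parameters $(b,\gamma)$ simultaneously (not just in $b$ at $\gamma=0$), which is exactly the statement recorded in the remark immediately following Theorem~\ref{ThmOp} — so I would explicitly cite that remark. A secondary point is that the adjoint $\wh{L_{g_b,\gamma}}(0)^*$ is taken with respect to a fixed (e.g.\ $g_{b_0}$-induced) volume density and fiber inner product, so that the pairing is $b,\gamma$-independent; this keeps the functionals $\la -,\eta_i\ra$ and the identification of $\cK_b^{\gamma*}$ as a genuine nullspace (rather than an annihilator depending on a moving inner product) clean, and is consistent with the conventions used, e.g., in~\eqref{Eq10NoLinPair} and~\eqref{EqL0Linv1Nondeg}. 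With these conventions pinned down, the argument is the same one already deployed in the proof of Theorem~\ref{Thm0} for the continuity of kernels, applied here to the adjoint, and the lemma follows.
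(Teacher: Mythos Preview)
Your approach is sound in spirit but differs from the paper's, and it has a small but genuine gap in the Grushin construction.

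The paper proceeds in two steps: first a contradiction/compactness argument (uniform Fredholm estimates plus weak limits, exactly as at the end of \S\ref{Ss10}) shows that the $7$-dimensional subspace $\cK_b^{\gamma*}$ varies continuously in $(b,\gamma)$; then individual continuous families are picked out by fixing seven functionals $\ell_{b,j}=\la h_{b,j},-\ra$ (with $h_{b,j}\in\CIdot$) and intersecting $\cK_b^{\gamma*}$ with their level sets. Your route is more direct: you build an augmented (Grushin-type) operator $\sL(b,\gamma)$, invert it at $(b_0,0)$, perturb, and read off the families from $\sL(b,\gamma)^{-1}(0,e)$. Both are standard; yours avoids the contradiction argument at the cost of having to verify invertibility of the augmented operator.

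The gap is precisely there. You choose a single set $\{\eta_j\}\subset\CIdot$ with $\la h_i^*,\eta_j\ra=\delta_{ij}$ and then use it for \emph{two} different purposes in $\sL$: as the map $R_-\colon u\mapsto(\la u,\eta_i\ra)$ (functionals independent on $\ker\wh{L_{g_{b_0},0}}(0)^*=\cK_{b_0}^{0*}$) and as the map $R_+\colon c\mapsto\sum c_j\eta_j$ (elements spanning a complement of $\ran\wh{L_{g_{b_0},0}}(0)^*=\ann\cK_{b_0}$). The condition $\la h_i^*,\eta_j\ra=\delta_{ij}$ handles $R_-$, but for $R_+$ you need the \emph{different} matrix $(\la h_i,\eta_j\ra)$, with $h_i$ a basis of $\cK_{b_0}$ (not $\cK_{b_0}^{0*}$), to be invertible. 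You assert this (``the $\eta_j$ span a complement of the range'') without checking it, and your later verification that the $\C^7$-component $c$ vanishes relies on exactly the same unverified invertibility, so is circular. The fix is immediate: either use two separate families (one dual to a basis of $\cK_{b_0}$ for $R_+$, one dual to a basis of $\cK_{b_0}^{0*}$ for $R_-$), or note that a generic choice of $\eta_j$ makes both $7\times 7$ matrices invertible simultaneously. With that correction your argument goes through. (A minor point: the target space of $\wh{L_{g_b,\gamma}}(0)^*$ should carry weight $-\ell$, not $-\ell-1$.)
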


\begin{rmk}
  Paralleling Remark~\ref{Rmk00Reg}, we note that the dual states are still necessarily supported in $r\geq r_b$ and smooth/conormal in $r>r_b$. Their Sobolev regularity at the event horizon $r=r_b$ is $-\tfrac32+\cO(|\gamma|)$, as the constraint damping modification is non-trivial there and may thus shift the threshold regularity; see \cite[\S9.2]{HintzVasyKdSStability}.
\end{rmk}

\begin{proof}[Proof of Lemma~\ref{LemmaCD0Dual}]
  We use an argument by contradiction similar to that at the end of~\S\ref{Ss10}. Namely, fix $s>\tfrac52$, $\ell\in(-\tfrac32,-\half)$, and suppose there exists $h^*_j\in\cK_{b_j}^{\gamma_j*}$ with $\|h_j^*\|_{\Hbsupp^{s',\ell'}}=1$, where $s'=1-s<-\tfrac32$, $\ell'=-2-\ell\in(-\tfrac32,-\half)$, and where $(b_j,\gamma_j)\to(b_0,0)$ as $j\to\infty$, but so that $h_j^*$ stays a fixed distance away from $\cK_{b_0}^{0*}$; that is, there exists $h\in\CIdot$ such that $h\in\ann\cK_{b_0}^{0*}$, but $|\la h,h_j^*\ra|\geq c>0$ for all $j$. Recall that we have \emph{uniform} estimates
  \[
    \|h^*\|_{\Hbsupp^{s',\ell'}} \leq C\left(\bigl\|\wh{L_{g_b,\gamma}}(0)^*h^*\bigr\|_{\Hbsupp^{s'-1,\ell'+2}} + \|h^*\|_{\Hbsupp^{s_0,\ell_0}}\right)
  \]
  for fixed $s_0<s'$, $\ell_0<\ell'$, and for all $(b,\gamma)$ in a fixed small neighborhood of $(b_0,0)$. Applying this to $h_j^*$ at $(b_j,\gamma_j)$ shows that $\|h_j^*\|_{\Hbsupp^{s_0,\ell_0}}$ is bounded from below by a positive constant; hence a weak limit $h_j^*\weakto h^*\in\Hbsupp^{s',\ell'}$, which automatically lies in $\cK_{b_0}^{0*}$, is necessarily non-zero. Since it also satisfies $|\la h,h^*\ra|\geq c>0$, this is a contradiction. This argument works equally well when $(b_j,\gamma_j)\to(b,\gamma)$ for some $(b,\gamma)$ close to $(b_0,0)$, finishing the proof of the continuity of $\cK_b^{\gamma*}$.

  In order to construct $h_{b,s 0}^{\gamma*}$, fix elements $h_{b,1},\ldots,h_{b,7}\in\CIdot$, which induce linear forms $\ell_{b,j}=\la h_{b,j},-\ra$ on $\cK_b^{\gamma*}$; we may arrange that $\{h_{b,s 0}^{0*}\}=\cK_b^{0*}\cap\ell_{b,1}^{-1}(1)\cap\bigcap_{j=2}^7\ell_{b,j}^{-1}(0)$, and that the $h_{b,j},\ell_{b,j}$ depend continuously on $b$. We can then define $h_{b,s 0}^{\gamma*}$ by $\{h_{b,s 0}^{\gamma*}\}=\cK_b^{\gamma*}\cap\ell_{b,1}^{-1}(1)\cap\bigcap_{j=2}^7\ell_{b,j}^{-1}(0)$. One can similarly construct $h_{b,s 1}^{\gamma*}(\scal)$ when $\scal$ is an element of a fixed basis of $\scal_1$, and define $h_{b,s 1}^{\gamma*}(\scal)$ for general $\scal$ as linear combination; similarly for $h_{b,v 1}^{\gamma*}(\vect)$.
\end{proof}

\begin{rmk}
\label{RmkCD0Dual}
  An alternative proof of Lemma~\ref{LemmaCD0Dual} proceeds by constructing $h_{b,s 0}^{\gamma*}$ etc.\ directly by adapting the normal operator arguments from~\S\S\ref{S0}, \ref{S1} and \ref{SL}; see the proof of Lemma~\ref{LemmaCD0GenDual} for details.
\end{rmk}

\begin{proof}[Proof of Theorem~\ref{ThmCD0Modes}\eqref{ItCD0Modes0gen}]
  Recall that the choice of symmetric gradient does not affect the existence of (generalized) mode solutions of the linearized gauge-fixed Einstein equation which are solutions of the linearized Einstein equation and also satisfy the linearized gauge condition; thus, for all these, the CD modification encoded by $\gamma$ is irrelevant.
  
  We first study the case $b=b_0$ and prove that the space $\wh\cK_{b_0}$ is exactly $11$-dimensional. The arguments in the proof of Lemma~\ref{LemmaL0Linv1} imply that there are no growing scalar or vector type $l\geq 2$ zero energy modes for $L_{g_{b_0},\gamma}$. Moreover, the non-existence of linearly growing vector type $l=1$ modes follows from the non-degeneracy of the pairing~\eqref{EqL0Linv1Nondeg} for $\gamma=0$; this argument extends to nearby $b$ and small $\gamma$ by replacing $L_{g_{b_0}}$, $h_{b_0,v 1}$, $h_{b_0,v 1}^*$ there by $L_{g_{b_0},\gamma}$, $h_{b_0,v 1}$, $h_{b_0,v 1}^{\gamma*}$, respectively. The non-existence of quadratically growing scalar $l=1$ modes of $L_{g_{b_0}}$ follows from the non-degeneracy of the pairing~\eqref{EqL0Qus1Pair}, which persists by similar arguments.
  
  Lastly, the existence of quadratically growing scalar $l=0$ modes can be excluded for small $\gamma\neq 0$ as follows: following the arguments around~\eqref{EqL0Qus0Fail} with $\delta_{g_{b_0}}^*$ replaced by $\wt\delta_{g,\gamma}^*$, \emph{and using constraint damping} (which requires $\gamma$ to be non-zero) in the form of Proposition~\ref{PropCD0}, we conclude that a quadratically growing scalar $l=0$ mode $h$ with non-zero quadratic (in $t_*$) term satisfies the linearized gauge condition $\delta_{g_{b_0}}\sfG_{g_{b_0}} h=0$ and is thus, by Lemma~\ref{LemmaL0Qus0}, in fact only linearly growing---a contradiction. In terms of pairings, this means that
  \begin{equation}
  \label{EqRModes0s0Pair}
    \big\la [[L_{g_{b_0},\gamma},t_*],t_*]h_{b_0,s 0} + 2[L_{g_{b_0},\gamma},t_*](\hat h_{b_0,s 0}-t_* h_{b_0,s 0}), h_{b_0,s 0}^{\gamma*} \big\ra,
  \end{equation}
  which reduces to~\eqref{EqL0Qus0Pair} for $\gamma=0$, \emph{does not vanish} for small non-zero $\gamma$. This completes the argument for $\wh\cK_{b_0}$.
  
  Fixing such $\gamma$, the pairing~\eqref{EqRModes0s0Pair} remains non-degenerate for black hole parameters $b$ sufficiently close to $b_0$, likewise for the analogues of the pairings~\eqref{EqL0Linv1Nondeg} and~\eqref{EqL0Qus1Pair} for $L_{g_b,\gamma}$. Thus, the arguments used around~\eqref{EqL0Linv1Nondeg} and \eqref{EqL0Qus1Pair} show that the subspace of those elements of $\cK_b$ which arise as leading order terms of at least linearly growing zero modes is $4$-dimensional, and the subspace consisting of those elements which are leading order terms of at least quadratically growing zero modes is trivial.
\end{proof}

We record the analogue of Lemmas~\ref{LemmaL0Lead} and \ref{LemmaCD0Dual} for generalized zero energy dual states.
\begin{lemma}
\label{LemmaCD0GenDual}
  In the notation of Theorem~\ref{ThmCD0Modes}, the 11-dimensional space
  \[
    \wh\cK_b^{\gamma*} := \ker\wh{L_{g_b,\gamma}}(0)^*\cap\Poly(t_*)\Hbsupp^{-\infty,-1/2-}
  \]
  depends continuously on $(b,\gamma)$, $\gamma\neq 0$, near $(b_0,0)$; moreover, this space is continuous down to $\gamma=0$. Furthermore:
  \begin{enumerate}
  \item The quotient space $\wh\cK_b^{\gamma*}/\cK_b^{\gamma*}$ is spanned by (the images in the quotient of) continuous families of generalized zero modes
    \[
      \hat h_{b,s 0}^{\gamma*},\ \hat h_{b,s 1}^{\gamma*}(\scal) \in \wh\cK_b^{\gamma*},
    \]
    linear in $\scal\in\scal_1$, which satisfy $\hat h_{b,s 0}^{0*}=\hat h_{b,s 0}^*$ and $\hat h_{b,s 1}^{0*}(\scal)=\hat h_{b,s 1}^*(\scal)$.
  \item\label{ItCD0GenDual2} We may choose the continuous families $h_{b,s 0}^{\gamma*}$, $h_{b,s 1}^{\gamma*}(\scal)$ in Lemma~\ref{LemmaCD0Dual} so that $h_{b,s 0}^{\gamma*}$, $h_{b,s 1}^{\gamma*}(\scal)$ are the $t_*$-coefficients of $\hat h_{b,s 0}^{\gamma*}$, $\hat h_{b,s 1}^{\gamma*}(\scal)$, respectively.
  \item\label{ItCD0GenDual3} Putting $\breve h_{b,s 0}^{\gamma*}:=\hat h_{b,s 0}^{\gamma*}-t_* h_{b,s 0}^{\gamma*}$ and $\breve h_{b,s 1}^{\gamma*}(\scal):=\hat h_{b,s 1}^{\gamma*}(\scal)-t_* h_{b,s 1}^{\gamma*}(\scal)$, we have
    \begin{subequations}
    \begin{align}
      h_{b,s 0}^{\gamma*},\ h_{b,s 1}^{\gamma*}(\scal),\ h_{b,v 1}^{\gamma*}(\vect) &\in \Hbsupp^{-\infty,1/2-}, \\
    \label{EqCD0GenDualBreve}
      \breve h_{b,s 0}^{\gamma*},\ \breve h_{b,s 1}^{\gamma*}(\scal) &\in \rho\CI+\Hbsupp^{-\infty,1/2-},
    \end{align}
    \end{subequations}
    where the $\rho\CI$ terms are supported in $r\geq 4\bhm_0$.
  \end{enumerate}
\end{lemma}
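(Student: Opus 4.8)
The plan is to prove Lemma~\ref{LemmaCD0GenDual} by combining the compactness/contradiction argument from the proof of Lemma~\ref{LemmaCD0Dual} (for continuity in $(b,\gamma)$) with explicit normal operator constructions (for the existence of the continuous families with the claimed leading-order structure), exactly paralleling the structure of \S\ref{SsL0Lin} but now working with $L_{g_b,\gamma}^*$ rather than $L_{g_{b_0}}$. I first establish the continuity of $\wh\cK_b^{\gamma*}$ as $(b,\gamma)\to(b_0,0)$: the $11$-dimensionality for $\gamma\neq 0$ near $0$ is Theorem~\ref{ThmCD0Modes}\eqref{ItCD0Modes0gen} applied to the adjoint (which has the same index, hence the same dimension of generalized nullspace, as $L_{g_b,\gamma}$ — one argues as in the proof of that theorem that $\wh\cK_b^{\gamma*}$ cannot be larger, using the non-degeneracy of the pairings~\eqref{EqL0Linv1Nondeg}, \eqref{EqL0Qus1Pair}, \eqref{EqRModes0s0Pair}, which were already checked to involve the \emph{dual} states). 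Continuity down to $\gamma=0$ then follows verbatim from the weak-compactness argument in the proof of Lemma~\ref{LemmaCD0Dual}: a normalized sequence $h_j^*\in\wh\cK_{b_j}^{\gamma_j*}$ with $(b_j,\gamma_j)\to(b,\gamma)$ has, by the uniform estimates of Theorem~\ref{ThmOp} (valid for the adjoint, and uniform in $b,\gamma$ near $b_0,0$ as recorded after Definition~\ref{DefOpCD}), a nonzero weak limit in $\wh\cK_b^{\gamma*}$; pairing against a fixed $h\in\CIdot$ annihilating the limit space forces a contradiction. Here one must be slightly careful that the relevant weighted space is $\Poly(t_*)\Hbsupp^{-\infty,-1/2-}$: one applies the estimate separately to the $t_*^j$-coefficients, using that $L_{g_b,\gamma}^*$ acts on $\Poly(t_*)$-coefficients triangularly via commutators $[L_{g_b,\gamma}^*,t_*]\in\rho^2\Diffsc^1 + \rho^2\CI$ (Lemma~\ref{LemmaOpLinFT} for the adjoint), so the leading coefficient is a genuine zero mode and lower ones are controlled inductively.

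Next I construct the families $\hat h_{b,s 0}^{\gamma*}$, $\hat h_{b,s 1}^{\gamma*}(\scal)$ and arrange items \eqref{ItCD0GenDual2}, \eqref{ItCD0GenDual3}. The construction mirrors Proposition~\ref{PropL0Lin}: one makes the dual-pure-gauge ansatz $\hat h_{b,s 0}^{\gamma*}=\sfG_{g_b}\wt\delta_{g_b,\gamma}^*\hat\omega_{b,s 0}^*$ with $\hat\omega_{b,s 0}^*$ the generalized dual zero mode of the $1$-form operator from Proposition~\ref{Prop10GenSc}, and similarly $\hat h_{b,s 1}^{\gamma*}(\scal)=\sfG_{g_b}\wt\delta_{g_b,\gamma}^*\hat\omega_{b,s 1}^*(\scal)$ — this is a solution of $L_{g_b,\gamma}^*(-)=0$ by the computation~\eqref{EqOpCDAdjoint} together with the fact that $\wt\delta_{g_b,\gamma}\sfG_{g_b}\wt\delta_{g_b,\gamma}^*\hat\omega = \tfrac12\cP_{b,\gamma}^*\hat\omega$ vanishes when $\hat\omega$ solves the $1$-form generalized dual zero mode equation; one needs to check that the CD modification $E$ intertwines correctly, i.e.\ that the relevant $1$-form operator appearing via the Bianchi identity for $L_{g_b,\gamma}^*$ is indeed $\cP_{b,\gamma}^*$, which is the content of~\eqref{EqCD}. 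The $t_*$-coefficient of $\hat h_{b,s 0}^{\gamma*}$ is then $\sfG_{g_b}\wt\delta_{g_b,\gamma}^*\omega_{b,s 0}^*$ (with $\omega_{b,s 0}^*$ the leading, $t_*$-linear, coefficient of $\hat\omega_{b,s 0}^*$); one \emph{defines} $h_{b,s 0}^{\gamma*}$ in Lemma~\ref{LemmaCD0Dual} to be precisely this expression, and similarly for $h_{b,s 1}^{\gamma*}(\scal)$, which gives \eqref{ItCD0GenDual2}. (One checks $h_{b,s 0}^{0*}=h_{b,s 0}^*$ etc.\ by comparing with~\eqref{EqL0s0}–\eqref{EqL0s1}.) For \eqref{ItCD0GenDual3}: the memberships $h_{b,s 0}^{\gamma*},h_{b,s 1}^{\gamma*}(\scal)\in\Hbsupp^{-\infty,1/2-}$ follow from $\wh{\delta_{g_b}^*}(0)\in\rho\Diffb^1$ (plus the compactly supported zeroth order $E$) applied to $\omega_{b,s 0}^*\in\Hbsupp^{-\infty,-1/2-}$ and $\omega_{b,s 1}^*(\scal)\in\Hbsupp^{-\infty,-3/2-}$; for $h_{b,v 1}^{\gamma*}(\vect)$ it is Proposition~\ref{Prop10Genv1}. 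The statements~\eqref{EqCD0GenDualBreve} for the $\breve{}$-parts are proved exactly as the analogous statements in Lemma~\ref{LemmaL0Lead}: $\breve h_{b,s 0}^{\gamma*}$ solves $\wh{L_{g_b,\gamma}}(0)^*\breve h_{b,s 0}^{\gamma*}=-[L_{g_b,\gamma},t_*]^*h_{b,s 0}^{\gamma*}$ with right-hand side in $\rho^3\CI+\Hbsupp^{-\infty,5/2-}$ (by the triangular-commutator structure), and one solves away the $\rho^{-1}$ leading term via the normal operator $\wh{\Box_{\ubar g,2}}(0)^*$; the potential logarithmic term $r^{-1}\log r$ is absent because the obstruction — orthogonality of the degree $-1$ dual resonant tensors~\eqref{EqL0LeadTensor} against the source, whose leading part lies in $r^{-2}\Omega_1$ with $\Omega_1$ built from $\scal_1$-harmonics~\eqref{EqL0LeadOmega1} — vanishes since $\scal\in\scal_1$ integrates to zero over $\Sph^2$. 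The $\rho\CI$ remainder is then supported where $\chi\equiv 1$, i.e.\ in $r\geq 4\bhm_0$, because the error being solved away globally is supported there and has compactly supported character near the horizon.

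Finally, the continuity in $(b,\gamma)$ of the \emph{chosen} families (rather than just of the spaces) is handled by the same normalization trick as in the proof of Lemma~\ref{LemmaCD0Dual}: pick $h_{b,1},\dots,h_{b,11}\in\CIdot$ inducing linear functionals on $\wh\cK_b^{\gamma*}$, depending continuously on $b$, such that the prescribed generalized modes are cut out by affine conditions $\ell_{b,j}(-)=\delta_{jk}$; since the space depends continuously on $(b,\gamma)$ and the functionals are continuous, the solutions of these affine systems depend continuously on $(b,\gamma)$ too. Alternatively (Remark~\ref{RmkCD0Dual}) the normal operator construction itself produces continuous families directly, provided one fixes the freedom (adding elements of $\cK_b^{\gamma*}$) by imposing the same orthogonality normalization against fixed $\eta_j\in\CIdot$ as in Proposition~\ref{Prop10GenSc}.

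\textbf{Main obstacle.} The delicate point is \eqref{ItCD0GenDual2}–\eqref{ItCD0GenDual3}: one must verify that the CD modification does not spoil the triangular commutator structure $[L_{g_b,\gamma},t_*]\in\rho^2\Diffsc^1+\rho^2\CI$ near $\pa_+X$ — but this is immediate since $E=E(g_b;\cd,\gamma,\gamma)$ is compactly supported in $r$, hence commutes with $t_*$ up to compactly supported terms and contributes nothing at infinity — and, more substantively, that the leading-order normal operator obstruction to the absence of $r^{-1}\log r$ terms is genuinely the same as in the $\gamma=0$ case. This again follows from compact support of the CD modification: the relevant computation takes place at $\pa_+X$, where $L_{g_b,\gamma}(0)$ and $L_{g_b}(0)$ agree, so the argument of Lemma~\ref{LemmaL0Lead} applies unchanged. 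Thus the only real work is bookkeeping, and no new analytic input beyond what is already in \S\S\ref{S1}, \ref{SL} and Theorem~\ref{ThmCD0Modes} is needed.
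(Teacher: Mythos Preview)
Your continuity argument for $\wh\cK_b^{\gamma*}$ is fine and matches the paper. The construction of the explicit families, however, has a genuine gap.

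First, the dual-pure-gauge ansatz is miswritten. By~\eqref{EqOpCDAdjoint}, $L_{g,E}^*(\sfG_g\delta_g^*\omega^*)=0$ holds iff $\wt\delta_{g,E}\sfG_g\delta_g^*\omega^*=0$, i.e.\ iff $\cP_{b,\gamma}^*\omega^*=0$; the symmetric gradient in the 2-tensor is the \emph{unmodified} $\delta_g^*$, and the 1-form operator is $\cP_{b,\gamma}^*=2\wt\delta_{g,\gamma}\sfG_g\delta_g^*$, not $2\wt\delta_{g,\gamma}\sfG_g\wt\delta_{g,\gamma}^*$ as you write. If you instead use $\sfG_g\wt\delta_{g,\gamma}^*\omega$, the term $(D_g\Ric)(E\omega)$ does not vanish, so you do not get an element of $\ker L_{g_b,\gamma}^*$.

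Second, and more seriously, the potentials $\hat\omega_{b,s 0}^*$, $\hat\omega_{b,s 1}^*(\scal)$ from Proposition~\ref{Prop10GenSc} solve $\Box_{g_b}(\cdot)=0$, i.e.\ $\cP_{b,0}(\cdot)=0$; for $\gamma\neq 0$ they do \emph{not} lie in $\ker\cP_{b,\gamma}^*$, so $\sfG_g\delta_g^*$ of them is not annihilated by $L_{g_b,\gamma}^*$. The paper's construction is genuinely different: for $\gamma\neq 0$, Proposition~\ref{PropCD0} gives $\ker\wh{\cP_{b,\gamma}}(0)^*\cap\Hbsupp^{-\infty,-1/2-}=0$, which \emph{removes} the obstruction~\eqref{Eq10NoDualdt} and allows one to extend $\chi\pa_t^\flat$ to an element $\omega_{b,s 0}^{\gamma*}\in\ker\wh{\cP_{b,\gamma}}(0)^*\cap\Hbsupp^{-\infty,-3/2-}$; one then sets $h_{b,s 0}^{\gamma*}:=\sfG_{g_b}\delta_{g_b}^*\omega_{b,s 0}^{\gamma*}$. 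Similarly for $h_{b,s 1}^{\gamma*}(\scal)$ and $h_{b,v 1}^{\gamma*}(\vect)$ (the latter now with correction $\omega'\in\Hbsupp^{-\infty,-1/2-}$, one order better than in Proposition~\ref{Prop10Genv1}). This is the mechanism by which constraint damping enters: it is not that the modification is invisible at infinity, but that the \emph{changed cokernel structure} of $\cP_{b,\gamma}^*$ makes available new 1-form potentials with the right asymptotics.

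Third, you do not address the obstruction to the existence of the generalized dual state. The paper's ansatz is $\hat h_{b,s 0}^{\gamma*}=t_*(h_{b,s 0}^{\gamma*}+h_{b,v 1}^{\gamma*}(\vect))+\breve h_{b,s 0}^{\gamma*}$, and solving~\eqref{EqCD0GenDuals0} requires orthogonality of the right-hand side to all of $\cK_b$; the pairings with $\cK_{b,s}$ vanish automatically, but the pairing with $h_{b,v 1}(\vect')$ is non-degenerate by~\eqref{EqL0Linv1Nondeg}, so one must choose $\vect$ to kill it, and then redefine $h_{b,s 0}^{\gamma*}\mapsto h_{b,s 0}^{\gamma*}+h_{b,v 1}^{\gamma*}(\vect)$ to arrange part~\eqref{ItCD0GenDual2}. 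Your ansatz omits this correction entirely.
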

\begin{proof}
  The first statement is proved similarly to Lemma~\ref{LemmaCD0Dual} and uses the non-degeneracies exploited in the proof of Theorem~\ref{ThmCD0Modes}.

  The (generalized) zero energy dual states for $L_{g_b,\gamma}$ are again dual-pure-gauge states with suitable 1-forms as potentials, as we demonstrate momentarily. The analysis for $\gamma=0$ in the previous sections was simplified by the fact that some of the 1-form potentials themselves were differentials of scalar generalized modes; since for $\gamma\neq 0$, the exterior derivative $d$ and $\cP_{b,\gamma}^*$ do not satisfy a useful commutation relation, we need to argue directly on the level of 1-forms and 2-tensors in order to get more precise information on the dual-pure-gauge potentials, as needed for the control of $\breve h_{b,s 1}^*$ etc.\ as in the proof of Lemma~\ref{LemmaL0Lead}. We recall the space $\Omega_1$ from~\eqref{EqL0LeadOmega1}, and fix a cutoff $\chi\in\CI(\R)$, $\chi\equiv 0$ on $(-\infty,3\bhm_0]$, $\chi\equiv 1$ on $[4\bhm_0,\infty)$.

  \pfstep{Construction of $h_{b,s 0}^{\gamma*}$.} We wish to set $h_{b,s 0}^{\gamma*} = \sfG_{g_b}\delta_{g_b}^*\omega_{b,s 0}^{\gamma*}$ with $\omega_{b,s 0}^{\gamma*}\in\ker\wh{\cP_{b,\gamma}}(0)^*$, but the kernel here is trivial when intersected with $\Hbsupp^{-\infty,-1/2-}$ when $\gamma\neq 0$. The key is that we can now extend the 1-form $\pa_t^\flat$ from $r\gg 1$ to an element $\wt\omega_{b,s 0}^{\gamma*}\in\ker\wh{\cP_{b,\gamma}}(0)^*\cap\Hbsupp^{-\infty,-3/2-}$, since the obstruction~\eqref{Eq10NoDualdt} coming from the non-triviality of $\ker\wh{\Box_{g_b}}(0)\cap\Hbext^{\infty,-3/2+}$, disappears for small non-zero $\gamma$: thus, we can set $\omega_{b,s 0}^{\gamma*}=\chi\pa_t^\flat+\omega'_0$, where $\omega'_0\in\Hbsupp^{-\infty,-1/2-}$ solves $\wh{\cP_{b,\gamma}}(0)^*\omega'_0=-[\wh{\cP_{b,\gamma}}(0),\chi]\pa_t^\flat\in\CIc(X^\circ)$. Therefore, we can put (i.e.\ re-define), for now,
    \begin{subequations}
  \begin{equation}
  \label{EqCD0GenDualStrs0}
    h_{b,s 0}^{\gamma*} := \sfG_{g_b}\delta_{g_b}^*\omega_{b,s 0}^{\gamma*} \in \chi r^{-2}\Omega_1 + \Hbsupp^{-\infty,3/2-},
  \end{equation}
  where the structure of the leading order term again follows from the a priori membership $h_{b,s 0}^{\gamma*}\in\Hbsupp^{-\infty,1/2-}$ by normal operator considerations.
  
  \pfstep{Construction of $h_{b,s 1}^{\gamma*}(\scal)$.} Write $b=(\bhm,\bha)$ and $b_1=(\bhm,0)$. Set $\omega_{b,s 1}^{\gamma*}(\scal)=\chi\omega_{b_1,s 1}^*(\scal)+\omega'_1$, where $\omega'_1\in\Hbsupp^{-\infty,-1/2-}$ satisfies $\wh{\cP_{b,\gamma}}(0)^*\omega'_1=-\wh{\cP_{b,\gamma}}(0)^*(\chi\omega_{b_1,s 1}^*(\scal))\in\Hbsupp^{-\infty,5/2-}$, cf.\ \eqref{EqL0LeadBoxOmegap}, hence lies in $\rho\CI+\Hbsupp^{-\infty,1/2-}$. Thus, we can re-define, for now,
  \begin{equation}
  \label{EqCD0GenDualStrs1}
    h_{b,s 1}^{\gamma*}(\scal) := \sfG_{g_b}\delta_{g_b}^*\omega_{b,s 1}^{\gamma*}(\scal) \in \chi r^{-2}\Omega_1 + \Hbsupp^{-\infty,3/2-}.
  \end{equation}

  \pfstep{Construction of $h_{b,v 1}^{\gamma*}(\vect)$.} In order for the ansatz $\omega_{b,v 1}^{\gamma*}(\vect)=\chi r^2\vect+\omega'$ to produce an element of $\ker\wh{\cP_{b,\gamma}}(0)^*$, we need $\wh{\cP_{b,\gamma}}(0)^*\omega'=-\wh{\cP_{b,\gamma}}(0)^*(\chi r^2\vect)\in\Hbsupp^{-\infty,3/2-}$, which can be solved for $\omega'\in\Hbsupp^{-\infty,-1/2-}$ in view of Proposition~\ref{PropCD0}. (Note that this is stronger than what we proved in Proposition~\ref{Prop10Genv1}.) Thus,
  \begin{equation}
  \label{EqCD0GenDualStrv1}
    h_{b,v 1}^{\gamma*}(\vect) := \sfG_{g_b}\delta_{g_b}^*\omega_{b,v 1}^{\gamma*}(\vect) \in \chi r^{-2}\Omega_1 + \Hbsupp^{-\infty,3/2-}
  \end{equation}
  \end{subequations}
  as a consequence of the a priori $\Hbsupp^{-\infty,1/2-}$ membership.
  
  \pfstep{Construction of generalized dual states.} We begin by constructing $\hat h_{b,s 0}^{\gamma*}$ using the ansatz
  \[
    \hat h_{b,s 0}^{\gamma*} = t_*\bigl(h_{b,s 0}^{\gamma*} + h_{b,v 1}^{\gamma*}(\vect)\bigr) + \breve h_{b,s 0}^{\gamma*},
  \]
  which lies in $\ker L_{g_b,\gamma}^*$ provided
  \begin{equation}
  \label{EqCD0GenDuals0}
    \wh{L_{g_b,\gamma}}(0)^*\breve h_{b,s 0}^{\gamma*} = -[L_{g_b,\gamma}^*,t_*]h_{b,s 0}^{\gamma*} - [L_{g_b,\gamma}^*,t_*]h_{b,v 1}^{\gamma*}(\vect).
  \end{equation}
  This can be solved if and only if the pairing of the right hand side with $\cK_b$ is trivial. The pairing with $h_{b,s 0}$ and $h_{b,s 1}(\scal)$ automatically vanishes, since the latter are leading order terms of linearly growing generalized zero modes of $L_{g_b,\gamma}$. On the other hand, the pairing
  \[
    \vect_1\times\vect_1\ni (\vect,\vect') \mapsto \la[L_{g_b,\gamma}^*,t_*]h_{b,v 1}^{\gamma*}(\vect),h_{b,v 1}(\vect')\ra = -\la h_{b,v 1}^{\gamma*}(\vect), [L_{g_b,\gamma},t_*]h_{b,v 1}(\vect')\ra
  \]
  is non-degenerate due to~\eqref{EqL0Linv1Nondeg} and by continuity in $(b,\gamma)$. Thus, we can choose $\vect\in\vect_1$ such that~\eqref{EqCD0GenDuals0} has a solution $\breve h_{b,s 0}^{\gamma*}\in\Hbsupp^{-\infty,-1/2-}$. But then the arguments used in the proof of Lemma~\ref{LemmaL0Lead} apply in view of~\eqref{EqCD0GenDualStrs0} and \eqref{EqCD0GenDualStrv1} and imply that, in fact, $\breve h_{b,s 0}^{\gamma*}\in\chi\rho\CI+\Hbsupp^{-\infty,1/2-}$. Replacing $h_{b,s 0}^{\gamma*}$ by $h_{b,s 0}^{\gamma*}+h_{b,v 1}^{\gamma*}(\vect)$ for this choice of $\vect$ accomplishes parts~\eqref{ItCD0GenDual2} and \eqref{ItCD0GenDual3} of the lemma.

  The arguments for $\hat h_{b,s 1}^{\gamma*}$ and $\breve h_{b,s 1}^{\gamma*}$ are analogous, now using~\eqref{EqCD0GenDualStrs1} and \eqref{EqCD0GenDualStrv1}.

  We have thus constructed an explicit basis of $\wh\cK_b^{\gamma*}/\cK_b^{\gamma*}$. By the already known continuity of $\wh\cK_b^{\gamma*}$ in $(b,\gamma)$, we can then re-define $\hat h_{b,s 0}^{\gamma*}$ and $\hat h_{b,s 1}^{\gamma*}(\scal)$ to be suitable linear combinations of this basis and elements of $\cK_b^{\gamma*}$ to ensure the continuity in $(b,\gamma)$, in particular at $\gamma=0$.
\end{proof}

\subsection{Mode stability of the gauge propagation operator in \texorpdfstring{$\Im\sigma\geq 0$}{the closed upper half plane}}
\label{SsCDU}

Without further on restrictions $\fv,\gamma$, it may happen that the zero energy state of $\cP_{b,0}$ is perturbed into a resonance of $\cP_{b,\gamma}$ in the upper half plane. We now show that for $\fv,\gamma$ with suitable signs, this does \emph{not} happen. We do this in two steps:
\begin{enumerate}
  \item we show in Lemma~\ref{LemmaCDU} that for $\fv>1$ and for \emph{all} sufficiently small $\gamma>0$, $\cP_{b_0,\gamma}$ has no modes in a \emph{fixed} neighborhood of $\sigma=0$ in the closed upper half plane.
  \item In Proposition~\ref{PropCDU}, we combine this with high energy estimates, as well as with perturbative (in $\gamma$) arguments in compact subsets of $\{\Im\sigma\geq 0,\ \sigma\neq 0\}$, to prove the mode stability of $\cP_{b_0,\gamma}$ for sufficiently small $\gamma>0$. \emph{Fixing} such $\gamma>0$, simple perturbation arguments then imply the mode stability of $\cP_{b,\gamma}$ for $b$ close to $b_0$.
\end{enumerate}

Let $s>\tfrac32$ and $\ell\in(-\tfrac32,-\half)$. The domains
\begin{equation}
\label{EqCDUFn}
  \cX_b^{s,\ell}(\sigma) = \bigl\{ \omega\in\Hbext^{s,\ell}(X;\wt\Tsc{}^*X) \colon \wh{\cP_{b,\gamma}}(\sigma)\omega\in\Hbext^{s-1,\ell+2}(X;\wt\Tsc{}^*X) \bigr\},
\end{equation}
of the operators $\wh{\cP_{b,\gamma}}(\sigma)\colon\cX_b^{s,\ell}(\sigma)\to\Hbext^{s-1,\ell+2}$ depend in a serious manner on $\sigma$ (but are independent of $\gamma$). Thus, the first step of the perturbation argument is to pass to operators with fixed domain and target spaces. This relies on:

\begin{lemma}
\label{LemmaCDUInv}
  Fix $\fv\neq 1$, and fix a value $\gamma_1\neq 0$ for which~\eqref{EqCD0} (with $\gamma$ replaced by $\gamma_1$) holds. Then $\wh{\cP_{b_0,\gamma_1}}(\sigma)\colon\cX_{b_0}^{s,\ell}(\sigma)\to\Hbext^{s-1,\ell+2}$ is invertible for $\sigma\in\C$ with $\Im\sigma\geq 0$ and $|\sigma|$ small. Moreover, $\wh{\cP_{b_0,\gamma_1}}(\sigma)^{-1}$ is continuous (in $\sigma$) with values in $\cL_{\rm weak}(\Hbext^{s-1,\ell+2},\Hbext^{s,\ell})$ (the space of bounded operators equipped with the weak operator topology), and is continuous with values in $\cL_{\rm op}(\Hbext^{s-1+\eps,\ell+2+\eps},\Hbext^{s-\eps,\ell-\eps})$ (norm topology) for any $\eps>0$.
\end{lemma}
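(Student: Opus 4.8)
The statement has two parts: invertibility of $\wh{\cP_{b_0,\gamma_1}}(\sigma)$ on $\cX_{b_0}^{s,\ell}(\sigma)\to\Hbext^{s-1,\ell+2}$ for $\Im\sigma\geq 0$, $|\sigma|$ small, and continuity of the inverse in $\sigma$ in the two stated topologies. The key point throughout is that, by Theorem~\ref{ThmOp} applied to $\cP_{b_0,\gamma_1}$ (which is a principally scalar wave operator, so the uniform Fredholm estimates there apply, with $\gamma_1$ small so they are uniform), we have index-$0$ Fredholm operators together with the uniform estimate
\[
  \|\omega\|_{\Hbext^{s,\ell}} \leq C\Bigl(\bigl\|\wh{\cP_{b_0,\gamma_1}}(\sigma)\omega\bigr\|_{\Hbext^{s-1,\ell+2}} + \|\omega\|_{\Hbext^{s_0,\ell_0}}\Bigr)
\]
for $s_0<s$, $\ell_0<\ell$, uniformly for $\Im\sigma\geq 0$, $|\sigma|\leq C$ (using $\ell\in(-\tfrac32,-\half)$ for the uniformity down to $\sigma=0$). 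I would first establish invertibility at $\sigma=0$: this is exactly~\eqref{EqCD0}, since $\ker\wh{\cP_{b_0,\gamma_1}}(0)$ is trivial on $\Hbext^{s,\ell}$ and, the operator being Fredholm of index $0$, the vanishing of the cokernel (which by normal operator arguments as in Proposition~\ref{PropOpNull} automatically has weight $-\tfrac12-$, hence lies in $\Hbsupp^{-\infty,-1/2-}$) gives surjectivity. Then invertibility persists for small $|\sigma|$, $\Im\sigma\geq 0$, by a standard argument from the uniform estimate: if it failed, there would be $\sigma_j\to 0$ and $\omega_j$ of unit $\Hbext^{s,\ell}$-norm in the kernel (or, for surjectivity, in the kernel of the adjoint, treated symmetrically using the uniform adjoint estimate), the uniform estimate would force $\|\omega_j\|_{\Hbext^{s_0,\ell_0}}$ bounded below, a weak limit $\omega\neq 0$ would lie in $\ker\wh{\cP_{b_0,\gamma_1}}(0)$, contradicting~\eqref{EqCD0}. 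This is the argument already carried out verbatim near the end of the proof of Theorem~\ref{Thm0}, so I would simply cite it.

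For the continuity statements I would argue as follows. Fix $\sigma_0$ with $\Im\sigma_0\geq 0$, $|\sigma_0|$ small, and let $\omega_0=\wh{\cP_{b_0,\gamma_1}}(\sigma_0)^{-1}f_0$ for $f_0\in\Hbext^{s-1,\ell+2}$. Writing, for $\sigma$ near $\sigma_0$ and $f$ near $f_0$, $\omega:=\wh{\cP_{b_0,\gamma_1}}(\sigma)^{-1}f$, the difference $\omega-\omega_0$ satisfies $\wh{\cP_{b_0,\gamma_1}}(\sigma)(\omega-\omega_0)=(f-f_0)-(\wh{\cP_{b_0,\gamma_1}}(\sigma)-\wh{\cP_{b_0,\gamma_1}}(\sigma_0))\omega_0$. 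By Lemma~\ref{LemmaOpLinFT} (applied to $\cP_{b_0,\gamma_1}$, or rather its analogue on $1$-forms, which has the same structure), $\wh{\cP_{b_0,\gamma_1}}(\sigma)-\wh{\cP_{b_0,\gamma_1}}(\sigma_0) = (\sigma-\sigma_0)\bigl(2\rho(\rho D_\rho+i)+Q'\bigr)+(\sigma^2-\sigma_0^2)\alpha$ with $Q'\in\rho^2\Diffsc^1$, $\alpha\in\rho^2\CI$; thus this operator maps $\Hbext^{s,\ell}\to\Hbext^{s-1,\ell+1}\hookrightarrow\Hbext^{s-1,\ell+2-\eps}$ with norm $O(|\sigma-\sigma_0|)$ — here the gain of decay from the $\rho$-prefactors is what matters. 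Applying $\wh{\cP_{b_0,\gamma_1}}(\sigma)^{-1}$, which by the first part is bounded $\Hbext^{s-1,\ell+2}\to\Hbext^{s,\ell}$ uniformly for $\sigma$ near $\sigma_0$, gives $\|\omega-\omega_0\|_{\Hbext^{s,\ell}}\leq C(\|f-f_0\|_{\Hbext^{s-1,\ell+2}}+|\sigma-\sigma_0|\,\|\omega_0\|_{\Hbext^{s,\ell}})$; letting $f=f_0$ and $\sigma\to\sigma_0$ shows $\omega\to\omega_0$ strongly in $\Hbext^{s,\ell}$, which is exactly continuity in the strong (a fortiori weak) operator topology $\cL_{\rm weak}(\Hbext^{s-1,\ell+2},\Hbext^{s,\ell})$. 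For the norm-continuity on the $\eps$-shifted spaces, one runs the same computation with $f_0$ now ranging over the unit ball of $\Hbext^{s-1+\eps,\ell+2+\eps}$: by interpolation/elliptic-type estimates (propagation of b- and scattering regularity at the radial sets, plus the normal operator argument at $\pa_+X$), $\wh{\cP_{b_0,\gamma_1}}(\sigma_0)^{-1}$ maps $\Hbext^{s-1+\eps,\ell+2+\eps}$ into $\Hbext^{s+\eps,\ell+\eps}$, hence $\omega_0\in\Hbext^{s+\eps,\ell+\eps}$ with uniformly bounded norm, and then $(\wh{\cP_{b_0,\gamma_1}}(\sigma)-\wh{\cP_{b_0,\gamma_1}}(\sigma_0))\omega_0\in\Hbext^{s-1+\eps,\ell+1+\eps}\hookrightarrow\Hbext^{s-1,\ell+2}$ with norm $O(|\sigma-\sigma_0|)$ uniformly in $f_0$; applying $\wh{\cP_{b_0,\gamma_1}}(\sigma)^{-1}$ yields $\|(\wh{\cP_{b_0,\gamma_1}}(\sigma)^{-1}-\wh{\cP_{b_0,\gamma_1}}(\sigma_0)^{-1})f_0\|_{\Hbext^{s-\eps,\ell-\eps}}\leq C|\sigma-\sigma_0|$ uniformly in $\|f_0\|_{\Hbext^{s-1+\eps,\ell+2+\eps}}\leq 1$, which is norm-continuity in $\cL_{\rm op}(\Hbext^{s-1+\eps,\ell+2+\eps},\Hbext^{s-\eps,\ell-\eps})$. (Here the loss from $\Hbext^{s,\ell}$ to $\Hbext^{s-\eps,\ell-\eps}$ on the target is precisely to convert the $O(|\sigma-\sigma_0|)$ bound on a map into a slightly-larger space into genuine norm convergence; this is the usual mechanism, cf.\ the discussion in~\cite{VasyLowEnergyLag}.)

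\textbf{Main obstacle.} The analytic content — invertibility near $\sigma=0$ and the mapping properties of $\wh{\cP_{b_0,\gamma_1}}(\sigma)-\wh{\cP_{b_0,\gamma_1}}(\sigma_0)$ — is already in hand via Theorem~\ref{ThmOp}, Lemma~\ref{LemmaOpLinFT}, and Proposition~\ref{PropCD0}, so the only real subtlety is bookkeeping the weights carefully: one must check that the decay gained from the $\rho$-prefactors in Lemma~\ref{LemmaOpLinFT} is genuinely enough (at least one order, to land back in $\Hbext^{s-1,\ell+2}$ after the $\sigma$-derivative of the operator) and that the domains $\cX_{b_0}^{s,\ell}(\sigma)$ are handled correctly — in particular, that $\wh{\cP_{b_0,\gamma_1}}(\sigma_0)^{-1}f_0\in\cX_{b_0}^{s,\ell}(\sigma)$ for nearby $\sigma$, which holds since $\cX_{b_0}^{s,\ell}(\sigma)$ is the preimage of $\Hbext^{s-1,\ell+2}$ and the operator difference is lower order. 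I would also remark that the weak operator topology is forced on the unshifted spaces because $\wh{\cP_{b_0,\gamma_1}}(\sigma)^{-1}$ need not be norm-continuous there (loss of one order of decay is exactly borderline), which is why the $\eps$-shift is needed for genuine operator-norm continuity.
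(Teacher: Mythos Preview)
Your invertibility argument is correct and matches the paper. The gap is in the continuity part: the direct resolvent identity you use does not close on these function spaces.

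Concretely, from Lemma~\ref{LemmaOpLinFT} the leading piece of $\wh{\cP_{b_0,\gamma_1}}(\sigma)-\wh{\cP_{b_0,\gamma_1}}(\sigma_0)$ is $(\sigma-\sigma_0)\cdot 2\rho(\rho D_\rho+i)\in\rho\Diffb^1$, which gains only \emph{one} order of decay, so it maps $\Hbext^{s,\ell}\to\Hbext^{s-1,\ell+1}$. But to apply $\wh{\cP_{b_0,\gamma_1}}(\sigma)^{-1}$ uniformly near $\sigma=0$ you need input in $\Hbext^{s-1,\ell+2}$. Your claimed inclusion $\Hbext^{s-1,\ell+1}\hookrightarrow\Hbext^{s-1,\ell+2-\eps}$ goes the wrong way (higher weight means more decay, so $\Hbext^{s-1,\ell+2}\subset\Hbext^{s-1,\ell+1}$, not conversely), and similarly in the norm-continuity step $\Hbext^{s-1+\eps,\ell+1+\eps}\hookrightarrow\Hbext^{s-1,\ell+2}$ would require $\eps\geq 1$. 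Shifting the weight in the inverse to accommodate $\Hbext^{s-1,\ell+1}$ input would force $\ell-1\in(-\tfrac32,-\tfrac12)$, which is outside the allowed range. This is not a bookkeeping issue: the paper itself, when it later uses the resolvent identity (see~\eqref{EqCDU1diff}), can only justify it for \emph{specific} inputs whose $r^{-1}$ leading term is annihilated by the normal operator $\rho(\rho D_\rho+i)$, and even then cites a regularization argument.

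The paper instead proves both continuity statements by compactness. For the weak operator topology: given $\sigma_j\to\sigma$ and $f_j\to f$, the $\omega_j=\wh{\cP_{b_0,\gamma_1}}(\sigma_j)^{-1}f_j$ are uniformly bounded in $\Hbext^{s,\ell}$; any subsequential weak limit $\omega$ satisfies $\wh{\cP_{b_0,\gamma_1}}(\sigma)\omega=f$, hence equals the unique solution, so the full sequence converges weakly. For norm continuity on the $\eps$-shifted spaces: argue by contradiction, using that a bounded sequence in $\Hbext^{s-1+\eps,\ell+2+\eps}$ has a norm-convergent subsequence in $\Hbext^{s-1,\ell+2}$, and combine with the weak continuity just proved to contradict the assumed lower bound $\|\omega_j-\omega'_j\|_{\Hbext^{s-\eps,\ell-\eps}}\geq\delta$. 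The $\eps$-relaxation on both sides is what provides the compact embeddings needed for this argument; it is not used to gain decay for the operator difference.
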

\begin{proof}
  Let us write $\wh\cP(\sigma)\equiv\wh{\cP_{b_0,\gamma_1}}(\sigma)$ in this proof. The first statement follows from having \emph{uniform} semi-Fredholm estimates
  \begin{equation}
  \label{EqCDUInvEst}
    \|\omega\|_{\Hbext^{s,\ell}} \leq C\bigl(\|\wh\cP(\sigma)\omega\|_{\Hbext^{s-1,\ell+2}} + \|\omega\|_{\Hbext^{s_0,\ell_0}}\bigr),\quad \omega\in\cX_{b_0}^{s,\ell}(\sigma)
  \end{equation}
  for $\sigma\in\C$, $\Im\sigma\geq 0$, with $|\sigma|$ small, together with the invertibility of $\wh\cP(0)$: this implies by a standard contradiction argument, see e.g.\ \cite[Proof of Theorem~1.1]{VasyLowEnergyLag}, that the second, error, term on the right in~\eqref{EqCDUInvEst} can be dropped for $\sigma$ near $0$ (upon increasing the constant $C$ if necessary). This gives the injectivity of $\wh\cP(\sigma)$; since this operator is Fredholm of index $0$, its invertibility is an immediate consequence.

  We prove the continuity of $\wh\cP(\sigma)^{-1}$ following the line of reasoning of~\cite[\S2.7]{VasyMicroKerrdS}. Suppose $\sigma_j\to\sigma$, and suppose $f_j\in\Hbext^{s-1,\ell+2}$ is a sequence converging to $f\in\Hbext^{s-1,\ell+2}$; put $\omega_j=\wh\cP(\sigma_j)^{-1}f_j$, which is bounded in $\Hbext^{s,\ell}$. Consider a subsequential limit $\omega_{j_k}\weakto\omega\in\Hbext^{s,\ell}$; then $\wh\cP(\sigma)\omega$ is necessarily equal to the weak limit $\lim_{k\to\infty} \wh\cP(\sigma_{j_k})\omega_{j_k}=f$, independently of the subsequence. Therefore, the entire sequence converges weakly, $\omega_j\weakto\omega$, proving continuity in the weak operator topology.

  Suppose the continuity in the operator norm topology failed: then we could find $\delta>0$, a sequence $\sigma_j\to\sigma$, and a bounded sequence $f_j\in\Hbext^{s-1+\eps,\ell+2+\eps}$ such that for $\omega_j=\wh\cP(\sigma_j)^{-1}f_j$ and $\omega'_j=\wh\cP(\sigma)^{-1}f_j$, we have
  \begin{equation}
  \label{EqCDUInvLower}
    \|\omega_j-\omega'_j\|_{\Hbext^{s-\eps,\ell-\eps}}\geq\delta.
  \end{equation}
  Passing to a subsequence, we can assume $f_j\weakto f\in\Hbext^{s-1+\eps,\ell+2+\eps}$, with norm convergence in $\Hbext^{s-1,\ell+2}$; in particular, $\omega'_j\to\wh\cP(\sigma)^{-1}f=:\omega$ in $\Hbext^{s,\ell}$. On the other hand, by continuity in the weak operator topology, we have $\omega_j\weakto\omega$ in $\Hbext^{s,\ell}$, hence $\omega_j\to\omega$ in $\Hbext^{s-\eps,\ell-\eps}$. But this implies $\omega_j-\omega'_j\to 0$ in $\Hbext^{s-\eps,\ell-\eps}$, contradicting~\eqref{EqCDUInvLower} and finishing the proof.
\end{proof}

The improvement of Proposition~\ref{PropCD0} on the Schwarzschild spacetime is:

\begin{lemma}
\label{LemmaCDU}
  Let $s>\tfrac32$, $\ell\in(-\tfrac32,-\half)$, and $\fv>1$. Then there exist $\gamma'_0>0$, $C_0>0$ such that for all $0<\gamma<\gamma'_0$ and $\sigma\in\C$, $\Im\sigma\geq 0$, $|\sigma|<C_0$, the operator $\wh{\cP_{b_0,\gamma}}(\sigma)\colon\cX_{b_0,\gamma}(\sigma)\to\Hbext^{s-1,\ell+2}$ is invertible.
\end{lemma}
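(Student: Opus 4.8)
The strategy is a two-parameter perturbation argument near $(\gamma,\sigma)=(0,0)$, building on Lemma~\ref{LemmaCDUInv} and on the explicit computation of the leading CD contribution. First I would fix a value $\gamma_1\neq 0$ as in Proposition~\ref{PropCD0} and use Lemma~\ref{LemmaCDUInv} to obtain, for $|\sigma|$ small and $\Im\sigma\geq 0$, the invertible operator $\wh{\cP_{b_0,\gamma_1}}(\sigma)^{-1}$, which is continuous in $\sigma$ in the weak operator topology and norm-continuous between slightly shifted spaces. The point of introducing $\gamma_1$ is that the domains $\cX_{b_0}^{s,\ell}(\sigma)$ in~\eqref{EqCDUFn} do not depend on $\gamma$, so for any $\gamma$ we may write
\[
  \wh{\cP_{b_0,\gamma}}(\sigma) = \wh{\cP_{b_0,\gamma_1}}(\sigma)\Bigl(\Id + \wh{\cP_{b_0,\gamma_1}}(\sigma)^{-1}\bigl(\wh{\cP_{b_0,\gamma}}(\sigma)-\wh{\cP_{b_0,\gamma_1}}(\sigma)\bigr)\Bigr),
\]
where $\wh{\cP_{b_0,\gamma}}(\sigma)-\wh{\cP_{b_0,\gamma_1}}(\sigma) = (\gamma-\gamma_1)\wh{\cP^\flat}(\sigma)$ with $\cP^\flat := 2\delta_{g_{b_0}}\sfG_{g_{b_0}}(2\cd\otimes_s(-)-G(\cd,-)g)$ a \emph{fixed}, compactly supported, first-order operator; in particular $\wh{\cP^\flat}(\sigma)$ maps $\Hbext^{s,\ell}$ into $\CIc(X^\circ)\subset\Hbext^{s-1+\eps,\ell+2+\eps}$ with norm bounded uniformly for $\sigma$ in a compact set. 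Composing with the norm-continuous family $\wh{\cP_{b_0,\gamma_1}}(\sigma)^{-1}\colon\Hbext^{s-1+\eps,\ell+2+\eps}\to\Hbext^{s-\eps,\ell-\eps}$ and then noting (by elliptic/radial point regularity as in Proposition~\ref{PropOpNull}, since the correction is compactly supported) that on the actual domain $\cX_{b_0}^{s,\ell}(\sigma)$ one recovers full regularity, the correction term $\wh{\cP_{b_0,\gamma_1}}(\sigma)^{-1}(\gamma-\gamma_1)\wh{\cP^\flat}(\sigma)$ is a norm-continuous, hence uniformly bounded, family of operators on the relevant spaces for $|\sigma|\leq C_0$, $\Im\sigma\geq 0$.

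The substance of the argument, and the only place where the hypothesis $\fv>1$ (as opposed to $\fv\neq 1$) and the sign $\gamma>0$ enter, is to show that $\Id + \wh{\cP_{b_0,\gamma_1}}(\sigma)^{-1}(\gamma-\gamma_1)\wh{\cP^\flat}(\sigma)$ is invertible. A naive Neumann-series estimate only gives invertibility for $|\gamma-\gamma_1|$ small, which is the wrong regime (we want all small $\gamma>0$, with $\gamma_1$ a fixed nonzero auxiliary value). Instead I would argue by contradiction and continuity: suppose there exist $\gamma_j\downarrow 0$ and $\sigma_j\to\sigma_\infty$ with $\Im\sigma_j\geq 0$, $|\sigma_j|<C_0$, and nonzero $\omega_j\in\cX_{b_0}^{s,\ell}(\sigma_j)$, normalized $\|\omega_j\|_{\Hbext^{s,\ell}}=1$, with $\wh{\cP_{b_0,\gamma_j}}(\sigma_j)\omega_j=0$. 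By the uniform semi-Fredholm estimate~\eqref{EqCDUInvEst} (which holds uniformly in $\gamma$ since the CD correction is a relatively compact perturbation) we get a lower bound $\|\omega_j\|_{\Hbext^{s_0,\ell_0}}\geq c>0$, so a weak limit $\omega_j\weakto\omega_\infty\in\Hbext^{s,\ell}$ is nonzero. If $\sigma_\infty\neq 0$: the equation $\wh{\cP_{b_0,\gamma_j}}(\sigma_j)\omega_j=0$ passes to the limit giving $\wh{\cP_{b_0,0}}(\sigma_\infty)\omega_\infty=\Box_{g_{b_0},1}$-mode $=0$ at frequency $\sigma_\infty$, with $\omega_\infty$ outgoing, contradicting Theorem~\ref{Thm1} (no nonzero modes of $\Box_{g_{b_0},1}$ in $\Im\sigma\geq 0$, $\sigma\neq 0$). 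If $\sigma_\infty=0$: here the $\gamma_j\to 0$ limit is delicate because $\gamma=0$ \emph{does} have a zero mode $\omega_{b_0,s 0}$, so one cannot simply send $\gamma_j\to 0$. This is the main obstacle.

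To handle the $\sigma_\infty=0$ case I would perform a Lyapunov--Schmidt reduction at $(\gamma,\sigma)=(0,0)$ exactly as in the toy model at the end of~\S\ref{SssIIK} and in the proof of Proposition~\ref{PropCD0}: decompose $\cX_{b_0}^{s,\ell}=\cK^\perp\oplus\la\omega_{b_0,s 0}\ra$ and $\Hbext^{s-1,\ell+2}=\cR\oplus\la\eta\ra$ with $\cR=\ann\,\omega_{b_0,s 0}^*$, $\la\eta,\omega_{b_0,s 0}^*\ra=1$, and write $\wh{\cP_{b_0,\gamma}}(\sigma)$ in block form as in~\eqref{EqCD0PhatMtx} but now with $\sigma$-dependence. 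The $\cK^\perp\to\cR$ block is invertible for $(\gamma,\sigma)$ near $(0,0)$, so invertibility of the full operator is equivalent to non-vanishing of the Schur complement, a scalar quantity $\mathfrak{e}(\gamma,\sigma)$ with $\mathfrak{e}(\gamma,\sigma)=\gamma\,\cP_{1 1}^\flat + \sigma\,\la\pa_\sigma\wh{\cP_{b_0,0}}(0)\,\omega_{b_0,s 0},\omega_{b_0,s 0}^*\ra + o(|\gamma|+|\sigma|)$. By~\eqref{EqCD0Nondeg}, $\cP_{1 1}^\flat=4(\fv-1)>0$ since $\fv>1$. The $\sigma$-coefficient is computed, as in Lemma~\ref{Lemma10NoLin} and~\eqref{Eq10NoLinPair}, from $[\Box_{g_{b_0}},t_*]\omega_{b_0,s 0}=i\pa_\sigma\wh{\Box_{g_{b_0}}}(0)\omega_{b_0,s 0}$ paired against $\omega_{b_0,s 0}^*$, and by~\eqref{Eq10NoLinPair} equals (a fixed nonzero multiple of) $2$; crucially it has a \emph{definite imaginary part}, so $\Im\bigl(\sigma\cdot(\text{that coefficient})\bigr)$ has a fixed sign on $\{\Im\sigma>0\}$ and is of one sign $\pm$ as $\sigma$ ranges over the real axis. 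The upshot is: for $\gamma>0$ and $\Im\sigma\geq 0$ with $|\gamma|+|\sigma|$ small, $\mathfrak{e}(\gamma,\sigma)$ cannot vanish — on $\Im\sigma>0$ the imaginary part is nonzero, while on $\sigma\in\R$ near $0$ the real part is dominated by the strictly positive term $4(\fv-1)\gamma$ plus a term linear in $\sigma$ whose sign I would check is compatible (or, more robustly, argue that $\mathfrak{e}(\gamma,\sigma)=0$ with $\sigma\in\R$ would force $\sigma=0$ by the linear-in-$\sigma$ real contribution having a definite sign, then $\gamma(\fv-1)=0$, a contradiction; the signs of $\fv-1$ and the real part of the $\sigma$-coefficient must be arranged consistently, which is the precise role of the sign conditions $\fv>1$, $\gamma>0$). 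This contradicts the existence of the $\omega_j$ for $j$ large (since then $(\gamma_j,\sigma_j)$ lies in that small neighborhood), completing the contradiction argument and hence the proof of the lemma; the explicit constants $\gamma_0'$, $C_0$ come from the size of the neighborhood on which the Schur-complement expansion and the uniform estimates are valid.
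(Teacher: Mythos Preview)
Your overall strategy---a Lyapunov--Schmidt/Schur complement reduction at $(\gamma,\sigma)=(0,0)$, with the leading term of the Schur complement being $4(\fv-1)\gamma$ from constraint damping (equation~\eqref{EqCD0Nondeg}) plus a $\sigma$-linear term computed from~\eqref{Eq10NoLinPair}---is the same as the paper's. However, there is a genuine technical gap. You decompose ``$\cX_{b_0}^{s,\ell}=\cK^\perp\oplus\la\omega_{b_0,s 0}\ra$'' and simply assert that the $\cK^\perp\to\cR$ block is invertible for $(\gamma,\sigma)$ near $(0,0)$; but $\cX_{b_0}^{s,\ell}(\sigma)$ depends on $\sigma$, so a splitting at $\sigma=0$ does not give a block decomposition of $\wh{\cP_{b_0,\gamma}}(\sigma)$ for $\sigma\neq 0$. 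The paper addresses this by composing on the \emph{right}: $\wh{\cP_\gamma}(\sigma)\wh{\cP_{\gamma_1}}(\sigma)^{-1}$ acts on the \emph{fixed} space $\Hbext^{s-1,\ell+2}$, and it is this operator whose block decomposition is analyzed. (Your left factorization leaves you on $\sigma$-dependent domains.) The bulk of the paper's proof then establishes, with real work, (i) uniform invertibility of the $(0,0)$ block, (ii) differentiability of the $(0,1)$ and $(1,1)$ blocks at $(\gamma,\sigma)=(0,0)$---this uses the resolvent identity together with the fact that $\pa_\sigma\wh{\cP_{\gamma_1}}(0)\omega_{b_0,s 0}\in\Hbext^{\infty,3/2-}$, which in turn relies on the normal operator $2\rho(\rho D_\rho+i)$ annihilating the $r^{-1}$ leading term of $\omega_{b_0,s 0}$---and (iii) continuity of the $(1,0)$ block. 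These are exactly the points you assert without proof; citing Proposition~\ref{PropCD0} does not suffice, since that proposition treats only $\sigma=0$. Your contradiction argument and the case split on $\sigma_\infty$ are unnecessary detours: once the Schur complement is uniformly controlled near $(0,0)$, both $\gamma_0'$ and $C_0$ come out directly.

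There is also an error in your non-vanishing argument for the Schur complement. The $\sigma$-coefficient is exactly $-2i$ (since $\pa_\sigma\wh{\cP_{b_0,0}}(0)=-i[\Box_{g_{b_0}},t_*]$ and~\eqref{Eq10NoLinPair}), so the leading part is $4(\fv-1)\gamma-2i\sigma$. You claim that on $\Im\sigma>0$ the imaginary part is nonzero, but for $\sigma\in i(0,\infty)$ we have $-2i\sigma\in(0,\infty)$, so the imaginary part vanishes. The correct (and simpler) observation is that the \emph{real} part equals $4(\fv-1)\gamma+2\Im\sigma$, which is strictly positive whenever $\fv>1$, $\gamma>0$, and $\Im\sigma\geq 0$; this is what forces $\mathfrak{e}(\gamma,\sigma)\neq 0$.
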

\begin{proof}
  We abbreviate $\cP_\gamma:=\cP_{b_0,\gamma}$, $\cX(\sigma):=\cX_{b_0}^{s,\ell}(\sigma)$.
  
  \pfstep{Formal argument.} We first give a non-rigorous argument showcasing the relevant calculation. Namely, split
  \begin{alignat*}{3}
    \cX(0)&=\cK^\perp\oplus\cK, &\quad \cK&=\ker_{\cX(0)}\wh{\cP_0}(0)=\la\omega_{b_0,s 0}\ra, \\
    \Hbext^{s-1,\ell+2}&=\cR\oplus\cR^\perp,&\quad \cR&=\ran_{\cX(0)}\wh{\cP_0}(0),
  \end{alignat*}
  where $\cK^\perp\subset\cX(0)$ and $\cR^\perp\subset\Hbext^{s-1,\ell+2}$ are arbitrary but fixed complementary subspaces; we can identify $\cR^\perp\cong\C$ via $\eta'\mapsto\la\eta',\omega_{b_0,s 0}^*\ra$. We then write
  \[
    \wh{\cP_\gamma}(\sigma) = \begin{pmatrix} \cP_{0 0}(\gamma,\sigma) & \cP_{0 1}(\gamma,\sigma) \\ \cP_{1 0}(\gamma,\sigma) & \cP_{1 1}(\gamma,\sigma) \end{pmatrix},
  \]
  where $\cP_{0 1}(0,0)=\cP_{1 0}(0,0)=\cP_{1 1}(0,0)=0$, and $\cP_{0 0}(0,0)$ is invertible. Suppose $\wh{\cP_\gamma}(\sigma)\omega=0$, and write $\omega=(\omega_0,\omega_1)$. Then $\omega_0=-\cP_{0 0}(\gamma,\sigma)^{-1}\cP_{0 1}(\gamma,\sigma)\omega_1$ and
  \begin{equation}
  \label{EqCDUInj11}
    \bigl(\cP_{1 1}(\gamma,\sigma) - \cP_{1 0}(\gamma,\sigma)\cP_{0 0}(\gamma,\sigma)^{-1}\cP_{0 1}(\gamma,\sigma)^{-1}\bigr)\omega_1 = 0;
  \end{equation}
  we want to show that $\omega_1=0$ (and thus $\omega_0=0$) for $|\sigma|+\gamma$ small, $\gamma>0$. Now, the second summand in parentheses is of size $\cO((|\gamma|+|\sigma|)^2)$ since $\cP_{1 0}(\gamma,\sigma),\cP_{0 1}(\gamma,\sigma)=\cO(\gamma+|\sigma|)$; it thus suffices to compute $\cP_{1 1}(\gamma,\sigma)$ modulo $\cO(\gamma^2+|\sigma|^2)$:
  \begin{equation}
  \label{EqCDU11Exp}
    \cP_{1 1}(\gamma,\sigma) = \gamma\cP_{1 1}^\flat+\sigma\cP_{1 1}^\sharp + \cO(\gamma^2+|\sigma|^2),\quad \cP_{1 1}^\flat=\pa_\gamma\cP_{1 1}(0,0),\ \cP_{1 1}^\sharp=\pa_\sigma\cP_{1 1}(0,0).
  \end{equation}
  The calculations~\eqref{Eq10NoLinComm}, \eqref{Eq10NoLinPair}, and~\eqref{EqCD0Nondeg} give
  \begin{equation}
  \label{EqCDU11Exp2}
    \cP_{1 1}^\flat = 4(\fv-1),\quad
    \cP_{1 1}^\sharp = \la -i[\cP_0,t_*]\omega_{b_0,s 0},\omega_{b_0,s 0}^*\ra = -2 i,
  \end{equation}
  and therefore
  \begin{equation}
  \label{EqCDUCalc}
    \cP_{1 1}(\gamma,\sigma)=4(\fv-1)\gamma - 2 i\sigma + \cO(\gamma^2+|\sigma|^2).
  \end{equation}
  Fixing $\fv>1$, this is non-zero in $\Im\sigma\geq 0$ for $\gamma+|\sigma|\leq C_0$, $\gamma>0$, as desired.

  \pfstep{Passage to fixed function spaces.} The above formal argument is not rigorous since the splitting of $\cX(0)$ does not give a splitting of $\cX(\sigma)$ for $\sigma\neq 0$. To remedy this, we use a standard trick in scattering theory and consider, for $\gamma_1\neq 0$ as in Lemma~\ref{LemmaCDUInv}, the operator
  \[
    \wh{\cP_\gamma}(\sigma)\wh{\cP_{\gamma_1}}(\sigma)^{-1} \colon \Hbext^{s-1,\ell+2} \to \Hbext^{s-1,\ell+2},
  \]
  which thus acts on a \emph{fixed} space. We split the target space as $\cR\oplus\cR^\perp$ as above, and the domain as $\wt\cK^\perp\oplus\wt\cK$, where
  \[
    \wt\cK = \wh{\cP_{\gamma_1}}(0)\cK = \la \wt\omega \ra,\quad \wt\omega:=\wh{\cP_{\gamma_1}}(0)\omega_{b_0,s 0}=(\wh{\cP_{\gamma_1}}(0)-\wh{\cP_0}(0))\omega_{b_0,s 0}\in\CIc(X^\circ),
  \]
  and $\wt\cK^\perp$ is any complement of $\wt\cK$ in $\Hbext^{s-1,\ell+2}$. In these splittings, we write
  \begin{equation}
  \label{EqCDUFixed}
    \wh{\cP_\gamma}(\sigma)\wh{\cP_{\gamma_1}}(\sigma)^{-1} = \begin{pmatrix} \wt\cP_{0 0}(\gamma,\sigma) & \wt\cP_{0 1}(\gamma,\sigma) \\ \wt\cP_{1 0}(\gamma,\sigma) & \wt\cP_{1 1}(\gamma,\sigma) \end{pmatrix},
  \end{equation}
  which is Fredholm of index $0$; moreover, $\wt\cP_{0 1}(0,0)=\wt\cP_{1 0}(0,0)=\wt\cP_{1 1}(0)=0$, and $\wt\cP_{0 0}(0,0)$ is invertible, and Fredholm of index $0$ for small $(\gamma,\sigma)$, $\Im\sigma\geq 0$, since the $(1,2)$, $(2,1)$, $(2,2)$ entries of~\eqref{EqCDUFixed} have rank $\leq 1$, hence are compact operators.
  
  \pfstep{Invertibility of $\wt\cP_{0 0}$.} We show that for $|\gamma|+|\sigma|$ small, there exists a uniform bound
  \begin{equation}
  \label{EqCDU00}
    \|\wt\cP_{0 0}(\gamma,\sigma)^{-1}\|_{\cR\to\wt\cK^\perp} \leq C.
  \end{equation}
  This is proved similarly to Lemma~\ref{LemmaCDUInv}, and uses that the bound~\eqref{EqCDUInvEst}, with $\cP_\gamma$ in place of $\cP$, is uniform for small $|\gamma|+|\sigma|$. Concretely, denote the projection onto $\cR$ along $\cR^\perp$ by $\Pi$; assuming that~\eqref{EqCDU00} fails, we then find sequences $(\gamma_j,\sigma_j)\to(0,0)$ and $f_j\in\wt\cK^\perp$, $\|f_j\|_{\Hbext^{s-1,\ell+2}}=1$ so that for $\omega_j=\wh{\cP_{\gamma_1}}(\sigma_j)^{-1}f_j$ (which is bounded in $\Hbext^{s,\ell}$ by Lemma~\ref{LemmaCDUInv}), we have $\Pi\wh{\cP_{\gamma_j}}(\sigma_j)\omega_j\to 0$ in $\Hbext^{s-1,\ell+2}$. Therefore,
  \begin{equation}
  \label{EqCDU00IM}
    \|\omega_j\|_{\Hbext^{s,\ell}} \leq C\left(o(1)+\|(I-\Pi)\wh{\cP_{\gamma_j}}(\sigma_j)\omega_j\|_{\Hbext^{s-1,\ell+2}} + \|\omega_j\|_{\Hbext^{s_0,\ell_0}}\right),
  \end{equation}
  where $s_0<s$, $\ell_0<\ell$. The second term on the right is bounded by a uniform constant times
  \begin{equation}
  \label{EqCDU00Conv}
    |\la\wh{\cP_{\gamma_j}}(\sigma_j)\omega_j,\omega_{b_0,s 0}^*\ra|= \bigl|\big\la\omega_j,\bigl(\wh{\cP_{\gamma_j}}(\sigma_j)-\wh{\cP_0}(0)\bigr)\omega_{b_0,s 0}^*\big\ra\bigr|\to 0,\quad j\to\infty;
  \end{equation}
  since $(\wh{\cP_{\gamma_j}}(\sigma_j)-\wh{\cP_0}(0))\omega_{b_0,s 0}^*$ converges to $0$ in norm in $\Hbsupp^{-s,-\ell}$. We thus conclude from equation~\eqref{EqCDU00IM} that a weakly convergent subsequence $\omega_j\weakto\omega$ has a \emph{non-zero} limit $\omega\in\Hbext^{s,\ell}$. Since $\wh{\cP_{\gamma_j}}(\sigma_j)\omega_j\to\wh{\cP_0}(0)\omega$ in distributions, and since by assumption and using~\eqref{EqCDU00Conv} this limit is $0$, we have $\omega\in\cK$. Now, taking a weakly convergent subsequence $f_j\weakto f$ in $\Hbext^{s-1,\ell+2}$, thus $f_j\to f$ in $\Hbext^{s-1-\eps,\ell+2-\eps}$, we have $\omega_j\weakto\wh{\cP_{\gamma_1}}(0)^{-1}f$ in $\Hbext^{s-\eps,\ell-\eps}$, and therefore $f=\wh{\cP_{\gamma_1}}(0)\omega\in\wt\cK$, and $f\neq 0$ because of $\omega\neq 0$ and since $\wh{\cP_{\gamma_1}}(0)$ is injective. But this contradicts $f_j\in\wt\cK^\perp$: indeed, choosing $f^*\in\Hbsupp^{-s+1,-\ell-2}$ such that $\wt\cK^\perp=\ker f^*$, we have $0=f^*(f_j)\to f^*(f)\neq 0$. This proves~\eqref{EqCDU00}.

  \pfstep{Differentiability of $\wt\cP_{0 1}$ and $\wt\cP_{1 1}$.} The next step is to show that the rank $1$ operator
  \begin{equation}
  \label{EqCDU1}
    \wt\cP_1(\gamma,\sigma):=\wt\cP_{0 1}(\gamma,\sigma)\oplus\wt\cP_{1 1}(\gamma,\sigma)\colon\wt\cK\to\Hbext^{s-1,\ell+2},
  \end{equation}
  is once differentiable at $(0,0)$. Writing
  \begin{equation}
  \label{EqCDU1Rewrite}
    \wh{\cP_\gamma}(\sigma)\wh{\cP_{\gamma_1}}(\sigma)^{-1} = I + \bigl(\wh{\cP_\gamma}(\sigma)-\wh{\cP_{\gamma_1}}(\sigma)\bigr)\wh{\cP_{\gamma_1}}(\sigma)^{-1},
  \end{equation}
  this amounts to proving the differentiability of the second summand at $(0,0)$. We first prove its continuity: since $\wt\omega\in\CIc(X^\circ)$, the 1-form $\wh{\cP_{\gamma_1}}(\sigma)^{-1}\wt\omega\in\Hbext^{s',\ell'}$ is continuous in $\sigma$ for any $s'\in\R$, $\ell'\in(-\tfrac32,-\half)$. But since $\wh{\cP_\gamma}(\sigma)-\wh{\cP_{\gamma_1}}(\sigma)\in\Diff^1$ has coefficients in $\CIc(X^\circ)$ and depends smoothly on $(\gamma,\sigma)$, the continuity of $\wh{\cP_\gamma}(\sigma)\wh{\cP_{\gamma_1}}(\sigma)^{-1}\wt\omega\in\CIc(X^\circ)$ follows.

  We also deduce that for proving differentiability at $(0,0)$, it suffices to prove the differentiability at $\sigma=0$ of $\wh{\cP_{\gamma_1}}(\sigma)^{-1}\wt\omega$. To this end, we write (formally at this point)
  \begin{equation}
  \label{EqCDU1diff}
    \bigl(\wh{\cP_{\gamma_1}}(\sigma)^{-1}-\wh{\cP_{\gamma_1}}(0)^{-1}\bigr)\wt\omega = \wh{\cP_{\gamma_1}}(\sigma)^{-1}\bigl(\wh{\cP_{\gamma_1}}(0)-\wh{\cP_{\gamma_1}}(\sigma)\bigr)\wh{\cP_{\gamma_1}}(0)^{-1}\wt\omega.
  \end{equation}
  The right hand side is well-defined since $\wh{\cP_{\gamma_1}}(0)^{-1}\wt\omega=\omega_{b_0,s 0}\in\rho\CI+\Hbext^{\infty,1/2-}$ is annihilated, modulo $\Hbext^{\infty,3/2-}$, by the normal operator $-2\sigma\rho(\rho D_\rho+i)$ (cf.\ \eqref{EqOpLinFT2}) of $\wh{\cP_{\gamma_1}}(0)-\wh{\cP_{\gamma_1}}(\sigma)$; thus
  \[
    \wt\omega(\sigma):=\bigl(\wh{\cP_{\gamma_1}}(0)-\wh{\cP_{\gamma_1}}(\sigma)\bigr)\wh{\cP_{\gamma_1}}(0)^{-1}\wt\omega \in \Hbext^{\infty,3/2-},
  \]
  with smooth dependence on $\sigma$, to which one can indeed apply $\wh{\cP_{\gamma_1}}(\sigma)^{-1}$. With both sides of~\eqref{EqCDU1diff} well-defined in $\Hbext^{\infty,-1/2-}$, the equality can be justified by a regularization argument as in \cite[\S4]{VasyLowEnergyLag}. More precisely, let us write
  \[
    \wt\omega(\sigma) = \sigma\wt\omega'(0) + \cO(|\sigma|^2),\quad \wt\omega'(0)=-\pa_\sigma\wh{\cP_{\gamma_1}}(0)\omega_{b_0,s 0},
  \]
  then Lemma~\ref{LemmaCDUInv} gives
  \begin{equation}
  \label{EqCDU1Taylor1}
    \bigl(\wh{\cP_{\gamma_1}}(\sigma)^{-1}-\wh{\cP_{\gamma_1}}(0)^{-1}\bigr)\wt\omega = \sigma\wh{\cP_{\gamma_1}}(0)^{-1}\wt\omega'(0) + o_{\Hbext^{s',\ell'}}(|\sigma|),\quad \sigma\to 0,
  \end{equation}
  that is, the remainder has $\Hbext^{s',\ell'}$-norm $o(|\sigma|)$; here, $s'\in\R$, $\ell'\in(-\tfrac32,-\half)$ are arbitrary. Since $\wt\cK$ is finite-dimensional, this proves the differentiability of~\eqref{EqCDU1} with error term of the Taylor expansion of size $o(|\sigma|)$ in $\cL_{\rm op}(\wt\cK,\Hbext^{s-1,\ell+2})$.
  
  In view of in the formal argument above, we need to compute the Taylor expansion of $\wt\cP_{1 1}(\gamma,\sigma)$. To do this, write $\wh\sC(\sigma)=\pa_\gamma\wh{\cP_\gamma}(\sigma)$; this is independent of $\gamma$, and we have $\wh{\cP_{\gamma_1}}(\sigma)-\wh{\cP_{\gamma_2}}(\sigma)=(\gamma_1-\gamma_2)\wh\sC(\sigma)$ for all $\gamma_1,\gamma_2\in\R$. Therefore, by~\eqref{EqCDU1Rewrite} and~\eqref{EqCDU1Taylor1},
  \begin{align*}
    &\wh{\cP_\gamma}(\sigma)\wh{\cP_{\gamma_1}}(\sigma)^{-1}\wt\omega \\
    &\quad = \wh{\cP_{\gamma_1}}(0)\omega_{b_0,s 0} + (\gamma-\gamma_1)\wh\sC(\sigma)\bigl(\omega_{b_0,s 0}-\sigma\wh{\cP_{\gamma_1}}(0)^{-1}\pa_\sigma\wh{\cP_{\gamma_1}}(0)\omega_{b_0,s 0} + o(|\sigma|)\bigr) \\
    &\quad = (\wh{\cP_{\gamma_1}}(0)-\wh{\cP_0}(0))\omega_{b_0,s 0} - \gamma_1\wh\sC(\sigma)\omega_{b_0,s 0} + \gamma\wh\sC(0)\omega_{b_0,s 0} \\
    &\quad\hspace{5em} + \sigma\gamma_1\bigl(\wh\sC(0)\wh{\cP_{\gamma_1}}(0)^{-1}\pa_\sigma\wh{\cP_{\gamma_1}}(0)\omega_{b_0,s 0} - \pa_\sigma\wh\sC(0)\omega_{b_0,s 0}\bigr) + \cO(|\gamma\sigma|) + o(|\sigma|) \\
    &\quad = \gamma\wh\sC(0)\omega_{b_0,s 0} + \sigma\Bigl(\bigl(\wh{\cP_{\gamma_1}}(0)-\wh{\cP_0}(0)\bigr)\wh{\cP_{\gamma_1}}(0)^{-1}\bigl(\pa_\sigma\wh{\cP_0}(0)+\gamma_1\pa_\sigma\wh\sC(0)\bigr)\omega_{b_0,s 0} \\
    &\quad\hspace{12em} - \gamma_1\pa_\sigma\wh\sC(0)\omega_{b_0,s 0}\Bigr) + \cO(|\gamma\sigma|) + o(|\sigma|) \\
    &\quad = \gamma\wh\sC(0)\omega_{b_0,s 0} + \sigma\pa_\sigma\wh{\cP_0}(0)\omega_{b_0,s 0} - \sigma\wh{\cP_0}(0)\bigl(\wh{\cP_{\gamma_1}}(0)^{-1}\pa_\sigma\wh{\cP_{\gamma_1}}(0)\omega_{b_0,s 0}\bigr) + \cO(|\gamma\sigma|) + o(|\sigma|),
  \end{align*}
  with the error terms measured in $\Hbext^{s',\ell'}$ for any $s',\ell'\in\R$. Paired with $\omega_{b_0,s 0}^*$ to get the component in $\cR^\perp$, the third summand gives $0$, and hence we obtain, using the calculation leading to~\eqref{EqCDUCalc},
  \begin{equation}
  \label{EqCDU11}
    \wt\cP_{1 1}(\gamma,\sigma) = \gamma\cP_{1 1}^\flat + \sigma\cP_{1 1}^\sharp + o(|\gamma|+|\sigma|) = 4(\fv-1)\gamma-2 i\sigma + o(|\gamma|+|\sigma|).
  \end{equation}

  \pfstep{Continuity of $\wt\cP_{1 0}$.} Next, we prove that $\wt\cP_{1 0}(\gamma,\sigma)\colon\wt\cK^\perp\to\cR^\perp$ is continuous at $(\gamma,\sigma)=(0,0)$, which follows a fortiori from the continuity of $(\wt\cP_{1 0},\wt\cP_{1 1})\colon\Hbext^{s-1,\ell+2}\to\cR^\perp$, thus from that of the complex-valued map
  \[
    \Hbext^{s-1,\ell+2}\ni\omega\mapsto\la\wh{\cP_\gamma}(\sigma)\wh{\cP_{\gamma_1}}(\sigma)^{-1}\omega,\omega_{b_0,s 0}^*\ra = \la \wh{\cP_{\gamma_1}}(\sigma)^{-1}\omega, \wh{\cP_\gamma}(\sigma)^*\omega_{b_0,s 0}^*\ra,
  \]
  in $(\gamma,\sigma)$ at $(0,0)$. In view of~\eqref{EqCDU00}, its operator norm is bounded by a uniform constant times $\|\wh{\cP_\gamma}(\sigma)^*\omega_{b_0,s 0}^*\|_{\Hbsupp^{-s,-\ell}}$; the latter however is clearly continuous in $(\gamma,\sigma)$, and equal to $0$ at $(\gamma,\sigma)=(0,0)$. (In fact, one can prove the differentiability of $(\wt\cP_{1 0},\wt\cP_{1 1})$ at $(0,0)$ by means of arguments similar to those used above.)

  \pfstep{Conclusion of the proof.} We now combine all the pieces: in view of~\eqref{EqCDU00}, the proof that $\wh{\cP_\gamma}(\sigma)\wh{\cP_{\gamma_1}}(\sigma)^{-1}$ is injective for $\gamma>0$, $\Im\sigma\geq 0$, with $|\sigma|+|\gamma|$ small, reduces, as in~\eqref{EqCDUInj11}, to the proof that the operator
  \begin{equation}
  \label{EqCDU11inv}
    \wt\cP_{1 1}(\gamma,\sigma) - \wt\cP_{1 0}(\gamma,\sigma)\wt\cP_{0 0}(\gamma,\sigma)^{-1}\wt\cP_{0 1}(\gamma,\sigma) \colon \wt\cK \to \cR^\perp
  \end{equation}
  is injective. But this follows from~\eqref{EqCDU11}, the vanishing $\wt\cP_{0 1}(\gamma,\sigma)=\cO(|\sigma|+\gamma)$, as well as the vanishing $\wt\cP_{1 0}(\gamma,\sigma)=o(1)$ as $(\gamma,\sigma)\to (0,0)$: indeed, the operator~\eqref{EqCDU11inv} is equal to $\wt\cP_{1 1}(\gamma,\sigma)+o(|\gamma|+|\sigma|)$. Surjectivity of $\wh{\cP_\gamma}(\sigma)$, which has index $0$, is an immediate consequence.
\end{proof}

We now show that robust high energy estimates and perturbation theory in compact subsets of the closed upper half plane imply full mode stability, also on slowly rotating Kerr spacetimes:

\begin{prop}
\label{PropCDU}
  Let $\fv>1$, $s>\tfrac32$, $\ell\in(-\tfrac32,-\half)$. Then there exists $\gamma_0>0$ such that for $0<\gamma<\gamma_0$, there exists a constant $C(\gamma)>0$ such that the following holds: if $|b-b_0|<C(\gamma)$, then the operator $\wh{\cP_{b,\gamma}}(\sigma)\colon\cX_b^{s,\ell}(\sigma)\to\Hbext^{s-1,\ell+2}$ is invertible for all $\sigma\in\C$, $\Im\sigma\geq 0$.
\end{prop}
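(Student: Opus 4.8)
The plan is to cover the closed upper half plane $\{\Im\sigma\geq 0\}$ by three regions — a large-frequency half-strip, a region of large imaginary part, and a fixed compact set — and to prove uniform invertibility of $\wh{\cP_{b,\gamma}}(\sigma)$ on $\cX_b^{s,\ell}(\sigma)$ on each, with all constants in the first two regions independent of $b$ near $b_0$ and $\gamma$ small, and only the radius of the allowed $b$-neighborhood in the third region depending on $\gamma$. The genuinely hard input, namely the invertibility near zero energy for \emph{all} small $\gamma>0$ on the Schwarzschild spacetime, is already available as Lemma~\ref{LemmaCDU}; the remaining work is to assemble this with standard large-frequency and perturbative estimates.

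First I would dispose of the two large-frequency regions. Since $\cP_{b,\gamma}$ is a principally scalar wave operator on $1$-forms — its principal part agrees with that of $\Box_{g_b,1}$, and $\cP_{b,\gamma}-\Box_{g_b,1}$ is a first order operator with coefficients in $\CIc(X^\circ)$ depending smoothly on $(b,\gamma)$ — the microlocal package behind Theorem~\ref{ThmOp} and Proposition~\ref{PropOpNull} applies to $\wh{\cP_{b,\gamma}}(\sigma)$ with estimates uniform for $b$ near $b_0$ and $|\gamma|$ small; in particular the null-bicharacteristic dynamics is that of Proposition~\ref{PropKFl}. The semiclassical high energy estimates in strips then give a constant $C_1>1$, uniform in $b,\gamma$, such that $\wh{\cP_{b,\gamma}}(\sigma)$ is invertible for $\Im\sigma\in[0,C_1]$, $|\Re\sigma|>C_1$. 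Separately, standard energy estimates for the forward problem for the (principally) hyperbolic stationary operator $\cP_{b,\gamma}$, whose exponential growth rate is uniform in $b,\gamma$ by continuous dependence of the energy constants, show that $\wh{\cP_{b,\gamma}}(\sigma)$ is invertible for $\Im\sigma>C_1$ and all $\Re\sigma$ (enlarging $C_1$ if necessary). The complement in $\{\Im\sigma\geq 0\}$ of the union of these two regions is the compact set $K:=\{|\Re\sigma|\leq C_1,\ 0\leq\Im\sigma\leq C_1\}$.

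It then remains to treat $K$, which I split into $K\cap\{|\sigma|\geq\delta/2\}$ and $K\cap\{|\sigma|\leq\delta\}$ for a small $\delta\in(0,C_0)$, $C_0$ as in Lemma~\ref{LemmaCDU}, so that the two pieces cover $K$. On the compact set $K\cap\{|\sigma|\geq\delta/2\}$, which is disjoint from $\sigma=0$, the operator $\wh{\cP_{b_0,0}}(\sigma)=\wh{\Box_{g_{b_0},1}}(\sigma)$ is invertible by Theorem~\ref{Thm1}; combining the uniform Fredholm estimates for $\wh{\cP_{b,\gamma}}(\sigma)$ with a compactness argument and the perturbation theory of \cite[\S2.7]{VasyMicroKerrdS} — exactly as in the proof of Theorem~\ref{ThmOp} — yields $\gamma_*>0$ and $\eps_*>0$ such that $\wh{\cP_{b,\gamma}}(\sigma)$ is invertible there whenever $|\gamma|<\gamma_*$ and $|b-b_0|<\eps_*$. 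On the remaining compact set $K\cap\{|\sigma|\leq\delta\}\subset\{|\sigma|<C_0,\ \Im\sigma\geq 0\}$, Lemma~\ref{LemmaCDU} gives, for every fixed $\gamma$ with $0<\gamma<\min(\gamma_0',\gamma_*)$ (and $\fv>1$ as hypothesized), the invertibility of $\wh{\cP_{b_0,\gamma}}(\sigma)$; a contradiction/weak-compactness argument based on the uniform-down-to-$\sigma=0$ Fredholm estimates for $\wh{\cP_{b,\gamma}}(\sigma)$ — identical in structure to the persistence argument at the end of the proof of Theorem~\ref{Thm0} — then upgrades this to invertibility for $|b-b_0|<C(\gamma)$. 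Here the constant $C(\gamma)>0$ necessarily depends on $\gamma$, since $\|\wh{\cP_{b_0,\gamma}}(0)^{-1}\|$ blows up as $\gamma\to 0$. Setting $\gamma_0:=\min(\gamma_0',\gamma_*)$ completes the argument; throughout, $\wh{\cP_{b,\gamma}}(\sigma)$ has index $0$, so the injectivity produced by the above estimates is equivalent to invertibility.

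The main obstacle is almost entirely one of bookkeeping of uniformity — tracking which constants are uniform in $(b,\gamma)$ and which (only $C(\gamma)$) must degenerate as $\gamma\to 0$. The one point requiring genuine care is that the semiclassical high energy estimates hold only in horizontal strips $\Im\sigma\in[0,C]$, so the region $\{\Im\sigma>C_1\}$ cannot be handled by microlocal means and must be dispatched by the separate energy-estimate/exponential-bound argument; fortunately this is standard for the hyperbolic operator $\cP_{b,\gamma}$.
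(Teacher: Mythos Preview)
Your proposal is correct and follows essentially the same approach as the paper: combine Lemma~\ref{LemmaCDU} near $\sigma=0$, high energy estimates for large $|\sigma|$, and perturbation off $\wh{\cP_{b_0,0}}(\sigma)=\wh{\Box_{g_{b_0},1}}(\sigma)$ (Theorem~\ref{Thm1}) on the remaining compact annulus, then pass to nearby $b$ by the usual uniform-Fredholm contradiction argument. The only organizational difference is that the paper first carries out the entire argument at $b=b_0$ and then perturbs in $b$ once at the end, whereas you perturb region-by-region; and you make explicit the separate treatment of $\{\Im\sigma>C_1\}$ via energy estimates, which the paper leaves implicit in its appeal to ``uniform high energy estimates as in the proof of Theorem~\ref{ThmOp}.''
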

\begin{proof}
  Consider first $b=b_0$. Then Lemma~\ref{LemmaCDU} provides us with $\gamma'_0>0$ and $C_0>0$ such that the conclusion holds when $0<\gamma<\gamma'_0$ and $|\sigma|<C_0$. For these $\gamma$, we have uniform high energy estimates as in the proof of Theorem~\ref{ThmOp}; in particular, there exists $C_1>1$ such that for all $\gamma\in(0,\gamma'_0)$, the operator $\wh{\cP_{b_0,\gamma}}(\sigma)$ is invertible when $\Im\sigma\geq 0$, $|\sigma|\geq C_1$.

  Suppose now $\sigma_0\in\C$, $\Im\sigma_0\geq 0$, and $C_0\leq|\sigma_0|\leq C_1$. Then $\wh{\cP_{b_0,0}}(\sigma_0)$ is invertible by Theorem~\ref{Thm1}; a simple perturbation theory argument as in~\cite[\S2.7]{VasyMicroKerrdS} implies the invertibility of $\wh{\cP_{b_0,\gamma}}(\sigma)$ for $(\gamma,\sigma)$ in an open set around $(0,\sigma_0)$. A compactness argument implies the existence of $0<\gamma''_0$ such that $\wh{\cP_{b_0,\gamma}}(\sigma)$ is invertible for $C_0\leq|\sigma|\leq C_1$, $\Im\sigma\geq 0$, and $|\gamma|<\gamma''_0$. The proposition thus holds for $\gamma_0:=\min(\gamma'_0,\gamma''_0)$ and $b=b_0$.

  Perturbation arguments as in the proof of Theorem~\ref{Thm0} imply, for any \emph{fixed} choice of $\gamma\in(0,\gamma_0)$, the invertibility of $\wh{\cP_{b,\gamma}}(\sigma)$ in $\Im\sigma\geq 0$ for $b$ sufficiently close to $b_0$. The proof is complete.
\end{proof}

\begin{cor}
\label{CorCDUModes}
  Let $s>\tfrac52$, $\ell\in(-\tfrac32,-\half)$, and fix $\gamma\in(0,\gamma_0)$ as in Proposition~\ref{PropCDU}. Then mode stability holds for the operator $L_{g_{b_0},\gamma}$ on the Schwarzschild spacetime: the operator $\wh{L_{g_{b_0},\gamma}}(\sigma)\colon\cX_{b_0}^{s,\ell}(\sigma)\to\Hbext^{s-1,\ell+2}$ is invertible for $\sigma\in\C$, $\Im\sigma\geq 0$, and $\sigma\neq 0$.
\end{cor}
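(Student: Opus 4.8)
\textbf{Proof plan for Corollary~\ref{CorCDUModes}.} The plan is to follow the structure of the proof of Proposition~\ref{PropIIS} as sketched in \S\ref{SssIIS}, but now for the constraint-damped operator $L_{g_{b_0},\gamma}$, exploiting that the gauge propagation operator $\cP_{b_0,\gamma}$ has been shown in Proposition~\ref{PropCDU} to have no modes in $\Im\sigma\geq 0$. Fix $\sigma\in\C$ with $\Im\sigma\geq 0$ and $\sigma\neq 0$, and let $h\in\cX_{b_0}^{s,\ell}(\sigma)$ (appropriately adapted; recall $s>\tfrac52$, $\ell\in(-\tfrac32,-\half)$, so the conjugated mode $h$ satisfies an outgoing radiation condition at $\pa_+X$ and is smooth across $\cH^+$) solve $\wh{L_{g_{b_0},\gamma}}(\sigma)h=0$. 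Since $\wh{L_{g_{b_0},\gamma}}(\sigma)$ is Fredholm of index $0$ (this is the constraint-damping version of Theorem~\ref{ThmOp}, noted to remain valid for small $\gamma$ immediately after the proof of Theorem~\ref{ThmOp}), it suffices to prove injectivity.

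First I would apply the linearized second Bianchi identity $\delta_{g_{b_0}}\sfG_{g_{b_0}}\circ D_{g_{b_0}}\Ric\equiv 0$ to the equation $L_{g_{b_0},\gamma}h=2 D_{g_{b_0}}\Ric(h)+2\wt\delta^*_{g_{b_0},\gamma}(\delta_{g_{b_0}}\sfG_{g_{b_0}}h)=0$: applying $\delta_{g_{b_0}}\sfG_{g_{b_0}}$ kills the first term and yields $\cP_{b_0,\gamma}(\delta_{g_{b_0}}\sfG_{g_{b_0}}h)=0$. The 1-form $\delta_{g_{b_0}}\sfG_{g_{b_0}}h$ is an outgoing mode of frequency $\sigma$ (it inherits the radiation condition and horizon-regularity from $h$ via~\eqref{EqKStDivSymmGrad}, and lies in a space of the form appearing in Proposition~\ref{PropCDU}). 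By Proposition~\ref{PropCDU}, $\wh{\cP_{b_0,\gamma}}(\sigma)$ is invertible for all $\sigma$ with $\Im\sigma\geq 0$, hence $\delta_{g_{b_0}}\sfG_{g_{b_0}}h=0$, so $h$ satisfies the linearized gauge condition and consequently $D_{g_{b_0}}\Ric(h)=0$ as well (the constraint-damping modification $E$ drops out since it multiplies $\delta_{g_{b_0}}\sfG_{g_{b_0}}h$). Next I would invoke mode stability of the Schwarzschild metric, Theorem~\ref{ThmMS}(1): since $\sigma\neq 0$, $h=\delta_{g_{b_0}}^*\omega$ for some outgoing 1-form mode $\omega$ (with $(\dot\bhm,\dot\bha)=(0,0)$, as there are no nonzero linearized-Kerr contributions at nonzero frequency). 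Feeding this back into the gauge condition $\delta_{g_{b_0}}\sfG_{g_{b_0}}\delta_{g_{b_0}}^*\omega=0$, i.e.\ $\tfrac12\Box_{g_{b_0},1}\omega=0$ with $\omega$ an outgoing mode of frequency $\sigma$, Theorem~\ref{Thm1} gives $\omega=0$ for $\Im\sigma\geq 0$, $\sigma\neq 0$, hence $h=0$.

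The only genuinely new input beyond the already-proven mode stability of the scalar and 1-form wave operators is the step $\cP_{b_0,\gamma}(\delta_{g_{b_0}}\sfG_{g_{b_0}}h)=0\Rightarrow\delta_{g_{b_0}}\sfG_{g_{b_0}}h=0$, and this is exactly what Proposition~\ref{PropCDU} delivers; so no serious obstacle remains at the level of this corollary. The subtlety I would be careful about is bookkeeping of function spaces: one must check that $\delta_{g_{b_0}}\sfG_{g_{b_0}}h$ and $\omega$ genuinely land in spaces on which Proposition~\ref{PropCDU} and Theorem~\ref{Thm1} are stated (weighted b-Sobolev spaces of extendible distributions with an outgoing condition at $\pa_+X$ and smoothness across $\cH^+$), which is handled by the expansions in Proposition~\ref{PropOpNull}\eqref{ItOpNullSigma} together with~\eqref{EqKStDivSymmGrad} and the regularity $s>\tfrac52$; and one should note that the Fredholm index-$0$ property used to upgrade injectivity to invertibility is the constraint-damping version of Theorem~\ref{ThmOp}, valid for the fixed small $\gamma$ chosen in Proposition~\ref{PropCDU}.
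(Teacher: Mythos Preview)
Your proposal is correct and follows essentially the same approach as the paper: reduce to injectivity via the index-$0$ Fredholm property, use the second Bianchi identity together with Proposition~\ref{PropCDU} to conclude $\delta_{g_{b_0}}\sfG_{g_{b_0}}h=0$ and hence $D_{g_{b_0}}\Ric(h)=0$, then invoke Theorem~\ref{ThmMS} and Theorem~\ref{Thm1} exactly as in the $\sigma\neq 0$ case of Proposition~\ref{PropL0}. The paper's proof is in fact terser than yours, simply noting that once $D_{g_{b_0}}\Ric(h)=0$ is established, ``the rest of the proof is the same as the proof of the corresponding statement for $\wh{L_{g_{b_0},0}}(\sigma)$ in Proposition~\ref{PropL0}.''
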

\begin{proof}
  It suffices to prove injectivity. But any $h\in\ker\wh{L_{g_{b_0},\gamma}}(\sigma)\cap\Hbext^{s,\ell}$ satisfies
  \[
    \delta_{g_{b_0}}\sfG_{g_{b_0}} h\in\ker\wh{\cP_{b_0,\gamma}}(\sigma)\cap\Hbext^{s-1,\ell}=\{0\}
  \]
  by Proposition~\ref{PropCDU}, hence $D_{g_{b_0}}\Ric(h)=0$. The rest of the proof is the same as the proof of the corresponding statement for $\wh{L_{g_{b_0},0}}(\sigma)$ in Proposition~\ref{PropL0}.
\end{proof}

In Proposition~\ref{PropRExist} below, we prove this for Kerr parameters $b\neq b_0$, the most delicate input being a rather explicit description of the resolvent near zero energy; the latter will rely on the non-degenerate structures used in the proof of Theorem~\ref{ThmCD0Modes}.

\begin{rmk}
\label{RmkCDUGauge}
  Recall that besides the modified constraint propagation operator $\cP_{g,\gamma}=2\delta_g\sfG_g\circ\wt\delta_{g,\gamma}^*$, which controls the properties (in particular: absence of zero energy states) of the gauge potentials of putative (generalized) zero modes of $L_{g,\gamma}$, there is another 1-form wave operator, $2 D_g\Ups\circ\delta_g^*$, which we called the \emph{gauge potential wave operator}, which controls what gauge potentials satisfy the linearized gauge condition, and also, more generally, how to add to a given linearized Kerr solution a pure gauge term so as to obtain a solution of the gauge-fixed Einstein equation. A modification of the gauge condition thus affects the latter wave operator, but not the former. Concretely then, we may modified the linearized gauge condition to be
  \[
    \wt\delta_{g,\gamma}\sfG_g h = 0,\quad \wt\delta_{g,\gamma}:=(\wt\delta_{g,\gamma}^*)^*.
  \]
  Correspondingly, the modified gauge potential wave operator is
  \[
    2\wt\delta_{g,\gamma}\sfG_g\circ \delta_g^* = \cP_{g,\gamma}^*,
  \]
  that is, gauge and constraint damping modifications are formally dual to one another (but not on the level of function spaces, as we need to work with extendible spaces for both when studying modes of $L_{g_b,\gamma}$). Using the same type of calculation as in~\S\ref{SsCDU}, one can then show that $2\wt\delta_{g,\gamma}\sfG_g\delta_g^*$ does not have any modes in $\Im\sigma\geq 0$ (in particular, the zero energy nullspace is trivial) when $\fv>2$ and $\gamma>0$ is small. With the thus modified gauge, the space of generalized scalar $l=0$ zero modes becomes 1-dimensional, spanned by a linearized Schwarzschild solution plus a pure gauge term (noting that the obstruction in~\eqref{EqL0SchwGauge} is now absent). Ultimately, this leads to a reduction of the space of pure gauge solutions arising in Theorem~\ref{ThmIBaby}: it only consists of Lie derivatives along asymptotic translations, rotations, and Lorentz boosts. In this sense, it is generated entirely by the asymptotic symmetries of the spacetime at null infinity.
\end{rmk}

\section{Structure of the resolvent of the linearized modified gauge-fixed Einstein operator}
\label{SR}

We now use Theorem~\ref{ThmCD0Modes} (and the non-degenerate structure of $L_{g_b,\gamma}$ going into its proof) to show that its resolvent $\wh{L_{g_b,\gamma}}(\sigma)^{-1}$ exists for $b$ close to $b_0$ and $\sigma\in\C$, $\Im\sigma\geq 0$, $\sigma\neq 0$, see~\S\ref{SsRE}, in particular Proposition~\ref{PropRExist}. This utilizes arguments similar to (but more intricate, due to the more complicated generalized null space structure, than) those used in the proof of Lemma~\ref{LemmaCDU}. In~\S\ref{SsRS}, we give a precise description of the resolvent near $\sigma=0$ as the sum of a finite rank operator which is meromorphic in $\sigma$ with a double pole at $0$, and the `regular part' which is continuous down to $\sigma=0$. Subsequent sections refine this further by establishing higher regularity of the regular part.

We remark that the results in this section \emph{imply} the mode stability of slowly rotating Kerr black holes under metric perturbations; we stress that this is due to our embedding of the linear stability problem into an \emph{analytically non-degenerate framework}, obviating the need for arguments based separations of variables in the non-Schwarzschild black hole case.

We stress that from this point onwards, we only use \emph{structural} information on $L_{g_b,\gamma}$ and its zero energy behavior from the previous sections (rather than explicit expressions of (generalized) zero energy (dual) states): this is all one needs when using the general perturbation stable Fredholm framework developed by Vasy \cite{VasyLowEnergyLag,VasyLAPLag} for the study of resolvents on asymptotically conic spaces.

Define $\cd$ by~\eqref{EqCD1form} with $\fv>1$. Let
\begin{subequations}
\begin{equation}
\label{EqRsell}
  s>\tfrac52,\quad \ell\in(-\tfrac32,-\half).
\end{equation}
With $\gamma_0>0$ as in Proposition~\ref{PropCDU}, \emph{let us henceforth fix}
\begin{equation}
\label{EqRgamma}
  \gamma\in(0,\gamma_0).
\end{equation}
We then put
\begin{equation}
\label{EqRLb}
  L_b := L_{g_b,E},
  \quad E=E(g;\cd,\gamma,\gamma),
\end{equation}
\end{subequations}
where $g=g_b$, $|b-b_0|<C(\gamma)$ is a slowly rotating Kerr metric. We re-define
\[
  \cX_b^{s,\ell}(\sigma) := \bigl\{ h\in\Hbext^{s,\ell}(X;S^2\,\wt\Tsc{}^*X) \colon \wh{L_b}(\sigma)h\in\Hbext^{s-1,\ell+2}(X;S^2\,\wt\Tsc{}^*X) \bigr\}.
\]
Moreover, we recall the (generalized) zero modes $h_{b,s 0}$, $h_{b,s 1}(\scal)$, $h_{b,v 1}(\scal)$, $\hat h_{b,s 0}$, $\hat h_{b,s 1}(\scal)$ from Propositions~\ref{PropL0} and \ref{PropL0Lin}, and denote the (generalized) zero energy dual states of $L_b$ using the notation of Lemma~\ref{LemmaCD0Dual} by
\[
  h_{b,s 0}^*:=h_{b,s 0}^{\gamma*},\quad
  h_{b,s 1}^*(\scal):=h_{b,s 1}^{\gamma*}(\scal),\quad
  h_{b,v 1}^*(\vect):=h_{b,v 1}^{\gamma*}(\vect);
\]
we also write $\hat h_{b,s 0}^*:=\hat h_{b,s 0}^{\gamma*}$ etc.\ in the notation of Lemma~\ref{LemmaCD0GenDual}, and
\[
  \breve h_{b,s 0}:=\hat h_{b,s 0}-t_*h_{b,s 0},\quad
  \breve h_{b,s 0}^*:=\hat h_{b,s 0}^*-t_* h_{b,s 0}^*,
\]
etc., which is the `same' as in Definition~\ref{DefL0Breve}, except here we use the dual states for the \emph{modified} operator. This constitutes an abuse of notation, as these dual states are \emph{not} equal to the dual states of Proposition~\ref{PropL0}; however, \emph{we henceforth only work with the modified operator $L_b$}, thus there is no ambiguity in meaning.

\subsection{Existence of the resolvent; rough description near zero energy}
\label{SsRE}

The determination of the structure of the resolvent relies on perturbation arguments. As in~\S\ref{SsCDU}, we first perturb $L_b$ to a `reference operator' which is invertible near $\sigma=0$:

\begin{lemma}
\label{LemmaRPert0}
  There exist $V\in\Psi^{-\infty}(X^\circ;S^2 T^*X^\circ)$, with compactly supported Schwartz kernel, and a constant $C_1>0$ such that
  \[
    \check L_b(\sigma) := \wh{L_b}(\sigma) + V \colon \cX_b^{s,\ell}(\sigma)\to\Hbext^{s-1,\ell+2}
  \]
  is invertible for $\sigma\in\C$, $\Im\sigma\geq 0$, $|\sigma|<C_1$, and $|b-b_0|\leq C_1$. Moreover, $\check L_b(\sigma)^{-1}$ is continuous in $\sigma$ with values in $\cL_{\rm weak}(\Hbext^{s-1,\ell+2},\Hbext^{s,\ell})\cap\cL_{\rm op}(\Hbext^{s-1+\eps,\ell+2+\eps},\Hbext^{s-\eps,\ell-\eps})$, $\eps>0$.
\end{lemma}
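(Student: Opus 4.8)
The plan is to construct $V$ so as to cancel the $7$-dimensional zero energy kernel of $\wh{L_{g_b}}(0)$ together with the `hidden' growth encoded by the generalized kernel, exactly as the finite-rank perturbation in Lemma~\ref{LemmaCDUInv} cancelled the $1$-dimensional kernel of $\wh{\cP_{b_0,0}}(0)$. Concretely, I would first recall from Theorem~\ref{ThmCD0Modes} and the index $0$ property of $\wh{L_{g_b}}(0)$ that $\cK_{b_0}=\ker\wh{L_{g_{b_0}}}(0)$ and $\cK_{b_0}^{\gamma*}=\ker\wh{L_{g_{b_0}}}(0)^*\cap\Hbsupp^{-\infty,-1/2-}$ are each $7$-dimensional, spanned by the explicit (generalized) zero energy states and dual states of Proposition~\ref{PropL0} and Lemma~\ref{LemmaCD0Dual}. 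Choose a basis $e_1,\dots,e_7$ of $\cK_{b_0}$ and a basis $f_1^*,\dots,f_7^*$ of $\cK_{b_0}^{\gamma*}$, and pick $\phi_j\in\CIc(X^\circ;S^2 T^*X^\circ)$, $\psi_j\in\CIc(X^\circ;S^2 T^*X^\circ)$ with the biorthogonality $\la e_i,\psi_j^*\ra=\delta_{ij}$ (after identifying $\psi_j$ with a distribution via the fixed density) and $\la\phi_i,f_j^*\ra=\delta_{ij}$; this is possible because the pairings $\cK_{b_0}\times\Hbsupp^{-\infty,\ldots}\to\C$ and $\Hbext^{\ldots}\times\cK_{b_0}^{\gamma*}\to\C$ are non-degenerate. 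Then set $V=\sum_{j=1}^7 \la\,\cdot\,,\psi_j\ra\,\phi_j$, a smoothing operator with compactly supported Schwartz kernel.

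The next step is to verify that $\check L_{b_0}(0)=\wh{L_{g_{b_0}}}(0)+V$ is invertible on the relevant spaces. Injectivity: if $(\wh{L_{g_{b_0}}}(0)+V)h=0$ with $h\in\cX_{b_0}^{s,\ell}(0)$, then pairing with $f_i^*$ annihilates the $\wh{L_{g_{b_0}}}(0)h$ term (since $f_i^*$ kills the range) and yields $\la h,\psi_i\ra=0$ for all $i$, so $Vh=0$, hence $\wh{L_{g_{b_0}}}(0)h=0$, so $h\in\cK_{b_0}$; but then $0=\la h,\psi_i\ra=(\text{coefficient of }e_i)$ by biorthogonality, so $h=0$. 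Since $V$ is compact relative to the Fredholm index $0$ operator $\wh{L_{g_{b_0}}}(0)$ (the domain $\cX_{b_0}^{s,\ell}(0)$ is unchanged because $V$ is order $-\infty$ and compactly supported in the interior), $\check L_{b_0}(0)$ is Fredholm of index $0$, and injectivity gives invertibility. For $b$ near $b_0$ and $\sigma$ near $0$ with $\Im\sigma\ge0$, invertibility then follows from a standard contradiction argument using the \emph{uniform} semi-Fredholm estimates of Theorem~\ref{ThmOp}\eqref{ItOpLow} (valid uniformly for $|b-b_0|<\eps$, $|\sigma|\le C$ when $\ell\in(-\tfrac32,-\tfrac12)$, also uniformly for the fixed $\gamma$ by the remark following Proposition~\ref{PropOpNull}): a normalized sequence in the kernel along parameters converging to $(b_0,0)$ would have a weak limit in $\ker\check L_{b_0}(0)=\{0\}$, which is forced to be nonzero by the lower bound coming from the error term—a contradiction. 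This produces $C_1>0$ as claimed.

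The continuity statement for $\check L_b(\sigma)^{-1}$ is then obtained exactly as in Lemma~\ref{LemmaCDUInv}, following~\cite[\S2.7]{VasyMicroKerrdS}. For the weak operator topology: given $\sigma_j\to\sigma$ and $f_j\to f$ in $\Hbext^{s-1,\ell+2}$, the sequence $h_j=\check L_b(\sigma_j)^{-1}f_j$ is bounded in $\Hbext^{s,\ell}$ by the uniform estimate; any weak subsequential limit $h$ satisfies $\check L_b(\sigma)h=f$ (the coefficients of $\wh{L_b}(\sigma)$ converge, $V$ is fixed), hence $h=\check L_b(\sigma)^{-1}f$ is the unique limit, so $h_j\weakto h$. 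For the norm topology on $\cL_{\rm op}(\Hbext^{s-1+\eps,\ell+2+\eps},\Hbext^{s-\eps,\ell-\eps})$: a failure would give $\delta>0$, $\sigma_j\to\sigma$, and a bounded sequence $f_j\in\Hbext^{s-1+\eps,\ell+2+\eps}$ with $\|\check L_b(\sigma_j)^{-1}f_j-\check L_b(\sigma)^{-1}f_j\|_{\Hbext^{s-\eps,\ell-\eps}}\ge\delta$; passing to a subsequence with $f_j\weakto f$ and norm convergence in $\Hbext^{s-1,\ell+2}$, compactness of the inclusion $\Hbext^{s,\ell}\hookrightarrow\Hbext^{s-\eps,\ell-\eps}$ upgrades the weak convergence $\check L_b(\sigma_j)^{-1}f_j\weakto\check L_b(\sigma)^{-1}f$ to norm convergence in $\Hbext^{s-\eps,\ell-\eps}$, and likewise $\check L_b(\sigma)^{-1}f_j\to\check L_b(\sigma)^{-1}f$ there, contradicting the lower bound.

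The main obstacle I anticipate is purely bookkeeping rather than conceptual: one must check that adding the interior smoothing operator $V$ genuinely does not alter the domain $\cX_b^{s,\ell}(\sigma)$ (it does not, since $V\colon\Hbext^{s,\ell}\to\CIc(X^\circ)\subset\Hbext^{s-1,\ell+2}$), that the biorthogonality can be arranged simultaneously for the primal and dual sides at $b=b_0$ (straightforward, as the two pairings are non-degenerate on the respective finite-dimensional spaces), and—slightly more delicate—that one uses the \emph{uniform-in-}$(b,\gamma)$ estimates of Theorem~\ref{ThmOp} rather than just the pointwise Fredholm property, so that the contradiction argument near $(b_0,0)$ closes with a single constant $C_1$. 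None of these require new ideas beyond what was already deployed for $\cP_{b,\gamma}$ in~\S\ref{SsCDU}.
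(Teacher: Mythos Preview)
Your proposal is correct and takes essentially the same approach as the paper: you construct the same rank-$7$ smoothing operator $V$ (your $\phi_j,\psi_j$ are the paper's $h_j^\sharp,h_j^\flat$), with your direct injectivity argument being equivalent to the paper's block-matrix reduction to the invertibility of $(\la V h_i,h_j^*\ra)$, and the perturbation and continuity arguments identically follow Lemma~\ref{LemmaCDUInv}. One small quibble: your opening sentence about cancelling ``the `hidden' growth encoded by the generalized kernel'' is misleading---only the $7$-dimensional kernel and cokernel enter here, not the $11$-dimensional generalized kernel $\wh\cK_b$.
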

\begin{proof}
  It suffices to prove the invertibility for $b=b_0$, $\sigma=0$, since arguments as in the proof of Lemma~\ref{LemmaCDUInv} then imply the invertibility and continuous dependence for $(b,\sigma)$ close to $(b_0,0)$. In the splittings $\cX_{b_0}^{s,\ell}(0)=\cK^\perp\oplus\cK$ with $\cK=\ker\wh{L_{b_0}}(0)$ and $\Hbext^{s-1,l+2}=\cR\oplus\cR^\perp$ with $\cR=\ran\wh{L_{b_0}}(0)$, write
  \[
    \wh{L_{b_0}}(0) = \begin{pmatrix} L_{0 0} & 0 \\ 0 & 0 \end{pmatrix}.
  \]
  The main input for the perturbation theory is the fact that $\cK$ and $\cR^\perp$ have the same (finite) dimension, namely $7$. Identifying $\cK\cong\C^7$ by choosing a basis $h_1,\ldots,h_7$, and identifying $\cR^\perp\cong\C^7$ via $f\mapsto(\la f,h^*_j\ra)_{j=1,\ldots,7}$, where $h_1^*,\ldots,h_7^*$ is a basis of $\ker\wh{L_{b_0}}(0)^*\cap\Hbsupp^{-\infty,-3/2+}$, it suffices (by the same arguments as in the proof of Proposition~\ref{PropCD0}) to construct $V$ so that the $7\times 7$ matrix
  \begin{equation}
  \label{EqRPert0Mtx}
    (\la Vh_i,h_j^*\ra)_{1\leq i,j\leq 7}
  \end{equation}
  is invertible. To do this, select $h_1^\sharp,\ldots,h_7^\sharp\in\CIc(X^\circ;S^2 T^*X^\circ)$ such that $(\la h_i^\sharp,h_j^*\ra)=\delta_{i j}$; this is possible since the $h_i^*$ are linearly independent distributions. Likewise, the $h_i$ are linearly independent; thus, we can select $h_1^\flat,\ldots,h_7^\flat\in\CIc(X^\circ;S^2 T^*X^\circ)$ with $\la h_i,h_j^\flat\ra=\delta_{i j}$. We then set $V=\sum_{k=1}^7 h_k^\sharp\la-,h_k^\flat\ra$.
\end{proof}

To set up the low energy spectral theory, we define the spaces
\begin{equation}
\label{EqRZeroModes}
\begin{aligned}
  \cK_{b,s 0} &:= \C h_{b,s 0}, && \quad & \cK_{b,s 0}^* & := \C h_{b,s 0}^*, \\
  \cK_{b,s 1} &:= h_{b,s 1}(\scal_1), && \quad & \cK_{b,s 1}^* & := h_{b,s 0}^*(\scal_1), \\
  \cK_{b,s} &:= \cK_{b,s 0}\oplus\cK_{b,s 1},&& \quad & \cK_{b,s}^* &:= \cK_{b,s 0}^* \oplus \cK_{b,s 1}^*, \\
  \cK_{b,v} &:= h_{b,v 1}(\vect_1), & && \cK_{b,v}^* & := h_{b,v 1}^*(\vect_1), \\
  \cK_b &:= \cK_{b,s}\oplus\cK_{b,v}, & && \cK_b^*&:=\cK_{b,s}^*\oplus\cK_{b,v}^*.
\end{aligned}
\end{equation}
The reason for combining the scalar $l=0$ and scalar $l=1$ spaces is that $\wh{L_b}(\sigma)$ will be more singular on both of them due to the existence of linearly growing solutions with leading terms in $\cK_{b,s}$. For $\check L_b(\sigma)$ as in Lemma~\ref{LemmaRPert0}, we set
\begin{equation}
\label{EqRwtcK}
\begin{split}
  \wt\cK_{b,s j} &:= \check L_b(0)\cK_{b,s j}\ (j=0,1), \\
  \wt\cK_{b,s} &:= \check L_b(0)\cK_{b,s}, \\
  \wt\cK_{b,v} &:= \check L_b(0)\cK_{b,v}, \\
  \wt\cK_b &:= \wt\cK_{b,s}\oplus\wt\cK_{b,v}.
\end{split}
\end{equation}
By definition of $\check L_b(0)$, these are subspaces of $\CIc(X^\circ;S^2 T^*X^\circ)$ which depend continuously on $b$. We fix a complementary subspace $\wt\cK^\perp\subset\Hbext^{s-1,\ell+2}$ of $\wt\cK$.

We decompose the target space $\Hbext^{s-1,\ell+2}$ into the range of $\wh{L_b}(0)$ and a complement. To do this in a continuous (in $b$) manner, we prove a slight generalization of the procedure used in the proof of Lemma~\ref{LemmaRPert0}:
\begin{lemma}
\label{LemmaRProj}
  There exists a linear projection map $\Pi_b^\perp\colon\Hbext^{s-1,\ell+2}\to\Hbext^{s-1,\ell+2}$ which is of rank $7$, depends continuously on $b$ near $b_0$ in the norm topology, and satisfies
  \[
    \la(I-\Pi_b^\perp)f,h^*\ra=0\quad\forall\,h^*\in\cK_b^*.
  \]
  The Schwartz kernel of $\Pi_b^\perp$ can be chosen to be independent of $s,\ell$ satisfying~\eqref{EqRsell}.
\end{lemma}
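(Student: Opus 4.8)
The plan is to mimic the construction of $V$ in the proof of Lemma~\ref{LemmaRPert0}, but now building a finite rank projection rather than a perturbation of the operator. First I would fix $s>\tfrac52$ and $\ell\in(-\tfrac32,-\half)$ as in~\eqref{EqRsell}, and recall from Lemma~\ref{LemmaCD0Dual} that the space $\cK_b^*=\ker\wh{L_b}(0)^*\cap\Hbsupp^{-\infty,-1/2-}$ is $7$-dimensional and depends continuously on $b$ near $b_0$; moreover, by Theorem~\ref{ThmOp}, the operator $\wh{L_b}(0)$ is Fredholm of index $0$, and $\cK_b^*$ annihilates its range on $\cX_b^{s,\ell}(0)$. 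The continuous families $h_{b,s 0}^*, h_{b,s 1}^*(\scal), h_{b,v 1}^*(\vect)$ provide a basis $h_{b,1}^*,\dots,h_{b,7}^*$ of $\cK_b^*$, linear in the spherical harmonic arguments, which I fix once and for all.

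The core of the argument is then a dual-basis construction. Since the $h_{b,j}^*$ are linearly independent distributions in $\Hbsupp^{-\infty,-1/2-}\subset\cC^{-\infty}(X^\circ)$, I can select $g_1,\dots,g_7\in\CIdotc(X^\circ;S^2 T^*M^\circ)$ with $\la g_i,h_{b_0,j}^*\ra=\delta_{i j}$; by continuity of the $h_{b,j}^*$ and a determinant argument, the matrix $A_b:=(\la g_i,h_{b,j}^*\ra)_{1\le i,j\le 7}$ remains invertible for $b$ near $b_0$, with $A_b^{-1}$ continuous in $b$. Setting $g_i^b:=\sum_{k=1}^7 (A_b^{-1})_{k i}g_k$, one obtains a dual basis with $\la g_i^b,h_{b,j}^*\ra=\delta_{i j}$, depending continuously on $b$. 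I then define
\[
  \Pi_b^\perp f := \sum_{j=1}^7 \la f,h_{b,j}^*\ra\, g_j^b .
\]
This is manifestly a bounded operator on $\Hbext^{s-1,\ell+2}$ of rank at most $7$, and in fact exactly $7$ since the $g_j^b$ are linearly independent (being a nondegenerate linear combination of the fixed linearly independent $g_k$). Because $g_j^b\in\CIdotc(X^\circ)$ has a Schwartz kernel independent of $s,\ell$, and the pairing with $h_{b,j}^*\in\Hbsupp^{-\infty,-1/2-}$ makes sense for any $f\in\Hbext^{s-1,\ell+2}$ with $\ell$ as in~\eqref{EqRsell} (indeed $h_{b,j}^*$ pairs with any extendible conormal distribution of the relevant weight range), the kernel of $\Pi_b^\perp$ can be taken independent of $s,\ell$. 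The continuity in $b$ in the norm topology follows from the continuity of $f\mapsto\la f,h_{b,j}^*\ra$ (uniform since the $h_{b,j}^*$ vary continuously in $\Hbsupp^{-\infty,-1/2-}$, hence in the dual of $\Hbext^{s-1,\ell+2}$) together with the norm continuity of $b\mapsto g_j^b$.

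It remains to check the two defining properties: that $\Pi_b^\perp$ is idempotent and that $\la(I-\Pi_b^\perp)f,h^*\ra=0$ for all $h^*\in\cK_b^*$. For the latter it suffices to check it on the basis: $\la(I-\Pi_b^\perp)f,h_{b,i}^*\ra=\la f,h_{b,i}^*\ra-\sum_j\la f,h_{b,j}^*\ra\la g_j^b,h_{b,i}^*\ra=\la f,h_{b,i}^*\ra-\la f,h_{b,i}^*\ra=0$ using $\la g_j^b,h_{b,i}^*\ra=\delta_{i j}$. Idempotency follows from the same duality relation: $(\Pi_b^\perp)^2 f=\sum_i\la\Pi_b^\perp f,h_{b,i}^*\ra g_i^b=\sum_i\bigl(\sum_j\la f,h_{b,j}^*\ra\la g_j^b,h_{b,i}^*\ra\bigr)g_i^b=\sum_i\la f,h_{b,i}^*\ra g_i^b=\Pi_b^\perp f$. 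I do not anticipate a serious obstacle here; the only point requiring mild care is the uniformity of the pairing estimate $|\la f,h_{b,j}^*\ra|\lesssim\|f\|_{\Hbext^{s-1,\ell+2}}$ in $b$, which is where one uses that the dual pairing between $\Hbsupp^{-\infty,-1/2-}$ and $\Hbext^{\infty,-3/2-}$ (and more generally the weight range in~\eqref{EqRsell}) is well-defined and that $h_{b,j}^*$ lies in a fixed such space with continuous dependence on $b$ — this is exactly the content established in Lemmas~\ref{LemmaCD0Dual} and~\ref{LemmaCD0GenDual}.
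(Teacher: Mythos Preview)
Your proof is correct and is essentially the paper's own argument: fix a continuous basis $h_{b,j}^*$ of $\cK_b^*$, choose test sections $g_i\in\CIc(X^\circ)$ making the Gram matrix $A_b=(\la g_i,h_{b,j}^*\ra)$ invertible at $b_0$ (hence nearby by continuity), and define $\Pi_b^\perp$ as the rank-$7$ operator built from these data and the inverse Gram matrix. One tiny index slip: with your convention for $A_b$ you need $g_i^b=\sum_k(A_b^{-1})_{i k}g_k$ rather than $(A_b^{-1})_{k i}$ for the duality $\la g_i^b,h_{b,j}^*\ra=\delta_{i j}$ to come out right, but this does not affect the argument.
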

\begin{proof}
  Let $h_{b,1}^*,\ldots,h_{b,7}^*\in\Hbsupp^{1-s,-1/2-}$ denote a basis of $\cK_b^*$ which depends continuously on $b$, and fix $h_1^\sharp,\ldots,h_7^\sharp\in\CIc(X^\circ)$ for which the matrix $A_b=(A_{b,i j})=(\la h_i^\sharp,h_{b,j}^*\ra)$ is invertible for $b=b_0$, hence for nearby $b$. Therefore, there exists $(p_b^{i j})$, continuous in $b$, such that $\sum_{i=1}^7 p_b^{i j}A_{b,i k}=\delta_{j k}$. We then put $\Pi_b^\perp=p^{i j}h_i^\sharp\la-,h_{b,j}^*\ra$, which satisfies all requirements.
\end{proof}

Defining the complementary projection
\[
  \Pi_b := I-\Pi_b^\perp \colon \Hbext^{s-1,\ell+2} \to \ran\Pi_b=\ann\cK_b^*=\ran_{\cX_b^{s,\ell}(0)}\wh{L_b}(0),
\]
we then split domain and target according to
\begin{equation}
\begin{aligned}
\label{EqRSplit}
  &\text{domain:}\ && \Hbext^{s-1,\ell+2} \cong \wt\cK^\perp \oplus \wt\cK_{b,s} \oplus \wt\cK_{b,v}, \\
  &\text{target:}\ && \Hbext^{s-1,\ell+2} \cong \ran\Pi_b \oplus \cR_s^\perp \oplus \cR_v^\perp,
\end{aligned}
\end{equation}
where $\cR_s^\perp$, resp.\ $\cR_v^\perp$, is a space of dimension $\dim\cK_{b_0,s}=4$, resp.\ $\dim\cK_{b_0,v}=3$, chosen such that the $L^2$-pairing $\cR_s^\perp\times\cK_{b,s}^*\to\C$, resp.\ $\cR_v^\perp\times\cK_{b,v}^*\to\C$, is non-degenerate. (We can choose $\cR^\perp_{s/v}$ to be a subspace of $\CIc(X^\circ;S^2 T^*X^\circ)$.) Via these pairings, we can identify
\[
  \cR_s^\perp \cong (\cK_{b,s}^*)^*,\quad
  \cR_v^\perp \cong (\cK_{b,v}^*)^*;
\]
we shall use these identifications implicitly below.

We now prove that the resolvent at $\sigma\neq 0$, $\Im\sigma\geq 0$, exists; in the course of the proof, we will obtain a rough description of its structure near $\sigma=0$, which we will successively improve later on.

\begin{prop}
\label{PropRExist}
  Fix $s,\ell,\gamma$ as in~\eqref{EqRsell}--\eqref{EqRgamma}. For small $C_0>0$ and Kerr parameters $b\in\R^4$, $|b-b_0|<C_0$, the operator $\wh{L_b}(\sigma)\colon\cX^{s,\ell}_b(\sigma)\to\Hbext^{s-1,\ell+2}$ is invertible for $\sigma\in\C$, $\Im\sigma\geq 0$, $\sigma\neq 0$.
\end{prop}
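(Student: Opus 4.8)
The plan is to split the closed upper half plane $\{\Im\sigma\geq 0\}$ into three regimes — large $|\sigma|$, a compact annulus bounded away from $0$ and $\infty$, and a punctured neighborhood of $\sigma=0$ — and treat each separately, the last being the substantive one.

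For $|\Re\sigma|\geq C_1$ and $\Im\sigma\in[0,C]$, the high energy estimate~\eqref{EqOpHigh} of Theorem~\ref{ThmOp} (which, as noted after that theorem, remains valid for $\wh{L_b}(\sigma)=\wh{L_{g_b,E}}(\sigma)$ with our fixed small $\gamma$) gives invertibility of $\wh{L_b}(\sigma)$ directly, uniformly for $|b-b_0|$ small. For $C_0\leq|\sigma|\leq C_1$, $\Im\sigma\geq 0$: Corollary~\ref{CorCDUModes} asserts the invertibility of $\wh{L_{g_{b_0},\gamma}}(\sigma)$ on this set, and a compactness argument combined with the perturbation theory of~\cite[\S2.7]{VasyMicroKerrdS} (exactly as in the proof of Theorem~\ref{Thm0} and Proposition~\ref{PropCDU}) upgrades this to invertibility of $\wh{L_b}(\sigma)$ for $(b,\sigma)$ in a neighborhood of $\{b_0\}\times\{C_0\leq|\sigma|\leq C_1\}$, hence for all such $\sigma$ once $|b-b_0|$ is small. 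Since $\wh{L_b}(\sigma)$ is Fredholm of index $0$ for all $\sigma$ in the closed upper half plane by Theorem~\ref{ThmOp}, in each case it suffices to prove injectivity, or equivalently to exhibit a left inverse.

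The core of the argument is the regime $0<|\sigma|<C_0$, for which I would follow the strategy of the proof of Lemma~\ref{LemmaCDU}. Passing to the reference operator $\check L_b(\sigma)=\wh{L_b}(\sigma)+V$ of Lemma~\ref{LemmaRPert0}, which is invertible near $\sigma=0$, the invertibility of $\wh{L_b}(\sigma)$ is equivalent to that of the finite-rank perturbation of the identity $\wh{L_b}(\sigma)\check L_b(\sigma)^{-1}=I-V\check L_b(\sigma)^{-1}$ acting on the fixed space $\Hbext^{s-1,\ell+2}$. In the splittings~\eqref{EqRSplit}, this operator has a $\wt\cK^\perp$–$\ran\Pi_b$ diagonal block which is invertible for $(b,\sigma)$ near $(b_0,0)$ — proved by a contradiction argument using the uniform Fredholm estimates for $\wh{L_b}(\sigma)$, just as for $\wt\cP_{0 0}$ in Lemma~\ref{LemmaCDU} — and off-diagonal blocks of size $o(1)$. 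Thus invertibility reduces to that of the $7$-dimensional Schur complement mapping $\wt\cK_{b,s}\oplus\wt\cK_{b,v}\to\cR_s^\perp\oplus\cR_v^\perp$. On the vector $l=1$ block this is, to leading order, $\sigma$ times the non-degenerate pairing underlying~\eqref{EqL0Linv1Nondeg} (the constraint-damping cross-terms contribute only at order $\cO(\gamma\sigma)$, negligible for our fixed small $\gamma$), hence a simple pole, invertible for $0<|\sigma|<C_0$. On the scalar $l\leq 1$ block the pole is of order two, because of the linearly growing generalized zero modes $\hat h_{b,s 0}$, $\hat h_{b,s 1}(\scal)$ and the Jordan-chain structure from Theorem~\ref{ThmCD0Modes}; after clearing poles, the effective operator on this block has an expansion in $\sigma$ whose relevant top-order coefficient in the scalar $l=0$ entry is a nonzero multiple of the pairing~\eqref{EqRModes0s0Pair} — this is precisely where constraint damping is used — and is therefore invertible in a punctured disk $0<|\sigma|<C_0$. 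Combining the three regimes finishes the proof.

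The main obstacle is the precise Taylor expansion, to second order in $\sigma$ at $\sigma=0$, of the scalar-sector Schur complement. As in Lemma~\ref{LemmaCDU} this requires differentiating $\check L_b(\sigma)^{-1}$ applied to the compactly supported generators of $\wt\cK_{b,s}$: the formal resolvent identity is admissible because the normal operator $2\sigma\rho(\rho D_\rho+i)$ of $\wh{L_b}(0)-\wh{L_b}(\sigma)$, cf.~\eqref{EqOpLinFT2}, kills the leading $\rho\CI$ term of the relevant zero modes modulo two orders of decay, but iterating it to second order forces one to track that the successive correction terms lie in $\Hbext^{\infty,1/2-}$ and then $\Hbext^{\infty,3/2-}$ — which is exactly the content of the leading-order asymptotics in Lemmas~\ref{LemmaL0Lead} and~\ref{LemmaCD0GenDual}, including the vanishing of the $\scal_1$-average over $\Sph^2$ that prevents logarithmic terms from appearing. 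Matching these expansions against the (generalized) zero modes and dual states, and checking that the resulting effective operator is the one dictated by the Jordan structure — a rank-$2$-type block in the scalar, a rank-$1$ block in the vector sector — is the technical heart, and is a direct but lengthy adaptation of the computations already carried out for $\cP_{b,\gamma}$ in~\S\ref{SsCDU}.
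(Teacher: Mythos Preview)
Your proposal is correct and follows essentially the same approach as the paper: reduce to a punctured neighborhood of $\sigma=0$, pass to $\wh{L_b}(\sigma)\check L_b(\sigma)^{-1}$ on a fixed space, establish uniform invertibility of the $(0,0)$ block, and then compute the Taylor expansion of the remaining $7\times 7$ block — simple zero on $\cK_{b,v}$ via the pairing~\eqref{EqL0Linv1Nondeg}, double zero on $\cK_{b,s}$ with non-degenerate leading coefficient given by the pairing $\ell_{b,s}$ (combining~\eqref{EqL0Qus1Pair} and~\eqref{EqRModes0s0Pair}), the latter being where constraint damping enters. The paper carries out these expansions in somewhat more detail (equations~\eqref{EqRExpKer}--\eqref{EqRInv2}), in particular verifying the cross-block vanishing $\pa_\sigma L_{1 2}=\pa_\sigma L_{2 1}=0$ which ensures the scalar and vector Schur complements decouple to leading order, but your outline captures all the essential ingredients.
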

\begin{proof}
  This is the content of Corollary~\ref{CorCDUModes} when $b=b_0$. The key facts we will use in the proof for $b$ near $b_0$ when $\sigma$ is near $0$ are:
  \begin{enumerate}
  \item the zero energy nullspace $\cK_b$ is $7$-dimensional, and the generalized zero energy nullspace $\wh\cK_b$ is $11$-dimensional, with both depending continuously on $b$ in the $\Poly(t_*)\Hbext^{\infty,-1/2-}$ topology, see Theorem~\ref{ThmCD0Modes}; similarly for the spaces $\cK_b^*$ and $\wh\cK_b^*$ of (generalized) dual states at zero energy in view of Lemmas~\ref{LemmaCD0Dual} and \ref{LemmaCD0GenDual};
  \item suitable pairings, such as~\eqref{EqL0Linv1Nondeg}, between (generalized) zero energy states and dual states are non-degenerate for $b=b_0$, which persists for $b$ near $b_0$; this was already exploited in the proof of Theorem~\ref{ThmCD0Modes}.
  \end{enumerate}
  We recall that such pairings are closely related to properties of the Taylor expansion of $\wh{L_b}(\sigma)$ at $\sigma=0$, as was already exploited in the proof of mode stability for the modified constraint propagation operator in Proposition~\ref{PropCDU}, see in particular equations \eqref{EqCDU11Exp}--\eqref{EqCDU11Exp2}.

  Concretely then, we shall determine the structure of the operator
  \[
    \wh{L_b}(\sigma)\check L_b(\sigma)^{-1}\colon\Hbext^{s-1,\ell+2}\to\Hbext^{s-1,\ell+2}
  \]
  when $(b,\sigma)$, $\Im\sigma\geq 0$, lies in a neighborhood of $(b_0,0)$, and prove its invertibility for non-zero $\sigma$. Thus, there is a \emph{fixed} constant $\delta>0$ such that \emph{for all} Kerr parameters $b$ close to $b_0$, $\wh{L_b}(\sigma)$ is invertible for $0<|\sigma|<\delta$, $\Im\sigma\geq 0$. Given such $\delta>0$, and using high energy estimates as well as perturbative arguments (starting with the mode stability of $L_{b_0}$) in compact subsets of $\{\Im\sigma\geq 0,\,\sigma\neq 0\}$ as in the proof of Proposition~\ref{PropCDU}, the operator $\wh{L_b}(\sigma)$ is invertible for $|\sigma|\geq\delta>0$ when $|b-b_0|$ is sufficiently small, proving the proposition.

  In the splittings~\eqref{EqRSplit} of $\Hbext^{s-1,\ell+2}$, we write
  \begin{equation}
  \label{EqRMtx}
    \wh{L_b}(\sigma)\check L_b(\sigma)^{-1} = \begin{pmatrix} L_{0 0} & L_{0 1} & L_{0 2} \\ L_{1 0} & L_{1 1} & L_{1 2} \\ L_{2 0} & L_{2 1} & L_{2 2} \end{pmatrix},\quad
    L_{i j}=L_{i j}(b,\sigma);
  \end{equation}
  for instance, $L_{0 0}(b,\sigma):=\Pi_b\wh{L_b}(\sigma)\check L_b(\sigma)|_{\wt\cK^\perp}$. Since the range of $\wh{L_b}(\sigma)$ is annihilated by $\cK_b^*$, we have $L_{1 0}(b,0)=L_{2 0}(b,0)=0$; likewise, $\wh{L_b}(\sigma)|_{\cK_b}\equiv 0$ implies that
  \[
    L_{\ker} := \begin{pmatrix} L_{0 1} & L_{0 2} \\ L_{1 1} & L_{1 2} \\ L_{2 1} & L_{2 2} \end{pmatrix}
  \]
  satisfies $L_{\ker}(b,0)=0$. Lastly, $L_{0 0}(b_0,0)$ is invertible.

  \pfstep{Uniform invertibility of $(0,0)$ entry.} The first step of our analysis of $\wh{L_b}(\sigma)\check L_b(\sigma)^{-1}$ is the analogue of~\eqref{EqCDU00} in the present setting (and proved in the same manner): there exists a uniform constant $C<\infty$ such that for $(b,\sigma)$ near $(b_0,0)$,
  \begin{equation}
  \label{EqR00}
    \|L_{0 0}(b,\sigma)^{-1} \|_{\cR_b\to\wt\cK^\perp} \leq C.
  \end{equation}

  \pfstep{Differentiability of $L_{i j}$, $(i,j)\neq(0,0)$.} The next step is the analogue of the differentiability of~\eqref{EqCDU1}, namely the differentiability of $L_{\ker}(b,\sigma)\wt h$ at $\sigma=0$ for $\wt h=\check L_b(0)h$, $h\in\cK_b$, with uniform control of the error term of the Taylor expansion. The key is that
  \begin{equation}
  \label{EqRReParen}
    \bigl(\check L_b(\sigma)^{-1}-\check L_b(0)^{-1}\bigr)\wt h = \check L_b(\sigma)^{-1}\bigl(\check L_b(0)-\check L_b(\sigma)\bigr)h,
  \end{equation}
  as follows from a regularization argument as in~\cite[\S4]{VasyLowEnergyLag}. But
  \begin{equation}
  \label{EqRDiff}
    \check L_b(0)-\check L_b(\sigma) = \wh{L_b}(0)-\wh{L_b}(\sigma) = -\sigma\pa_\sigma\wh{L_b}(0) - \half\sigma^2\pa_\sigma^2\wh{L_b}(0),
  \end{equation}
  with $\pa_\sigma^2\wh{L_b}(0)\in\rho^2\CI(X;\End(S^2\,\wt{\Tsc^*}X))$ decaying quadratically in $r$. Thus, using the continuity properties of $\check L_b(\sigma)^{-1}$ proved in Lemma~\ref{LemmaRPert0}, we conclude that
  \begin{align}
    \wh{L_b}(\sigma)\check L_b(\sigma)^{-1}(\check L_b(0)h) &=  (I-V\check L_b(\sigma)^{-1})(\check L_b(0)h) \nonumber\\
  \label{EqRExpKer}
      &=\wh{L_b}(0)h - V\check L_b(\sigma)^{-1}(\wh{L_b}(0)-\wh{L_b}(\sigma))h \\
      &= \sigma V\check L_b(\sigma)^{-1}\pa_\sigma\wh{L_b}(0)h + \cO(|\sigma|^2)h, \nonumber
  \end{align}
  where the error term (which in fact maps into $\CIc$) is measured using the operator norm on $\cL(\cK_b,\Hbext^{s-1,\ell+2})$. Now note that $\pa_\sigma\wh{L_b}(0)h\in\Hbext^{\infty,3/2-}$ since $\pa_\sigma\wh{L_b}(0)\in\rho\Diffb^1$, and $h\in\cK_b\subset\Hbext^{\infty,1/2-}$ is quadratically decaying. Therefore,
  \begin{equation}
  \label{EqRKerCont}
    \check L_b(\sigma)^{-1}\pa_\sigma\wh{L_b}(0)h\in\Hbext^{\infty,-1/2-}
  \end{equation}
  is continuous \emph{down to $\sigma=0$}, and hence
  \begin{equation}
  \label{EqRKerCont2}
    \wh{L_b}(\sigma)\check L_b(\sigma)^{-1}(\check L_b(0)h) = \sigma V\check L_b(0)^{-1}\pa_\sigma\wh{L_b}(0)h + o(|\sigma|)h,
  \end{equation}
  proving the desired differentiability. (The $o(|\sigma|)$ remainder here, as well as in subsequent calculations, is \emph{uniform} in $b$, and is in fact uniformly (in $b$) bounded in norm by $C|\sigma|^{1+\alpha}$ for some $\alpha>0$ and for some uniform constant $C>0$; this follows from the fact that $\check L_b(\sigma)$ is in fact H\"older-$\alpha$ regular at $\sigma=0$, with uniform H\"older constant, when one strengthens the domain or relaxes the target space, see Proposition~\ref{PropRegCheckL} below.)
  
  A crucial observation for subsequent arguments is that the continuity of~\eqref{EqRKerCont} holds provided merely $h\in\rho\CI+\Hbext^{\infty,1/2-}$, i.e.\ an $r^{-1}$ leading term is acceptable too; this relies on the fact that the normal operator of $\pa_\sigma\wh{L_b}(0)$ is $-2\rho(\rho D_\rho+i)$ by Lemma~\ref{LemmaOpLinFT}, which maps $\rho\CI\to\rho^3\CI$, i.e.\ gains one more order of decay than a priori expected from an element of $\rho\Diffb$. (This was already exploited in the proof of Lemma~\ref{LemmaCDU} around equation~\eqref{EqCDU1diff}.)

  Similar arguments give the differentiability of $L_{1 0}\oplus L_{2 0}$ at $\sigma=0$. Indeed, for $f\in\wt\cK^\perp$ and $h^*\in\cK_b^*$, we need to compute
  \[
    \la \wh{L_b}(\sigma)\check L_b(\sigma)^{-1}f, h^*\ra = \la f, (\check L_b(\sigma)^{-1})^*\wh{L_b}(\sigma)^*h^* \ra = \la f, h^*\ra - \la f, (\check L_b(\sigma)^{-1})^*V^* h^*\ra,
  \]
  where we can rewrite the second term by means of
  \begin{equation}
  \label{EqRKerAdj}
    (\check L_b(\sigma)^{-1})^*V^* = (\check L_b(0)^{-1})^*V^* + \bigl((\check L_b(\sigma)^{-1})^*-(\check L_b(0)^{-1})^*\bigr)\check L_b(0)^*\circ(\check L_b(0)^{-1})^*V^*.
  \end{equation}
  Since $V^*$ has compactly supported (in $(X^\circ)^2$) smooth Schwartz kernel, we have
  \begin{equation}
  \label{EqRLbVstar}
    (\check L_b(0)^{-1})^*V^*=(\check L_b(0)^*)^{-1}V^*\colon \sD'(X^\circ) \to \rho\CI+\Hbsupp^{-\infty,1/2-}(X).
  \end{equation}
  Therefore, we can rewrite the second term in~\eqref{EqRKerAdj} as
  \[
    (\check L_b(\sigma)^{-1})^*\bigl(\check L_b(0)^*-\check L_b(\sigma)^*\bigr) \circ (\check L_b(0)^{-1})^*V^*;
  \]
  by Taylor expansion as in~\eqref{EqRDiff} we conclude that
  \[
    (\check L_b(\sigma)^{-1})^*V^*h^* = (\check L_b(0)^{-1})^*V^*h^* - \sigma(\check L_b(0)^{-1})^*\pa_\sigma\wh{L_b}(0)^*(\check L_b(0)^{-1})^*V^*h^*+o(|\sigma|)h^*.
  \]
  The error term here is measured in the norm topology on $\cL(\cK_b^*,\Hbsupp^{-s+1,-\ell-2})$.

  \pfstep{Coefficients of $\sigma$.} We next compute the leading coefficient of some of the $L_{i j}$. For $j=1,2$, we rewrite~\eqref{EqRKerCont2} for $h\in\cK_b$ using $V=\check L_b(0)-\wh{L_b}(0)$ as
  \[
    \wh{L_b}(\sigma)\check L_b(\sigma)^{-1}\check L_b(0)h = \sigma(I-\wh{L_b}(0)\check L_b(0)^{-1})\pa_\sigma\wh{L_b}(0) h + o(|\sigma|).
  \]
  Pairing this against an element $h^*\in\cK_b^*$, the coefficient of $\sigma$ is
  \begin{equation}
  \label{EqRLowerCoeff}
    \la\pa_\sigma\wh{L_b}(0)h,h^*\ra - \la\check L_b(0)^{-1}\pa_\sigma\wh{L_b}(0)h,\wh{L_b}(0)^*h^*\ra.
  \end{equation}

  Consider this first for $h=h_{b,v 1}(\vect)\in\cK_{b,v}$, $h^*=h_{b,v 1}^*(\vect')\in\cK_{b,v}^*$, $\vect,\vect'\in\vect_1$. Then the second summand in~\eqref{EqRLowerCoeff} vanishes, and the first summand gives a non-degenerate pairing on $\cK_{b,v}\times\cK_{b,v}^*\cong\vect_1\times\vect_1$ by continuity from~\eqref{EqL0Linv1Nondeg}. Thus, $\pa_\sigma L_{2 2}(b,0)$ is invertible, and so
  \begin{equation}
  \label{EqR22Inv}
    \sigma^{-1}L_{2 2}(b,\sigma)\ \text{is invertible for $(b,\sigma)$ near $(b_0,0)$}.
  \end{equation}

  Taking $h\in\cK_{b,v}$ still, but now $h^*=h_{b,s 0}^*\in\cK_{b,s}^*$, the second summand in~\eqref{EqRLowerCoeff} still vanishes, and now the first summand does, too; likewise for $h^*=h_{b,s 1}^*(\scal)$. Indeed,
  \begin{equation}
  \label{EqRpasigma}
    \pa_\sigma\wh{L_b}(0)^*h_{b,s 0}^* = -i[L_b,t_*]h_{b,s 0}^* = -i L_b(\hat h_{b,s 0}^*-\breve h_{b,s 0}^*) = i\wh{L_b}(0)^*\breve h_{b,s 0}^*,
  \end{equation}
  which gives
  \begin{equation}
  \label{EqR12Vanish}
    \la\pa_\sigma\wh{L_b}(0)h,h^*\ra = \la h,\pa_\sigma\wh{L_b}(0)^*h^*\ra = -i\la h,\wh{L_b}(0)^*\breve h_{b,s 0}^*\ra = -i\la\wh{L_b}(0)h,\breve h_{b,s 0}^*\ra = 0.
  \end{equation}
  
  For $h\in\cK_{b,s}$ and $h^*\in\cK_b^*$, the pairing~\eqref{EqRLowerCoeff} vanishes, too, since
  \[
    \la\pa_\sigma\wh{L_b}(0)h_{b,s 0},h^*\ra = i \la\wh{L_b}(0)\breve h_{b,s 0},h^*\ra = i\la\breve h_{b,s 0},\wh{L_b}(0)^*h^*\ra = 0.
  \]
  We have thus proved that
  \begin{equation}
  \label{EqR111221Deg}
    \pa_\sigma L_{1 1}(b,0) = \pa_\sigma L_{1 2}(b,0) = \pa_\sigma L_{2 1}(b,0) = 0.
  \end{equation}

  For $h\in\cK_{b,s}$, the conclusions for $L_{1 1}$ and $L_{2 1}$ imply $\Pi_b^\perp\pa_\sigma|_{\sigma=0}(\wh{L_b}(\sigma)\check L_b(\sigma)^{-1}\check L_b(0)h)=0$; therefore, on $\cK_{b,s}$, we have $\pa_\sigma L_{0 1}(b,0)=\pa_\sigma(\wh{L_b}(\sigma)\check L_b(\sigma)^{-1})|_{\sigma=0}$, so
  \begin{equation}
  \label{EqR01Lead}
  \begin{split}
    \pa_\sigma L_{0 1}(b,0)(\check L_b(0)h)|_{\sigma=0} &= i V\check L_b(0)^{-1}\wh{L_b}(0)\breve h \\
     &= i(V-V\check L_b(0)^{-1}V)\breve h,\quad \text{where}\ L_b(t_*h+\breve h)=0.
  \end{split}
  \end{equation}
  (Note that the right hand side only depends on $\breve h\bmod\cK_b$.) Similarly, using
  \begin{equation}
  \label{EqRcheckLVstar}
    (\check L_b(0)^{-1})^*V^*h^*=h^*-(\check L_b(0)^{-1})^*\wh{L_b}(0)^*h^*=h^*,\quad h^*\in\cK_b^*,
  \end{equation}
  we have, for $f\in\wt\cK^\perp$ and $h^*\in\cK_{b,s}^*$,
  \begin{equation}
  \label{EqR10Lead}
  \begin{split}
    \la\pa_\sigma L_{1 0}(b,0)f,h^*\ra|_{\sigma=0} &= \la f,(\check L_b(0)^{-1})^*\pa_\sigma\wh{L_b}(0)^*(\check L_b(0)^{-1})^*V^*h^*\ra \\
      &= -i\la f,(I-(\check L_b(0)^{-1})^*V^*)\breve h^*\ra,\quad \text{where}\ L_b^*(t_*h^*+\breve h^*)=0.
  \end{split}
  \end{equation}
  (The right hand side only depends on $\breve h^*\bmod\cK_b^*$ by~\eqref{EqRcheckLVstar}.)
  
  \pfstep{Leading order term of $L_{1 1}$.} In order to capture $\wh{L_b}(\sigma)\check L_b(\sigma)^{-1}$ in a non-degenerate manner, we must compute more terms in the Taylor expansion of $L_{1 1}$ at $\sigma=0$. Consider thus $h\in\cK_{b,s}$ and $h^*\in\cK_{b,s}^*$, then, in view of~\eqref{EqRExpKer}, \eqref{EqR111221Deg}, and~\eqref{EqRcheckLVstar},
  \begin{align}
    &\la \wh{L_b}(\sigma)\check L_b(\sigma)^{-1}(\check L_b(0)h),h^*\ra \nonumber\\
  \label{EqRbottomCalc}
    &\quad = \la V\check L_b(\sigma)^{-1}(\sigma\pa_\sigma\wh{L_b}(0)+\tfrac{\sigma^2}{2}\pa_\sigma^2\wh{L_b}(0))h,h^*\ra \\
    &\quad = \la V\check L_b(\sigma)^{-1}\tfrac{\sigma^2}{2}\pa_\sigma^2\wh{L_b}(0)h, h^*\ra + \la V(\check L_b(\sigma)^{-1}-\check L_b(0)^{-1})\sigma\pa_\sigma\wh{L_b}(0)h,h^*\ra \nonumber\\
    &\quad= \sigma^2\Bigl(\half\la V\check L_b(\sigma)^{-1}\pa_\sigma^2\wh{L_b}(0)h,h^*\ra 
      \nonumber\\
    &\quad\qquad\qquad -\big\la\pa_\sigma\wh{L_b}(0)h,(\check L_b(\sigma)^{-1})^*(\pa_\sigma\wh{L_b}(0)^*+\tfrac{\sigma}{2}\pa_\sigma^2\wh{L_b}(0)^*)(\check L_b(0)^{-1})^*V^*h^*\big\ra\Bigr) \nonumber\\
  \label{EqR11Calc0}
  \begin{split}
    &\quad = \sigma^2\Bigl(\half\la\pa_\sigma^2\wh{L_b}(0)h,(\check L_b(\sigma)^{-1})^*V^*h^*\ra \\
    &\quad\qquad\qquad - \big\la\pa_\sigma\wh{L_b}(0)h,(\check L_b(\sigma)^{-1})^*(\pa_\sigma\wh{L_b}(0)^*+\tfrac{\sigma}{2}\pa_\sigma^2\wh{L_b}(0)^*)h^*\big\ra \Bigr).
  \end{split}
  \end{align}
  This is equal to $\sigma^2$ times a constant depending bilinearly on $(h,h^*)$, plus a $o(|\sigma|^2)$ remainder. To evaluate the constant, we introduce the pairing
  \begin{equation}
  \label{EqR11NondegPre}
    \ell_{b,s}(h,h^*) := -\half\la[[L_b,t_*],t_*]h+2[L_b,t_*]\breve h,h^*\ra,\quad (h,h^*)\in\cK_{b,s}\times\cK_{b,s}^*.
  \end{equation}
  Recall from~\eqref{EqL0Qus1Pair} that $\ell_{b,s}(h,-)=0\in(\cK_{b,s}^*)^*$ only if there exists a quadratically growing generalized zero mode with leading coefficient $h$; thus, this is a \emph{non-degenerate} pairing in view of the absence of such modes, see Theorem~\ref{ThmCD0Modes}. Then, with $\breve h,\breve h^*$ as in~\eqref{EqR01Lead}, \eqref{EqR10Lead}, and using~\eqref{EqRcheckLVstar} again to simplify the first pairing in~\eqref{EqR11Calc0} for $\sigma=0$, we have
  \begin{equation}
  \label{EqR11Calc}
  \begin{split}
    \la\half\pa_\sigma^2 L_{1 1}(b,0)h,h^*\ra &= \la\half\pa_\sigma^2\wh{L_b}(0)h - \pa_\sigma\wh{L_b}(0)\check L_b(0)^{-1}\pa_\sigma\wh{L_b}(0)h, h^*\ra \\
      &= \la-\half[[L_b,t_*],t_*]h - i\pa_\sigma\wh{L_b}(0)\check L_b(0)^{-1}\wh{L_b}(0)\breve h,h^*\ra \\
      &= -\half\la[[L_b,t_*],t_*]h+2[L_b,t_*]\breve h,h^*\ra + \la i\pa_\sigma\wh{L_b}(0)\check L_b(0)^{-1}V\breve h,h^*\ra \\
      &= \ell_{b,s}(h,h^*) + \la(V-V\check L_b(0)^{-1}V)\breve h,\breve h^*\ra.
  \end{split}
  \end{equation}
  Now, if we were only considering the top left $2\times 2$ minor of~\eqref{EqRMtx},\footnote{This in particular fully captures, on Schwarzschild spacetimes, the action of the operator $\wh{L_b}(\sigma)\check L_b(\sigma)^{-1}$ on symmetric 2-tensors which do not have a vector $l=1$ component, i.e.\ its invertibility for $b=b_0$ is necessary for the invertibility of the full operator.}
  \[
    \begin{pmatrix} L_{0 0} & L_{0 1} \\ L_{1 0} & L_{1 1} \end{pmatrix} = \begin{pmatrix} \cO(1) & \cO(\sigma) \\ \cO(\sigma) & \cO(\sigma^2) \end{pmatrix},
  \]
  its invertibility near $\sigma=0$ would be guaranteed provided $\sigma^{-2}(L_{1 1}-L_{1 0}L_{0 0}^{-1}L_{0 1})=\half\pa_\sigma^2 L_{1 1}-\pa_\sigma L_{1 0}\circ L_{0 0}^{-1}\circ\pa_\sigma L_{0 1}$ induces a non-degenerate pairing on $\wt\cK_{b,s}\times\cK_{b,s}^*$; but the calculations~\eqref{EqR01Lead} and \eqref{EqR10Lead} imply
  \begin{align*}
    \la \pa_\sigma L_{1 0}(b,0)L_{0 0}(b,0)^{-1}\pa_\sigma L_{0 1}(b,0)\check L_b(0)h,h^*\ra
      &= -i\la \pa_\sigma L_{0 1}(b,0)\check L_b(0)h,\breve h^*\ra \\
      &= \la(V-V\check L_b(0)^{-1}V)\breve h,\breve h^*\ra.
  \end{align*}
  In view of the non-degeneracy of~\eqref{EqR11NondegPre}, this implies that
  \begin{equation}
  \label{EqR11Nondeg}
  \begin{split}
    &\wt L_{1 1}^\sharp(b,\sigma):=\sigma^{-2}\bigl(L_{1 1}(b,\sigma) - L_{1 0}(b,\sigma)L_{0 0}(b,\sigma)^{-1}L_{0 1}(b,\sigma)\bigr) \\
    &\qquad \text{is invertible for $(b,\sigma)$ near $(b_0,0)$.}
  \end{split}
  \end{equation}

  We also note that the calculation~\eqref{EqR11Calc0} works also for $h\in\cK_{b,s}$, $h^*\in\cK_{b,v}^*$, as well as for $h\in\cK_{b,v}$, $h^*\in\cK_{b,s}^*$, implying that
  \begin{equation}
  \label{EqR1221Quadr}
    L_{1 2}(b,\sigma),\ L_{2 1}(b,\sigma) = \cO(|\sigma|^2);
  \end{equation}
  in fact, they are equal to $\sigma^2\wt L_{1 2}(b,\sigma)$, $\sigma^2\wt L_{2 1}(b,\sigma)$, with $\wt L_{1 2}$ and $\wt L_{2 1}$ continuous at $\sigma=0$.
  
  Furthermore, for $h\in\cK_{b,v}$ and $h^*\in\cK_{b,v}^*$, the calculation~\eqref{EqR11Calc0} is valid upon adding the linear (in $\sigma$) term $\la V\check L_b(0)^{-1}\sigma\pa_\sigma\wh{L_b}(0)h,h^*\ra$ (which we recall was zero when one of $h,h^*$ lied in $\cK_{b,s}^{(*)}$) in each line after~\eqref{EqRbottomCalc}. This proves the following strengthening of~\eqref{EqR22Inv}:
  \begin{equation}
  \label{EqR22InvErr}
  \begin{split}
    &\sigma^{-1}L_{2 2}(b,\sigma) = \ell_{b,v} + \cO(|\sigma|), \quad \ell_{b,v}(h,h^*) := \la-i[L_b,t_*]h,h^*\ra, \\
    &\qquad\text{as bilinear forms}\ \cK_{b,v}\times\cK_{b,v}^*\to\C.
  \end{split}
  \end{equation}

  \pfstep{The inverse of $\wh{L_b}(\sigma)\check L_b(\sigma)^{-1}$.} Equipped with the information from~\eqref{EqR00}, \eqref{EqR22Inv}, \eqref{EqR111221Deg}, \eqref{EqR11Nondeg}, and~\eqref{EqR1221Quadr}, we can now write
  \begin{equation}
  \label{EqROp}
    \wh{L_b}(\sigma)\check L_b(\sigma)^{-1}
     =\begin{pmatrix}
        L_{0 0} & \sigma\wt L_{0 1} & \sigma\wt L_{0 2} \\
        \sigma\wt L_{1 0} & \sigma^2\wt L_{1 1} & \sigma^2\wt L_{1 2} \\
        \sigma\wt L_{2 0} & \sigma^2\wt L_{2 1} & \sigma\wt L_{2 2}
      \end{pmatrix},
  \end{equation}
  with tacit dependence on $(b,\sigma)$, and solve
  \[
    \wh{L_b}(\sigma)\check L_b(\sigma)^{-1}\wt h=f,\qquad
    \wt h=(\wt h_0,\wt h_1,\wt h_2),\ \ f=(f_0,f_1,f_2)
  \]
  for small $\sigma\neq 0$ by first solving the first component of this equation for $\wt h_0$ (using~\eqref{EqR00}), then the third component for $\wt h_2$ (using~\eqref{EqR22Inv}), and then the second component for $\wt h_1$ (using~\eqref{EqR11Nondeg}. This gives
  \begin{equation}
  \label{EqRInv}
    \wt R_b(\sigma) := \bigl(\wh{L_b}(\sigma)\check L_b(\sigma)^{-1}\bigr)^{-1}
      =\begin{pmatrix}
         \wt R_{0 0} & \sigma^{-1}\wt R_{0 1} & \wt R_{0 2} \\
         \sigma^{-1}\wt R_{1 0} & \sigma^{-2}\wt R_{1 1} & \sigma^{-1}\wt R_{1 2} \\
         \wt R_{2 0} & \sigma^{-1}\wt R_{2 1} & \sigma^{-1}\wt R_{2 2}
       \end{pmatrix}
  \end{equation}
  in the splittings~\eqref{EqRSplit}, where the $\wt R_{i j}=\wt R_{i j}(b,\sigma)$ are continuous in $\sigma$. Explicitly, set
  \begin{equation}
  \label{EqRInv1}
    \wt L_{i j}^\sharp := \wt L_{i j} - \wt L_{i 0}L_{0 0}^{-1}\wt L_{0 j}, \quad
    \wt L_{i j}^\flat := \wt L_{i j} - \sigma\wt L_{i 0}L_{0 0}^{-1}\wt L_{0 j}, \quad
    \wt L_{1 1}^\natural=\wt L_{1 1}^\sharp-\sigma\wt L_{1 2}^\sharp(\wt L_{2 2}^\flat)^{-1}\wt L_{2 1}^\sharp,
  \end{equation}
  and recall from~\eqref{EqR00}, \eqref{EqR11NondegPre}, and \eqref{EqR22Inv} that $L_{0 0}$, $\wt L_{1 1}^\sharp$ (hence $\wt L_{1 1}^\natural$ for small $\sigma$), and $\wt L_{2 2}$ (hence $\wt L_{2 2}^\flat$ for small $\sigma$) are invertible. The singular terms in~\eqref{EqRInv} are then given by
  \begin{equation}
  \label{EqRInv2}
  \begin{gathered}
    \begin{aligned}
    \wt R_{0 1} &= -\bigl(L_{0 0}^{-1}\wt L_{0 1}-\sigma L_{0 0}^{-1}\wt L_{0 2}(\wt L_{2 2}^\flat)^{-1}\wt L_{2 1}^\sharp\bigr)\wt R_{1 1},  \\
    \wt R_{1 0} &= -\wt R_{1 1}\bigl(\wt L_{1 0}L_{0 0}^{-1}-\sigma\wt L_{1 2}^\sharp(\wt L_{2 2}^\flat)^{-1}\wt L_{2 0}L_{0 0}^{-1}\bigr),
    \end{aligned} \\
    \begin{aligned}
      \wt R_{1 1} &= (\wt L_{1 1}^\natural)^{-1}, &
      \wt R_{1 2} &= -\wt R_{1 1}\wt L_{1 2}^\sharp(\wt L_{2 2}^\flat)^{-1}, \\
      \wt R_{2 1} &= -(\wt L_{2 2}^\flat)^{-1}\wt L_{2 1}^\sharp \wt R_{1 1}, & \qquad
      \wt R_{2 2} &= (\wt L_{2 2}^\flat)^{-1} - \sigma(\wt L_{2 2}^\flat)^{-1}\wt L_{2 1}^\sharp \wt R_{1 2}.
    \end{aligned}
  \end{gathered}
  \end{equation}
  Since $\check L_b(\sigma)$ is invertible, the expression \eqref{EqRInv}, together with
  \begin{equation}
  \label{EqRInv3}
    \wh{L_b}(\sigma)^{-1} = \check L_b(\sigma)^{-1}\wt R_b(\sigma),
  \end{equation}
  explicitly demonstrates the invertibility of $\wh{L_b}(\sigma)$ for $0<|\sigma|<C_0$, $\Im\sigma\geq 0$, and $|b-b_0|<C_0$, when $C_0>0$ is sufficiently small.
\end{proof}

\subsection{Precise structure of the resolvent near zero energy}
\label{SsRS}

\emph{We continue using the notation of the proof of Proposition~\ref{PropRExist}}. The formula~\eqref{EqRInv3} for the resolvent in terms of~\eqref{EqRInv} is not yet satisfactory for the purpose of solving the wave equation $L_b h=f$ by means of the inverse Fourier transform, $h(\sigma)=\wh{L_b}(\sigma)^{-1}\hat f(\sigma)$. Recall that the inverse Fourier transform of a term $\sigma^{-\alpha}$, read as $(\sigma+i 0)^{-\alpha}$, has asymptotic behavior $t_*^{\alpha-1}$; the $(1,0)$, $(1,2)$, $(2,1)$, and $(2,2)$ entries are thus already acceptable since they produce stationary terms in $\cK_b$. This is not the case for the $(0,1)$ term, as it produces a stationary term lying in an as of yet uncontrolled subspace of $\Hbext^{s,\ell}$. Note also that the $(1,1)$ term is only controlled modulo $o(|\sigma|^{-2})$ (or really $\cO(|\sigma|^{-2+\alpha})$ for some $\alpha>0$), which is not precise enough to allow for a useful description of the asymptotic behavior it produces (control of the time dependence being the issue); some degree of conormal regularity would be sufficient to prove that it produces a pure gauge solution modulo a decaying tail.\footnote{As an illustration, note that $t_*^{1-\alpha}h_{b,s 0}-\delta_{g_b}^*(t_*^{1-\alpha}\omega_{b,s 0})=\cO(t_*^{-\alpha})$.}

Rather than fixing these issues minimalistically for the purpose of obtaining a rather weak linear stability result (as far as decay is concerned), we proceed to obtain a \emph{complete} description of the singular part of the resolvent. The two main ingredients are:
\begin{enumerate}
\item a more careful choice of $\check L_b(\sigma)$ and of the splittings~\eqref{EqRSplit} ensures that the $(1,0)$ and $(0,1)$ components of $\wt R_b(\sigma)$ are regular at $\sigma=0$. This is set up in Lemma~\ref{LemmaRImprovedV} and explained in the first step of the proof of Theorem~\ref{ThmR};
\item the $r^{-1}$ leading order behavior not only of zero energy states but also of the stationary parts of generalized (dual) zero energy states, see Lemmas~\ref{LemmaL0Lead} and \ref{LemmaCD0GenDual}, enables us to Taylor expand certain components of $\wh{L_b}(\sigma)\check L_b(\sigma)^{-1}$ to high order. (The relevance of having such leading order terms was already indicated in the second paragraph after equation~\eqref{EqRKerCont2}.)
\end{enumerate}

\begin{definition}
\label{DefRBreve}
  Recall the spaces $\cK_b=\cK_{b,s}\oplus\cK_{b,v}$ and $\cK_b^*$ from~\eqref{EqRZeroModes}. For $h_s=h_{b,s j}\in\cK_{b,s}$, set $\breve h_s=\breve h_{b,s j}$ in the notation of Definition~\ref{DefL0Breve}, so that $L_b(t_* h_s+\breve h_s)=0$. We then define the non-degenerate (for $b=b_0$ and thus for nearby $b$) sesquilinear pairing
  \begin{equation}
  \label{EqRBreve}
  \begin{split}
    &k_b \colon \cK_b \times \cK_b^* \to \C \\
    &\qquad k_b((h_s,h_v),h^*) := \big\la\half\bigl([[L_b,t_*],t_*]h_s+2[L_b,t_*]\breve h_s\bigr) + [L_b,t_*]h_v, h^*\big\ra.
  \end{split}
  \end{equation}
\end{definition}

\begin{thm}
\label{ThmR}
  For $(b,\sigma)$, $\Im\sigma\geq 0$, in a small neighborhood of $(b_0,0)$ we have
  \[
    \wh{L_b}(\sigma)^{-1} = P_b(\sigma) + L^-_b(\sigma) \colon \Hbext^{s-1,\ell+2}(X;S^2\,\wt\Tsc{}^*X) \to \Hbext^{s,\ell}(X;S^2\,\wt\Tsc{}^*X).
  \]
  Here, the regular part $L_b^-(\sigma)$ has uniformly bounded operator norm, and is continuous with values in $\cL_{\rm weak}(\Hbext^{s-1,\ell+2},\Hbext^{s,\ell})\cap\cL_{\rm op}(\Hbext^{s-1+\eps,\ell+2+\eps},\Hbext^{s-\eps,\ell-\eps})$, $\eps>0$. The principal part $P_b(\sigma)$ is a quadratic polynomial in $\sigma^{-1}$ with finite rank coefficients; explicitly,\footnote{We take these signs and factors of $i$ because of $\cF^{-1}(i(\sigma+i 0)^{-1})=H(t_*)$, $\cF^{-1}(-(\sigma+i 0)^{-2})=t_* H(t_*)$.}
  \begin{equation}
  \label{EqRPrincipal}
    P_b(\sigma)f = (-\sigma^{-2}h_s+i\sigma^{-1}\breve h_s) + i\sigma^{-1}(h'_s + h_v),
  \end{equation}
  where $h_s,h'_s\in\cK_{b,s}$ and $h_v\in\cK_{b,v}$ are uniquely determined by the conditions
  \begin{subequations}
  \begin{alignat}{3}
  \label{EqRSing1}
    k_b((h_s,h_v),h^*) &= \la f,h^*\ra &\quad \text{for all}\ &h^*\in\cK_b^*, \\
  \label{EqRSing2}
    k_b(h'_s,h_s^*) &= -\la\half[[L_b,t_*],t_*](\breve h_s+h_v)+[L_b,t_*]\bar h,h_s^*\ra &\quad\text{for all}\ &h_s^*\in\cK_{b,s}^*,
  \end{alignat}
  where $\bar h\in\Hbext^{s,\ell}$ is a stationary solution of
  \begin{equation}
  \label{EqRBarh}
    L_b\bar h=f-\half\bigl([[L_b,t_*],t_*]h_s+2[L_b,t_*](\breve h_s+h_v)\bigr).
  \end{equation}
  \end{subequations}
\end{thm}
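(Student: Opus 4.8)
The plan is to upgrade the rough formula $\wh{L_b}(\sigma)^{-1}=\check L_b(\sigma)^{-1}\wt R_b(\sigma)$ from the proof of Proposition~\ref{PropRExist} into the precise splitting claimed here, working throughout in the block decompositions~\eqref{EqRSplit} and keeping the notation $L_{i j}(b,\sigma)$, $\wt R_{i j}(b,\sigma)$, $\wt L_{i j}^\sharp$, $\wt L_{i j}^\flat$, $\wt L_{1 1}^\natural$ from there. First I would invoke the improved choice of the finite-rank perturbation $V$ (hence of $\check L_b$) and of the complements $\wt\cK^\perp,\cR_s^\perp,\cR_v^\perp$ furnished by Lemma~\ref{LemmaRImprovedV}: by the finite-dimensional linear algebra of Lemma~\ref{LemmaRPert0}, combined with the fact (see~\eqref{EqR01Lead}--\eqref{EqR10Lead}) that the first-order coefficients of the coupling blocks $L_{0 1}$ and $L_{1 0}$ are prescribed in terms of $V$, $\check L_b(0)^{-1}$ and the stationary parts $\breve h,\breve h^*$ of the linearly growing (dual) modes, one can arrange
\[
  L_{0 1}(b,\sigma)=\cO(|\sigma|^2),\qquad L_{1 0}(b,\sigma)=\cO(|\sigma|^2),
\]
while retaining the invertibility of $\check L_b(\sigma)$ near $\sigma=0$ and the continuity properties of Lemma~\ref{LemmaRPert0}. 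Feeding this into the still-valid inversion formulas~\eqref{EqRInv2} then shows that the blocks $\wt R_{0 0},\wt R_{0 1},\wt R_{1 0},\wt R_{0 2},\wt R_{2 0}$ of $\wt R_b(\sigma)$ are continuous down to $\sigma=0$, so that the only surviving singularities of $\wt R_b(\sigma)$ live in the lower-right $2\times 2$ block: $\sigma^{-2}\wt R_{1 1}$ and $\sigma^{-1}\wt R_{1 2},\sigma^{-1}\wt R_{2 1},\sigma^{-1}\wt R_{2 2}$ (with $\wt R_{1 2},\wt R_{2 1}$ in fact less singular by~\eqref{EqR1221Quadr}).

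The second step is to compose with $\check L_b(\sigma)^{-1}$ and extract the singular part. For $h\in\cK_b$ one has $\check L_b(\sigma)^{-1}(\check L_b(0)h)=h+\sigma\,w_1(\sigma)h$ with $w_1(\sigma)h$ continuous down to $\sigma=0$, which is exactly the computation~\eqref{EqRReParen}--\eqref{EqRKerCont2}; it is here that the $\rho\CI$ (i.e.\ $r^{-1}$) leading behaviour of $h$ and of the stationary parts $\breve h,\breve h^*$ of the generalized (dual) modes, recorded in Lemmas~\ref{LemmaL0Lead} and~\ref{LemmaCD0GenDual}, is needed, since the normal operator $-2\rho(\rho D_\rho+i)$ of $\pa_\sigma\wh{L_b}(0)$ gains \emph{two} orders of decay on $\rho\CI$ rather than the single order expected of an element of $\rho\Diffb^1$. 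Tracking the products $\check L_b(\sigma)^{-1}\cdot(\text{singular block of }\wt R_b(\sigma))$ order by order in $\sigma$ and using $\wt\cK_{b,s}=\check L_b(0)\cK_{b,s}$, the $\sigma^{-2}$-part of $\wt R_{1 1}$ produces, after the Taylor expansion of $\check L_b(\sigma)^{-1}$, a contribution $-\sigma^{-2}h_s+i\sigma^{-1}\breve h_s$ modulo $\cO(1)$; the $\sigma^{-1}$-coefficient of $\wt R_{1 1}$ contributes an additional $i\sigma^{-1}h_s'$; and the residue of $\wt R_{2 2}$ contributes $i\sigma^{-1}h_v$. Collecting these gives exactly the ansatz~\eqref{EqRPrincipal} for $P_b(\sigma)$, and one defines the regular part by $L_b^-(\sigma):=\wh{L_b}(\sigma)^{-1}-P_b(\sigma)$.

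To pin down the coefficients $h_s,h_s'\in\cK_{b,s}$ and $h_v\in\cK_{b,v}$ I would not chase them through the inversion formulas but instead characterize them by the requirement that $\wh{L_b}(\sigma)\,P_b(\sigma)f$ differ from $f$ only by a regular remainder. Pairing $\wh{L_b}(\sigma)(P_b(\sigma)f)$ against $h^*\in\cK_b^*$ and expanding in $\sigma$---using $\wh{L_b}(\sigma)=\wh{L_b}(0)-\sigma\pa_\sigma\wh{L_b}(0)-\tfrac{\sigma^2}{2}\pa_\sigma^2\wh{L_b}(0)$, the relation $\pa_\sigma\wh{L_b}(0)=-i[L_b,t_*]$ from Lemma~\ref{LemmaOpLinFT}, and $L_b(t_* h_s+\breve h_s)=0$---the $\sigma^0$-coefficient of the pairing is precisely $k_b((h_s,h_v),h^*)$, so matching it with $\la f,h^*\ra$ gives~\eqref{EqRSing1}; this determines $(h_s,h_v)$ uniquely because $k_b$ is non-degenerate, which is the content of the absence of quadratically growing scalar $l=0,1$ modes and linearly growing vector $l=1$ modes established in Theorem~\ref{ThmCD0Modes} (cf.\ the pairings~\eqref{EqR11NondegPre} and~\eqref{EqR22InvErr}, which $k_b$ restricts to on the respective sectors). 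The right-hand side of~\eqref{EqRBarh} then pairs to zero against $\cK_b^*$ by~\eqref{EqRSing1}, so a stationary solution $\bar h$ exists, and it accounts for the $\wh{L_b}(0)$-contribution of the $\cO(1)$ part of the ansatz; pushing the pairing against $h_s^*\in\cK_{b,s}^*$ one order further yields~\eqref{EqRSing2}, which determines $h_s'$ uniquely again by non-degeneracy of $k_b(\cdot,\cdot)|_{\cK_{b,s}\times\cK_{b,s}^*}=-\ell_{b,s}$. Finally, the stated continuity of $L_b^-(\sigma)$ in the weak operator topology on $\cL(\Hbext^{s-1,\ell+2},\Hbext^{s,\ell})$ and in the norm topology on $\cL(\Hbext^{s-1+\eps,\ell+2+\eps},\Hbext^{s-\eps,\ell-\eps})$ is inherited from the same properties of $\check L_b(\sigma)^{-1}$ (Lemma~\ref{LemmaRPert0}) and of the continuous blocks of $\wt R_b(\sigma)$, once one has checked that subtracting $P_b(\sigma)$ removes exactly the non-continuous (finite-rank) part.

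The main obstacle I anticipate is the bookkeeping of the second step: one must verify that composing $\check L_b(\sigma)^{-1}$---which is only H\"older, not smooth, at $\sigma=0$---with the $\sigma^{-2}$-singular block and Taylor-expanding it far enough to see the $\sigma^{-1}$-coefficient generates no further singular, and no merely $o(1)$-but-not-continuous, contributions. This hinges entirely on the normal-operator gain (two orders of decay rather than one) for $\pa_\sigma\wh{L_b}(0)$ acting on the $\rho\CI+\Hbext^{\infty,1/2-}$ leading parts of the (generalized) zero and dual modes, so the argument is genuinely sensitive to the precise regularity statements of Lemmas~\ref{LemmaL0Lead} and~\ref{LemmaCD0GenDual} and to the normalizations of $\breve h_{b,s j}$, $\breve h_{b,s j}^*$ fixed there. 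A secondary point is confirming that Lemma~\ref{LemmaRImprovedV} can indeed produce a single $V$ realizing simultaneously the invertibility of $\check L_b(\sigma)$ near $\sigma=0$ and the second-order decoupling of the regular block in~\eqref{EqR01Lead}--\eqref{EqR10Lead}; this is a finite-dimensional construction as in Lemma~\ref{LemmaRPert0}, but now with the additional constraints that $V$ must kill the relevant images of $\breve h,\breve h^*$ modulo $\cK_b,\cK_b^*$.
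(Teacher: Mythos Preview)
Your proposal is essentially correct and follows the paper's own approach: use Lemma~\ref{LemmaRImprovedV} to force $L_{0 1},L_{1 0}=\cO(|\sigma|^2)$, read off the structure of $\wt R_b(\sigma)$ from the inversion formulas~\eqref{EqRInv2}, expand $\check L_b(\sigma)^{-1}$ on $\wt\cK_{b,s}$ to second order using the $\rho\CI$ leading terms of $\breve h$, and then identify the coefficients of the principal part by matching orders in the equation $\wh{L_b}(\sigma)h(\sigma)=f$. (One minor slip: your Taylor formula for $\wh{L_b}(\sigma)$ has the wrong sign.)

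The one place where your account understates the work is the regularity of $\wt R_{1 1}$ itself. You correctly flag the difficulty of composing $\check L_b(\sigma)^{-1}$ with the $\sigma^{-2}$-block, but the parallel---and in fact harder---issue is showing that $\wt R_{1 1}(\sigma)$ admits a two-term Taylor expansion with \emph{continuous} remainder, i.e.\ that $\wt L_{1 1}^\natural$ does. Since $\wt L_{1 1}^\sharp\equiv\wt L_{1 1}\bmod\cO^2$ after your improved choice of $V$, this reduces to expanding the pairing $\la\wh{L_b}(\sigma)\check L_b(\sigma)^{-1}\check L_b(0)h,h^*\ra$ for $h\in\cK_{b,s}$, $h^*\in\cK_{b,s}^*$ to \emph{fourth} order in $\sigma$ with continuous remainder; the paper carries this out by iterating the resolvent identity twice on each side, using the $\rho\CI+\Hbext^{\infty,1/2-}$ structure not only of $h,h^*$ but also of $\breve h,\breve h^*$ (Lemmas~\ref{LemmaL0Lead} and~\ref{LemmaCD0GenDual}) at each step. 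One similarly needs $\wt L_{1 2},\wt L_{2 1}$ modulo $\cO^1$ to control $\wt R_{1 2},\wt R_{2 1},\wt R_{2 2}$ to one order. This is exactly the mechanism you name (the normal operator $-2\rho(\rho D_\rho+i)$ gaining two orders on $\rho\CI$), but it has to be invoked repeatedly and on the dual side as well, not just once in the composition with $\check L_b(\sigma)^{-1}$.
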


Given~\eqref{EqRSing1}, the solution of equation~\eqref{EqRBarh} is unique only modulo $\cK_b$; but condition~\eqref{EqRSing2} only depends on the image of $\bar h$ in $\Hbext^{s,\ell}/\cK_b$.

\begin{rmk}
  If we ask only that the tensor $\breve h_s$ be \emph{any} stationary solution of $L_b(t_*h_s+\breve h_s)=0$, then we can re-define $\breve h_s$ as $\breve h_s+h_v$, and subsequently set $h_v=0$. Keeping $h_v=0$, the term $\breve h_s$ is then only well-defined modulo $\cK_{b,s}$; it is easy to check that changing $\breve h_s$ to $\breve h_s+h''_s$, $h''_s\in\cK_{b,s}$, changes $h'_s$ to $h'_s-h''_s$ according to~\eqref{EqRSing1}--\eqref{EqRBarh}. Therefore, the description~\eqref{EqRPrincipal} of the principal part is well-defined independently of the choice of $\breve h_s$.
\end{rmk}

We first show that that the $(0,1)$ and $(1,0)$ components of $\wh{L_b}(\sigma)\check L_b(\sigma)^{-1}$ in~\eqref{EqROp} can be made to vanish quadratically at $\sigma=0$ by a more careful choice of the operator $\check L_b(\sigma)$ and the space $\wt\cK^\perp$ in~\eqref{EqRSplit}; this uses the explicit form~\eqref{EqR01Lead} and \eqref{EqR10Lead} of $\pa_\sigma L_{1 0},\pa_\sigma L_{0 1}$ at $\sigma=0$. To this end, we refine Lemma~\ref{LemmaRPert0} as follows:

\begin{lemma}
\label{LemmaRImprovedV}
  There exists $V_b\in\Psi^{-\infty}(X^\circ;S^2 T^*X^\circ)$ which is continuous in $b$ with uniformly compactly supported Schwartz kernel, such that
  \[
    \check L_b(\sigma)=\wh{L_b}(\sigma)+V_b\colon\cX_b^{s,\ell}(\sigma)\to\Hbext^{s-1,\ell+2}
  \]
  satisfies the conclusions of Lemma~\ref{LemmaRPert0}, and so that moreover for a suitably chosen (continuous in $b$) complementary subspace $\wt\cK_b^\perp$ of $\wt\cK_b=\check L_b(0)\cK_b=V_b(\cK_b)$, we have
  \begin{alignat}{2}
  \label{EqRImprovedV}
    V_b\breve h&=0,\quad \breve h&\,\in\,&\breve\cK_b:=\C\breve h_{b,s 0}\oplus\breve h_{b,s 1}(\scal_1), \\
  \label{EqRImprovedVAdj}
    \big\la f,\bigl(I-(\breve L_b(0)^{-1})^*V_b^*\bigr)\breve h^*\big\ra&=0,\quad
    \breve h^*&\,\in\,&\breve\cK_b^*:=\C\breve h_{b,s 0}^*\oplus\breve h_{b,s 1}^*(\scal_1),\ \ 
    f\in\wt\cK_b^\perp.
  \end{alignat}
\end{lemma}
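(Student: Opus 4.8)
The plan is to construct $V_b$ as a finite-rank smoothing perturbation, much as in Lemma~\ref{LemmaRPert0}, but now with two separate duties: it must make $\check L_b(\sigma)$ invertible near $\sigma=0$ (this is the content of Lemma~\ref{LemmaRPert0}), and in addition it must annihilate the stationary pieces $\breve\cK_b$ of the linearly growing zero modes, while being engineered so that $\wt\cK_b^\perp$ is orthogonal (in the appropriate pairing) to the image under $I-(\check L_b(0)^{-1})^*V_b^*$ of the stationary pieces $\breve\cK_b^*$ of the linearly growing dual zero modes. The key structural input is the same one used throughout~\S\ref{SsRE}: $\cK_b$ and $\cK_b^*$ are $7$-dimensional, the relevant pairings (the matrix~\eqref{EqRPert0Mtx}, i.e.\ $(\la V h_i,h_j^*\ra)$) are non-degenerate for a generic choice of $V$, and the tensors $\breve h_{b,\bullet\bullet}$, $\breve h_{b,\bullet\bullet}^*$ are smooth/conormal away from the event horizon with the decay recorded in Lemmas~\ref{LemmaL0Lead} and~\ref{LemmaCD0GenDual}; in particular they are honest distributions that can be tested against compactly supported smooth symmetric $2$-tensors.

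First I would fix, continuously in $b$, bases $h_1,\dots,h_7$ of $\cK_b$ (with $h_1=h_{b,s 0}$, the next three spanning $h_{b,s 1}(\scal_1)$, the last three spanning $h_{b,v 1}(\vect_1)$) and $h_1^*,\dots,h_7^*$ of $\cK_b^*$ (ordered compatibly). I look for $V_b$ of the form $V_b=\sum_{k=1}^7 h_k^\sharp\,\la\,\cdot\,,h_k^\flat\ra$ with $h_k^\sharp,h_k^\flat\in\CIc(X^\circ;S^2 T^*X^\circ)$ depending continuously on $b$. The invertibility requirement (via the argument in Proposition~\ref{PropCD0} / Lemma~\ref{LemmaRPert0}) is that $(\la V_b h_i,h_j^*\ra)_{i,j}=(\la h_i,h_i^\flat\ra\,\la h_j^\sharp,h_j^*\ra$-type matrix$)$ be invertible; choosing the $h_k^\flat$ biorthogonal to $\{h_i\}$ and the $h_k^\sharp$ biorthogonal to $\{h_j^*\}$ makes this matrix the identity, as in Lemma~\ref{LemmaRPert0}. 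This choice is possible by linear independence of the $h_i$ and of the distributions $h_j^*$, and can be made continuous in $b$ by the same projection/biorthogonalization construction as in Lemma~\ref{LemmaRProj}.

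The substantive point is then to arrange~\eqref{EqRImprovedV} and~\eqref{EqRImprovedVAdj} simultaneously with invertibility. For~\eqref{EqRImprovedV}: since $\breve\cK_b$ is a $4$-dimensional space of tensors \emph{not} lying in $\cK_b$ (they grow like $r^{-1}$ plus decaying, whereas $\cK_b\subset\Hbext^{\infty,1/2-}$, cf.\ Lemmas~\ref{LemmaL0Lead}--\ref{LemmaCD0GenDual}), I can choose the functionals $h_k^\flat$ to additionally kill $\breve\cK_b$, i.e.\ require $\la\breve h,h_k^\flat\ra=0$ for all $\breve h\in\breve\cK_b$; this is a finite collection of linear constraints on each $h_k^\flat\in\CIc(X^\circ;S^2 T^*X^\circ)$ which is compatible with the biorthogonality-to-$\{h_i\}$ constraints precisely because $\breve\cK_b\cap\cK_b=0$, so the span of $\{h_i\}\cup\breve\cK_b$ is $11$-dimensional and one chooses $h_k^\flat$ dual to $h_k$ inside this $11$-dimensional space and vanishing on the remaining directions. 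Then $V_b\breve h=0$ for $\breve h\in\breve\cK_b$, giving~\eqref{EqRImprovedV}. For~\eqref{EqRImprovedVAdj}: by~\eqref{EqRcheckLVstar}-type reasoning, $(\check L_b(0)^{-1})^*V_b^*$ acts on dual states; the tensor $(I-(\check L_b(0)^{-1})^*V_b^*)\breve h^*$ is a definite (continuous in $b$) element of $\Hbsupp^{-\infty,-1/2-}$ for each $\breve h^*\in\breve\cK_b^*$, and I then \emph{define} $\wt\cK_b^\perp$ to be a complement of $\wt\cK_b=V_b(\cK_b)$ inside the annihilator of the $4$-dimensional space $\{(I-(\check L_b(0)^{-1})^*V_b^*)\breve h^*:\breve h^*\in\breve\cK_b^*\}$; such a complement exists and can be chosen continuously in $b$ provided that $4$-dimensional space, together with $\cK_b^*=\mathrm{ann}(\wt\cK_b^{\mathrm{candidate complement}})$... more precisely, provided $\wt\cK_b$ meets the annihilator of those $4$ functionals transversally, which I would verify by a dimension count using that $\wt\cK_b$ has dimension $7$ and the total space of functionals involved has dimension $7+4$ with the overlap controlled by the non-degeneracy of the pairings~\eqref{EqRBreve}/\eqref{EqR11NondegPre}.

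\textbf{Main obstacle.} The delicate step is the last one: showing that the constraints~\eqref{EqRImprovedV} and~\eqref{EqRImprovedVAdj} do not conflict with the invertibility of $\check L_b(\sigma)$ near $\sigma=0$, i.e.\ that one can satisfy all three simultaneously with continuity in $b$. I expect this to reduce, after the biorthogonal-basis setup, to checking that a single $(7+4)\times(7+4)$ (or suitably blocked) matrix built from the pairings $\la h_i,h_j^\flat\ra$, $\la h_j^\sharp,h_k^*\ra$, $\la\breve h,h_k^\flat\ra$, and $\la(I-(\check L_b(0)^{-1})^*V_b^*)\breve h^*,\,\cdot\,\ra$ is invertible, which in turn follows from (i) the linear independence of $\cK_b$, $\breve\cK_b$ and of $\cK_b^*$, $\breve\cK_b^*$ as distributions, and (ii) the non-degeneracy of the bilinear forms $k_b$ (equivalently $\ell_{b,s}$, $\ell_{b,v}$) established in the course of proving Theorem~\ref{ThmCD0Modes}. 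Once this linear-algebraic fact is in hand, continuity in $b$ is automatic from the continuous dependence of all the ingredients, and the conclusions of Lemma~\ref{LemmaRPert0} for $\check L_b(\sigma)$ follow verbatim since $V_b-V$ (with $V$ the operator of Lemma~\ref{LemmaRPert0}) is again a uniformly compactly supported smoothing operator and the perturbation argument of~\cite[\S2.7]{VasyMicroKerrdS} applies.
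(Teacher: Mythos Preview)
Your handling of condition~\eqref{EqRImprovedV} is correct and matches the paper: choosing the $h_k^\flat$ biorthogonal to a basis of $\cK_b$ while also annihilating $\breve\cK_b$ is possible precisely because $\breve\cK_b\cap\cK_b=0$, and this gives $V_b\breve h=0$ for $\breve h\in\breve\cK_b$.

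The real gap is in your treatment of~\eqref{EqRImprovedVAdj}. You propose to choose $\wt\cK_b^\perp$ inside the common kernel of the four functionals $\psi_i:=(I-(\check L_b(0)^{-1})^*V_b^*)\breve h_i^*$, claiming this works ``provided $\wt\cK_b$ meets the annihilator of those $4$ functionals transversally.'' But the opposite holds: for $\wt h=\check L_b(0)h\in\wt\cK_b$ with $h\in\cK_b$, one computes
\[
  \big\la\wt h,\psi_i\big\ra
  =\big\la\check L_b(0)h,\breve h_i^*\big\ra-\big\la h,V_b^*\breve h_i^*\big\ra
  =\big\la(\check L_b(0)-V_b)h,\breve h_i^*\big\ra
  =\big\la\wh{L_b}(0)h,\breve h_i^*\big\ra=0,
\]
so $\wt\cK_b$ is entirely \emph{contained} in $\bigcap_i\ker\psi_i$. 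A complement $\wt\cK_b^\perp\subset\bigcap_i\ker\psi_i$ would then force $\psi_i\equiv 0$ on all of $\Hbext^{s-1,\ell+2}$, which is impossible since $\check L_b(0)^*\psi_i=\wh{L_b}(0)^*\breve h_i^*\neq 0$ (using $\breve\cK_b^*\cap\cK_b^*=0$). No dimension count or appeal to the non-degeneracy of $k_b$ can repair this with your rank-$7$ perturbation.

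The paper's proof addresses exactly this obstruction by writing $V_b=V_{b,1}+V_{b,2}$: here $V_{b,1}$ is your rank-$7$ operator, while $V_{b,2}$ is an additional rank-$4$ piece designed to place four auxiliary tensors $h'_{b,1},\ldots,h'_{b,4}\in\CIc(X^\circ)$ into $\ker(\wh{L_b}(0)+V_{b,2})$, chosen so that $\langle h'_{b,j},\wh{L_b}(0)^*\breve h_{b,i}^*\rangle=\delta_{ij}$ (possible since $\wh{L_b}(0)^*\breve\cK_b^*$ is $4$-dimensional). This forces $\wh{L_b}(0)^*\breve\cK_b^*$ to lie outside $\mathrm{ran}(\wh{L_b}(0)+V_{b,2})^*=\ann\bigl(\ker(\wh{L_b}(0)+V_{b,2})\bigr)$, which is the mechanism behind~\eqref{EqRImprovedV2}. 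The $h'_{b,j}$ are also required to have span disjoint from $\cK_b\oplus\breve\cK_b$, and $V_{b,2}$ is made to vanish there, so that~\eqref{EqRImprovedV} is not disturbed. This controlled enlargement of the kernel---dual to $\wh{L_b}(0)^*\breve\cK_b^*$---is the key idea you are missing; without it, the conditions you need are not simultaneously achievable.
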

\begin{proof}
  As argued around equation~\eqref{EqRPert0Mtx}, the conclusions of Lemma~\ref{LemmaRPert0} are satisfied provided $V_b\colon\sD'(X^\circ)\to\CIc(X^\circ)$ induces an injective map
  \begin{subequations}
  \begin{equation}
  \label{EqRImprovedV1}
    V_b|_{\cK_b}\colon\cK_b\hra(\cK_b^*)^*
  \end{equation}
  Moreover, one can find $\wt\cK_b^\perp$ such that~\eqref{EqRImprovedVAdj} holds iff $(I-(\check L_b(0)^{-1})^*V_b^*)\breve\cK_b^*\cap\ann(\wt\cK_b)=0$, which upon applying the invertible map $\check L_b(0)^*$ is equivalent to
  \begin{equation}
  \label{EqRImprovedV2}
    \wh{L_b}(0)^*\breve\cK_b^* \cap \check L_b(0)^*\ann(V_b(\cK_b)) = 0.
  \end{equation}
  \end{subequations}

  We proceed to arrange~\eqref{EqRImprovedV}, \eqref{EqRImprovedV1}, and \eqref{EqRImprovedV2}. Fix bases $\{h_{b,1},\ldots,h_{b,7}\}$ of $\cK_b$ and $\{h_{b,1}^*,\ldots,h_{b,7}^*\}$ of $\cK_b^*$ which depend continuously on $b$. Fix moreover $h_{b,i}^\sharp,h_{b,i}^\flat\in\CIc(X^\circ)$ satisfying $\la h_{b,i}^\sharp,h_{b,j}^*\ra=\delta_{i j}$ and $\la h_{b,i},h_{b,j}^\flat\ra=\delta_{i j}$; we make the ansatz
  \[
    V_b = V_{b,1} + V_{b,2}, \quad V_{b,1} = \sum_{i=1}^7 h_{b,i}^\sharp\la-,h_{b,i}^\flat\ra;
  \]
  if we choose $V_{b,2}$ such that $\cK_b\subset\ker V_{b,2}$, then~\eqref{EqRImprovedV1} holds.
  
  Next,~\eqref{EqRImprovedV} holds if $\breve\cK_b\subset\ker V_{b,2}$ and if we choose the $h_{b,j}^\flat$ to also satisfy $h_{b,j}^\flat\perp\breve\cK_b$ for $j=1,\ldots,7$. The latter can be arranged iff
  \begin{equation}
  \label{EqRImprovedVCap}
    \breve\cK_b\cap\cK_b=0.
  \end{equation}
  This holds true for $b=b_0$ by inspection of the expressions in Proposition~\ref{PropL0}. Indeed, elements of $\cK_{b_0}$ are of size $\cO(r^{-2})$ with non-zero $r^{-2}$ coefficients, which implies that the elements of $\breve\cK_b$ must have a non-vanishing $r^{-1}$ leading term; see the proof of Lemma~\ref{LemmaL0Lead}. A better (in that it does not rely on any explicit calculations) perspective on~\eqref{EqRImprovedVCap} is the following: if $0\neq h\in\cK_{b,s}$, there exists a function $F\in\CIc(X^\circ)$ such that $L_b(F h)\neq 0$; indeed, the space $\CIc(X^\circ)h$ is infinite-dimensional, while $\ker L_b\cap\Hbext^{\infty,-1/2-}$ is finite-dimensional. But
  \[
    L_b((t_*+F)h+(\breve h-F h)) = 0;
  \]
  this means that if we work with $t_*+F$ instead of $t_*$ to define $\breve h$ and the spectral family $\wh{L_b}(\sigma)$ (note that $\wh{L_b}(0)$ is unaffected by such a change), then $\breve h$ changes by $-F h\notin\ker\wh{L_b}(0)$. Therefore, we can always arrange~\eqref{EqRImprovedVCap} upon changing $t_*$ in a compact subset of $X^\circ$ (by an arbitrarily small amount). For later use, we also note that
  \begin{equation}
  \label{EqRImprovedVCap2}
    \breve\cK_b^* \cap \cK_b^* = 0;
  \end{equation}
  this can again be either checked explicitly for $b=b_0$ and the unmodified operator $L_{g_{b_0},0}$, and thus holds by continuity for nearby $b$; or it can be arranged by slightly modifying $t_*$.

  It remains to arrange~\eqref{EqRImprovedV2} and the extra condition $\cK_b\oplus\breve\cK_b\subset V_{b,2}$. Assuming the latter, we have $V_b(\cK_b)=V_{b,1}(\cK_b)=\ran V_{b,1}$, and~\eqref{EqRImprovedV2} is then equivalent to
  \[
    \wh{L_b}(0)^*\breve\cK_b^* \cap (\wh{L_b}(0)+V_{b,2})^*(\ker V_{b,1}^*) = 0.
  \]
  We arrange the stronger condition in which $\ker V_{b,1}^*\subset\Hbsupp^{-s+1,-\ell+2}$ is replaced by the full space $\Hbsupp^{-s+1,\ell+2}$; this is then equivalent to the requirement that
  \[
    \wh{L_b}(0)^*\breve\cK_b^* \to \bigl(\ker(\wh{L_b}(0)+V_{b,2})\bigr)^*
  \]
  be injective. (Note that in view of~\eqref{EqRImprovedVCap2}, the space on the left is 4-dimensional and depends continuously on $b$.) To arrange this, choose a continuous (in $b$) basis $\{\breve h_{b,1}^*,\ldots,\breve h_{b,4}^*\}$ of $\breve\cK_b^*$, and continuously select $h'_{b,1},\ldots,h'_{b,4}\in\CIc$ with the property that $H'_b:=\mathspan\{h'_{b,1},\ldots,h'_{b,4}\}$ and $\cK_b\oplus\breve\cK_b$ have trivial intersection, and such that $\la\wh{L_b}(0)^*\breve h_{b,i}^*,h'_{b,j}\ra=\delta_{i j}$. We then want to define $V_{b,2}$ so that $h'_{b,j}\in\ker(\wh{L_b}(0)+V_{b,2})$ for $j=1,\ldots,4$; this holds provided we define $V_{b,2}$ as a rank $4$ operator which assigns $h'_{b,j}\mapsto-\wh{L_b}(0)h'_{b,j}$, while on a complement of $H'_b$ depending continuously on $b$ and containing $\cK_b\oplus\breve\cK_b$, we let $V_{b,2}\equiv 0$.
\end{proof}

\begin{proof}[Proof of Theorem~\ref{ThmR}]
  We continue where the proof of Proposition~\ref{PropRExist} ended, and in particular use the notation~\eqref{EqROp}--\eqref{EqRInv2}; however, now we use $\check L_b(0)=\wh{L_b}(0)+V_b$ as well as the complement $\wt\cK_b^\perp$ of $\wt\cK_b$ defined by Lemma~\ref{LemmaRImprovedV}. Note that the arguments in the proof of Proposition~\ref{PropRExist} are unaffected by our more careful choice of $V_b$ and $\wt\cK_b$, $\wt\cK_b^\perp$.

  \pfstep{Quadratic vanishing of $L_{0 1}$ and $L_{1 0}$.} Lemma~\ref{LemmaRImprovedV} now gives
  \[
    \sigma^{-1}L_{0 1}(b,\sigma),\ \sigma^{-1}L_{1 0}(b,\sigma) = o(1),\quad \sigma\to 0.
  \]
  This can be strengthened further: for $h^*\in\cK_{b,s}^*$, we compute, using~\eqref{EqRpasigma},
  \begin{align}
    (\check L_b(\sigma)^{-1})^*V_b^*h^* &= h^* + \bigl((\check L_b(\sigma)^{-1})^*-(\check L_b(0)^{-1})^*\bigr)\check L_b(0)^*h^* \nonumber\\
  \label{EqRcheckLVstarImpr0}
      &= h^* - (\check L_b(\sigma)^{-1})^*\bigl(\sigma\pa_\sigma\wh{L_b}(0)^*+\tfrac{\sigma^2}{2}\pa_\sigma^2\wh{L_b}(0)^*\bigr)h^* \\
      &= h^* - i\sigma(\check L_b(\sigma)^{-1})^*\wh{L_b}(0)^*\breve h^* - \tfrac{\sigma^2}{2}(\check L_b(\sigma)^{-1})^*\pa_\sigma^2\wh{L_b}(0)^*h^* \nonumber\\
      &= h^* - i\sigma\breve h^* + i\sigma(\check L_b(\sigma)^{-1})^*V_b^*\breve h^* + i\sigma(\check L_b(\sigma)^{-1})^*(\wh{L_b}(\sigma)^*-\wh{L_b}(0)^*)\breve h^* \nonumber\\
      &\quad - \tfrac{\sigma^2}{2}\pa_\sigma^2\wh{L_b}(0)^*h^* \nonumber\\
  \label{EqRcheckLVstarImpr}
  \begin{split}
      &= h^*-i\sigma(I-(\check L_b(0)^{-1})^*V_b^*)\breve h^* - \sigma^2\Bigl(\half\pa_\sigma^2\wh{L_b}(0)^*h^* \\
      &\quad - i(\check L_b(\sigma)^{-1})^*(\pa_\sigma\wh{L_b}(0)^*+\tfrac{\sigma}{2}\pa_\sigma^2\wh{L_b}(0)^*)(I-(\check L_b(0)^{-1})^*V_b^*)\breve h^*\Bigr);
  \end{split}
  \end{align}
  here, we used Lemma~\ref{LemmaCD0GenDual} to justify the rewriting of the fourth term in the penultimate line when passing to the last line. Thus, for $f\in\wt\cK_b^\perp$,
  \begin{equation}
  \label{EqRcheckL10}
    \la L_{1 0}(b,\sigma)f,h^*\ra = \big\la f,\bigl(I-(\check L_b(\sigma)^{-1})^*V_b^*\bigr)h^*\big\ra
  \end{equation}
  has a Taylor expansion in $\sigma$ up to quadratic terms, with $o(|\sigma|^2)$ remainder; in fact, the calculation~\eqref{EqRcheckLVstarImpr} and Lemma~\ref{LemmaRImprovedV} show that $L_{1 0}(b,\sigma)=\cO(|\sigma|^2)$ in operator norm.
  
  Similarly, for $h\in\cK_{b,s}$, we have
  \begin{align}
    &\wh{L_b}(\sigma)\check L_b(\sigma)^{-1}\check L_b(0)h \nonumber\\
      &\quad= \check L_b(0)h - V_b\check L_b(\sigma)^{-1}\check L_b(0)h \nonumber\\
   \label{EqR10exp0}
      &\quad= V_b\check L_b(\sigma)^{-1}(\sigma\pa_\sigma\wh{L_b}(0)+\tfrac{\sigma^2}{2}\pa_\sigma^2\wh{L_b}(0))h \\
      &\quad= i\sigma V_b\check L_b(\sigma)^{-1}\wh{L_b}(0)\breve h + \tfrac{\sigma^2}{2}V_b\check L_b(\sigma)^{-1}\pa_\sigma^2\wh{L_b}(0)h \nonumber\\
      &\quad= i\sigma(V_b-V_b\check L_b(0)^{-1}V_b)\breve h \nonumber\\
      &\quad\qquad + \sigma^2 V_b\check L_b(\sigma)^{-1}\bigl(\half\pa_\sigma^2\wh{L_b}(0)h - i(\pa_\sigma\wh{L_b}(0)+\tfrac{\sigma}{2}\pa_\sigma^2\wh{L_b}(0))\breve h\bigr) \nonumber\\
  \label{EqRL10exp}
      &\quad= \sigma^2 V_b\check L_b(\sigma)^{-1}\bigl(\half\pa_\sigma^2\wh{L_b}(0)h - i(\pa_\sigma\wh{L_b}(0)+\tfrac{\sigma}{2}\pa_\sigma^2\wh{L_b}(0))\breve h\bigr),
  \end{align}
  where we used Lemma~\ref{LemmaL0Lead} to justify the penultimate equality, and Lemma~\ref{LemmaRImprovedV} for the final one. Recall now that $L_{1 1}$ and $L_{2 1}$ vanish quadratically at $\sigma=0$, and in fact have a Taylor expansion with $o(|\sigma|^2)$ error term, by~\eqref{EqR1221Quadr}. We thus conclude that
  \[
    L_{0 1}(b,\sigma)\check L_b(0)h=\wh{L_b}(\sigma)^{-1}\check L_b(\sigma)^{-1}\check L_b(0)h-(L_{1 1}(b,\sigma)+L_{2 1}(b,\sigma))\check L_b(0)h
  \]
  has a Taylor expansion modulo $o(|\sigma|^2)$, with vanishing linear (in $\sigma$) term.

  In summary, in the expression~\eqref{EqROp} for $\wh{L_b}(\sigma)\check L_b(\sigma)^{-1}$, we now have
  \begin{equation}
  \label{EqRL0110Impr}
    \wt L_{0 1}=\sigma\wt L_{0 1}',\quad
    \wt L_{1 0}=\sigma\wt L_{1 0}',
  \end{equation}
  with $\wt L_{0 1}'$ and $\wt L_{1 0}'$ continuous at $\sigma=0$ and of size $\cO(1)$ in operator norm. In view of~\eqref{EqRInv2}, this immediately implies that the entries $\sigma^{-1}\wt R_{0 1}$ and $\sigma^{-1}\wt R_{1 0}$ of the inverse $\wt R_b(\sigma)$ in~\eqref{EqRInv1} are in fact regular (bounded and continuous) at $\sigma=0$.

  For the remainder of the proof, to simplify notation, we shall denote by `$\cO^k$' operators which are $\sigma^k$ times a $\sigma$-dependent family of operators which is continuous at $\sigma=0$.

  \pfstep{Control of $\wt R_{1 1}\bmod\cO^2$.} This is the most delicate calculation; it requires calculating $\wt L_{1 1}^\natural\bmod\cO^2=\wt L_{1 1}^\sharp-\sigma\wt L_{1 2}^\sharp\wt L_{2 2}^{-1}\wt L_{2 1}^\sharp$. The factor $\wt L_{2 2}^{-1}$ is already controlled modulo $\cO^1$ by~\eqref{EqR22InvErr}; it thus remains to control
  \[
    \wt L_{1 1}^\sharp\equiv\wt L_{1 1}-\sigma^2\wt L'_{1 0}L_{0 0}^{-1}\wt L'_{0 1}\equiv \wt L_{1 1} \bmod\cO^2,
  \]
  as well as
  \[
    \wt L_{1 2}^\sharp = \wt L_{1 2}-\sigma\wt L_{1 0}'L_{0 0}^{-1}\wt L_{0 2} \equiv \wt L_{1 2}\bmod\cO^1, \quad
    \wt L_{2 1}^\sharp = \wt L_{2 1}-\sigma\wt L_{2 0}L_{0 0}^{-1}\wt L_{0 1}' \equiv \wt L_{2 1}\bmod\cO^1.
  \]

   \pfsubstep{Control of $\wt L_{1 1}$ modulo $\cO^2$.} We wish to expand $\wt L_{1 1}$ \emph{two orders} further than before, requiring a \emph{fourth order} Taylor expansion of the pairing
   \begin{equation}
   \label{EqR11deep0}
     \la \wh{L_b}(\sigma)\check L_b(\sigma)^{-1}(\check L_b(0)h),h^*\ra,\quad h\in\cK_{b,s},\ h^*\in\cK_{b,s}^*,
   \end{equation}
   see~\eqref{EqRbottomCalc}. As above, the relevant structure will be that $\pa_\sigma\wh{L_b}(0)h=i\wh{L_b}(0)\breve h$, with $\breve h$ having a $r^{-1}$ leading term by Lemma~\ref{LemmaL0Lead}; recall that such a leading term (rather than, say, $r^{-1}\log r$) is key for a rewriting~\eqref{EqRReParen} (with $\breve h$ taking the role of $\wt h$ there) to be valid. This allows the Taylor expansion to be taken one order further; using the same structure for dual states, i.e.\ $\pa_\sigma\wh{L_b}(0)^*h^*=i\wh{L_b}(0)^*\breve h^*$ plus Lemma~\ref{LemmaCD0GenDual}, gives another order.
    
  Concretely then, starting with~\eqref{EqRbottomCalc}, we wish to calculate
  \begin{equation}
  \label{EqR11deep}
    \la V_b\check L_b(\sigma)^{-1}\pa_\sigma\wh{L_b}(0)h,h^*\ra + \tfrac{\sigma}{2}\la V_b\check L_b(\sigma)^{-1}\pa_\sigma^2\wh{L_b}(0)h,h^*\ra\quad\bmod \cO^3.
  \end{equation}
  Let us begin with the second term, which, for later use, we expand further than necessary at this point. Write $f_2=\pa_\sigma^2\wh{L_b}(0)h\in\Hbext^{\infty,5/2-}$ (which gains two orders of decay relative to $h\in\Hbext^{\infty,1/2-}$ by Lemma~\ref{LemmaOpLinFT}) and $h_2=\check L_b(0)^{-1}f_2\in\rho\CI+\Hbext^{\infty,1/2-}$, then $2\sigma^{-1}$ times the second term in~\eqref{EqR11deep} is
  \begin{equation}
  \label{EqR11deepErr}
  \begin{split}
    \la V_b\check L_b(\sigma)^{-1}f_2,h^*\ra &= \la V_b\check L_b(0)^{-1}f_2,h^*\ra - \la V_b\check L_b(\sigma)^{-1}\sigma\pa_\sigma\wh{L_b}(0)h_2,h^*\ra \\
     &\qquad - \la V_b\check L_b(\sigma)^{-1}\tfrac{\sigma^2}{2}\pa_\sigma^2\wh{L_b}(0)h_2,h^*\ra.
  \end{split}
  \end{equation}
  In the last term, we can integrate by parts and expand $(\check L_b(\sigma)^{-1})^*V_b^*h^*$ up to $\cO^2$ errors using~\eqref{EqRcheckLVstarImpr}, contributing a $\sigma^2$ leading order term and a $\cO^4$ remainder to~\eqref{EqR11deepErr}. The second term of~\eqref{EqR11deepErr} times $\sigma^{-1}$ on the other hand is, modulo $\cO^2$, equal to (using~\eqref{EqRcheckLVstar})
  \begin{align*}
    &-\la\pa_\sigma\wh{L_b}(0)h_2,(\check L_b(\sigma)^{-1})^*V_b^* h^*\ra \\
    &\quad = -\la\pa_\sigma\wh{L_b}(0)h_2,h^*\ra + \la\pa_\sigma\wh{L_b}(0)h_2,(\check L_b(\sigma)^{-1})^*\sigma\pa_\sigma\wh{L_b}(0)^*h^*\ra,
  \end{align*}
  with the second term equal to
  \begin{align*}
    &-i\sigma\la\pa_\sigma\wh{L_b}(0)h_2,(\check L_b(\sigma)^{-1})^*\wh{L_b}(0)^*\breve h^*\ra \\
    &\qquad = i\sigma\la\pa_\sigma\wh{L_b}(0)h_2,(\check L_b(\sigma)^{-1})^*V_b^*\breve h^*\ra - i\sigma\la\pa_\sigma\wh{L_b}(0)h_2,\breve h^*\ra \\
    &\qquad\qquad - i\sigma\la\pa_\sigma\wh{L_b}(0)h_2,((\check L_b(\sigma)^{-1})^*-(\check L_b(0)^{-1})^*)\check L_b(0)^*\breve h^*\ra.
  \end{align*}
  Using Lemma~\ref{LemmaCD0GenDual}, the last summand can be re-parenthesized to
  \[
    i\sigma\la\pa_\sigma\wh{L_b}(0)h_2,(\check L_b(\sigma)^{-1})^*(\wh{L_b}(\sigma)-\wh{L_b}(0))\breve h^*\ra = \cO^2.
  \]
  Altogether, we have shown that~\eqref{EqR11deepErr} has a Taylor expansion at $\sigma=0$ up to a $\cO^3$ error. Thus, the second term in~\eqref{EqR11deep} is controlled modulo $\cO^4$.

  The analysis of the first term of~\eqref{EqR11deep} is similar: it equals
  \[
    i\la V_b\check L_b(\sigma)^{-1}\wh{L_b}(0)\breve h,h^*\ra = -i\la V_b\check L_b(\sigma)^{-1}(V_b\breve h),h^*\ra + i\la V_b\check L_b(\sigma)^{-1}\check L_b(0)\breve h,h^*\ra;
  \]
  the first term has an expansion modulo $\cO^3$ just like~\eqref{EqR11deepErr}, while the second one has a leading term $i\la V_b\breve h,h^*\ra$ plus an error (again using Lemma~\ref{LemmaL0Lead})
  \[
    i\la V_b\check L_b(\sigma)^{-1}(\sigma\pa_\sigma\wh{L_b}(0)+\tfrac{\sigma^2}{2}\pa_\sigma^2\wh{L_b}(0))\breve h,h^*\ra = i\la(\sigma\pa_\sigma\wh{L_b}(0)+\tfrac{\sigma^2}{2}\pa_\sigma^2\wh{L_b}(0))\breve h,(\check L_b(\sigma)^{-1})^*V_b^*h^*\ra.
  \]
  Using~\eqref{EqRcheckLVstarImpr}, this has an expansion modulo $\cO^3$.

  \pfsubstep{Control of $\wt L_{1 2}$ and $\wt L_{2 1}$ modulo $\cO^1$.} For $\wt L_{1 2}$, we need to compute, modulo $\cO^3$, the pairing $\la\wh{L_b}(\sigma)\check L_b(\sigma)^{-1}\check L_b(0)h,h^*\ra$ for $h\in\cK_{b,s}$ and $h^*\in\cK_{b,v}^*$. Using~\eqref{EqRL10exp}, we need
  \[
    \half\sigma^2\la V_b\check L_b(\sigma)^{-1}\pa_\sigma^2\wh{L_b}(0)h,h^*\ra - i\sigma^2\la V_b\check L_b(\sigma)^{-1}\pa_\sigma\wh{L_b}(0)\breve h,h^*\ra \bmod \cO^3.
  \]
  Integrating by parts in each term, and using the equality~\eqref{EqRcheckLVstarImpr0} (which is valid also for $h^*\in\cK_{b,v}^*$) gives a $\sigma^2$ leading term plus a $\cO^3$ remainder. The argument for $\wt L_{2 1}$ is analogous, now using the full strength of~\eqref{EqRL10exp} but only the expansion~\eqref{EqR10exp0} for $h\in\cK_{b,v}$.

  \pfstep{Control of $\wt R_{1 2}$, $\wt R_{2 1}$, $\wt R_{2 2}$ modulo $\cO^1$.} We now use the explicit formulas~\eqref{EqRInv2}. The expansion for $\wt R_{2 2}$ follows from that of $\wt L_{2 2}^\flat\equiv\wt L_{2 2}\bmod\cO^1$, which was already proved in~\eqref{EqR22InvErr} (where an inspection of the argument shows that the error term is indeed $\sigma$ times a family of operators which is continuous at $\sigma=0$). For $\wt R_{1 2}$ and $\wt R_{2 1}$ on the other hand, we use the expansion modulo $\cO^1$ of $\wt L_{2 2}^\flat$ and $\wt L_{1 2}$, resp.\ $\wt L_{2 1}$, together with that of $\wt R_{1 1}$.

  We have thus established
  \begin{equation}
  \label{EqwtRb}
    \wt R_b(\sigma) = \bigl(\wh{L_b}(\sigma)\check L_b(\sigma)^{-1}\bigr)^{-1}
    =\begin{pmatrix}
       \wt R_{0 0} & \wt R_{0 1}' & \wt R_{0 2} \\
       \wt R_{1 0}' & \sigma^{-2}\wt P_{1 1} + \wt R_{1 1}' & \sigma^{-1}\wt P_{1 2} + \wt R_{1 2}' \\
       \wt R_{2 0} & \sigma^{-1}\wt P_{2 1} + \wt R_{2 1}' & \sigma^{-1}\wt P_{2 2} + \wt R_{2 2}',
     \end{pmatrix}
  \end{equation}
  where the $\wt R_{i j}=\wt R_{i j}(b,\sigma)$ and $\wt R_{i j}'=\wt R_{i j}'(b,\sigma)$ have uniformly bounded operator norm for $(b,\sigma)$ near $(b_0,0)$ (and are in fact continuous at $\sigma=0$), while $\wt P_{1 1}=\wt P_{1 1}(b,\sigma)$ is linear in $\sigma$, and for $(i,j)=(1,2),(2,1),(2,2)$, $\wt P_{i j}=\wt P_{i j}(b)$ is $\sigma$-independent.

  \pfstep{Expansion with $\cO^0$ errors for $\wh{L_b}(\sigma)^{-1}$.} We now compute $\wh{L_b}(\sigma)^{-1} = \check L_b(\sigma)^{-1}\wt R_b(\sigma)$. To get the desired expansion, we merely need to show that $\check L_b(\sigma)^{-1}|_{\wt\cK_{b,s}}$, resp.\ $\check L_b(\sigma)^{-1}|_{\wt\cK_{b,v}}$ has a Taylor expansion modulo $\cO^2$ remainder, resp.\ $\cO^1$. But for $h\in\cK_b$,
  \begin{equation}
  \label{EqRCheckLSing1}
    \check L_b(\sigma)^{-1}\check L_b(0)h = h - \sigma\check L_b(\sigma)^{-1}\pa_\sigma\wh{L_b}(0)h + \cO^2 = h + \cO^1.
  \end{equation}
  For $h\in\cK_{b,s}$, we can expand further: using Lemma~\ref{LemmaRImprovedV}, we have
  \begin{equation}
  \label{EqCheckLSing2}
    \check L_b(\sigma)^{-1}\check L_b(0)h = h - i\sigma\breve h + i\sigma\check L_b(\sigma)^{-1}\bigl(\wh{L_b}(\sigma)-\wh{L_b}(0)\bigr)\breve h = h - i\sigma\breve h + \cO^2.
  \end{equation}

  \pfstep{Explicit form of the singular part of $\wh{L_b}(\sigma)^{-1}$.} The calculations~\eqref{EqRCheckLSing1}--\eqref{EqCheckLSing2} imply that the range of the singular coefficients of $\wh{L_b}(\sigma)^{-1}$ is contained in $\cK_b+\breve\cK_b$. We proceed to determine their full structure. This can be done by keeping track of the terms in the above Taylor expansions, or using a simple matching argument which we proceed to explain. Thus, let $f\in\Hbext^{s-1,\ell+2}$, and consider
  \[
    h(\sigma) = \wh{L_b}(\sigma)^{-1}f =: -\sigma^{-2}h_{-2} + i\sigma^{-1}h_{-1} + h_0 + \wt h(\sigma),
  \]
  where $\wt h(\sigma)=o(1)$ in $\Hbext^{s-\eps,\ell-\eps}$, $\eps>0$, as $\sigma\to 0$. Then
  \begin{equation}
  \label{EqREqn}
  \begin{split}
    f = \wh{L_b}(\sigma)h(\sigma) &= -\sigma^{-2}\wh{L_b}(0)h_{-2} \\
      &\quad + \sigma^{-1}\bigl(-\pa_\sigma\wh{L_b}(0)h_{-2} + i\wh{L_b}(0)h_{-1}\bigr) \\
      &\quad + \bigl(-\half\pa_\sigma^2\wh{L_b}(0)h_{-2} + i\pa_\sigma\wh{L_b}(0)h_{-1} + \wh{L_b}(0)h_0\bigr) \\
      &\quad + \wh{L_b}(0)\wt h(\sigma) + \sigma\bigl(\tfrac{i}{2}\pa_\sigma^2\wh{L_b}(0)h_{-1} + \pa_\sigma\wh{L_b}(0)h_0\bigr) \\
      &\quad + \bigl(\wh{L_b}(\sigma)-\wh{L_b}(0)\bigr)\wt h(\sigma) + \tfrac{\sigma^2}{2}\pa_\sigma^2\wh{L_b}(0)h_0.
  \end{split}
  \end{equation}
  Therefore, $h_{-2}\in\cK_b$ and $\pa_\sigma\wh{L_b}(0)h_{-2}=i\wh{L_b}(0)h_{-1}$; this implies
  \[
    h_{-2}=h_s\in\cK_{b,s}, \quad
    h_{-1}=\breve h_s+h'_s+h_v,
  \]
  for some $h'_s\in\cK_{b,s}$, $h_v\in\cK_{b,v}$, which proceed to determine.
  
  Consider the equality of constant coefficients,
  \begin{equation}
  \label{EqRh0}
  \begin{split}
    \wh{L_b}(0)h_0 &= f + \half\pa_\sigma^2\wh{L_b}(0)h_{-2} - i\pa_\sigma\wh{L_b}(0)h_{-1} \\
      &=f + \wh{L_b}(0)\breve h'_s + \half\pa_\sigma^2\wh{L_b}(0)h_s - i\pa_\sigma\wh{L_b}(0)\breve h_s - i\pa_\sigma\wh{L_b}(0)h_v,
  \end{split}
  \end{equation}
  where $\breve h'_s$ is defined relative to $h'_s$ as in Definition~\ref{DefRBreve}. Pairing this with $h^*\in\cK_b^*$ (which annihilates $\ran\wh{L_b}(0)$), we get
  \begin{equation}
  \label{EqRPair}
    \la f,h^*\ra = \big\la \half\bigl([[L_b,t_*],t_*]h_s+2[L_b,t_*]\breve h_s\bigr) + [L_b,t_*]h_v, h^*\big\ra = k_b((h_s,h_v),h^*)
  \end{equation}
  using the pairing $k_b\colon\cK_b\times\cK_b^*\to\C$ from Definition~\ref{DefRBreve}. Since this is non-degenerate, we can uniquely determine $h_s$ (thus $\breve h_s$) and $h_v$ from~\eqref{EqRPair}.

  Consider again~\eqref{EqRh0}; on the right hand side, the only term not yet determined by $f$ is $\breve h_s'$ in the second term. Let us drop this term and consider the PDE
  \begin{equation}
  \label{EqRPair2}
    \wh{L_b}(0)\bar h_0=f+\half\pa_\sigma^2\wh{L_b}(0)h_s-i\pa_\sigma\wh{L_b}(0)\breve h_s-i\pa_\sigma\wh{L_b}(0)h_v;
  \end{equation}
  it can be solved for $\bar h_0\in\Hbext^{s,\ell}$ because of~\eqref{EqRPair}; since the right hand side is uniquely determined by $f$, so is $\bar h_0\bmod\cK_b$. By~\eqref{EqRh0}, we have
  \[
    h'':=\bar h_0-(h_0-\breve h_s')\in\cK_b.
  \]
  Considering then the fourth line of~\eqref{EqREqn} and pairing with $h_s^*\in\cK_{b,s}^*$, the term involving $\wt h(\sigma)$ does not contribute; we obtain, upon multiplication by $-i$ and writing $h_0=\bar h_0+\breve h_s'-h''$,
  \begin{align*}
    0 &= -i\big\la\tfrac{i}{2}\pa_\sigma^2\wh{L_b}(0)h_{-1} + \pa_\sigma\wh{L_b}(0)h_0, h_s^*\big\ra \\
      &= \big\la \tfrac{1}{2}\pa_\sigma^2\wh{L_b}(0)(\breve h_s+h'_s+h_v) - i\pa_\sigma\wh{L_b}(0)\breve h'_s - i\pa_\sigma\wh{L_b}(0)\bar h_0 + i\pa_\sigma\wh{L_b}(0)h'', h_s^* \big\ra.
  \end{align*}
  Integration by parts of the fourth term, using $\pa_\sigma\wh{L_b}(0)^*h_s^*=i\wh{L_b}(0)^*\breve h_s^*$, and integrating by parts again, one gets zero since $\wh{L_b}(0)h''=0$. Therefore,
  \[
    \big\la\half\bigl([[L_b,t_*],t_*]h'_s+2[L_b,t_*]\breve h'_s\bigr),h_s^*\big\ra = -\big\la\half[[L_b,t_*],t_*](\breve h_s+h_v)+[L_b,t_*]\bar h_0,h_s^*\big\ra,
  \]
  which uniquely determines $h'_s$. This completes the proof of Theorem~\ref{ThmR}.
\end{proof}

\section{Regularity of the resolvent in the spectral parameter}
\label{SReg}

\emph{We continue using the notation from~\S\ref{SR}, so $L_b=L_{g_b,E}$ is given by~\eqref{EqRLb}, with $\gamma$ fixed as in~\eqref{EqRgamma}.} We now study in detail the regular part $L_b^-(\sigma)$ of the resolvent $\wh{L_b}(\sigma)$ defined in Theorem~\ref{ThmR}. In~\S\ref{SsRegLow}, we prove that its derivatives of order one, resp.\ two are bounded by small inverse powers of $|\sigma|$ when acting between slightly relaxed function spaces; see Theorem~\ref{ThmRegL} and Corollary~\ref{CorRegL}. Away from $\sigma=0$, the regular part is smooth, as we show in~\S\ref{SsRegNon0}, together with quantitative high energy estimates for its derivatives. In~\S\ref{SsRegCon} finally, we show that $L_b^-(\sigma)$ is conormal at $\sigma=0$, i.e.\ satisfies the same bounds as $L_b^-(\sigma)$ (and its up to second derivatives) after any number of applications of $\sigma\pa_\sigma$.

\subsection{Regularity at low frequencies}
\label{SsRegLow}

By Lemma~\ref{LemmaOpLinFT}, and using that our constraint damping is compactly supported, we have (omitting the bundle $S^2\,\wt{\Tsc^*}X$ from the notation)
\begin{equation}
\label{EqLStruct}
\begin{split}
  \wh{L_b}(\sigma) &= 2\sigma\rho(\rho D_\rho+i) + \wh{L_b}(0) + \cR_b(\sigma), \\
  &\quad \cR_b(\sigma)\in \sigma\rho^3\Diffb^1(X) + \sigma\rho^2\CI(X) + \sigma^2\rho^2\CI(X), \quad
  \wh{L_b}(0)\in\rho^2\Diffb^2(X).
\end{split}
\end{equation}
The operator $\check L_b(\sigma)=\wh{L_b}(\sigma)+V_b$ differs from this by the $\sigma$-independent smoothing operator $V_b$ with Schwartz kernel of compact support in $X^\circ\times X^\circ$, see Lemma~\ref{LemmaRImprovedV}

\begin{thm}
\label{ThmRegL}
  Let $\ell\in(-\tfrac32,-\half)$, $\eps\in(0,1)$, $\ell+\eps\in(-\half,\half)$, and $s-\eps>\tfrac72$.
  \begin{enumerate}
  \item For $\sigma\neq 0$, $\Im\sigma\geq 0$, the operator $\pa_{\sigma}L^-_b(\sigma)$ maps $\Hbext^{s-1,\ell+2}\to \Hbext^{s-\eps,\ell+\eps-1}$. For $0<|\sigma|\leq\sigma_0$, $\Im\sigma\geq 0$, and $\sigma_0$ sufficiently small, we have a uniform estimate
    \begin{equation}
    \label{EqFDP}
      \|\pa_{\sigma}L^-_b(\sigma)\|_{\Hbext^{s-1,\ell+2}\to \Hbext^{s-\eps,\ell+\eps-1}}\lesssim |\sigma|^{-\eps}.
    \end{equation}
  \item For $\sigma\neq 0$, $\Im\sigma\geq 0$, the operator $\pa_{\sigma}^2 L^-_b(\sigma)$ maps $\Hbext^{s-1,\ell+2}\to \Hbext^{s-1-\eps,\ell+\eps-1}$. For $0<|\sigma|\leq\sigma_0$, $\Im\sigma\geq 0$, and $\sigma_0$ sufficiently small, we have a uniform estimate
    \begin{equation}
    \label{EqSDP}
      \|\pa_{\sigma}^2 L^-_b(\sigma)\|_{\Hbext^{s-1,\ell+2}\to \Hbext^{s-1-\eps,\ell+\eps-1}}\lesssim |\sigma|^{-1-\eps}.
    \end{equation}
  \end{enumerate}
\end{thm}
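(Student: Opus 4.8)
\textbf{Plan of proof for Theorem~\ref{ThmRegL}.}

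The plan is to differentiate the formal resolvent identity $\wh{L_b}(\sigma)^{-1}=\check L_b(\sigma)^{-1}\wt R_b(\sigma)$ from the proof of Theorem~\ref{ThmR}, exploiting that both factors are already controlled modulo lower-order terms; I work throughout with the decomposition $\wh{L_b}(\sigma)^{-1}=P_b(\sigma)+L_b^-(\sigma)$. The key observation is that the $\sigma$-derivatives of $\wh{L_b}(\sigma)$ are \emph{better behaved at $r=\infty$} than the operator itself: by \eqref{EqLStruct}, $\pa_\sigma\wh{L_b}(\sigma)=2\rho(\rho D_\rho+i)+\pa_\sigma\cR_b(\sigma)$ lies in $\rho\Diffb^1$, and its normal operator $2\rho(\rho D_\rho+i)$ gains an extra order of decay on functions with an $r^{-1}$ leading term (as repeatedly used in~\S\S\ref{SR}); and $\pa_\sigma^2\wh{L_b}(\sigma)\in\rho^2\CI(X)$, losing no $b$-regularity and gaining two orders of decay. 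First I would establish that $\check L_b(\sigma)^{-1}$ is H\"older-$\alpha$ (uniformly in $b$) at $\sigma=0$ when the target space is relaxed by $\eps$ — this was already flagged in the parenthetical remark after~\eqref{EqRKerCont2} and follows from the proof of Proposition~\ref{PropRegCheckL} (referenced in the excerpt); more precisely I need that $\check L_b(\sigma)^{-1}$ and its first two $\sigma$-derivatives, read into appropriately relaxed spaces, satisfy bounds $\cO(1)$, $\cO(|\sigma|^{-\eps})$, $\cO(|\sigma|^{-1-\eps})$ respectively. These come from writing $\pa_\sigma\check L_b(\sigma)^{-1}=-\check L_b(\sigma)^{-1}(\pa_\sigma\wh{L_b}(\sigma))\check L_b(\sigma)^{-1}$ and using that $\pa_\sigma\wh{L_b}(\sigma)\in\rho\Diffb^1$ maps $\Hbext^{s-\eps,\ell+\eps}$ into $\Hbext^{s-1-\eps,\ell+1+\eps}$, which is two weight-orders better than the target $\Hbext^{s-1,\ell+2}$ of $\check L_b(\sigma)^{-1}$ requires for uniform invertibility, so each extra derivative costs only one factor of the "resolvent gain" — and that gain is $|\sigma|^{-\eps}$ (not $|\sigma|^{-1}$) precisely because we relax the spaces and the model operator $2\sigma\rho(\rho D_\rho+i)$ is being inverted in a range where its contribution is a perturbation; this is the content of the low-energy estimate~\eqref{EqOpFinite} of Theorem~\ref{ThmOp} (with the weight window $\ell+\eps\in(-\tfrac12,\tfrac12)$) applied to nested spaces.

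Next I would differentiate $\wt R_b(\sigma)=(\wh{L_b}(\sigma)\check L_b(\sigma)^{-1})^{-1}$ entrywise in the block form~\eqref{EqwtRb}. The singular entries $\sigma^{-2}\wt P_{1 1}$, $\sigma^{-1}\wt P_{i j}$ are explicit and are subtracted off into $P_b(\sigma)$; what remains in $L_b^-(\sigma)$ after composing with $\check L_b(\sigma)^{-1}$ are the "regular" blocks $\wt R_{i j}$, $\wt R_{i j}'$, together with the part of $\check L_b(\sigma)^{-1}\wt P_{i j}$ that is regular once one uses the Taylor expansions \eqref{EqRCheckLSing1}--\eqref{EqCheckLSing2} (which produce the exact singular tensors $h_s$, $\breve h_s$, $h_s'$, $h_v$ absorbed into $P_b(\sigma)$, leaving a genuinely bounded remainder). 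The point is that each of these remainder blocks is built from finitely many compositions of $\check L_b(\sigma)^{-1}$, the smoothing operators $V_b$, and the differential operators $\pa_\sigma^j\wh{L_b}(\sigma)\in\rho^{\max(1,j)}\Diffb^{\max(1,2-j)}$, applied to elements lying in $\rho\CI+\Hbext^{\infty,1/2-}$ (the crucial $r^{-1}$ leading-term structure of generalized zero modes and dual zero modes from Lemmas~\ref{LemmaL0Lead}, \ref{LemmaCD0GenDual}, \ref{LemmaCD0Dual}). Differentiating such a composition once lands one factor of $\pa_\sigma\check L_b(\sigma)^{-1}$ or $\pa_\sigma\wh{L_b}(\sigma)$; the former costs $|\sigma|^{-\eps}$ by the previous paragraph and the latter is smoothing in the weight, so the product stays bounded in $\Hbext^{s-\eps,\ell+\eps-1}$ by $|\sigma|^{-\eps}$. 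Differentiating twice produces at worst a term with two factors of $\pa_\sigma\check L_b(\sigma)^{-1}$ (each $|\sigma|^{-\eps}$, together $|\sigma|^{-2\eps}$, which is $\lesssim|\sigma|^{-1-\eps}$ for $\eps<1$) or one factor of $\pa_\sigma^2\check L_b(\sigma)^{-1}$ ($|\sigma|^{-1-\eps}$); the loss of one further order of $b$-regularity in passing from~\eqref{EqFDP} to~\eqref{EqSDP} is exactly the $\Hbext^{s-1-\eps}$ versus $\Hbext^{s-\eps}$ on the target, accounting for the extra $\Diffb^1$ factor that $\pa_\sigma^2$ never removes but $\pa_\sigma$ does (via the improved normal operator). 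Away from $\sigma=0$ the bounds are uniform on $0<|\sigma|\le\sigma_0$ because $\check L_b(\sigma)$ is invertible with the stated continuity from Lemma~\ref{LemmaRImprovedV} and all estimates are stable under the compactly supported perturbation $V_b$.

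The main obstacle I anticipate is bookkeeping the \emph{interaction} between the subtraction of the principal part $P_b(\sigma)$ and the differentiation: one must verify that the singular coefficients $h_s,\breve h_s,h_s',h_v$ in~\eqref{EqRPrincipal}, which are determined by the non-degenerate pairings $k_b$ in~\eqref{EqRSing1}--\eqref{EqRBarh}, are $\sigma$-\emph{independent} (they are, by construction — they solve $\sigma$-free linear systems) so that $\pa_\sigma P_b(\sigma)$ and $\pa_\sigma^2 P_b(\sigma)$ contribute only the explicit powers $\sigma^{-2},\sigma^{-3}$ against fixed finite-rank operators, and these are precisely the terms one does \emph{not} include in $L_b^-(\sigma)$. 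Concretely this means redoing the matching argument at the end of the proof of Theorem~\ref{ThmR} one more order: expanding $\wh{L_b}(\sigma)^{-1}f$ with a $\sigma^1$ term included, verifying via the same integration-by-parts identities $\pa_\sigma\wh{L_b}(0)^*h^*=i\wh{L_b}(0)^*\breve h^*$ (Lemma~\ref{LemmaCD0GenDual}) and the non-degeneracy of $k_b$ that the $\sigma^1$-coefficient of $L_b^-(\sigma)f$ is well-defined and lies in $\Hbext^{s-\eps,\ell+\eps-1}$, and checking that the remainder after subtracting through order $\sigma^1$ is $o(|\sigma|)$ — which is where the $r^{-1}$ leading-order structure and the relaxed weight window $\ell+\eps\in(-\tfrac12,\tfrac12)$ are both essential, since that is exactly the range in which $2\sigma\rho(\rho D_\rho+i)+\wh{L_b}(0)$ is uniformly invertible modulo the finite-dimensional obstructions already accounted for. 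Once that expansion is in hand, \eqref{EqFDP} and \eqref{EqSDP} follow by applying the $\check L_b(\sigma)^{-1}$-composition estimates above to the explicitly regular remainder.
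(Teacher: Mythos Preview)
Your plan is essentially the paper's approach: establish $|\sigma|^{-\eps}$ and $|\sigma|^{-1-\eps}$ bounds for $\pa_\sigma\check L_b(\sigma)^{-1}$ and $\pa_\sigma^2\check L_b(\sigma)^{-1}$ (Proposition~\ref{PropRegCheckL}), propagate these through the block entries of $\wt R_b(\sigma)$ via a systematic ``$\eps$-regular expansion'' framework (Definition~\ref{DefEpsReg}, Propositions~\ref{PropEpsReg} and~\ref{PropResReg}, Lemma~\ref{LemmaRegRInverse}), and then differentiate the resulting clean formula for $L_b^-(\sigma)$.

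Two points where your sketch diverges from the actual execution. First, the mechanism producing $|\sigma|^{-\eps}$ rather than $|\sigma|^{-1}$ is not quite ``the model operator is a perturbation in the relaxed weight window''; it is an \emph{interpolation} (Lemma~\ref{LemmaRefDiff1}): the composition $\rho(\rho D_\rho+i)\check L_b(\sigma)^{-1}$ is uniformly bounded $\Hbext^{s-1,\ell+2}\to\Hbext^{s-1,\ell+1}$ (no $\sigma$-loss, one weight lost), while rewriting $2\sigma\rho(\rho D_\rho+i)=\check L_b(\sigma)-\check L_b(0)+\cR$ gives a $|\sigma|^{-1}$ bound into $\Hbext^{s-2,\ell+2}$ (no weight lost, one derivative lost). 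Interpolating between these two endpoints yields the $|\sigma|^{-\eps}$ bound into $\Hbext^{s-1-\eps,\ell+1+\eps}$. This is the precise source of the $\eps$-dependence, and you should state it rather than appeal to the weight window alone.

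Second, your ``main obstacle'' paragraph is a red herring. There is no need to redo the matching argument from the end of Theorem~\ref{ThmR} to one higher order. Instead, one writes down the exact identity
\[
  L_b^-(\sigma)=\check L_b(\sigma)^{-1}\wt R_{b,\rm reg}(\sigma)-\check L_b(\sigma)^{-1}\bigl((\pa_\sigma\check L_b(0)+\tfrac{\sigma}{2}\pa_\sigma^2\check L_b(0))P_{b,1}+\tfrac12\pa_\sigma^2\check L_b(0)P_{b,2}\bigr),
\]
obtained by equating $\wt R_{b,\rm reg}+\wt R_{b,\rm sing}=\check L_b(\sigma)(P_b(\sigma)+L_b^-(\sigma))$ and Taylor-expanding $\check L_b(\sigma)$; the singular coefficients $P_{b,j}$ are $\sigma$-independent as you note, and their images under $\pa_\sigma^k\check L_b(0)$ land in $\Hbext^{\infty,3/2-}$ by Lemma~\ref{LemmaL0Lead}, so the second term is handled directly by Proposition~\ref{PropRegCheckL}. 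The first term is handled by differentiating the product and invoking the $\eps$-regularity of $\wt R_{b,\rm reg}$ (Proposition~\ref{PropResReg}). No further matching is required.
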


\begin{cor}
\label{CorRegL}
  Let $\ell,\eps,s$ be as in Theorem~\ref{ThmRegL}. Then
  \begin{equation}
  \label{EqInterpolRegL}
    L^-_b\in H^{3/2-\eps}\bigl((-\sigma_0,\sigma_0);\cL(\Hbext^{s-1,\ell+2}, \Hbext^{s-\max(\eps,1/2),\ell+\eps-1})\bigr).
  \end{equation}
\end{cor}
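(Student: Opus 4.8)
\textbf{Proof plan for Corollary~\ref{CorRegL}.} The statement should follow from Theorem~\ref{ThmRegL} together with the elementary boundedness of $L_b^-(\sigma)$ (with values in $\cL_{\rm op}(\Hbext^{s-1+\eps,\ell+2+\eps},\Hbext^{s-\eps,\ell-\eps})$) from Theorem~\ref{ThmR}, by interpolation in the $\sigma$ variable. The plan is to interpret the Sobolev regularity statement via the characterization $u\in H^{\alpha}(I;B)$, for $\alpha\in(1,2)$ and a Banach space $B$, iff $u\in H^1(I;B)$ and $\pa_\sigma u\in H^{\alpha-1}(I;B)$, and, for $\beta\in(0,1)$, $v\in H^\beta(I;B)$ iff $v\in L^2(I;B)$ and $\iint_{I\times I}\frac{\|v(\sigma)-v(\sigma')\|_B^2}{|\sigma-\sigma'|^{1+2\beta}}\,d\sigma\,d\sigma'<\infty$. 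Concretely, with $\alpha=\tfrac32-\eps$ and $B=\cL(\Hbext^{s-1,\ell+2},\Hbext^{s-\max(\eps,1/2),\ell+\eps-1})$, it suffices to show that $L_b^-$, $\pa_\sigma L_b^-$, and the fractional $(\alpha-1)=(\tfrac12-\eps)$-th derivative of $\pa_\sigma L_b^-$ are all in $L^2((-\sigma_0,\sigma_0);B)$; equivalently (by the Gagliardo seminorm characterization), that $\pa_\sigma L_b^-\in H^{1/2-\eps}((-\sigma_0,\sigma_0);B)$.

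First I would record that $L_b^-(\sigma)\in L^\infty((-\sigma_0,\sigma_0);B)\subset L^2((-\sigma_0,\sigma_0);B)$, which is immediate from the uniform operator-norm bound in Theorem~\ref{ThmR} (note $\Hbext^{s-\eps,\ell-\eps}\hra\Hbext^{s-\max(\eps,1/2),\ell+\eps-1}$ since $s-\eps\geq s-\max(\eps,1/2)$ and $\ell-\eps\geq\ell+\eps-1$ for $\eps\leq\tfrac12$, so $B$ is a weaker target than the one in Theorem~\ref{ThmR}). Next, the bound~\eqref{EqFDP} of Theorem~\ref{ThmRegL} gives $\|\pa_\sigma L_b^-(\sigma)\|_{\cL(\Hbext^{s-1,\ell+2},\Hbext^{s-\eps,\ell+\eps-1})}\lesssim|\sigma|^{-\eps}$, and since $\eps<\tfrac12$ this is square-integrable on $(-\sigma_0,\sigma_0)$; as $\Hbext^{s-\eps,\ell+\eps-1}\hra\Hbext^{s-\max(\eps,1/2),\ell+\eps-1}$, we get $\pa_\sigma L_b^-\in L^2((-\sigma_0,\sigma_0);B)$, i.e.\ $L_b^-\in H^1((-\sigma_0,\sigma_0);B)$.

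The main point is the fractional estimate: I would show the Gagliardo seminorm of $\pa_\sigma L_b^-$ of order $\tfrac12-\eps$ is finite, by splitting the double integral into the region $|\sigma-\sigma'|\geq\tfrac12\max(|\sigma|,|\sigma'|)$ and its complement. On the first region one uses the crude bound $\|\pa_\sigma L_b^-(\sigma)-\pa_\sigma L_b^-(\sigma')\|_B\lesssim |\sigma|^{-\eps}+|\sigma'|^{-\eps}$ together with the fact that $|\sigma-\sigma'|$ is comparable to the larger of the two, reducing to a convergent one-dimensional integral because $-2\eps-(1+2(\tfrac12-\eps))=-2$ is integrable against the extra factor coming from the comparability; on the complementary ("diagonal") region, where $\sigma$ and $\sigma'$ have comparable size, one interpolates between the second-derivative bound~\eqref{EqSDP}, $\|\pa_\sigma^2 L_b^-(\sigma)\|_{\cL(\Hbext^{s-1,\ell+2},\Hbext^{s-1-\eps,\ell+\eps-1})}\lesssim|\sigma|^{-1-\eps}$, and the first-derivative bound~\eqref{EqFDP}: by the fundamental theorem of calculus $\|\pa_\sigma L_b^-(\sigma)-\pa_\sigma L_b^-(\sigma')\|_{\cL(\Hbext^{s-1,\ell+2},\Hbext^{s-1-\eps,\ell+\eps-1})}\lesssim |\sigma-\sigma'|\sup_{[\sigma',\sigma]}|\cdot|^{-1-\eps}$, and combining this with the uniform-in-$|\sigma-\sigma'|$ bound $|\sigma|^{-\eps}+|\sigma'|^{-\eps}$ (valued in the same space, which embeds into $B$) yields, for the $B$-norm, a bound of the form $|\sigma-\sigma'|^{\theta}(\max(|\sigma|,|\sigma'|))^{-\eps-\theta}$ for any $\theta\in[0,1]$; choosing $\theta$ slightly larger than $\tfrac12-\eps$ makes the Gagliardo integrand integrable near the diagonal, while on the diagonal the remaining power of $\max(|\sigma|,|\sigma'|)$ is integrable since $-2\eps-2\theta>-1$ fails in general—so one must in fact split $\theta$ optimally, taking $\theta=\tfrac12-\eps+\delta$ in the off-diagonal radial integration and $\theta$ close to $\tfrac12-\eps$ where needed, balancing the two homogeneities. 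The bookkeeping of these two competing homogeneities (the $|\sigma-\sigma'|$ power controlling the singularity of the kernel $|\sigma-\sigma'|^{-1-2(1/2-\eps)}=|\sigma-\sigma'|^{-2+2\eps}$ on the diagonal, versus the $|\sigma|^{-\eps}$-type blow-up at $\sigma=0$) is the only real obstacle, and it works out precisely because the loss in~\eqref{EqSDP} over~\eqref{EqFDP} is exactly one power of $|\sigma|$, matched to one derivative. I would then invoke the standard equivalence of the Gagliardo seminorm with the $H^{1/2-\eps}$ norm for Banach-space-valued functions on an interval to conclude $\pa_\sigma L_b^-\in H^{1/2-\eps}((-\sigma_0,\sigma_0);B)$, hence $L_b^-\in H^{3/2-\eps}((-\sigma_0,\sigma_0);B)$, which is~\eqref{EqInterpolRegL}.
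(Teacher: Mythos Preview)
Your approach via the Gagliardo seminorm is genuinely different from the paper's. The paper instead translates~\eqref{EqFDP} and~\eqref{EqSDP} into memberships $L_b^-(\sigma)-L_b^-(0)\in|\sigma|^{3/2-\eps}\Hb^k([0,\sigma_0))$ for $k=1,2$ (with $\|u\|_{\Hb^k}^2=\sum_{j\leq k}\|(\sigma\pa_\sigma)^j u\|_{L^2(d\sigma)}^2$), uses Hardy's inequality to get $\sigma^k\Hb^k=\dot H^k$ and hence $\sigma^\alpha\Hb^{\alpha'}\subset\dot H^{\min(\alpha,\alpha')}$ by interpolation, then interpolates between the $k=1$ and $k=2$ memberships in the $\Hb$-scale, and finally glues across $\sigma=0$ using $L_b^-(\pm 0)=L_b^-(0)$. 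It also treats $\eps\in[\tfrac12,1)$ by a separate, simpler argument using only~\eqref{EqFDP}.

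Your argument has two concrete gaps. First, it only addresses $\eps<\tfrac12$: the step ``since $\eps<\tfrac12$ this is square-integrable'' is essential to your $H^1$ membership, but the corollary covers all $\eps\in(0,1)$ (with target regularity $s-\eps$ when $\eps\geq\tfrac12$), and you do not treat that range. Second, the interpolation on the near-diagonal region is mis-stated. The two bounds you combine land in \emph{different} target spaces---the triangle-inequality bound from~\eqref{EqFDP} in $\cL(\cdot,\Hbext^{s-\eps,\ell+\eps-1})$, the mean-value bound from~\eqref{EqSDP} in $\cL(\cdot,\Hbext^{s-1-\eps,\ell+\eps-1})$---so interpolating to hit $B=\cL(\cdot,\Hbext^{s-1/2,\ell+\eps-1})$ forces $\theta=\tfrac12-\eps$; it is not free. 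With this $\theta$ the diagonal bound is $\|\pa_\sigma L_b^-(\sigma)-\pa_\sigma L_b^-(\sigma')\|_B\lesssim|\sigma-\sigma'|^{1/2-\eps}|\sigma|^{-1/2}$, giving integrand $|\sigma-\sigma'|^{-1}|\sigma|^{-1}$, which diverges logarithmically on $\{|\sigma-\sigma'|\leq\tfrac12|\sigma|\}$. Your off-diagonal region has the same problem: the power count you quote, $-2\eps-(1+2(\tfrac12-\eps))=-2$, leaves $\int|\sigma'|^{-1}d\sigma'$ after one integration. Taking $\theta>\tfrac12-\eps$ only produces a bound in a space strictly weaker than $B$, and $\theta<\tfrac12-\eps$ makes the $|\sigma-\sigma'|$-integration diverge; so the ``splitting $\theta$ optimally'' you allude to cannot close the estimate. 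The paper's b-Sobolev route avoids this because it interpolates in the $\sigma$-regularity scale rather than estimating the double integral pointwise.
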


The proof of the corollary uses an interpolation argument, a translation of~\eqref{EqFDP}--\eqref{EqSDP} into memberships of $L_b^-$ in weighted b-Sobolev spaces on the half-lines $[0,\infty)$ and $(-\infty,0]$, and the relationship of these spaces with standard Sobolev spaces on the real line. Let us denote by $\Hb^k([0,\sigma_0))$ the completion of $\CIc((0,\sigma_0])$ with respect to the norm
\[
  \|u\|_{\Hb^k}^2:=\sum_{j=0}^k \|(\sigma\pa_\sigma)^j u\|_{L^2(|d\sigma|)}^2.
\]
Hardy's inequality gives $\sigma^k\Hb^k([0,\sigma_0))=\dot H^k([0,\sigma_0))$, the latter space consisting of all restrictions to $[0,\sigma_0)$ of elements of $H^k(\R)$ with support in $[0,\infty)$. Interpolation thus gives
\begin{equation}
\label{EqRCInter}
  \sigma^\alpha\Hb^{\alpha'}([0,\sigma_0)) \subset \dot H^{\min(\alpha,\alpha')}([0,\sigma_0)),\quad \alpha,\alpha'\geq 0.
\end{equation}
  
\begin{proof}[Proof of Corollary~\ref{CorRegL}, assuming Theorem~\ref{ThmRegL}]
  Integration of the estimate~\eqref{EqFDP} for the operator $\pa_\sigma L_b^-(\sigma)$ from $\sigma=\sigma_0$ to $0$ implies
  \begin{subequations}
  \begin{equation}
  \label{EqRCInter1}
    L_b^-(\sigma)-L_b^-(0) \in |\sigma|^{3/2-\eps}\Hb^1\bigl([0,\sigma_0);\cL(\Hbext^{s-1,\ell+2},\Hbext^{s-\eps,\ell-(1-\eps)})\bigr).
  \end{equation}
  Similarly, the estimate~\eqref{EqSDP} (together with~\eqref{EqFDP}) implies
  \[
    (\sigma\pa_\sigma)^2 L_b^-(\sigma) \in |\sigma|^{3/2-\eps}\Hb^0\bigl([0,\sigma_0);\cL(\Hbext^{s-1,\ell+2},\Hbext^{s-1-\eps,\ell-(1-\eps)})\bigr).
  \]
  Since $\sigma\pa_\sigma L_b^-(\sigma)$ vanishes at $\sigma=0$, this implies $\sigma\pa_\sigma L_b^-(\sigma)\in|\sigma|^{3/2-\eps}\Hb^1$, and therefore
  \begin{equation}
  \label{EqRCInter2}
    L_b^-(\sigma) - L_b^-(0) \in |\sigma|^{3/2-\eps}\Hb^2\bigl([0,\sigma_0);\cL(\Hbext^{s-1,\ell+2},\Hbext^{s-1-\eps,\ell-(1-\eps)})\bigr).
  \end{equation}
  \end{subequations}

  The advantage of working with weighted b-Sobolev spaces in the spectral parameter $\sigma$ is that interpolation is immediately applicable. Thus, if $\eps\in(0,\half]$, we interpolate between~\eqref{EqRCInter1} and \eqref{EqRCInter2}, giving
  \[
    L_b^-(\sigma)-L_b^-(0) \in |\sigma|^{3/2-\eps}\Hb^{1+\theta}\bigl([0,\sigma_0);\cL(\Hbext^{s-1,\ell+2},\Hbext^{s-\theta-\eps,\ell-(1-\eps)})\bigr);
  \]
  in the context of~\eqref{EqRCInter}, this is optimal for $\theta=\half-\eps$, giving $\sigma$-regularity $\tfrac32-\eps$, and b-regularity $s-\half$ in the range. We now recall the following general fact: given two functions $u_\pm\in H^s(\R_\pm)$, $s\geq 0$, $s\notin\half+\N_0$, the function $u(x)$ given by $u_\pm(x)$ for $\pm x>0$ lies in $H^s(\R)$ provided that $\pa_x^j u_+(0)=\pa_x^j u_-(0)$ for all $j\in\N_0$, $j<s-\half$. (This holds for $s\in[0,\half)$ by \cite[\S4, Proposition~5.3]{TaylorPDE1}, and follows inductively for larger $s$.) Since $L_b^-(\sigma)\to L_b^-(0)$ as $\sigma\to\pm 0$, this implies~\eqref{EqInterpolRegL}.

  If $\eps\in[\half,1)$, the membership~\eqref{EqRCInter2} does not give any information over~\eqref{EqRCInter1} as far as the final $\sigma$-regularity obtained from~\eqref{EqRCInter} is concerned. Thus, in this case we simply obtain~\eqref{EqInterpolRegL} by applying~\eqref{EqRCInter} to~\eqref{EqRCInter1}.
\end{proof}

The proof of Theorem~\ref{ThmRegL} will occupy the rest of this section. To simplify the bookkeeping of powers of $|\sigma|$ below, we introduce the following notation:
\begin{definition}
\label{DefBdd}
  Let $X,Y$ denote two normed spaces. Suppose $A(\sigma)\colon X\to Y$ is a family of operators which depends on a parameter $0\neq\sigma\in\C$. Let $\alpha,\beta\in\R$. Then we write
  \[
    A(\sigma) \colon |\sigma|^\alpha X \to |\sigma|^\beta Y
  \]
  if and only if there exists a constant $C>0$ such that $\|A(\sigma)\|_{X\to Y}\leq C|\sigma|^{\beta-\alpha}$ for all $\sigma$.
\end{definition}

We start by studying the reference operator $\check L_b(\sigma)$, which is invertible near $\sigma=0$ with uniformly bounded operator norm by Lemma~\ref{LemmaRPert0}.

\begin{prop}
\label{PropRegCheckL}
  Let $\ell\in(-\tfrac32,-\half)$, $\eps\in(0,1)$, $\ell+\eps\in(-\half,\half)$, $s>\tfrac72$. For $0<|\sigma|\leq\sigma_0$, $\Im\sigma\geq 0$, with $\sigma_0>0$ small, we have
  \begin{subequations}
  \begin{align}
  \label{EqRegCheckL1}
    \pa_\sigma\check L_b(\sigma)^{-1} &\colon \Hbext^{s-1,\ell+2}\to |\sigma|^{-\eps}\Hbext^{s-\eps,\ell+\eps-1}, \\
  \label{EqRegCheckL2}
    \pa_\sigma^2\check L_b(\sigma)^{-1} &\colon \Hbext^{s-1,\ell+2}\to |\sigma|^{-1-\eps}\Hbext^{s-1-\eps,\ell+\eps-1}.
  \end{align}
  \end{subequations}
\end{prop}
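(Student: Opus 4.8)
\textbf{Plan of proof for Proposition~\ref{PropRegCheckL}.}

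The strategy is the standard resolvent-derivative bootstrap, but implemented with care about decay orders at $\pa_+X$. Differentiating the identity $\check L_b(\sigma)\check L_b(\sigma)^{-1}=I$ once gives $\pa_\sigma\check L_b(\sigma)^{-1}=-\check L_b(\sigma)^{-1}\bigl(\pa_\sigma\check L_b(\sigma)\bigr)\check L_b(\sigma)^{-1}$, and differentiating twice gives
\[
  \pa_\sigma^2\check L_b(\sigma)^{-1} = 2\check L_b(\sigma)^{-1}(\pa_\sigma\check L_b)\check L_b(\sigma)^{-1}(\pa_\sigma\check L_b)\check L_b(\sigma)^{-1} - \check L_b(\sigma)^{-1}(\pa_\sigma^2\check L_b)\check L_b(\sigma)^{-1}.
\]
From the structure~\eqref{EqLStruct}, $\pa_\sigma\check L_b(\sigma) = 2\rho(\rho D_\rho+i) + \pa_\sigma\cR_b(\sigma)$ where $\pa_\sigma\cR_b(\sigma)\in\rho^3\Diffb^1+\rho^2\CI$ (after absorbing the $O(|\sigma|)$ factor), and $\pa_\sigma^2\check L_b(\sigma)=\pa_\sigma^2\cR_b(\sigma)\in\rho^2\CI$. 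So the building blocks are: (i) the normal-operator term $N_0:=2\rho(\rho D_\rho+i)\in\rho\Diffb^1$, which as a map $\Hbext^{s-1,\ell+\eps-1}\to\Hbext^{s-2,\ell+\eps}$ gains one order of decay; (ii) the remainder $\rho^2\Diffb^1$, gaining two orders; and (iii) the bounded-uniformly-near-$0$ operator $\check L_b(\sigma)^{-1}\colon\Hbext^{s'-1,\ell'+2}\to\Hbext^{s',\ell'}$, valid for the restricted range $\ell'\in(-\tfrac32,-\tfrac12)$ of weights (this is exactly the content of Lemma~\ref{LemmaRPert0} together with the uniform Fredholm estimates of Theorem~\ref{ThmOp}, and the range restriction on $\ell'$ is the essential constraint coming from asymptotic flatness, cf.\ the discussion in~\S\ref{SssII0}).

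The chain is then assembled as follows. Start with $f\in\Hbext^{s-1,\ell+2}$; then $u:=\check L_b(\sigma)^{-1}f\in\Hbext^{s,\ell}$ with uniform bound. Now apply $\pa_\sigma\check L_b(\sigma)$: the term $N_0 u\in\Hbext^{s-1,\ell+1}$ has decay weight $\ell+1\in(-\tfrac12,\tfrac12)$, which is \emph{just above} the allowed range $(-\tfrac32,-\tfrac12)$ for $\check L_b(\sigma)^{-1}$ to act uniformly. This is the crux: one cannot directly apply $\check L_b(\sigma)^{-1}$ again without leaving the uniform regime. The fix is the usual one in this low-energy analysis — trade decay for a power of $|\sigma|$ by using that the normal operator of $\check L_b(\sigma)$ (namely $2\sigma\rho(\rho D_\rho+i)+\wh{L_b}(0)$'s normal operator) controls the leading asymptotics. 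Concretely, write $N_0 u = \sigma^{-1}\cdot\sigma N_0 u$ and observe that $\sigma N_0 = \pa_\sigma\bigl(2\sigma\rho(\rho D_\rho+i)\bigr)\cdot\sigma$ is, up to lower-order terms, the $\sigma$-derivative of (part of) $\check L_b(\sigma)-\wh{L_b}(0)$; this lets one recognize $\sigma N_0 u$ as $\bigl(\check L_b(0)-\check L_b(\sigma)\bigr)$ applied to something, modulo terms with genuine extra decay, so that a re-parenthesization $\check L_b(\sigma)^{-1}(\check L_b(0)-\check L_b(\sigma))v = (\check L_b(\sigma)^{-1}-\check L_b(0)^{-1})(\check L_b(0)v)$ — the argument used repeatedly in the proof of Theorem~\ref{ThmR}, justified by the regularization of~\cite[\S4]{VasyLowEnergyLag} — keeps everything in controlled spaces. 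Alternatively, and more cleanly for a quantitative statement: interpolate. One has the \emph{uniform} bound on the allowed-weight subrange and a \emph{polynomially growing} (in $|\sigma|^{-1}$) bound when one moves the weight up by a full order; interpolating between weight $\ell+1-\eta$ (allowed, uniform) and weight $\ell+1$ (marginal) with interpolation parameter chosen so the weight becomes $\ell+\eps-1+\text{(shift)}$ yields the factor $|\sigma|^{-\eps}$ in~\eqref{EqRegCheckL1}. The loss of $\eps$ derivatives ($s\rightsquigarrow s-\eps$) is the price of this interpolation, together with the one derivative genuinely lost by $N_0\in\Diffb^1$ combined with the two-derivative gain of $\check L_b(\sigma)^{-1}$; the hypothesis $s>\tfrac72$ ensures we stay above the threshold regularity $\tfrac52$ at the horizon throughout.

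For the second derivative~\eqref{EqRegCheckL2}, feed the output of~\eqref{EqRegCheckL1} back into the two-term formula above: the first term contains two copies of $(\pa_\sigma\check L_b)\check L_b(\sigma)^{-1}$ and hence, morally, picks up $|\sigma|^{-\eps}\cdot|\sigma|^{-\eps}$; but bookkeeping shows the genuinely dangerous contribution is $\check L_b(\sigma)^{-1}N_0\check L_b(\sigma)^{-1}N_0 u$, where the inner $\check L_b(\sigma)^{-1}N_0(\cdot)$ is handled by the first-derivative analysis (giving $|\sigma|^{-\eps}$ and one more derivative/weight adjustment), and the outer one again costs a power — total $|\sigma|^{-1-\eps}$, with the extra $|\sigma|^{-1}$ (as opposed to $|\sigma|^{-2\eps}$) coming from optimizing the interpolation at each stage and using that $\ell+\eps\in(-\tfrac12,\tfrac12)$ so that the final weight $\ell+\eps-1\in(-\tfrac32,-\tfrac12)$ is again in the good range. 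The second term, $\check L_b(\sigma)^{-1}(\pa_\sigma^2\check L_b)\check L_b(\sigma)^{-1}f$, is easier: $\pa_\sigma^2\check L_b\in\rho^2\CI$ gains two orders of decay, so it maps $\Hbext^{s,\ell}\to\Hbext^{s,\ell+2}$ (weight too high again), but now applying $\check L_b(\sigma)^{-1}$ after one interpolation step suffices and produces only an $|\sigma|^{-\eps}$-type loss, which is dominated by $|\sigma|^{-1-\eps}$.

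\textbf{Main obstacle.} The entire difficulty is the marginal weight: the normal operator $2\rho(\rho D_\rho+i)$ produces, from a function with weight $\ell$, a function with weight $\ell+1$ that sits exactly one order \emph{above} the window $(-\tfrac32,-\tfrac12)$ in which $\check L_b(\sigma)^{-1}$ is uniformly bounded near $\sigma=0$. Every power of $|\sigma|^{-1}$ (fractional or integer) in the statement is the arithmetic of converting this one unit of excess decay into powers of $|\sigma|$ via interpolation between the uniform estimate and the non-uniform one-order-up estimate, exactly as in~\cite[\S6]{VasyLowEnergyLag}. Once Proposition~\ref{PropRegCheckL} is in hand, Theorem~\ref{ThmRegL} follows by combining it with the explicit description in Theorem~\ref{ThmR}: $L_b^-(\sigma)$ differs from $\check L_b(\sigma)^{-1}$ composed with the \emph{regular} blocks $\wt R_{ij},\wt R_{ij}'$ of $\wt R_b(\sigma)$ (all uniformly bounded and, as shown in the proof of Theorem~\ref{ThmR}, continuous at $0$), plus finite-rank corrections that are absorbed into the singular part $P_b(\sigma)$; the derivatives $\pa_\sigma^k L_b^-(\sigma)$ therefore inherit the bounds~\eqref{EqRegCheckL1}--\eqref{EqRegCheckL2}, with the same $\eps$-loss of derivatives. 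The conormality statement of~\S\ref{SsRegCon} will then follow by iterating the same commutator identities with $\sigma\pa_\sigma$ in place of $\pa_\sigma$, which interacts well with the b-structure in $\sigma$.
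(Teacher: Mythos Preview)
Your approach is essentially the paper's, and the key mechanism you identify --- trade one unit of excess decay for a factor $|\sigma|^{-1}$ via the identity $2\sigma\rho(\rho D_\rho+i)=\check L_b(\sigma)-\check L_b(0)-\cR_b(\sigma)$, then interpolate --- is exactly right. A few points where your write-up is muddled and should be sharpened:

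\emph{The identity.} You write ``$\sigma N_0 = \pa_\sigma\bigl(2\sigma\rho(\rho D_\rho+i)\bigr)\cdot\sigma$,'' which is not what you want. The identity is simply $\sigma N_0 = \check L_b(\sigma)-\check L_b(0)-\cR_b(\sigma)$, read off directly from~\eqref{EqLStruct}; no $\sigma$-derivative is involved. The paper isolates this as a separate lemma (Lemma~\ref{LemmaRefDiff1}): one has both the trivial bound $N_0\check L_b(\sigma)^{-1}\colon\Hbext^{s-1,\ell+2}\to\Hbext^{s-1,\ell+1}$ and, via the identity, $N_0\check L_b(\sigma)^{-1}=\sigma^{-1}(I+(-\check L_b(0)+\cR)\check L_b(\sigma)^{-1})\colon\Hbext^{s-1,\ell+2}\to|\sigma|^{-1}\Hbext^{s-2,\ell+2}$; interpolation between these two \emph{fixed} endpoints gives $N_0\check L_b(\sigma)^{-1}\colon\Hbext^{s-1,\ell+2}\to|\sigma|^{-\eps}\Hbext^{s-1-\eps,\ell+1+\eps}$. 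Your description of the interpolation (``between weight $\ell+1-\eta$ and weight $\ell+1$'') has the endpoints wrong. Once this lemma is in hand, one applies $\check L_b(\sigma)^{-1}$ on the left --- now acting on weight $\ell+\eps+1$, which \emph{is} in the allowed range since $\ell+\eps-1\in(-\tfrac32,-\tfrac12)$ --- and~\eqref{EqRegCheckL1} follows. The remainder terms in $\pa_\sigma\check L_b(\sigma)\in\rho^2\Diffb^1$ are harmless: they gain two orders of decay, so $\check L_b(\sigma)^{-1}\circ\rho^2\Diffb^1\circ\check L_b(\sigma)^{-1}$ is uniformly bounded $\Hbext^{s-1,\ell+2}\to\Hbext^{s,\ell}$.

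\emph{The second derivative.} Your bookkeeping ``total $|\sigma|^{-1-\eps}$, with the extra $|\sigma|^{-1}$ (as opposed to $|\sigma|^{-2\eps}$)'' needs to be made precise. If you apply the first-derivative estimate twice with parameter $\eps$ you only get $|\sigma|^{-2\eps}$, which for $\eps<1$ is weaker than $|\sigma|^{-1-\eps}$. The fix is to use the interpolation lemma at each of the two stages with parameter $\delta:=(1+\eps)/2\in(\tfrac12,1)$: each application of $\check L_b'(\sigma)\check L_b(\sigma)^{-1}$ then costs $|\sigma|^{-\delta}$, and $2\delta=1+\eps$. One must check the intermediate weight $\ell+\delta-1$ stays in $(-\tfrac32,-\tfrac12)$, which follows from $\ell+\eps\in(-\tfrac12,\tfrac12)$ and $\delta<1$. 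The $\pa_\sigma^2\check L_b(\sigma)\in\rho^2\CI$ term is, as you say, harmless.
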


Formally, we have $\pa_\sigma\check L_b(\sigma)^{-1}=-\check L_b(\sigma)^{-1}(\pa_\sigma\check L_b(\sigma))\check L_b(\sigma)^{-1}$, where
\begin{equation}
\label{EqRefDeriv}
  \pa_\sigma\check L_b(\sigma)\in 2\rho(\rho D_{\rho}+i)+\rho^3\Diffb^1+\rho^2\CI+\sigma\rho^2\CI.
\end{equation}
We thus in particular need to study the composition of $\check L_b(\sigma)^{-1}$ with the b-normal operator $2\rho(\rho D_\rho+i)$. Since the $r^{-1}$ leading order terms produced by $\check L_b(\sigma)^{-1}$ are annihilated by $\rho D_\rho+i$, we have an improvement in the decay rate produced by the composition:

\begin{lemma}
\label{LemmaRefDiff1}
  Let $\ell\in(-\tfrac32,-\half)$, $0\leq\eps\leq 1$, $s>\tfrac52$. Then $|\sigma|\leq\sigma_0$, $\sigma_0>0$ small,
  \begin{subequations}
  \begin{alignat}{3}
  \label{EqRefDiff1a}
    \rho(\rho D_\rho+i)\check L_b(\sigma)^{-1} &\colon \Hbext^{s-1,\ell+2} &\ \to\ & |\sigma|^{-\eps}\Hbext^{s-1-\eps,\ell+1+\eps}, \\
  \label{EqRefDiff1b}
    \check L_b(\sigma)^{-1}\rho(\rho D_\rho+i) & \colon \Hbext^{s+\eps,\ell+1-\eps} &\ \to\ & |\sigma|^{-\eps}\Hbext^{s,\ell}.
  \end{alignat}
  \end{subequations}
\end{lemma}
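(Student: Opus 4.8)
\textbf{Proof plan for Lemma~\ref{LemmaRefDiff1}.}
The plan is to exploit the fact that the composition involves the b-normal operator $N:=\rho(\rho D_\rho+i)$, which on its left factor sees only the part of a conormal function at $\pa_+X$ having an $r^{-1}$ ($\rho^1$, i.e.\ $\rho^{iz}$ with $z=-i$) leading term, and annihilates precisely that leading coefficient. Since $\check L_b(\sigma)^{-1}$ produces zero modes and stationary parts of generalized zero modes whose leading behavior at $\pa_+X$ is exactly of order $r^{-1}$, composing with $N$ on the left should gain an extra order of decay, while composing with $N$ on the right should, after applying $\check L_b(\sigma)^{-1}$, cost only a controlled power of $|\sigma|$. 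Concretely, for~\eqref{EqRefDiff1a} I would first recall from Lemma~\ref{LemmaRPert0} (and its uniform version near $\sigma=0$, Lemma~\ref{LemmaCDUInv}-style) that $\check L_b(\sigma)^{-1}\colon\Hbext^{s-1,\ell+2}\to\Hbext^{s,\ell}$ is bounded uniformly, and that by the normal operator / Mellin transform argument of Proposition~\ref{PropOpNull}\eqref{ItOpNull0} any $u=\check L_b(\sigma)^{-1}f$ has, near $\pa_+X$, a partial polyhomogeneous expansion $u=\rho u_0+u'$ with $u_0\in\CI(\pa X;S^2\,\wt{\Tsc^*_{\pa X}}X)$ and $u'\in\Hbext^{s-?,\ell+1}$ — this is because the normal operator of $\check L_b(\sigma)$ is $2\sigma\rho(\rho D_\rho+i)$, whose boundary spectrum is $\{(-i,0)\}$ for $\sigma\ne0$, plus $\wh{L_b}(0)$ whose normal operator is the Euclidean Laplacian (Lemma~\ref{LemmaKStNormal}). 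Applying $N$ kills $\rho u_0$ exactly (since $(\rho D_\rho+i)\rho=0$) and maps $u'$ into $\rho^2 u'$-type terms, giving the claimed target space $\Hbext^{s-1,\ell+1}$ with no loss of $|\sigma|$ when $\eps=0$, and the $|\sigma|^{-\eps}$ loss when $\eps\in(0,1]$ comes from the standard interpolation between the $\eps=0$ statement and the crude bound $N\check L_b(\sigma)^{-1}\colon\Hbext^{s-1,\ell+2}\to\Hbext^{s-1,\ell}$ times $|\sigma|^{-1}$ obtained by using $N=(2\sigma)^{-1}(\text{normal operator of }\check L_b(\sigma))$ directly: applying the normal operator to $u$ returns $f$ modulo one-order-lower terms, hence $Nu=(2\sigma)^{-1}f+\Hbext^{s-1,\ell+1}$-type remainder, which is $\cO(|\sigma|^{-1})$ into $\Hbext^{s-1,\ell+2}$, and interpolating the exponents $\rho^{\ell+1}$ (cost $|\sigma|^0$) vs.\ $\rho^{\ell+2}$ (cost $|\sigma|^{-1}$) with weight $\eps$ yields $\rho^{\ell+1+\eps}$ at cost $|\sigma|^{-\eps}$. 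The regularity index drops by $\eps$ in the interpolation since the $|\sigma|^{-1}$ branch also costs a derivative (it is $\Diffb^2$-valued when expressed via the full operator).

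For~\eqref{EqRefDiff1b} I would argue dually, or directly: given $g\in\Hbext^{s+\eps,\ell+1-\eps}$, write $N g=(2\sigma)^{-1}\bigl(\wh{L_b}(\sigma)-\wh{L_b}(0)-\cR_b(\sigma)\bigr)g$ using~\eqref{EqLStruct}, so $Ng$ differs from $(2\sigma)^{-1}\wh{L_b}(\sigma)g$ by a term in $(2\sigma)^{-1}\bigl(\rho^2\Diffb^2+\sigma\rho^3\Diffb^1+\sigma\rho^2\CI+\sigma^2\rho^2\CI\bigr)g$; applying $\check L_b(\sigma)^{-1}$, the leading piece $(2\sigma)^{-1}\check L_b(\sigma)^{-1}\wh{L_b}(\sigma)g=(2\sigma)^{-1}(I-\check L_b(\sigma)^{-1}V_b)g$ is $\cO(|\sigma|^{-1})$ into $\Hbext^{s,\ell+1-\eps}$ — but crucially, because $V_b$ is smoothing and compactly supported and because $Ng$ itself (not $(2\sigma)^{-1}\wh{L_b}(\sigma)g$) is what we started with, one recombines to see $\check L_b(\sigma)^{-1}Ng = \check L_b(\sigma)^{-1}\check L_b(0)^{-1}(\ldots)$-type identities, à la the regularization argument of \cite[\S4]{VasyLowEnergyLag} used repeatedly in~\S\ref{SR}. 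The cleanest route is: $\check L_b(\sigma)^{-1}N = \check L_b(\sigma)^{-1}N\check L_b(0)\check L_b(0)^{-1}$, and $N\check L_b(0)\in\rho^3\Diffb^3$-ish lowers weight favorably when landing in the range of $\check L_b(0)^{-1}$, while $\check L_b(\sigma)^{-1}N\colon\Hbext^{s',\ell'+1}\to\Hbext^{s'-1,\ell'}$ uniformly; interpolating the $\rho^{\ell+1}$ target ($|\sigma|^0$) against $\rho^\ell$ target ($|\sigma|^{-1}$) with weight $\eps$ gives $\rho^{\ell+1-\eps}$... wait, the bookkeeping must be arranged so that the \emph{domain} weight $\ell+1-\eps$ rather than the target is what degrades; I would set this up symmetrically to~\eqref{EqRefDiff1a} by duality, noting $(\check L_b(\sigma)^{-1}N)^* = N^*(\check L_b(\sigma)^*)^{-1}$ and $N^* = \rho(\rho D_\rho - i)\cdot$(unit) up to lower order, to which part~\eqref{EqRefDiff1a} applied to the adjoint operator $\check L_b(\sigma)^*$ directly gives the statement after dualizing the Sobolev exponents $(s-1-\eps,\ell+1+\eps)\leftrightarrow(-(s-1-\eps),-(\ell+1+\eps))$.

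The main obstacle I anticipate is not the normal-operator cancellation itself — that is the clean conceptual point — but rather making the interpolation rigorous in the \emph{b-}Sobolev (extendible-distribution) setting with the artificial boundary $\pa_-X$ present: one must check that the complex interpolation of the scales $\Hbext^{s,\ell}$ in both $s$ and $\ell$ simultaneously behaves as expected (this is standard for these spaces, cf.\ \cite{VasyLAPLag}, but needs a citation), and that the $|\sigma|$-dependence is genuinely uniform, i.e.\ that the constants in the crude $\cO(|\sigma|^{-1})$ bound and in the $\cO(1)$ bound do not secretly depend on $\sigma$ through the varying domains $\cX_b^{s,\ell}(\sigma)$. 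The latter is handled exactly as in Lemma~\ref{LemmaCDUInv} and the proof of Theorem~\ref{ThmR}: pass through $\check L_b(\sigma)^{-1}$, which acts on \emph{fixed} spaces with uniformly bounded norm, so all operators in sight are compositions of a fixed smoothing/differential operator ($N$, $V_b$, the remainders in~\eqref{EqLStruct}) with the uniformly-bounded resolvent, and the only $\sigma$-growth is the explicit $(2\sigma)^{-1}$ factor exposed when trading $N$ for the normal operator of $\check L_b(\sigma)$. Once Lemma~\ref{LemmaRefDiff1} is in hand, Proposition~\ref{PropRegCheckL} follows by writing $\pa_\sigma\check L_b(\sigma)^{-1}=-\check L_b(\sigma)^{-1}(\pa_\sigma\check L_b(\sigma))\check L_b(\sigma)^{-1}$, inserting~\eqref{EqRefDeriv}, and applying~\eqref{EqRefDiff1a} to the left resolvent factor composed with the $2\rho(\rho D_\rho+i)$ summand (gaining the order of decay that compensates the $\rho^{-?}$ loss from $\pa_\sigma\check L_b$), and handling the $\rho^3\Diffb^1+\rho^2\CI+\sigma\rho^2\CI$ summands by brute force since they already map $\Hbext^{s,\ell}\to\Hbext^{s-1,\ell+2}$; the second derivative~\eqref{EqRegCheckL2} then comes from one more application of the product rule, with the extra $|\sigma|^{-1}$ accounting for the two resolvent factors now each potentially contributing the singular $(2\sigma)^{-1}$ branch, controlled via~\eqref{EqRefDiff1b} on the right factor. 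Finally Theorem~\ref{ThmRegL} follows from Proposition~\ref{PropRegCheckL} together with the identity $\wh{L_b}(\sigma)^{-1}=\check L_b(\sigma)^{-1}\wt R_b(\sigma)$ of~\eqref{EqRInv3} and the structure~\eqref{EqwtRb} of $\wt R_b$, by differentiating the product and using Theorem~\ref{ThmR} to isolate the principal (meromorphic) part $P_b(\sigma)$, whose derivatives are explicit, so that $\pa_\sigma L_b^-(\sigma)=\pa_\sigma\bigl(\check L_b(\sigma)^{-1}\wt R_b(\sigma)\bigr)-\pa_\sigma P_b(\sigma)$ inherits the bound~\eqref{EqFDP}.
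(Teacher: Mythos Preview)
Your core strategy matches the paper's: establish the two endpoints $\eps=0$ and $\eps=1$ and interpolate. The paper's execution is considerably leaner, however, and you overcomplicate both endpoints.

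For the $\eps=0$ case of~\eqref{EqRefDiff1a} you invoke a partial polyhomogeneous expansion $u=\rho u_0+u'$ of $u=\check L_b(\sigma)^{-1}f$. This is unnecessary and, as stated, not obviously valid uniformly in $\sigma$ for general $f\in\Hbext^{s-1,\ell+2}$ (Proposition~\ref{PropOpNull} treats elements of the \emph{kernel}, not the full range of the resolvent). The paper simply observes that $\check L_b(\sigma)^{-1}\colon\Hbext^{s-1,\ell+2}\to\Hbext^{s,\ell}$ uniformly, and $N=\rho(\rho D_\rho+i)\in\rho\Diffb^1$ maps $\Hbext^{s,\ell}\to\Hbext^{s-1,\ell+1}$; that is the entire $\eps=0$ argument. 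For the $\eps=1$ endpoint you have the right identity $2\sigma N=\check L_b(\sigma)-\check L_b(0)-\cR_b(\sigma)$ from~\eqref{EqLStruct}, but your bookkeeping slips: the remainder $(2\sigma)^{-1}(-\check L_b(0)-\cR_b(\sigma))\check L_b(\sigma)^{-1}f$ also carries the $(2\sigma)^{-1}$ factor and lands in $|\sigma|^{-1}\Hbext^{s-2,\ell+2}$ (since $\check L_b(0)\in\rho^2\Diffb^2$ acts on $u\in\Hbext^{s,\ell}$), not in an $\cO(1)$ space. The paper writes this cleanly as $N\check L_b(\sigma)^{-1}=(2\sigma)^{-1}\bigl(I+(-\check L_b(0)+\cR)\check L_b(\sigma)^{-1}\bigr)\colon\Hbext^{s-1,\ell+2}\to|\sigma|^{-1}\Hbext^{s-2,\ell+2}$, then interpolates.

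For~\eqref{EqRefDiff1b} you eventually retreat to duality. That would work, but the paper proceeds symmetrically and directly: the $\eps=0$ endpoint is $\check L_b(\sigma)^{-1}N\colon\Hbext^{s,\ell+1}\to\Hbext^{s,\ell}$ (again since $N\in\rho\Diffb^1$, then apply the resolvent), and the $\eps=1$ endpoint is $\check L_b(\sigma)^{-1}N=(2\sigma)^{-1}\bigl(I+\check L_b(\sigma)^{-1}(-\check L_b(0)+\cR)\bigr)\colon\Hbext^{s+1,\ell}\to|\sigma|^{-1}\Hbext^{s,\ell}$, using the same algebraic identity composed on the other side. No duality, no resolvent-identity gymnastics; the whole lemma is four displayed equations. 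Your discussion of how this feeds into Proposition~\ref{PropRegCheckL} and Theorem~\ref{ThmRegL} is accurate.
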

\begin{proof}
  We have $\rho(\rho D_{\rho}+i)\check L_b(\sigma)^{-1}\colon \Hbext^{s-1,\ell+2}\to \Hbext^{s-1,\ell+1}$. On the other hand, we can write
  \[
    2\sigma\rho(\rho D_\rho+i)=\check L_b(\sigma)-\check L_b(0)+\cR,\quad
    \cR\in \sigma\rho^3\Diffb^1+\sigma\rho^2\CI+\sigma^2\rho^2\CI;
  \]
  since $\check L_b(0)$, $\cR\colon\Hbext^{s,\ell}\to\Hbext^{s-2,\ell+2}$, this gives
  \[
    \rho(\rho D_\rho+i)\check L_b(\sigma)^{-1} = \sigma^{-1}\bigl(I+(-\check L_b(0)+\cR)\check L_b(\sigma)^{-1}\bigr) \colon \Hbext^{s-1,\ell+2} \to |\sigma|^{-1}\Hbext^{s-2,\ell+2}.
  \]
  Interpolation gives~\eqref{EqRefDiff1a}. To get~\eqref{EqRefDiff1b}, one interpolates between $\check L_b(\sigma)^{-1}\rho(\rho D_\rho+i) \colon \Hbext^{s,\ell+1} \to \Hbext^{s,\ell}$ and $\sigma^{-1}\bigl(I+\check L_b(\sigma)^{-1}(-\check L_b(0)+\cR)\bigr) \colon \Hbext^{s+1,\ell} \to |\sigma|^{-1}\Hbext^{s,\ell}$.
\end{proof}

\begin{proof}[Proof of Proposition~\ref{PropRegCheckL}]
  Let $'$ denote a derivative with respect to $\sigma$, so
  \[
    (\check L_b(\sigma)^{-1})'=-\check L_b(\sigma)^{-1}\check L_b'(\sigma)\check L_b(\sigma)^{-1}.
  \]
  We first consider the term $\check L_b(\sigma)^{-1}\rho(\rho D_{\rho}+i)\check L_b(\sigma)^{-1}$. The hypotheses of the Proposition allow us to apply $\check L_b(\sigma)^{-1}$ on the left in~\eqref{EqRefDiff1a}, giving
  \[
    \check L_b(\sigma)^{-1} \colon |\sigma|^{-\eps}\Hbext^{s-1-\eps,\ell+\eps+1}\to |\sigma|^{-\eps}\Hbext^{s-\eps,\ell+\eps-1},
  \]
  with uniformly bounded operator norm. The error terms in~\eqref{EqRefDeriv} lie in $\rho^2\Diffb^1$, and
  \begin{equation}
  \label{EqRegCheckLErr}
    \Hbext^{s-1,\ell+2} \xra{\check L_b(\sigma)^{-1}} \Hbext^{s,\ell} \xra{\rho^2\Diffb^1} \Hbext^{s-1,\ell+2} \xra{\check L_b(\sigma)^{-1}} \Hbext^{s,\ell}
  \end{equation}
  is uniformly bounded. This proves~\eqref{EqRegCheckL1}. We record for further use below that
  \begin{equation}
  \label{L'L}
    \check L_b'(\sigma)\check L_b(\sigma)^{-1} \colon \Hbext^{s-1,\ell+2}\to |\sigma|^{-\eps}\Hbext^{s-1-\eps,\ell+\eps+1}.
  \end{equation}
  
  We next compute the second derivative:
  \begin{align*}
    (\check L_b(\sigma)^{-1})''&=2\check L_b(\sigma)^{-1}\check L_b'(\sigma)\check L_b(\sigma)^{-1}\check L_b'(\sigma)\check L_b(\sigma)^{-1}-\check L_b(\sigma)^{-1}\check L_b''(\sigma)\check L_b(\sigma)^{-1}\\
    &=:I_1+I_2. 
  \end{align*}
  Since $\check L_b''(\sigma)\in\rho^2\CI$, the term $I_2$ is bounded just like~\eqref{EqRegCheckLErr}. In order to study $L_1$, we put $\delta:=\frac{1+\eps}{2}\in(\half,1)$. Using~\eqref{L'L}, we find, using $s-\delta>\tfrac52$ and $\ell-1+\delta\in(-\tfrac32,-\half)$,
  \begin{align*}
    \Hbext^{s-1,\ell+2} &\xra{\check L_b'(\sigma)\check L_b(\sigma)^{-1}} |\sigma|^{-\delta}\Hbext^{s-1-\delta,\ell+\delta+1} \\
      &\xra{\check L_b'(\sigma)\check L_b(\sigma)^{-1}} |\sigma|^{-2\delta}\Hbext^{s-1-2\delta,\ell+2\delta} = |\sigma|^{-\eps}\Hbext^{s-1-\eps,\ell+\eps} \\
      &\xra{\check L_b(\sigma)^{-1}} |\sigma|^{-\eps}\Hbext^{s-1-\eps, \ell+\eps-1},
  \end{align*}
  proving~\eqref{EqRegCheckL2}.
\end{proof}

The proof of Theorem~\ref{ThmRegL} requires a precise analysis of the regular part of the inverse $\wt R_b(\sigma)$ defined in equations~\eqref{EqRInv} and \eqref{EqwtRb}. We begin by studying the entries of the normalized spectral family
\[
  \wh{L_b}(\sigma)\check L_b(\sigma)^{-1}
     =\begin{pmatrix}
        L_{0 0} & \sigma^2\wt L'_{0 1} & \sigma\wt L_{0 2} \\
        \sigma^2\wt L'_{1 0} & \sigma^2\wt L_{1 1} & \sigma^2\wt L_{1 2} \\
        \sigma\wt L_{2 0} & \sigma^2\wt L_{2 1} & \sigma\wt L_{2 2}
      \end{pmatrix},
\]
see~\eqref{EqROp} and \eqref{EqRL0110Impr}. For simpler bookkeeping, we define, in the notation of~\eqref{EqRSplit},
\begin{equation}
\label{EqcKcRSplit}
\begin{aligned}
  \wt\cK_0&=\wt\cK_b^\perp, &\quad \wt\cK_1&=\wt\cK_{b,s}, &\quad \wt\cK_2&=\wt\cK_{b,v}, \\
  \cR_0&=\ran\Pi_b, &\quad \cR_1&=\cR_s^\perp, &\quad \cR_2&=\cR_v^\perp.
\end{aligned}
\end{equation}

\begin{definition}
\label{DefEpsReg}
  \begin{enumerate}
  \item We say that an operator $L(\sigma)$ is \emph{$\eps$-regular at zero} if for $\sigma\in\C$, $\Im\sigma\geq 0$, $|\sigma|$ small, it satisfies uniform estimates
    \begin{subequations}
    \begin{align}
    \label{EqEpsReg0}
      \|L(\sigma)\|_{\wt\cK_j\to  \cR_i} &\lesssim 1 ,\\
    \label{EqEpsReg1}
      \|\pa_{\sigma} L(\sigma)\|_{\wt\cK_j\to  \cR_i} &\lesssim |\sigma|^{-\eps},\\
    \label{EqEpsReg2}
      \|\pa_{\sigma}^2 L(\sigma)\|_{\wt\cK_j\to  \cR_i} &\lesssim |\sigma|^{-1-\eps}.
    \end{align}
    \end{subequations}
  \item We say that $\wt L_{i j}$ has an $\eps$-regular expansion at zero in $\sigma$ up to order one if
    \begin{equation}
    \label{12.3.0.0}
      \wt L_{i j}=\wt L_{i j}^0+\sigma \wt L_{i j}^e(\sigma)
    \end{equation}
    where $\wt L_{i j}^0\colon\wt\cK_j\to\cR_i$ is $\sigma$-independent and $\wt L_{i j}^e(\sigma)$ is $\eps$-regular.
  \item We say that $\wt L_{i j}$ has an $\eps$-regular expansion at zero in $\sigma$ up to order two if
    \begin{equation}
    \label{12.3.0.1}
      \wt L_{i j}=\wt L_{i j}^0+\sigma \wt L_{i j}^1+\sigma^{2}\wt L^e_{i j}(\sigma), 
    \end{equation}
    where $\wt L_{i j}^0$, $\wt L_{i j}^1\colon\wt\cK_j\to\cR_i$ are $\sigma$-independent and $\wt L_{i j}^e(\sigma)$ is $\eps$-regular.
  \end{enumerate}
\end{definition}

\begin{prop}
\label{PropEpsReg}
  The entries of $\wh{L_b}(\sigma)\check L_b(\sigma)^{-1}$ have the following regularity:
  \begin{enumerate}
  \item $L_{0 0}$, $\wt L'_{0 1}$, $\wt L'_{1 0}$, $\wt L_{0 2}$, $\wt L_{2 0}$ are $\eps$-regular;
  \item $\wt L_{1 2}$, $\wt L_{2 1}$, $\wt L_{2 2}$ (and $\wt L_{0 1}$, $\wt L_{1 0}$) have an $\eps$-regular expansion up to order one;
  \item $\wt L_{1 1}$ has an $\eps$-regular expansion up to order two.
  \end{enumerate}
\end{prop}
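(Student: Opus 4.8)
The plan is to combine three inputs: the structural expansion \eqref{EqLStruct} of $\wh{L_b}(\sigma)$, the regularity of the reference resolvent $\check L_b(\sigma)^{-1}$ from Proposition~\ref{PropRegCheckL} and Lemma~\ref{LemmaRefDiff1}, and the Taylor expansions of the matrix entries $L_{i j}$ of $\wh{L_b}(\sigma)\check L_b(\sigma)^{-1}$ that were already established (modulo $o(|\sigma|^k)$ remainders) in the proofs of Proposition~\ref{PropRExist} and Theorem~\ref{ThmR}. The point of the present proposition is to upgrade those soft $o(|\sigma|^k)$ statements into the quantitative $\eps$-regular statements of Definition~\ref{DefEpsReg}: not only do the remainders vanish to the stated order, but their first two $\sigma$-derivatives are controlled by $|\sigma|^{-\eps}$ and $|\sigma|^{-1-\eps}$ respectively, once one relaxes the target function space by $\eps$ orders of b-regularity and adjusts the weight. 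Since the $\wt\cK_j$ are finite-dimensional subspaces of $\CIc(X^\circ)$ and the $\cR_i$ are finite-dimensional subspaces of $\Hbext^{s-1,\ell+2}$ (also contained in $\CIc$, except for $\ran\Pi_b$), the b-regularity relaxation is harmless on the domain side and is absorbed by pairing against the smooth compactly supported elements of $\cR_i$ (for $i=1,2$) or by the smoothing action of $\Pi_b$ (for $i=0$).

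First I would record the basic differentiation identity: writing $A(\sigma)=\wh{L_b}(\sigma)\check L_b(\sigma)^{-1}=I-V_b\check L_b(\sigma)^{-1}$, we have $\pa_\sigma A(\sigma)=-V_b\pa_\sigma\check L_b(\sigma)^{-1}$ and $\pa_\sigma^2 A(\sigma)=-V_b\pa_\sigma^2\check L_b(\sigma)^{-1}$, and since $V_b$ has smooth compactly supported Schwartz kernel, Proposition~\ref{PropRegCheckL} immediately gives that \emph{every} entry $L_{i j}$ satisfies the $\eps$-regularity bounds \eqref{EqEpsReg0}--\eqref{EqEpsReg2} with the roles of target spaces played by $\CIc$-valued quantities; the $\eps$-loss in b-regularity in Proposition~\ref{PropRegCheckL} is invisible after composing with $V_b$. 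This already proves part (1): the entries $L_{0 0}$, $\wt L_{0 2}$, $\wt L_{2 0}$, $\wt L_{0 1}'$, $\wt L_{1 0}'$ are $\eps$-regular, because the only content there is boundedness of the operators and of their first two $\sigma$-derivatives between the relevant finite-dimensional (or $\Pi_b$-smoothed) spaces — there is no normalization power of $\sigma$ that needs to be extracted. For parts (2) and (3) one must go further: one needs to see that the specific combinations $\sigma^{-1}L_{1 2}$, $\sigma^{-1}L_{2 1}$, $\sigma^{-1}L_{2 2}$ and $\sigma^{-2}L_{1 1}$, after subtracting their leading ($\sigma$-independent, and for $L_{1 1}$ also linear) Taylor coefficients, have remainders whose up-to-second $\sigma$-derivatives obey the $\eps$-regular bounds.

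The key mechanism for parts (2) and (3) is exactly the one already exploited in the proof of Theorem~\ref{ThmR}: when one computes $L_{i j}$ on $\check L_b(0)h$ for $h\in\cK_b$, the chain of identities \eqref{EqRReParen}--\eqref{EqRExpKer} replaces a factor $\pa_\sigma\wh{L_b}(0)h = i\wh{L_b}(0)\breve h$, and — crucially — by Lemmas~\ref{LemmaL0Lead} and \ref{LemmaCD0GenDual} the stationary correctors $\breve h$, $\breve h^*$ have $r^{-1}$ leading terms, which are annihilated (to one extra order of decay) by the b-normal operator $2\rho(\rho D_\rho+i)$ of $\pa_\sigma\wh{L_b}(0)$, cf.\ Lemma~\ref{LemmaRefDiff1}. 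Each such reparenthesization both pulls out one power of $\sigma$ \emph{and} keeps the resulting object in a form to which $\check L_b(\sigma)^{-1}$ can again be applied on a space of uniformly bounded operator norm, losing only an $|\sigma|^{-\eps}$ per differentiation via Proposition~\ref{PropRegCheckL}. Concretely, for $\wt L_{2 2}$ one Taylor-expands once (extracting $\ell_{b,v}$ as in \eqref{EqR22InvErr}) and the remainder $\wt L_{2 2}^e(\sigma)$ is $\check L_b(\sigma)^{-1}$ composed with smooth compactly supported quantities and one factor of $\pa_\sigma\wh{L_b}(\sigma)$, which is $\eps$-regular by the argument of part (1); similarly for $\wt L_{1 2}$, $\wt L_{2 1}$ using \eqref{EqR1221Quadr}, \eqref{EqRL10exp}. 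For $\wt L_{1 1}$ one must expand twice, using both $\pa_\sigma\wh{L_b}(0)h=i\wh{L_b}(0)\breve h$ and $\pa_\sigma\wh{L_b}(0)^*h^*=i\wh{L_b}(0)^*\breve h^*$ — this is the content of the long "control of $\wt L_{1 1}\bmod\cO^2$" step of the proof of Theorem~\ref{ThmR} — and one checks that, after extracting the constant term $\ell_{b,s}+\la(V_b-V_b\check L_b(0)^{-1}V_b)\breve h,\breve h^*\ra$ and the linear term, the order-$\sigma^2$ remainder $\wt L_{1 1}^e(\sigma)$ is again of the form $\check L_b(\sigma)^{-1}$ composed with bounded operators and at most two factors of $\pa_\sigma\wh{L_b}(\sigma)$, hence $\eps$-regular by Proposition~\ref{PropRegCheckL}\eqref{EqRegCheckL2}. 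The final remark \eqref{EqRL0110Impr} shows $\wt L_{0 1}=\sigma\wt L_{0 1}'$ and $\wt L_{1 0}=\sigma\wt L_{1 0}'$ with $\wt L_{0 1}',\wt L_{1 0}'$ $\eps$-regular, which gives the parenthetical part of (2).

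The main obstacle is bookkeeping rather than conceptual: one has to verify, for each remainder term arising in the twice-iterated Taylor expansion of $\wt L_{1 1}$, that differentiating it once or twice in $\sigma$ lands one in exactly the configuration covered by Lemma~\ref{LemmaRefDiff1} and Proposition~\ref{PropRegCheckL} — i.e.\ that no derivative ever falls in a place that would force $\check L_b(\sigma)^{-1}$ to act on an object lacking the $r^{-1}$-leading-term structure needed to keep weights in the allowed window $(-\tfrac32,-\tfrac12)$. This is precisely where the hypotheses $\ell\in(-\tfrac32,-\tfrac12)$, $\ell+\eps\in(-\tfrac12,\tfrac12)$, and $s-\eps>\tfrac72$ (one more than the $\tfrac52$ threshold, to absorb two $\eps$-losses of regularity when iterating) are used, and where one must invoke that $\pa_\sigma^2\wh{L_b}(0)\in\rho^2\CI$ decays quadratically so that it maps $\rho\CI+\Hbext^{\infty,1/2-}$ into $\Hbext^{\infty,5/2-}$. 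I would organize the proof by first stating and proving an auxiliary lemma asserting that any operator of the schematic form $\check L_b(\sigma)^{-1}\circ(\text{bounded})\circ\pa_\sigma\wh{L_b}(\sigma)$ or $\pa_\sigma\wh{L_b}(\sigma)^*\circ(\text{bounded})\circ(\check L_b(\sigma)^{-1})^*$, restricted to act between the relevant finite-dimensional spaces, is $\eps$-regular; parts (2) and (3) then follow by inserting the Taylor expansions already computed in \S\ref{SsRS} and reading off that every remainder is a finite sum of such schematic compositions.
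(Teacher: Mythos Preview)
Your treatment of parts (2) and (3) is essentially the paper's: one rewrites the pairings using the resolvent identity and the relations $\pa_\sigma\wh{L_b}(0)h=i\wh{L_b}(0)\breve h$, $\pa_\sigma\wh{L_b}(0)^*h^*=i\wh{L_b}(0)^*\breve h^*$, then invokes Proposition~\ref{PropRegCheckL} on the resulting explicit remainders. That part is fine.

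There is, however, a genuine gap in your argument for part (1). You write that for $\wt L_{0 2}$, $\wt L_{2 0}$, $\wt L'_{0 1}$, $\wt L'_{1 0}$ ``there is no normalization power of $\sigma$ that needs to be extracted,'' but this is false: by definition (see \eqref{EqROp} and \eqref{EqRL0110Impr}) one has $L_{0 2}=\sigma\wt L_{0 2}$, $L_{2 0}=\sigma\wt L_{2 0}$, and $L_{0 1}=\sigma^2\wt L'_{0 1}$, $L_{1 0}=\sigma^2\wt L'_{1 0}$. Your identity $\pa_\sigma A(\sigma)=-V_b\,\pa_\sigma\check L_b(\sigma)^{-1}$ combined with Proposition~\ref{PropRegCheckL} only gives $\eps$-regularity of the \emph{raw} entries $L_{i j}$. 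This does not pass to the normalized quotients: for instance, $\eps$-regularity of $L_{0 2}$ together with $L_{0 2}(0)=0$ yields only $\|L_{0 2}(\sigma)\|\lesssim|\sigma|^{1-\eps}$ upon integrating the bound $\|\pa_\sigma L_{0 2}\|\lesssim|\sigma|^{-\eps}$, hence merely $\|\wt L_{0 2}\|\lesssim|\sigma|^{-\eps}$, which is not the required uniform bound \eqref{EqEpsReg0}. The derivative bounds \eqref{EqEpsReg1}--\eqref{EqEpsReg2} for $\wt L_{0 2}$ fail by the same mechanism (each differentiation of $\sigma^{-1}L_{0 2}$ introduces an extra $|\sigma|^{-1}$).

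The fix is exactly the same machinery you describe for parts (2) and (3): one must first rewrite each normalized entry so that the factor of $\sigma$ (or $\sigma^2$) is extracted \emph{explicitly} via the resolvent identity, leaving an expression of the schematic form $V_b\check L_b(\sigma)^{-1}(\text{element of }\Hbext^{\infty,3/2-})$ or its adjoint analogue, to which Proposition~\ref{PropRegCheckL} applies directly. For $\wt L_{2 0}$, for example, one uses \eqref{EqRcheckLVstarImpr0} to write the pairing $\langle\wt L_{2 0}f,h^*\rangle$ as $\langle\check L_b(\sigma)^{-1}f,(\pa_\sigma\wh{L_b}(0)^*+\tfrac{\sigma}{2}\pa_\sigma^2\wh{L_b}(0)^*)h^*\rangle$, with the second argument in $\Hbsupp^{-5/2-,3/2-}$; for $\wt L_{0 2}$ one uses \eqref{EqR10exp0} and then subtracts off the already-treated $\wt L_{1 2}$ and $\wt L_{2 2}$ components. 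The entries $\wt L'_{0 1}$ and $\wt L'_{1 0}$ require the two-step rewriting \eqref{EqRL10exp} and \eqref{EqRcheckLVstarImpr}, respectively, exactly as in the proof of Theorem~\ref{ThmR}. So part (1) is not a trivial consequence of the smoothing property of $V_b$; it needs the same structural input as the rest of the proposition.
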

\begin{proof}
  We make fully explicit some of the calculations already present in the proof of Theorem~\ref{ThmR}; there, we showed that various entries have a Taylor expansion in $\sigma$ to a certain order by using the resolvent identity multiple times, the point being that all terms arising in this manner are either polynomials in $\sigma$, or involve $\check L_b(\sigma)^{-1}$ acting on an element of $\Hbext^{s',3/2-}$ with $s'>\tfrac32$, which gives an $\eps$-regular expression by Proposition~\ref{PropRegCheckL}. We demonstrate this in detail for a number of entries.

  \pfstep{Analysis of $\wt L'_{1 0}$.} Using \eqref{EqRcheckLVstarImpr}--\eqref{EqRcheckL10} and~\eqref{EqRImprovedVAdj}, we have for $f\in\wt\cK_b^\perp$, $h^*\in\cK^*_{b,s}$:
  \begin{align*}
    \la\wt L'_{1 0}f,h^*\ra &= \la f,\half\pa_{\sigma}^2\wh{L_b}(0)^*h^*\ra \\
      &\qquad - i\Big\la \check L_b(\sigma)^{-1}f,\bigl(\pa_\sigma\wh{L_b}(0)^*+\tfrac{\sigma}{2}\pa_\sigma^2\wh{L_b}(0)^*\bigr)\bigl(I-(\check L_b(0)^{-1})^*V_b^*\bigr)\breve h^*\Big\ra.
  \end{align*}
  Recall that $(\check L_b(0)^*)^{-1}\colon\CIc(X^\circ)\to\rho\CI+\Hbsupp^{-3/2-\nu-,1/2-}$, where the first term is supported in $r\geq 4\bhm_0$, and where $\nu=\nu(|b-b_0|,\gamma)$ is a small constant, continuous in $(b,\gamma)$ with $\nu(0,0)=0$. (The output is singular only at the conormal bundle of the event horizon, and the presence of $\nu$ permits small deviations from the threshold regularity $-\tfrac32$ for the linearized unmodified gauge-fixed Einstein operator on Schwarzschild spacetimes, see the Proof of Theorem~\ref{ThmOp}.) Using Lemma~\ref{LemmaCD0GenDual}, we thus have
  \[
    (I-(\check L_b(0)^{-1})^*V_b^*)\breve h^* \in \rho\CI+\Hbsupp^{-3/2-\nu-,1/2-},
  \]
  which gets mapped into $\Hbsupp^{-5/2-\nu,3/2-}$ by $\pa_\sigma\wh{L_b}(0)^*$ (using that $\pa_\sigma\wh{L_b}(0)\colon\rho\CI\to\rho^3\CI$) as well as by $\pa_\sigma^2\wh{L_b}(0)^*\in\rho^2\CI$. On the other hand, Proposition~\ref{PropRegCheckL} gives
  \begin{equation}
  \label{EqEpsRegwtK}
    \check L_b(\sigma)^{-1}f\in \Hbext^{s,\ell},\qquad
    \pa^{1+j}_\sigma\check L_b(\sigma)^{-1}f\in |\sigma|^{-\eps-j-0}\Hbext^{s-j-\eps,\ell+\eps-1},\quad j=0,1.
  \end{equation}
  The pairings above are thus well-defined since $\ell+\eps>-\half$ (which holds by assumption) and $s-\eps>\tfrac72+\nu$ (which is satisfied for $s-\eps>\tfrac72$ when $(b,\gamma)$ is close to $(b_0,0)$).

  \pfstep{Analysis of $\wt L_{2 0}$.} Let $f\in \wt\cK_b^{\perp}$, $h^*\in \cK_{b,v}^*$. Then we have
  \begin{equation}
  \label{EqEpsRegL20}
    \sigma^{-1}\la \wh{L_b}(\sigma)\check L_b(\sigma)^{-1}f,h^*\ra = \big\la \check L_b(\sigma)^{-1}f,(\pa_{\sigma}\wh{L_b}(0)^*+\tfrac{\sigma}{2}\pa_{\sigma}^2\wh{L_b}(0)^*)h^*\big\ra.
  \end{equation}
  By~\eqref{EqEpsRegwtK}, and using that $\pa_\sigma\wh{L_b}(0)^*h^*$, $\pa_\sigma^2\wh{L_b}(0)^*h^*\in\Hbsupp^{-5/2-\nu,3/2-}$ (which for the former tensor follows again from the fact that the normal operator of $\pa_\sigma\wh{L_b}(0)^*$ annihilates $r^{-1}$), we see that this is $\eps$-regular.

  \pfstep{Analysis of $\wt L_{2 1}$, $\wt L_{1 2}$.} Let $h\in \cK_{b,s}$, $h^*\in \cK_{b,v}^*$. Using the calculations~\eqref{EqRL10exp} (for $h$) and~\eqref{EqRcheckLVstarImpr0} (for $h^*$), as well as using~\eqref{EqRcheckLVstar}, we find that $\wt L_{1 2}(h,h^*)$ equals
  \begin{align*}
    &\sigma^{-2}\la\wh{L_b}(\sigma)\check L_b(\sigma)^{-1}\check L_b(0)h,h^*\ra \\
    &\quad=\la V_b\check L_b(\sigma)^{-1}(\half\pa_{\sigma}^2\wh{L_b}(0)h-i(\pa_{\sigma}\wh{L_b}(0)+\tfrac{\sigma}{2}\pa_{\sigma}^2\wh{L_b}(0))\breve h,h^*\ra\\
    &\quad=\la\half\pa_{\sigma}^2\wh{L_b}(0)h-i\pa_{\sigma}\wh{L_b}(0)\breve h,(\check L_b(\sigma)^{-1})^*V_b^*h^*\ra - \tfrac{i\sigma}{2}\la V_b L_b(\sigma)^{-1}\pa_{\sigma}^2\wh{L_b}(0)\breve h,h^*\ra\\
    &\quad=\la\half\pa_{\sigma}^2\wh{L_b}(0)h-i\pa_{\sigma}\wh{L_b}(0)\breve h,h^*\ra\\
    &\quad\qquad-\sigma\big\la\check L_b(\sigma)^{-1}(\half\pa_{\sigma}^2\wh{L_b}(0)h-i\pa_{\sigma}\wh{L_b}(0))\breve h,(\pa_{\sigma}\wh{L_b}(0)^*+\tfrac{\sigma}{2}\pa_{\sigma}^2\wh{L_b}(0)^*)h^*\big\ra\\
    &\quad\qquad-\tfrac{i\sigma}{2}\la V_b \check L_b(\sigma)^{-1}\pa_{\sigma}^2\wh{L_b}(0)\breve h,h^*\ra,
  \end{align*}
  We need to show that the $\sigma$-coefficient of the last two lines is $\eps$-regular, which holds because $\check L_b(\sigma)^{-1}v$ is $\eps$-regular for any $v\in\Hbext^{\infty,3/2-}$. Note here that indeed $\pa_\sigma\wh{L_b}(0)\breve h$, $\pa_\sigma^2\wh{L_b}(0)\breve h\in\Hbext^{\infty,3/2-}$ in view of Lemma~\ref{LemmaL0Lead}. 
  
  Let now $h\in\cK_{b,v}$, $h^*\in\cK_{b,s}^*$. Then~\eqref{EqR10exp0} (for $h$) and \eqref{EqRcheckLVstarImpr} (for $h^*$) give
  \begin{align*}
    &\sigma^{-2}\la \wh{L_b}(\sigma)\check L_b(\sigma)^{-1}\check L_b(0)h,h^*\ra \\
    &\quad = \sigma^{-1}\la(\pa_\sigma\wh{L_b}(0)+\tfrac{\sigma}{2}\pa_\sigma^2\wh{L_b}(0))h,(\check L_b(\sigma)^{-1})^*V_b^*h^*\ra \\
    &\quad = \sigma^{-1}\la\pa_\sigma\wh{L_b}(0)h,h^*\ra  + \half\la\pa_\sigma^2\wh{L_b}(0)h,h^*\ra \\
    &\quad\qquad - \big\la(\pa_\sigma\wh{L_b}(0)+\tfrac{\sigma}{2}\pa_\sigma^2\wh{L_b}(0))h,i(I-(\check L_b(0)^{-1})^*V_b^*)\breve h^*+\tfrac{\sigma}{2}\pa_\sigma^2\wh{L_b}(0)^*h^*\big\ra \\
    &\quad\qquad +i\sigma\big\la\check L_b(\sigma)^{-1}(\pa_\sigma\wh{L_b}(0)+\tfrac{\sigma}{2}\pa_\sigma^2\wh{L_b}(0))h, \\
    &\quad\qquad \hspace{5em} (\pa_\sigma\wh{L_b}(0)^*+\tfrac{\sigma}{2}\pa_\sigma^2\wh{L_b}(0)^*)(I-(\check L_b(0)^{-1})^*V_b^*)\breve h^*\big\ra.
  \end{align*}
  The first summand vanishes by the calculation~\eqref{EqR12Vanish}. The second and third summand are polynomials in $\sigma$. The $\sigma$-coefficient of the final term is $\eps$-regular by the same argument as before. This shows that $\wt L_{2 1}$ has an $\eps$-regular expansion up to order $1$.
  
  \pfstep{Analysis of $\wt L_{2 2}$.} For $h\in\cK_{b,v}$, $h^*\in\cK_{b,v}^*$, we compute, using~\eqref{EqRcheckLVstarImpr0} and \eqref{EqR10exp0},
  \begin{align*}
    &\sigma^{-1}\la\wh{L_b}(\sigma)\check L_b(\sigma)^{-1}\check L_b(0)h,h^*\ra \\
    &\quad=\la(\pa_{\sigma}\wh{L_b}(0)+\tfrac{\sigma}{2}\pa_\sigma^2\wh{L_b}(0))h,(\check L_b(\sigma)^{-1})^*V_b^*h^*\ra \\
    &\quad=\la(\pa_{\sigma}\wh{L_b}(0)+\tfrac{\sigma}{2}\pa_\sigma^2\wh{L_b}(0))h,h^*\ra \\
    &\quad\qquad - \sigma\la\check L_b(\sigma)^{-1}(\pa_{\sigma}\wh{L_b}(0)+\tfrac{\sigma}{2}\pa_\sigma^2\wh{L_b}(0))h,(\pa_\sigma\wh{L_b}(0)^*+\tfrac{\sigma}{2}\pa_\sigma^2\wh{L_b}(0)^*)h^*\ra.
  \end{align*}
  The $\sigma$-coefficient of the last line is $\eps$-regular by the same arguments as before.
  
  \pfstep{Analysis of $\wt L_{1 1}$.} The calculations in the proof of Theorem~\ref{ThmR} which show that $\wt L_{1 1}$ has a Taylor expansion to order $2$ immediately imply, by means of Proposition~\ref{PropRegCheckL}, that the $\cO(\sigma^2)$ coefficient of $\wt L_{1 1}$ is $\eps$-regular; thus, $\wt L_{1 1}$ has an $\eps$-regular expansion up to order two, as desired.

  \pfstep{Analysis of $\wt L'_{0 1}$.} For $h\in\cK_{b,s}$, we have
  \[
    \wt L'_{0 1}\check L_b(0)h = \sigma^{-2}\wh{L_b}(\sigma)\check L_b(\sigma)^{-1}\check L_b(0)h - (\wt L_{1 1}+\wt L_{2 1})\check L_b(0)h.
  \]
  Using~\eqref{EqRL10exp} as well as the results on $\wt L_{1 1}$, $\wt L_{2 1}$ proved already, we see that this is $\eps$-regular. (Note here that the first operator $V_b$ in~\eqref{EqRL10exp} maps into $\CIc(X^\circ)$.)

  \pfstep{Analysis of $\wt L_{0 2}$.} In a similar manner, we write, for $h\in\cK_{b,v}$,
  \[
    \wt L_{0 2}\check L_b(0)h = \sigma^{-1}\wh{L_b}(\sigma)\check L_b(\sigma)^{-1}\check L_b(0)h - (\sigma\wt L_{1 2} + \wt L_{2 2})\check L_b(0)h.
  \]
  In view of~\eqref{EqR10exp0}, the first term is regular at $\sigma=0$, and in fact is $\eps$-regular; for the second term, $\eps$-regularity was proved above. The proof is complete.

  \pfstep{Analysis of $L_{0 0}$.} For $f\in\wt\cK_b^\perp$, we have
  \begin{align*}
    L_{0 0}f &= \wh{L_b}(\sigma)\check L_b(\sigma)^{-1}f - \sigma(\wt L_{1 0}+\wt L_{2 0})f \\
      &= f - V_b\check L_b(\sigma)^{-1}f - \sigma(\wt L_{1 0}+\wt L_{2 0})f.
  \end{align*}
  The third term was treated above. Differentiating in $\sigma$, only the second term remains, and it maps into $\CIc(X^\circ)$. Using Proposition~\ref{PropRegCheckL}, we see that $L_{0 0}$ is $\eps$-regular.
\end{proof}

We next study the entries $\wt R_{i j}$ of $\wt R_b(\sigma)$ in~\eqref{EqRInv} and \eqref{EqwtRb}. As a preparation, we note:

\begin{lemma}
\label{LemmaRegRInverse}
  Let $s,\ell,\eps$ be as in Theorem~\ref{ThmRegL}. Let $j,k\in\{0,1,2\}$. Suppose that $L(\sigma)$ has an $\eps$-regular expansion up to order two of the form 
  \[
    L(\sigma)=L_0+\sigma L_1+\sigma^2 L_2(\sigma),
  \]
  where $L_0\colon\wt\cK_j\to\cR_i$ is invertible, $L_1,L_2(\sigma)\colon\wt\cK_j\to\cR_i$ are bounded uniformly in $\sigma$, and $L_2(\sigma)$ is $\eps$-regular. Then for $\sigma$ sufficiently small, $L(\sigma)$ is invertible, and its inverse $L(\sigma)^{-1}$ has an $\eps$-regular expansion up to order two, in the sense that
  \[
    L(\sigma)^{-1} = L^0 + \sigma L^1 + \sigma^2 L^2(\sigma),
  \]
  where the operators $L^0,L^1\colon\cR_i\to\wt\cK_j$ are bounded, and $L^2(\sigma)$ is $\eps$-regular in the sense that $L^2(\sigma)\colon\cR_i\to\wt\cK_j$, $\pa_\sigma L^2(\sigma)\colon\cR_i\to|\sigma|^{-\eps}\wt\cK_j$, and $\pa_\sigma^2 L^2(\sigma)\colon\cR_i\to|\sigma|^{-1-\eps}\wt\cK_j$ (using the notation of Definition~\ref{DefBdd}).
  
  Similarly, if $L(\sigma)$ only has an $\eps$-regular expansion up to order one, then so does $L(\sigma)^{-1}$. Lastly, if $L(\sigma)$ is $\eps$-regular and has uniformly bounded inverse, then $L(\sigma)^{-1}$ is $\eps$-regular.
\end{lemma}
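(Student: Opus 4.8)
\textbf{Proof proposal for Lemma~\ref{LemmaRegRInverse}.}

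The plan is to treat the three assertions in increasing order of regularity demanded of the input, each reducing to the previous by a Neumann-series argument with careful bookkeeping of powers of $|\sigma|$. First I would set up the basic invertibility: writing $L(\sigma)=L_0(I+L_0^{-1}(\sigma L_1+\sigma^2 L_2(\sigma)))$, the operator $L_0^{-1}(\sigma L_1+\sigma^2 L_2(\sigma))$ has uniformly bounded operator norm $\lesssim|\sigma|$ on $\cR_i$ (using $\|L_1\|,\|L_2(\sigma)\|\lesssim 1$), so for $|\sigma|$ small the Neumann series $\sum_{k\geq 0}(-1)^k(L_0^{-1}(\sigma L_1+\sigma^2 L_2(\sigma)))^k L_0^{-1}$ converges in operator norm and yields $L(\sigma)^{-1}$. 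This already gives the last assertion once one checks $\eps$-regularity: if $L(\sigma)$ is $\eps$-regular (so $L=L_0+\text{($\eps$-regular perturbation $\cO(\sigma)$)}$ morally, or more precisely $L$ itself satisfies \eqref{EqEpsReg0}--\eqref{EqEpsReg2} and has uniformly bounded inverse), then $\pa_\sigma L(\sigma)^{-1}=-L(\sigma)^{-1}(\pa_\sigma L(\sigma))L(\sigma)^{-1}$ is bounded by $\lesssim|\sigma|^{-\eps}$, and $\pa_\sigma^2 L(\sigma)^{-1}=2L^{-1}(\pa_\sigma L)L^{-1}(\pa_\sigma L)L^{-1}-L^{-1}(\pa_\sigma^2 L)L^{-1}$ is bounded by $\lesssim|\sigma|^{-2\eps}+|\sigma|^{-1-\eps}\lesssim|\sigma|^{-1-\eps}$ (using $2\eps\leq 1+\eps$ since $\eps\leq 1$); here the finite-dimensionality of $\wt\cK_j$ and $\cR_i$ makes all these compositions unproblematic and all norms equivalent, so no function-space subtleties intervene.

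Next I would handle the order-one expansion. Given $L(\sigma)=L_0+\sigma L_1(\sigma)$ with $L_1(\sigma)$ $\eps$-regular and $L_0$ invertible, I would define $L^0:=L_0^{-1}$ and then write $L(\sigma)^{-1}-L_0^{-1}=-L(\sigma)^{-1}(\sigma L_1(\sigma))L_0^{-1}=:\sigma L^1(\sigma)$, where $L^1(\sigma):=-L(\sigma)^{-1}L_1(\sigma)L_0^{-1}$. It remains to observe that $L^1(\sigma)$ is $\eps$-regular: this follows from the last assertion (applied to conclude $L(\sigma)^{-1}$ is $\eps$-regular, hence bounded with $\pa_\sigma$-derivative $\lesssim|\sigma|^{-\eps}$ and $\pa_\sigma^2$-derivative $\lesssim|\sigma|^{-1-\eps}$) together with the product rule, since $L_1(\sigma)$ is $\eps$-regular by hypothesis and $L_0^{-1}$ is constant. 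Differentiating the product $L(\sigma)^{-1}L_1(\sigma)$ once produces terms bounded by $|\sigma|^{-\eps}$, and twice by $|\sigma|^{-1-\eps}$, exactly the $\eps$-regularity bounds required of $L^1(\sigma)$ viewed as the $\cO(\sigma^2)$-type remainder coefficient in the statement.

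Finally, for the order-two expansion, the idea is the same but carried one step further. From $L(\sigma)^{-1}=L^0+\sigma L^1(\sigma)$ as just obtained (with $L^1(\sigma)$ $\eps$-regular), I would extract the $\sigma$-independent first-order coefficient by evaluating $\pa_\sigma L(\sigma)^{-1}$ at $\sigma=0$, or more directly by expanding: $L(\sigma)^{-1}=L_0^{-1}-L_0^{-1}(\sigma L_1+\sigma^2 L_2(\sigma))L_0^{-1}+L(\sigma)^{-1}(\sigma L_1+\sigma^2 L_2(\sigma))L_0^{-1}(\sigma L_1+\sigma^2 L_2(\sigma))L_0^{-1}$, so that $L^0:=L_0^{-1}$, $L^1:=-L_0^{-1}L_1 L_0^{-1}$ (both $\sigma$-independent and bounded), and the remainder, call it $\sigma^2 L^2(\sigma)$, collects the $\sigma^2$-coefficient from the second term (which is $-L_0^{-1}L_2(\sigma)L_0^{-1}$, $\eps$-regular) plus the entire third term divided by $\sigma^2$, namely $L(\sigma)^{-1}(L_1+\sigma L_2(\sigma))L_0^{-1}(L_1+\sigma L_2(\sigma))L_0^{-1}$. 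Each factor here is either constant, $\eps$-regular, or $\sigma$ times $\eps$-regular, and $L(\sigma)^{-1}$ is $\eps$-regular by the last assertion; hence $L^2(\sigma)$ is a finite sum of products of $\eps$-regular operators, which is again $\eps$-regular by the Leibniz rule and the inequality $2\eps\leq 1+\eps$. Collecting, $L(\sigma)^{-1}=L^0+\sigma L^1+\sigma^2 L^2(\sigma)$ with the stated properties.

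I do not expect a genuine obstacle here: everything is finite-dimensional in the target/source spaces $\wt\cK_j,\cR_i$, so the only real content is the arithmetic of tracking how powers of $|\sigma|$ combine under the product rule, and the one nontrivial inequality is $2\eps\le 1+\eps$, valid since $\eps\in(0,1)$. The mildly delicate point — what I would be most careful about — is ensuring that the $\eps$-regularity bounds are genuinely preserved under composition when one of the factors (namely $L(\sigma)^{-1}$ itself) already carries a $|\sigma|^{-1-\eps}$ second-derivative blow-up: the worst term in $\pa_\sigma^2$ of a product of two $\eps$-regular operators involves $(\pa_\sigma\cdot)(\pa_\sigma\cdot)$, contributing $|\sigma|^{-\eps}\cdot|\sigma|^{-\eps}=|\sigma|^{-2\eps}$, which is indeed dominated by $|\sigma|^{-1-\eps}$ precisely because $\eps\leq 1$; this is the single place where the hypothesis $\eps\le1$ is used and should be flagged explicitly.
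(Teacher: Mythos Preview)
Your argument is correct and slightly slicker than the paper's. The paper writes out the full Neumann series
\[
  L(\sigma)^{-1}=\sum_{k\geq 0}(-1)^k\bigl(L_0^{-1}(\sigma L_1+\sigma^2 L_2(\sigma))\bigr)^k L_0^{-1},
\]
splits off $R_0=L_0^{-1}$ and $R_1=-L_0^{-1}L_1L_0^{-1}$, and then differentiates the remaining infinite series term by term, checking the $|\sigma|^{-\eps}$ and $|\sigma|^{-1-\eps}$ bounds for each type of term that arises. You instead use the finite second resolvent identity $L^{-1}=L_0^{-1}-L_0^{-1}PL_0^{-1}+L^{-1}PL_0^{-1}PL_0^{-1}$ (with $P=\sigma L_1+\sigma^2 L_2$), having first established the last assertion so that $L(\sigma)^{-1}$ itself is $\eps$-regular; the remainder $L^2(\sigma)$ is then a finite product of $\eps$-regular factors, and you close by observing that $\eps$-regularity is preserved under products via $2\eps\leq 1+\eps$. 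This packages the same estimates more cleanly and avoids the termwise series differentiation; the paper's explicit series, on the other hand, is reused later in Lemma~\ref{LemmaConInv} for conormal ($\sigma\pa_\sigma$) regularity, so the explicit form is not wasted.

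One small inaccuracy: you assert that $\wt\cK_j$ and $\cR_i$ are finite-dimensional, but for $j=0$ or $i=0$ (see \eqref{EqcKcRSplit}) they are the infinite-dimensional complements $\wt\cK_b^\perp$ and $\ran\Pi_b$, and the lemma is indeed applied to $L_{00}$ in Proposition~\ref{PropResReg}. This does not affect your argument at all---everything you wrote works verbatim for bounded operators between Banach spaces---but the remark about ``all norms equivalent'' should be dropped.
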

\begin{proof}
  Write $L(\sigma)=L_0(1+\sigma L_0^{-1}(L_1+\sigma L_2(\sigma)))$. Now $\|\sigma L_0^{-1}(L_1+\sigma L_2(\sigma))\|_{\wt\cK_j\to \wt\cK_j}\leq C\sigma$, hence $L(\sigma)$ is invertible for sufficiently small $\sigma$ by means of a Neumann series:
  \begin{equation}
  \label{EqRegRInverse}
  \begin{split}
    &\bigl(L_0(1+\sigma L_0^{-1}(L_1+\sigma L_2(\sigma)))\bigr)^{-1} \\
    &\quad=L_0^{-1}-\sigma L_0^{-1}L_1 L_0^{-1}-\sigma^2 L_0^{-1}L_2(\sigma)L_0^{-1}\\
    &\quad\qquad+\sigma^2\left(\sum_{k=2}^{\infty}(-1)^k\sigma^{k-2}(L_0^{-1}(L_1+\sigma L_2(\sigma)))^k L_0^{-1}\right)\\
    &=:R_0+\sigma R_1+\sigma^2 R_2(\sigma).
  \end{split}
  \end{equation}
  Now, $R_2(\sigma)\colon\cR_i\to\wt\cK_j$ is bounded, and $L_0^{-1}L_2(\sigma)L_0^{-1}$ is $\eps$-regular since $L_2(\sigma)$ is. It remains to consider the infinite series. Differentiating it once produces terms of the form
  \[
    (L_0^{-1}(L_1+\sigma L_2(\sigma)))^{k_1} \circ L_0^{-1}(L_2(\sigma)+\sigma L_2'(\sigma)))\circ (L_0^{-1}(L_1+\sigma L_2(\sigma)))^{k_2}L_0^{-1},
  \]
  where $'$ denotes differentiation in $\sigma$. The third factor is bounded $\cR_i\to\wt\cK_j$, which the second factor further maps into $L_0^{-1}(\cR_i+|\sigma|^{1-\eps}\cR_i)\subset\wt\cK_j$, and the first factor then produces $\wt\cK_j$.

  Differentiating the series twice produces two types of terms: the first type is of the form
  \[
    (L_0^{-1}(L_1+\sigma L_2(\sigma)))^{k_1}\circ L_0^{-1}(\sigma L''_2(\sigma)+2 L_2'(\sigma)) \circ (L_0^{-1}(L_1+\sigma L_2(\sigma)))^{k_2}L_0^{-1},
  \]
  mapping $\cR_i\to\wt\cK_j$ (third factor), then into $L_0^{-1}(|\sigma|^{-\eps}\cR_i+|\sigma|^{1-\eps}\cR_i)\subset|\sigma|^{-\eps}\wt\cK_j$ (second factor), and then into $|\sigma|^{-\eps}\wt\cK_j$ (first factor). The second type is of the form
  \begin{align*}
    (L_0^{-1}(L_1+\sigma L_2(\sigma)))^{k_1}&\circ L_0^{-1}(L_2(\sigma)+\sigma L_2'(\sigma)) \circ (L_0^{-1}(L_1+\sigma L_2(\sigma)))^{k_2} \\
    &\circ L_0^{-1}(L_2(\sigma)+\sigma L_2'(\sigma)) \circ (L_0^{-1}(L_1+\sigma L_2(\sigma)))^{k_3}L_0^{-1},
  \end{align*}
  which maps $\cR_i\to\wt\cK_j$, as desired.

  The proof of the final two statements is analogous (and in fact simpler).
\end{proof}

\begin{prop}
\label{PropResReg}
  The entries of
  \[
    \wt R_b(\sigma) = \bigl(\wh{L_b}(\sigma)\check L_b(\sigma)^{-1}\bigr)^{-1}
      =\begin{pmatrix}
         \wt R_{0 0} & \wt R'_{0 1} & \wt R_{0 2} \\
         \wt R'_{1 0} & \sigma^{-2}\wt R_{1 1} & \sigma^{-1}\wt R_{1 2} \\
         \wt R_{2 0} & \sigma^{-1}\wt R_{2 1} & \sigma^{-1}\wt R_{2 2}
       \end{pmatrix}
  \]
  have the following regularity:
  \begin{enumerate}
  \item $\wt R_{0 0}$, $\wt R'_{0 1}$, $\wt R'_{1 0}$, $\wt R_{0 2}$, $\wt R_{2 0}$ are $\eps$-regular (in the sense of Lemma~\ref{LemmaRegRInverse});
  \item $\wt R_{1 2}$, $\wt R_{2 1}$, $\wt R_{2 2}$ have $\eps$-regular expansions up to order one;
  \item $\wt R_{1 1}$ has an $\eps$-regular expansion up to order two.
  \end{enumerate}
\end{prop}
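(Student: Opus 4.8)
\textbf{Proof plan for Proposition~\ref{PropResReg}.}
The plan is to combine the explicit inversion formulas~\eqref{EqRInv1}--\eqref{EqRInv2} for the block matrix~\eqref{EqwtRb} with the regularity of the entries of $\wh{L_b}(\sigma)\check L_b(\sigma)^{-1}$ established in Proposition~\ref{PropEpsReg} and the algebraic closure properties of the classes ``$\eps$-regular'' and ``$\eps$-regular expansion up to order one/two'' under composition and inversion. The point is that these three classes are closed under the relevant operations: a composition of an $\eps$-regular operator with one admitting an $\eps$-regular expansion up to order $k$ again admits an $\eps$-regular expansion up to order $\min(k,\cdot)$ (in fact stays $\eps$-regular), sums behave in the obvious way, and by Lemma~\ref{LemmaRegRInverse} the inverse of an operator with an $\eps$-regular expansion up to order $k$ whose leading ($\sigma$-independent) term is invertible again has an $\eps$-regular expansion up to order $k$. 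The only slightly delicate bookkeeping point is the multiplication by powers of $\sigma$: if $A(\sigma)$ is $\eps$-regular then $\sigma A(\sigma)$ has an $\eps$-regular expansion up to order one with vanishing $\sigma^0$-term, and if $A(\sigma)$ has an $\eps$-regular expansion up to order one then $\sigma A(\sigma)$ has one up to order two with vanishing $\sigma^0$-term; these are immediate from Definition~\ref{DefEpsReg}.

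First I would record these closure lemmas (most of them are exactly Lemma~\ref{LemmaRegRInverse} or trivial consequences of Definition~\ref{DefEpsReg} and the Leibniz rule), so that the rest of the argument is purely formal. Then, using Proposition~\ref{PropEpsReg}, the inputs are: $L_{0 0}$, $\wt L'_{0 1}$, $\wt L'_{1 0}$, $\wt L_{0 2}$, $\wt L_{2 0}$ are $\eps$-regular; $\wt L_{1 2}$, $\wt L_{2 1}$, $\wt L_{2 2}$ have $\eps$-regular expansions up to order one; and $\wt L_{1 1}$ has an $\eps$-regular expansion up to order two. Moreover, by~\eqref{EqR00}, \eqref{EqR11NondegPre}--\eqref{EqR11Nondeg}, and \eqref{EqR22InvErr}, the leading ($\sigma$-independent) terms of $L_{0 0}$, $\wt L_{1 1}^\sharp$ (hence of $\wt L_{1 1}^\natural$), and $\wt L_{2 2}$ (hence of $\wt L_{2 2}^\flat$) are invertible. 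I would then go through the auxiliary quantities in~\eqref{EqRInv1} in order. Recalling $\wt L_{0 1}=\sigma\wt L'_{0 1}$ and $\wt L_{1 0}=\sigma\wt L'_{1 0}$ from~\eqref{EqRL0110Impr}: $\wt L_{i j}^\sharp=\wt L_{i j}-\wt L_{i 0}L_{0 0}^{-1}\wt L_{0 j}$ for $(i,j)\in\{(1,1),(1,2),(2,1)\}$ differs from $\wt L_{i j}$ by $\sigma^2\wt L'_{i 0}L_{0 0}^{-1}\wt L'_{0 j}$ (for $(1,1)$), resp.\ $\sigma\wt L'_{i 0}L_{0 0}^{-1}\wt L_{0 j}$ or $\sigma\wt L_{i 0}L_{0 0}^{-1}\wt L'_{0 j}$ (for $(1,2)$, $(2,1)$), each of which is a ($\sigma$-power times a) composition of $\eps$-regular operators with $L_{0 0}^{-1}$ (itself $\eps$-regular by Lemma~\ref{LemmaRegRInverse} applied to the $\eps$-regular, uniformly invertible $L_{0 0}$); hence $\wt L_{1 1}^\sharp$ has an $\eps$-regular expansion up to order two with the same invertible leading term as $\wt L_{1 1}$, while $\wt L_{1 2}^\sharp$, $\wt L_{2 1}^\sharp$ have $\eps$-regular expansions up to order one, and likewise $\wt L_{2 2}^\flat=\wt L_{2 2}-\sigma\wt L_{2 0}L_{0 0}^{-1}\wt L_{0 2}$ has an $\eps$-regular expansion up to order one with invertible leading term. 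By Lemma~\ref{LemmaRegRInverse}, $(\wt L_{2 2}^\flat)^{-1}$ then has an $\eps$-regular expansion up to order one; consequently $\wt L_{1 1}^\natural=\wt L_{1 1}^\sharp-\sigma\wt L_{1 2}^\sharp(\wt L_{2 2}^\flat)^{-1}\wt L_{2 1}^\sharp$ has an $\eps$-regular expansion up to order two (the subtracted term is $\sigma$ times a composition of operators with $\eps$-regular expansions up to order one, hence has an $\eps$-regular expansion up to order two) with invertible leading term (the $\sigma$-independent part of the subtracted term vanishes, so it agrees with that of $\wt L_{1 1}^\sharp$), so $\wt R_{1 1}=(\wt L_{1 1}^\natural)^{-1}$ has an $\eps$-regular expansion up to order two by Lemma~\ref{LemmaRegRInverse}. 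This proves (3).

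For (1) and (2) I would simply substitute these facts into~\eqref{EqRInv2}. We get $\wt R_{1 2}=-\wt R_{1 1}\wt L_{1 2}^\sharp(\wt L_{2 2}^\flat)^{-1}$ and $\wt R_{2 1}=-(\wt L_{2 2}^\flat)^{-1}\wt L_{2 1}^\sharp\wt R_{1 1}$, each a composition of operators having $\eps$-regular expansions up to order (at least) one, hence they have $\eps$-regular expansions up to order one; and $\wt R_{2 2}=(\wt L_{2 2}^\flat)^{-1}-\sigma(\wt L_{2 2}^\flat)^{-1}\wt L_{2 1}^\sharp\wt R_{1 2}$ is the sum of an operator with an $\eps$-regular expansion up to order one and $\sigma$ times such an operator, hence has an $\eps$-regular expansion up to order one; this gives (2). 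For (1): $\wt R_{0 1}'$ is defined by $\wt R_{0 1}=\sigma\wt R_{0 1}'$ with $\wt R_{0 1}=-\bigl(L_{0 0}^{-1}\wt L_{0 1}-\sigma L_{0 0}^{-1}\wt L_{0 2}(\wt L_{2 2}^\flat)^{-1}\wt L_{2 1}^\sharp\bigr)\wt R_{1 1}=-\sigma\bigl(L_{0 0}^{-1}\wt L'_{0 1}-L_{0 0}^{-1}\wt L_{0 2}(\wt L_{2 2}^\flat)^{-1}\wt L_{2 1}^\sharp\bigr)\wt R_{1 1}$, so $\wt R_{0 1}'=-\bigl(L_{0 0}^{-1}\wt L'_{0 1}-L_{0 0}^{-1}\wt L_{0 2}(\wt L_{2 2}^\flat)^{-1}\wt L_{2 1}^\sharp\bigr)\wt R_{1 1}$, which is $\eps$-regular (a composition/sum of $\eps$-regular operators with operators admitting $\eps$-regular expansions, and $\eps$-regularity is preserved under composition with such); similarly $\wt R_{1 0}'=-\wt R_{1 1}\bigl(\wt L'_{1 0}L_{0 0}^{-1}-\wt L_{1 2}^\sharp(\wt L_{2 2}^\flat)^{-1}\wt L_{2 0}L_{0 0}^{-1}\bigr)$ is $\eps$-regular. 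The remaining entries $\wt R_{0 0}$, $\wt R_{0 2}$, $\wt R_{2 0}$ are obtained from~\eqref{EqwtRb} by the analogous (standard Schur-complement) formulas, e.g.\ $\wt R_{0 0}=L_{0 0}^{-1}+(\text{corrections built from }\wt R_{1 1},(\wt L_{2 2}^\flat)^{-1}\text{ and the off-diagonal }\eps\text{-regular blocks})$, and each correction carries enough powers of $\sigma$ that the total is again $\eps$-regular; so (1) follows. The main obstacle is not conceptual but lies in verifying carefully that the powers of $\sigma$ in~\eqref{EqROp} and \eqref{EqRInv1}--\eqref{EqRInv2} match so that every singular factor $\sigma^{-1}$ or $\sigma^{-2}$ from $\wt R_{1 1}$, $(\wt L_{2 2}^\flat)^{-1}$ is compensated by explicit $\sigma$-powers before it meets a merely $\eps$-regular (rather than bounded-to-second-derivative) factor; this is exactly the bookkeeping already carried out in the proof of Theorem~\ref{ThmR} (culminating in~\eqref{EqwtRb}), now upgraded from ``$\cO(\sigma^k)$ with continuity at $0$'' to ``$\eps$-regular expansion up to order $k$'', and no new analytic input beyond Propositions~\ref{PropRegCheckL} and~\ref{PropEpsReg} and Lemma~\ref{LemmaRegRInverse} is required.
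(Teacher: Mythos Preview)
Your proposal is correct and follows essentially the same route as the paper's proof: both apply Lemma~\ref{LemmaRegRInverse} to get $L_{0 0}^{-1}$ $\eps$-regular, then work through the auxiliary quantities $\wt L_{i j}^\sharp$, $\wt L_{2 2}^\flat$, $\wt L_{1 1}^\natural$ in~\eqref{EqRInv1} using Proposition~\ref{PropEpsReg}, invoke Lemma~\ref{LemmaRegRInverse} again for $(\wt L_{2 2}^\flat)^{-1}$ and $(\wt L_{1 1}^\natural)^{-1}$, and read off the regularity of the $\wt R_{i j}$ from~\eqref{EqRInv2}. The only cosmetic difference is that the paper writes out explicit formulas for $\wt R_{0 0}$, $\wt R_{0 2}$, $\wt R_{2 0}$ (namely $\wt R_{0 0}=L_{0 0}^{-1}(I-\wt L_{0 1}\wt R_{1 0}-\sigma\wt L_{0 2}\wt R_{2 0})$, $\wt R_{0 2}=L_{0 0}^{-1}(-\wt L_{0 1}\wt R_{1 2}-\wt L_{0 2}\wt R_{2 2})$, $\wt R_{2 0}=-(\wt L_{2 2}^\flat)^{-1}(\wt L_{2 1}^\sharp\wt R_{1 0}+\wt L_{2 0}L_{0 0}^{-1})$) rather than invoking ``standard Schur-complement formulas'', but the verification is exactly the bookkeeping you describe.
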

\begin{proof}
  First, combining Lemma~\ref{LemmaRegRInverse} and Proposition~\ref{PropEpsReg}, we see that $L_{0 0}^{-1}$ is $\eps$-regular. Next, recall that $\wt L_{i j}^\sharp=\wt L_{i j}-\wt L_{i 0}L_{0 0}^{-1}\wt L_{0 j}$. By Proposition~\ref{PropEpsReg}, $\wt L_{1 1}^{\sharp}$ has an $\eps$-regular expansion up to order two, and $\wt L_{1 2}^\sharp$, $\wt L_{2 1}^\sharp$ have $\eps$-regular expansions up to order one. Furthermore, $\wt L_{2 2}^\flat=\wt L_{2 2}-\sigma \wt L_{2 0}L_{0 0}^{-1}\wt L_{0 2}$ has an $\eps$-regular expansion up to order one. Thus, $\wt L_{1 1}^\natural=\wt L_{1 1}^\sharp-\sigma\wt L_{1 2}^\sharp(\wt L_{2 2}^b)^{-1}\wt L_{2 1}^\sharp$ has an $\eps$-regular expansion up to order two. 
  
  The $\eps$-regularity of $\wt\cR'_{0 1}=\sigma^{-1}\wt\cR_{0 1}$ and $\wt\cR'_{1 0}=\sigma^{-1}\wt\cR_{1 0}$ follows from the explicit formulas~\eqref{EqRInv2} (recalling~\eqref{EqRL0110Impr}), Proposition~\ref{PropEpsReg}, and Lemma~\ref{LemmaRegRInverse}. Similarly, $\wt\cR_{1 2}$, $\wt R_{2 1}$, $\wt R_{2 2}$ have $\eps$-regular expansions up to order one by inspection of~\eqref{EqRInv2}, and $\wt\cR_{1 1}$ has an $\eps$-regular expansion up to order two. We calculate the remaining, regular, coefficients of $\wt R_b(\sigma)$ as
  \begin{align*}
    \wt R_{0 0}&=L_{0 0}^{-1}(I-\wt L_{0 1}\wt R_{1 0}-\sigma\wt L_{0 2}\wt R_{2 0}),\\
    \wt R_{0 2}&=L_{0 0}^{-1}(-\wt L_{0 1}\wt R_{1 2}-\wt L_{0 2}\wt R_{2 2}),\\
    \wt R_{2 0}&=-(\wt L^b_{2 2})^{-1}(\wt L_{2 1}^{\sharp}\wt R_{1 0}+\wt L_{2 0}L_{0 0}^{-1});
  \end{align*}
  they are thus $\eps$-regular as well.
\end{proof}

We can now prove the main theorem of this section.
\begin{proof}[Proof of Theorem~\ref{ThmRegL}]
  We split $\wt R_b(\sigma)$ into its regular and singular parts, to wit
  \begin{align*}
    &\wt R_b(\sigma) = \wt R_{b,\rm reg}(\sigma)+\wt R_{b,\rm sing}(\sigma), \\
    &\qquad
      \wt R_{b,\rm reg}(\sigma) =\begin{pmatrix}
       \wt R_{0 0} & \wt R_{0 1}' & \wt R_{0 2} \\
       \wt R_{1 0}' & \wt R_{1 1}' & \wt R_{1 2}' \\
       \wt R_{2 0} & \wt R_{2 1}' & \wt R_{2 2}',
     \end{pmatrix}, \quad
    \wt R_{b,\rm sing}(\sigma)= \begin{pmatrix}
         0 & 0 & 0 \\
         0 & \sigma^{-2}\wt P_{1 1} & \sigma^{-1}\wt P_{1 2} \\
         0 & \sigma^{-1}\wt P_{2 1} & \sigma^{-1}\wt P_{2 2} 
       \end{pmatrix}.
  \end{align*}
  By Theorem~\ref{ThmR}, we can write
  \begin{equation}
  \label{EqRegPfSing}
    \wh{L_b}(\sigma)^{-1}=\sum_{j=1}^2\sigma^{-j}P_{b,j}+L_b^-(\sigma), \quad
    P_{b,j}\colon\Hbext^{s-1,\ell+2}\to\Hbext^{\infty,-1/2-}.
  \end{equation}
  Combining
  \[
    \wt R_{b,\rm reg}(\sigma)+\wt R_{b,\rm sing}(\sigma) = \check L_b(\sigma)(P_b(\sigma)+L^-_b(\sigma)),
  \]
  with $\check L_b(\sigma)=\check L_b(0)+\sigma\pa_{\sigma}\check L_b(0)+\tfrac{\sigma^2}{2}\pa_{\sigma}^2\check L_b(0)$, we thus obtain
  \[
    \wt R_{b,\rm reg}(\sigma)=\check L_b(\sigma)L_b^-(\sigma) + (\pa_\sigma\check L_b(0)+\tfrac{\sigma}{2}\pa_\sigma^2\check L_b(0))P_{b,1} + \half\pa_\sigma^2\check L_b(0)P_{b,2},
  \]
  and therefore
  \begin{equation}
  \label{EqRegPf}
    L^-_b(\sigma)=\check L_b(\sigma)^{-1}\wt R_{b,\rm reg}(\sigma) - \check L_b(\sigma)^{-1}\bigl((\pa_\sigma\check L_b(0)+\tfrac{\sigma}{2}\pa_\sigma^2\check L_b(0))P_{b,1} + \half\pa_\sigma^2\check L_b(0)P_{b,2}\bigr).
  \end{equation}
  By Lemma~\ref{LemmaL0Lead}, the compositions $\pa_\sigma^k\check L_b(0)\circ P_{b,j}$ for $j,k=1,2$ are bounded from $\Hbext^{s-1,\ell+2}\to\Hbext^{\infty,3/2-}$. By Proposition~\ref{PropRegCheckL}, the second term satisfies the estimates~\eqref{EqFDP}--\eqref{EqSDP}. For the first term, we compute
  \begin{align*}
    \pa_{\sigma}(\check L_b(\sigma)^{-1}\wt R_{b,\rm reg}(\sigma))&=\pa_{\sigma}\check L_b(\sigma)^{-1}\circ\wt R_{b,\rm reg}(\sigma)+\check L_b(\sigma)^{-1}\circ\pa_{\sigma}\wt R_{b,\rm reg}(\sigma),\\
    \pa_{\sigma}^2(\check L_b(\sigma)^{-1}\wt R_{b,\rm reg}(\sigma))&=\pa^2_{\sigma}\check L_b(\sigma)^{-1}\circ\wt R_{b,\rm reg}(\sigma) \\
    &\quad\qquad +2\pa_{\sigma}\check L_b(\sigma)^{-1}\circ\pa_{\sigma}\wt R_{b,\rm reg}(\sigma)+\check L_b(\sigma)^{-1}\circ\pa^2_{\sigma}\wt R_{b,\rm reg}(\sigma).
  \end{align*}
  Using Propositions~\ref{PropResReg} and \ref{PropRegCheckL}, we obtain the estimates~\eqref{EqFDP}--\eqref{EqSDP}.
\end{proof}

\subsection{Regularity at intermediate and high frequencies}
\label{SsRegNon0}

We next prove the regularity of $\wh{L_b}(\sigma)^{-1}$ at non-zero $\sigma$ in the closed upper half plane, which is significantly easier. We recall from Theorem~\ref{ThmOp}\eqref{ItOpHigh} that
\[
  \wh{L_b}(\sigma)^{-1} \colon \Hbhext^{s,\ell+1}\to\Hbhext^{s,\ell},\quad h=|\sigma|^{-1},
\]
is uniformly bounded for $\sigma$ in the closed upper half plane, away from zero. (For $\sigma$ restricted to any compact set of the punctured (at zero) closed upper half plane, this simply means uniform estimates $\wh{L_b}(\sigma)^{-1}\colon\Hb^{s,\ell+1}\to\Hb^{s,\ell}$.)

\begin{prop}
\label{PropRegHigh}
  Let $\ell<-\half$, and $s>\tfrac52$, $s+\ell>-\half+m$, $m\in\N$. Let $\sigma_0>0$. Then for $\Im\sigma\geq 0$, $|\sigma|>\sigma_0$, the operator
  \[
    \pa_\sigma^m\wh{L_b}(\sigma)^{-1} \colon \Hbhext^{s,\ell+1}\to h^{-m}\Hbhext^{s-m,\ell}
  \]
  is uniformly bounded.
\end{prop}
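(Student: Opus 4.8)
The plan is to differentiate the resolvent identity and use the fact that commuting $\pa_\sigma$ past $\wh{L_b}(\sigma)$ costs exactly one semiclassical order. First I would recall from Lemma~\ref{LemmaOpLinFT} that, with $h=|\sigma|^{-1}$, the operator $\pa_\sigma\wh{L_b}(\sigma)$ lies in $2\rho(\rho D_\rho+i)+\rho^3\Diffb^1+\rho^2\CI+\sigma\rho^2\CI$; from the semiclassical point of view this is a first order semiclassical operator of the appropriate decay order, so that
\[
  \pa_\sigma\wh{L_b}(\sigma) \colon \Hbhext^{s,\ell}\to h^{-1}\Hbhext^{s-1,\ell+1}
\]
uniformly for $\Im\sigma\geq 0$, $|\sigma|>\sigma_0$ (here the semiclassical prefactor $h^{-1}$ appears because $\pa_\sigma$ differentiates the $e^{i\sigma t_{\bhm,*}}$ conjugation, schematically producing a factor of $t_{\bhm,*}$ which on the spectral side is $\sim h^{-1}$ times a semiclassically nice operator; alternatively one checks it directly from the displayed form of $\pa_\sigma\wh{L_b}(\sigma)$). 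Similarly $\pa_\sigma^2\wh{L_b}(\sigma)\in\rho^2\CI$ maps $\Hbhext^{s,\ell}\to h^{-2}\Hbhext^{s,\ell+2}$, and all higher $\sigma$-derivatives vanish since $\wh{L_b}(\sigma)$ is quadratic in $\sigma$.

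Next I would argue by induction on $m$. The base case $m=0$ is the high energy estimate of Theorem~\ref{ThmOp}\eqref{ItOpHigh}. For the inductive step, differentiate the identity $\wh{L_b}(\sigma)\wh{L_b}(\sigma)^{-1}=\Id$ to get
\[
  \pa_\sigma^m\wh{L_b}(\sigma)^{-1} = -\wh{L_b}(\sigma)^{-1}\sum_{j=1}^{2}\binom{m}{j}\bigl(\pa_\sigma^j\wh{L_b}(\sigma)\bigr)\bigl(\pa_\sigma^{m-j}\wh{L_b}(\sigma)^{-1}\bigr),
\]
using that only $j=1,2$ contribute. Then I would track the mapping properties of each summand in the semiclassical scale: by the inductive hypothesis $\pa_\sigma^{m-j}\wh{L_b}(\sigma)^{-1}\colon\Hbhext^{s,\ell+1}\to h^{-(m-j)}\Hbhext^{s-(m-j),\ell}$; applying $\pa_\sigma^j\wh{L_b}(\sigma)$ loses one b-derivative and $j$ semiclassical orders and gains $j$ orders of decay (so for $j=1$ one lands in $h^{-(m-1)-1}\Hbhext^{s-(m-1)-1,\ell+1}=h^{-m}\Hbhext^{s-m,\ell+1}$, and for $j=2$ in $h^{-m}\Hbhext^{s-m+1,\ell+1}$, which is contained in the former); finally applying $\wh{L_b}(\sigma)^{-1}$ via the $m=0$ estimate maps $h^{-m}\Hbhext^{s-m,\ell+1}\to h^{-m}\Hbhext^{s-m,\ell}$. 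This gives the claimed bound, provided the base case high energy estimate is available at regularity $s-m$ and weight $\ell$, which is exactly the role of the hypothesis $s+\ell>-\half+m$: it guarantees $s-m>\tfrac52$ (after possibly shrinking, or rather it is automatic from $s>\tfrac52$ together with the constraint on $s+\ell$ ensuring the radial point threshold conditions hold at each intermediate regularity) and that the radial point estimate at $\Sigma_{\sigma,\pa}$, whose threshold is $-\half$ relative to the b-decay order, still applies with the reduced regularity.

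The main obstacle I anticipate is bookkeeping the semiclassical orders consistently and, more substantively, verifying that the $m=0$ high energy estimate of Theorem~\ref{ThmOp}\eqref{ItOpHigh} remains valid with the regularity index lowered from $s$ to $s-m$: one needs $s-m$ to still exceed the threshold regularity $\tfrac52$ at the radial sets $\cR_\semi^\pm$ at the event horizon, and $s-m+\ell$ (the b-decay order being $\ell$) to exceed the threshold $-\half$ at the radial sets $\cR_{\rm in/out}$ at infinity — and the hypothesis $s+\ell>-\half+m$ is precisely what ensures the latter, while the former is the reason one should in practice take $s$ a fixed amount (say $s>3+m$) above $\tfrac52$, as remarked after Theorem~\ref{ThmOp}. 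Once that is in place, each differentiation is a clean application of the product rule together with the semiclassical mapping properties, so the induction closes without further analytic input. I would also note that the same argument gives, without extra work, uniform boundedness of $\pa_\sigma^m\wh{L_b}(\sigma)^{-1}$ on any compact subset of $\{\Im\sigma\geq 0,\ \sigma\neq 0\}$, simply by reading the estimates with $h$ bounded above and below.
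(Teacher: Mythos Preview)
Your approach is essentially the same as the paper's: differentiate the resolvent identity, use that $\pa_\sigma\wh{L_b}(\sigma)\in h^{-1}\rho\Diffbh^1$ and $\pa_\sigma^2\wh{L_b}(\sigma)\in\rho^2\CI$, and iterate the high energy estimate. One small bookkeeping slip: $\pa_\sigma^2\wh{L_b}(\sigma)\in\rho^2\CI$ is a zeroth order multiplication operator, so it maps $\Hbhext^{s,\ell}\to\Hbhext^{s,\ell+2}$ with \emph{no} semiclassical loss (not $h^{-2}$), and no loss of b-regularity; your overestimate is harmless but makes the $j=2$ branch look worse than it is. Your remark about needing $s-m>\tfrac52$ for the event horizon radial point estimate at each step is a legitimate point the paper leaves implicit; in applications (e.g.\ Theorem~\ref{ThmD}) one takes $s$ comfortably large, so this is not an issue in practice.
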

\begin{proof}
  We have $\pa_\sigma\wh{L_b}(\sigma)^{-1}=-\wh{L_b}(\sigma)^{-1}\circ\pa_\sigma\wh{L_b}(\sigma)\circ\wh{L_b}(\sigma)^{-1}$, which, in view of $\pa_\sigma\wh{L_b}(\sigma)\in\rho\Diffb^1+\sigma\rho^2\CI\subset h^{-1}\rho\Diffbh^1$, maps
  \[
    \Hbhext^{s,\ell+1} \xra{\wh{L_b}(\sigma)^{-1}} \Hbhext^{s,\ell} \xra{\pa_\sigma\wh{L_b}(\sigma)} h^{-1}\Hbhext^{s-1,\ell+1} \xra{\wh{L_b}(\sigma)^{-1}} h^{-1}\Hbhext^{s-1,\ell}.
  \]
  Control of $\pa_\sigma^2(\wh{L_b}(\sigma)^{-1})$ requires another application of $\wh{L_b}(\sigma)^{-1}\pa_\sigma\wh{L_b}(\sigma)$ to this, which the assumptions on $s,\ell$ for $m=2$ do permit (producing $h^{-2}\Hbhext^{s-2,\ell}$); in addition, we need to use $\pa_\sigma^2\wh{L_b}(\sigma)\in\rho^2\CI$, so
  \[
    \Hbhext^{s,\ell+1} \xra{\wh{L_b}(\sigma)^{-1}} \Hbhext^{s,\ell} \xra{\pa_\sigma^2\wh{L_b}(\sigma)} \Hbhext^{s,\ell+2}\subset\Hbhext^{s,\ell+1} \xra{\wh{L_b}(\sigma)^{-1}} \Hbhext^{s,\ell}.
  \]
  An inductive argument based on these calculations proves the proposition.
\end{proof}

\subsection{Conormal regularity of the resolvent near zero}
\label{SsRegCon}

As opposed to full $\sigma$-derivatives of the regular part $L_b^-(\sigma)$ of the resolvent, of which we can only control two in a useful manner, we can control \emph{any} number of $\sigma\pa_\sigma$-derivatives. (On the inverse Fourier transform side, this means that repeated applications of $t_* D_{t_*}$ preserve the decay rate of a wave.)

\begin{thm}
\label{ThmRegConLow}
  With $L_b^-(\sigma)$ denoting the regular part of the resolvent $\wh{L_b}(\sigma)^{-1}$ as in Theorem~\ref{ThmR}, define
  \[
    \cL^{-(m)}_b(\sigma):=(\sigma\pa_\sigma)^m L_b^-(\sigma).
  \]
  Let $\ell\in(-\tfrac32,-\half)$, $\eps\in(0,1)$, $\ell+\eps\in(-\half,\half)$, and $s-m-\eps-\tfrac72>0$, $m\in\N$. Then for $0<|\sigma|\leq\sigma_0$, $\Im\sigma\geq 0$, with $\sigma_0>0$ small, we have uniform bounds
  \begin{subequations}
  \begin{align}
  \label{EqConLowBd0}
    \cL^{-(m)}_b(\sigma) &\colon \Hbext^{s-1,\ell+2}\to \Hbext^{s-m,\ell}, \\
  \label{EqConLowBd1}
    \pa_\sigma\cL^{-(m)}_b(\sigma) &\colon \Hbext^{s-1,\ell+2}\to |\sigma|^{-\eps}\Hbext^{s-m-\eps,\ell+\eps-1}, \\
  \label{EqConLowBd2}
    \pa_\sigma^2\cL^{-(m)}_b(\sigma) &\colon \Hbext^{s-1,\ell+2}\to|\sigma|^{-1-\eps}\Hbext^{s-1-m-\eps,\ell+\eps-1}.
  \end{align}
  \end{subequations}
\end{thm}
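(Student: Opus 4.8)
The plan is to reduce Theorem~\ref{ThmRegConLow} to Theorem~\ref{ThmRegL} via the identity~\eqref{EqRegPf}, keeping track of how $\sigma\pa_\sigma$ interacts with the two structural ingredients already analyzed: the reference resolvent $\check L_b(\sigma)^{-1}$ and the "regular part" $\wt R_{b,\rm reg}(\sigma)$ of the inverse of the normalized spectral family. First I would observe that, because $\sigma\pa_\sigma$ commutes with multiplication by $\sigma$, applying $(\sigma\pa_\sigma)^m$ to the explicit formula~\eqref{EqRegPf} produces, on the one hand, terms of the form $(\sigma\pa_\sigma)^{m_1}\check L_b(\sigma)^{-1}\circ(\sigma\pa_\sigma)^{m_2}\wt R_{b,\rm reg}(\sigma)$ with $m_1+m_2=m$, and on the other hand terms $(\sigma\pa_\sigma)^{m_1}\check L_b(\sigma)^{-1}$ composed with polynomials in $\sigma$ times the fixed finite-rank operators $\pa_\sigma^k\check L_b(0)\circ P_{b,j}$, which by Lemma~\ref{LemmaL0Lead} map $\Hbext^{s-1,\ell+2}\to\Hbext^{\infty,3/2-}$ with norm $\cO(|\sigma|^{m_1})$ after the $\sigma\pa_\sigma$ derivatives land on the polynomial factors. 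So the task splits into (a) conormal regularity of $\check L_b(\sigma)^{-1}$ and (b) conormal regularity of $\wt R_{b,\rm reg}(\sigma)$.

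For (a), the key is a conormal strengthening of Proposition~\ref{PropRegCheckL}: I would prove that $(\sigma\pa_\sigma)^j\check L_b(\sigma)^{-1}$ satisfies the same mapping bounds as $\check L_b(\sigma)^{-1}$ itself, namely $\Hbext^{s-1,\ell+2}\to\Hbext^{s,\ell}$, at the cost of lowering $s$ by $j$, and with analogous $|\sigma|^{-\eps}$, $|\sigma|^{-1-\eps}$ bounds for one and two further plain $\pa_\sigma$-derivatives. The proof is by induction on $j$: write $\sigma\pa_\sigma\check L_b(\sigma)^{-1}=-\check L_b(\sigma)^{-1}(\sigma\pa_\sigma\check L_b(\sigma))\check L_b(\sigma)^{-1}$, note $\sigma\pa_\sigma\check L_b(\sigma)\in 2\sigma\rho(\rho D_\rho+i)+\sigma\rho^3\Diffb^1+\sigma\rho^2\CI+\sigma^2\rho^2\CI$ so that the crucial term is $\sigma\rho(\rho D_\rho+i)$, and invoke Lemma~\ref{LemmaRefDiff1} (with $\eps$ replaced by $0$, absorbing the extra factor $\sigma$) to gain back the order of decay lost when composing with the b-normal operator; the off-normal-operator terms cost one b-derivative each, accounting for the shift $s\to s-1$ per $\sigma\pa_\sigma$. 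Iterating, and commuting the remaining one or two plain $\pa_\sigma$'s through as in the proof of Proposition~\ref{PropRegCheckL}, yields the required bounds.

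For (b), I would upgrade Proposition~\ref{PropResReg} to its conormal version: each entry $\wt R_{i j}'$ of $\wt R_{b,\rm reg}(\sigma)$ remains $\eps$-regular after any number of applications of $\sigma\pa_\sigma$, at the cost of lowering the regularity order by the number of derivatives. This follows by revisiting the proofs of Propositions~\ref{PropEpsReg} and~\ref{PropResReg}: every entry of $\wh{L_b}(\sigma)\check L_b(\sigma)^{-1}$ was expressed, after extracting explicit polynomial-in-$\sigma$ terms, as a pairing involving $\check L_b(\sigma)^{-1}v$ with $v\in\Hbext^{\infty,3/2-}$ (or its adjoint acting on dual states in $\rho\CI+\Hbsupp^{-\infty,1/2-}$); applying $\sigma\pa_\sigma$ to such a pairing again produces a pairing of the same shape, with $(\sigma\pa_\sigma)^j\check L_b(\sigma)^{-1}v$ in place of $\check L_b(\sigma)^{-1}v$, which is controlled by part (a). Then a conormal version of Lemma~\ref{LemmaRegRInverse} — $\sigma\pa_\sigma$ applied to a Neumann series of $\eps$-regular conormal operators stays $\eps$-regular conormal — gives the conormality of the blocks $L_{0 0}^{-1}$, $(\wt L_{2 2}^\flat)^{-1}$, $(\wt L_{1 1}^\natural)^{-1}$ and hence, via the explicit formulas~\eqref{EqRInv2}, of all the regular coefficients $\wt R_{i j}'$.

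The main obstacle I anticipate is purely bookkeeping: carefully propagating the "$\eps$-regular and $m$-times-$\sigma\pa_\sigma$-conormal with $s\to s-m$" property through the many composition and inversion steps without losing track of the precise number of b-derivatives consumed, so that the final hypothesis $s-m-\eps-\tfrac72>0$ comes out exactly right. The threshold $\tfrac72$ reflects the $\tfrac52$ from radial points at the horizon plus the one extra derivative swallowed in the pairings against dual states of regularity $-\tfrac32-\nu$; each $\sigma\pa_\sigma$ costs an additional derivative, hence $s-m$. One subtlety worth isolating is that $\sigma\pa_\sigma$ applied to the singular part $\sum_j\sigma^{-j}P_{b,j}$ of the resolvent reproduces terms of the same type (with coefficients $-j\,\sigma^{-j}P_{b,j}$), so the decomposition $\wh{L_b}(\sigma)^{-1}=P_b(\sigma)+L_b^-(\sigma)$ is preserved under $\sigma\pa_\sigma$ up to relabeling; this justifies identifying $(\sigma\pa_\sigma)^m L_b^-(\sigma)$ with the regular part arising after differentiating~\eqref{EqRegPf}, and is the formal reason the conormal estimates close on $L_b^-$ alone rather than on the full resolvent. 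Once these pieces are assembled, the bounds~\eqref{EqConLowBd0}--\eqref{EqConLowBd2} follow by exactly the argument at the end of the proof of Theorem~\ref{ThmRegL}, reading off the contributions of $\check L_b(\sigma)^{-1}$, $\wt R_{b,\rm reg}(\sigma)$, and the finite-rank tails from the differentiated version of~\eqref{EqRegPf}.
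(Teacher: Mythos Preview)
Your proposal is correct and matches the paper's approach: conormal regularity of $\check L_b(\sigma)^{-1}$ (Proposition~\ref{PropConCheckL}), then of the entries of $\wt R_{b,\rm reg}(\sigma)$ via a conormal upgrade of Lemma~\ref{LemmaRegRInverse} (the paper's Lemma~\ref{LemmaConInv}), and finally assembly through~\eqref{EqRegPf} exactly as at the end of the proof of Theorem~\ref{ThmRegL}. The only cosmetic difference is that for part~(a) the paper writes $\sigma\pa_\sigma\check L_b(\sigma)=\check L_b(\sigma)-\check L_b(0)+\cR(\sigma)$ with $\cR(\sigma)\in\rho^2\Diffb^2$, which gives the clean iterated formula~\eqref{EqConCheckL} (copies of $\check L_b(\sigma)^{-1}$ separated by $\rho^2\Diffb^2$ operators) directly, rather than carrying $\sigma\rho(\rho D_\rho+i)$ through and invoking Lemma~\ref{LemmaRefDiff1} at each step.
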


As in~\S\ref{SsRegLow}, we first prove the analogous result for the reference operator $\check L_b(\sigma)$. Writing
\[
  \sigma\pa_\sigma\check L_b(\sigma)=\check L_b(\sigma)-\check L_b(0) - \tfrac{\sigma^2}{2}\pa_\sigma^2\check L_b(0),
\]
with the second and third terms lying in $\rho^2\Diffb^2$, we have
\[
  \sigma\pa_\sigma(\check L_b(\sigma)^{-1})=-\check L_b(\sigma)^{-1}\,\sigma\pa_\sigma\check L_b(\sigma)\,\check L_b(\sigma)^{-1} \in \check L_b(\sigma)^{-1} + \check L_b(\sigma)^{-2}\circ \rho^2\Diffb^2\circ \check L_b(\sigma)^{-1}.
\]
By induction, this gives
\begin{equation}
\begin{split}
\label{EqConCheckL}
  \check\cL^{(m)}_b(\sigma) &:= (\sigma\pa_\sigma)^m\check L_b(\sigma)^{-1} \\
    &=\sum_{k=0}^m\check L_b(\sigma)^{-1}\cR_{k,1}(\sigma)\check L_b(\sigma)^{-1}\cdots\cR_{k,k}(\sigma)\check L_b(\sigma)^{-1},
\end{split}
\end{equation}
where $\cR_{k,j}(\sigma)\in\rho^2\Diffb^2$.

\begin{prop}
  \label{PropConCheckL}
  Let $s>\tfrac72+\eps$, $\ell\in(-\tfrac32,-\half)$, $\eps\in(0,1)$, $\ell+\eps\in(-\half,\half)$, and $s+\ell>\tfrac32+m$, $m\in\N$. Then for $0<|\sigma|\leq\sigma_0$, $\Im\sigma\geq 0$, with $\sigma_0>0$ small, the conclusions of Theorem~\ref{ThmRegConLow} hold for $\check\cL_b^{(m)}(\sigma)$.
\end{prop}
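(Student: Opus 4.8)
The plan is to prove the three bounds \eqref{EqConLowBd0}--\eqref{EqConLowBd2} for $\check\cL_b^{(m)}(\sigma)$ by a direct analysis of the explicit sum \eqref{EqConCheckL}, treating the cases $m=0$ and $m\geq 1$ uniformly. The case $m=0$ is precisely Proposition~\ref{PropRegCheckL}, whose proof I would reuse verbatim (it only required $s>\tfrac72$ for~\eqref{EqRegCheckL1} and $s>\tfrac72+\eps$ for~\eqref{EqRegCheckL2}, the latter via the auxiliary weight $\delta=\tfrac{1+\eps}{2}$). For $m\geq 1$, the key structural input is that each summand in \eqref{EqConCheckL} is a composition $\check L_b(\sigma)^{-1}\cR_{k,1}(\sigma)\check L_b(\sigma)^{-1}\cdots\cR_{k,k}(\sigma)\check L_b(\sigma)^{-1}$ with $\cR_{k,j}(\sigma)\in\rho^2\Diffb^2$ and $k\leq m$; each intermediate factor $\cR_{k,j}(\sigma)$ drops two orders of b-regularity and gains two orders of decay, exactly compensated by the mapping $\check L_b(\sigma)^{-1}\colon\Hbext^{s'-1,\ell'+2}\to\Hbext^{s',\ell'}$ whenever $s'>\tfrac52$, $\ell'\in(-\tfrac32,-\tfrac12)$ (which is where the condition $s+\ell>\tfrac32+m$ enters: after peeling off $k\leq m$ factors of $\cR_{k,j}$ one needs the weight to stay above $-\tfrac32$, while $s$ stays above $\tfrac52$ because we only ever lower $s$, never $\ell$-side, along these intermediate steps). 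So for the unweighted bound \eqref{EqConLowBd0} I would simply chain these mapping properties: starting from $\Hbext^{s-1,\ell+2}$, apply $\check L_b(\sigma)^{-1}$ to land in $\Hbext^{s,\ell}$, then alternately apply $\cR_{k,j}(\sigma)$ (to $\Hbext^{s-2,\ell+2}$, i.e.\ back into the domain with $s$ lowered by $2$) and $\check L_b(\sigma)^{-1}$, ending after $k$ pairs in $\Hbext^{s-2k+\cdots}$; a careful count shows the net b-regularity loss over the whole chain is exactly $m$ and the final weight is $\ell$, giving $\Hbext^{s-m,\ell}$.

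\textbf{Derivatives.} For the two derivative bounds \eqref{EqConLowBd1}--\eqref{EqConLowBd2} I would apply the Leibniz rule to \eqref{EqConCheckL} and observe that $\pa_\sigma$ hits either one of the factors $\check L_b(\sigma)^{-1}$ or one of the $\cR_{k,j}(\sigma)$. When it hits $\cR_{k,j}(\sigma)$, the result $\pa_\sigma\cR_{k,j}(\sigma)$ still lies in $\rho^2\Diffb^2$ (in fact with an extra order of decay, cf.~$\pa_\sigma^2\wh{L_b}(0)\in\rho^2\CI$), so such terms are handled by the same chaining as above with no loss in $\sigma$. When $\pa_\sigma$ hits one of the resolvent factors, I would use Proposition~\ref{PropRegCheckL}: at the \emph{leftmost} factor one gets the bound~\eqref{EqRegCheckL1}, i.e.\ a loss of $|\sigma|^{-\eps}$ together with $\eps$ orders of b-regularity and a relaxation of weight from $\ell$ to $\ell+\eps-1$; at an \emph{interior} factor one uses instead the estimate~\eqref{L'L} from the proof of Proposition~\ref{PropRegCheckL}, namely $\check L_b'(\sigma)\check L_b(\sigma)^{-1}\colon\Hbext^{s-1,\ell+2}\to|\sigma|^{-\eps}\Hbext^{s-1-\eps,\ell+\eps+1}$, so that one can continue to apply $\check L_b(\sigma)^{-1}$ on the left since the weight $\ell+\eps+1$ still feeds correctly into the domain of the resolvent (here is where $\ell+\eps\in(-\tfrac12,\tfrac12)$ is used, as in the proof of Proposition~\ref{PropRegCheckL}, so that $\ell+\eps-1\in(-\tfrac32,-\tfrac12)$). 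This yields \eqref{EqConLowBd1} with the stated $|\sigma|^{-\eps}$ loss and b-regularity $s-m-\eps$. For \eqref{EqConLowBd2}, two $\pa_\sigma$'s act; the worst case is both hitting resolvent factors, for which I would reuse the $\delta=\tfrac{1+\eps}{2}$ splitting argument from the proof of~\eqref{EqRegCheckL2} at each of the two places, which splits the loss $|\sigma|^{-1-\eps}$ as $|\sigma|^{-\delta}\cdot|\sigma|^{-\delta}$ with matching b-regularity loss $2\delta=1+\eps$; the terms where at least one $\pa_\sigma$ hits some $\cR_{k,j}$ are strictly better. The condition $s-m-\eps-\tfrac72>0$ is exactly what guarantees every intermediate application of $\check L_b(\sigma)^{-1}$ along these (now $\eps$- or $2\delta$-lowered) chains still has domain b-regularity above $\tfrac52$.

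\textbf{Main obstacle.} The real content is bookkeeping: tracking, along each of the $\sim m!$ terms produced by Leibniz on the $(m{+}1)$-fold product \eqref{EqConCheckL}, that the total loss of b-regularity never exceeds $m$ (plus $\eps$ or $1+\eps$ for the one or two derivatives), that the weight at $r=\infty$ stays inside the admissible window $(-\tfrac32,-\tfrac12)$ at every intermediate application of $\check L_b(\sigma)^{-1}$, and that the powers of $|\sigma|$ accumulate to at most $|\sigma|^{-\eps}$, resp.\ $|\sigma|^{-1-\eps}$. The crucial \emph{qualitative} point making this work --- and the only place where one uses more than soft mapping properties --- is the normal-operator improvement already exploited throughout~\S\ref{SR}: the leading $r^{-1}$ terms produced by $\check L_b(\sigma)^{-1}$ are annihilated by the b-normal operator $\rho(\rho D_\rho+i)$ of $\pa_\sigma\check L_b(0)$, which is what allows the extra $\pa_\sigma$ to be absorbed with only an $\eps$-loss rather than a full order; this is encapsulated in Lemma~\ref{LemmaRefDiff1} and Proposition~\ref{PropRegCheckL} and is inherited here without change. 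Once Proposition~\ref{PropConCheckL} is in hand, Theorem~\ref{ThmRegConLow} itself follows by differentiating the identity~\eqref{EqRegPf} expressing $L_b^-(\sigma)$ in terms of $\check L_b(\sigma)^{-1}$, $\wt R_{b,\mathrm{reg}}(\sigma)$, and the finite-rank residue operators $P_{b,j}$ (which by Lemma~\ref{LemmaL0Lead} map into $\Hbext^{\infty,3/2-}$, so their contributions are harmless), combined with the $\eps$-regularity of the entries of $\wt R_{b,\mathrm{reg}}(\sigma)$ from Proposition~\ref{PropResReg} after applying $(\sigma\pa_\sigma)^m$ --- and the latter is established by exactly the same Leibniz-plus-chaining argument applied to the explicit formulas~\eqref{EqRInv2}, using that $\sigma\pa_\sigma$ preserves $\eps$-regularity and $\eps$-regular expansions in the sense of Definition~\ref{DefEpsReg}.
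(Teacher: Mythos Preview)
Your approach is correct and essentially the same as the paper's, which organizes the bookkeeping as an induction on $m$ (explicit $m=1$ case, then an inductive step for $\tilde\cL_b^{(m+1)}=\tilde\cL_b^{(m)}\cR(\sigma)\check L_b(\sigma)^{-1}$) rather than working directly with the full sum~\eqref{EqConCheckL}; the ingredients---Leibniz, chaining the mapping properties of $\check L_b(\sigma)^{-1}$ and $\cR\in\rho^2\Diffb^2$, and the $\delta=\tfrac{1+\eps}{2}$ splitting for second derivatives---are identical. One small slip: along the chain for~\eqref{EqConLowBd0} the weight alternates between $\ell$ and $\ell+2$ and hence stays fixed, so your parenthetical attributing the role of the hypothesis $s+\ell>\tfrac32+m$ to ``the weight staying above $-\tfrac32$'' is off---the binding constraint along the chain is that the b-regularity index stays above the threshold $\tfrac52$ after the accumulated losses.
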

\begin{proof}
  Consider first the case $m=1$. Let us write
  \[
    \sigma\pa_\sigma(\check L_b(\sigma)^{-1})=\check L_b(\sigma)^{-1} + \check L_b(\sigma)^{-1}\cR(\sigma)\check L_b(\sigma)^{-1},\quad
    \cR(\sigma)\in\rho^2\Diffb^2.
  \]
  We compute the first derivative (denoted by $'$) of the second summand in $\sigma$ (the first term being handled by Proposition~\ref{PropRegCheckL} directly). We have 
  \begin{align*}
    (\check L_b(\sigma)^{-1} \cR(\sigma)\check L_b(\sigma)^{-1})'&= (\check L_b(\sigma)^{-1} )'\cR(\sigma)\check L_b(\sigma)^{-1} \\
      &\quad\qquad +\check L_b(\sigma)^{-1}\cR'(\sigma)\check L_b(\sigma)^{-1}+\check L_b(\sigma)^{-1}\cR(\sigma)(\check L_b(\sigma)^{-1})' \\
      &=:L_1+L_2+L_3.
  \end{align*}
  Using Proposition~\ref{PropRegCheckL}, we have uniform bounds
  \begin{align*}
    L_1,\,L_2 &\colon \Hbext^{s-1,\ell+2}\xra{\check L_b(\sigma)^{-1}}\Hbext^{s,\ell}\xra{\cR(\sigma),\,\cR'(\sigma)}\Hbext^{s-2,\ell+2}\xra{(\check L_b(\sigma)^{-1})'} |\sigma|^{-\eps}\Hbext^{s-1-\eps,\ell+\eps-1}, \\
    L_3 &\colon \Hbext^{s-1,\ell+2}\xra{(\check L_b(\sigma)^{-1})'}|\sigma|^{-\eps}\Hbext^{s-\eps,\ell+\eps-1}\xra{\cR(\sigma)}|\sigma|^{-\eps}\Hbext^{s-2-\eps,\ell+1+\eps} \\
      &\hspace{18em} \xra{\check L_b(\sigma)^{-1}}|\sigma|^{-\eps}\Hbext^{s-1-\eps,\ell-1+\eps}.
  \end{align*}
  Let us now consider the second derivative. We have 
  \begin{align*}
    (\check L_b(\sigma)^{-1}\cR(\sigma)\check L_b(\sigma)^{-1})''&=(\check L_b(\sigma)^{-1})''\cR(\sigma)\check L_b(\sigma)^{-1}+2(\check L_b(\sigma)^{-1})'\cR(\sigma)(\check L_b(\sigma)^{-1})'\\
    &\qquad+2(\check L_b(\sigma)^{-1})'\cR'(\sigma)\check L_b(\sigma)^{-1}
      +2\check L_b(\sigma)^{-1}\cR'(\sigma)(\check L_b(\sigma)^{-1})'\\
    &\qquad+\check L_b(\sigma)^{-1}\cR(\sigma)(\check L_b(\sigma)^{-1})''+\check L_b(\sigma)^{-1}\cR''(\sigma)\check L_b(\sigma)^{-1}\\
    &=:M_1+M_2+M_3+M_4+M_5+M_6.
  \end{align*}
  We again can treat the different terms by applying Proposition~\ref{PropRegCheckL}. Namely,
  \begin{align*}
    M_1 &\colon \Hbext^{s-1,\ell+2}\xra{\check L_b(\sigma)^{-1}}\Hbext^{s,\ell}\xra{\cR(\sigma)}\Hbext^{s-2,\ell+2}\xra{(\check L_b(\sigma)^{-1})''}|\sigma|^{-1-\eps}\Hbext^{s-2-\eps,\ell+\eps-1}, \\
    M_2 &\colon \Hbext^{s-1,\ell+2}\xra{(\check L_b(\sigma)^{-1})'}|\sigma|^{-\eps/2}\Hbext^{s-\eps/2,\ell+\eps/2-1}\xra{\cR(\sigma)}|\sigma|^{-\eps/2}\Hbext^{s-2-\eps/2,\ell+1+\eps/2} \\
     &\hspace{20em} \xra{(\check L_b(\sigma)^{-1})'}|\sigma|^{-\eps}\Hbext^{s-1-\eps,\ell-1+\eps}, \\
    M_3 &\colon \Hbext^{s-1,\ell+2}\xra{\check L_b(\sigma)^{-1}}\Hbext^{s,\ell}\xra{\cR'(\sigma)}\Hbext^{s-2,\ell+2}\xra{(\check L_b(\sigma)^{-1})'}|\sigma|^{-\eps}\Hbext^{s+1-\eps,\ell+\eps-1}, \\
    M_4 &\colon \Hbext^{s-1,\ell+2}\xra{(\check L_b(\sigma)^{-1})'}|\sigma|^{-\eps}\Hbext^{s-\eps,\ell+\eps-1}\xra{\cR'(\sigma)}|\sigma|^{-\eps}\Hbext^{s-2-\eps,\ell+1+\eps}, \\
     &\hspace{20em} \xra{\check L_b(\sigma)^{-1}}|\sigma|^{-\eps}\Hbext^{s-1-\eps,\ell-1+\eps}, \\
    M_5 &\colon \Hbext^{s-1,\ell+2}\xra{(\check L_b(\sigma)^{-1})''}|\sigma|^{-1-\eps}\Hbext^{s-1-\eps,\ell+\eps-1}\xra{\cR(\sigma)}|\sigma|^{-1-\eps}\Hbext^{s-3-\eps,\ell+\eps+1} \\
     &\hspace{20em}\xra{\check L_b(\sigma)^{-1}}|\sigma|^{-1-\eps}\Hbext^{s-2-\eps,\ell+\eps-1}, \\
    M_6 &\colon \Hbext^{s-1,\ell+2}\xra{\check L_b(\sigma)^{-1}} \Hbext^{s,\ell} \xra{\cR''(\sigma)}\Hbext^{s-2,\ell+2}\xra{\check L_b(\sigma)^{-1}}\Hbext^{s-1,\ell}.
  \end{align*}
  
  Suppose now that the proposition holds for $m\in\N$; we want to show it for $m+1$. It is sufficient to prove the proposition for an operator of type 
  \[
    \tilde{\cL}_b^{(m+1)}(\sigma)=\tilde{\cL}^{(m)}_b(\sigma)\cR(\sigma)\check L_b(\sigma)^{-1},\quad \cR(\sigma)\in\rho^2\Diffb^2,
  \]
  where $\tilde{\cL}^{(m)}_b(\sigma)$ satisfies the same estimates as $\check\cL^{(m)}_b(\sigma)$, i.e.\ \eqref{EqConLowBd1}--\eqref{EqConLowBd2}. Let us compute the first derivative of $\tilde{\cL}_b^{(m+1)}(\sigma)$:
  \begin{align*}
    (\tilde\cL_b^{(m)}(\sigma)\cR(\sigma)\check L_b(\sigma)^{-1})'&= (\cL_b^{(m)}(\sigma))'\cR(\sigma)\check L_b(\sigma)^{-1} \\
    &\quad\qquad +\tilde\cL_b^{(m)}(\sigma)\cR'(\sigma)\check L_b(\sigma)^{-1}+\tilde\cL_b^{(m)}(\sigma)\cR(\sigma)(\check L_b(\sigma)^{-1})' \\
    &=:L_1+L_2+L_3.
  \end{align*}
  Using Proposition~\ref{PropRegCheckL} and the inductive hypothesis, we have
  \begin{align*}
    L_1 &\colon \Hbext^{s-1,\ell+2}\xra{\check L_b(\sigma)^{-1}}\Hbext^{s,\ell}\xra{\cR(\sigma)}\Hbext^{s-2,\ell+2}\xra{(\tilde\cL^{(m)}_b(\sigma))'}|\sigma|^{-\eps}\Hbext^{s-1-m-\eps,\ell+\eps-1}, \\
    L_2 &\colon \Hbext^{s-1,\ell+2}\xra{\check L_b(\sigma)^{-1}}\Hbext^{s,\ell}\xra{\cR'(\sigma)}\Hbext^{s-2,\ell+2}\xra{\tilde\cL_b^{(m)}(\sigma)}\Hbext^{s-1-m,\ell}, \\
    L_3 &\colon \Hbext^{s-1,\ell+2}\xra{(\check L_b(\sigma)^{-1})'}|\sigma|^{-\eps}\Hbext^{s-\eps,\ell+\eps-1}\xra{\cR(\sigma)}|\sigma|^{-\eps}\Hbext^{s-2-\eps,\ell+1+\eps} \\
     &\hspace{18em} \xra{\tilde\cL_b^{(m)}(\sigma)}|\sigma|^{-\eps}\Hbext^{s-m-1-\eps,\ell-1+\eps}.
  \end{align*}

  We next consider the second derivative,
  \begin{align*}
    (\tilde\cL_b^{(m)}(\sigma)\cR(\sigma)\check L_b(\sigma)^{-1})''&=\tilde\cL_b^{(m)}(\sigma)''\cR(\sigma)\check L_b(\sigma)^{-1}+2\tilde\cL_b^{(m)}(\sigma)'\cR(\sigma)(\check L_b(\sigma)^{-1})'\\
    &\qquad+2\tilde\cL_b^{m}(\sigma)'\cR'(\sigma)\check L_b(\sigma)^{-1}
      +2\tilde\cL_b^{(m)}(\sigma)\cR'(\sigma)(\check L_b(\sigma)^{-1})'\\
    &\qquad+\tilde\cL_b^{(m)}(\sigma)\cR(\sigma)(\check L_b(\sigma)^{-1})''+\tilde\cL_b^{(m)}(\sigma)\cR''(\sigma)\check L_b(\sigma)^{-1}\\
    &=:M_1+M_2+M_3+M_4+M_5+M_6.
  \end{align*}
  We again can treat the different terms by applying Proposition~\ref{PropRegCheckL} and the inductive hypothesis:
  \begin{align*}
    M_1 &\colon \Hbext^{s-1,\ell+2}\xra{\check L_b(\sigma)^{-1}}\Hbext^{s,\ell}\xra{\cR(\sigma)}\Hbext^{s-2,\ell+2}\xra{\tilde\cL_b^{(m)}(\sigma)''}|\sigma |^{-1-\eps}\Hbext^{s-2-m-\eps,\ell+\eps-1}, \\
    M_2 &\colon \Hbext^{s-1,\ell+2}\xra{(\check L_b(\sigma)^{-1})'}|\sigma|^{-\eps/2}\Hbext^{s-\eps/2,\ell+\eps/2-1}\xra{\cR(\sigma)}|\sigma|^{-\eps/2}\Hbext^{s-2-\eps/2,\ell+1+\eps/2} \\
      &\hspace{18em} \xra{\tilde\cL_b^{(m)}(\sigma)'}|\sigma|^{-\eps}\Hbext^{s-1-m-\eps,\ell-1+\eps}, \\
    M_3 &\colon \Hbext^{s-1,\ell+2}\xra{\check L_b(\sigma)^{-1}}\Hbext^{s,\ell}\xra{\cR'(\sigma)}\Hbext^{s-2,\ell+2}\xra{\tilde\cL_b^{(m)}(\sigma)'}|\sigma|^{-\eps}\Hbext^{s-1-m-\eps,\ell+\eps-1}, \\
    M_4 &\colon \Hbext^{s-1,\ell+2}\xra{(\check L_b(\sigma)^{-1})'}|\sigma|^{-\eps}\Hbext^{s-\eps,\ell+\eps-1}\xra{\cR'(\sigma)}|\sigma|^{-\eps}\Hbext^{s-\eps,\ell+\eps+1} \\
      &\hspace{18em} \xra{\tilde\cL_b^{(m)}(\sigma)}\Hbext^{s+1-m-\eps,\ell+\eps-1}, \\
    M_5 &\colon \Hbext^{s-1,\ell+2}\xra{(\check L_b(\sigma)^{-1})''}|\sigma|^{-1-\eps}\Hbext^{s-1-\eps,\ell+\eps-1}\xra{\cR(\sigma)}|\sigma|^{-1-\eps}\Hbext^{s-3-\eps,\ell+\eps+1} \\
      &\hspace{18em} \xra{\tilde\cL_b^{(m)}(\sigma)}|\sigma|^{-1-\eps}\Hbext^{s-2-m-\eps,\ell+\eps-1}, \\
    M_6 &\colon \Hbext^{s-1,\ell+2}\xra{\check L_b(\sigma)^{-1}}\Hbext^{s,\ell}\xra{\cR''(\sigma)}\Hbext^{s,\ell+2}\xra{\tilde\cL_b^{(m)}(\sigma)}\Hbext^{s-m,\ell}.
  \end{align*}
  This finishes the proof of the proposition. 
\end{proof}

Using the conormal regularity of $\check L_b(\sigma)^{-1}$, we can now proceed as in \S\ref{SReg}, but we have to keep track of the loss of regularity when commuting with $\sigma\pa_\sigma$. We claim that for $m\in\N$, $(\sigma\pa_\sigma)^m\wt L_{i j}$ has an $\eps$-regular expansion up to the same order as $\wt L_{i j}$. Since $\sigma\pa_\sigma(\sigma^k)=k\sigma^k$, $k=0,1$, the only interesting terms are commutators with the non-polynomial (in $\sigma$) terms $\wt L_{i j}^e(\sigma)$; but the $\eps$-regularity of $(\sigma\pa_\sigma)^m\wt L^e_{i j}(\sigma)$ can be proved in the same way as that of $\wt L^e_{i j}$, now using Proposition~\ref{PropConCheckL} instead of Proposition~\ref{PropRegCheckL}. For example, consider $\wt L_{2 0}$, computed in~\eqref{EqEpsRegL20}: for $f\in\wt\cK_b^\perp$, $h^*\in\cK_{b,v}^*$, we have
\[
  (\sigma\pa_\sigma)^m\wt L_{2 0}(f,h^*) = \la (\sigma\pa_\sigma)^m\check L_b(\sigma)^{-1}f,(\pa_\sigma\wh{L_b}(0)^*+\tfrac{\sigma}{2}\pa_\sigma^2\wh{L_b}(0)^*)h^*\ra + \text{similar terms},
\]
where the `similar terms' arise when some of the $\sigma\pa_\sigma$-derivatives fall on the second summand in the pairing. The $\eps$-regularity of this is thus indeed an immediate consequence of Proposition~\ref{PropConCheckL}. The argument for the other $\wt L_{i j}$ is completely analogous.

In order to analyze the entries $R_{i j}$ of $\wt R_b(\sigma)$, we need a result similar to Lemma~\ref{LemmaRegRInverse}.

\begin{lemma}
\label{LemmaConInv}
  Let $s,\ell,\eps,m$ be as in Theorem~\ref{ThmRegConLow}. Suppose that $L(\sigma)$ has an $\eps$-regular expansion up to order two of the form 
  \[
    L(\sigma)=L_0+\sigma L_1+\sigma^2 L_2(\sigma),
  \]
  where $L_0\colon\wt\cK_j\to\cR_i$ is invertible, $L_1,L_2(\sigma)\colon\wt\cK_j\to\wt\cR_i$ are bounded uniformly in $\sigma$, and $(\sigma\pa_\sigma)^k L_2(\sigma)$ is $\eps$-regular for $k=0,\ldots,m$. Then for $\sigma$ sufficiently small, $L(\sigma)$ is invertible, and $(\sigma\pa_\sigma)^m L(\sigma)^{-1}$ has an $\eps$-regular expansion up to order two, in the sense that
  \[
    (\sigma\pa_\sigma)^k L(\sigma)^{-1} = L^{0(k)} + \sigma L^{1(k)} + \sigma^2 L^{2(k)}
  \]
  for $k=0,\ldots,m$, where $L^{0(k)},L^{1(k)}\colon\cR_i\to\wt\cK_j$ are bounded, and $L^{2(k)}(\sigma)$ is $\eps$-regular (in the sense explained in Lemma~\ref{LemmaRegRInverse}).
\end{lemma}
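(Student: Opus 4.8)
\textbf{Plan for the proof of Lemma~\ref{LemmaConInv}.}
The plan is to follow the Neumann-series argument of Lemma~\ref{LemmaRegRInverse} verbatim, but now applying $(\sigma\pa_\sigma)^k$ to the series~\eqref{EqRegRInverse} and tracking the combinatorics of the resulting terms. First I would record that invertibility for small $\sigma$ is already established in Lemma~\ref{LemmaRegRInverse}, and that
\[
  L(\sigma)^{-1} = R_0 + \sigma R_1 + \sigma^2 R_2(\sigma),\quad
  R_2(\sigma) = \sum_{k\geq 2}(-1)^k\sigma^{k-2}\bigl(L_0^{-1}(L_1+\sigma L_2(\sigma))\bigr)^k L_0^{-1},
\]
with $R_0,R_1$ bounded and $\sigma$-independent; since $\sigma\pa_\sigma$ annihilates constants and acts as multiplication by $1$, resp.\ $2$, on $\sigma R_1$, resp.\ $\sigma^2(\cdot)$, it suffices to prove that $(\sigma\pa_\sigma)^k R_2(\sigma)$ is $\eps$-regular for $k=0,\dots,m$. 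The key structural observation is that $\sigma\pa_\sigma$ applied to $\sigma^{k-2}\bigl(L_0^{-1}(L_1+\sigma L_2(\sigma))\bigr)^k L_0^{-1}$ produces, by the Leibniz rule, either the same type of term with a different scalar coefficient (when the derivative hits the overall power $\sigma^{k-2}$), or a sum of terms in which one interior factor $L_0^{-1}(L_1+\sigma L_2(\sigma))$ is replaced by $L_0^{-1}\sigma\pa_\sigma(\sigma L_2(\sigma)) = L_0^{-1}\bigl(\sigma L_2(\sigma)+\sigma^2(\pa_\sigma L_2)(\sigma)\bigr)$, which (using that $\sigma\pa_\sigma L_2$ is $\eps$-regular, hence $\sigma^2\pa_\sigma L_2 = \sigma(\sigma\pa_\sigma L_2)$ is $\sigma$ times an $\eps$-regular family) is again $\cO(|\sigma|)$ in operator norm with $\eps$-regular $\sigma$-derivatives of the stated orders. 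Iterating, $(\sigma\pa_\sigma)^k R_2(\sigma)$ is a finite sum of convergent (for small $\sigma$) series of products of factors each of which is either $L_0^{-1}(L_1+\sigma L_2(\sigma))$ or $L_0^{-1}(\sigma\pa_\sigma)^j(\sigma L_2(\sigma))$ for some $1\leq j\leq k$, times $L_0^{-1}$, times a scalar power of $\sigma$ that is $\geq 0$; each such product maps $\cR_i\to\wt\cK_j$ boundedly, and the whole series is geometrically convergent with a uniform bound.

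Next I would check the $\eps$-regularity of $(\sigma\pa_\sigma)^k R_2(\sigma)$ as a function of $\sigma$, i.e.\ the bounds on its \emph{ordinary} derivatives $\pa_\sigma$ and $\pa_\sigma^2$. This is exactly the computation already carried out in the proof of Lemma~\ref{LemmaRegRInverse} for $k=0$: differentiating the series once or twice in $\sigma$ lands one interior factor on $\pa_\sigma$ or $\pa_\sigma^2$ of something of the form $L_2(\sigma)$ or $(\sigma\pa_\sigma)^j(\sigma L_2(\sigma))$, which by hypothesis gains at most $|\sigma|^{-\eps}$, resp.\ $|\sigma|^{-1-\eps}$, while every other factor stays bounded $\cR_i\to\wt\cK_j$; the mapping-property bookkeeping between the spaces $\wt\cK_j$ and $\cR_i$ is routine and identical to the displayed chains in the proof of Proposition~\ref{PropConCheckL}. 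The hypothesis $s-m-\eps-\tfrac72>0$ (inherited from Theorem~\ref{ThmRegConLow}) is what guarantees that all the intermediate compositions make sense; I would point this out but not belabor it.

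I do not expect a genuine obstacle here: the lemma is a purely algebraic/combinatorial elaboration of Lemma~\ref{LemmaRegRInverse}, and the analytic input (the $\eps$-regularity of $(\sigma\pa_\sigma)^k L_2(\sigma)$ for $k\le m$, and the mapping properties of $L_0^{-1}$) is supplied by hypothesis. The only mildly delicate point is notational: one must organize the Leibniz expansion of $(\sigma\pa_\sigma)^m$ applied to the series so that it is manifestly a finite sum of series of the same shape as the original, each with a nonnegative leftover power of $\sigma$, so that the $\sigma^2$ in front of $R_2$ is never consumed. I would handle this by an induction on $m$: the inductive step applies $\sigma\pa_\sigma$ to a term of the already-established form and observes, as above, that the result is again a finite sum of terms of that form. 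The statement for the $\eps$-regular-up-to-order-one case, and the `fully $\eps$-regular' case, follow by the same argument with the obvious simplifications, exactly as in Lemma~\ref{LemmaRegRInverse}.
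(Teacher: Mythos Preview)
Your proposal is correct and takes essentially the same approach as the paper: expand $L(\sigma)^{-1}$ via the Neumann series~\eqref{EqRegRInverse}, apply $(\sigma\pa_\sigma)^k$ using the Leibniz rule, observe inductively that the result is again a sum of terms of the same structural form (each interior factor being either $L_0^{-1}(L_1+\sigma L_2)$ or $L_0^{-1}$ acting on some $(\sigma\pa_\sigma)^j(\sigma L_2)$), and then invoke the $\eps$-regularity argument of Lemma~\ref{LemmaRegRInverse} verbatim. One small slip: your displayed formula for $R_2(\sigma)$ omits the term $-L_0^{-1}L_2(\sigma)L_0^{-1}$ coming from the $k=1$ piece of the Neumann series (cf.~\eqref{EqRegRInverse}); this term is of course trivially handled since $(\sigma\pa_\sigma)^k$ acts directly on $L_2$, but it should be included.
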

\begin{proof}
  We write $L(\sigma)^{-1}$ as a Neumann series as in~\eqref{EqRegRInverse}, with $\sigma^2$ term $R_2(\sigma)$ satisfying
  \begin{align*}
    \sigma\pa_\sigma R_2(\sigma)&=-L_0^{-1}(\sigma\pa_\sigma L_2(\sigma))L_0^{-1} \\
     &\qquad + \sum_{k=2}^{\infty}(-1)^k\sigma^{k-2}\Biggl(\sum_{j=0}^{k-1}(L_0^{-1}(L_1+\sigma L_2(\sigma)))^j L_0^{-1}\bigl(\sigma L_2(\sigma)+\sigma(\sigma\pa_\sigma L_2(\sigma))\bigr) \\
     &\qquad\qquad \times(L_0^{-1}(L_1+\sigma L_2(\sigma)))^{k-1-j}+(k-2)(L_0^{-1}(L_1+\sigma L_2(\sigma)))^k\Biggr)L_0^{-1}. 
  \end{align*}
  Induction over $m$ gives:
  \begin{align*}
    (\sigma\pa_\sigma)^m R_2(\sigma)&=-L_0^{-1}((\sigma\pa_\sigma)^m L_2(\sigma))L_0^{-1} \\
      &\qquad + \sum_{k=2}^{\infty} (-1)^k\sigma^{k-2}\prod_{j=1}^k\alpha_j(L_0^{-1}(\beta_j L_1+\sigma L_{2,j}(\sigma)))L_0^{-1},
  \end{align*}
  where $\alpha_j$, $\beta_j$ are constants and $L_{2,j}$ satisfies the same hypotheses as $L_2(\sigma)$. The end of the proof is then the same as in Lemma~\ref{LemmaRegRInverse}.  
\end{proof}

This applies in particular to $L_{0 0}(\sigma)$. Using Lemma~\ref{LemmaConInv}, we obtain the following analogue of Proposition~\ref{PropResReg}:
\begin{itemize}
  \item $(\sigma\pa_\sigma)^m\wt R_{0 0}$, $(\sigma\pa_\sigma)^m\wt R'_{0 1}$, $(\sigma\pa_\sigma)^m\wt R'_{1 0}$, $(\sigma\pa_\sigma)^m\wt R_{0 2}$, $(\sigma\pa_\sigma)^m\wt R_{2 0}$ are $\eps$-regular;
  \item $(\sigma\pa_\sigma)^m\wt R_{1 2}$, $(\sigma\pa_\sigma)^m\wt R_{2 1}$, $(\sigma\pa_\sigma)^m\wt R_{2 2}$ have $\eps$-regular expansions up to order one;
  \item $(\sigma\pa_\sigma)^m\wt R_{1 1}$ has an $\eps$-regular expansion up to order two.
\end{itemize}
  
\begin{proof}[Proof of Theorem~\ref{ThmRegConLow}]
  Recall the expression~\eqref{EqRegPf} for $L_b^-(\sigma)$; we thus have to consider terms of the form 
  \[
    (\sigma\pa_\sigma)^j\check L_b(\sigma)^{-1}(\sigma\pa_\sigma)^l\wt R_{b,\rm reg}-(\sigma\pa_\sigma)^q\check L_b(\sigma)^{-1}\bigl((\alpha\pa_{\sigma}\check L_b(0)+\beta\tfrac{\sigma}{2}\pa_\sigma^2\check L_b(0))P_{b,1}+\gamma\pa_\sigma^2\check L_b(0)P_{b,2}\bigr)
  \]
  for numerical constants $\alpha,\beta,\gamma$. All these terms behave like the corresponding terms for $j,l,q$ equal to zero, except for the increased requirement on the b-regularity. We then conclude as in the proof of Theorem~\ref{ThmRegL}.
\end{proof}

\section{Decay estimates}
\label{SD}

We continue using the notation of the previous section, so $L_b$ is the linearized gauge-fixed modified Einstein operator. We shall study the decay of the solution of
\begin{equation}
\label{EqD}
  L_b h=f,\quad f\in\CIc((0,\infty)_{t_*};\Hbext^{s,\ell+2}),
\end{equation}
in $t_*$. In fact, we shall allow more general $f$ lying in \emph{spacetime} Sobolev spaces
\begin{equation}
\label{EqDFn}
  \wt H_\bop^{s,\ell},\quad
  \wt H_{\bop,\rm c}^{s,\ell,k},
\end{equation}
equal to $L^2(t_*^{-1}([0,\infty));|d g_{b_0}|)$ for $s,\ell,k=0$. The index $\ell\in\R$ is the weight in $\rho=r^{-1}$, i.e.\ $\wt H_\bop^{s,\ell}=\rho^\ell\wt H_\bop^s$, likewise for the second (conormal) space. The index $s\in\R$ measures regularity with respect to $\pa_{t_*}$ and stationary b-vector fields on $X$. The index $k\in\N_0$ measures regularity with respect to $\la t_*\ra D_{t_*}$, so $u\in\wt H_{\bop,\rm c}^{s,\ell,k}$ if and only $(\la t_*\ra D_{t_*})^j u\in\wt H_\bop^{s,\ell}$, $j=0,\ldots,k$. We stress that all elements of these spaces are supported in $t_*\geq 0$. (The value $0$ here is of course artificial; the point is that $t_*$ has a finite lower bound on the support.)

\begin{thm}
\label{ThmD}
  Let $\ell\in(-\tfrac32,-\half)$, $\eps\in(0,1)$, $\ell+\eps\in(-\half,\half)$, and $s>\tfrac72+m$, $m\in\N_0$. Let $h$ denote the solution of equation~\eqref{EqD}. Then there exists a generalized zero mode $\hat h\in\wh\cK_b$ (see Theorem~\ref{ThmCD0Modes}) such that
  \[
    h=\hat h+\tilde h,
  \]
  and so that the remainder $\tilde h$ obeys the decay
  \begin{subequations}
  \begin{equation}
  \label{EqDSobEst}
    \|\tilde h\|_{\la t_*\ra^{-3/2+\eps}\wt H_{\bop,\rm c}^{s-2,\ell+\eps-1,m}} \lesssim \|f\|_{\la t_*\ra^{-7/2+\eps}\wt H_{\bop,\rm c}^{s,\ell+2,m}}.
  \end{equation}
  If $m\geq 1$, then we also have
  \begin{equation}
  \label{EqDPwise0}
    \|(\la t_*\ra D_{t_*})^{m-1}\tilde h\|_{\la t_*\ra^{-2+\eps}L^\infty(\R_{t_*};\Hbext^{s-2-m,\ell+\eps-1})} \lesssim \|f\|_{\la t_*\ra^{-7/2+\eps}\wt H_{\bop,\rm c}^{s,\ell+2,m}}.
  \end{equation}
  In particular, for $m\geq 1$, $\N_0\ni j<s-(\tfrac92+m)$, we have pointwise decay
  \begin{equation}
  \label{EqDPwise}
    |(\la t_*\ra D_{t_*})^{m-1}V^j \tilde h| \lesssim \la t_*\ra^{-2+\eps} r^{-1/2-\ell-\eps} \|f\|_{\la t_*\ra^{-7/2+\eps}\wt H_{\bop,\rm c}^{s,\ell+2,1}},
  \end{equation}
  \end{subequations}
  where $V^j$ is any up to $j$-fold composition of $\pa_{t_*}$, $r\pa_r$, and rotation vector fields.

  In the notation of~\eqref{EqRZeroModes} and Definition~\ref{DefRBreve}, the leading order term $\hat h$ can be expressed in terms of $f$ in the following manner: we have
  \[
    \hat h = (t_*h_s + \breve h_s) + h'_s + h_v + h''_s,\quad h_s,\,h'_s,\,h''_s\in\cK_{b,s},\ h_v\in\cK_{b,v},
  \]
  where $h_s,h'_s,h_v$ are determined by~\eqref{EqRSing1}--\eqref{EqRBarh} for $\int_{\R_{t_*}} f(t_*)\,d t_*$ in place of $f$, and $h''_s$ is the unique element in $\cK_{b,s}$ for which there exists $h''_v\in\cK_{b,v}$ with $k_b((h''_s,h''_v),-)=-\int_{\R_{t_*}}t_* f(t_*)\,d t_*$ as elements of $(\cK_b^*)^*$.
\end{thm}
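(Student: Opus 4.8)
\textbf{Proof strategy for Theorem~\ref{ThmD}.} The plan is to solve~\eqref{EqD} by the Fourier transform in $t_*$, insert the precise description of the resolvent from Theorem~\ref{ThmR}, and read off the asymptotics from the singular part while estimating the regular part via the conormal regularity results of~\S\ref{SReg}. Concretely, writing $h(\sigma)=\wh{L_b}(\sigma)^{-1}\hat f(\sigma)$ and using $\wh{L_b}(\sigma)^{-1}=P_b(\sigma)+L_b^-(\sigma)$ from Theorem~\ref{ThmR}, we split $h=h_{\rm sing}+h_{\rm reg}$ according to the two summands. For $h_{\rm sing}$, the inverse Fourier transform of $P_b(\sigma)\hat f(\sigma)$ is computed using $\cF^{-1}(i(\sigma+i0)^{-1})=H(t_*)$ and $\cF^{-1}(-(\sigma+i0)^{-2})=t_* H(t_*)$, together with the fact that $P_b(\sigma)$ acts on $\hat f(\sigma)$ through the pairings~\eqref{EqRSing1}--\eqref{EqRBarh}, which at $\sigma=0$ involve $\hat f(0)=\int_{\R_{t_*}}f(t_*)\,dt_*$ and, via the $\sigma$-derivative structure of the conditions, $\pa_\sigma\hat f(0)=-i\int_{\R_{t_*}}t_* f(t_*)\,dt_*$. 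This produces exactly the claimed $\hat h\in\wh\cK_b$, with the quadratic pole giving the $t_* h_s+\breve h_s$ term, the simple pole giving $h'_s+h_v$, and the $\sigma$-linear correction to the quadratic pole (the $\pa_\sigma$ of the $\sigma^{-2}$ coefficient paired against $\pa_\sigma\hat f(0)$) producing the extra $h''_s$ via a pairing identity of the form $k_b((h''_s,h''_v),-)=-\int t_* f\,dt_*$. Making this bookkeeping precise — in particular identifying which derivative of which pole coefficient pairs against $\hat f(0)$ versus $\pa_\sigma\hat f(0)$, and checking that the $h''_v$ component is absorbed into the $\cK_{b,v}$ part without changing the statement — is the first substantive step.

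\textbf{Estimating the regular part.} For $h_{\rm reg}(t_*)=\frac{1}{2\pi}\int e^{-i\sigma t_*}L_b^-(\sigma)\hat f(\sigma)\,d\sigma$, the decay in $t_*$ comes from integration by parts in $\sigma$: each factor of $t_*$ is traded for a $\pa_\sigma$, and the point of Theorems~\ref{ThmRegL} and~\ref{ThmRegConLow} is precisely that $L_b^-$ has $H^{3/2-\eps}$ regularity in $\sigma$ near $0$ (Corollary~\ref{CorRegL}) and that $(\sigma\pa_\sigma)^m L_b^-$ retains the same regularity at the cost of $m$ derivatives in the b-regularity order. The plan is: cut $\sigma$ into the region $|\sigma|\le\sigma_0$ (handled by the low-energy estimates) and $|\sigma|\ge\sigma_0$ (handled by Proposition~\ref{PropRegHigh}, where smoothness and polynomial high-energy bounds give rapid decay in $t_*$ after integrating by parts, using that $\hat f$ is Schwartz since $f$ is compactly supported in $t_*$ — or, for $f$ in the spacetime Sobolev spaces~\eqref{EqDFn}, using the weight $\la t_*\ra^{-7/2+\eps}$ to control $\hat f$ and its $\sigma$-derivatives). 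The half-integer Sobolev regularity $\tfrac32-\eps$ in $\sigma$ translates, by the standard Fourier-analytic duality between weighted $L^2$ in $t_*$ and $H^s$ in $\sigma$, into the decay rate $\la t_*\ra^{-3/2+\eps}$ in~\eqref{EqDSobEst}; the extra $m$ commutators with $\la t_*\ra D_{t_*}$ correspond to the $(\sigma\pa_\sigma)^m$ bounds and cost $m$ orders of b-regularity, which is why $s>\tfrac72+m$ is required. The pointwise bound~\eqref{EqDPwise0} is obtained from~\eqref{EqDSobEst} with $m$ replaced by $m-1$ combined with a one-dimensional Sobolev embedding $\wt H_{\bop,\rm c}^{\cdot,\cdot,1}\hookrightarrow\la t_*\ra^{-1/2}L^\infty_{t_*}$ in the $t_*$ variable, and~\eqref{EqDPwise} follows by additionally applying the b-Sobolev embedding $\Hbext^{s',\ell'}\hookrightarrow r^{-\ell'-3/2}L^\infty$ on $X$ from~\eqref{EqBHbSobEmb} and commuting through up to $j$ stationary vector fields.

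\textbf{Main obstacle.} The hard part will be the careful translation of the $\sigma$-regularity statements — stated in Theorems~\ref{ThmRegL}, \ref{ThmRegConLow} and Corollary~\ref{CorRegL} in terms of pointwise-in-$\sigma$ bounds on up to two $\sigma$-derivatives plus conormality — into the clean weighted spacetime Sobolev estimate~\eqref{EqDSobEst}. This requires: (i) verifying that the weight loss from $\eps$ to $\ell+\eps-1$ at $r=\infty$ and the regularity loss from $s$ to $s-2$ are exactly the ones propagated through the contour integral; (ii) handling the mild singularity of $L_b^-(\sigma)$ and its derivatives at $\sigma=0$ (the $|\sigma|^{-\eps}$ and $|\sigma|^{-1-\eps}$ blow-ups) — these are integrable in the relevant sense and precisely calibrated to give $H^{3/2-\eps}$, but one must check the Hardy-inequality-type interpolation argument already sketched in the proof of Corollary~\ref{CorRegL} globalizes to the operator-valued integral; and (iii) splicing the $t_*>0$ support condition with the lower bound $t_*\ge -C$ coming from the choice of $t_*$, i.e.\ checking that the forcing problem is genuinely causal so that $h$ is supported in $t_*\ge 0$ as asserted. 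A secondary technical point is that for general $f\in\wt H_{\bop,\rm c}^{s,\ell+2,m}$ (rather than $\CIc$ in $t_*$) one works with $\hat f$ only as an $L^2$-based family; the weight $\la t_*\ra^{-7/2+\eps}$ on $f$ is chosen exactly so that $\hat f$ lies in $H^{7/2-\eps}$ in $\sigma$ with values in $\Hbext^{s,\ell+2}$, which is enough regularity to carry out the two integrations by parts in the low-energy regime and to control the high-energy tail; verifying this matching of indices is routine but must be done with care.
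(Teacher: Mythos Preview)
Your proposal is correct and follows essentially the same route as the paper: Fourier representation, contour shift to the real axis, split $\wh{L_b}(\sigma)^{-1}=P_b(\sigma)+L_b^-(\sigma)$, identify $\hat h$ from the polar part acting on the Taylor jet of $\hat f$ at $\sigma=0$, and estimate the regular part via the $H^{3/2-\eps}$ conormal regularity of $L_b^-$ at low energy (Corollary~\ref{CorRegL}, Theorem~\ref{ThmRegConLow}) together with the high-energy bounds (Proposition~\ref{PropRegHigh}).

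Two small points where your write-up should be tightened. First, your description of $h''_s$ is garbled: the $\sigma^{-2}$ coefficient $P_{b,2}$ is $\sigma$-independent, so there is no ``$\pa_\sigma$ of the $\sigma^{-2}$ coefficient''; what actually happens is that one Taylor expands $\hat f(\sigma)=f^{(0)}+\sigma f^{(1)}+\sigma^2\hat f_3(\sigma)$ (with $f^{(0)}=\hat f(0)$, $f^{(1)}=\pa_\sigma\hat f(0)$) and observes that $P_{b,2}\sigma^{-2}\cdot\sigma f^{(1)}=\sigma^{-1}P_{b,2}f^{(1)}$ contributes a simple pole whose residue, read through~\eqref{EqRSing1}, gives $h''_s$ via $k_b((h''_s,h''_v),-)=-\int t_* f\,dt_*$. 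Second, you do not mention what happens to the Taylor remainder: the paper disposes of it by noting that $P_b(\sigma)(\sigma^2\hat f_3(\sigma))$ is already in $H^{3/2-\eps}(\R_\sigma;\Hbext^{\infty,1/2-})$ (the $\sigma^2$ cancels both poles) and hence contributes only to $\tilde h$; this costs two $\sigma$-derivatives of $\hat f$ and is the reason the weight on $f$ is $\la t_*\ra^{-7/2+\eps}$ rather than $\la t_*\ra^{-3/2+\eps}$. Your ``weighted Sobolev embedding'' for the $L^\infty$ improvement~\eqref{EqDPwise0} is exactly the integration-from-infinity argument the paper uses (write $h(t_*)=-\int_{t_*}^\infty s^{-1}(sD_s h)\,ds$ and Cauchy--Schwarz), so that step is fine.
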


For any fixed $\ell\in(-\tfrac32,-\half)$, one has a range of choices $\eps\in(-\half-\ell,1)$; smaller values of $\eps$ give stronger $t_*$-decay at the expense of $r$-growth. It is thus useful to resolve the asymptotic regimes \begin{enumerate*} \item $t_*\to\infty$, $r$ bounded, \item $r\to\infty$, $t_*$ bounded. \end{enumerate*} This is accomplished by lifting to the blow-up $[\ol{\R_{t_*}}\times X;\{\infty\}\times\pa X]$; see Figure~\ref{FigDBlowup}. Its boundary hypersurfaces are:
\begin{enumerate}
\item the lift of $\ol{\R_{t_*}}\times\pa X$, called (by a mild abuse of notation) null infinity $\scri^+$, with interior parameterized by $t_*\in\R$ and the polar coordinate $\omega\in\Sph^2$;
\item the front face, denoted $\iota^+$, with interior parameterized by $r/t_*\in(0,\infty)$ and $\omega\in\Sph^2$; this is a resolution of future timelike infinity;
\item the lift of $\{\infty\}\times X$, called the `Kerr face' $\cK$, with interior parameterized by $r\in(r_-,\infty)$ and $\omega\in\Sph^2$.
\end{enumerate}
The estimate~\eqref{EqDPwise} then implies pointwise decay at the inverse polynomial rate $\tfrac52+\ell-$ at $\cK$ (taking $\eps=-\half-\ell+$), $\tfrac52+\ell-$ at $\iota^+$, and $\tfrac32+\ell-$ at $\scri^+$ (taking $\eps=1-$), so
\begin{equation}
\label{EqDPwise2}
  |(\la t_*\ra D_{t_*})^{m-1}V^j \tilde h(t_*,r,\omega)| \lesssim\la t_*\ra^{-5/2-\ell+} \left(\frac{\la t_*\ra}{\la r\ra+\la t_*\ra}\right)^{3/2+\ell-}.
\end{equation}
One can similarly condense the estimates~\eqref{EqDSobEst}--\eqref{EqDPwise0}; for instance, the former implies
\begin{equation}
\label{EqDSobEst2}
  \tilde h \in \la t_*\ra^{-2-\ell+}\left(\frac{\la t_*\ra}{\la r\ra+\la t_*\ra}\right)^{3/2+\ell-}\wt H_{\bop,\rm c}^{s-2,0,m}.
\end{equation}

\begin{figure}[!ht]
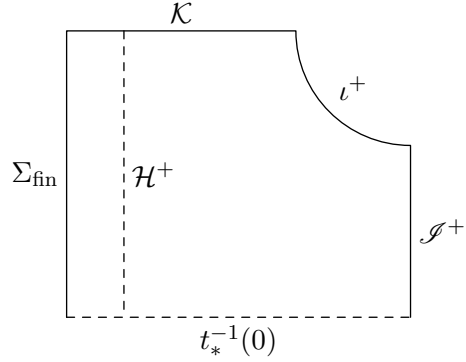

  \centering
  \inclfig{FigDBlowup}
  \caption{The part of the blowup $[\ol{\R_{t_*}}\times X;\{\infty\}\times\pa X]$ on which $t_*\geq 0$. Shown are the front face $\iota^+$, (the lift of) null infinity $\scri^+$, and the `Kerr face' $\cK$, as well as the event horizon $\cH^+$ and the final hypersurface $\Sigma_{\rm fin}$ beyond the black hole event horizon, see~\eqref{EqK0SurfFin}.}
\label{FigDBlowup}
\end{figure}

\begin{rmk}
\label{RmkDPrice}
  Even for $f$ with compact spacetime support, Theorem~\ref{ThmD} only assures pointwise $t^{-2+}$ time decay for bounded $r$. Price's law for scalar waves on the other hand asserts $t^{-3}$ decay \cite{PriceLawI,TataruDecayAsympFlat}. (For gravitational perturbations, Price's law predicts even faster decay rates \cite{PriceLawII}, which one cannot expect to hold however in the general setting of Theorem~\ref{ThmD}, as it concerns general symmetric 2-tensor valued waves, not merely those satisfying the linearized Einstein equation.) A comparison as far as the relevant low frequency behavior of the resolvent is concerned (though in a simpler setting) is \cite[Corollary~1.3]{GuillarmouHassellSikoraResIII}: it asserts $t^{-3}$ asymptotics for scalar waves on a space asymptotic to a cone with cross section the standard 2-sphere, i.e.\ to Euclidean space, while upon perturbing this cross section, one can only expect $t^{-2+\eps}$; the latter is what we prove here. The asymptotic behavior here being the better one, a proof of $t^{-3}$ decay in Theorem~\ref{ThmD} would require expanding the resolvent at $\sigma=0$ to one order more, showing that one has a term linear in $\sigma$ (which we just barely fail to capture in Theorem~\ref{ThmRegL}), and obtaining a $\cO(|\sigma|^2)$ remainder.
\end{rmk}

\begin{proof}[Proof of Theorem~\ref{ThmD}]
  As in equation~\eqref{EqIIFT}, we start with the integral representation
  \begin{equation}
  \label{EqDIntRep}
    h(t_*) = \frac{1}{2\pi}\int_{\Im\sigma=C} e^{-i\sigma t_*}\wh{L_b}(\sigma)^{-1}\hat f(\sigma)\,d\sigma
  \end{equation}
  for any $C>0$. Two important observations are: \begin{enumerate*} \item different values of $C$ produce the same result since the integrand is holomorphic in $\sigma$ with values in $\Hbext^{s,\ell}$, and with norm decaying superpolynomially as $|\Re\sigma|\to\infty$ with $\Im\sigma>0$ bounded; \item the tensor $h$ defined by~\eqref{EqDIntRep} has support in $t_*\geq -T_0$ for some ($f$-independent) $T_0$, which follows from the Paley--Wiener theorem and the fact that the large parameter (in $\sigma$) estimates of $\wh{L_b}(\sigma)^{-1}$ are uniform as $\Im\sigma\to+\infty$.\footnote{Alternatively, by varying $C>0$, one sees that~\eqref{EqDIntRep} defines a solution of $L_b h=f$ which decays superexponentially at $t_*\to-\infty$; this implies its vanishing for large negative $t_*$ by a simple energy estimate.} \end{enumerate*}
  
  We shift the contour by letting $C\to 0+$. Fixing a frequency cutoff $\chi\in\CIc(\R)$ with $\chi=1$ on $[-1,1]$, we split
  \[
    \hat f(\sigma)=\chi(\Re\sigma)\hat f(\sigma)+(1-\chi(\Re\sigma))\hat f(\sigma)=:\hat f_1(\sigma)+\hat f_2(\sigma).
  \]
  Then for $\sigma\neq 0$, and using Theorem~\ref{ThmR}, we write
  \begin{align*}
    \hat h(\sigma)&=\hat h_{\rm reg}(\sigma) + \hat h_{\rm sing}(\sigma), \\
    &\qquad \hat h_{\rm reg}(\sigma):=L_b^-(\sigma)\hat f_1(\sigma) + \wh{L_b}(\sigma)^{-1}\hat f_2(\sigma), \quad
    \hat h_{\rm sing}(\sigma):=P_b(\sigma)\hat f_1(\sigma);
  \end{align*}
  therefore, $h(t_*)=h_{\rm reg} + h_{\rm sing}$, where
  \[
    h_{\rm reg} = \frac{1}{2\pi}\int_\R e^{-i\sigma t_*}\hat h_{\rm reg}(\sigma)\,d\sigma, \qquad h_{\rm sing} = \lim_{C\to 0+} \frac{1}{2\pi}\int_{\R+i C} e^{-i\sigma t_*}\hat h_{\rm sing}(\sigma)\,d\sigma.
  \]

  \pfstep{Decay of the regular part.} A fortiori, we have $\hat f_1(\sigma)\in H^{3/2-\eps}(\R_\sigma;\Hbext^{s,\ell+2})$ for any $\eps\in\R$. Since $H^{3/2-\eps}(\R)$ is an algebra for $\eps<1$, we have
  \[
    L_b^-(\sigma)\hat f_1(\sigma) \in H^{3/2-\eps}\bigl((-2,2);\Hbext^{s+1-\max(\eps,1/2),\ell+\eps-1}\bigr)
  \]
  by Theorem~\ref{ThmRegL}, with norm bounded by $\|\la t_*\ra^{3/2-\eps}f\|_{L^2(\R_{t_*};\Hbext^{s,\ell+2})}$. This can be generalized by means of Theorem~\ref{ThmRegConLow}, which gives, for $k\leq m$,
  \begin{equation}
  \label{EqDReg1}
    (\sigma\pa_\sigma)^k\bigl(L_b^-(\sigma)\hat f_1(\sigma)\bigr) \in H^{3/2-\eps}\bigl((-2,2);\Hbext^{s+1-k-\max(\eps,1/2),\ell+\eps-1}\bigr),
  \end{equation}
  with norm bounded by $\sum_{j\leq k}\|(t_* D_{t_*})^j\la t_*\ra^{3/2-\eps}f\|_{L^2(\R_{t_*};\Hbext^{s-j,\ell+2})}\lesssim\|f\|_{\la t_*\ra^{-3/2+\eps}\wt H_{\bop,\rm c}^{s,\ell+2,k}}$.

  Turning to the high frequency part of $\hat h_{\rm reg}(\sigma)$, we note that Proposition~\ref{PropRegHigh} implies
  \[
    \wh{L_b}(\sigma)^{-1} \in W^{\infty,m}\bigl(\R;\cL(\bar H_{\bop,\la\sigma\ra^{-1}}^{s,\ell+2}, \la\sigma\ra^m \bar H_{\bop,\la\sigma\ra^{-1}}^{s-m,\ell})\bigr),\qquad
    \sigma\in\R\setminus[-1,1],\quad m\leq k.
  \]
  Therefore, for $\eps\in(0,1)$, and taking $m=2$, we have
  \[
    \wh{L_b}(\sigma)^{-1}\hat f_2(\sigma) \in H^{3/2-\eps}\bigl(\R;\la\sigma\ra^{-s+2}\bar H_{\bop,\la\sigma\ra^{-1}}^{s-2,\ell}\bigr)
  \]
  with norm bounded by a constant times
  \[
    \|\hat f_2\|_{H^{3/2-\eps}(\R;\la\sigma\ra^{-s}\bar H_{\bop,\la\sigma\ra^{-1}}^{s,\ell+2})} \lesssim \|f\|_{\la t_*\ra^{-3/2+\eps}\wt H_\bop^{s,\ell+2}}.
  \]
  More generally, we have
  \begin{equation}
  \label{EqDReg2}
    (\sigma\pa_\sigma)^k\bigl(\wh{L_b}(\sigma)^{-1}\hat f_2(\sigma)\bigr) \in H^{3/2-\eps}\bigl(\R;\la\sigma\ra^{-s+2+k}H_{\bop,\la\sigma\ra^{-1}}^{s-2-k,\ell}\bigr),
  \end{equation}
  with norm bounded by $\sum_{j\leq k}\|(t_* D_{t_*})^jf\|_{\la t_*\ra^{-3/2+\eps}\wt H_\bop^{s-j,\ell+2}}\lesssim\|f\|_{\la t_*\ra^{-3/2+\eps}\wt H_{\bop,\rm c}^{s,\ell+2,k}}$.

  Upon taking the inverse Fourier transform, the memberships~\eqref{EqDReg1} and \eqref{EqDReg2} imply
  \begin{equation}
  \label{EqDReg3}
    \|h_{\rm reg}\|_{\la t_*\ra^{-3/2+\eps}\wt H_{\bop,\rm c}^{s-2,\ell+\eps-1,k}} \lesssim \|f\|_{\la t_*\ra^{-3/2+\eps}\wt H_{\bop,\rm c}^{s,\ell+2,k}}.
  \end{equation}
  Note that for $m=1$, we can integrate $D_{t_*}h_{\rm reg}=t_*^{-1}(t_*D_{t_*} h_{\rm reg})$ from $t_*=\infty$ to recover $h_{\rm reg}$; this gives a $\half$ improvement of the $L^\infty$ decay rate since for $h'\in L^2(\R_{t_*})$, we have
  \[
    \left|\int_{t_*}^\infty \la s\ra^{-1}\cdot\la s\ra^{-3/2+\eps} h'(s)\,d s\right| \leq \left(\int_{t_*}^\infty \la s\ra^{-5+2\eps}\,d s\right)^{1/2} \|h'\|_{L^2} \lesssim \la t_*\ra^{-2+\eps}
  \]
  for $t_*\geq 0$; therefore, we have stronger (by $\half$) $t_*$-decay
  \begin{equation}
  \label{EqDRegLinfty}
    (\la t_*\ra D_{t_*})^{k-1}h_{\rm reg} \in \la t_*\ra^{-2+\eps}L^\infty(\R_{t_*};\Hbext^{s-2-k,\ell+\eps-1}),\quad 1\leq k\leq m,
  \end{equation}
  with norm bounded by the right hand side of~\eqref{EqDReg3}.

  \pfstep{Asymptotic behavior of the singular part.} Turning to $h_{\rm sing}$, we write
  \begin{equation}
  \label{EqDSingDecomp}
    \hat f_1(\sigma) = \chi(\sigma)(f^{(0)}-i\sigma f^{(1)}) + \sigma^2\hat f_3(\sigma),
  \end{equation}
  where $f^{(0)}=\hat f_1(0)$ and $f^{(1)}=\pa_\sigma\hat f_1(0)$ satisfy, in view of $H^{3/2+}(\R)\hra\cC^1(\R)$,
  \[
    \|f^{(j)}\|_{\Hbext^{s,\ell+2}} \lesssim \|f\|_{\la t_*\ra^{-3/2-\eta}L^2(\R_{t_*};\Hbext^{s,\ell+2})}\quad j=0,1,\ \forall\,\eta>0;
  \]
  furthermore, $\hat f_3(\sigma)$ is compactly supported in $\sigma$, has the same regularity in $\sigma$ as $\hat f_1(\sigma)$ away from $\sigma=0$, while near $\sigma=0$ it loses $2$ derivatives:\footnote{This follows from the following observation: if $f\in H^\alpha(\R_\sigma)$, $\alpha>\half+k$, $k\in\N_0$, and $f(0)=\dots=f^{(k)}(0)=0$, then $\sigma^{-k}f\in H^{\alpha-k}(\R)$. This in turn follows from the elementary case $k=0$ by induction.}
  \[
    \|\hat f_3\|_{H^r(\R;\Hbext^{s,\ell+2})} \lesssim \|\hat f_1\|_{H^{r+2}(\R;\Hbext^{s,\ell+2})}\quad \forall\,r>-\half.
  \]

  Let us write $P_b(\sigma)=\sigma^{-2}P_{b,2}+\sigma^{-1}P_{b,1}$ as in~\eqref{EqRegPfSing}, with $P_{b,j}\colon\Hbext^{s-1,\ell+2}\to\Hbext^{\infty,-1/2-}$. The contribution of $\hat f_3$ to $h_{\rm sing}$ is then
  \[
    P_b(\sigma)\bigl(\sigma^2\hat f_3(\sigma)\bigr) \in H^{3/2-\eps}(\R;\Hbext^{\infty,-1/2-})
  \]
  when $\hat f_1\in H^{7/2-\eps}$. More generally, we have (cf.\ the estimate following~\eqref{EqDReg1})
  \begin{equation}
  \label{EqDSing1}
    \|(\sigma\pa_\sigma)^k\bigl(P_b(\sigma)\bigl(\sigma^2\hat f_3(\sigma)\bigr)\bigr)\|_{H^{3/2-\eps}(\R;\Hbext^{\infty,1/2-})} \lesssim \|f\|_{\la t_*\ra^{-7/2+\eps}\wt H_{\bop,\rm c}^{s,\ell+2,k}}.
  \end{equation}

  It remains to analyze the first term in~\eqref{EqDSingDecomp}. We have
  \[
    \lim_{C\to 0}\frac{1}{2\pi}\int_{\R+i C} e^{-i\sigma t_*}\chi(\Re\sigma)\sigma^{-k}\,d\sigma = \cF^{-1}\bigl(\chi(\sigma)(\sigma+i 0)^{-k}\bigr),
  \]
  which equals $\cF^{-1}((\sigma+i 0)^{-k})$ plus a term which is Schwartz as $|t_*|\to\infty$. Therefore, modulo a Schwartz function in $t_*$ with values in $\Hbext^{\infty,-1/2-}$, we have
  \[
    \cF^{-1}\left(P_b(\sigma+i 0)\bigl(\chi(\sigma)(f^{(0)}-i\sigma f^{(1)})\bigr)\right) \equiv (t_*h_s+\breve h_s) +h'_s+h_v + h''_s =: \hat h,
  \]
  where $h_s,h'_s,h_v$ are given by~\eqref{EqRSing1}--\eqref{EqRBarh} with $f^{(0)}$ in place of $f$, while $h''_s\in\cK_{b,s}$ is the unique element for which $k_b((h''_s,h''_v),-)=f^{(1)}$, as elements of $(\cK_b^*)^*$, for some $h''_v\in\cK_{b,v}$. 

  Altogether, we thus have, for any $k\in\N_0$,
  \[
    \|h_{\rm sing}-\hat h\|_{\la t_*\ra^{-3/2+\eps}\wt H_{\bop,\rm c}^{\infty,-1/2-,k}} \lesssim \|f\|_{\la t_*\ra^{-7/2+\eps}\wt H_{\bop,\rm c}^{s,\ell+2,k}}.
  \]
  Together with~\eqref{EqDReg3}, and using the argument leading to~\eqref{EqDRegLinfty} for improved pointwise (in $t_*$) decay, this proves the theorem. (For the pointwise decay estimate~\eqref{EqDPwise}, recall that $\Hbext^{s,\ell}\hra\la r\ra^{-3/2-\ell}L^\infty$ for $s>\tfrac32$ by Sobolev embedding, cf.\ \eqref{EqBHbSobEmb}.)
\end{proof}

\section{Proof of linear stability}
\label{SPf}

\textit{We continue to denote the linearized modified gauge-fixed Einstein operator on the Kerr spacetime $(M^\circ,g_b)$ by $L_b$ as in~\eqref{EqRgamma}--\eqref{EqRLb}.}

Theorem~\ref{ThmIBaby} concerns the initial value problem for the linearized Einstein equation for which the Cauchy surface is equal to $t^{-1}(0)$ for large $r$. We now show, using arguments from~\cite[\S4]{HintzVasyMink4}, how to reduce this problem to Theorem~\ref{ThmD}. In fact, we shall first consider initial value problems for the operator $L_b$, see Theorem~\ref{ThmPfIVP}, and reduce the linear stability of the Kerr metric to a special case of this, see Theorem~\ref{ThmPfKerr}.

We denote by $\ft\in\CI(M^\circ)$ a function of the form $\ft=t_*+F(r)$ which satisfies $\ft=t$ for $r\geq 4\bhm_0$, and so that $d\ft$ is future timelike on $M^\circ$ with respect to $g_{b_0}$, hence for $g_b$ when $b$ is close to $b_0$. We define
\[
  \Sigma_0^\circ := \ft^{-1}(0),
\]
which is a (spacelike with respect to $g_b$) Cauchy surface. Identifying the region $r\geq 4\bhm_0$ in $\Sigma_0^\circ$ with the corresponding subset of $\R^3$, we can compactify $\Sigma_0^\circ$ at infinity to the manifold (with two boundary components) $\Sigma_0$. We denote by $\wt{\Tsc^*}\Sigma_0=\Tsc^*\Sigma_0 \oplus \R\,d\ft$ the spacetime scattering cotangent bundle, which for large $r$ is spanned over $\CI(\Sigma_0)$ by $d t$ and $d x^1,d x^2,d x^3$, with $(x^1,x^2,x^3)$ denoting standard coordinates on $\R^3$.

\begin{thm}
\label{ThmPfIVP}
  Let $\alpha\in(0,1)$ and $s>\tfrac{13}{2}+m$, $m\in\N_0$. Suppose
  \[
    h_0 \in \Hbext^{s,-1/2+\alpha}(\Sigma_0;S^2\,\wt{\Tsc^*}\Sigma_0),\quad
    h_1 \in \Hbext^{s-1,1/2+\alpha}(\Sigma_0;S^2\,\wt{\Tsc^*}\Sigma_0).
  \]
  Then the solution $h$ of the initial value problem
  \[
      L_b h = 0, \qquad
      \bigl(h|_{\Sigma_0},\,(\cL_{\pa_\ft}h)|_{\Sigma_0}\bigr) = (h_0,\,h_1).
  \]
  has the following asymptotic behavior:
  \begin{enumerate}
  \item\label{ItPfIVP1} in $t_*\geq 0$, we can write $h=\hat h+\tilde h$, where $\hat h\in\wh\cK_b$ is a generalized zero mode of $L_b$ (see Theorem~\ref{ThmCD0Modes}), and where the remainder $\tilde h$ satisfies the following decay in $t_*\geq 0$: for $\eps\in(0,1)$ with $\alpha+\eps>1$, we have
    \begin{subequations}
    \begin{equation}
    \label{EqPfIVP1Hb}
      \|\tilde h\|_{\la t_*\ra^{-3/2+\eps}\wt H_{\bop,\rm c}^{s-5,-5/2+\alpha+\eps,m}} \lesssim \|h_0\|_{\Hbext^{s,-1/2+\alpha}}+\|h_1\|_{\Hbext^{s-1,1/2+\alpha}}.
    \end{equation}
    For $m\geq 1$, we moreover have the $L^\infty$ bound
    \begin{equation}
    \label{EqPfIVP1Linfty}
      \|(\la t_*\ra D_{t_*})^{m-1}\tilde h\|_{\la t_*\ra^{-2+\eps}L^\infty(\R_{t_*};\Hbext^{s-5-m,-5/2+\alpha+\eps})} \lesssim \|h_0\|_{\Hbext^{s,-1/2+\alpha}}+\|h_1\|_{\Hbext^{s-1,1/2+\alpha}}.
    \end{equation}
    For $\N_0\ni j<s-(\tfrac{13}{2}+m)$, we have the pointwise bound
    \begin{equation}
    \label{EqPfIVP1Pt}
      |(\la t_*\ra D_{t_*})^{m-1} V^j \tilde h(t_*,r,\omega)| \lesssim \la t_*\ra^{-1-\alpha+}\left(\frac{\la t_*\ra}{\la t_*\ra+\la r\ra}\right)^{\alpha-},
    \end{equation}
    \end{subequations}
    where $V^j$ is any up to $j$-fold composition of the vector fields $\pa_{t_*}$, $r\pa_r$, and rotation vector fields.
  \item\label{ItPfIVP2} in $\ft\geq 0$, $t_*\leq 0$, we have $|\tilde h|\lesssim r^{-1}(1+|t_*|)^{-\alpha}$.
  \end{enumerate}
\end{thm}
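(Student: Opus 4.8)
The plan is to reduce Theorem~\ref{ThmPfIVP} to the forcing problem of Theorem~\ref{ThmD}. First I would convert the initial value problem on $\Sigma_0$ into a forcing problem whose right-hand side is supported in $\{t_*\geq 0\}$: following \cite[\S4]{HintzVasyMink4}, one extends the Cauchy data $(h_0,h_1)$ to a symmetric 2-tensor $h_{\rm ext}$ on $M^\circ$ which is stationary in the region $t_*\leq-1$ (so that $L_b h_{\rm ext}$ vanishes there, using that $\wh{L_b}(0)$ applied to the stationary extension produces something supported in $t_*\geq 0$ after modification), and sets $h=h_{\rm ext}-h'$ where $L_b h'=L_b h_{\rm ext}=:f$ with $f$ supported in $t_*\geq 0$ and with the attained initial data being zero; the hyperbolicity of $L_b$ (it is principally $\tfrac12\Box_{g_b}$) and the transversality of $\Sigma_{\rm fin}$ and $\scri^+$ to $\{t_*\geq 0\}$ give existence and uniqueness of such $h'$ up to the final hypersurface, cf.\ the discussion around~\eqref{EqIIFT}. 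The main bookkeeping is to check that $f$ lies in the forcing space $\la t_*\ra^{-7/2+\eps}\wt H_{\bop,\rm c}^{s',\ell+2,m}$ for appropriate $s',\ell$: since $L_b\in\rho^2\Diffb^2$ modulo the $D_{t_{\chi_0}}$ and $D_{t_{\chi_0}}^2$ terms and the CD modification, applying it to an $r^{-1/2+\alpha}$-type extension of the data costs two orders of b-derivatives (hence the shift $s\rightsquigarrow s-2$, and then another $-3$ from the Sobolev embeddings used in Theorem~\ref{ThmD}, explaining $s>\tfrac{13}{2}+m$) and produces a weight $\rho^2\cdot\rho^{-1/2+\alpha}=\rho^{3/2+\alpha}$, i.e.\ $\ell+2$ with $\ell=-1/2+\alpha-2<-1/2$; compact support in $t_*$ of $f$ then gives any desired $\la t_*\ra$ weight. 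Plugging this into Theorem~\ref{ThmD} with the weight $\ell=-3/2+\alpha$ (note $\ell\in(-3/2,-1/2)$ exactly when $\alpha\in(0,1)$) and the constraint $\ell+\eps\in(-1/2,1/2)$, i.e.\ $\alpha+\eps>1$, yields the decomposition $h'=\hat h'+\tilde h'$ with the stated estimates, and since $h_{\rm ext}$ is stationary in $t_*\leq -1$ and a fixed generalized zero mode behavior does not interfere, one reads off $\hat h=\hat h'$ (up to relabeling) and $\tilde h = h_{\rm ext}-\tilde h'$ in $t_*\geq 0$; in $t_*\geq 0$ with $r$ bounded, $h_{\rm ext}$ is itself stationary and hence can be absorbed into $\hat h$, while its contribution at large $r$ is handled as in part~\eqref{ItPfIVP2}.

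For part~\eqref{ItPfIVP1} the estimates~\eqref{EqPfIVP1Hb}, \eqref{EqPfIVP1Linfty}, \eqref{EqPfIVP1Pt} are then literal transcriptions of~\eqref{EqDSobEst}, \eqref{EqDPwise0}, \eqref{EqDPwise} with $\ell=-3/2+\alpha$, so $\ell+\eps-1=-5/2+\alpha+\eps$ and $-1/2-\ell-\eps=1-\alpha-\eps$; the pointwise rate $\la t_*\ra^{-1-\alpha+}(\la t_*\ra/(\la t_*\ra+\la r\ra))^{\alpha-}$ is the content of~\eqref{EqDPwise2} after substituting this value of $\ell$. I would remark that the Sobolev order $s-5$ in~\eqref{EqPfIVP1Hb} accounts for the two orders lost in applying $L_b$ to the extension plus three in the embedding used inside Theorem~\ref{ThmD}, and $s-5-m$ in~\eqref{EqPfIVP1Linfty} for the additional loss from the $L^\infty$-embedding in the $\la t_*\ra D_{t_*}$-regularity.

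Part~\eqref{ItPfIVP2} — the region $\ft\geq 0$, $t_*\leq 0$, i.e.\ a neighborhood of $\scri^+$ to the past of $\{t_*=0\}$ — requires a separate, more hands-on argument, and this is the step I expect to be the main obstacle. Here one cannot directly use the resolvent representation, which produces tensors supported in $t_*\geq -T_0$ only; instead, in the region between $\{\ft=0\}$ and $\{t_*=0\}$ one has a genuine characteristic/mixed initial value problem near $\scri^+$, and the relevant tool is a weighted energy estimate adapted to the compactification at null infinity (the $t_*=t-r_*$ foliation being approximately null there by~\eqref{EqOpLinTimeFn}). The plan is to use that the data decay like $r^{-1/2+\alpha}$, resp.\ $r^{-3/2+\alpha}$ (measured against the scattering structure, i.e.\ $\cA^{\alpha}$, resp.\ $\cA^{\alpha+1}$ pointwise), propagate this along the approximately null foliation with a standard $r$-weighted (Dafermos–Rodnianski-type, or the scattering-vector-field) energy estimate for the wave-type operator $L_b$, and conclude the pointwise bound $|\tilde h|\lesssim r^{-1}(1+|t_*|)^{-\alpha}$; the $r^{-1}$ is the generic radiation-field rate at $\scri^+$ for a wave whose data have one power better than critical decay, and the $(1+|t_*|)^{-\alpha}$ is the improvement in $|t_*|$ inherited from the extra decay $\alpha$ of the data. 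The subtlety is that $L_b$ is only principally scalar and carries a bundle endomorphism (the curvature term $\sR_{g_b}$ and the CD term), so one must check the endomorphism does not destroy the positivity of the energy current near $\scri^+$ — but since all these terms are lower order in the b-sense ($\rho^2$-weighted) and the CD cutoff $\cd$ is compactly supported away from $\scri^+$, they contribute only absorbable error terms. I would present this as a short energy-estimate lemma, citing \cite[\S4]{HintzVasyMink4} for the analogous argument in the Minkowski setting, since the geometry near $\scri^+$ for slowly rotating Kerr is, to the relevant leading order, that of Minkowski by~\eqref{EqKStMetrics2}.
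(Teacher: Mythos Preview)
Your reduction of the initial value problem to a forcing problem has a genuine gap. The ``stationary extension'' $h_{\rm ext}$ you describe does not produce a forcing supported in $t_*\geq 0$: if $h_{\rm ext}$ is merely stationary for $t_*\leq -1$, then $L_b h_{\rm ext}=\wh{L_b}(0)h_{\rm ext}$ there, which has no reason to vanish unless $h_{\rm ext}$ lies in the ($7$-dimensional) kernel of $\wh{L_b}(0)$. The paper instead first \emph{solves} the initial value problem to obtain $h$, then takes a smooth cutoff $\chi=\chi(t_*)$ with $\chi\equiv 1$ for $t_*\leq T_1$ and $\chi\equiv 0$ for $t_*\geq T_2$, and writes $L_b((1-\chi)h)=-[L_b,\chi]h$, which is supported in $t_*\in[T_1,T_2]$.

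The second, more substantive, gap is in your spatial weight accounting. The solution $h$ does \emph{not} inherit the $r^{-1-\alpha}$ decay of the data in the strip $T_1\leq t_*\leq T_2$: generic linear waves on asymptotically flat spacetimes carry a radiation field, so $h\sim r^{-1}$ near $\scri^+$. A naive application of $[L_b,\chi]\in\rho\Diffb^1$ to an $r^{-1}$ tensor yields only $r^{-2}$, which corresponds to $\ell+2=\tfrac12$, placing you at the boundary of the admissible range for Theorem~\ref{ThmD} and losing the $\alpha$ entirely. The mechanism the paper exploits (Lemma~\ref{LemmaPfIVP}) is structural: by~\eqref{EqPfIVPComm}, the leading part of $[L_b,\chi(t_*)]$ is $2\rho\chi'(t_*)(\rho\pa_\rho-1)$, which \emph{annihilates} the $r^{-1}$ radiation field; it is the subleading $\cA^{1+\alpha}$ remainder of $h$ (which one must establish first, by energy estimates and integration along approximate characteristics as in \cite[\S\S4--5]{HintzVasyMink4}) that determines the decay of the forcing, yielding $[L_b,\chi]h\in\wt H_{\bop}^{s-3,1/2+\alpha}$. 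This near-$\scri^+$ analysis---which you correctly identify as needed for part~\eqref{ItPfIVP2}---is thus not a separate afterthought but the essential input that makes part~\eqref{ItPfIVP1} go through; and it, not ``applying $L_b$ to the extension,'' is where the extra three orders of Sobolev regularity are lost (see Remark~\ref{RmkPfDecReg}).
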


The estimates~\eqref{EqPfIVP1Hb} and \eqref{EqPfIVP1Linfty} can be condensed by working on a blow-up of $\ol{\R_{t_*}}\times X$ at the corner $\{t_*=\infty\}\times\pa_+X$ as in~\eqref{EqDSobEst2}.

\begin{rmk}
  Since $1\in\Hbext^{\infty,-3/2-}(\Sigma_0)$ and, conversely, $\Hbext^{\infty,-3/2+}(\Sigma_0)\hra L^\infty(\Sigma_0)$, one sees that the decay for $h_0$, as well as for its $r\pa_r$ and spherical derivatives, is (almost) equivalent to pointwise $r^{-1-\alpha}$ bounds.\footnote{The `almost' is due to the small (disregarding derivative losses due to Sobolev embedding) difference of $L^\infty$ and weighted $L^2$ control.} Likewise, the assumptions on $h_1$ are essentially pointwise $r^{-2-\alpha}$ bounds.
\end{rmk}

\begin{rmk}
\label{RmkPfBetter}
  The asymptotic behavior in $t_*\leq 0$ can be described in great detail, see \cite{HintzVasyMink4} for results in the nonlinear setting; the results for linear waves here are straightforward to obtain using energy methods (see also Lemma~\ref{LemmaPfIVP} below), and the statement~\eqref{ItPfIVP2} above is merely the simplest pointwise bound one can prove. As in \cite{HintzVasyMink4}, one can moreover show that $h$ in fact has an $r^{-1}$ leading order term at null infinity $\scri^+$, viewed as a boundary hypersurface of a suitable compactification of $M^\circ$ (which in $r/t_{\chi_0}>\eps>0$ is given by ${}^\bhm M$ in the notation of the reference); we recall the argument in the proof of Lemma~\ref{LemmaPfIVP} below when restricting to the region $t_*\leq C$ for any fixed $C\in\R$.
\end{rmk}

\begin{rmk}
\label{RmkPfDecReg}
  While we work almost at the sharp level of decay, we do not strive to optimize the regularity assumptions here; the extra regularity assumed on $h_0,h_1$ comes from a loss of regularity in an argument below on the integration along approximate (radial) characteristics of $L_b$ used to get a sufficiently precise description of $h$ at $\scri^+$.
\end{rmk}

\begin{lemma}
\label{LemmaPfIVP}
  Let $\alpha,b,s,h_0,h_1,h$ be as in Theorem~\ref{ThmPfIVP}. Let $T_1<T_2$, and let $\chi\in\CI(\R)$ be such that $\supp\chi\subset[T_1,\infty)$ and $\supp(1-\chi)\subset(-\infty,T_2]$. Then $L_b(\chi h)\in\wt H_{\bop,\rm c}^{s-3-k,1/2+\alpha,k}$ for all $k\in\N_0$.
\end{lemma}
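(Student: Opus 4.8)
The plan is to show that $L_b(\chi h)$ is obtained by commuting $L_b$ with the purely temporal cutoff $\chi=\chi(\ft)$ and applying this to the global solution $h$; since $L_b h=0$ we have $L_b(\chi h)=[L_b,\chi]h$, and the commutator is a first-order operator whose coefficients are supported in the compact-in-$\ft$ slab $\{T_1\le\ft\le T_2\}$. So the heart of the matter is to establish sufficient regularity and decay of $h$ \emph{on that slab}, and then to package the resulting bound in the spacetime b-Sobolev spaces $\wt H_{\bop,\rm c}^{s-3-k,1/2+\alpha,k}$.

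First I would set up energy estimates for $L_b$ on $M^\circ$ in the region $\ft\ge 0$, $t_*\le T_2$: this is a region foliated by the spacelike (w.r.t.\ $g_b$) hypersurfaces $\ft^{-1}(c)$, $c\ge 0$, with the final hypersurface $\Sigma_{\rm fin}$ beyond the horizon being spacelike as well, so the standard hyperbolic energy method of \cite{HintzVasyMink4} applies; $L_b$ is a principally scalar wave operator, hence the usual multiplier $\pa_\ft$ (suitably modified near $\scri^+$ and near the horizon) yields, together with elliptic regularity in the spatial directions and commutation with stationary b-vector fields and with $\pa_\ft$, the bound $h\in\wt H_{\bop,\rm loc}^{s,\ell}$ on $\{0\le\ft,\ t_*\le T_2\}$ for the appropriate weight $\ell$ coming from the weights $-1/2+\alpha$ and $1/2+\alpha$ of the data. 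Concretely, the decay rates of the data transfer to $\cO(r^{-1})$ decay of $h$ itself at $\scri^+$ in this bounded-$t_*$ region, by the argument recalled in Remark~\ref{RmkPfBetter}: one integrates the approximately radial transport equation that $L_b$ reduces to at leading order near $\scri^+$ (the operator $2\sigma\rho(\rho D_\rho+i)$ of Lemma~\ref{LemmaOpLinFT} in physical space is a $\pa_{t_*}$-weighted radial derivative), losing some derivatives in the process, which is precisely why $s$ is taken large. This gives $h\in\cA^{1-}$-type behavior at $\pa_+X$ uniformly for $t_*\in[T_1,T_2]$, i.e.\ $h$ and its $\pa_{t_*}$- and stationary-b-derivatives up to order $s-$ some fixed loss lie in $\rho^{-1/2-}L^2$ on the slab.

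Next, since $\chi=\chi(\ft)$ and $\ft=t_*+F(r)$ with $F\in\CI$ and $d\ft\ne 0$, the commutator $[L_b,\chi]$ is a first-order differential operator with coefficients in $\CIc$ in $\ft$ (supported where $\chi'\ne 0$, hence in $T_1\le\ft\le T_2$) but with the natural $\rho$-weights: schematically $[L_b,\chi]\in\rho\Diffb^1$ localized to the slab, matching the structure~\eqref{EqLStruct}. Applying this to $h$ costs one derivative and, because the coefficients gain a factor $\rho$, \emph{improves} the spatial weight by one order relative to $\rho^{-1/2}L^2$, landing in $\rho^{1/2+\alpha}L^2$ once one also uses the $\cO(r^{-1-\alpha})$-type decay of $h$ at $\scri^+$ that the data decay produces (the extra $\rho^\alpha$ over the generic $\rho^{1}$ from commuting is exactly where $\alpha$ enters). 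Since the support in $\ft$ is compact, the weight $\la t_*\ra^j$ in the definition of $\wt H_{\bop,\rm c}^{\bullet,\bullet,k}$ is bounded, so powers of $\la t_*\ra D_{t_*}$ up to order $k$ cost nothing beyond additional $\pa_{t_*}$-derivatives; this accounts for the third index $k$ and the regularity drop $s\mapsto s-3-k$ (one derivative from the commutator, extra losses from the transport-equation argument at $\scri^+$, and $k$ from the $\pa_{t_*}$-commutations). Assembling these pieces gives $L_b(\chi h)=[L_b,\chi]h\in\wt H_{\bop,\rm c}^{s-3-k,1/2+\alpha,k}$ with support in $t_*\ge T_1$, as claimed.

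The main obstacle I anticipate is the sharp propagation of the $r^{-1-\alpha}$ spatial decay of $h$ up to $\scri^+$ in the bounded-$t_*$ region without losing the weight: plain energy estimates with a $\pa_\ft$ multiplier only see the generic weight, so one must run the transport/radial-integration argument near null infinity carefully (as in \cite[\S4]{HintzVasyMink4}), which is what forces the large value of $s$ and the loss of three derivatives; tracking this loss, and checking that the commutator's $\rho$-gain genuinely converts into the claimed weight $1/2+\alpha$ rather than merely $1/2$, is the delicate bookkeeping step. Everything else — the spacelike energy estimates, the compact $\ft$-support of $[L_b,\chi]$, and the harmless role of the $\la t_*\ra$-weights on a bounded slab — is routine.
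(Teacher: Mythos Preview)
Your scaffolding is right --- reduce to $[L_b,\chi]h$, control $h$ on the commutator's support via energy estimates plus a radial transport argument near $\scri^+$ --- but the mechanism you give for the weight $\tfrac12+\alpha$ does not work. Note first that the cutoff is $\chi=\chi(t_*)$, not $\chi(\ft)$: the conclusion lives in $\wt H_{\bop,\rm c}$ (support in $t_*\ge 0$), and the downstream use is Theorem~\ref{ThmD}; so the commutator is supported in the slab $t_*\in[T_1,T_2]$, which reaches $\scri^+$, not $i^0$. On that slab $h$ does \emph{not} decay like $r^{-1-\alpha}$: the transport argument you cite produces $h=r^{-1}h_+(t_*,\omega)+\wt h$ with a nonvanishing radiation field $h_+$ and remainder $\wt h=\cO(r^{-1-\alpha})$. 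If $[L_b,\chi]$ were merely a generic element of $\rho\Diffb^1$, applying it to the leading term $r^{-1}h_+$ would give only $\cO(r^{-2})$, i.e.\ weight $\tfrac12-$, and no bookkeeping recovers the extra $\rho^\alpha$; your phrase ``once one also uses the $\cO(r^{-1-\alpha})$-type decay of $h$'' conflates the decay of the remainder $\wt h$ with that of $h$ itself.

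The missing idea is the specific structure of the commutator. From Lemma~\ref{LemmaOpLinFT} one reads off
\[
  [L_b,\chi(t_*)] = 2\rho\,\chi'(t_*)(\rho\pa_\rho-1) \;+\; \rho^2\Diffsc^1 \;+\; \rho^2\CI\,\pa_{t_*},
\]
and the leading term \emph{annihilates} $r^{-1}$ since $(\rho\pa_\rho-1)\rho=0$; the remaining terms gain $\rho^2$ and map $r^{-1}h_+$ into $\cO(r^{-3})$. Hence $[L_b,\chi]h\in\cO(r^{-3})+\cO(r^{-2-\alpha})=\cO(r^{-2-\alpha})$, which is exactly the weight $\tfrac12+\alpha$. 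This cancellation --- not improved decay of $h$ --- is what makes the lemma go through.
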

\begin{proof}
  Since $L_b(\chi h)=[L_b,\chi]h$ has support in $t_*^{-1}([T_1,T_2])$ (so regularity with respect to $\la t_*\ra D_{t_*}$ and $D_{t_*}$ is equivalent), it suffices to prove the conclusion for $k=0$. Moreover, local (in spacetime) existence and regularity theory for the wave equation imply $h\in H^s$ for $\ft\geq 0$, $t_*\leq C$, $r\leq C$ for any $C$; this implies that $[L_b,\chi]\in H^{s-1}$ in such compact sets. Therefore, it suffices to work in an arbitrarily small neighborhood $r>R_0\gg 1$ of infinity, which we shall do from now on.

  Aiming to apply certain results of \cite[\S\S3--5]{HintzVasyMink4}, write $b=(\bhm,\bha)$, and let $b_1=(\bhm,0)$; put $r_*=r+2\bhm_1\log(r-2\bhm_1)$. As in~\cite[\S2.1, \S3.1]{HintzVasyMink4}, we introduce the null coordinates
  \[
    x^0=t+r_*,\quad x^1=t-r_*,
  \]
  with respect to which we have $g_{b_1}=(1-\tfrac{2\bhm_1}{r})d x^0\,d x^1-r^2\slg$, and
  \[
    2\pa_0\equiv 2\pa_{x^0}=\pa_t+(1-\tfrac{2\bhm_1}{r})\pa_r,\quad
    2\pa_1\equiv 2\pa_{x^1}=\pa_t-(1-\tfrac{2\bhm_1}{r})\pa_r.
  \]
  Let $x^2,x^3$ denote local coordinates on $\Sph^2$, and denote spherical indices by $c,d=2,3$; let $\pa_c\equiv\pa_{x^c}$. Considering then the Kerr metric, we compute using the form~\eqref{EqKaMetric} of $g_b$ in $r\gg 1$ and on $X$ (the compactification of $t_*^{-1}(0)$)
  \begin{align*}
    &g_b(\pa_0,\pa_0),\ (g_b-g_{b_1})(\pa_0,\pa_1),\ g_b(\pa_0,r^{-1}\pa_c), \\
    &\quad g_b(\pa_1,\pa_1),\ g_b(\pa_1,r^{-1}\pa_c),\ (g_b-g_{b_1})(r^{-1}\pa_b,r^{-1}\pa_c) \in \rho^2\CI.
  \end{align*}
  Thus, in the language of \cite[Definition~3.1]{HintzVasyMink4}, $g_b$ differs from $g_{b_1}$ by a correction $g_b-g_{b_1}$ (which is denoted $h$ in the reference, but which is different from $h$ here) that has vanishing leading order terms. Therefore, by \cite[Lemma~3.8]{HintzVasyMink4}, and recalling that in the present paper we do not have constraint damping for large $r$ (thus $\gamma=0$ in the formulas in the reference), the operator $L_b$ is equal to the scalar wave operator for the Schwarzschild metric $g_{b_1}$, tensored with the identity, plus error terms,
  \[
    \rho^{-3}L_b\rho = -2\rho^{-2}\pa_0\pa_1 - \slDelta + R,
  \]
  where $R\in\rho\wt\Diff{}_\bop^2+\wt\Diff{}_\bop^1$ (acting on sections of $\wt{\Tsc^*}X$), where, roughly speaking, $\wt\Diff{}_\bop^k$ consists of up to $k$-fold products of the vector fields $x_0\pa_0\approx r(\pa_t+\pa_{r_*})$, $x_1\pa_1\approx(r_*-t)(\pa_t-\pa_{r_*})$, and rotation vector fields $\pa_a$. (Switching to $\rho^{-3}L_b\rho$ is related to the Friedlander rescaling for the scalar wave equation \cite{FriedlanderRadiation}, cf.\ \cite[\S1.1.1]{HintzVasyMink4}.)

  At this point, the conclusion of the lemma can be seen as follows: the asymptotic behavior of smooth solutions $h$ of $\rho^{-3}L_b\rho(\rho^{-1}h)=0$ at $\scri^+$ (meaning: for bounded $t_*$ and as $r\to\infty$) is given by $\rho\CI=r^{-1}\CI$, just like for scalar waves on Minkowski or Schwarzschild/Kerr spacetimes, plus terms with more decay, namely $\cO(r^{-1-\alpha})$. Now, Lemma~\ref{LemmaOpLinFT} implies that
  \begin{equation}
  \label{EqPfIVPComm}
    [L_b,\chi(t_*)]=2\rho\chi'(t_*)(\rho\pa_\rho-1)+\rho^2\Diffsc^1+\rho^2\CI\pa_{t_*};
  \end{equation}
  but this maps $\rho\CI$ into $\rho^3\CI$, the leading order term $r^{-1}$ being annihilated by the first summand in~\eqref{EqPfIVPComm}; and $\cO(r^{-1-\alpha})$ gets mapped to $\cO(r^{-2-\alpha})$, hence the stated decay rate for $L_b(\chi h)$.

  More precisely, \cite[Proposition~4.8]{HintzVasyMink4} with $a_0=\alpha$ and $a_I<0$ arbitrary (in the present setting it suffices to use the simplest form of the energy estimates there, namely one can use the vector field multiplier of \cite[Lemma~4.4]{HintzVasyMink4}) shows that near $\scri^+$, $h$ lies in $\wt H_\bop^{s,-1/2-}$ (permitting $r^{-1}$ asymptotics); rewriting the PDE for $h:=\rho u$ as $\rho^{-2}\pa_0\pa_1 u=(\slDelta-R)u\in\wt H_\bop^{s-2,-1/2-}$ and integrating along the approximate characteristics $\pa_0$ and $\pa_1$ as in \cite[\S5.1]{HintzVasyMink4} (see also the discussion around \cite[Equation~(1.16)]{HintzVasyMink4}) shows that in fact
  \[
    h = \rho H^{s-2}(\R_{t_*}\times\Sph^2) + \wt H_\bop^{s-2,-1/2+\alpha}.
  \]
  Thus, $L_b(\chi(t_*)h)\in\wt H_\bop^{s-3,1/2+\alpha}$, as claimed.
\end{proof}

\begin{proof}[Proof of Theorem~\ref{ThmPfIVP}]
  With $\chi$ as in Lemma~\ref{LemmaPfIVP}, note that
  \[
    L_b((1-\chi)h) = L_b h - L_b(\chi h) = -L_b(\chi h).
  \]
  Since $s-3>\tfrac72+m$, the estimates~\eqref{EqPfIVP1Hb} and \eqref{EqPfIVP1Linfty} now follow from Theorem~\ref{ThmD}, with $\ell+2=\half+\alpha$, so $\ell=-\tfrac32+\alpha$. The pointwise estimate~\eqref{EqPfIVP1Pt} then follows from~\eqref{EqDPwise2}.
\end{proof}

For the linear stability statement, recall that the constraint equations for a Riemannian metric $\gamma$ and a symmetric 2-tensor $k$ on $\Sigma_0^\circ$ take the form
\begin{equation}
\label{EqPfConstraints}
  \cC(\gamma,k) := \bigl(R_\gamma+(\tr_\gamma k)^2-|k|_\gamma^2,\ \delta_\gamma k+d\,\tr_\gamma k\bigr) = 0.
\end{equation}
Given a Lorentzian metric $g$ of signature $(+,-,-,-)$ on $M^\circ$, denote its initial data by
\[
  \tau(g) := (\gamma,k),
\]
where $\gamma$ is the pullback of $-g$ to $\Sigma_0^\circ$ (the minus sign making $\gamma$ into a \emph{positive} definite Riemannian metric), and $k$ is the second fundamental form of $\Sigma_0^\circ\subset(M^\circ,g)$. Let us in particular denote the initial data of the Kerr metric $g_b$, with $b$ near $b_0$, by
\[
  (\gamma_b,k_b) := \tau(g_b).
\]
Recall the gauge 1-form $\Ups_b(g):=\Ups(g;g_b)$ in the notation of Definition~\ref{DefOpGauge}.

\begin{thm}
\label{ThmPfKerr}
  Let $\alpha\in(0,1)$, and let $s>\tfrac{13}{2}+m$, $m\in\N_0$. Suppose that the tensors
  \[
    \dot\gamma \in \Hbext^{s,-1/2+\alpha}(\Sigma_0;S^2\,\Tsc^*\Sigma_0), \quad
    \dot k \in \Hbext^{s-1,1/2+\alpha}(\Sigma_0;S^2\,\Tsc^*\Sigma_0)
  \]
  satisfy the linearization of the constraint equations around the initial data $(\gamma_b,k_b)$ of a Kerr metric: $D_{(\gamma_b,k_b)}\cC(\dot\gamma,\dot k)=0$. Then there exists a solution $h$ of the initial value problem
  \[
    D_{g_b}\Ric(h) = 0,\qquad
    D_{g_b}\tau(h) = (\dot\gamma,\dot k),
  \]
  satisfying the gauge condition $D_{g_b}\Ups_b(h)=-\delta_{g_b}\sfG_{g_b}h=0$, which has the asymptotic behavior stated in Theorem~\ref{ThmPfIVP}.
\end{thm}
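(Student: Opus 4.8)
The plan is to reduce Theorem~\ref{ThmPfKerr} to Theorem~\ref{ThmPfIVP} (applied to the operator $L_b$) by the standard DeTurck/wave-map-gauge machinery, following the scheme of \cite[\S4]{HintzVasyMink4} and \cite[\S\S3,8]{HintzVasyKdSStability}. The three ingredients are: (i) producing Cauchy data $(h_0,h_1)$ for $L_b h=0$ out of the geometric data $(\dot\gamma,\dot k)$ which moreover make the linearized gauge 1-form $D_{g_b}\Ups_b(h)$ vanish \emph{on} $\Sigma_0^\circ$; (ii) using constraint propagation to show that $D_{g_b}\Ups_b(h)$ then vanishes identically, so that $h$ solves not just $L_b h=0$ but also the linearized Einstein equation $D_{g_b}\Ric(h)=0$; (iii) quoting Theorem~\ref{ThmPfIVP} for the decay. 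Step (iii) is immediate once the Cauchy data are seen to lie in the correct weighted b-Sobolev spaces, which they do: $\dot\gamma$ and $\dot k$ live in $\Hbext^{s,-1/2+\alpha}$ and $\Hbext^{s-1,1/2+\alpha}$ respectively, and the construction in step (i) is a zeroth-order algebraic operation (plus one stationary b-derivative on $\Sigma_0$) on these tensors, hence preserves the weights up to the harmless shift built into Theorem~\ref{ThmPfIVP}.

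First I would carry out step (i). Given $(\dot\gamma,\dot k)$ one forms a symmetric 2-tensor $h$ on $M^\circ$ whose restriction and Lie derivative along $\pa_\ft$ at $\Sigma_0^\circ$ are prescribed: the spatial-spatial part of $h|_{\Sigma_0^\circ}$ is $-\dot\gamma$, and the remaining components (the lapse and shift perturbations, i.e.\ the $d\ft^2$ and $d\ft\otimes_s(\cdot)$ parts) together with the normal derivative $(\cL_{\pa_\ft}h)|_{\Sigma_0^\circ}$ are chosen so that $\dot k$ is the linearized second fundamental form \emph{and} so that $D_{g_b}\Ups_b(h)|_{\Sigma_0^\circ}=0$. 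Since $D_{g_b}\Ups_b$ is a first-order operator whose symbol in the conormal direction $d\ft$ is an isomorphism $\wt\Tsc{}^*\to\wt\Tsc{}^*$ (this is where the gauge being a wave map gauge enters), the vanishing of $D_{g_b}\Ups_b(h)$ at $\Sigma_0^\circ$ can always be solved for the transversal derivatives of the lapse/shift components; this is exactly the computation of \cite[Proposition~3.10]{HintzVasyKdSStability} (or \cite[\S4]{HintzVasyMink4}), linearized, and I would simply cite it. The resulting $(h_0,h_1)$ has the stated regularity and decay because every operation used is algebraic in $(\dot\gamma,\dot k)$ together with at most one application of a stationary b-vector field on $\Sigma_0$, and the coefficients are smooth functions of the (smooth, bounded) Kerr metric.

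Next, step (ii): constraint propagation. Let $h$ be the solution of $L_b h=0$ with the Cauchy data from step (i); set $\Upsilon:=D_{g_b}\Ups_b(h)=-\delta_{g_b}\sfG_{g_b}h$. Applying $\delta_{g_b}\sfG_{g_b}$ to $L_b h=0$ and using $L_b=2D_{g_b}\Ric+2\wt\delta^*_{g_b,E}\delta_{g_b}\sfG_{g_b}$ together with the linearized second Bianchi identity $\delta_{g_b}\sfG_{g_b}\circ D_{g_b}\Ric\equiv 0$, one obtains $\cP_{b,\gamma}\Upsilon=0$, where $\cP_{b,\gamma}=2\delta_{g_b}\sfG_{g_b}\wt\delta^*_{g_b,\gamma}$ is the modified gauge propagation operator of~\eqref{EqCD}. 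Now $\Upsilon$ vanishes at $\Sigma_0^\circ$ by construction. To conclude $\Upsilon\equiv 0$ in $t_*\geq 0$ I need that its first transversal derivative at $\Sigma_0^\circ$ also vanishes: this follows because $\dot\gamma,\dot k$ satisfy the \emph{linearized constraint equations} $D_{(\gamma_b,k_b)}\cC(\dot\gamma,\dot k)=0$, which is precisely the condition guaranteeing that the normal derivative of $\Upsilon$ is determined, and vanishes, given $\Upsilon|_{\Sigma_0^\circ}=0$ — again this is the linearization of the standard fact (see \cite[\S3.4]{HintzVasyKdSStability}) that the constraints hold iff the gauge-fixed Cauchy data produce a solution of the actual Einstein equation. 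With both Cauchy data of $\Upsilon$ vanishing, uniqueness for the wave equation $\cP_{b,\gamma}\Upsilon=0$ (a principally scalar hyperbolic system) forces $\Upsilon\equiv 0$ in the domain of dependence, i.e.\ in $\{\ft\geq 0\}$; hence $D_{g_b}\Ric(h)=\tfrac12 L_b h=0$ and $\delta_{g_b}\sfG_{g_b}h=0$ there.

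Finally I would assemble the conclusion. In $t_*\geq 0$, Theorem~\ref{ThmPfIVP}\eqref{ItPfIVP1} applied to $h$ (whose Cauchy data have the required weights and regularity) gives $h=\hat h+\tilde h$ with $\hat h\in\wh\cK_b$ and $\tilde h$ obeying~\eqref{EqPfIVP1Hb}--\eqref{EqPfIVP1Pt}; in the region $\ft\geq 0$, $t_*\leq 0$ one quotes part~\eqref{ItPfIVP2}. Since, by construction, $h$ now also solves $D_{g_b}\Ric(h)=0$ and satisfies the gauge condition, this is exactly the asserted statement. I expect the main obstacle to be step (i)/(ii) bookkeeping — specifically, verifying carefully that the linearized constraints are equivalent to the vanishing of $\Upsilon$ to first order at $\Sigma_0^\circ$ in the \emph{modified} gauge (the constraint damping modification $E$ is supported near $\cH^+$, away from where $\Sigma_0^\circ$ meets infinity, so it does not affect the asymptotic analysis, but one must check it does not spoil the initial-surface identities); this is routine but must be done with care, and I would lean on the corresponding computations in \cite[\S\S3,8]{HintzVasyKdSStability} and \cite[\S4]{HintzVasyMink4}, which go through verbatim since only the structure of the symbol of $D_{g_b}\Ups_b$ and of $\cP_{b,\gamma}$ near $\Sigma_0^\circ$ is used, none of the fine spectral theory.
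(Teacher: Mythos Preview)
Your proposal is correct and follows essentially the same approach as the paper: construct gauged Cauchy data $(h_0,h_1)$ from $(\dot\gamma,\dot k)$ (the paper does this by first building a smooth nonlinear map $i_b\colon(\gamma,k)\mapsto(g_0,g_1)$ and then linearizing, but this is equivalent to your direct linear construction, and the paper cites the same \cite[Proposition~3.10]{HintzVasyKdSStability}), then use the linearized constraints to get $\cL_{\pa_\ft}(D_{g_b}\Ups_b(h))|_{\Sigma_0}=0$ via the identity $D_{g_b}\Ein(h)=\sfG_{g_b}\wt\delta^*_{g_b,\gamma}(D_{g_b}\Ups_b(h))$, and finally propagate $\Upsilon\equiv 0$ by uniqueness for $\cP_{b,\gamma}$ before invoking Theorem~\ref{ThmPfIVP}. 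Your concern about the constraint damping modification is well-placed but, as you anticipate, harmless: the paper's initial-surface identity uses $\wt\delta^*_{g_b,\gamma}$ rather than $\delta^*_{g_b}$, and this is exactly what makes the argument go through without change.
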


Theorem~\ref{ThmIBaby} is an immediate consequence of the estimate~\eqref{EqPfIVP1Pt} when we take $s=8>\tfrac{13}{2}+m$ with $m=1$, in view of the explicit description of $\hat h$ as a generalized zero mode of $L_b$ as described by Theorem~\ref{ThmCD0Modes}: $\hat h$ is a linearized Kerr metric plus a pure gauge term.

\begin{proof}[Proof of Theorem~\ref{ThmPfKerr}]
  We claim that there exist
  \begin{equation}
  \label{EqPfKerr1}
    h_0\in\Hbext^{s,-1/2+\alpha},\ h_1\in\Hbext^{s-1,1/2+\alpha},
  \end{equation}
  such that
  \begin{equation}
  \label{EqPfKerr2}
    D_{g_b}\tau(h_0+\ft h_1)=(\dot\gamma,\dot k),\quad
    D_{g_b}\Ups_b(h_0+\ft h_1)=0\qquad \text{at}\ \ft=0.
  \end{equation}
  For the solution $h$ of the initial value problem $L_{g_b}h=0$, $(h|_{\Sigma_0},\cL_{\pa_\ft}h|_{\Sigma_0})=(h_0,h_1)$, as described by Theorem~\ref{ThmPfIVP}, we then have $D_{g_b}\tau(h)=(\dot\gamma,\dot k)$ and $D_{g_b}\Ups_b(h)=0$ at $\Sigma_0$. Now, recall (e.g.\ from \cite[\S2.2]{HintzVasyKdSStability}) that the linearized constraint equations are equivalent to the vanishing of $D_{g_b}\Ein(h)(N,-)$ where $N$ is the unit normal to $\Sigma_0^\circ$ and $\Ein(g)=\Ric(g)-\half g R_g$ is the Einstein tensor. Since
  \[
    D_{g_b}\Ein(h) - \sfG_{g_b}\wt\delta_{g_b,\gamma}^*\bigl(D_{g_b}\Ups_b(h)\bigr) = 0,
  \]
  this implies that $\cL_{\pa_\ft}(D_{g_b}\Ups_b(h))=0$ at $\Sigma_0$ as well. Finally, since by the linearized second Bianchi identity, $D_{g_b}\Ups_b(h)$ satisfies the wave equation
  \[
    \cP_{b,\gamma}(D_{g_b}\Ups_b(h))=2\delta_{g_b}\sfG_{g_b}\wt\delta_{g_b,\gamma}^*(D_{g_b}\Ups_b(h))=0,
  \]
  we conclude that $D_{g_b}\Ups_b(h)=-\delta_{g_b}\sfG_{g_b}h\equiv 0$ and thus also $D_{g_b}\Ric(h)=0$, as desired.

  Thus, in order to complete the proof of linear stability, it suffices to arrange~\eqref{EqPfKerr1}--\eqref{EqPfKerr2}. This is accomplished by a minor modification of the arguments of \cite[Proposition~3.10 and Corollary~3.11]{HintzVasyKdSStability}. We present the details, following the reference, mainly in order to demonstrate that the decay rates of $(h_0,h_1)$ can be arranged to match those of $(\dot\gamma,\dot k)$. The task at hand has nothing to do with the constraint equations. Hence, it suffices to solve the following \emph{nonlinear} (but geometrically simpler) problem: given $(\gamma,k)$ close to $(\gamma_b,k_b)$ and with $(\gamma,k)-(\gamma_b,k_b)\in\Hbext^{s,-1/2+\alpha}\oplus\Hbext^{s-1,1/2+\alpha}$, find Cauchy data $(g_0,g_1)$ close to $(g_{b,0},g_{b,1}):=(g_b|_{\Sigma_0},0)$ and with $(g_0-g_{b,0},g_1-g_{b,1})\in\Hbext^{s,-1/2+\alpha}\oplus\Hbext^{s-1,1/2+\alpha}$, such that
  \begin{equation}
  \label{EqPfKerrNonlin}
    \tau(g_0+\ft g_1)=(\gamma,k),\quad
    \Ups_b(g_0+\ft g_1)=0\qquad\text{at}\ \ft=0.
  \end{equation}
  Our construction will give a smooth map $i_b\colon(\gamma,k)\mapsto(g_0,g_1)$ which takes $\tau(g_b)$ into $(g_{b,0},g_{b,1})$, thus its linearization at $\tau(g_b)$ takes $(\dot\gamma,\dot k)$ into the desired Cauchy data $(h_0,h_1)$.

  Define $\phi_b\in 1+\rho\CI(\Sigma_0)$ and $\omega_b\in\rho^2\CI(\Sigma_0;\Tsc^*\Sigma_0)$ by $g_b=:\phi_b\,d\ft^2+2\,d\ft\otimes_s\omega_b-\gamma_b$; we then define the component $g_0$ of $i_b(\gamma,k)=(g_0,g_1)$ by
  \[
    g_0=\phi_b\,d\ft^2+2\,d\ft\otimes_s\omega_b-\gamma.
  \]
  Let $N_0$, resp.\ $N_b\in T_{\Sigma_0^\circ}M^\circ$ denote the future timelike unit vector field with respect to $g_0$, resp.\ $N_b$; then $N_0-N_b\in\Hbext^{s,-1/2+\alpha}$. To make $\tau(g_0+\ft g_1)=(\gamma,k)$, we need to find $g_1$ with
  \[
    g_0\bigl((\nabla^{g_0+\ft g_1}_X-\nabla^{g_0}_X)Y,N_0) = k(X,Y)-g_0(\nabla_X^{g_0}Y,N_0),\quad X,Y\in T\Sigma_0^\circ,
  \]
  with $\nabla^g$ the Levi-Civita connection of $g$. This is equivalent to
  \[
    -(N_0\ft)g_1(X,Y) = 2\bigl(k(X,Y)-g_0(\nabla_X^{g_0}Y,N_0)\bigr).
  \]
  But $N_0\ft=1+\rho\CI+\cO(\rho^{1+\alpha})$, and $N_0\ft\neq 0$ on $\Sigma_0^\circ$ since $d\ft$ is timelike for $g_0$ (as $g_0$ is close to $g_b$ in $\cC^0$); hence this determines $g_1(X,Y)$ for $X,Y\in T\Sigma_0^\circ$, with $g_1|_{S^2\,\Tsc\Sigma_0}\in\Hbext^{s-1,1/2+\alpha}$ by assumption on $k$ and since the second term on the right loses one order of regularity but gains an order of decay, by the same arguments as given before equation~\eqref{EqKStGamma}.

  Finally, we need to arrange $\Ups_b(g_0+\ft g_1)-\Ups_b(g_0)=-\Ups_b(g_0)\in\Hbext^{s-1,1/2+\alpha}$ at $\ft=0$, so
  \[
    (\sfG_{g_0}g_1)(\nabla^{g_0}\ft,V) = -\Ups_b(g_0)(V),\quad V\in\Tsc\Sigma_0.
  \]
  Since $\nabla^{g_0}\ft\perp T\Sigma_0^\circ$ is a multiple of $N_0$, more precisely $\nabla^{g_0}\ft=(1+\rho\CI+\Hbext^{s,-1/2+\alpha})N_0$, this determines $(\sfG_{g_0}g_1)(N_0,X)=g_1(N_0,X)$ for $X\in\Tsc\Sigma_0$. But then we have $g_1(N_0,N_0)=2(\sfG_{g_0}g_1)(N_0,N_0)+(\tr_{g_0}g_1-g_1(N_0,N_0))$, with both summands on the right hand side known; this determines $g_1(N_0,N_0)$, and we have $g_1\in\Hbext^{s-1,1/2+\alpha}$. The proof is complete.
\end{proof}

\begin{rmk}
\label{RmkDecTo0}
  We explain why fast decay of $(\dot\gamma,\dot k)$ implies the decay of $h$ to zero as $t_*\to\infty$ in our chosen gauge, thus recovering the decay proved in \cite{AnderssonBackdahlBlueMaKerr} in the outgoing radiation gauge. Thus, let us assume that $\dot\gamma$ and $\dot k$ decay rapidly as $r\to\infty$ (sufficiently fast polynomial decay would suffice). Denote by $h$ the solution of the initial value problem for $L_b h=0$ constructed in Theorem~\ref{ThmPfKerr}, and denote by $\chi=\chi(\ft)$ a smooth cutoff, $\chi\equiv 1$ for $\ft\leq 1$, $\chi\equiv 0$ for $\ft\geq 2$. Then $h=\chi h+h'$, where $\ft\geq 1$ on $\supp h'$, and $L_b h'=-[L_b,\chi]h$ is supported in $\ft^{-1}([1,2])$ and decays rapidly as $r\to\infty$. We can solve this using the Fourier transform in $\ft$; the resolvent of $L_b$ with respect to $\ft$ is obtained from $\wh{L_b}(\sigma)$ via conjugation by $e^{i\sigma(\ft-t_*)}$ (which maps sufficiently fast decaying tensors into any fixed b-Sobolev space). To give a flavor of the argument, let us now pretend that the $\ft$-resolvent only has a simple pole at $\sigma=0$, with singular part given by a finite sum of terms $h_0\la\cdot,h_0^*\ra$, where $h_0\in\cK_b$, $h_0^*\in\cK_b^*$ (the full argument is only more involved algebraically); then we need to show that $\la([L_b,\chi]h)\ftrans(0),h_0^*\ra=0$. Since $D_{g_b}\Ric(h)=0$ and $D_{g_b}\Ups_b(h)=0$, and since $h_0^*=\sfG_{g_b}\delta_{g_b}^*\omega^*$ is dual-pure-gauge by Proposition~\ref{PropL0}, this is equivalent to the vanishing of the spacetime pairing (extending $h_0^*$ to spacetime by stationarity)
  \begin{align*}
    \la[L_b,\chi]h,h_0^*\ra &= \la[L_b,\chi]h,\sfG_{g_b}\delta_{g_b}^*\omega^*\ra \\
      &= \la\delta_{g_b}\sfG_{g_b}[L_b,\chi]h,\omega^*\ra \\
      &= \la\delta_{g_b}\sfG_{g_b}L_b(\chi h),\omega^*\ra \\
      &= \la -2\delta_{g_b}\sfG_{g_b}\wt\delta_{g_b,\gamma}^* D_{g_b}\Ups_b(\chi h),\omega^*\ra \\
      &= \la -\cP_{g_b,\gamma}[D_{g_b}\Ups_b,\chi]h,\omega^*\ra \\
      &= -\la[D_{g_b}\Ups_b,\chi]h,\cP_{g_b,\gamma}^*\omega^*\ra \\
      &= 0.
  \end{align*}
  Here, the fast spatial decay of $h$ is used to justify the integrations by parts in the second and the penultimate equalities.
  
  We reiterate that this argument \emph{only} applies to solutions of the linearized Einstein equation $D_{g_b}\Ric(h)=0$, $D_{g_b}\Ups_b(h)=0$, but \emph{not} to general solutions of the linearized gauge-fixed Einstein equation $L_b h=0$; indeed, even for generic smooth initial data $(h_0,h_1)$ in Theorem~\ref{ThmPfIVP} with \emph{compact} support, the solution $h$ does not decay to zero, i.e.\ the asymptotic leading order term $\hat h$ is non-zero.
\end{rmk}

\bibliographystyle{alpha}

\end{document}